\definecolor{my-linkcolor}{rgb}{0.75,0,0}
\definecolor{my-citecolor}{rgb}{0.1,0.57,0}
\definecolor{my-urlcolor}{rgb}{0,0,0.75}
\title{Tensor categories for vertex operator algebra extensions}
\author{Thomas Creutzig
  \thanks{T.~C. is supported by the Natural
    Sciences and Engineering Research Council of Canada (RES0020460).}
  \and Shashank Kanade
  \thanks{S.~K. gratefully acknowledges the support provided by
    post-doctoral fellowship awarded by the Pacific Institute for the
    Mathematical Sciences and Endeavour Research Fellowship (6127\_2017)
    awarded by the Department of Education and Training, Australian
    Government. S.~K. is presently supported by the Collaboration Grant 
    for Mathematicians \#636937, awarded by the Simons Foundation.
}
  \and Robert McRae } \date{}
\numberwithin{equation}{section}
\theoremstyle{definition}\newtheorem{rema}{Remark}[section]
\theoremstyle{plain}\newtheorem{propo}[rema]{Proposition}
\newtheorem{theo}[rema]{Theorem}
\theoremstyle{definition}\newtheorem{defi}[rema]{Definition}
\theoremstyle{plain}\newtheorem{lemma}[rema]{Lemma}
\newtheorem{corol}[rema]{Corollary}
\theoremstyle{definition}\newtheorem{exam}[rema]{Example}
\theoremstyle{definition}
\theoremstyle{definition}\newtheorem{nota}[rema]{Notation}
\theoremstyle{definition}\newtheorem{assum}[rema]{Assumption}
\newcommand{\cY}{\mathcal{Y}}
\newcommand{\cA}{\mathcal{A}}
\newcommand{\cR}{\mathcal{R}}
\newcommand{\cM}{\mathcal{M}}
\newcommand{\cS}{\mathcal{S}}
\newcommand{\cF}{\mathcal{F}}
\newcommand{\cI}{\mathcal{I}}
\newcommand{\cJ}{\mathcal{J}}
\newcommand{\cC}{\mathcal{C}}
\newcommand{\cG}{\mathcal{G}}
\newcommand{\cW}{\mathcal{W}}
\newcommand{\sC}{{\mathcal{SC}}}
\newcommand{\sCeven}{{\underline{\sC}}}
\newcommand{\sR}{{\underline{\cR}}}
\newcommand{\sA}{{\underline{\cA}}}
\newcommand{\CC}{\mathbb{C}}
\newcommand{\ZZ}{\mathbb{Z}}
\newcommand{\NN}{\mathbb{N}}
\newcommand{\RR}{\mathbb{R}}
\newcommand{\sunit}{{\underline{\unit}}}
\newcommand{\sleft}{{\underline{l}}}
\newcommand{\sright}{{\underline{r}}}
\newcommand{\ztwo}{\ZZ/2\ZZ}
\newcommand{\sic}{{J}}
\newcommand{\unit}{\mathbf{1}}
\newcommand{\hopflink}{{\,\text{\textmarried}}}
\newcommand{\fus}[1]{\mathbin{\boxtimes_{#1}}}
\newcommand{\zero}{{\bar{0}}}
\newcommand{\one}{{\bar{1}}}
\newcommand{\VOA}{{vertex operator algebra}}
\newcommand{\VOSA}{{vertex operator superalgebra}}
\newcommand{\VOAs}{{vertex operator algebras}}
\newcommand{\VOSAs}{{vertex operator superalgebras}}
\DeclareMathOperator{\rank}{rank}
\DeclareMathOperator{\tw}{tw}
\DeclareMathOperator{\fix}{fix}
\DeclareMathOperator{\ord}{ordtw}
\DeclareMathOperator{\qdim}{qdim}
\DeclareMathOperator{\adim}{adim}
\DeclareMathOperator{\id}{1}
\DeclareMathOperator{\im}{Im}
\DeclareMathOperator{\tr}{Tr}
\DeclareMathOperator{\rep}{Rep}
\let\ker\relax
\let\hom\relax
\DeclareMathOperator{\ker}{Ker}
\DeclareMathOperator{\hom}{Hom}
\DeclareMathOperator{\Endo}{End}
\DeclareMathOperator{\ch}{ch}
\DeclareMathOperator{\vir}{Vir}
\DeclareMathOperator{\com}{Com}
\newcommand{\even}{{\bar{0}}}
\newcommand{\odd}{{\bar{1}}}
\newcommand{\reploc}{\rep^0 V}
\newcommand{\reptw}{{\rep^{\tw}} V}
\newcommand{\repA}{\rep A}
\newcommand{\repzA}{\rep^0 A}
\newcommand{\cSloc}{[\rep^0 V]}
\newcommand{\cStw}{[{\rep^{\tw}} V]}
\newcommand{\cSfix}{[{\rep^{\fix}} V]}
\newcommand{\cSord}{[{\rep^{\ord}} V]}
\begin{document}

\title{Tensor categories for vertex operator superalgebra extensions}	
\date{}
\maketitle

\abstract{ 
  Let $V$ be a \VOA{} with a category $\cC$ of (generalized)
  modules that has vertex tensor category structure, and thus
  braided tensor category structure, and let $A$ be a vertex operator
  (super)algebra extension of $V$. We employ tensor categories to
  study untwisted (also called local)
  $A$-modules in $\cC$, using results of
  Huang-Kirillov-Lepowsky that show that $A$ is a (super)algebra object in $\cC$ and that generalized $A$-modules in $\cC$ correspond exactly to local modules for the corresponding (super)algebra object. Both categories, of local modules for a $\cC$-algebra and (under suitable conditions) of generalized $A$-modules, have natural braided monoidal category structure, given in the first case by Pareigis and Kirillov-Ostrik and in the second case by Huang-Lepowsky-Zhang.

Our main result is that the Huang-Kirillov-Lepowsky isomorphism of
  categories between local (super)algebra modules and extended vertex
  operator (super)algebra modules is also an isomorphism of braided
  monoidal (super)categories. Using this result, we show that
  induction from a suitable subcategory of $V$-modules to $A$-modules
  is a vertex tensor functor. Two applications are given:
    
  First, we derive Verlinde formulae for regular vertex operator superalgebras and 
  regular $\frac{1}{2}\ZZ$-graded vertex operator algebras by realizing them as (super)algebra objects 
  in the vertex tensor categories of their even and $\ZZ$-graded components, respectively.   
  
  Second, we analyze parafermionic cosets $C=\mathrm{Com}(V_L,V)$
  where $L$ is a positive definite even lattice and $V$ is regular.
  If the vertex tensor category of either $V$-modules or $C$-modules 
  is understood, then our results classify all inequivalent simple modules 
  for the other algebra and determine their fusion rules and modular character 
  transformations. We illustrate both directions with 
  several examples.
  
  \bigskip
  
  \noindent\textbf{MSC 2020:} Primary 17B69, 18M15, 18M20, 81R10, 81T40

}

\setcounter{tocdepth}{2}
\setcounter{secnumdepth}{4}

\newpage

\tableofcontents

\section{Introduction}

Vertex operator algebras can be viewed as an attempt to mathematically
formalize the symmetry algebra of a two-dimensional conformal quantum
field theory. In addition to physics, \VOAs{} have by now
intimate connections to many areas of mathematics, including affine Lie
algebras, modular tensor categories, geometry, and modular forms. In
this paper, we use tensor categories to deepen our understanding of the representation theory of vertex operator  algebras, and more generally, \VOSAs.

The tensor category approach to conformal quantum field theory began
in the late 1980s with Verlinde's observation that fusion rules of
rational conformal field theory are connected to modular properties of
characters \cite{V}. This was soon followed by the work of Moore and
Seiberg \cite{MS}, who realized that the representation theory of
conformal field theory has certain features that Turaev \cite{T} was
then able to formalize through introducing the concept of modular
tensor category. Two important questions were then when a category of
\VOA{} modules is a modular tensor category, and whether the modular
$S$-matrix defined by the transformation properties of torus one-point
functions for the vertex operator algebra agrees (up to normalization)
with the categorical $S$-matrix of a modular tensor category given by
Hopf links. Both questions were resolved by Huang \cite{H-verlinde,
  H-rigidity}: in particular, the module category of a so-called
regular \VOA{} is a modular tensor category, and the modular and
categorical $S$-matrices indeed agree up to normalization. Huang's
work depends not merely on the braided tensor category structure of
vertex operator algebra module categories, but also on the deeper
\textit{vertex tensor category} structure formulated and developed in
\cite{HL-VTC, HL-tensor1, HL-tensor2, HL-tensor3, H-tensor4,
  HL-tensorAffine}. Recently, Huang, Lepowsky, and Zhang have
developed a vertex tensor category theory for more general vertex
operator algebras: the tensor categories
constructed in \cite{HLZ1}-\cite{HLZ8} are no longer necessarily semisimple or
finite, but they are braided and have a twist. It is not known
generally under what conditions they are rigid.

Our setting is thus a \VOA{} $V$ with a vertex tensor category $\cC$ of
modules that is braided and has a twist. Let $A\supseteq V$ be a
$V$-module that itself carries the structure of a vertex operator
(super)algebra. The aim is then to use the tensor category structure on $\cC$ to understand its relation with the representation theory of $A$ (see \cite{Hohn} for an early realization that tensor categories can be used to study vertex operator algebra extensions). Our starting point is the
work of Pareigis \cite{Pa} and Kirillov-Ostrik \cite{KO} on
commutative algebra objects inside abstract modular tensor categories,
in particular their construction of a braided tensor category $\repzA$
of ``local modules'' (called ``dyslectic modules'' in \cite{Pa}) 
for an algebra $A$ in $\cC$. A
fundamental theorem of Huang, Kirillov, and Lepowsky \cite{HKL} states that
\VOA{} extensions of $V$ in $\cC$ are precisely commutative algebras
$A$ in $\cC$ with injective unit and trivial twist (the generalization to superalgebra
extensions was obtained in \cite{CKL}). It is also shown in \cite{HKL}
that the module category for the extension algebra $A$
agrees with $\rep^0 A$ as categories (see \cite{CKL} for the
superalgebra case).  Our main theoretical contribution is to show that
in fact $\repzA$ agrees with the category of $A$-modules in $\cC$ as
braided tensor categories, where the category of $A$-modules is given
the braided tensor category structure of \cite{HLZ1}-\cite{HLZ8}. This
means that all abstract tensor-categorical
constructions in \cite{KO} apply naturally in the vertex
algebraic setting.

The main tensor-categorical tool from \cite{KO} that we use in
applications to specific vertex operator (super)algebra extensions is the
induction functor. For any object $W$ in the category $\cC$
of $V$-modules, the tensor product $A\boxtimes W$ in $\cC$
naturally has the structure of a possibly non-local $A$-module. By restricting to the subcategory $\cC^0$ consisting of objects $W$ such that $A\boxtimes W$ is local, we get
an induction tensor functor from $\cC^0$ to
$\repzA$. Induced modules have been constructed previously in the context of vertex operator algebra extensions $V^G\subseteq V$ where $V^G$ is the fixed-point subalgebra of a finite group $G$ of automorphisms of $V$ \cite{DLin, Ya2, CarM}, but so far, there has not been systematic use of induction for general vertex operator (super)algebra extensions. In this work, we shall show that in a suitable sense, induction is a
vertex tensor functor, which allows one to relate tensor-categorical
data for $V$-modules to tensor-categorical data for $A$-modules. 

In
the following subsections, we discuss in more detail the vertex tensor
category theory needed for applying \cite{KO} to vertex operator
algebras, the types of vertex operator superalgebra extensions we would like to
consider, and the results we obtain in this paper.

\subsection{Vertex tensor categories}
\label{sec:intro-vtc}

Module categories for vertex operator (super)algebras that are vertex
tensor categories have a rich complex analytic structure that can be
specialized to braided tensor category structure. However, the full
analytic structure is crucial for our investigation, as we explain
here and later. For excellent background on vertex tensor categories,
see the survey article \cite{HL-rev}, and for more details, see
\cite{HL-VTC}.

We recall the main features in the definition of a \VOA{}. A vertex
operator algebra is a (typically infinite-dimensional) graded vector
space $V$ with two distinguished vectors: the vacuum $\mathbf{1}$ and
the conformal vector $\omega$, and equipped with a ``multiplication''
map, the vertex operator
$Y(\cdot, x )\cdot: V\otimes V \rightarrow V((x))$, where $x$ is a
formal variable and $V((x))$ is the space of Laurent series with
coefficients in $V$. 
The grading on $V$ is typically given by the semisimple action of the operator
$L(0) = \mathrm{Coeff}_{x^{-2}}Y(\omega,x)$.
One may specialize the variable $x$ in $Y(\cdot,x)\cdot$ to a non-zero complex number $z$ 
thereby yielding an element of the
algebraic completion $\overline{V}$ of $V$ ($\overline{V}$ is the direct product, as opposed to direct sum, of the graded
homogeneous subspaces of $V$, or equivalently is the full dual of the
graded dual of $V$).  The deepest axiom that $Y(\cdot,x)\cdot$
satisfies is the so-called Jacobi identity, which can be equivalently
formulated (in the presence of the other axioms) in terms of complex
variables as follows: the three series
\begin{align}
          \langle v',Y(v_1,z_1)Y(v_2,z_2)v_3 \rangle
          &\quad\quad |z_1|>|z_2|>0,
	\label{eqn:VOAv1v2}\\
          \langle v',Y(Y(v_1,z_1-z_2)v_2,z_2)v_3 \rangle
          &\quad\quad |z_2|>|z_1-z_2|>0, 
          \label{eqn:VOAv1v2assoc}\\
          \langle v',Y(v_2,z_2)Y(v_1,z_1)v_3 \rangle
          &\quad\quad |z_2|>|z_1|>0,
	\label{eqn:VOAv2v1}
\end{align}
where $v_i\in V$ and $v'\in V'$, where $V'$ is the graded dual of $V$,
converge absolutely in the
indicated domains to a common rational function
in $z_1, z_2$. The equality of the first and second
functions above is a complex-analytic version of associativity for
$Y$, while the equality of the first and third corresponds to
commutativity of left multiplication operators. The vertex operator
$Y$ also satisfies a more direct analogue of commutativity, called
skew-symmetry:
\begin{align}
e^{xL(-1)}Y(v_2,-x)v_1=Y(v_1,x)v_2
\label{eqn:voa-skewsym}
\end{align}
for $v_1, v_2\in V$. The domains of \eqref{eqn:VOAv1v2},
\eqref{eqn:VOAv1v2assoc}, and \eqref{eqn:VOAv2v1} and the
corresponding equalities of rational functions are governed by certain
sewing constraints for Riemann spheres with positively and negatively
oriented punctures, thereby making a \VOA{} an algebra over the
(partial) operad given by the moduli space of such spheres
\cite{H-book}. In this sense, vertex operator algebras are a ``complexification'' of the notion of commutative
associative algebra.

Given modules $W_i$ for a \VOA{} $V$, an intertwining operator
$\cY: W_1\otimes W_2\rightarrow W_3\{x\}$ (here braces denote
arbitrary complex powers of $x$) is again defined using a Jacobi
identity \cite{FHL}.  For \VOAs{} in logarithmic conformal field theory, it
becomes necessary to consider logarithmic intertwining operators
\cite{Mil1, Mil2}, whose target space is $W_3[\log x]\{x\}$.
One can choose a branch of logarithm and specialize complex powers of
$x$ and integer powers of $\log x$ in a logarithmic intertwining
operator to corresponding powers of a specific complex number
$z\in\CC^\times$ and its logarithm to obtain $P(z)$-intertwining maps
\cite{HLZ3}.  This, however, is not a definition of intertwining maps,
which are instead defined via a Jacobi identity involving
$z$. Intertwining operators and maps are central to the definition of
tensor product of vertex operator algebra modules.

The equalities of the analytic functions in \eqref{eqn:VOAv1v2},
\eqref{eqn:VOAv1v2assoc}, and \eqref{eqn:VOAv2v1} when the vertex
operators $Y$ in each series are replaced with general (and possibly
different) intertwining operators between appropriate modules is now
very subtle. In this generality, the series no longer converge to
rational functions; in fact, they should converge to multivalued
analytic functions due to the presence of non-integral powers of $z$
(and perhaps logarithms as well). The associativity relation for
general intertwining operators is called the operator product
expansion in physics and is not known to exist for general categories
of modules for arbitrary \VOA{}s.  Importantly, the operator product
expansion is required in many scenarios, the earliest instance being
the proof of Frenkel, Lepowsky, and Meurman that the moonshine module
$V^\natural$ is a \VOA{} \cite{FLM}. In vertex tensor category theory,
the existence of the operator product expansion is equivalent to the
existence of associativity isomorphisms between triple tensor products
of modules (see \cite[Theorem 14.11]{H-tensor4} and \cite[Theorems 10.3 and 10.6]{HLZ6}).

As with the vertex operator of a vertex operator algebra, the required
commutativity and associativity properties for multivalued analytic
functions defined by compositions of intertwining operators are
governed by sewing constraints for Riemann spheres with punctures. The
notion of vertex tensor category is then designed precisely to reflect
these constraints.  Given modules $W_1$ and $W_2$ for a vertex
operator algebra, one should have tensor products $W_1\fus{P(z)}W_2$
parametrized by elements $P(z)$ of the moduli space of Riemann spheres
with punctures at $\infty$, $z\neq 0$, and $0$, and with specified
local coordinates at the punctures.  The $P(z)$-tensor product,
\emph{if it exists}, is a module $W_1\fus{P(z)} W_2$ equipped with a
$P(z)$-intertwining map
$\fus{P(z)}: W_1\otimes W_2\rightarrow \overline{W_1\boxtimes_{P(z)}
  W_2}$ that satisfies the following universal property: Given any
test module $W$ and $P(z)$-intertwining map
$I:W_1\otimes W_2\rightarrow \overline{W}$, there exists a unique
morphism $\eta:W_1\fus{P(z)}W_2\rightarrow W$ such that the following
diagram commutes:
\begin{align*}
\xymatrixcolsep{4pc}
\xymatrix{
W_1\otimes W_2 \ar[r]^{I} \ar[d]^{\boxtimes_{P(z)}} & \overline{W} \\
\overline{W_1\fus{P(z)} W_2} \ar[ur]_{\overline{\eta}} & \\
}
\end{align*}
where $\overline{\eta}$ is the natural extension of $\eta$ to
algebraic completions. The vertex operator algebra is a unit with
respect to each $P(z)$-tensor product. Associativity of tensor
products follows from existence of operator product expansions for
compositions of $P(z)$-tensor product intertwining maps, and braiding
comes from skew-symmetry for intertwining operators.  The module
homomorphism $e^{2\pi i L(0)}$ yields a twist on a vertex tensor
category.  For a quick yet detailed review of these concepts, see
\cref{subsec:VTC}.  When the varying $z\in\CC^\times$ are carefully
specialized to a single fixed complex number (typically, $z=1$) one
obtains a braided monoidal category with a twist. However, this
procedure necessarily loses information, analogous to forgetting the
complex structure on a Riemann surface and remembering only the
topological structure.  The specialization procedure is necessarily
subtle: since pairs $(z_1, z_2)$ with $z_1=z_2$ do not belong to any
of the domains in \eqref{eqn:VOAv1v2}, \eqref{eqn:VOAv1v2assoc},
\eqref{eqn:VOAv2v1}, the multivalued analytic functions arising from
compositions of tensor product intertwining maps typically have
singularities at these points.

As mentioned above, the notion of vertex tensor category incorporating
the considerations above was first formulated by Huang and Lepowsky in
\cite{HL-VTC}. In \cite{HL-tensor1,HL-tensor2,HL-tensor3, H-tensor4}
they showed how to endow the module categories of ``rational'' vertex
operator algebras with such structure. Their results embrace many
well-known examples, for example \VOAs{} based on affine Lie algebras
at positive integral levels \cite{HL-tensorAffine}, Virasoro minimal
models \cite{H-Virasoro}, lattice vertex operator algebras, and the
moonshine module $V^\natural$. Using vertex tensor category structure,
Huang solved the two major important problems we have mentioned above:
determination of the modularity of these categories and the Verlinde
conjecture.

Going beyond rationality, vertex tensor categories for
\VOAs{} in logarithmic conformal field theory have been constructed in
\cite{HLZ1}-\cite{HLZ8}, subject to certain conditions, in particular
existence of the logarithmic operator product expansion.  In this
case, the action of $L(0)$ on modules need not be semisimple, and
correspondingly, intertwining operators may have logarithmic
terms. Huang has shown in \cite{H-cofin} that the full category of
(grading-restricted, generalized) modules for a $C_2$-cofinite \VOA{}
satisfies the conditions of \cite{HLZ1}-\cite{HLZ8} for the existence
of vertex tensor category structure (actually, the conditions on the
vertex operator algebra needed in \cite{H-cofin} are somewhat more
relaxed than $C_2$-cofiniteness, but this is sufficient).  However,
there are many interesting \VOAs{} for which it is unknown
whether natural representation categories can be given vertex tensor
structure. Examples include the $C_1$-cofinite module categories for
singlet vertex operator algebras (see Section 6 of \cite{CMR}) and module
categories for affine Lie algebras at admissible levels. (Since the first version of this paper was written, there has been progress on these two examples. For singlet algebras, rigid vertex tensor structure is now known on the subcategory of so-called ``atypical'' modules \cite{CMY}, though not yet on the full $C_1$-cofinite module category. For affine Lie algebras at admissible levels, vertex tensor structure is now known on the $C_1$-cofinite module category \cite{CHY}, and this category is rigid at least for simply-laced types \cite{Cr2}. There are also results for some affine vertex tensor categories at non-admissible levels \cite{CY}. However, it is still an open problem to construct vertex tensor categories that include non-$C_1$-cofinite modules for affine Lie algebras, such as relaxed highest-weight modules and their images under spectral flow.)

The concept of vertex tensor category applies equally
well to vertex operator superalgebras. Vertex tensor
categories for superalgebras include minimal models for the
$N=1$ superconformal algebra and unitary minimal models for the $N=2$
superconformal algebra \cite{HM1, HM2}, as well as a non-semisimple example coming from the affine vertex operator 
superalgebra of $\mathfrak{gl}(1|1)$ \cite{CMY2}.

\subsection{Motivation}

The first motivation for our study of vertex operator algebra
extensions concerns the invariant theory of \VOAs{}, in which two
types of extensions appear. First, if $V$ is a \VOA{} and $G$ is a
group of automorphisms of $V$, then we have an extension
$V^G\subseteq V$, where $V^G$ is the vertex operator subalgebra of
$G$-fixed points. Secondly, if $B$ is a vertex operator
subalgebra of $V$ (with generally different conformal vector), then the vertex operator subalgebra of $V$ that
commutes with $B$, $C=\text{Com}\left(B, V\right)$, is called the
$B$-coset \VOA{} of $V$, and $V$ is an extension of $B\otimes C$.

In many applications, we understand $V$ and $B$ fairly well, but we
would like to understand the representation theory
of either the orbifold $V^G$ or the coset $C$. These are called
the orbifold and coset problems, respectively. Conversely, the inverse orbifold
(respectively, inverse coset) problem occurs when we understand $V^G$
(respectively, $B\otimes C$) reasonably well, but we would like to
understand the representation theory of $V$.  The theory of our work
exactly applies to the inverse orbifold and coset problems because our
starting assumption is an extension $V\subseteq A$ where the smaller
algebra $V$ already has a braided tensor category of
modules. Interestingly, we can also handle the coset problem
efficiently, and we illustrate this in Section \ref{sec:coset} for
parafermionic cosets, where the vertex operator subalgebra
$B$ is a lattice \VOA. In this setting, when $V$ is in particular an affine vertex operator algebra at positive integral level, the coset problem
has been analyzed previously in \cite{ADJR, ALY, DLY, DLWY,
  DR, DW1, DW2, DW3}.  A better understanding of the orbifold problem
requires the study of $G$-crossed tensor categories \cite{T-Gcross,  Ga, Mu1,
  Mu2, K, K2} in the context of vertex operator algebras; we plan to
contribute to this problem in the near future (since the first version of this paper was written, one of us has completed an extensive study of the orbifold problem in \cite{McR}, using the theory developed in this work).

The examples of inverse coset problems that we have in mind and which
can be understood better with the aid of our results are certain
affine Lie superalgebra vertex operator superalgebras at integer and
also non-integer admissible levels. Very little is known about the
representation theory of affine vertex operator superalgebras,
although there are character formulae for certain atypical modules in
terms of mock Jacobi forms \cite{KW2, KW3, KW4} and there is a fairly
good understanding of $V_k\left(\mathfrak{gl}(1|1)\right)$ \cite{CR1}.
Let us announce here that various affine vertex operator superalgebras
are extensions of known \VOAs. Unlike for parafermionic cosets, these
superalgebra extensions are not of simple current type. Nonetheless,
in examples like the affine vertex operator superalgebra of
$\mathfrak{osp}(1|2)$, the representation category can be completely
described in terms of the underlying vertex operator subalgebra
\cite{CFK}. That work is a nice interplay between the use of tensor
categories (including our results) and Jacobi form computations. The key steps are: first, a
decomposition of characters that determines the decomposition of
$L_k\left(\mathfrak{osp}(1|2) \right)$ into
$L_k(\mathfrak{sl}_2)\otimes \text{Vir}$-modules, where Vir is
the simple Virasoro \VOA{} of appropriate central charge. This
determines the superalgebra object in the modular tensor category of
$L_k(\mathfrak{sl}_2)\otimes \text{Vir}$-modules corresponding to
$L_k\left(\mathfrak{osp}(1|2) \right)$.  The second step is to use the
induction functor to find as many simple local modules as possible, and
the last step is to use the modular $S$-matrix together with the
Frobenius-Perron theorem of \cite{DMNO} to conclude that one has found
all inequivalent simple modules.  The induction tensor functor then
also determines the fusion rules of both local and twisted
$L_k\left(\mathfrak{osp}(1|2) \right)$-modules.  A few additional
subtleties arise because $L_k\left(\mathfrak{osp}(1|2) \right)$ is a
vertex operator superalgebra; to our knowledge, this is the first
description of a series of $\ZZ$-graded rational vertex operator
superalgebras.  We are optimistic that some affine vertex operator
superalgebras of type $\mathfrak{sl}(n|1)$ and $D(2, 1; \alpha)$ can
also be studied using our methods, and this is under
investigation. The latter \VOSA{} plays a crucial role in the quantum
geometric Langlands correspondence of $\mathfrak{sl}_2$ as well as in
four-dimensional supersymmetric quantum field theories \cite{Gai}.

The second motivation for our work here is the Verlinde formula
\cite{V}, which is central to \VOA{} theory since it relates
analytic data in the form of modular properties of torus one-point
functions to categorical data in the form of representations of the
fusion ring. This formula has been proven for module categories of
regular \VOAs{} by Huang \cite{H-verlinde} and it is important to
extend these results beyond rationality, for example to $C_2$-cofinite
but non-rational \VOAs{} (see \cite{CG} for the state of the art) or
even beyond finite tensor categories; see \cite{AC, CM, CR2, CR3, CR4}
for examples. A nice application of our main results are Verlinde
formulae for regular vertex operator superalgebras and
$\frac{1}{2}\ZZ$-graded regular \VOAs; see Sections \ref{sec:ver} and
\ref{sec:Verlinde}.  Understanding the Verlinde formula amounts to
relating the complex analytic properties of intertwining operators and
their correlation functions to categorical and topological field
theory data. Correlation functions are solutions to sewing
constraints, and such solutions are uniquely specified by a simple
special symmetric Frobenius algebra in the representation category of
the chiral algebra of the conformal field theory. A related motivation or future goal is thus to connect
the \VOA{} approach to conformal field theory and its correlation
functions (see \cite{HK1,HK2,HK3,Kon} for important developments) with the topological field theory approach of Fuchs,
Fjelstad, Runkel and Schweigert (\cite{FRS1, FRS2, FFRS1, FFRS2}; see
\cite{FS} for a review).

Conformal nets are another approach to conformal field theory using
tensor categories. There has been progress in relating this area to
the \VOA{} point of view \cite{Kaw, CKLW, BLKR}. Each approach has its
advantages and \VOA{} theory together with tensor categories might
succeed in proving important statements that are known from the
conformal net standpoint. A nice example is the fairly recent solution to
the mirror extension problem for coset \VOAs{} \cite{OS, Lin, CKM}.  Another
useful tool in the conformal net approach is $\alpha$-induction
\cite{LR, X, BE1, BE2, BEK, Ost}, a braided functor from a modular
tensor category $\cC$ to the category of module endofunctors of a
certain module category over $\cC$. This functor is used to construct
modular invariants for conformal field theory and is in general
important in understanding the categorical part of bulk and boundary
conformal field theory (see for example \cite[Section
3]{FRSalpha}). We now have a good induction functor for \VOAs{}, and
it is surely a valuable continuation of our work to connect \VOAs{} to
$\alpha$-induction.
  
  Beyond vertex operator algebras and conformal nets, it is well known that quantum groups are rich sources of braided tensor categories. Since the results we prove in Section \ref{sec:TensCats} below hold for (super)algebras in essentially any braided tensor category, they also apply to quantum groups. For example, after the first version of this paper appeared, and inspired by our work, the following problem was solved. It was conjectured in \cite{FGST1} that the representation categories of triplet vertex operator algebras and of restricted quantum groups of $\mathfrak{sl}_2$ at corresponding roots of unity are braided tensor equivalent. However, it turns out that the latter categories are not braidable \cite{KS}, so the next question is whether the quantum group category can be modified to a braided tensor category. 
Indeed, in \cite{CGR}, a quasi Hopf algebra was constructed that is isomorphic to the quantum group as an algebra but has modified comultiplication and associator; its representation category is braidable and now conjecturally equivalent to that of the triplet vertex operator algebra. This quasi Hopf algebra was constructed through careful study of the category of local modules for a certain algebra object in the category of weight modules for the corresponding unrolled restricted quantum group of $\mathfrak{sl}_2$. This procedure, using (super)algebras in braided tensor categories, has now become a powerful method for constructing non-degenerate finite braided tensor categories and supercategories \cite{GLO, N, CRu}.

%

\subsection{Results on vertex operator (super)algebra extensions and their modules}

With the above motivation in mind we recall the known results
on vertex operator algebra extensions and present our main theorem.
We always work in the following setting:
\begin{assum}
  $V$ is a \VOA{} and $\cC$ is a category of $V$-modules with a
  natural vertex tensor category structure.
\end{assum} 
If $V$ is regular, its representation category $\cC$ is modular.
There are however
interesting \VOAs{}, such as the triplet algebras $\cW(p)$ \cite{TW1},
whose representations form braided tensor categories, but
not all modules are completely reducible. The foundation of our work
is the following key result of \cite[Theorems 3.2 and 3.4]{HKL} (see
also \cite[Theorems 3.13 and 3.14]{CKL} for the natural superalgebra
generalization):
\begin{theo}
  A \VOA{} extension $V\subseteq A$ in $\cC$ is equivalent to a
  commutative associative algebra in the braided tensor category $\cC$
  with trivial twist and injective unit.
  Moreover, the category of
  local $\cC$-algebra modules $\repzA$ is isomorphic to the category
  of modules in $\cC$ for the extended \VOA{} $A$.
\end{theo}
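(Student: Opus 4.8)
The plan is to follow the strategy of \cite{HKL, CKL}, treating the statement as a compilation of several distinct assertions that can be proven in sequence. The first assertion — that a \VOA{} extension $V\subseteq A$ in $\cC$ yields a commutative associative algebra in $\cC$ with trivial twist and injective unit — is proven by translating the axioms. First I would define the multiplication morphism $\mu_A\colon A\fus{P(1)} A\to A$ as the morphism corresponding, via the universal property of the $P(1)$-tensor product, to the $P(1)$-intertwining map obtained by specializing the vertex operator $Y_A(\cdot,x)\cdot$ of $A$ (restricted to $V$-modules) to $x=1$; the unit $\iota_A\colon V\to A$ is the inclusion. Then associativity of $\mu_A$ corresponds, under the correspondence between associativity isomorphisms and operator product expansions recalled in the excerpt, to the associativity \eqref{eqn:VOAv1v2assoc} of $Y_A$; commutativity of $\mu_A$ relative to the braiding corresponds to skew-symmetry for $Y_A$; the unit axioms correspond to $Y_A(\unit,x)=\Id$ and the creation property; and triviality of the twist $\theta_A=e^{2\pi i L(0)}$ on $A$ is exactly the statement that $A$ has integer conformal weights, which holds since $A$ is an honest \VOA{}. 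Injectivity of $\iota_A$ is the statement that $V$ embeds in $A$.

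The converse — that such an algebra object gives a \VOA{} structure on $A$ extending that of $V$ — runs the same dictionary backwards: from $\mu_A$ and $\iota_A$ one reconstructs the intertwining operator $Y_A$ of type $\binom{A}{A\,A}$ and checks that commutativity and associativity of the algebra translate into the Jacobi identity (equivalently, the complex-analytic associativity and commutativity relations \eqref{eqn:VOAv1v2}--\eqref{eqn:VOAv2v1}) for $Y_A$, while the trivial twist guarantees that $Y_A$ has no logarithmic terms and only integer powers of $x$, so that it is a genuine vertex operator rather than a logarithmic intertwining operator. The haploidity claim under additional hypotheses on $V$ amounts to showing $\hom_\cC(V,A)$ is one-dimensional, which follows when $V$ is simple and $A$ has a one-dimensional conformal-weight-zero space.

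For the final assertion — that $\repzA$ is isomorphic as a category to the category of $A$-modules in $\cC$ — the plan is again to construct the dictionary in both directions. Given an $A$-module $(M,Y_M)$ in $\cC$, specializing $Y_M$ at $x=1$ produces an action morphism $\mu_M\colon A\fus{P(1)} M\to M$; the associativity of $Y_M$ (as a module action) gives the module-associativity axiom, and one checks that the locality (``dyslexia'') condition defining $\repzA$ — namely that $\mu_M$ is unchanged under the double braiding $A\fus{P(1)} M\to M\fus{P(1)} A\to A\fus{P(1)} M$ — corresponds precisely to the requirement that $Y_M(a,x)m$ involves only integer powers of $x$, which in turn is forced by $A$ and $M$ both having conformal weights in a single coset of $\ZZ$ and by the commutativity relation \eqref{eqn:VOAv2v1} applied to $Y_A$ and $Y_M$. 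Conversely a local $A$-module in $\cC$ produces an intertwining operator $Y_M$ of type $\binom{M}{A\,M}$ with integer powers, and one verifies it satisfies the module Jacobi identity. Functoriality in both directions is routine since morphisms in both categories are the same maps — $V$-module maps that intertwine the $A$-action — and one checks the two constructions are mutually inverse.

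The main obstacle is the careful handling of the convergence and analytic continuation needed to pass between the formal-variable vertex operators $Y_A$, $Y_M$ and the $P(1)$-intertwining maps and their associated tensor-category morphisms: one must verify that specialization at $x=1$ is well-defined, that the operator product expansion holds with the correct associativity constants, and — the subtlest point — that the braiding isomorphisms of $\cC$, which are defined by analytic continuation of intertwining operators along a specific path, interact correctly with skew-symmetry so that the categorical commutativity constraint matches the vertex-algebraic one. In the \VOSA{} case there is the additional bookkeeping of Koszul signs from the $\ZZ/2\ZZ$-grading, so that ``commutative algebra'' must be interpreted in the braided sense appropriate to the super setting and the locality condition defining $\repzA$ must be the super-dyslexia condition; the sign conventions must be chosen consistently throughout for the two constructions to be genuinely inverse.
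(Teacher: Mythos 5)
Your proposal is correct and reproduces essentially the same dictionary used in the cited works. Note that the paper itself does not prove this theorem but states it as a recollection of Theorems 3.2 and 3.4 in \cite{HKL} and Theorems 3.13 and 3.14 in \cite{CKL}; the paper's own contribution (Theorem \ref{thm:repzABTCvrtxSalg}) is the much harder strengthening that this isomorphism of categories is an isomorphism of braided monoidal (super)categories, which requires matching the tensor product, unit, associativity, and braiding structures and is where the real work of Section \ref{sec:vtc} lives. One small imprecision in your sketch: locality of an $A$-module is not best characterized as being ``forced by conformal weights lying in a single coset of $\ZZ$''; the cleaner statement, which is the one used in \cite{HKL} and recalled around equation \eqref{vrtxmonodromy}, is that $\mu_M$ is invariant under the monodromy $\cM_{A,M}$ if and only if $Y_M(a,e^{2\pi i}x)m=Y_M(a,x)m$, i.e.\ if and only if $Y_M$ has only integral powers of $x$, which is precisely what distinguishes untwisted $A$-modules from twisted ones inside $\repA$.
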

This powerful result allows one to study \VOA{} extensions
using abstract tensor category theory. For example, Lin has used it to
solve the mirror extension problem in coset \VOA s \cite{Lin} (see also \cite{CKM}). 
In order to study representations of the extended \VOA{}, one needs the monoidal structure on the category of local $\cC$-algebra
modules $\repzA$. Kirillov and Ostrik have given $\repzA$ a braided
tensor category structure \cite{KO}, but several problems need
resolution before this can be applied in the vertex operator
algebra context. First, we need to work in a more general setting than
\cite{KO}: we do not assume $\cC$ is rigid, semisimple, or strict (in
fact, vertex tensor categories are highly non-strict due to the
subtlety of the associativity isomorphisms). Secondly, we would
like to consider superalgebra extensions of vertex
operator algebras, and therefore we need superalgebra analogues of all
relevant constructions and results in \cite{KO}. 

To handle superalgebra extensions of $V$, we first introduce an auxiliary supercategory $\sC$ which amounts to the category $\cC$ of $V$-modules but with $V$ viewed as a vertex operator superalgebra having zero odd component. In other words, $V$-modules in $\sC$ have parity decompositions which may or may not be trivial. As categories, $\cC$ and $\sC$ are equivalent, but $\sC$ has additional supercategory structure, that is, morphisms can be even or odd according as they preserve or reverse parity decompositions. We define a superalgebra $A$ in $\cC$ to be a commutative associative algebra with even structure morphisms in the braided tensor supercategory $\sC$, and we define the categories $\repA$ and $\repzA$ of $A$-modules and local $A$-modules in $\cC$ to be those $A$-modules in $\sC$ having even structure morphisms. Assuming only that
$\cC$ is abelian and that the tensor product on $\cC$ is right exact,
we then fix a superalgebra $A$ in $\cC$ 
and prove:
\begin{itemize}
 \item $\repzA$ is an additive braided monoidal supercategory (Theorem
  \ref{thm:rep0}, corresponding to \cite[Theorem 1.10]{KO}).
 
 \item The induction functor from $\cC$ (or $\sC$) to the category $\repA$ of not
   necessarily local $A$-modules is a tensor functor (Theorem
   \ref{thm:inductionfucntor}, corresponding to \cite[Theorem
   1.6]{KO}).
\end{itemize}

More importantly, before applying the constructions and results of
\cite{KO} to vertex operator (super)algebras, we need to make sure
that the braided monoidal (super)category structure on $\repzA$ is the
correct structure in the vertex-algebraic sense, that is, we need to make sure
that it agrees with the vertex tensor category constructions of
\cite{HLZ1}-\cite{HLZ8}. We accomplish this in Section \ref{sec:vtc},
culminating in our main result, Theorem \ref{thm:repzABTCvrtxSalg}:
\begin{theo} 
  Suppose $A$ is a (super)algebra extension of $V$ in $\cC$ such that
  $V$ is contained in the even part of $A$. Then the isomorphism of
  categories given in \cite[Theorem 3.4]{HKL} between $\repzA$ and the category of vertex-algebraic
  $A$-modules in $\cC$ is an isomorphism of braided
  monoidal (super)categories.
\end{theo}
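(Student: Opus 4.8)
**The plan is to compare two braided monoidal structures on the same underlying category of $A$-modules by tracing both of them back to intertwining operators.** On the category-theoretic side, $\rep^0 A$ carries the Kirillov--Ostrik tensor product: the tensor product $M_1 \boxtimes_A M_2$ is a certain coequalizer (the relative tensor product over $A$) of $M_1 \tensv M_2$ in $\cC$, with associativity inherited from $\cC$ and braiding built from the braiding of $\cC$ together with the local (dyslectic) condition on $M_1$. On the vertex-algebraic side, the category of $A$-modules in $\cC$ has the Huang--Lepowsky--Zhang tensor product $M_1 \boxtimes_A M_2$, defined by a universal property with respect to $P(z)$-intertwining maps \emph{of $A$-modules}; its associativity comes from the operator product expansion for $A$-module intertwining operators and its braiding from skew-symmetry. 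The key observation is that an $A$-module $P(z)$-intertwining map $M_1 \otimes M_2 \to \overline{M_3}$ is the same thing as a $V$-module $P(z)$-intertwining map that additionally intertwines the $A$-action --- equivalently, a $V$-module intertwining map that descends through the coequalizer defining the relative tensor product over $A$. So I would first establish, at the level of the two tensor \emph{bifunctors}, a natural isomorphism $M_1 \boxtimes_A^{HLZ} M_2 \cong M_1 \boxtimes_A^{KO} M_2$ by showing both objects represent the same functor on $\cC$ (namely, $W \mapsto \{A\text{-module }P(z)\text{-intertwining maps } M_1\otimes M_2 \to \overline{W}\}$). The universal property of the HLZ tensor product of $V$-modules, already available by assumption, plus the explicit coequalizer description of induction and of $\boxtimes_A$ from \cite{KO} (our Theorem~\ref{thm:inductionfucntor} and Theorem~\ref{thm:rep0}), should make this identification forced rather than requiring new analysis.

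**The second step is to verify that this natural isomorphism of bifunctors intertwines the unit, associativity, and braiding isomorphisms of the two structures.** For the unit, both structures have $A$ itself as unit object, and the left/right unit isomorphisms on both sides are induced from the $A$-module structure maps, so this should be immediate once the tensor products are identified. For associativity, on the KO side the associator on $\rep^0 A$ is by construction the one descended from $\cC$; on the HLZ side it is characterized by how it acts on compositions of $A$-module intertwining maps, which in turn are compositions of $V$-module intertwining maps. The point is that the HLZ associativity isomorphism for $V$-modules restricts to the one for $A$-modules (since $A$-module intertwining operators are special $V$-module intertwining operators), and the KO associator is literally the restriction of the $\cC$-associator; so both are governed by the same underlying associativity of iterated $V$-module intertwining maps. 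Similarly, the HLZ braiding on $A$-modules comes from skew-symmetry of $A$-module intertwining operators, which is skew-symmetry of $V$-module intertwining operators, and this is precisely what the KO braiding on $\rep^0 A$ encodes via the braiding on $\cC$ and the dyslexia condition --- the dyslexia condition being exactly the statement that $A$-module intertwining operators (rather than merely $V$-module ones) are well defined on the relative tensor product. Each of these is a diagram chase, reduced to the corresponding already-known coherence statement for $\cC$ from \cite{HLZ1}-\cite{HLZ8}.

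**The main obstacle I anticipate is the careful bookkeeping of $P(z)$-dependence and of the superalgebra sign conventions, not any deep new input.** The HLZ framework produces not a single tensor product but a family $\boxtimes_{P(z)}$ indexed by the moduli space, with parallel transport isomorphisms between them; to get a braided monoidal (as opposed to merely vertex tensor) statement one specializes to $z=1$, and one must check that the specialization is compatible on both sides --- i.e.\ that the KO braiding matches the HLZ braiding coming from the specific path used to define the braiding isomorphism $M_1 \boxtimes M_2 \to M_2 \boxtimes M_1$ (going through the upper or lower half-plane), rather than its inverse. In the super case one must additionally check that all structure morphisms are even and that the Koszul signs introduced in passing to $\sC$ match the signs built into the KO-type constructions of Theorem~\ref{thm:rep0}; this is where \cite{CKL} and the definitions of $\sC$, $\repA$, $\repzA$ recalled above do the work, and one should confirm that the sign of the braiding on $\rep^0 A$ agrees with the sign appearing in the vertex-algebraic skew-symmetry for intertwining operators between super-modules. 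Concretely, I would (i) fix $z=1$ and a branch of logarithm throughout; (ii) prove the bifunctor isomorphism via the shared universal property; (iii) check unit compatibility; (iv) check associativity by reducing to the $\cC$-associator on iterated intertwining maps; (v) check braiding by reducing to skew-symmetry of $V$-module intertwining operators and comparing the half-plane conventions and, in the super case, the signs. Steps (iv) and (v) are the substantive ones, but each ultimately invokes a coherence theorem already proved for $\cC$ in \cite{HLZ1}-\cite{HLZ8} and the explicit descriptions in our Theorems~\ref{thm:rep0} and~\ref{thm:inductionfucntor}.
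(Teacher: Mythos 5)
Your high-level architecture matches the paper's: compare the two tensor products by characterizing both via a common universal property with respect to an appropriate class of intertwining maps, then check unit, associativity, and braiding coherence, handling $z$-dependence through parallel transport and Koszul signs through the supercategory $\sC$. That is exactly what Sections~\ref{subsec:complexIntw} and \ref{subsec:VTConRepARep0A} of the paper do.

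However, you significantly understate where the content of the proof lies, in a way that leaves a real gap in the argument as written. You treat the identification of ``$V$-module $P(z)$-intertwining maps that descend through the coequalizer'' with ``$A$-module $P(z)$-intertwining maps satisfying the HLZ Jacobi identity'' as something that ``should make this identification forced rather than requiring new analysis.'' That identification is in fact the heart of the proof and is not at all automatic. The Kirillov--Ostrik coequalizer is defined by the condition $\eta\circ\mu^{(1)}=\eta\circ\mu^{(2)}$ where $\mu^{(1)},\mu^{(2)}$ are built from associators and a braiding in $\cC$; unwinding this into a statement about the vertex-algebraic intertwining operator $\cY_\eta$ requires translating abstract tensor-categorical diagrams into equalities of multivalued analytic functions (Theorem~\ref{thm:repAtocomplex}), and this translation depends on a delicate lemma (Proposition~\ref{braidinganalogue}) about $e^{\varepsilon L(-1)}$-conjugation of products of intertwining operators where one must carefully control absolute convergence and branches of logarithm. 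Moreover, even once one has the complex-analytic associativity and skew-associativity of $\cY_\eta$, these do not by themselves give the formal Jacobi identity for $A$-module intertwining operators; one needs a rationality/single-valuedness argument (Theorem~\ref{thm:complextoformal}) to pass from the existence of a common multivalued analytic extension on $\{z_1,z_2,z_1-z_2\neq 0\}$ to the formal-variable Jacobi identity, and it is exactly for objects of $\repzA$ (not all of $\repA$) that the needed single-valuedness holds. So the ``dyslexia condition'' is not, as you put it, ``exactly the statement that $A$-module intertwining operators are well defined on the relative tensor product''; it is the integrality condition $Y_W(a,e^{2\pi i}x)w=Y_W(a,x)w$ that makes the relevant analytic extension single-valued around $z_2=0$, and without it the argument that produces the Jacobi identity breaks down. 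Your steps (iii)--(v) are fine and essentially the paper's, but step (ii) as you describe it is not a formality: it is where the bulk of the analytic work of Section~\ref{sec:vtc} is concentrated, and a proof that omits it is incomplete.
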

This theorem is a considerable strengthening of \cite[Theorem
3.4]{HKL} and has numerous useful consequences. Perhaps the first is
that if $A$ is a vertex operator (super)algebra extension of $\cC$,
then the category of $A$-modules in $\cC$ actually has vertex tensor
category structure given by the constructions of
\cite{HLZ1}-\cite{HLZ8} (with the corresponding braided monoidal
(super)category structure as in \cite{KO}). Secondly, combining this
theorem with Theorems \ref{thm:inductionfucntor} and
\ref{thm:Fisbraidedtensor}, we prove in \cref{thm:Fisvertextensor}:
\begin{theo}
Induction is a vertex tensor functor on the (not necessarily abelian)
full subcategory of $V$-modules inducing to $A$-modules, 
with respect to the vertex tensor category structures on these categories constructed in \cite{HLZ8}.
\end{theo}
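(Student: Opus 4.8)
The plan is to bootstrap the ``vertex tensor functor'' statement from the already-established facts that induction is a tensor functor in the braided-monoidal sense (\cref{thm:inductionfucntor}, $=$ \cite[Theorem 1.6]{KO}) and that, via our main theorem \ref{thm:repzABTCvrtxSalg}, the category of local $A$-modules with its Kirillov--Ostrik braided monoidal structure coincides with the category of vertex-algebraic $A$-modules in $\cC$ with the braided monoidal structure coming from \cite{HLZ8}. The key extra point is that ``vertex tensor functor'' means more than ``braided tensor functor'': one must exhibit, for each point $P(z)$ of the moduli space of spheres with three punctures, a natural family of isomorphisms $F(W_1\fus{P(z)}W_2)\cong F(W_1)\fus{P(z)}F(W_2)$ in $\repzA$ (where $F$ denotes induction), compatible with the operadic composition, associativity, and the $z$-dependence encoded by parallel transport and the action of $e^{2\pi i L(0)}$.

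First I would recall from \cite{HLZ8} that for a braided vertex tensor category, the $P(z)$-tensor products for different $z$ are all canonically related: fixing $z=1$ recovers the braided monoidal category, and the general $P(z)$-tensor product is obtained from $\fus{}=\fus{P(1)}$ by applying the module map $e^{(\ln z)L(-1)}$ (parallel transport along a path) together with, if the path winds, the monodromy $e^{2\pi i L(0)}$; this is exactly the content of the ``naturality in $z$'' that upgrades a braided monoidal functor to a vertex tensor functor. So the strategy is: (1) construct, for each $P(z)$, the natural isomorphism $F(W_1)\fus{P(z)}F(W_2)\xrightarrow{\sim}F(W_1\fus{P(z)}W_2)$ by using the universal property of $P(z)$-tensor products applied to the $P(z)$-intertwining map obtained by composing $F$ of the universal $V$-module intertwining map with the $A$-action; (2) check that under the specialization $z=1$ this recovers the tensor-structure isomorphism of \cref{thm:inductionfucntor}; and (3) verify that the family is compatible with parallel transport isomorphisms and with $e^{2\pi i L(0)}$, which reduces — because induction is defined by $F(W)=A\fus{} W$ with the obvious $L(-1)$ and $L(0)$ actions, and these commute appropriately with the structure morphisms — to naturality statements that are already packaged in \cref{thm:inductionfucntor} together with \cref{thm:Fisbraidedtensor}/\ref{thm:repzABTCvrtxSalg}. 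The point is that the vertex tensor functor axioms of \cite{HLZ8} are, modulo the already-verified braided monoidal ones, precisely the assertions that $F$ intertwines the module maps $L(-1)$ and $L(0)$ with themselves and is natural in the geometric data $P(z)$ — and both hold essentially by construction of the induction functor.

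The main obstacle I expect is \emph{not} any single hard computation but rather the bookkeeping: one must set up the definition of a (braided) vertex tensor functor between vertex tensor categories carefully — this is somewhat folklore, as \cite{HLZ8} axiomatizes vertex tensor categories but does not dwell on functors between them — and then check all the operadic coherence diagrams (the analogues of the hexagon and pentagon, now at the level of the sphere operad, plus the compatibility with the genus-zero sewing that governs associativity of $P(z_1)$- and $P(z_2)$-tensor products). The subtlety is the one already flagged in the introduction: the multivalued functions from compositions of $P(z)$-tensor-product intertwining maps have singularities along $z_1=z_2$, so one cannot simply ``set $z=1$'' globally; one has to argue that the isomorphisms constructed pointwise in $z$ glue to an isomorphism of the relevant sheaves/bundles of intertwining maps over the configuration space, and that this gluing is compatible with the one on the $V$-side. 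However, since $F=A\fus{}(-)$ and the $A$-action is a fixed morphism in $\cC$ independent of the geometric parameters, this compatibility is inherited directly from the corresponding statements in $\cC$, which are part of the hypothesis that $\cC$ is a vertex tensor category. Thus the proof is ultimately a reduction: \textbf{vertex tensor functor} $=$ \textbf{braided tensor functor (done)} $+$ \textbf{naturality in the sphere data (inherited from $\cC$, since $F$ is induction)}, and writing it out amounts to carefully matching the axioms of \cite{HLZ8} against \cref{thm:inductionfucntor}, \cref{thm:Fisbraidedtensor}, and \cref{thm:repzABTCvrtxSalg}.
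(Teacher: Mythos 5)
Your high-level strategy matches the paper's: reduce the vertex-tensor-functor claim to the already-established braided-monoidal facts (Theorems \ref{thm:inductionfucntor} and \ref{thm:Fisbraidedtensor} together with Theorem \ref{thm:repzABTCvrtxSalg}) plus a naturality-in-$z$ argument via parallel transport. The paper's proof does exactly this: it lists six axioms a vertex tensor functor must satisfy (compatibility with $P(z)$-tensor products, parallel transports, unit isomorphisms, $\cA_{z_1,z_2}$, and $\cR^{\pm}_{P(\pm z)}$) and then verifies each by conjugating the $z=1$ structure by parallel transports and invoking Corollary \ref{Fmonodromy}.

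That said, the proposal leaves the single most load-bearing step unspecified: the explicit definition of the structure isomorphism $f_{P(z)}$. You propose to ``use the universal property of $P(z)$-tensor products applied to [some $P(z)$-intertwining map],'' but you never say which intertwining map, and the verification of the six coherence axioms depends crucially on the definition having a specific form. The paper takes $f_{P(z); W_1, W_2} = T^A_{1\to z}\circ f_{W_1,W_2}\circ\cF(T_{z\to 1})$, i.e., conjugation of the $z=1$ monoidal-structure isomorphism by parallel transports along a fixed branch cut. With this definition, the parallel-transport axiom reduces (via Proposition \ref{prop:monoandtrans} and Remark \ref{vrtxAmonodromy}) to the monodromy compatibility of Corollary \ref{Fmonodromy}; the unit and braiding axioms follow from combining the $z=1$ facts with the relations $l_W = l_{P(z)}\circ T_{1\to z}$, $\cR_{W_1,W_2} = T_{-z\to 1}\circ\cR^+_{P(z)}\circ T_{1\to z}$, etc.; and the associativity axiom uses the relation \eqref{Az_1z_2general}/\eqref{repzaAz_1z_2} relating $\cA_{z_1,z_2}$ to $\cA_{W_1,W_2,W_3}$ by parallel transports $T_\gamma$, $T_{\widetilde\gamma}$ whose paths depend only on $(z_1,z_2)$ and not on the \VOA. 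Your sketch names none of these, and phrases the $z$-dependence in terms of ``$F$ intertwining $L(-1)$ and $L(0)$ with themselves,'' which is not how \cite{HLZ8} axiomatizes the structure; the actual content is that $\cF$ commutes with the parallel transports $T_\gamma$, and for that you need to use that $\cF$ is braided (so that monodromies are preserved), not just that $\cF$ commutes with Virasoro modes. So: right idea, right reduction, but the proof as written would not compile — you must pin down $f_{P(z)}$ and track the parallel-transport conjugations through each of the six coherence diagrams.
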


This theorem means that induction respects all vertex
tensor category structure, in particular $P(z)$-tensor products for
each $z\in\CC^\times$, unit, associativity, and braiding
isomorphisms. Thus induction can be applied in a vertex-algebraic
setting: some applications are given in Section
\ref{sec:applications}. In particular, we find Verlinde formulae for
\VOSAs{} and $\frac{1}{2}\ZZ$-graded \VOAs{}, and we study cosets of
rational vertex operator algebras by lattice \VOAs{} in
generality. The proof of \cref{thm:repzABTCvrtxSalg} occupies much of this paper, and we now briefly sketch the main steps:
\begin{enumerate}
\item First, after discussing the notion of a commutative associative
  algebra $A$ in $\cC$ and its representation category $\repA$, as
  defined in \cite{KO, Pa}, we formulate superalgebras in $\cC$ as algebras
  (with even structure morphisms) in the auxiliary supercategory $\sC$
  (see for example \cite{BE} for the formulation of supercategory and
  tensor supercategory that we use). The objects of $\sC$ are
  essentially direct sums of two objects (even and odd) in $\cC$, and
  the morphisms of $\sC$ correspondingly have superspace
  decompositions into parity-preserving even morphisms and
  parity-reversing odd morphisms. Sign factors necessary for the
  supercommutativity of $A$ are built into the braiding in $\sC$.

\item Next, we prove, under minimal assumptions on $\cC$ (namely,
  $\cC$ is abelian with right exact tensor functor) that $\repA$ with
  its tensor product $\boxtimes_A$ is an additive monoidal
  (super)category; in the superalgebra setting, $\repA$ might not be
  abelian since non-homogeneous morphisms might not have kernels and
  cokernels. We provide details of this proof in Section
  \ref{sec:TensCats}.  Actually, the assumption that $\cC$ is abelian is not strictly necessary: we only need certain morphisms to
  have cokernels.

\item In proving the associativity of $\boxtimes_A$, we
  introduce \emph{categorical $\repA$-intertwining operators} among a
  triple of objects in $\repA$ that intertwine the action of $A$ on
  each of these three objects.  We then show that $\boxtimes_A$
  satisfies a universal property in terms of categorical
  intertwining operators in Section \ref{subsec:catintwop}, in analogy
  with $P(z)$-tensor products in vertex tensor categories. We further
  prove an associativity result for categorical intertwining
  operators in Section \ref{subsec:associnrepA}, which easily leads to
  the associativity of $\boxtimes_A$.
  
\item Once the proof that $\repA$ is a monoidal (super)category is
  complete, we prove that the subcategory $\repzA$ of local
  $A$-modules is braided.

\item In Section \ref{sec:vtc}, we begin to ``complexify'' these
  results when $\cC$ is a module category for a vertex operator algebra
  $V$ with vertex tensor category structure. From the definition of $P(1)$-tensor products, every categorical $\repA$-intertwining operator
  $\eta:W_1\boxtimes W_2\rightarrow W_3$, where the $W_i$ are objects of
  $\repA$, corresponds to an
  intertwining operator of $V$-modules, say $\cY_{\eta}$, of type
  $\binom{W_3}{W_1,\, W_2}$ such that
  $\overline{\eta}(w_1\fus{P(1)} w_2) = \cY_{\eta}(w_1,e^{\log
    1})w_2$.\footnote{The perhaps strange notation
    $x\mapsto e^{l_p(z)}$ indicates the substitutions
    $x^h \mapsto e^{h\, l_p(z)}$ and $(\log x)^n \mapsto (l_p(z))^n$
    for $h\in\CC$ and $n\in\NN$, where $l_p$ for $p\in\ZZ$ denotes the
    $p$th branch of logarithm, that is,
    $2\pi i p \leq \im\,l_p(z)<2\pi i (p+1)$. The principal branch
    corresponding to $p=0$ we simply denote by $\log z$.} In particular, the multiplication maps
  $\mu_W:A\boxtimes W\rightarrow W$ for $W$ in $\repA$ correspond to intertwining operators denoted $Y_W$.
%
%

\item In Section \ref{subsec:complexIntw} we show that $\eta$ is a
  categorical $\repA$-intertwining operator if and only if $\cY_{\eta}$ satisfies
  a certain complex analytic associativity condition (analogue of the equality of
  \eqref{eqn:VOAv1v2} and \eqref{eqn:VOAv1v2assoc}) and a
  skew-associativity condition (analogue of the equality of \eqref{eqn:VOAv1v2assoc}
  and \eqref{eqn:VOAv2v1}) with the module maps $Y_{W_i}$. This is one
  of the technical sections of the paper, where some careful use of complex analysis is required (some results we collect in Section
  \ref{subsec:technical}), similar to \cite{HKL} and
  \cite{HLZ8}.

\item In Theorems \ref{thm:complextoformal} and
  \ref{repzAintwopcorrect} we show that for modules in $\repzA$, the
  associativity and skew-associativity of a $\repA$-intertwining
  operator is equivalent to the Jacobi identity for vertex-algebraic
  intertwining operators among $A$-modules.  Then the universal
  properties of $\boxtimes_A$ and the vertex-algebraic tensor product
  $\boxtimes_{P(1)}$ for $A$-modules quickly imply that these two
  tensor products are equivalent.

\item Finally, in Section \ref{subsec:VTConRepARep0A} we show that,
  with the identification of $\boxtimes_A$ with $\fus{P(1)}$, the
  braided monoidal (super)category structure on $\repzA$
  (in particular, unit, associativity, and braiding isomorphisms)
  agrees with the (vertex-algebraic) braided tensor category constructions for
  $A$-modules in \cite{HLZ8}. This completes the proof of Theorem
  \ref{thm:repzABTCvrtxSalg}. We in fact show that $\repzA$ has a vertex tensor category structure. 
  The category $\repA$ has some (not all!) features of a vertex tensor category.
\end{enumerate}	

In Section \ref{sec:TensCats} we also prove properties
of the induction functor $\cF: \sC\rightarrow\repA$ that will be
needed for applications. In Section \ref{subsec:inductionfunctor}, we
prove that $\cF$ is a tensor functor and left adjoint to the restriction functor $\cG: \repA\rightarrow \sC$. Then in
Section \ref{subsec:miscresults}, we show that induction respects
additional structures on $\sC$, $\repA$, and $\repzA$, including duals
and, under suitable conditions, twists. We also give a necessary and sufficient condition for an
object of $\sC$ to induce to $\repzA$, and show that on the full
subcategory $\sC^0$ of objects that induce to $\repzA$, $\cF$ is a braided
tensor functor. Section \ref{subsec:miscresults} culminates in the
proof that, if $\cC$ (or $\sC$) is a ribbon category and $A$ has
trivial twist, then $\cF$ respects categorical $S$-matrices (traces of
monodromy isomorphisms) of objects that induce to $\repzA$.  We use
this result to deduce $S$-matrices for parafermions in Section
\ref{sec:applications}. We show that induction respects all vertex tensor category structure on $\sC^0$ and $\repzA$, that is, induction is a vertex tensor functor, in Section \ref{subsec:vrtxtensfunctor}.

\subsection{Tensor categories of regular \VOSAs}

It was shown in \cite{H-rigidity} that if $V$ is a ($\ZZ$-graded)
simple, CFT type, self-contragredient, rational, $C_2$-cofinite vertex
operator algebra, then the vertex tensor category of $V$-modules is a
modular tensor category. CFT type means that $V$ has no negative
conformal weight spaces and $V_0=\CC\unit$, and rational means that
$\NN$-gradable weak $V$-modules (also called admissible $V$-modules)
are completely reducible. By \cite{ABD}, in the presence of the other
assumptions, rationality and $C_2$-cofiniteness can be replaced by the
assumption that $V$ is regular in the sense of
\cite{DLM-regular}. Thus for simplicity we shall use the term regular
to refer to any vertex operator algebra satisfying the assumptions of
\cite{H-rigidity}.

There are three natural ways to generalize the notion of \VOA{}, and hence also of regular \VOA:
\begin{enumerate}
\item $V=\bigoplus\limits_{n\in \ZZ} V_n$ is a $\ZZ$-graded \VOSA,
\item $V=\bigoplus\limits_{n\in \frac{1}{2}\ZZ} V_n$ is a $\frac{1}{2}\ZZ$-graded \VOA, and
\item $V=\bigoplus\limits_{n\in \frac{1}{2}\ZZ} V_n$ is a $\frac{1}{2}\ZZ$-graded \VOSA{}.
\end{enumerate}
The axioms defining $\frac{1}{2}\ZZ$-graded vertex operator algebras are the same as those briefly discussed in Section \ref{sec:intro-vtc}, except that the eigenvalues of the semisimple action of $L(0)$ are allowed to be half integers. The notions of $\ZZ$- and $\frac{1}{2}\ZZ$-graded vertex operator superalgebras are obtained by introducing a parity decomposition and an appropriate sign factor in \eqref{eqn:VOAv2v1} (and also \eqref{eqn:voa-skewsym}). Concrete examples of all three types of (super)algebra are discussed in Section \ref{sec:applications}: $\ZZ$-graded vertex operator superalgebras include affine vertex superalgebras, while superconformal algebras are $\frac{1}{2}\ZZ$-graded superalgebras. The $\beta\gamma$-vertex algebra and the Bershadsky-Polyakov algebra \cite{Ber, Pol} are perhaps the best-known $\frac{1}{2}\ZZ$-graded vertex operator algebras. All three generalizations of \VOA{}  arise as $\cW$-superalgebras, that is, quantum Hamiltonian reductions of affine vertex superalgebras, which are important in connections to physics and geometry (see for example \cite{GR, CL3}).

The boson-fermion statistics of particle physics says that a boson has
integral spin and a fermion has half-integral spin. We thus call the
first two generalizations above a \VOA{}, respectively a \VOSA{}, of incorrect or
wrong statistics, while the third we call a \VOSA{} of correct
statistics if its even part is $V^\even=\bigoplus_{n\in\ZZ} V_n$ and
its odd part is $V^\odd=\bigoplus_{n\in\frac{1}{2}\ZZ} V_n$. If
$V$ falls into one of the three cases above, then $\mathrm{Aut}(V)$
contains a copy of $\ZZ/2\ZZ$, generated by the parity involution
$P_V=1_{V^\even}\oplus(-1_{V^\odd})$ in the superalgebra cases and
generated by the twist $\theta_V=e^{2\pi i L(0)}$ in the
$\frac{1}{2}\ZZ$-graded cases (these two automorphisms coincide for a
vertex operator superalgebra of correct statistics). Either way, we
have a decomposition into $\pm 1$-eigenspaces for the action of
$\ZZ/2\ZZ$ on $V$:
\[
V=V_+\oplus V_-.
\]
If the orbifold ($\ZZ$-graded) vertex operator subalgebra $V_+$ is
regular, then a small generalization of \cite[Theorem 4.2]{CarM} (see
\cite[Appendix A]{CKLR}) shows that $V_-$ is a self-dual simple
current for the orbifold \VOA{} $V_+$. In particular, the fusion
product of $V_-$ with itself is
\[
V_- \boxtimes_{V_+} V_-\cong V_+.
\]
The statistics of $V$ is reflected in the categorical dimension of
$V_-$. One gets a vertex operator (super)algebra of correct
boson-fermion statistics if $\qdim(V_-)=1$ and of incorrect type if
$\qdim(V_-)=-1$.

In each of the three cases the representation category of the vertex
operator (super)algebra $V$ is the category of local modules of a
corresponding (super)algebra object in the category $\cC$ of
$V_+$-modules \cite{CKL}. This has a few nice direct
consequences. First, it tells us something about the monoidal category
of local $V$-modules: since the twist in a vertex tensor category is
$e^{2\pi i L(0)}$ with $L(0)$ the Virasoro zero-mode giving the
conformal weight grading, the twist is not a $V$-module homomorphism
when $V$ is a $\frac{1}{2}\ZZ$-graded vertex operator
(super)algebra. On the other hand, $e^{2\pi i L(0)}$ still defines a
twist on the category of local $V$-modules in case 1. In any case,
braiding induces nicely and so local modules in case 2 form a braided
tensor category, while for cases 1 and 3 one needs a superbraiding, a
braiding with parity. This superbraiding is defined using the braiding
on objects in $\cC$, and it satisfies the hexagon axioms and an
appropriate ``supernaturality'' so that it defines the structure of a
braided monoidal supercategory on local $V$-modules. See Section
\ref{subsec:superalgebra} for more details on braided monoidal
supercategories.

In superconformal field theory both Neveu-Schwarz and Ramond modules
are of interest. The first correspond to local objects in the category
of superalgebra modules and the second to certain non-local modules
which we will call twisted (in fact, they are twisted modules for the
parity automorphism of $V$). We will see in the discussion of
Verlinde's formula that it is indeed necessary to also consider twisted
superalgebra modules.

As mentioned above, we will
give a few examples in Section \ref{sec:applications}, in particular a
fairly detailed discussion of rational $N=2$ superconformal algebras in
Section \ref{sec:N=2} and then short discussions of a
Bershadsky-Polyakov algebra in Section \ref{sec:BP} and a
rational $W$-superalgebra associated to
$\widehat{\mathfrak{sl}}(3|1)$ in Section \ref{sec:wrongsuper}.
For the detailed example of regular affine \VOSAs{} associated to
$\widehat{\mathfrak{osp}}(1|2)$, see \cite{CFK}.

\subsection{Verlinde formulae for regular \texorpdfstring{$\frac{1}{2}\ZZ$}{(1/2)Z}-graded \VOSA s}\label{sec:ver}

One nice application of our general results is a Verlinde formula for
the three cases of (super)algebra extensions discussed in the previous
subsection. The Verlinde formula, conjectured in \cite{V} and
proven in the regular \VOA{} setting by Huang \cite{H-verlinde}, is a true highlight of vertex operator algebra theory:
it says that fusion rules are given by a concrete expression in terms
of the entries of the modular $S$-matrix of characters, thus, for
example, providing an efficient way of computing fusion rules.  Since
the representation category for each case of regular vertex operator
(super)algebra discussed above is the category of local modules for a
corresponding (super)algebra object in the modular tensor cateogry of
$V_+$, we can use induction to deduce interesting consequences for
$V$-modules (as for example Verlinde's formula) from corresponding
properties for $V_+$-modules. We do so in Section \ref{sec:Verlinde},
and since this is quite technical, we here redescribe the results
purely in terms of modular and asymptotic properties of characters and
supercharacters of $V$.

\subsubsection{$\ZZ$-graded \VOSA s}
\allowdisplaybreaks

Suppose $V$ is a regular $\ZZ$-graded \VOSA. Then we view $V$ as an
order-$2$ simple current extension of its even vertex operator
subalgebra, which means the representation category of $V$ is the
representation category of the superalgebra extension
$V^\even\subseteq V$. Every simple $V^\even$-module induces to either
a local or twisted module in $\rep V$. By \cite[Theorem 3.4]{HKL} and
our Theorem \ref{thm:repzABTCvrtxSalg}, $\rep^0 V$ is the braided
monoidal supercategory of untwisted $V$-modules.  The subcategories of local
and twisted modules we denote by $\reploc$ and $\reptw$,
respectively. We call an object in $\rep V$ homogeneous if it is
either local or twisted. There are only finitely many inequivalent
simple objects in $\rep V$, and we denote by $[\rep V]$ the set of
equivalence classes for the relation
\[
  X \sim Y \qquad \text{if and only if} \ X \cong Y \ \text{as modules
    for} \ V^\even.
\]
That is, we consider objects that only differ by parity to be
equivalent.  Let $\CC[\rep V]$ be the complex vector space with basis
labeled by the elements of $[\rep V]$ .  Let $L(0)$ be the Virasoro
zero-mode of $V$ and $c$ the central charge; then the character and
supercharacter of objects are defined respectively as the graded trace
and supertrace:
\[
  \ch^+[X](\tau, v) := \text{tr}_X\left(q^{L(0)-\frac{c}{24}} o(v)\right) \qquad
  \text{and}\qquad \ch^-[X](\tau,v) := \text{str}_X\left(q^{L(0)-\frac{c}{24}}
    o(v)\right), \qquad q=e^{2\pi i\tau}.
\]
Here $v\in V^\even$ is even and $o(v)$ the zero-mode of the
corresponding vertex operator. This is needed to ensure linear
independence of characters of inequivalent modules. Characters of
equivalent modules coincide and supercharacters coincide up to a
possible sign.  The $\CC$-linear span of characters as well as
supercharacters is thus isomorphic to the vector space $\CC[\rep V]$
whose basis elements are labeled by the elements of $[\rep V]$.  We
fix one representative of each equivalence class and thus fix a choice
of parity for each class.  By abuse of notation, we denote this set of
representatives by $[\rep V]$ as well and its untwisted and twisted
subsets by $\cSloc$ and $\cStw$, respectively.  Then M\"{o}bius
transformations on the linear span of these characters and
supercharacters define an action of SL$(2,\ZZ)$. If $v\in V^\even$ has
conformal weight $\mathrm{wt}[v]$ with respect to Zhu's modified
vertex operator algebra structure on $V^\even$ \cite{Zhu}, the modular
$S$-transformation has the form
\begin{equation}
\begin{split}
  \ch^+[X]\left(-\frac{1}{\tau}, v\right)
  &= \tau^{\text{wt}[v]} \sum_{Y\in \cStw} S_{X, Y}  \ch^-[Y](\tau, v) \\
  \ch^-[X]\left(-\frac{1}{\tau}, v\right) &= \tau^{\text{wt}[v]}
  \sum_{Y\in \cSloc} S_{X, Y} \ch^-[Y](\tau, v)
\end{split}
\end{equation}
if $X$ is local and 
\begin{equation}
\begin{split}
  \ch^+[X]\left(-\frac{1}{\tau}, v\right)
  &= \tau^{\text{wt}[v]} \sum_{Y\in \cStw} S_{X, Y}  \ch^+[Y](\tau, v) \\
  \ch^-[X]\left(-\frac{1}{\tau}, v\right) &= \tau^{\text{wt}[v]}
  \sum_{Y\in \cSloc} S_{X, Y} \ch^+[Y](\tau, v)
\end{split}
\end{equation}
if $X$ is twisted.  The modular $T$-transformation acts diagonally on
characters and supercharacters of simple local modules and maps
characters of twisted modules to the corresponding supercharacters
and vice versa.  Especially, we see that the linear span of local
supercharacters already forms a vector-valued modular form for the
modular group.

Let $Z$ be a simple module; we call an intertwining operator of type
$\binom{Z}{X\, Y}$ for $X, Y$ in $[\rep V]$ even if $Z$ has the same
parity as its representative in $[\rep V]$ and it is called odd if it
has reversed parity. Obviously this choice of parity depends on our
choice of representatives $[\rep V]$. We define the fusion rules as
dimensions and superdimensions of spaces of intertwining
operators. Verlinde's formula for $\ZZ$-graded \VOSAs{} is then
\begin{equation}
  \begin{split}
    {N^+}_{X,Y}^{W}&=  \delta_{t(X)+t(Y), t(W)}\sum\limits_{Z \in \cStw}
    \dfrac{{S}_{X,Z}\cdot {S}_{Y,Z}\cdot ({S}^{-1})_{Z,W}}{{S}_{V, Z}}\\
    {N^-}_{X,Y}^{W}&=  \delta_{t(X)+t(Y), t(W)} \sum\limits_{Z \in \cSloc}
    \dfrac{{S}_{X,Z}\cdot {S}_{Y,Z}\cdot ({S}^{-1})_{Z,W}}{{S}_{V, Z}}
\end{split}
\end{equation}
with 
\[
  t :[\rep V]\rightarrow \ZZ/2\ZZ, \qquad X \mapsto \begin{cases} \even &
    \qquad \text{if} \ X \in \cSloc \\ \odd & \qquad \text{if} \ X \in
    \cStw\end{cases}.
\]
The fusion product is then
\[
  X \boxtimes_V Y \cong \bigoplus_{W\in[\rep V]} \left({N^\even}_{X,Y}^{W}W
    \oplus {N^\odd}_{X,Y}^{W} \Pi(W)\right), \qquad\qquad
  {N^\pm}_{X,Y}^{W} = {N^\even}_{X,Y}^{W} \pm {N^\odd}_{X,Y}^{W},
\]
where $\Pi$ is the parity reversal functor. 

Under certain conditions one can get information on the fusion rules
from asymptotic dimensions.  Suppose there is a unique simple
untwisted module $Z_{(+)}$ with lowest conformal dimension among all
untwisted simple modules, and suppose there is also a unique
simple twisted module $Z_{(-)}$ of lowest conformal dimension among
all simple twisted modules.  Assume that $\ch^-[Z_{(+)}]$ does not
vanish and that the lowest conformal weight spaces of $Z_{(\pm)}$ are
purely even.
For a simple $V$-module $X$, we define the asymptotic (super)dimension by
\begin{align}\label{eq:adimdef}
  \adim^\pm[X] = \lim_{iy\rightarrow i\infty}
  \dfrac{\ch^\pm[X](-1/iy)}{\ch^\pm[V](-1/iy)} = 
\dfrac{{S}_{X,Z_{(\mp)}}}{{S}_{V,Z_{(\mp)}}}
\end{align}
We find that for $X, Y\in[\rep V]$, the product of asymptotic
dimensions respects the fusion product in the sense that
\begin{align}\label{eq:Introadimprod1}
  \adim^\pm[X]\adim^{\pm}[Y]
  &= \sum_{W\in [\rep V]} {N^\pm}_{X, Y}^{W}\adim^\pm[W].
\end{align}
Some recent works such as \cite{DJX, DW3} have used the asymptotic
dimensions of certain ordinary rational \VOAs{} to derive results on
the representation categories of these \VOAs{}. 
Note that what we call asymptotic dimension, these papers call quantum dimension; we prefer to avoid this
terminology as it might lead to confusion with categorical dimension of
objects.

An example illustrating the
statements of this subsection is given in Section
\ref{sec:wrongsuper}.
Such Verlinde formulae were also conjectured in a non-rational setting, for the affine \VOSA{} of
$\mathfrak{gl}(1|1)$ \cite{CR1, CQS}; these conjectures have recently been confirmed by the fusion rule computations in \cite{CMY2}.

\subsubsection{$\frac{1}{2}\ZZ$-graded \VOAs}\label{sec:BPlikevoas}

Let $V$ be a regular $\frac{1}{2}\ZZ$-graded \VOA; examples include Bershadsky-Polyakov algebras at specific levels \cite{Ber, Pol, Ar1}. Then $V$ is an
extension of its $\ZZ$-graded subalgebra $V_+$, that
is,
\[
  V=\bigoplus_{n\in\frac{1}{2}\ZZ}V_n = V_+ \oplus V_-, \qquad\qquad
  V_+ = \bigoplus_{n\in\ZZ}V_n \qquad\text{and}\qquad V_- =
  \bigoplus_{n\in\ZZ+\frac{1}{2}}V_n.
\]
The vertex operator algebra automorphism $\pm 1$ on $V_\pm$
generates an action of $\ztwo$ on $V$ with $V_+$ the corresponding
orbifold vertex operator subalgebra. By \cite{CarM} (see also the appendix of \cite{CKLR}),
$V_-$ is then a self-dual simple current of $V_+$.  The representation
category of $V$ is as before the category of local modules for
$V$ viewed as an algebra in the category of $V_+$-modules, and every simple $V_+$-module induces to either a simple local
$V$-module or a simple twisted $V$-module. Both are naturally
$\ZZ/2\ZZ$-graded,
\[
X = X_+\oplus X_-.
\]
We denote by $\cSloc$
and $\cStw$ sets of representatives of inequivalent simple local
and twisted modules as explained in the last subsection.  We
define characters as before as well:
\[
  \ch^\pm[X](\tau, v) := \text{tr}_{X_+}\left(q^{L(0)-\frac{c}{24}} o(v)\right) \pm
  \text{tr}_{X_-}\left(q^{L(0)-\frac{c}{24}} o(v)\right), \qquad q=e^{2\pi i\tau},
\]
where $o(v)$ is the zero-mode of the vertex operator for $v\in
V_+$. It turns out that the form of the modular $S$-transformation and
Verlinde's formula are essentially decided by the categorical
dimension of $V_-$ as a $V_+$-module, which is $-1$ as in the case of
$\ZZ$-graded \VOSAs. Thus these formulae are the same as before, that
is, the modular $S$-transformation has the form
\begin{equation}
\begin{split}
  \ch^+[X]\left(-\frac{1}{\tau}, v\right)
  &= \tau^{\text{wt}[v]} \sum_{Y\in \cStw} S_{X, Y}  \ch^-[Y](\tau, v) \\
  \ch^-[X]\left(-\frac{1}{\tau}, v\right) &= \tau^{\text{wt}[v]}
  \sum_{Y\in \cSloc} S_{X, Y} \ch^-[Y](\tau, v)
\end{split}
\end{equation}
if $X$ is local and 
\begin{equation}
\begin{split}
  \ch^+[X]\left(-\frac{1}{\tau}, v\right)
  &= \tau^{\text{wt}[v]} \sum_{Y\in \cStw} S_{X, Y}  \ch^+[Y](\tau, v) \\
  \ch^-[X]\left(-\frac{1}{\tau}, v\right) &= \tau^{\text{wt}[v]}
  \sum_{Y\in \cSloc} S_{X, Y} \ch^+[Y](\tau, v)
\end{split}
\end{equation}
if $X$ is twisted.  The modular $T$-transformation acts diagonally on
characters and supercharacters of simple twisted modules and maps
characters of local modules to the corresponding supercharacters and
vice versa.  We observe that the linear span of twisted characters
already forms a vector-valued modular form for the modular group.
Verlinde's formula for $\frac{1}{2}\ZZ$-graded \VOAs{} is then
\begin{equation}
\begin{split}
  {N^+}_{X,Y}^{W} &= \delta_{t(X)+t(Y), t(W)}\sum\limits_{Z \in \cStw}
  \dfrac{{S}_{X,Z}\cdot {S}_{Y,Z}\cdot ({S}^{-1})_{Z,W}}{{S}_{V, Z}}\\
  {N^-}_{X,Y}^{W} &= \delta_{t(X)+t(Y), t(W)} \sum\limits_{Z \in
    \cSloc}\dfrac{{S}_{X,Z}\cdot {S}_{Y,Z}\cdot
    ({S}^{-1})_{Z,W}}{{S}_{V, Z}}
\end{split}
\end{equation}
with 
\[
  t : [\rep V] \rightarrow \ZZ/2\ZZ, \qquad X \mapsto \begin{cases} \even &
    \qquad \text{if} \ X \in \cSloc \\ \odd & \qquad \text{if} \ X \in
    \cStw\end{cases}.
\]
The fusion product is then
\[
  X \boxtimes_V Y \cong \bigoplus_{W\in[\rep V]} \left({N^\even}_{X,Y}^{W}W
    \oplus {N^\odd}_{X,Y}^{W} W'\right), \qquad\qquad {N^\pm}_{X,Y}^{W} =
  {N^\even}_{X,Y}^{W} \pm {N^\odd}_{X,Y}^{W},
\]
where $W'$ is in the same class as $W$ but has reversed
$\ZZ/2\ZZ$-grading. The
properties \eqref{eq:adimdef} and \eqref{eq:Introadimprod1} of asymptotic dimensions also hold, 
provided there are unique modules $Z_{(\pm)}$ of local and twisted type with
minimal conformal weight, ch$^-[Z_{(-)}]\neq 0$, and with 
lowest conformal weight spaces purely even.

\subsubsection{$\frac{1}{2}\ZZ$-graded \VOSAs}

The case of $\frac{1}{2}\ZZ$-graded \VOSAs{} $V$ is more subtle than the
two previous cases due to the possible existence of fixed points. As
before, $V$ is an order-$2$ simple current
extension of its even vertex operator subalgebra,
$V=V^\even\oplus V^\odd$, and we choose a set of representatives of
equivalence classes of local modules $\cSloc$ as before. However, for
the twisted modules there are two possibilities. Let $X$ be a simple
twisted module; as a $V^\even$-module,
\[
X\cong X^\even\oplus X^\odd
\]
where $X^\even$ and $X^\odd$ are two simple $V^\even$-modules. Then
either $X^\even\cong X^\odd$ or not. In the first case we say $X$ is
a fixed point. Representatives of equivalence classes of simple
fixed-point modules are denoted by $\cSfix$, while the set of
representatives of ordinary non-fixed-point twisted modules we
call $\cSord$, so that $\cStw$ is the disjoint union of these two sets
of representatives of equivalence classes of simple twisted
objects. A simple local module is never a fixed point.  We
define characters and supercharacters as before:
\[
  \ch^+[X](\tau, v) := \text{tr}_X\left(q^{L(0)-\frac{c}{24}} o(v)\right) \qquad
  \text{and}\qquad \ch^-[X](\tau, v) := \text{str}_X\left(q^{L(0)-\frac{c}{24}}
    o(v)\right), \qquad q=e^{2\pi i\tau}.
\]
Note that
\[
 \ch^-[X](\tau, v) = 0 \qquad\qquad\text{for} \ X\in\cSfix. 
\]
The modular $S$-transformation has the form
\begin{equation}
\begin{split}
  \ch^+[X]\left(-\frac{1}{\tau}, v\right)
  &= \tau^{\text{wt}[v]} \sum_{Y\in \cSloc} S_{X, Y}  \ch^+[Y](\tau, v) \\
  \ch^-[X]\left(-\frac{1}{\tau}, v\right) &= \tau^{\text{wt}[v]}
  \sum_{Y\in \cStw} S_{X, Y} \ch^+[Y](\tau, v)
\end{split}
\end{equation}
if $X$ is local and 
\begin{equation}
\begin{split}
  \ch^+[X]\left(-\frac{1}{\tau}, v\right)
  &= \tau^{\text{wt}[v]} \sum_{Y\in \cSloc} S_{X, Y}  \ch^-[Y](\tau, v) \\
  \ch^-[X]\left(-\frac{1}{\tau}, v\right) &= \tau^{\text{wt}[v]}
  \sum_{Y\in \cSord} S_{X, Y} \ch^-[Y](\tau, v)
\end{split}
\end{equation}
if $X$ is twisted (and in addition ordinary in the case of the
$S$-transformation for $\ch^-[X]$). We set $S_{X, Y}=0$ if both $X$
and $Y$ are twisted and in addition at least one of the two is in
$\cSfix$.  The modular $T$-transformation acts diagonally on
characters and supercharacters of simple twisted modules and maps
characters of simple local modules to the corresponding
supercharacters and vice versa.  In this case, we see that the linear
span of twisted ordinary supercharacters already forms a vector-valued
modular form for the modular group. For known results on modularity of
\VOSAs{} see \cite{DZ}.  This modularity observation has a nice
application in the context of Mathieu moonshine: it is needed in order
to identify self-dual regular \VOSAs{} together with their unique
twisted module with potential bulk superconformal field theories; see
\cite{CDR}.
 
Going back to Verlinde's formula for $\frac{1}{2}\ZZ$-graded
\VOSAs{}, we have:
\begin{equation}
\begin{split}
  {N^+}_{X,Y}^{W}&= n_W \delta_{t(X)+t(Y), t(W)}\sum\limits_{Z \in \cSloc}
  \dfrac{{S}_{X,Z}\cdot {S}_{Y,Z}\cdot ({S}^{-1})_{Z,W}}{{S}_{V, Z}}\\
  {N^-}_{X,Y}^{W}&=  \delta_{t(X)+t(Y), t(W)} \sum\limits_{Z \in \cStw}
  \dfrac{{S}_{X,Z}\cdot {S}_{Y,Z}\cdot ({S}^{-1})_{Z,W}}{{S}_{V, Z}}
\end{split}
\end{equation}
with 
\[
  t : [\rep V] \rightarrow \ZZ/2\ZZ, \hspace{1em} X \mapsto \begin{cases} \even &
    \qquad \text{if} \ X \in \cSloc \\ \odd & \qquad \text{if} \ X \in \
    \cStw\end{cases}\qquad\text{and} \qquad n_W = \begin{cases} 2
    &\qquad \text{if} \ W \ \in\cSfix \\ 1
    &\qquad\text{else} \end{cases}.
\]
Thus the Verlinde formula is slightly more complicated for
the fusion rules of a local with a twisted module.  The fusion product
is 
\[
  X \boxtimes_V Y \cong \bigoplus_{W\in\cS} \left({N^\even}_{X,Y}^{W}W
    \oplus {N^\odd}_{X,Y}^{W} \Pi(W)\right), \qquad\qquad {N^\pm}_{X,Y}^{W} =
  {N^\even}_{X,Y}^{W} \pm {N^\odd}_{X,Y}^{W},
\]
where $\Pi$ is the parity-reversing endofunctor.

We assume the existence of unique simple local and twisted modules
$Z_{(\pm)}$ as before, we assume that $\ch^-[Z_{(+)}]$ does not
vanish, and that the lowest conformal weight spaces of $Z_{(\pm)}$ are
purely even.  For a simple $V$-module $X$, the asymptotic
(super)dimension is defined as before by
\begin{align}
  \adim^\pm[X] = \lim_{iy\rightarrow i\infty}
  \dfrac{\ch^\pm[X](-1/iy)}{\ch^\pm[V](-1/iy)} = 
\dfrac{{S}_{X,Z_{(\mp)}}}{{S}_{V,Z_{(\mp)}}}.
\end{align}
We find that for $X, Y\in[\rep V]$, the product of asymptotic
superdimensions respects the fusion product in the sense that
\begin{align}
\adim^-[X]\adim^{-}[Y]&=\sum_{W\in [\rep V]} {N^-}_{X, Y}^{W}\adim^-[W].
\end{align}
There is also a formula for the product of asymptotic dimensions, but
it requires knowledge of fixed points. However, if $X$ and $Y$ are
both local or both twisted, then we also get
\begin{align}\label{eq:Introadimprod2}
\adim^+[X]\adim^{+}[Y]&= \sum_{W\in [\rep V]} {N^+}_{X, Y}^{W}\adim^+[W].
\end{align}

\subsection{Outline}

Here we summarize the structure of this paper. In Section
\ref{sec:TensCats}, we work in an abstract tensor-categorical setting
to study representations of (super)algebra objects in braided tensor
categories. We prove that, for a (super)algebra $A$ in a braided
tensor category $\cC$, the category $\repA$ of $A$-modules with its
tensor product $\boxtimes_A$ forms an additive monoidal
(super)category, under minimal assumptions on $\cC$. To prove the
associativity in $\repA$, we introduce and study what we call
categorical $\repA$-intertwining operators. Moreover, we prove that
the category $\repzA$ of local $A$-modules is braided monoidal.  We
provide detailed proofs, often in terms of graphical calculus. Though many results proved here are
known, at least in more restrictive settings (see for example
\cite{Pa, KO, EGNO}), readers new to the theory of
tensor categories may find our exposition helpful. In Section
\ref{subsec:inductionfunctor} we introduce the induction functor
$\cF:\cC\rightarrow\repA$ and prove that it is a tensor
functor. In Section \ref{subsec:miscresults}, we collect several
useful properties of the induction functor, culminating in the
relationship between $S$-matrices in $\cC$ and $\repzA$, provided
both are ribbon categories.

In Section \ref{sec:vtc}, we begin with a quick yet fairly detailed
exposition of vertex tensor category constructions.  Readers new to
vertex operator algebras or vertex tensor categories may
find this introductory material useful. Then, we proceed to prove our
main theorem, deriving complex-analytic properties of categorical
$\repA$-intertwining operators.  In Sections \ref{subsec:complexIntw}
and \ref{subsec:VTConRepARep0A}, we use these complex-analytic
properties to match the braided monoidal category structure on
$\repzA$ from Section \ref{sec:TensCats} with vertex tensor
categorical structure on the category of $A$-modules in $\cC$. Some
analytic lemmas needed in these proofs as well as in Section
\ref{sec:applications} are collected in Section
\ref{subsec:technical}.

In Section \ref{sec:applications} we apply our results.  We first give
several examples of vertex operator (super)algebra extensions. After
recalling several results on simple current extensions, we show how to
use screening charges to construct extensions of several copies of
(non-rational but $C_2$-cofinite) triplet algebras $\cW(p)$. The next
examples are extensions of products of rational Virasoro
\VOAs. Interestingly, Corollary \ref{cor:Etype} gives type $E$
extensions.  Our next application is Verlinde formulae for regular
\VOSAs{} as well as $\frac{1}{2}\ZZ$-graded \VOAs{} (see Section
\ref{sec:ver} above for a summary of results). Finally, we take $V$ to
be a rational vertex operator (super)algebra containing as vertex
operator subalgebra a lattice \VOA{} $V_L$ of a positive-definite integral
lattice $L$. The coset $C=\com\left(V_L, V\right)$ is often
called the parafermion coset of $V$. Building on \cite{CKLR}, we
describe these cosets in generality: we classify simple
inequivalent $C$-modules in terms of those for $V$, we express the
fusion rules for coset modules in terms of those for 
$V$, and we express characters of $C$-modules and their modular
transformations in terms of those for $V$-modules. These results are applied to several examples, in which $V$ is a rational affine \VOA{}, a
rational $N=2$ superconformal algebra, and a rational
Bershadsky-Polyakov algebra.

\section{Tensor categories and supercategories}\label{sec:TensCats}

This section largely follows \cite[Section 1]{KO}, where a tensor category $\repA$ of representations of a commutative associative algebra $A$ in a braided tensor category $\cC$ was constructed, as well as an induction tensor functor from $\cC$ to $\repA$. See also \cite{Pa} for the first construction of the braided tensor subcategory $\repzA$ of $\repA$. However, we work in a more general setting than \cite{KO} in two important respects: first, we do not assume that $\cC$ is rigid or semisimple, and second, we work with \textit{super}algebras in $\cC$. We need these generalizations to study braided tensor categories associated to potentially non-regular \VOAs{} (whose module categories are typically non-semisimple and are typically not known to be rigid) and \VOSAs. Also, our approach to the associativity isomorphisms in $\repA$, making use of what we call ``categorical $\repA$-intertwining operators,'' is inspired by vertex operator algebra theory and is necessary for relating the tensor-categorical structure of $\repA$ to the vertex-tensor-categorical constructions of \cite{HLZ1}-\cite{HLZ8}, which we accomplish in Section \ref{sec:vtc}. For these reasons, and for the reader's convenience, we include proofs of the main results in this section, although most are straightforward generalizations of results in \cite{Pa, KO}. For brevity and clarity, many tensor-categorical calculations are summarized only, using graphical calculus; for fuller details, the reader may consult the first arXiv version of this paper: \texttt{https://arxiv.org/abs/1705.05017v1}.

\subsection{Commutative associative algebras in braided tensor categories}

Our motivation for this subsection is a vertex operator algebra extension $V\subseteq A$ where $V$ has a vertex tensor category $\cC$ of modules, so $\cC$ is also an (abelian) braided tensor category, and where $A$ is an object of $\cC$. See \cite{H-cofin} for conditions on $V$ under which the full category of grading-restricted generalized $V$-modules is a finite abelian category and has vertex tensor category structure; for example, these conditions hold if $V$ is $C_2$-cofinite but not necessarily rational.

For background on (braided) tensor categories, see for example \cite{EGNO, Ka, BK}. As the term ``tensor category'' has several variant meanings in the literature, we first clarify our terminology. For us, a monoidal category is a category $\cC$ equipped with tensor product bifunctor $\boxtimes$, unit object $\unit$, natural left and right unit isomorphisms $l$ and $r$, and a natural associativity isomorphism $\cA$, which satisfy the triangle and pentagon axioms. A tensor category is a monoidal category with additionally a compatible abelian category structure in the sense that the tensor product of morphisms is bilinear. A monoidal or tensor category is braided if it has a natural braiding isomorphism $\cR$ satisfying the hexagon axioms.

We fix a braided tensor category $(\cC, \boxtimes, \unit, l, r, \cA, \cR)$. We also assume that morphism sets in $\cC$ are vector spaces over a field $\mathbb{F}$ such that composition and tensor product of morphisms are both $\mathbb{F}$-bilinear, that is, $\cC$ is an $\mathbb{F}$-linear braided tensor category. For braided tensor categories coming from vertex operator algebras, $\mathbb{F}=\CC$. The only additional assumption on $\mathcal{C}$ that we will need for now, which is justified for braided tensor categories arising from vertex operator algebras thanks to Proposition 4.26 in \cite{HLZ3}, is the following:
\begin{assum}
For any object $W$ in $\mathcal{C}$, the functors $W\boxtimes\cdot$ and $\cdot\boxtimes W$ are right exact.
\end{assum}

\begin{defi}
	A \textit{commutative associative algebra} in $\mathcal{C}$ is an object $A$ of $\mathcal{C}$ equipped with morphisms $\mu: A\boxtimes A\rightarrow A$ and $\iota_A: \mathbf{1}\rightarrow A$ satisfying the following axioms:
	\begin{enumerate}
		\item Associativity: $\mu\circ(\mu\boxtimes 1_A)\circ\mathcal{A}_{A,A,A} = \mu\circ(1_A\boxtimes\mu)$ as morphisms $A\boxtimes (A\boxtimes A)\rightarrow A$.
		\item Commutativity: $\mu\circ \cR_{A,A}=\mu$ as morphisms $A\boxtimes A\rightarrow A$.
		\item Unit:  $\mu\circ(\iota_A\boxtimes 1_A)\circ l_A^{-1}=1_A$ as morphisms $A\rightarrow A$.
	\end{enumerate}
\end{defi}
\begin{rema}\label{rightunit}
	The commutativity and unit axioms in the above definition together imply the right unit property $\mu\circ(1_A\boxtimes\iota_A)\circ r_A^{-1}=1_A$.
\end{rema}

\begin{rema}
	If $\mathcal{C}$ is a vertex tensor category of modules for a vertex operator algebra $V$, then a commutative associative algebra $A$ in $\mathcal{C}$ with trivial twist $\theta_A=e^{2\pi i L(0)}$ and with $\iota_A$ injective is a vertex operator algebra extension of $V$, by Theorem 3.2 and Remark 3.3 in \cite{HKL}. (The assumption that $\iota_A$ is injective is implicit in the proof of \cite[Theorem 3.2]{HKL}.)
\end{rema}

Given a commutative associative algebra $A$ in $\mathcal{C}$, we have the following category of ``$A$-modules'':
\begin{defi}\label{repAdef}
	The category $\mathrm{Rep}\,A$ has objects $(W, \mu_W)$ where $W$ is an object of $\mathcal{C}$ and $\mu_W: A\boxtimes W\rightarrow W$ satisfies:
	\begin{enumerate}
		\item Associativity:  $ \mu_W\circ(\mu\boxtimes 1_W)\circ\mathcal{A}_{A,A,W}=\mu_W\circ (1_A\boxtimes\mu_W)$ as morphisms $A\boxtimes (A\boxtimes W)\rightarrow W$.
		\item Unit:  $\mu_W\circ(\iota_A\boxtimes 1_W)\circ l_W^{-1}=1_W$ as morphisms $W\rightarrow W$.
	\end{enumerate}
	The morphisms between objects $(W_1,\mu_{W_1})$ and $(W_2, \mu_{W_2})$ of $\mathrm{Rep}\,A$ are all $\mathcal{C}$-morphisms $f: W_1\rightarrow W_2$ such that $f\circ\mu_{W_1}=\mu_{W_2}\circ(1_A\boxtimes f): A\boxtimes W_1\rightarrow W_2$. 
\end{defi}

\begin{rema}
 Note that $(A,\mu)$ is an object of $\repA$.
\end{rema}

\begin{rema} Here, we preview the relation of $\repA$-objects to vertex-algebraic intertwining operators.
	When $\mathcal{C}$ is a braided tensor category of modules for a vertex operator algebra $V$ and $V\subseteq A$ is a vertex operator algebra extension, we will show that an object of $\mathrm{Rep}\,A$ is a $V$-module $W$ equipped with a $V$-intertwining operator $Y_W$ of type $\binom{W}{A\,W}$ such that $Y_W(\mathbf{1}, x)=1_W$ and as multivalued functions,
	\begin{equation*}
	\langle w', Y_W(a_1,z_1)Y_W(a_2,z_2)w\rangle=\langle w', Y_W(Y(a_1,z_1-z_2)a_2, z_2)w\rangle
	\end{equation*}
	for $a_1, a_2\in A$, $w\in W$, $w'\in W'$, and $z_1,z_2\in\CC^\times$ such that
	\begin{equation*}
	\vert z_1\vert>\vert z_2\vert>\vert z_1-z_2\vert >0.
	\end{equation*}
	Here $Y$ is the vertex operator for $A$. If $W_1$ and $W_2$ are two objects of $\mathrm{Rep}\,A$, then a morphism $f: W_1\rightarrow W_2$ in $\mathrm{Rep}\,A$ will be a $V$-module homomorphism which satisfies
	\begin{equation*}	f(Y_{W_1}(a,x)w_1)=Y_{W_2}(a,x)f(w_1)
	\end{equation*}
	for $a\in A$ and $w_1\in W_1$.
\end{rema}
\begin{rema}
	In the setting of the previous remark, objects of $\mathrm{Rep}\,A$ are more general than ordinary $A$-modules, because we do not require the $V$-intertwining operator $Y_W(\cdot, x)\cdot$ for a module $W$ in $\repA$ to involve only integral powers of the formal variable $x$. For example, if $V$ is the fixed-point subalgebra of a group of automorphisms $G$ of $A$, then $g$-twisted $A$-modules for $g\in G$ are objects of $\mathrm{Rep}\,A$. By \cite[Theorem 3.4]{HKL}, ordinary $A$-modules are objects of the subcategory $\mathrm{Rep}^0\,A$ recalled in Section \ref{subsec:rep0A} below.
\end{rema}

We will show that $\repA$ has a natural tensor category structure. In particular:
\begin{enumerate}
\item There is a tensor product bifunctor $\boxtimes_A: \repA\times\repA\rightarrow\repA$.

 \item The object $(A,\mu)$ is a unit for $\boxtimes_A$, equipped with natural left and right unit isomorphisms $l^A$ and $r^A$ from $A\boxtimes_A\cdot$ and $\cdot\boxtimes_A A$, respectively, to the identity functor on $\repA$.
 
 \item $\repA$ has a natural associativity isomorphism $\cA^A:\boxtimes_A\circ(1_{\repA}\times\boxtimes_A)\rightarrow\boxtimes_A\circ(\boxtimes_A\times 1_{\repA})$.
 
 \item The unit and associativity isomorphisms in $\repA$ satisfy the pentagon and triangle axioms. (This will complete the proof that $\repA$ is a monoidal category.)
 
 \item $\repA$ is an abelian category for which the tensor product of morphisms is bilinear.
\end{enumerate}
We will show that $\repA$ is abelian in this subsection. After introducing superalgebras in $\cC$ in the next subsection, we will construct $\boxtimes_A$ and the unit isomorphisms. We will defer the construction of the associativity isomorphisms and the proof that $\repA$ is a monoidal category to Section \ref{subsec:associnrepA} after introducing categorical $\repA$-intertwining operators.

\begin{theo}[{See also\ \cite[Lemma 1.4]{KO}}]\label{thm:repAabelian} 
$\repA$ is an $\mathbb{F}$-linear abelian category.
\end{theo}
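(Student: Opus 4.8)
The plan is to show that $\repA$ inherits an $\mathbb{F}$-linear abelian structure from $\cC$ by transporting kernels, cokernels, and biproducts along the forgetful functor $\cG:\repA\to\cC$, $(W,\mu_W)\mapsto W$. First I would observe that $\hom_{\repA}(W_1,W_2)$ is an $\mathbb{F}$-subspace of $\hom_\cC(W_1,W_2)$: if $f,g$ are $\repA$-morphisms then the condition $f\circ\mu_{W_1}=\mu_{W_2}\circ(1_A\boxtimes f)$ is $\mathbb{F}$-linear in $f$, using bilinearity of composition and of $\boxtimes$ on morphisms in $\cC$; hence $\repA$ is $\mathbb{F}$-linear, and $\cG$ is faithful and $\mathbb{F}$-linear. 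The zero object $(0,0)$ and biproducts are immediate: given $(W_1,\mu_{W_1})$ and $(W_2,\mu_{W_2})$, equip $W_1\oplus W_2$ with the action obtained from $\mu_{W_1}\oplus\mu_{W_2}$ after identifying $A\boxtimes(W_1\oplus W_2)\cong (A\boxtimes W_1)\oplus(A\boxtimes W_2)$ via the canonical isomorphism (here one uses that $A\boxtimes\cdot$, being an additive functor on the additive category $\cC$, preserves finite biproducts); the associativity and unit axioms for the direct sum follow componentwise, and the four structure morphisms of the biproduct in $\cC$ are easily checked to be $\repA$-morphisms, so they exhibit the biproduct in $\repA$.

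The heart of the argument is constructing kernels and cokernels. For a morphism $f:(W_1,\mu_{W_1})\to(W_2,\mu_{W_2})$ in $\repA$, let $k:K\to W_1$ be a kernel of $f$ in $\cC$ and $c:W_2\to C$ a cokernel of $f$ in $\cC$. For the kernel, I would produce a module structure $\mu_K:A\boxtimes K\to K$ as follows: the composite $f\circ\mu_{W_1}\circ(1_A\boxtimes k)=\mu_{W_2}\circ(1_A\boxtimes f)\circ(1_A\boxtimes k)=\mu_{W_2}\circ(1_A\boxtimes(f\circ k))=0$, so by the universal property of the kernel $k$ there is a unique $\mu_K$ with $k\circ\mu_K=\mu_{W_1}\circ(1_A\boxtimes k)$; this says precisely that $k$ is a $\repA$-morphism. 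One then checks the associativity and unit axioms for $\mu_K$ by precomposing with $k$ (which is monic in $\cC$) and using the axioms for $\mu_{W_1}$ together with naturality of $\cA$ and of $l$; faithfulness of $\cG$ and the monomorphism property of $k$ let one cancel $k$. Finally $(K,\mu_K)\xrightarrow{k}(W_1,\mu_{W_1})$ is a kernel of $f$ in $\repA$: any $\repA$-morphism $g:(W',\mu_{W'})\to(W_1,\mu_{W_1})$ with $f\circ g=0$ factors uniquely through $k$ in $\cC$, and the factoring morphism $W'\to K$ is automatically a $\repA$-morphism because $k$ is monic and intertwines the actions.

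For cokernels the argument is dual but uses right exactness rather than left exactness, which is where Assumption~2.2 enters. Given the cokernel $c:W_2\to C$ in $\cC$, I want $\mu_C:A\boxtimes C\to C$ with $\mu_C\circ(1_A\boxtimes c)=c\circ\mu_{W_2}$. The obstruction is that to define $\mu_C$ out of $A\boxtimes C$ by the universal property of a cokernel, I need $A\boxtimes c:A\boxtimes W_2\to A\boxtimes C$ to be a cokernel of $A\boxtimes f$; this is exactly the statement that the functor $A\boxtimes\cdot$ is right exact, so it preserves the cokernel $c$ of $f$. Then $c\circ\mu_{W_2}\circ(1_A\boxtimes f)=c\circ f\circ\mu_{W_1}=0$, so $c\circ\mu_{W_2}$ factors uniquely as $\mu_C\circ(A\boxtimes c)$, giving $\mu_C$ and showing $c$ is a $\repA$-morphism. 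The axioms for $\mu_C$ are verified by postcomposing nothing but instead precomposing with the epimorphism $A\boxtimes(A\boxtimes c)$ (again right exact functors applied twice) and cancelling; and $(C,\mu_C)$ is a cokernel of $f$ in $\repA$ by the dual of the kernel argument, this time using that $c$ is epic to promote the factoring $\cC$-morphism to a $\repA$-morphism. Having kernels, cokernels, and biproducts, and with every monic a kernel and every epi a cokernel (inherited from $\cC$ via $\cG$ and the monic/epic transport just used), $\repA$ is abelian. The main obstacle, and the reason Assumption~2.2 is hypothesized, is precisely the cokernel construction: without right exactness of $A\boxtimes\cdot$ one cannot even define the module action on $C$, let alone verify its axioms.
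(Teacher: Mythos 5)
Your proposal is correct and follows the paper's strategy very closely through the construction of biproducts, kernels, and cokernels; those parts are essentially complete. The gap is the final sentence, where you wave at the last abelian axiom --- ``every monic is a kernel and every epi is a cokernel (inherited from $\cC$ via $\cG$ and the monic/epic transport just used)'' --- without giving an argument. This does not follow from faithfulness of $\cG$ or from anything else you've established. The issue is two-fold. First, it is not a priori true that a monomorphism in $\repA$ is a monomorphism in $\cC$; faithfulness of $\cG$ gives you that it \emph{reflects} monos, not that it preserves them. The correct argument uses the kernel construction you already did: take the $\cC$-kernel $(K,k)$ of $f$, which you showed is a $\repA$-morphism with $f\circ k=0$; if $f$ is $\repA$-monic this forces $k=0$, and since any $\cC$-morphism $g$ with $f\circ g=0$ factors through $k$, one gets $g=0$, so $f$ is $\cC$-monic. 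Second, even once you know $f$ is $\cC$-monic, you cannot conclude immediately: $f$ is the kernel in $\cC$ of \emph{some} $\cC$-morphism $g$, but $g$ need not be a $\repA$-morphism, so this does not directly exhibit $f$ as a kernel in $\repA$. The paper instead shows $f$ is the kernel of its own $\repA$-cokernel $c$: given any $\repA$-morphism $h$ with $c\circ h=0$, one factors $g$ through $c$ (since $g\circ f=0$) to get $g\circ h=0$, uses the universal property of the $\cC$-kernel $(W_1,f)$ of $g$ to produce $h'$ with $f\circ h'=h$, and then checks that $h'$ intertwines the $A$-actions by precomposing the defining relation with $f$ and cancelling (using that $f$ is $\cC$-monic). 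The epi case is dual, using $1_A\boxtimes f$ being epic via right exactness. These steps constitute roughly a third of the paper's proof and are not automatic; you need to spell them out.
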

\begin{proof}

To show that $\repA$ is an abelian category, we need to show the following:
\begin{enumerate}
 \item $\repA$ has a zero object.
 
 \item Morphism sets in $\repA$ are $\mathbb{F}$-vector spaces, with composition of morphisms bilinear.
 
 \item Biproducts of finite sets of objects in $\repA$ exist. Recall that a biproduct of a finite set of objects $\lbrace W_i\rbrace$ is an object $\bigoplus W_i$ equipped with morphisms $p_i: \bigoplus W_i\rightarrow W_i$ and $q_i: W_i\rightarrow\bigoplus W_i$ such that $p_i\circ q_i=1_{W_i}$ for all $i$, $p_i\circ q_j=0$ for $i\neq j$, and $\sum q_i\circ p_i=1_{\bigoplus W_i}$. A biproduct of $\lbrace W_i\rbrace$ is both a product and a coproduct of $\lbrace W_i\rbrace$.
 
 \item Every morphism in $\repA$ has a kernel and a cokernel.
 
 \item Every monomorphism in $\repA$ is a kernel and every epimorphism is a cokernel.
\end{enumerate}
The zero object of $\repA$ is the zero object $0$ of $\cC$ equipped with $\mu_0: A\boxtimes 0\rightarrow 0$ the unique morphism into $0$. Then since composition of morphisms in $\cC$ is $\mathbb{F}$-bilinear, $\mathrm{Hom}_{\repA} (W_1, W_2)$ is an $\mathbb{F}$-vector subspace of $\hom_{\cC}(W_1,W_2)$ for any objects $W_1$, $W_2$ of $\repA$.

Now for a finite set of objects $\lbrace (W_i,\mu_{W_i})\rbrace$ in $\repA$ with biproduct $(\bigoplus W_i, \lbrace p_i\rbrace, \lbrace q_i\rbrace)$ in $\cC$, the object $\bigoplus W_i$ has the left $A$-action
%
%
\begin{equation*}
 \mu_{\bigoplus W_i} =\sum q_i\circ\mu_{W_i}\circ(1_A\boxtimes p_i): A\boxtimes\bigoplus W_i\rightarrow\bigoplus W_i.
\end{equation*}
Using the unit and associativity properties for each $\mu_{W_i}$, together with properties of the $p_i$ and $q_i$, it is straightforward to show that $\mu_{\bigoplus W_i}$ also satisfies the unit and associativity properties. Moreover, it is easy to see from the definition of $\mu_{\bigoplus W_i}$ that each $p_i$ and $q_i$ is a morphism in $\repA$. Thus $(\bigoplus W_i,\lbrace p_i\rbrace,\lbrace q_i\rbrace)$ is also a biproduct of $\lbrace(W_i,\mu_{W_i})\rbrace$ in $\repA$.

Now suppose that $f: W_1\rightarrow W_2$ is a morphism in $\repA$; we need to show that $f$ has a kernel and a cokernel in $\repA$. Since $\cC$ is an abelian category, there is a kernel $k: K\rightarrow W_1$ of $f$ in $\cC$.
	We need to show that $K$ has an $A$-module structure such that $k$ is a morphism in $\repA$, and that $(K,k)$ satisfy the universal property of a kernel in $\repA$. Consider 
	the morphism
	\begin{equation*}
	 \mu_{W_1}\circ(1\boxtimes k): A\boxtimes K\rightarrow W_1.
	\end{equation*}
Since $f$ is a morphism in $\repA$, 
\begin{align*}
 f\circ\mu_{W_1}\circ(1\boxtimes k)=\mu_{W_2}\circ(1\boxtimes f)\circ(1\boxtimes k)=0,
\end{align*}
so the universal property of the kernel $(K,k)$ induces a unique $\cC$-morphism  $\mu_K: A\boxtimes K\rightarrow K$
such that 
\begin{equation}\label{muKdef}
 k\circ\mu_K=\mu_{W_1}\circ(1\boxtimes k).
\end{equation}
 We prove that $(K,\mu_K)$ is an object of $\repA$. For the unit property for $\mu_K$, observe that
 \begin{equation*}
  K\xrightarrow{l_K^{-1}}\mathbf{1}\boxtimes K\xrightarrow{\iota_A\boxtimes 1_K} A\boxtimes K\xrightarrow{\mu_K} K\xrightarrow{k} W_1
 \end{equation*}
equals the composition
\begin{equation*}
 K\xrightarrow{l_K^{-1}}\mathbf{1}\boxtimes K\xrightarrow{\iota_A\boxtimes 1_K} A\boxtimes K\xrightarrow{1\boxtimes k} A\boxtimes W_1\xrightarrow{\mu_{W_1}} W_1
\end{equation*}
by \eqref{muKdef}. By the naturality of the unit isomorphisms and the unit property for $\mu_{W_1}$, this composition equals $k$. Since $k$ is a kernel in $\cC$ and thus a monomorphism, it follows that $\mu_K\circ(\iota_A\boxtimes 1_K)\circ l_{K}^{-1}= 1_K$. To prove the associativity of $\mu_K$, we use \eqref{muKdef}, the associativity of $\mu_{W_1}$, and naturality of the associativity isomorphisms:
\begin{align*}
 k\circ\mu_K\circ(1_A & \boxtimes\mu_K)  =\mu_{W_1}\circ(1_A\boxtimes k)\circ(1_A\boxtimes\mu_K) =\mu_{W_1}\circ(1_A\boxtimes\mu_{W_1})\circ(1_A\boxtimes(1_A\boxtimes k))\nonumber\\
 & =\mu_{W_1}\circ(\mu\boxtimes 1_{W_1})\circ\cA_{A,A,W_1}\circ(1_A\boxtimes(1_A\boxtimes k)) =\mu_{W_1}\circ(1_A\boxtimes k)\circ(\mu\boxtimes 1_K)\circ\cA_{A,A,K}\nonumber\\
 & =k\circ\mu_K\circ(\mu\boxtimes 1_K)\circ\cA_{A,A,K}.
\end{align*}
Since $k$ is a monomorphism, associativity for $\mu_K$ follows.

Now that $(K,\mu_K)$ is an object of $\repA$, \eqref{muKdef} says that $k$ is a morphism in $\repA$. To show that
	$(K,\mu_K)$ and $k$ form a kernel of $f$ in $\repA$, let $(K',\mu_{K'})$ be an object of $\repA$ and let $k': K'\rightarrow W_1$ be a $\repA$-morphism such that
	$f\circ k'=0$. As $(K, k)$ is a kernel of $f$ in $\cC$, there is a unique $\cC$-morphism $g: K'\rightarrow K$ such that $k\circ g=k'$. We just need to show that $g$ is
	a morphism in $\repA$. For this, we use the fact that $k$ and $k'$ are $\repA$-morphisms to calculate:
	\begin{align*}
	k\circ\mu_K\circ(1_A\boxtimes g)&=\mu_{W_1}\circ(1_A\boxtimes k)\circ(1_A\boxtimes g)=\mu_{W_1}\circ (1_A\boxtimes k')
	= k'\circ\mu_{K'}=k\circ g\circ \mu_{K'}.
	\end{align*}
	Since $k$ is monic, it follows that $\mu_K\circ(1_A\boxtimes g)=g\circ\mu_{K'}$, as desired.

To prove that $f: W_1\rightarrow W_2$ also has a cokernel in $\repA$, take a cokernel $(C, c)$ of $f$ in $\cC$; we need to show that $C$ is an object of $\repA$. Since we assume that $A\boxtimes\cdot$ is right exact, $(A\boxtimes C, 1_A\boxtimes c)$ is a cokernel of $1_A\boxtimes f$ in $\cC$. Then since $f$ is a morphism in $\repA$,
\begin{equation*}
 c\circ\mu_{W_2}\circ(1_A\boxtimes f)=c\circ f\circ\mu_{W_1}=0,
\end{equation*}
so that the universal property of the cokernel $(A\boxtimes C, 1_A\boxtimes c)$ implies there is a unique $\cC$-morphism $\mu_C: A\boxtimes C\rightarrow C$ such that
\begin{equation}\label{muCdef}
 \mu_C\circ(1_A\boxtimes c)=c\circ\mu_{W_2}.
\end{equation}
The proof that $(C,\mu_C)$ is an object of $\repA$, that $c: W_2\rightarrow C$ is a morphism in $\repA$, and that $C$ is a cokernel of $f$ in $\repA$ is then similar to the proof for kernels.

Next, we show that if $f: W_1\rightarrow W_2$ is a monomorphism in $\repA$, then $f$ is a kernel. First, we show that $f$ is a monomorphism in $\cC$. Thus suppose $f\circ g=0$ where $g: W_1'\rightarrow W_1$ is any morphism in $\cC$; we need to show that $g=0$. By the universal property of the kernel $(K, k)$ of $f$, there is a unique $\cC$-morphism $g': W_1'\rightarrow K$ such that $g=k\circ g'$. But we have already shown that $k$ is a morphism in $\repA$; thus because $f$ is a monomorphism in $\repA$ and $f\circ k=0$ by definition of the kernel of $f$, it follows that $k=0$, and thus $g=0$ as well.

Now that $f$ is a monomorphism in $\cC$, it is the kernel of a $\cC$-morphism $g: W_2\rightarrow W_2'$. As $f$ is also a morphism in $\repA$, it has a cokernel $(C,c)$ in both $\cC$ and $\repA$. We will show that $(W_1, f)$ is also a kernel of $c$. Now, since $g\circ f=0$, the universal property of the cokernel in $\cC$ implies there is a unique $\cC$-morphism $g': C\rightarrow W_2'$ such that $g=g'\circ c$. Then if $h: W\rightarrow W_2$ is any morphism in $\repA$ such that $c\circ h=0$, we have
\begin{equation*}
 g\circ h=g'\circ c\circ h=0
\end{equation*}
as well, so by the universal property of the kernel $(W_1, f)$ of $g$, there is a unique $\cC$-morphism $h': W\rightarrow W_1$ such that $f\circ h'=h$. To show that $h'$ is actually a morphism in $\repA$, we observe that
\begin{equation*}
 f\circ h'\circ\mu_W=h\circ\mu_W=\mu_{W_2}\circ(1_A\boxtimes h)=\mu_{W_2}\circ(1_A\boxtimes f)\circ(1_A\boxtimes h')=f\circ\mu_{W_1}\circ(1_A\boxtimes h')
\end{equation*}
since $f$ and $h$ are morphisms in $\repA$. Thus since $f$ is a monomorphism, $h'\circ\mu_W=\mu_{W_1}\circ(1_A\boxtimes h')$.

Finally, we can show similarly that if $f: W_1\rightarrow W_2$ is an epimorphism in $\repA$, then it is the cokernel of its kernel in $\repA$.
\end{proof}

\subsection{Superalgebras in braided tensor categories}
\label{subsec:superalgebra}

In this subsection we formulate the notion of supercommutative associative superalgebra in our braided tensor category $\cC$. The idea is to realize a superalgebra as an (ordinary) commutative associative algebra in a braided tensor \textit{super}category associated to $\cC$. Our motivation is to study vertex operator superalgebra extensions of vertex operator algebras using tensor category theory. Note, for example, that any vertex operator superalgebra $V$ with parity decomposition $V^{\bar{0}}\oplus V^{\bar{1}}$ is an extension of its even vertex operator subalgebra $V^{\bar{0}}$. If $V^{\bar{0}}$ has a vertex tensor category $\cC$ of modules that includes $V^{\bar{1}}$, then $V$ will be a superalgebra object in $\cC$ in the sense defined below.

First we clarify what we mean by supercategory: should objects or morphism spaces have natural $\ZZ/2\ZZ$-gradings? We will follow the approach of \cite{BE} and give morphism spaces $\ZZ/2\ZZ$-gradings; also see \cite{BE} for supercategory analogues of notions from tensor category theory. We denote the parity of a parity-homogeneous morphism $f$ in a supercategory by $\vert f\vert\in\ZZ/2\ZZ$. Here, we highlight some differences that appear in the supercategory setting; the general philosophy is that sign factors should appear whenever the order of two morphisms with parity is reversed. First, composition of morphisms in a supercategory should be even as a linear map between superspaces, and superfunctors between supercategories should induce even linear maps on morphisms. If $\sC$ is a supercategory, then $\sC\times\sC$ is naturally a supercategory with composition of morphisms given by
\begin{equation*}
 (f_1, f_2)\circ(g_1, g_2)=(-1)^{\vert f_2\vert\vert g_1\vert}(f_1\circ g_1, f_2\circ g_2),
\end{equation*}
when $f_2$ and $g_1$ are parity-homogeneous morphisms in $\sC$ (this is called the super interchange law in \cite{BE}). Moreover, $\sC\times\sC$ has a supersymmetry superfunctor $\sigma$ given by $\sigma(W_1,W_2)=(W_2,W_1)$ for objects and $\sigma(f_1,f_2)=(-1)^{\vert f_1\vert\vert f_2\vert}(f_2,f_1)$ for parity-homogeneous morphisms.

A monoidal supercategory $\sC$ is equipped with a tensor product superfunctor $\boxtimes: \sC\times\sC\rightarrow\sC$. In particular, $\boxtimes$ induces an even linear map on morphisms: $\vert f_1\boxtimes f_2\vert=\vert f_1\vert+\vert f_2\vert$ for parity-homogeneous morphisms $f_1$ and $f_2$, and the super interchange law holds:
\begin{equation}\label{exchange}
 (f_1\boxtimes f_2)\circ(g_1\boxtimes g_2)=(-1)^{\vert f_2\vert\vert g_1\vert} (f_1\circ g_1)\boxtimes(f_2\circ g_2)
\end{equation}
for appropriately composable morphisms $f_1, f_2, g_1, g_2$ with $f_2$ and $g_1$ parity-homogeneous. The natural unit and associativity isomorphisms are even, and in a braided monoidal supercategory, the natural braiding isomorphism $\cR: \boxtimes\rightarrow\boxtimes\circ\sigma$ is also even. Naturality of $\cR$ means that for parity-homogeneous morphisms $f_1: W_1\rightarrow\widetilde{W}_1$ and $f_2: W_2\rightarrow\widetilde{W}_2$ in $\sC$,
\begin{equation}\label{braidingsupernatural}
 \cR_{\widetilde{W}_1,\widetilde{W}_2}\circ(f_1\boxtimes f_2)=(-1)^{\vert f_1\vert\vert f_2\vert}(f_2\boxtimes f_1)\circ\cR_{W_1,W_2}.
\end{equation}
A (braided) tensor supercategory is a (braided) monoidal supercategory with a compatible abelian category structure, that is, the tensor product of morphisms is bilinear.

We continue to fix an $\mathbb{F}$-linear braided tensor category $\cC$ in which tensoring is right exact. Since we will be working with parity automorphisms on $\ztwo$-graded objects in this and following subsections, we will now for convenience assume that the characteristic of $\mathbb{F}$ is not $2$. As $\boxtimes$ is biadditive, tensor products in $\cC$ distribute over biproducts:
\begin{propo}\label{distribute}
 For any finite sets of objects $\lbrace W_i\rbrace$ and $\lbrace X_j\rbrace$ in $\cC$, there is a unique isomorphism
 \begin{equation*}
  F: \left(\bigoplus W_i\right)\boxtimes\left(\bigoplus X_j\right)\rightarrow\bigoplus(W_i\boxtimes X_j)
 \end{equation*}
such that $F\circ(q_{W_i}\boxtimes q_{X_j})=q_{W_i\boxtimes X_j}$ and $p_{W_i\boxtimes X_j}\circ F=p_{W_i}\boxtimes p_{X_j}$ for all $i$, $j$.
\end{propo}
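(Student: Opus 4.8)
The plan is to write down the candidate isomorphism explicitly in terms of the structure morphisms of the two biproducts and to verify everything by a routine computation; the only nontrivial ingredient is the bilinearity of $\boxtimes$ on morphisms, which is part of the definition of a tensor category in our sense. First I would set
\[
F=\sum_{i,j}q_{W_i\boxtimes X_j}\circ(p_{W_i}\boxtimes p_{X_j})\colon\Big(\bigoplus W_i\Big)\boxtimes\Big(\bigoplus X_j\Big)\longrightarrow\bigoplus(W_i\boxtimes X_j),
\]
together with the candidate inverse
\[
G=\sum_{i,j}(q_{W_i}\boxtimes q_{X_j})\circ p_{W_i\boxtimes X_j}\colon\bigoplus(W_i\boxtimes X_j)\longrightarrow\Big(\bigoplus W_i\Big)\boxtimes\Big(\bigoplus X_j\Big).
\]

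Next I would compute $G\circ F$ and $F\circ G$. For $G\circ F$, functoriality of $\boxtimes$ gives $(p_{W_i}\boxtimes p_{X_j})\circ(q_{W_k}\boxtimes q_{X_l})=(p_{W_i}\circ q_{W_k})\boxtimes(p_{X_j}\circ q_{X_l})$, and the biproduct relations $p\circ q=\delta\cdot 1$ kill every cross term, leaving $\sum_{i,j}(q_{W_i}\circ p_{W_i})\boxtimes(q_{X_j}\circ p_{X_j})$; here bilinearity of $\boxtimes$ lets me pull the sums into each tensor slot separately, giving $\big(\sum_i q_{W_i}\circ p_{W_i}\big)\boxtimes\big(\sum_j q_{X_j}\circ p_{X_j}\big)=1\boxtimes 1=1$. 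The computation of $F\circ G$ is even more direct: the middle factor again collapses via $(p_{W_i}\boxtimes p_{X_j})\circ(q_{W_k}\boxtimes q_{X_l})=\delta_{ik}\delta_{jl}\,1$, and then $\sum_{i,j}q_{W_i\boxtimes X_j}\circ p_{W_i\boxtimes X_j}=1$ by the biproduct relation on $\bigoplus(W_i\boxtimes X_j)$. Hence $F$ is an isomorphism with inverse $G$.

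Finally I would verify the two stated identities: precomposing $F$ with $q_{W_a}\boxtimes q_{X_b}$ and applying the biproduct relations yields $q_{W_a\boxtimes X_b}$, while postcomposing with $p_{W_a\boxtimes X_b}$ yields $p_{W_a}\boxtimes p_{X_b}$. For uniqueness, note that the projection condition alone suffices: if $F'$ satisfies $p_{W_i\boxtimes X_j}\circ F'=p_{W_i}\boxtimes p_{X_j}$ for all $i,j$, then, using that $\bigoplus(W_i\boxtimes X_j)$ is in particular a product in $\cC$,
\[
F'=\Big(\sum_{i,j}q_{W_i\boxtimes X_j}\circ p_{W_i\boxtimes X_j}\Big)\circ F'=\sum_{i,j}q_{W_i\boxtimes X_j}\circ(p_{W_i}\boxtimes p_{X_j})=F.
\]
I do not expect any genuine obstacle here: the whole argument is bookkeeping with the defining relations of biproducts, and the only point requiring mild care is to invoke bilinearity (and not merely functoriality) of $\boxtimes$ when collapsing $\sum_i q_{W_i}\circ p_{W_i}$ inside a single tensor factor to the identity morphism.
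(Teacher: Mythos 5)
Your proof is correct and follows essentially the same route as the paper: the same explicit formulas for $F$ and its inverse, the same expansion of the identity via biproduct idempotents, and the same invocation of bilinearity of $\boxtimes$ to move sums in and out of tensor slots. The only cosmetic difference is that you derive uniqueness from the projection condition ($p_{W_i\boxtimes X_j}\circ F'=p_{W_i}\boxtimes p_{X_j}$ plus $\sum q\circ p=1$ on the target), while the paper derives it from the injection condition ($F\circ(q_{W_i}\boxtimes q_{X_j})=q_{W_i\boxtimes X_j}$ plus the factored identity $\big(\sum q\circ p\big)\boxtimes\big(\sum q\circ p\big)=1$ on the source); both are valid and symmetric variants of the same argument.
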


We now introduce a supercategory $\sC$ associated to $\cC$ that essentially consists of objects of $\cC$ having a parity decomposition. Alternatively, one can view $\sC$ as the Deligne product of $\cC$ with the supercategory of finite-dimensional vector superspaces:
\begin{defi}
	\label{def:superC} 
	Let ${\sC}$ be the category
	whose objects are ordered pairs $W=(W^\even ,W^\odd )$ where $W^\even$ and $W^\odd$ are objects of $\cC$
	and 
	\begin{equation*}
	\hom_{\sC}(W_1, W_2) = \hom_{\cC}(W_1^\even\oplus W_1^\odd, W_2^\even\oplus W_2^\odd).
	\end{equation*}
	For an object $W$ in $\sC$, the \textit{parity involution} of $W$ is 
	\begin{equation*}P_W=1_{
	 W^\even}\oplus (-1_{W^\odd})=q_{W^\even}\circ p_{W^\even}-q_{W^\odd}\circ p_{W^\odd}.
	\end{equation*}
	We let $\Pi$ denote the parity reversing endofunctor on $\sC$.
\end{defi}

We say that a morphism $f\in\hom_{\sC}(W_1, W_2)$ has parity $\vert f\vert\in\ZZ/2\ZZ$ if $f\circ P_{W_1}=(-1)^{\vert f\vert} P_{W_2}\circ f$, and we say that $f$ is \textit{even} or \textit{odd} according as $\vert f\vert=\even$ or $\vert f\vert=\odd$. It is clear that even and odd morphisms form subspaces of $\hom_{\sC}(W_1,W_2)$ for any objects $W_1$, $W_2$ in $\sC$. In fact, $\hom_{\sC}(W_1,W_2)$ is a superspace with parity decomposition given by the even and odd morphism subspaces, that is, every morphism in $\sC$ is uniquely the sum of an even morphism and an odd morphism. To see this, first note that for $f\in\hom_{\sC}(W_1,W_2)$, we can write $f = f^\even+ f^\odd$ where
\begin{equation*}
 f^\even=\dfrac{1+P_{W_2}}{2}\circ f\circ\dfrac{1+P_{W_1}}{2}+\dfrac{1-P_{W_2}}{2}\circ f\circ\dfrac{1-P_{W_1}}{2}
\end{equation*}
is even and
\begin{equation*}
 f^\odd=\dfrac{1+P_{W_2}}{2}\circ f\circ\dfrac{1-P_{W_1}}{2}+\dfrac{1-P_{W_2}}{2}\circ f\circ\dfrac{1+P_{W_1}}{2}
\end{equation*}
is odd. To understand this decomposition, note that for $W$ in $\sC$, $\frac{1+P_W}{2}=q_{W^\even}\circ p_{W^\even}$ is projection of $W$ onto $W^\even$, while $\frac{1-P_W}{2}=q_{W^\odd}\circ p_{W^\odd}$ is projection of $W$ onto $W^\odd$. Thus $f^\even$ consists of the components of $f$ mapping $W_1^\even$ onto $W_2^\even$ and $W_1^\odd$ onto $W_2^\odd$, while $f^\odd$ consists of the components of $f$ mapping $W_1^\even$ into $W_2^\odd$ and $W_1^\odd$ into $W_2^\even$. So every morphism in $\sC$ is the sum of even and odd components.

We also need the decomposition of $f$ into even and odd components to be unique. For this, we show that $0$ is the only morphism that is both even and odd. In fact, suppose $f\circ P_{W_1}=P_{W_2}\circ f=-f\circ P_{W_1}$. Then $2(f\circ P_{W_1})=0$, or $f\circ P_{W_1}=f\circ(q_{W^\even}\circ p_{W^\even}-q_{W^\odd}\circ p_{W^\odd})=0$. Thus $f\circ q_{W^\even}\circ p_{W^\even}=f\circ q_{W^\odd}\circ p_{W^\odd}$. By composing both sides of this relation with $q_{W^i}$, $i=\even, \odd$, on the right, we get $f\circ q_{W^i}=0$ for $i=\even,\odd$. But then
\begin{equation*}
 f=f\circ(q_{W^\even}\circ p_{W^\even} +q_{W^\odd}\circ p_{W^\odd}) =0.
\end{equation*}
Thus morphism spaces in $\sC$ are superspaces with parity decomposition given by even and odd morphisms. Moreover, it is clear that if $f_1: W_2\rightarrow W_3$ and $f_2: W_1\rightarrow W_2$ are parity-homogeneous morphisms in $\sC$, then $\vert f_1\circ f_2\vert=\vert f_1\vert+\vert f_2\vert$. Thus composition is an even bilinear map, and so $\sC$ is a supercategory.

\begin{rema}
 Since our convention is that $\sC$ contains both even and odd morphisms, morphisms do not generally preserve parity decompositions of objects. For instance, $W\cong\Pi(W)$ for any object $W$ in $\sC$ since the identity on $W^\even\oplus W^\odd=W^\odd\oplus W^\even$ is an (odd) isomorphism from $W$ to $\Pi(W)$. If one instead wants to use the convention that morphisms preserve parity decompositions of objects, one should use the \textit{underlying category} $\sCeven$ whose objects are the objects of $\sC$ and whose morphisms are the \textit{even} morphisms of $\sC$. In general we can expect more isomorphism classes of objects in $\sCeven$ since a given object in $\cC$ could have multiple distinct parity decompositions. Note also that the odd morphisms in $\hom_{\sC}(W_1,W_2)$ appear in $\sCeven$ as morphisms from $W_1$ to $\Pi(W_2)$. Although we shall generally work with $\sC$ we shall frequently remark on how our results apply to $\sCeven$.  
\end{rema}

\begin{defi}\label{def:SCtoC}
There is a natural inclusion functor $\cI: \cC\rightarrow {\sC}$ 
such that $\cI(W) = (W, 0)$ and $\cI(f) = f\oplus 0$. We also have a natural forgetful functor $\cJ:\sC \rightarrow \cC$ with
$\cJ: (W^\even, W^\odd) \rightarrow W^\even\oplus W^\odd$, $\cJ(f)=f$ (so $\cJ$ forgets the superspace structure on morphisms). With these functors in mind, we shall sometimes abuse notation and denote an object $W=(W^\even,W^\odd)$ of $\sC$
simply by $W = W^\even\oplus W^\odd$.
\end{defi}

The following proposition is easy to prove:
\begin{propo}\label{SCabelian}
 The supercategory $\sC$ is an $\mathbb{F}$-linear abelian category:
 \begin{enumerate}
  \item The zero object of $\sC$ is $\underline{0}=(0,0)$.
  \item The biproduct of a finite collection of objects $\lbrace W_i\rbrace$ in $\sC$ is given by $\bigoplus W_i=\left(\bigoplus W_i^\even, \bigoplus W_i^\odd\right)$ with $p_i=p_{W_i^\even}\oplus p_{W_i^\odd}$ and $q_i=q_{W_i^\even}\oplus q_{W_i^\odd}$.
  \item If $(K,k)$ is a kernel in $\cC$ of a morphism $f: W_1^\even\oplus W_1^\odd\rightarrow W_2^\even\oplus W_2^\odd$ in $\sC$, then $(K,0)$ is a kernel of $f$ in $\sC$, with kernel morphism
  \begin{equation*}
   K\oplus 0 \xrightarrow{p_K} K \xrightarrow{k} W_1^\even\oplus W_1^\odd.
  \end{equation*}

%
%
  \item If $(C,c)$ is a cokernel in $\cC$ of a morphism $f: W_1^\even\oplus W_1^\odd\rightarrow W_2^\even\oplus W_2^\odd$ in $\sC$, then $(C,0)$ is a cokernel of $f$ in $\sC$, with cokernel morphism
  \begin{equation*}
    W_2^\even\oplus W_2^\odd\xrightarrow{c} C\xrightarrow{q_C} C\oplus 0.
  \end{equation*}
 \end{enumerate}
\end{propo}

Note that the structure morphisms $p_i$ and $q_i$ for a biproduct in $\sC$ can be taken to be even, but the same is not necessarily the case for kernel and cokernel morphisms. However, we do have:
\begin{propo}\label{evenkercoker}
If $f: W_1\rightarrow W_2$ is a parity-homogeneous morphism in $\sC$, then its kernel and cokernel morphisms can be taken to be even. Moreover, every parity-homogeneous monomorphism in $\sC$ is the kernel of an even morphism, and every parity-homogeneous epimorphism in $\sC$ is the cokernel of an even morphism.
\end{propo}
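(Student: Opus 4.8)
The plan is to observe that a parity-homogeneous $f$ forces the parity involutions of $W_1$ and $W_2$ to descend to involutions on \emph{any} kernel, resp.\ cokernel, of $f$ computed in $\cC$, and then to re-choose the kernel and cokernel \emph{objects} inside $\sC$ so that these descended involutions become their parity involutions; the structure morphisms are then automatically even.

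For the kernel, let $k\colon K\to W_1^{\even}\oplus W_1^{\odd}$ be a kernel of $f$ in $\cC$, so that by Proposition~\ref{SCabelian} the object $(K,0)$ equipped with the $\sC$-morphism $k$ is a kernel of $f$ in $\sC$ --- but $k$ need not be even. Writing $|f|$ for the parity of $f$, we have $f\circ P_{W_1}=(-1)^{|f|}P_{W_2}\circ f$, hence $f\circ(P_{W_1}\circ k)=(-1)^{|f|}P_{W_2}\circ f\circ k=0$; the universal property of $(K,k)$ in $\cC$ then yields a unique $\cC$-morphism $\widetilde P\colon K\to K$ with $k\circ\widetilde P=P_{W_1}\circ k$, and since $k$ is monic and $P_{W_1}^2=1_{W_1}$ one gets $\widetilde P^2=1_K$. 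Because $\mathrm{char}\,\mathbb{F}\neq 2$ and $\cC$ is abelian, the idempotent $\tfrac12(1_K+\widetilde P)$ splits and $K$ decomposes as $K^{\even}\oplus K^{\odd}$ into the $(\pm 1)$-eigenobjects of $\widetilde P$. Set $\widetilde K:=(K^{\even},K^{\odd})\in\sC$; then, identifying $K$ with $K^{\even}\oplus K^{\odd}$, we have $P_{\widetilde K}=\widetilde P$ by the formula in Definition~\ref{def:superC}, so $k$, now regarded as an $\sC$-morphism $\widetilde K\to W_1$, satisfies $k\circ P_{\widetilde K}=k\circ\widetilde P=P_{W_1}\circ k$ and is therefore even. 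Finally, the comparison isomorphism $(K,0)\cong\widetilde K$ in $\sC$ (which need not be even, but that is harmless) transports the kernel $(K,0)$ to $\widetilde K$, so $(\widetilde K,k)$ is also a kernel of $f$, now with even structure morphism.

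The cokernel case is dual: from a cokernel $c\colon W_2\to C$ of $f$ in $\cC$ one gets $(c\circ P_{W_2})\circ f=(-1)^{|f|}c\circ f\circ P_{W_1}=0$, hence a unique involution $\widetilde P\colon C\to C$ with $\widetilde P\circ c=c\circ P_{W_2}$; splitting $C=C^{\even}\oplus C^{\odd}$ along the eigenobjects of $\widetilde P$ and setting $\widetilde C:=(C^{\even},C^{\odd})$ makes $c\colon W_2\to\widetilde C$ even, and $(\widetilde C,c)$ is a cokernel of $f$ in $\sC$. For the remaining two assertions, recall that in any abelian category a monomorphism is the kernel of its cokernel and an epimorphism is the cokernel of its kernel; since $\sC$ is abelian by Proposition~\ref{SCabelian}, a parity-homogeneous monomorphism $f$ is the kernel of $\coker f$, which by the cokernel case may be chosen even, and dually a parity-homogeneous epimorphism is the cokernel of its kernel, which may be chosen even.

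The one point demanding genuine care is the transition from the ``naive'' kernel of Proposition~\ref{SCabelian} to the eigenobject splitting: one must check that $\widetilde P$ really squares to the identity (so that the two eigenobjects exist and their biproduct recovers $K$) and that reorganizing $K$ into even and odd parts does not disturb the kernel's universal property --- which holds precisely because the comparison map lives in $\sC$, where the parity of an isomorphism is immaterial. Everything else is a routine diagram chase using monicity of $k$, epicity of $c$, the biproduct/kernel/cokernel descriptions already recorded in Proposition~\ref{SCabelian}, and $\mathrm{char}\,\mathbb{F}\neq 2$.
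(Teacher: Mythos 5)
Your proof is correct and follows essentially the same route as the paper: descend the parity involution of $W_1$ to the naive $\cC$-kernel via the universal property, split into $(\pm1)$-eigenobjects to build the $\sC$-object $\widetilde K$, observe that the kernel morphism is then even, and argue dually for cokernels and then deduce the monomorphism/epimorphism assertions via kernel-of-cokernel. The only difference is cosmetic---you invoke idempotent splitting in an abelian category where the paper writes out the comparison isomorphism $K^{\even}\oplus K^{\odd}\to K$ and its inverse explicitly in terms of the universal property of the eigenobject kernels.
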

\begin{proof}
Let $(K, k)$ be a kernel of $f$ in $\cC$ and consider the $\cC$-morphism $P_{W_1}\circ k: K\rightarrow W_1$. Since $f$ is parity-homogeneous, we have
\begin{equation*}
 f\circ P_{W_1}\circ k=(-1)^{\vert f\vert} P_{W_2}\circ f\circ k=0,
\end{equation*}
so the universal property of the kernel implies there is a unique morphism $P_K: K\rightarrow K$ such that
\begin{equation*}
 k\circ P_K=P_{W_1}\circ k.
\end{equation*}
Then set $\widetilde{K} =(K^\even, K^\odd)$ in $\sC$ where $(K^\even, k^\even)$ is a kernel of $\frac{1-P_K}{2}$ in $\cC$ and $(K^\odd, k^\odd)$ is a kernel of $\frac{1+P_K}{2}$ in $\cC$; also define $\widetilde{k}: \widetilde{K}\rightarrow W_1$ by
\begin{equation*}
 \widetilde{k}=k\circ(k^\even\circ p_{K^\even}+k^\odd\circ p_{K^\odd}).
\end{equation*}
We will show that $(\widetilde{K},\widetilde{k})$ is a kernel of $f$ in $\sC$ and that $\widetilde{k}$ is even.

First let us show that $\widetilde{k}$ is even. We have
\begin{align*}
 P_{W_1}\circ\widetilde{k} & = P_{W_1}\circ k\circ(k^\even\circ p_{K^\even}+k^\odd\circ p_{K^\odd}) = k\circ P_K\circ(k^\even\circ p_{K^\even}+k^\odd\circ p_{K^\odd})\nonumber\\
 & =k\circ(k^\even\circ p_{K^\even}- k^\odd\circ p_{K^\odd}) = k\circ(k^\even\circ p_{K^\even}+k^\odd\circ p_{K^\odd})\circ(q_{K^\even}\circ p_{K^\even}-q_{K^\odd}\circ p_{K^\odd}) =\widetilde{k}\circ P_{\widetilde{K}};
\end{align*}
in the third equality we have used the definition of $k^\even$, $k^\odd$ as kernel morphisms. Thus $\widetilde{k}$ is even.

To show that $(\widetilde{K}, \widetilde{k})$ is a kernel of $f$, it is enough to show that $k^\even\circ p_{K^\even}+k^\odd\circ p_{K^\odd}: K^\even\oplus K^\odd\rightarrow K$ is an isomorphism (in $\cC$). To construct the inverse, note that since $\frac{1+P_K}{2}\circ\frac{1-P_K}{2}=\frac{1-P_K}{2}\circ\frac{1+P_K}{2}=0$, the universal properties of the kernels $k^i$, $i=\even,\odd$, imply that there are morphisms $f^i: K\rightarrow K^i$, $i=\even,\odd$, such that
\begin{equation*}
 k^\even\circ f^\even=\dfrac{1+P_K}{2}\hspace{2em}\mathrm{and}\hspace{2em} k^\odd\circ f^\odd=\dfrac{1-P_K}{2}.
\end{equation*}
We show that $q_{K^\even}\circ f^\even+q_{K^\odd}\circ f^\odd$ is the inverse of $k^\even\circ p_{K^\even}+k^\odd\circ p_{K^\odd}$. In fact,
\begin{equation*}
 (k^\even\circ p_{K^\even}+k^\odd\circ p_{K^\odd})\circ(q_{K^\even}\circ f^\even+q_{K^\odd}\circ f^\odd) =k^\even\circ f^\even+k^\odd\circ f^\odd=\dfrac{1+P_K}{2}+\dfrac{1-P_K}{2}=1_K.
\end{equation*}
For the reverse composition, note that
\begin{align*}
 k^\even\circ f^\even\circ k^\even=\dfrac{1+P_K}{2}\circ k^\even =\left(1_K-\dfrac{1-P_K}{2}\right)\circ k^\even=k^\even;
\end{align*}
since $k^\even$ is a monomorphism, it follows that $f^\even\circ k^\even =1_{K^\even}$. Also,
\begin{equation*}
 k^\even\circ f^\even\circ k^\odd=\dfrac{1+P_K}{2}\circ k^\odd =0,
\end{equation*}
so that $f^\even\circ k^\odd=0$. Similarly, $f^\odd\circ k^\even=0$ and $f^\odd\circ k^\odd=1_{K^\odd}$. Thus
\begin{align*}
 (q_{K^\even}\circ f^\even +q_{K^\odd}\circ  f^\odd)\circ(k^\even\circ p_{K^\even}+k^\odd\circ p_{K^\odd})
 = q_{K^\even}\circ p_{K^\even}+q_{K^\odd}\circ p_{K^\odd}=1_{K^\even\oplus K^\odd}.
\end{align*}
This completes the proof that $(\widetilde{K}, \widetilde{k})$ is a kernel of $f$ in $\sC$ with $\widetilde{k}$ even.

The proof that $f$ has a cokernel $(\widetilde{C},\widetilde{c})$ in $\sC$ with $\widetilde{c}$ even is entirely analogous. Now suppose that $f$ is a parity-homogeneous monomorphism in $\sC$. The argument in the proof of Theorem \ref{thm:repAabelian} shows that $f$ is the kernel of its cokernel, which can be taken to be even. Similarly, if $f$ is a parity-homogeneous epimorphism in $\sC$, then $f$ is the cokernel of its kernel, which can be taken to be even.
\end{proof}	

\begin{rema}
 Propositions \ref{SCabelian} and \ref{evenkercoker} combined imply that the underlying category $\sCeven$ is abelian.
\end{rema}

We now give ${\sC}$ the structure of a braided tensor supercategory. On objects, the tensor product bifunctor $\boxtimes$ is given by $W_1\boxtimes W_2=((W_1\boxtimes W_2)^\even, (W_1\boxtimes W_2)^\odd)$, where \begin{equation*}
 (W_1\boxtimes W_2)^i=\bigoplus_{i_1+i_2=i} W_1^{i_1}\boxtimes W_2^{i_2}                                                                                                                                                                                                                                           \end{equation*}
for $i=\even, \odd$. The tensor product of morphisms $f_1: W_1\rightarrow\widetilde{W}_1$, $f_2: W_2\rightarrow\widetilde{W}_2$ needs to be a $\cC$-morphism:
\begin{equation*}
 f_1\boxtimes f_2: \bigoplus_{i_1, i_2\in\ztwo} W_1^{i_1}\boxtimes W_2^{i_2}\rightarrow\bigoplus_{i_1,i_2\in\ztwo} \widetilde{W}_1^{i_1}\boxtimes\widetilde{W}_2^{i_2}.
\end{equation*}
Since tensor products in $\cC$ distribute over biproducts (Proposition \ref{distribute}), we can identify $f_1\boxtimes f_2$ as a morphism $(W_1^\even\oplus W_1^\odd)\boxtimes(W_2^\even\oplus W_2^\odd)\rightarrow(\widetilde{W}_1^\even\oplus \widetilde{W}_1^\odd)\boxtimes(\widetilde{W}_2^\even\oplus \widetilde{W}_2^\odd)$. If $f_2$ is parity-homogeneous, we define this morphism to be
 $(f_1\circ P_{W_1}^{\vert f_2\vert})\boxtimes f_2$, where $\boxtimes$ here is the tensor product of morphisms in $\cC$. That is,
 \begin{equation*}
  f_1\boxtimes_{\sC} f_2=F\circ\left((f_1\circ P_{W_1}^{\vert f_2\vert})\boxtimes_\cC f_2\right)\circ F^{-1}
 \end{equation*}
with $F$ as in Proposition \ref{distribute}. If $f_2$ is not necessarily homogeneous, we define
\begin{equation*}
 f_1\boxtimes f_2=f_1\boxtimes f_2^\even+f_1\boxtimes f_2^\odd.
\end{equation*}
Then it is clear that the tensor product of morphisms in $\sC$ is bilinear.

Note the factor of $P_{W_1}^{\vert f_2\vert}$ in the definition of tensor product of morphisms in $\sC$. This is needed to prove  the super interchange law \eqref{exchange} in the following proposition: 
\begin{propo}\label{SCtenseven}
The tensor product $\boxtimes: \sC\times\sC\rightarrow\sC$ is a superfunctor.
\end{propo}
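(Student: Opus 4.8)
The plan is to check the three conditions that make $\boxtimes$ a superfunctor $\sC\times\sC\to\sC$: it preserves identity morphisms; it induces an \emph{even} linear map on morphism superspaces, i.e.\ $\vert f_1\boxtimes f_2\vert=\vert f_1\vert+\vert f_2\vert$ for parity-homogeneous $f_1,f_2$; and it respects the composition of the supercategory $\sC\times\sC$, which is exactly the exchange condition \eqref{exchange}. Since $\boxtimes$ and composition are bilinear on morphisms and the sign $(-1)^{\vert f_2\vert\vert g_1\vert}$ appearing in \eqref{exchange} involves only $f_2$ and $g_1$, I would first reduce every identity to the case in which all morphisms in sight are parity-homogeneous in $\sC$, then decompose $f_1,g_2$ into homogeneous parts as needed.

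The recurring tool is the distributivity isomorphism $F$ of Proposition~\ref{distribute}. The one observation I would isolate first is that, under $F$, the parity involution $P_{W_1\boxtimes W_2}$ of the object $W_1\boxtimes W_2$ of $\sC$ corresponds to $P_{W_1}\boxtimes_\cC P_{W_2}$; this is immediate from the uniqueness clause of Proposition~\ref{distribute}, since both $\cC$-morphisms act as $(-1)^{i_1+i_2}$ on the summand $W_1^{i_1}\boxtimes_\cC W_2^{i_2}$. Identity preservation then follows at once: as $1_{W_2}$ is even, $1_{W_1}\boxtimes 1_{W_2}=F\circ(1_{W_1}\boxtimes_\cC 1_{W_2})\circ F^{-1}=1_{W_1\boxtimes W_2}$. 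For the parity statement I would compute $(f_1\boxtimes f_2)\circ P_{W_1\boxtimes W_2}$ by unwinding $f_1\boxtimes f_2=F\circ((f_1\circ P_{W_1}^{\vert f_2\vert})\boxtimes_\cC f_2)\circ F^{-1}$, rewriting $P_{W_1\boxtimes W_2}=F\circ(P_{W_1}\boxtimes_\cC P_{W_2})\circ F^{-1}$, merging the two $\boxtimes_\cC$-morphisms by $\cC$-functoriality, and then moving $P_{W_1}$ and $P_{W_2}$ to the left through $f_1$ and $f_2$ via $f_i\circ P=(-1)^{\vert f_i\vert}P\circ f_i$; this yields the total sign $(-1)^{\vert f_1\vert+\vert f_2\vert}$ and exhibits the result as $(-1)^{\vert f_1\vert+\vert f_2\vert}P_{\widetilde W_1\boxtimes\widetilde W_2}\circ(f_1\boxtimes f_2)$, as required.

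The exchange condition is the substantive step, and indeed the precise reason the factor $P_{W_1}^{\vert f_2\vert}$ was built into the definition of $\boxtimes$ on morphisms. With $g_i\colon U_i\to W_i$ and $f_i\colon W_i\to\widetilde W_i$, I would expand $(f_1\boxtimes f_2)\circ(g_1\boxtimes g_2)$ using the definition for each of the two left-hand tensor products; the middle factors $F_W^{-1}\circ F_W$ cancel, and $\cC$-functoriality of $\boxtimes_\cC$ collapses the composite into $F_{\widetilde W}\circ\bigl((f_1\circ P_{W_1}^{\vert f_2\vert}\circ g_1\circ P_{U_1}^{\vert g_2\vert})\boxtimes_\cC(f_2\circ g_2)\bigr)\circ F_U^{-1}$. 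The key move is to commute $P_{W_1}^{\vert f_2\vert}$ past the homogeneous morphism $g_1$: from $g_1\circ P_{U_1}=(-1)^{\vert g_1\vert}P_{W_1}\circ g_1$ one gets $P_{W_1}^{\vert f_2\vert}\circ g_1=(-1)^{\vert f_2\vert\vert g_1\vert}\,g_1\circ P_{U_1}^{\vert f_2\vert}$, and then $P_{U_1}^{\vert f_2\vert}\circ P_{U_1}^{\vert g_2\vert}=P_{U_1}^{\vert f_2\vert+\vert g_2\vert}$ with exponents in $\ZZ/2\ZZ$. Since composition in $\sC$ is even, $f_2\circ g_2$ is homogeneous of parity $\vert f_2\vert+\vert g_2\vert$, so what remains is precisely $(-1)^{\vert f_2\vert\vert g_1\vert}$ times $F_{\widetilde W}\circ\bigl((f_1\circ g_1\circ P_{U_1}^{\vert f_2\circ g_2\vert})\boxtimes_\cC(f_2\circ g_2)\bigr)\circ F_U^{-1}=(-1)^{\vert f_2\vert\vert g_1\vert}(f_1\circ g_1)\boxtimes(f_2\circ g_2)$, which is \eqref{exchange}.

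I expect the only genuine obstacle to be notational discipline: one must keep the three distributivity isomorphisms $F_U$, $F_W$, $F_{\widetilde W}$ attached to their respective pairs of objects distinct, and be careful at each step whether a given $\boxtimes$ or $P$ lives in $\cC$ or in $\sC$. Once the definitions are written out with all the $F$'s in place, every remaining step is just bilinearity and functoriality of $\boxtimes_\cC$, the relation $f\circ P=(-1)^{\vert f\vert}P\circ f$ for homogeneous morphisms of $\sC$, or additivity of $\ZZ/2\ZZ$-exponents of $P$. In particular, no rigidity, semisimplicity, or associativity and unit constraints of $\cC$ enter.
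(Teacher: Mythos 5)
Your proof is correct and follows essentially the same route as the paper's, expanding the definition $f_1\boxtimes f_2 = F\circ((f_1\circ P_{W_1}^{\vert f_2\vert})\boxtimes_\cC f_2)\circ F^{-1}$ and commuting powers of the parity involution past homogeneous morphisms. The one stylistic difference worth noting is that you isolate the relation $F^{-1}\circ P_{W_1\boxtimes W_2}\circ F = P_{W_1}\boxtimes_\cC P_{W_2}$ as a standalone observation before the parity-grading computation, whereas the paper carries out that identification inline by expanding $P_{\widetilde W_1\boxtimes\widetilde W_2}$ as an explicit sum of projections $q\circ p$ with $(-1)^{i_1+i_2}$ weights and pushing it through $F$ term by term; your version is a bit more modular but buys nothing substantive. (Your attribution of this observation to ``the uniqueness clause of Proposition~\ref{distribute}'' is slightly loose---what is really being used is that two morphisms between biproducts agreeing on each summand coincide---but the fact is clearly true.) You also explicitly verify preservation of identities, which the paper leaves tacit; and your computation of the exchange sign via $P_{W_1}^{\vert f_2\vert}\circ g_1 = (-1)^{\vert f_2\vert\vert g_1\vert}\,g_1\circ P_{U_1}^{\vert f_2\vert}$ followed by absorbing $P_{U_1}^{\vert f_2\vert}\circ P_{U_1}^{\vert g_2\vert}$ into $P_{U_1}^{\vert f_2\circ g_2\vert}$ is exactly the paper's calculation up to a harmless relabeling of which pair of morphisms is called $f$ and which $g$.
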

\begin{proof}
We need to show that $\boxtimes$ induces an even linear map on morphisms and that tensor products of morphisms satisfy the  super interchange law \eqref{exchange}.

To show that
\begin{equation*}
 \boxtimes: \hom_{\sC}(W_1, \widetilde{W}_1)\otimes\hom_{\sC}(W_2,\widetilde{W}_2)\rightarrow\hom_{\sC}(W_1\boxtimes W_2,\widetilde{W}_1\boxtimes\widetilde{W}_2)
\end{equation*}
is an even linear map between superspaces, we need to show that if $f_1: W_1\rightarrow\widetilde{W}_1$ and $f_2: W_2\rightarrow\widetilde{W}_2$ are parity-homogeneous morphisms in $\sC$, then $\vert f_1\boxtimes f_2\vert =\vert f_1\vert+\vert f_2\vert$. Using the definitions and Proposition \ref{distribute},
\begin{align*}
 P_{\widetilde{W}_1\boxtimes\widetilde{W_2}} &\circ(f_1\boxtimes_{\sC} f_2) =\left(\sum_{i_1,i_2\in\ztwo} (-1)^{i_1+i_2} q_{\widetilde{W}_1^{i_1}\boxtimes\widetilde{W}_2^{i_2}}\circ p_{\widetilde{W}_1^{i_1}\boxtimes\widetilde{W}_2^{i_2}}\right)\circ F\circ\left((f_1\circ P_{W_1}^{\vert f_2\vert})\boxtimes_{\cC} f_2\right)\circ F^{-1}\nonumber\\
 & =\sum_{i_1,i_2\in\ztwo} (-1)^{i_1+i_2} q_{\widetilde{W}_1^{i_1}\boxtimes\widetilde{W}_2^{i_2}}\circ\left((p_{\widetilde{W}_1^{i_1}}\circ f_1\circ P_{W_1}^{\vert f_2\vert})\boxtimes_{\cC} (p_{\widetilde{W}_2^{i_2}}\circ f_2)\right)\circ F^{-1}\nonumber\\
& =F\circ\left(\sum_{i_1\in\ztwo} (-1)^{i_1}(q_{\widetilde{W}_1^{i_1}}\circ p_{\widetilde{W}_1^{i_1}}\circ f_1\circ P_{W_1}^{\vert f_2\vert})\boxtimes_{\cC}\sum_{i_2\in\ztwo} (-1)^{i_2}(q_{\widetilde{W}_2^{i_2}}\circ p_{\widetilde{W}_2^{i_2}}\circ f_2)\right)\circ F^{-1}\nonumber\\
& =F\circ\left( (P_{\widetilde{W}_1}\circ f_1\circ P_{W_1}^{\vert f_2\vert})\boxtimes_{\cC} (P_{\widetilde{W}_2}\circ f_2)\right)\circ F^{-1}\nonumber\\
& =(-1)^{\vert f_1\vert+\vert f_2\vert} F\circ\left((f_1\circ P_{W_1}^{\vert f_2\vert})\boxtimes_{\cC} f_2\right)\circ(P_{W_1}\boxtimes_{\cC} P_{W_2})\circ F^{-1}\nonumber\\
& =(-1)^{\vert f_1\vert+\vert f_2\vert} F\circ\left((f_1\circ P_{W_1}^{\vert f_2\vert})\boxtimes_{\cC} f_2\right)\circ F^{-1}\circ P_{W_1\boxtimes W_2}\nonumber\\
& =(-1)^{\vert f_1\vert+\vert f_2\vert}(f_1\boxtimes_{\sC} f_2)\circ P_{W_1\boxtimes W_2},
\end{align*}
as desired.

Now suppose $f_1: W_2\rightarrow W_3$, $f_2: W_1\rightarrow W_2$, $g_1: X_2\rightarrow X_3$, and $g_2: X_1\rightarrow X_2$ are morphisms in $\sC$ with $g_1$ and $f_2$ homogeneous. To verify the  super interchange law, we may assume that $g_2$ is also homogeneous, and then
\begin{align*}
 (f_1\boxtimes_{\sC} g_1)\circ(f_2\boxtimes_{\sC} g_2) & =F\circ((f_1\circ P_{W_2}^{\vert g_1\vert})\boxtimes_\cC g_1)\circ((f_2\circ P_{W_1}^{\vert g_2\vert})\boxtimes_{\cC} g_2)\circ F^{-1}\nonumber\\
 & =F\circ((f_1\circ P_{W_2}^{\vert g_1\vert}\circ f_2\circ P_{W_1}^{\vert g_2\vert})\boxtimes_{\cC}(g_1\circ g_2))\circ F^{-1}\nonumber\\
 & =((-1)^{\vert f_2\vert})^{\vert g_1\vert} F\circ((f_1\circ f_2\circ P_{W_1}^{\vert g_1\vert+\vert g_2\vert})\boxtimes_{\cC} (g_1\circ g_2))\circ F^{-1}\nonumber\\
 & =(-1)^{\vert g_1\vert \vert f_2\vert} (f_1\circ f_2)\boxtimes_{\sC}(g_1\circ g_2)
\end{align*}
as desired.
\end{proof}
\begin{rema}
 Proposition \ref{SCtenseven} shows that the tensor product of even morphisms in $\sC$ is even, so that $\boxtimes$ defines a functor on $\sCeven$ as well.
\end{rema}

Now we can define the rest of the braided tensor supercategory structure on $\sC$:
\begin{enumerate}
 \item The unit object in $\sC$ is $\sunit=(\unit, 0)$, and for any object $W$ in $\sC$, the left unit isomorphism is $\sleft=l_{W^\even}\oplus l_{W^\odd}$ and the right unit isomorphism is $\sright=r_{W^\even}\oplus r_{W^\odd}$.
 
 \item For objects $W_1$, $W_2$, and $W_3$ in $\sC$, the associativity isomorphism is
 \begin{equation*}
  \sA_{W_1,W_2,W_3}=\left(\bigoplus_{i_1+i_2+i_3=\even} \cA_{W_1^{i_1}, W_2^{i_2}, W_3^{i_3}}\right)\oplus\left(\bigoplus_{i_1+i_2+i_3=\odd} \cA_{W_1^{i_1}, W_2^{i_2}, W_3^{i_3}}\right).
 \end{equation*}
 
 \item For objects $W_1$ and $W_2$ in $\sC$, the braiding isomorphism is
 \begin{equation*}
  \sR_{W_1,W_2} =\left(\bigoplus_{i_1+i_2=\even}(-1)^{i_1 i_2}\cR_{W_1^{i_1},W_2^{i_2}}\right)
\oplus 	\left(\bigoplus_{i_1+i_2=\odd}(-1)^{i_1 i_2}\cR_{W_1^{i_1},W_2^{i_2}}\right)
 \end{equation*}

\end{enumerate}

\begin{rema}\label{rema:SCdata}
 It is straightforward to verify, using Proposition \ref{distribute} and its notation, that, if we take $\mathbf{1}\oplus 0$ in $\cC$ to be $\mathbf{1}$ with $q_\mathbf{1}=p_\mathbf{1}=1_\mathbf{1}$ and $q_0=p_0=0$, then
 \begin{equation*}
  \sleft_W = l_{W^\even\oplus W^\odd}\circ F^{-1}
 \end{equation*}
and
\begin{equation*}
\sright_W=r_{W^\even\oplus W^\odd}\circ F^{-1}
\end{equation*}
for any object $W$ in $\sC$. Moreover, for any objects $W_1$, $W_2$, $W_3$ in $\sC$,
\begin{equation*}
 \sA_{W_1,W_2,W_3}=F\circ(F\boxtimes 1_{W_3})\circ\cA_{W_1^\even\oplus W_1^\odd, W_2^\even\oplus W_2^\odd, W_3^\even\oplus W_3^\odd}\circ(1_{W_1}\boxtimes F^{-1})\circ F^{-1},
\end{equation*}
and
\begin{equation*}
 \sR_{W_1,W_2}=F\circ Q_{W_2,W_1}\circ\cR_{W_1^\even\oplus W_1^\odd,W_2^\even\oplus W_2^\odd}\circ F^{-1},
\end{equation*}
where
\begin{equation*}
 Q_{W_2,W_1}=\sum_{i_1,i_2\in\ztwo} (-1)^{i_1 i_2} (q_{W_2^{i_2}}\circ p_{W_2^{i_2}})\boxtimes (q_{W_1^{i_1}}\circ p_{W_1^{i_1}}).
\end{equation*}
Note that $Q_{W_2, W_1}$ would be the identity if it were not for the $(-1)^{i_1 i_2}$ factor.
\end{rema}

\begin{theo}\label{propo:superCisbraided}
The supercategory $\sC$ together with $\boxtimes$, $\sunit$, $\sleft$, $\sright$, $\sA$, and $\sR$ is an $\mathbb{F}$-linear braided tensor supercategory. 
\end{theo}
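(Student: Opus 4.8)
The plan is to reduce every remaining axiom to the corresponding axiom already available in the braided tensor category $\cC$, exploiting the explicit formulas of Remark \ref{rema:SCdata} that write $\sleft$, $\sright$, $\sA$, and $\sR$ as the structure isomorphisms of $\cC$ conjugated by the distributivity isomorphisms $F$ of Proposition \ref{distribute} (with, in the braiding case, the additional sign-twisting operator $Q$). Since Proposition \ref{SCabelian} already makes $\sC$ an $\mathbb{F}$-linear abelian category, Proposition \ref{SCtenseven} already makes $\boxtimes$ a superfunctor, and the tensor product of morphisms is bilinear by construction, what is left is precisely the monoidal and braided coherence: that $\sleft,\sright,\sA,\sR$ are even and natural, and that they satisfy the triangle, pentagon, and two hexagon axioms.

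First I would record that $\boxtimes$ is a bifunctor on $\sC$: the identity $1_{W_1}\boxtimes 1_{W_2}=1_{W_1\boxtimes W_2}$ is immediate from the definition together with $P_{W_1}^{\even}=1_{W_1}$, and compatibility with composition is exactly the exchange condition \eqref{exchange} proved in Proposition \ref{SCtenseven}, extended from parity-homogeneous to arbitrary morphisms by bilinearity. Next, each of $\sleft,\sright,\sA,\sR$ is by construction a direct sum of an ``even-part'' $\cC$-isomorphism and an ``odd-part'' $\cC$-isomorphism, hence block-diagonal with respect to the parity decompositions, hence commutes with the parity involutions and so is even. For naturality one takes parity-homogeneous morphisms, rewrites tensor products of morphisms via $f_1\boxtimes_{\sC}f_2=F\circ((f_1\circ P_{W_1}^{\vert f_2\vert})\boxtimes_{\cC}f_2)\circ F^{-1}$, and applies naturality of $l,r,\cA,\cR$ in $\cC$ together with naturality of $F$. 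For $\sR$ the same computation additionally produces the sign $(-1)^{\vert f_1\vert\vert f_2\vert}$ of \eqref{braidingsupernatural}, arising from moving $Q$ past $f_1\boxtimes_{\cC}f_2$ combined with the $P^{\vert f_2\vert}$ factors; the unit naturality is the simplest case because $\sunit$ has trivial odd part, so $P_{\sunit}=1$.

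For the monoidal coherence I would substitute the formulas of Remark \ref{rema:SCdata} into the triangle and pentagon diagrams; after conjugating by the appropriate $F$'s these reduce to the triangle and pentagon axioms of $\cC$, once one has verified the purely formal, sign-free compatibilities among the various distributivity isomorphisms (for instance that $F\circ(F\boxtimes 1)$ and $F\circ(1\boxtimes F)$ intertwine $\cA$ on a fourfold biproduct), all of which follow directly from the defining universal property of $F$ in Proposition \ref{distribute}. For the two hexagon axioms for $\sR$ one performs the same conjugation; the only genuinely new input beyond the hexagon axioms of $\cC$ is that $(i_1,i_2)\mapsto(-1)^{i_1 i_2}$ is biadditive on $\ztwo\times\ztwo$, so that the sign attached to braiding $W_1^{i_1}$ past a biproduct of the summands of $W_2\boxtimes W_3$ factors as the product of the signs along the two legs of the hexagon.

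The main obstacle is the sign bookkeeping in this last step: one must track simultaneously the three sources of signs — the $(-1)^{i_1 i_2}$ appearing in $\sR$, the factors $P_W^{\vert f\vert}$ inserted into $f\boxtimes_{\sC}g$, and the operator $Q$ of Remark \ref{rema:SCdata} — while also keeping the (sign-free but not entirely trivial) coherence of the $F$'s under control. Everything else is a routine, if somewhat lengthy, transcription of the already-established braided tensor structure on $\cC$ through the equivalence of categories $\cC\cong\sC^{\mathrm{ev}}$-style description afforded by $F$.
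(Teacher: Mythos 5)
Your proposal is correct and follows essentially the same route as the paper's own proof: the paper likewise notes that, via the explicit formulas of Remark \ref{rema:SCdata}, evenness, naturality of $\sleft,\sright,\sA$, and the triangle and pentagon axioms are straightforward $F$-conjugates of the corresponding facts in $\cC$; it then concentrates on the braiding, establishing supernaturality \eqref{braidingsupernatural} by exactly the sign bookkeeping you describe (tracking $Q$ and the $P_W^{|f|}$ factors through a component decomposition of $f_1$ and $f_2$), and proving the hexagons graded-piece by graded-piece using precisely the biadditivity $(-1)^{i_1(i_2+i_3)}=(-1)^{i_1 i_2}(-1)^{i_1 i_3}$ that you identify as the key new input.
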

\begin{proof}
It is straightforward to use the definitions, Remark \ref{rema:SCdata}, and the fact that $\cC$ is a tensor category to verify that the unit and associativity isomorphisms in $\sC$ are natural and satisfy the triangle and pentagon axioms. However, the braiding isomorphisms are complicated by the extra $(-1)^{i_1 i_2}$ factor.

We need to show that the braiding isomorphisms are natural in the sense of \eqref{braidingsupernatural}. In fact,
\begin{align}\label{bsncalc}
 \sR_{\widetilde{W}_1, \widetilde{W}_2} & \circ(f_1\boxtimes_{\sC} f_2)  =F\circ Q_{\widetilde{W}_2, \widetilde{W}_1}\circ\cR_{\widetilde{W}_1^\even\oplus\widetilde{W}_1^\odd, \widetilde{W}_2^\even\oplus\widetilde{W}_2^\odd}\circ F^{-1}\circ F\circ\left((f_1\circ P_{W_1}^{\vert f_2\vert})\boxtimes_\cC f_2\right)\circ F^{-1}\nonumber\\
 & =F\circ Q_{\widetilde{W}_2,\widetilde{W}_1}\circ\left(f_2\boxtimes_\cC (f_1\circ P_{W_1}^{\vert f_2\vert})\right)\circ\cR_{W_1^\even\oplus W_1^\odd, W_2^\even\oplus W_2^\odd}\circ F^{-1}
\end{align}
by the naturality of the braiding isomorphisms in $\cC$. Now, we can write 
\begin{equation*}
 f_2=\sum_{j_2+j_2'=\vert f_2\vert} q_{\widetilde{W}_2^{j_2'}}\circ f^{(2)}_{j_2',j_2}\circ p_{W_2^{j_2}},
\end{equation*}
where $f^{(2)}_{j_2', j_2}= p_{\widetilde{W}_2^{j_2'}}\circ f\circ q_{W_2^{j_2}}$ is a morphism in $\cC$ from $W_2^{j_2}$ to $\widetilde{W}_2^{j_2'}$. Similarly, 
\begin{align*}
 f_1\circ P_{W_1}^{\vert f_2\vert} &=\left(\sum_{j_1+j_1'=\vert f_1\vert} q_{\widetilde{W}_1^{j_1'}}\circ f^{(1)}_{j_1',j_1}\circ p_{W_1^{j_1}}\right)\circ\left(\sum_{k\in\ztwo} (-1)^{k\vert f_2\vert} q_{W_1^k}\circ p_{W_1^k}\right)\nonumber\\
 &=\sum_{j_1+j_1'=\vert f_1\vert} (-1)^{j_1\vert f_2\vert} q_{\widetilde{W}_1^{j_1'}}\circ f^{(1)}_{j_1',j_1}\circ p_{W_1^{j_1}}.
\end{align*}
Thus
\begin{align*}
 Q_{\widetilde{W}_2,\widetilde{W}_1} & \circ\left(f_2\boxtimes (f_1\circ P_{W_1}^{\vert f_2\vert})\right)\nonumber\\
 &= \sum_{\stackrel{  i_1,i_2\in\ztwo}{j_1+j_1'=\vert f_1\vert,\,\,j_2+j_2'=\vert f_2\vert}} (-1)^{i_1 i_2+j_1\vert f_2\vert}\cdot\nonumber\\
 &\hspace{5em}\cdot\left(q_{\widetilde{W}_2^{i_2}}\circ p_{\widetilde{W}_2^{i_2}}\circ q_{\widetilde{W}_2^{j_2'}}\circ f^{(2)}_{j_2',j_2}\circ p_{W_2^{j_2}}\right)\boxtimes\left(q_{\widetilde{W}_1^{i_1}}\circ p_{\widetilde{W}_1^{i_1}}\circ q_{\widetilde{W}_1^{j_1'}}\circ f^{(1)}_{j_1',j_1}\circ p_{W_2^{j_1}}\right)\nonumber\\
 & =\sum_{\stackrel{i_1+j_1=\vert f_1\vert}{i_2+j_2=\vert f_2\vert}} (-1)^{i_1 i_2+j_1\vert f_2\vert}\left(q_{\widetilde{W}_2^{i_2}}\circ f^{(2)}_{i_2,j_2}\circ p_{W_2^{j_2}}\right)\boxtimes\left(q_{\widetilde{W}_1^{i_1}}\circ  f^{(1)}_{i_1,j_1}\circ p_{W_2^{j_1}}\right)\nonumber\\
 & =\sum_{\stackrel{i_1+j_1=\vert f_1\vert}{i_2+j_2=\vert f_2\vert}} (-1)^{j_1 j_2+i_2\vert f_1\vert}\left(q_{\widetilde{W}_2^{i_2}}\circ f^{(2)}_{i_2,j_2}\circ p_{W_2^{j_2}}\right)\boxtimes\left(q_{\widetilde{W}_1^{i_1}}\circ  f^{(1)}_{i_1,j_1}\circ p_{W_2^{j_1}}\right)\nonumber\\
 & =\sum_{\stackrel{i_1+i_1'=\vert f_1\vert}{i_2+i_2'=\vert f_2\vert}} (-1)^{i_2\vert f_1\vert}\left(q_{\widetilde{W}_2^{i_2}}\circ f^{(2)}_{i_2,i_2'}\circ p_{W_2^{i_2'}}\right)\boxtimes\left(q_{\widetilde{W}_1^{i_1}}\circ  f^{(1)}_{i_1,i_1'}\circ p_{W_2^{i_1'}}\right)\circ\nonumber\\
 & \hspace{5em}\circ\sum_{j_1,j_2\in\ztwo} (-1)^{j_1 j_2} (q_{W_2^{j_2}}\circ p_{W_2^{j_2}})\boxtimes(q_{W_1^{j_1}}\circ p_{W_1^{j_1}})\nonumber\\
 & =\sum_{i_2+i_2'=\vert f_2\vert} (-1)^{(i_2'+\vert f_2\vert)\vert f_1\vert}\left(\left(q_{\widetilde{W}_2^{i_2}}\circ f^{(2)}_{i_2,i_2'}\circ p_{W_2^{i_2'}}\right)\boxtimes f_1\right)\circ Q_{W_2, W_1}\nonumber\\
 & =(-1)^{\vert f_1\vert \vert f_2\vert} \left((f_2\circ P_{W_2}^{\vert f_1\vert})\boxtimes f_1\right)\circ Q_{W_2,W_1}.
\end{align*}
Putting this into \eqref{bsncalc} and using the definitions quickly yields \eqref{braidingsupernatural}.

Now to check the hexagon diagram for $\sR$, take objects $W_1$, $W_2$, and $W_3$ in $\sC$. For $i_1, i_2, i_3\in \mathbb{Z}/2\mathbb{Z}$, we have
	the following commuting diagram thanks to the hexagon axiom for $\cR$:
	\begin{align}
	\xymatrixcolsep{6pc}
	\xymatrix{
		W_1^{i_1}\boxtimes(W_2^{i_2}\boxtimes W_3^{i_3})\ar[r]^{(-1)^{i_1(i_2+i_3)}\cR} \ar[d]^{\cA}
		& (W_2^{i_2}\boxtimes W_3^{i_3})\boxtimes W_1^{i_1} \ar[r]^{\cA^{-1}} & W_2^{i_2}\boxtimes(W_3^{i_3}\boxtimes W_1^{i_1})\\
		(W_1^{i_1}\boxtimes W_2^{i_2})\boxtimes W_3^{i_3}\ar[r]^{(-1)^{i_1 i_2}\cR\boxtimes 1}  & (W_2^{i_2}\boxtimes W_1^{i_1})\boxtimes W_3^{i_3}\ar[r]^{\cA^{-1}} &
		W_2^{i_2}\boxtimes(W_1^{i_1}\boxtimes W_3^{i_3})	\ar[u]^{1\boxtimes (-1)^{i_1 i_3}\cR}	},
	\end{align}
	and a similar diagram involving $\cR^{-1}$. These immediately yield the hexagon axioms for $\sR$.
\end{proof}

\begin{rema}
 Note the essential role of the parity factor in the definition of the tensor product of morphisms in $\sC$ for proving the naturality of $\sR$. The naturality of $\sR$ says that the tensor product of morphisms in $\sC$ supercommutes up to conjugation by braiding isomorphisms.
\end{rema}

\begin{rema}
 Since all the structure morphisms in $\sC$ are even, the braided tensor supercategory structure on $\sC$ reduces to a braided tensor category structure on $\sCeven$. In particular, the supercategory naturality of $\sR$ reduces to ordinary naturality when both morphisms $f_1$ and $f_2$ are even.
\end{rema}

We will need tensoring in $\sC$ to be right exact, just as it is in $\cC$:
\begin{propo}
For any object $W$ in $\sC$, the functors $W\boxtimes\cdot$ and $\cdot\boxtimes W$ are right exact.
\end{propo}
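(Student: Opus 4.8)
The plan is to reduce the right exactness of $W\boxtimes\cdot$ and $\cdot\boxtimes W$ in $\sC$ to the analogous property in $\cC$, which we have assumed, by passing through the forgetful functor $\cJ\colon\sC\to\cC$. The first step is to observe that $\cJ$ is exact, since by Proposition \ref{SCabelian} kernels and cokernels in $\sC$ are computed exactly as in $\cC$, and that $\cJ$ is faithful, since morphism spaces in $\sC$ are by definition morphism spaces of $\cC$. A faithful exact functor between abelian categories is conservative and hence reflects exact sequences, so it suffices to prove that $\cJ\circ(W\boxtimes\cdot)$ and $\cJ\circ(\cdot\boxtimes W)$ carry right exact sequences in $\sC$ to right exact sequences in $\cC$.

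The second step is to unwind these composites using the distributivity of $\boxtimes$ over biproducts in $\cC$, Proposition \ref{distribute}. For $\cJ\circ(\cdot\boxtimes W)$ the parity-twist factor in the tensor product of morphisms is trivial, because $1_W$ is even; then Proposition \ref{distribute} identifies $\cJ(X\boxtimes W)$ with $(\cJ(X)\boxtimes W^\even)\oplus(\cJ(X)\boxtimes W^\odd)$ compatibly with morphisms, so that $\cJ\circ(\cdot\boxtimes W)$ is the biproduct of $(\cdot\boxtimes W^\even)\circ\cJ$ and $(\cdot\boxtimes W^\odd)\circ\cJ$. Since $W^\even\boxtimes\cdot$, $W^\odd\boxtimes\cdot$, and $\cJ$ are all right exact, and biproducts and composites of right exact additive functors are right exact, $\cdot\boxtimes W$ is right exact. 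The functor $W\boxtimes\cdot$ is treated the same way, except that here the parity-twist factor $P_W^{\vert f_2\vert}$ in $f_1\boxtimes f_2$ is genuinely present. Using Proposition \ref{distribute} to write $\cJ(W\boxtimes X)\cong(W^\even\boxtimes\cJ(X))\oplus(W^\odd\boxtimes\cJ(X))$, one checks that for a morphism $h\colon X\to X'$ in $\sC$ the morphism $\cJ(1_W\boxtimes h)$ restricts to $1_{W^\even}\boxtimes\cJ(h)$ on the first summand, where $P_W$ acts as the identity, and to $1_{W^\odd}\boxtimes\big(P_{\cJ(X')}\circ\cJ(h)\circ P_{\cJ(X)}\big)$ on the second, where $P_W$ acts by $-1$ and hence reverses the sign of the odd part of $h$. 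Thus $\cJ\circ(W\boxtimes\cdot)$ is the biproduct of $(W^\even\boxtimes\cdot)\circ\cJ$ and $(W^\odd\boxtimes\cdot)\circ T$, where $T\colon\sC\to\cC$ agrees with $\cJ$ on objects and sends $h$ to $P_{\cJ(X')}\circ\cJ(h)\circ P_{\cJ(X)}$.

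The one step that needs a moment of thought is showing that this functor $T$ is right exact, that is, that conjugating morphisms by the parity involution does no harm. This is immediate once one notes that each $P_W$ is an involution: $T$ sends a right exact sequence in $\sC$ to a sequence in $\cC$ which is isomorphic, via the parity involutions of the three objects involved, to $\cJ$ of that sequence, and the latter is right exact because $\cJ$ is. Granting this, $\cJ\circ(W\boxtimes\cdot)$ is again a biproduct of composites of right exact functors, hence right exact, and the conclusion for $W\boxtimes\cdot$ follows because $\cJ$ reflects exact sequences. In short, the argument is essentially formal—Propositions \ref{distribute} and \ref{SCabelian} together with the standing right-exactness assumption on $\cC$ do all the work—and the only subtlety is the parity-twist factor in the definition of the tensor product of morphisms in $\sC$, which disappears because conjugation by the involution $P$ preserves exactness.
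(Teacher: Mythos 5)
Your proof is correct, but it takes a different route from the paper's, especially for $W\boxtimes\cdot$. The paper handles $\cdot\boxtimes W$ exactly as you do—conjugating $f\boxtimes_{\sC}1_W$ by the isomorphism $F$ of Proposition~\ref{distribute} to reduce to right exactness of $\cdot\boxtimes_{\cC}(W^\even\oplus W^\odd)$ in $\cC$—but it then dispatches $W\boxtimes\cdot$ in one line: the braiding $\sR_{W,X}\colon W\boxtimes X\to X\boxtimes W$ gives a natural isomorphism $W\boxtimes\cdot\cong\cdot\boxtimes W$, because $1_W$ is even so the supernaturality \eqref{braidingsupernatural} has no sign, and right exactness transfers along a natural isomorphism. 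Your argument avoids the braiding entirely and instead tracks the parity-twist factor $P_W^{\lvert h\vert}$ head-on: decomposing $\cJ\circ(W\boxtimes\cdot)$ as a biproduct of $(W^\even\boxtimes\cdot)\circ\cJ$ and $(W^\odd\boxtimes\cdot)\circ T$, where $T$ conjugates by the parity involutions, and then noting $T$ is naturally isomorphic to $\cJ$ via the involutions themselves (so it too is exact). This is perfectly valid, and it has the minor virtue of making the role of $\cJ$ and the sign twist entirely explicit—it would work even if $\sC$ carried no braiding. The trade-off is length: the braiding argument is cleaner precisely because the naturality it uses has already been proved in Theorem~\ref{propo:superCisbraided}, and the observation that $1_W$ is even collapses the sign issue instantly rather than requiring the auxiliary functor $T$ and the check that $P^2=1$ makes it both a functor and naturally isomorphic to $\cJ$. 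Your initial reduction via ``faithful exact functors reflect exactness'' is also sound but heavier machinery than the paper needs, since the paper simply observes that cokernels in $\sC$ are computed in $\cC$ and works with them directly.
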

\begin{proof}
First consider $\cdot\boxtimes W$. Suppose $W_1\xrightarrow{f_1} W_2\xrightarrow{f_2} W_3\rightarrow 0$ is a right exact sequence in $\sC$, that is, $(W_3, f_2)$ is a cokernel of $f_1$. We need to verify that $(W_3\boxtimes_\sC W, f_2\boxtimes_\sC 1_W)$ is a cokernel of $f_1\boxtimes_\sC 1_W$. Since $f_i\boxtimes_\sC 1_W=F\circ(f_i\boxtimes_\cC 1_{W^\even\oplus W^\odd})\circ F^{-1}$ for $i=1,2$, using the notation of Proposition \ref{distribute} (since $1_W$ is even), it is enough to show that $((W_2^\even\oplus W_2^\odd)\boxtimes(W^\even\oplus W^\odd), f_2\boxtimes_\cC 1_{W^\even\oplus W^\odd})$ is a cokernel of $f_1\boxtimes_\cC 1_{W^\even\oplus W^\odd}$ in $\cC$. But this is immediate because the tensor functor on $\cC$ is right exact by assumption.

For $W\boxtimes\cdot$, right exactness follows from the right exactness of $\cdot\boxtimes W$ and the naturality of the braiding isomorphisms (which reduces to ordinary naturality in this case since $1_W$ is even).
\end{proof}

\begin{rema}
 It is not hard to show that the tensor product functor in $\sCeven$ is also right exact.
\end{rema}

Now that we have a braided tensor supercategory associated to $\cC$ with right exact tensor functor, we can introduce the notion of supercommutative associative superalgebra in $\cC$:
\begin{defi}\label{defi:SA}
A \textit{supercommutative associative superalgebra} 
(or \emph{superalgebra} for short) in $\mathcal{C}$ is a commutative associative algebra in $\sC$ whose structure morphisms are even. Explicitly, a superalgebra in $\cC$ is an object
	$A=(A^\even,A^\odd)$ of $\sC$ equipped with even morphisms $\mu: A\boxtimes A\rightarrow A$ and $\iota_A: \sunit \rightarrow A$ in $\sC$ satisfying the following axioms:
	\begin{enumerate}
		\item Associativity: $\mu\circ(\mu\boxtimes 1_A)\circ\sA_{A,A,A}= \mu\circ(1_A\boxtimes\mu)$ as morphisms $A\boxtimes (A\boxtimes A)\rightarrow A$.
		\item Supercommutativity: $\mu\circ \sR_{A,A}=\mu$ as morphisms $A\boxtimes A\rightarrow A$.
		\item Unit: $\mu\circ(\iota_A\boxtimes 1_A)\circ \sleft_A^{-1}=1_A$ as morphisms $A\rightarrow A$.
	\end{enumerate}
\end{defi}

\begin{rema}\label{rightunitSA}
	The supercommutativity and unit axioms of the above definition (together with the naturality of $\sR$ with respect to even morphisms) imply the right unit property: $\mu\circ(1_A\boxtimes\iota_A)\circ \sright_A^{-1}=1_A$.
\end{rema}

\begin{rema}
 Since the structure morphisms of a superalgebra in $\cC$ are even, superalgebras in $\cC$ are precisely commutative associative algebras in the braided tensor category $\sCeven$.
\end{rema}

\begin{rema}
 If $\cC$ is a braided tensor category of modules for a vertex operator algebra $V$ and $A$ is a superalgebra in $\cC$, recall the twist $\cC$-automorphism $\theta_A=e^{2\pi i L(0)}$ of $A$. It is shown in \cite{CKL}, slightly generalizing the results of \cite{HKL, H-two}, that superalgebras $A$ in $\cC$ which additionally satisfy:
 \begin{enumerate}
  \item[4.] $\iota_A$ is injective;
  \item[5.] $A$ is $\frac{1}{2}\ZZ$-graded by conformal weights: $\theta_A^2=1_A$; and
  \item[6.] $\mu$ has no monodromy: $\mu\circ(\theta_A\boxtimes\theta_A)=\theta_{A}\circ\mu$
 \end{enumerate}
are precisely vertex operator superalgebra extensions of $V$ in $\cC$ such that $V\subseteq A^\even$.
\end{rema}

Since a superalgebra $A$ in $\cC$ is an algebra in $\sC$ with even structure morphisms, we can define the category $\repA$ of ``$A$-modules in $\cC$'' to be the category of ``$A$-modules in $\sC$'' with even structure morphisms: 
\begin{defi}\label{repSAdef}
	Given a superalgebra $A$ in $\cC$, $\mathrm{Rep}\,A$ is the category with
	objects $(W, \mu_W)$ where $W$ is an object of  $\sC$, 
	and $\mu_W: A\boxtimes W\rightarrow W$ is an even morphism satisfying:
	\begin{enumerate}
\item Associativity: $\mu_W\circ(\mu\boxtimes 1_W)\circ\sA_{A,A,W}=\mu_W\circ (1_A\boxtimes\mu_W)$ as morphisms  $A\boxtimes (A\boxtimes W)\rightarrow W$.
\item Unit: $\mu_W\circ(\iota_A\boxtimes 1_W)\circ \sleft_W^{-1}=1_W$ as morphisms $W\rightarrow W$.
	\end{enumerate}
	The morphisms between objects $(W_1,\mu_{W_1})$ and $(W_2, \mu_{W_2})$ of $\mathrm{Rep}\,A$ consist of all $\sC$-morphisms $f: W_1\rightarrow W_2$ such that $f\circ\mu_{W_1} =\mu_{W_2}\circ(1_A\boxtimes f): A\boxtimes W_1\rightarrow W_2$. 
\end{defi}

\begin{rema}
If $A$ is a superalgebra in $\cC$, let $\underline{\repA}$ denote the category of $A$-modules where $A$ is considered as an algebra in $\sCeven$, as defined in the previous subsection. Then $\underline{\repA}$ contains exactly the same objects as $\repA$, but $\repA$ contains more morphisms since $\underline{\repA}$ has only even morphisms.
\end{rema}

\begin{rema}\label{rem:RepAnotab}
Since a superalgebra $A$ in $\cC$ is an algebra in the braided tensor category $\sCeven$, Theorem \ref{thm:repAabelian} shows that the category $\underline{\repA}$ is abelian. However, we cannot guarantee that the larger category $\repA$ is abelian since, although Theorem \ref{thm:repAabelian} shows that kernels and cokernels of not necessarily parity-homogeneous morphisms in $\repA$ are equipped with $A$-actions that satisfy associativity and the unit property, we cannot guarantee that these $A$-actions are even.
\end{rema}

However, we do have:
\begin{propo}\label{SuperRepAadditive}
 If $A$ is a superalgebra in $\cC$, then $\repA$ is an $\mathbb{F}$-linear additive supercategory. Moreover, parity-homogeneous morphisms in $\repA$ have kernels and cokernels, every parity-homogeneous monomorphism in $\repA$ is a kernel in $\repA$, and every parity-homogeneous epimorphism is a cokernel in $\repA$.
\end{propo}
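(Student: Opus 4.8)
The plan is to build everything on top of Theorem \ref{thm:repAabelian} applied to $A$ viewed as a commutative associative algebra in the braided tensor category $\sCeven$ (which has right exact tensor functor), so that $\underline{\repA}$ is an $\mathbb{F}$-linear abelian category, and then to track parity carefully; the genuinely new content is precisely this bookkeeping, since $\repA$ itself need not be abelian (Remark \ref{rem:RepAnotab}).

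For the additive supercategory structure, note that morphism sets in $\repA$ are $\mathbb{F}$-subspaces of the corresponding $\sC$-morphism spaces, with bilinear and even composition inherited from $\sC$, so it only remains to check that $\hom_{\repA}(W_1,W_2)$ is a sub-\emph{superspace} of $\hom_{\sC}(W_1,W_2)$, i.e.\ is closed under taking even and odd parts. For this I would observe that the $\mathbb{F}$-linear map $\Phi\colon\hom_{\sC}(W_1,W_2)\to\hom_{\sC}(A\boxtimes W_1,W_2)$, $\Phi(f)=f\circ\mu_{W_1}-\mu_{W_2}\circ(1_A\boxtimes f)$, is parity-preserving: $\mu_{W_1},\mu_{W_2}$ and $1_A$ are even, and $\boxtimes$ preserves parity of morphisms by Proposition \ref{SCtenseven}. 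Hence $\hom_{\repA}(W_1,W_2)=\ker\Phi$ is a sub-superspace. The zero object and the biproducts of $\underline{\repA}$ (which exist since $\underline{\repA}$ is abelian), together with their even structure morphisms $p_i,q_i$, are immediately seen to remain a zero object and biproducts in the larger category $\repA$, since the defining equations and universal properties refer only to the $p_i,q_i$ and sums of $\repA$-morphisms. Thus $\repA$ is an $\mathbb{F}$-linear additive supercategory.

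For the ``moreover'' part, let $f\colon W_1\to W_2$ be parity-homogeneous in $\repA$. By Proposition \ref{evenkercoker} we may choose a kernel $(\widetilde K,\widetilde k)$ of $f$ in $\sC$ with $\widetilde k$ even. Mimicking Theorem \ref{thm:repAabelian}: from $f\circ\widetilde k=0$ and $f\in\repA$ we get $f\circ\mu_{W_1}\circ(1_A\boxtimes\widetilde k)=\mu_{W_2}\circ(1_A\boxtimes f)\circ(1_A\boxtimes\widetilde k)=\mu_{W_2}\circ\bigl(1_A\boxtimes(f\circ\widetilde k)\bigr)=0$, where the exchange condition \eqref{exchange} introduces no sign because $1_A$ is even; so the universal property of $\widetilde k$ yields a unique $\mu_{\widetilde K}$ with $\widetilde k\circ\mu_{\widetilde K}=\mu_{W_1}\circ(1_A\boxtimes\widetilde k)$. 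The right-hand side is even, and since $\widetilde k$ is even and monic, the odd component of $\mu_{\widetilde K}$ vanishes, so $\mu_{\widetilde K}$ is even. The unit and associativity axioms for $\mu_{\widetilde K}$, the fact that $\widetilde k$ is then an $\repA$-morphism, and the universal property of $(\widetilde K,\widetilde k)$ in $\repA$ all go through verbatim as in Theorem \ref{thm:repAabelian}, the exchange condition supplying the sign-free identity $(1_A\boxtimes g)\circ(1_A\boxtimes h)=1_A\boxtimes(g\circ h)$ that makes the argument work for not-necessarily-homogeneous test morphisms. Cokernels are handled identically, using that the tensor functor on $\sC$ is right exact (so that $(A\boxtimes\widetilde C,1_A\boxtimes\widetilde c)$ is a cokernel of $1_A\boxtimes f$ when $(\widetilde C,\widetilde c)$ is a cokernel of $f$ in $\sC$ with $\widetilde c$ even) together with the fact that $1_A\boxtimes\widetilde c$ is epic to force $\mu_{\widetilde C}$ to be even.

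Finally, if $f$ is a parity-homogeneous monomorphism in $\repA$, then the kernel morphism $\widetilde k$ constructed above is an $\repA$-morphism with $f\circ\widetilde k=0=f\circ 0$, so $\widetilde k=0$ and hence $\ker_{\sC}f=0$, i.e.\ $f$ is monic in $\sC$; as $\sC$ is abelian (Proposition \ref{SCabelian}), $f$ is the kernel in $\sC$ of its cokernel, which we may take to be the even $\widetilde c$ from the previous step, and which is also the cokernel of $f$ in $\repA$. A short computation identical to the kernel case (again using \eqref{exchange} without sign) shows that the unique factorizations witnessing the kernel universal property live in $\repA$, so $(W_1,f)$ is a kernel of $\widetilde c$ in $\repA$; the case of a parity-homogeneous epimorphism is dual. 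The main obstacle — and the sole reason we cannot simply invoke Theorem \ref{thm:repAabelian} to conclude that all of $\repA$ is abelian — is exactly ensuring the evenness of the induced $A$-actions $\mu_{\widetilde K}$ and $\mu_{\widetilde C}$, which is why we pass through Proposition \ref{evenkercoker}: the parity-homogeneity of $f$ is what allows us to choose the $\sC$-kernel and $\sC$-cokernel morphisms even, and that evenness is precisely what propagates to the module actions.
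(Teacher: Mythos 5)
Your proof is correct and takes essentially the same route as the paper's: you establish the superspace structure on $\repA$-morphisms (the paper does this by matching even/odd components; framing $\hom_{\repA}(W_1,W_2)=\ker\Phi$ of an even linear map is the same observation), inherit zero object and biproducts from the abelian category of even morphisms, and then prove kernels and cokernels of parity-homogeneous morphisms exist by constructing the $A$-action via the same universal-property computation as Theorem \ref{thm:repAabelian} and deducing its evenness from the monic/epic nature of the chosen even $\widetilde k$ and $1_A\boxtimes\widetilde c$ (the paper phrases this with the parity involutions $P_K$, $P_C$, which is the same argument). The final step, that a parity-homogeneous mono is the kernel of its cokernel and dually, likewise mirrors the paper's appeal to the corresponding part of Theorem \ref{thm:repAabelian}.
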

\begin{proof}
 The $\mathbb{F}$-vector space structure on morphisms in $\repA$ is inherited from $\sC$. To show that morphisms in $\repA$ have superspace structure, we need to show that if $f: W_1\rightarrow W_2$ is a morphism in $\repA$, then its parity-homogeneous components $f^\even$ and $f^\odd$ are also $\repA$-morphisms. Since $f$ is a morphism in $\repA$, we have $f\circ\mu_{W_1}=\mu_{W_2}\circ(1_A\boxtimes f)$. Then
 \begin{equation*}
  f^\even\circ\mu_{W_1}+f^\odd\circ\mu_{W_2}=\mu_{W_2}\circ(1_A\boxtimes f^\even)+\mu_{W_2}\circ(1_A\boxtimes f^\odd).
 \end{equation*}
Since $\mu_{W_1}$ and $\mu_{W_2}$ are both even, matching even and odd components of the above equation shows that $f^\even$ and $f^\odd$ are $\repA$-morphisms. The evenness and bilinearity of composition of morphisms in $\repA$ is then inherited from $\sC$, so $\repA$ is an $\mathbb{F}$-linear supercategory.

 The existence of a zero object in $\repA$ is clear, and the existence of biproducts in $\repA$ is the same as in the proof of Theorem \ref{thm:repAabelian}, except that we need to additionally show that the $A$-action $\mu_{\bigoplus W_i}$ on a biproduct $\bigoplus W_i$ is even. Since $\mu_{\bigoplus W_i}=\sum q_i\circ \mu_{W_i}\circ(1_A\boxtimes p_i)$ is the composition of even morphisms, this is clear. Thus $\repA$ is an $\mathbb{F}$-linear additive supercategory.
 
 Now we need to show that if $f: W_1\rightarrow W_2$ is a parity-homogeneous morphism in $\repA$, then $f$ has a cokernel and kernel in $\repA$. We know from Theorem \ref{thm:repAabelian} that $f$ has a kernel $(K,k)$ and a cokernel $(C,c)$ in $\sC$ equipped with $A$-actions $\mu_K$ and $\mu_C$ satisfying the associativity and unit properties. It will follow that $(K,\mu_K, k)$ and $(C,\mu_C, c)$ are the kernel and cokernel of $f$ in $\repA$ if $\mu_K$ and $\mu_C$ are even. Since $f$ is parity-homogeneous, Proposition \ref{evenkercoker} implies we may assume $k$ and $c$ are even. To show that $\mu_C$ is even, recall from Theorem \ref{thm:repAabelian} that $\mu_C\circ(1_A\boxtimes c)=c\circ\mu_{W_2}$, so that
 \begin{align*}
  P_C\circ\mu_C\circ(1_A\boxtimes c)=P_C\circ c\circ\mu_{W_2}=c\circ\mu_{W_2}\circ P_{A\boxtimes W_2}=\mu_C\circ(1_A\boxtimes c)\circ P_{A\boxtimes W_2}=\mu_C\circ P_{A\boxtimes C}\circ(1_A\boxtimes c).
 \end{align*}
Since $c$ is an epimorphism and $A\boxtimes\cdot$ in $\sC$ is right exact, $1_A\boxtimes c$ is an epimorphism and it follows that $P_C\circ\mu_C=\mu_C\circ P_{A\boxtimes C}$. Thus $\mu_C$ is even. Similarly, $\mu_K$ is even.

Finally, we need to verify that every parity-homogeneous monomorphism in $\repA$ is a kernel and every parity-homogeneous epimorphism in $\repA$ is a cokernel. In fact, the proof of Theorem \ref{thm:repAabelian} shows that a parity-homogeneous monomorphism in $\repA$ is the kernel of its cokernel, and a parity-homogeneous epimorphism in $\repA$ is the cokernel of its kernel.
\end{proof}

\begin{rema}
 In the following subsections where we construct a tensor product on $\repA$, we shall only need the existence of cokernels of certain even morphisms in $\repA$.
\end{rema}

\subsection{Tensor products and unit isomorphisms in \texorpdfstring{$\repA$}{Rep A}}

Here we fix a superalgebra $A$ in our $\mathbb{F}$-linear braided tensor category $\cC$ and construct a tensor product $\boxtimes_A: \repA\times\repA\rightarrow\repA$ as well as left and right unit isomorphisms. The construction of $\boxtimes_A$ is based on \cite{KO}; later, we shall show that this tensor product is characterized by a universal property analogous to the defining property of tensor products of vertex operator algebra modules. Although we work in the generality of superalgebras, all results and constructions apply as well to the category $\repA$ of modules for an ordinary commutative associative algebra in $\cC$; the only difference is that for an ordinary algebra we do not have to worry about evenness of certain morphisms or certain sign factors (as in the super interchange law). In particular, the following results and constructions  apply to algebras in $\sCeven$ since $\sCeven$ is a braided tensor category with right exact tensor functor.

\begin{nota}
  In this and the following subsections, we will frequently give summaries of lengthy proofs in braided
  tensor (super)categories using graphical calculus. We shall denote the unit
  $\bf 1$ (or $\sunit$) by dashed lines, the (super)algebra $A$ by thin strands, objects of
  $\cC$ or $\sC$ by thick strands, and morphisms and multiplications
  $\mu_\bullet$ by coupons. We read diagrams from bottom to top: for instance, 
  in the composition $f\circ g$, the morphism $g$ appears at the bottom.
\end{nota}

\begin{defi}\label{def:mu1mu2}
For objects $(W_1,\mu_{W_1})$ and $(W_2,\mu_{W_2})$ of
$\mathrm{Rep}\,A$, we define $\mu^{(1)}$ to be the composition
\begin{equation*}
A\boxtimes (W_1\boxtimes W_2)\xrightarrow{\sA_{A,W_1,W_2}}(A\boxtimes W_1)\boxtimes W_2\xrightarrow{\mu_{W_1}\boxtimes 1_{W_2}} W_1\boxtimes W_2
\end{equation*}
and we define $\mu^{(2)}$ to be the composition
\begin{align*}
A\boxtimes(W_1\boxtimes W_2)\xrightarrow{\sA_{A, W_1, W_2}} (A\boxtimes W_1)\boxtimes W_2 & \xrightarrow{\sR_{A,W_1}\boxtimes 1_{W_2}} (W_1\boxtimes A)\boxtimes W_2\nonumber\\
& \xrightarrow{\sA_{W_1,A, W_2}^{-1}} W_1\boxtimes(A\boxtimes W_2)\xrightarrow{1_{W_1}\boxtimes\mu_{W_2}} W_1\boxtimes W_2.
\end{align*}
\end{defi}

\begin{rema} \label{rem:KOmu2}
	By the hexagon axiom for a braided tensor (super)category and naturality of the braiding for even morphisms, $\mu^{(2)}$ is also given by the composition
	\begin{align}
	A\boxtimes (W_1\boxtimes W_2)\xrightarrow{1_A\boxtimes \sR_{W_2,W_1}^{-1}} A\boxtimes (W_2\boxtimes W_1)& \xrightarrow{\sA_{A,W_2,W_1}} (A\boxtimes W_2)\boxtimes W_1\nonumber\\
&\xrightarrow{\mu_{W_2}\boxtimes 1_{W_1}} W_2\boxtimes W_1\xrightarrow{\sR_{W_2,W_1}} W_1\boxtimes W_2.\label{eqn:mu2alt}
	\end{align} 
We shall frequently use this alternate formulation of $\mu^{(2)}$.
\end{rema}

\begin{lemma}\label{muiassoc}
	For $i=1,2$, we have
	\begin{equation*}
	\mu^{(i)}\circ(1_A\boxtimes\mu^{(i)})=\mu^{(i)}\circ(\mu\boxtimes 1_{W_1\boxtimes W_2})\circ\sA_{A,A,W_1\boxtimes W_2}
	\end{equation*}
	as morphisms $A\boxtimes (A\boxtimes (W_1\boxtimes W_2))\rightarrow W_1\boxtimes W_2$.
\end{lemma}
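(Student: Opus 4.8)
The plan is to prove the case $i=1$ by a direct diagram chase and then to deduce the case $i=2$ from it by conjugating with a braiding isomorphism, so that the hexagon axiom enters only indirectly (through Remark \ref{rem:KOmu2}).

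For $i=1$, recall $\mu^{(1)}=(\mu_{W_1}\boxtimes 1_{W_2})\circ\sA_{A,W_1,W_2}$. First I would expand both sides of the claimed identity. On the left, using functoriality of $\boxtimes$ and the naturality of $\sA$ with respect to the morphisms $1_A$, $\mu_{W_1}$, $1_{W_2}$, together with the interchange law $(f\boxtimes g)\circ(f'\boxtimes g')=(f\circ f')\boxtimes(g\circ g')$ (which here carries no super sign factor, since $1_A$, $\mu_{W_1}$, $\mu$, $\sA$, $\sR$ are all even), one arrives at $\big((\mu_{W_1}\circ(1_A\boxtimes\mu_{W_1}))\boxtimes 1_{W_2}\big)\circ\sA_{A,A\boxtimes W_1,W_2}\circ(1_A\boxtimes\sA_{A,W_1,W_2})$. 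The pentagon axiom for $\sC$, applied to the ordered quadruple $(A,A,W_1,W_2)$, rewrites $\sA_{A,A\boxtimes W_1,W_2}\circ(1_A\boxtimes\sA_{A,W_1,W_2})$ as $(\sA_{A,A,W_1}^{-1}\boxtimes 1_{W_2})\circ\sA_{A\boxtimes A,W_1,W_2}\circ\sA_{A,A,W_1\boxtimes W_2}$, and then the associativity of the $A$-action $\mu_{W_1}$ converts $\mu_{W_1}\circ(1_A\boxtimes\mu_{W_1})\circ\sA_{A,A,W_1}^{-1}$ into $\mu_{W_1}\circ(\mu\boxtimes 1_{W_1})$. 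A shorter parallel rearrangement of the right-hand side $\mu^{(1)}\circ(\mu\boxtimes 1_{W_1\boxtimes W_2})\circ\sA_{A,A,W_1\boxtimes W_2}$, using only naturality of $\sA$ at $\mu$ and the interchange law, yields the same composite $\big((\mu_{W_1}\circ(\mu\boxtimes 1_{W_1}))\boxtimes 1_{W_2}\big)\circ\sA_{A\boxtimes A,W_1,W_2}\circ\sA_{A,A,W_1\boxtimes W_2}$, which finishes the case $i=1$.

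For $i=2$ I would avoid a second direct computation and instead use the alternate description of $\mu^{(2)}$ in Remark \ref{rem:KOmu2}. Setting $\psi:=\sR_{W_2,W_1}$ and $\nu:=(\mu_{W_2}\boxtimes 1_{W_1})\circ\sA_{A,W_2,W_1}$, that remark gives $\mu^{(2)}=\psi\circ\nu\circ(1_A\boxtimes\psi^{-1})$; but $\nu$ is exactly the morphism ``$\mu^{(1)}$'' attached to the ordered pair $(W_2,W_1)$, so $\nu\circ(1_A\boxtimes\nu)=\nu\circ(\mu\boxtimes 1_{W_2\boxtimes W_1})\circ\sA_{A,A,W_2\boxtimes W_1}$ by the case $i=1$ already proved. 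Plugging $\mu^{(2)}=\psi\circ\nu\circ(1_A\boxtimes\psi^{-1})$ into $\mu^{(2)}\circ(1_A\boxtimes\mu^{(2)})$ and using functoriality of $1_A\boxtimes(-)$, the inner copies of $\psi$ and $\psi^{-1}$ cancel, leaving $\psi\circ\nu\circ(1_A\boxtimes\nu)\circ(1_A\boxtimes(1_A\boxtimes\psi^{-1}))$. Applying the associativity of $\nu$, then the naturality of $\sA$ in its third argument to slide $1_A\boxtimes(1_A\boxtimes\psi^{-1})$ past $\sA_{A,A,-}$, and then the interchange law to commute $\mu\boxtimes 1$ past $1_{A\boxtimes A}\boxtimes\psi^{-1}$, one reassembles $\psi\circ\nu\circ(1_A\boxtimes\psi^{-1})\circ(\mu\boxtimes 1_{W_1\boxtimes W_2})\circ\sA_{A,A,W_1\boxtimes W_2}=\mu^{(2)}\circ(\mu\boxtimes 1_{W_1\boxtimes W_2})\circ\sA_{A,A,W_1\boxtimes W_2}$, as desired.

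The main obstacle is the bookkeeping in the $i=1$ step: one has to invoke the pentagon axiom with precisely the right quadruple of objects and orientation so that the residual associators on the two sides coincide, and one must check that every use of the interchange law and of naturality of $\sA$ involves only even morphisms, so that no super sign factors creep in. Once the $i=1$ identity is established, the $i=2$ case is a transport-of-structure argument requiring no further ideas.
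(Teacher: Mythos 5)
Your proof is correct. The $i=1$ computation is a standard pentagon/naturality chase, and the reduction of $i=2$ to $i=1$ via $\mu^{(2)}=\sR_{W_2,W_1}\circ\nu\circ(1_A\boxtimes\sR_{W_2,W_1}^{-1})$ from Remark \ref{rem:KOmu2} is a valid transport-of-structure argument; the inner $\sR$'s cancel, naturality of $\sA$ slides $1_A\boxtimes(1_A\boxtimes\sR^{-1})$ past $\sA_{A,A,-}$, and the interchange law commutes $\mu\boxtimes 1$ past $1_{A\boxtimes A}\boxtimes\sR^{-1}$; since every morphism in sight is even, no super sign factors arise in any of these steps.

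The route you take is structurally a bit different from the paper's. The paper proves the assertion for $\mu^{(2)}$ directly in a single chain of manipulations (start from Remark \ref{rem:KOmu2} on both copies of $\mu^{(2)}$, cancel the braiding and its inverse, use naturality of $\sA$ and associativity of $\mu_{W_2}$, then naturality again) and declares the $\mu^{(1)}$ case to be ``similar but simpler.'' You instead prove the $\mu^{(1)}$ case from scratch and then deduce $\mu^{(2)}$ from it by conjugation, factoring out the braidings before invoking associativity. The underlying ingredients are the same in both (pentagon, naturality of $\sA$, interchange, associativity of the module action, Remark \ref{rem:KOmu2}); what your version buys is a modular presentation in which the role of the hexagon-derived rewriting of $\mu^{(2)}$ is made explicit as a conjugation, so you do the real computation only once, for the simpler untwisted case. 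The paper's version buys a single self-contained computation without having to phrase and invoke a preliminary case. Either is perfectly acceptable.
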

\begin{proof}
	We indicate the proof for $\mu^{(2)}$ using graphical calculus; the proof for $\mu^{(1)}$ is similar but simpler.
	 In the first diagram, we have used Remark \ref{rem:KOmu2} to rewrite $\mu^{(2)}$, and
	in the last step we exchange the order of $\mu$ and the braiding without a sign factor because they are even:
	\begin{align*}
	\begin{matrix}
		\begin{tikzpicture}[scale=0.8, out=up, in=down, line width=0.5pt]
			\node (mub) at (0.5,1.1) [draw] {$\scriptstyle{\mu_{W_2}}$};
			\node (mut) at (-0.5,4) [draw,minimum width=30pt,minimum height=10pt,thick, fill=white] {$\scriptstyle{\mu_{W_2}}$};
			\node (a1) at (-1,-0.3) {$\scriptstyle{A}$};
			\node (a2) at (0,-0.3) {$\scriptstyle{A}$};
			\node (w1) at (1,-0.3) {$\scriptstyle{W_1}$};
			\node (w2) at (2,-0.3) {$\scriptstyle{W_2}$};
			\node (w1f) at (0,5.8) {$\scriptstyle{W_1}$};
			\node (w2f) at (2,5.8) {$\scriptstyle{W_2}$};
			\draw (a2.90) to (mub.210);
			\draw (a1.90) to (mut.210);
			\draw [line width=1pt] (w1.90) to (2,1) to (0.5,2.5) to (2,4) to (0,5.5);
			\draw [white, double=black, line width=3pt, double distance=1pt] (mub.90) to (0.5,1.5) to (2,2.5) to (mut.330);
			\draw [white, double=black, line width=3pt, double distance=1pt] (w2.90) to (mub.330);
			\draw [white, double=black, line width=3pt, double distance=1pt] (mut.90) to (w2f.270);
		\end{tikzpicture}
	\end{matrix}
	=
	\begin{matrix}
		\begin{tikzpicture}[scale=0.8, out=up, in=down, line width=0.5pt]
			\node (mub) at (0.5,1.5) [draw] {$\scriptstyle{\mu_{W_2}}$};
			\node (mut) at (-0.5,4) [draw,minimum width=30pt,minimum height=10pt,thick, fill=white] {$\scriptstyle{\mu_{W_2}}$};
			\node (a1) at (-1,-0.3) {$\scriptstyle{A}$};
			\node (a2) at (0,-0.3) {$\scriptstyle{A}$};
			\node (w1) at (1,-0.3) {$\scriptstyle{W_1}$};
			\node (w2) at (2,-0.3) {$\scriptstyle{W_2}$};
			\node (w1f) at (0,5.8) {$\scriptstyle{W_1}$};
			\node (w2f) at (2,5.8) {$\scriptstyle{W_2}$};
			\draw (a2.90) to (mub.210);
			\draw (a1.90) to (mut.210);
			\draw [line width=1pt] (w1.90) to (1.8,1.5) to (1.8,3.5) to (0,5.5);
			\draw [white, double=black, line width=3pt, double distance=1pt] (mub.90) to (mut.330);
			\draw [white, double=black, line width=3pt, double distance=1pt] (w2.90) to (mub.330);
			\draw [white, double=black, line width=3pt, double distance=1pt] (mut.90) to (w2f.270);
		\end{tikzpicture}
	\end{matrix}
	=
	\begin{matrix}
		\begin{tikzpicture}[scale=0.8, out=up, in=down, line width=0.5pt]
			\node (mub) at (-0.5,2) [draw] {$\scriptstyle{\mu_{W}}$};
			\node (mut) at (0,3.8) [draw,minimum width=30pt,minimum height=10pt,thick, fill=white] {$\scriptstyle{\mu_{W_2}}$};
			\node (a1) at (-1,-0.3) {$\scriptstyle{A}$};
			\node (a2) at (0,-0.3) {$\scriptstyle{A}$};
			\node (w1) at (1,-0.3) {$\scriptstyle{W_1}$};
			\node (w2) at (2,-0.3) {$\scriptstyle{W_2}$};
			\node (w1f) at (0,5.8) {$\scriptstyle{W_1}$};
			\node (w2f) at (2,5.8) {$\scriptstyle{W_2}$};
			\draw (a2.90) to (mub.330);
			\draw (a1.90) to (mub.210);
			\draw [line width=1pt] (w1.90) to (1.8,1.5) to (1.8,3.5) to (0,5.5);
			\draw  (mub.90) to (mut.210);
			\draw [white, double=black, line width=3pt, double distance=1pt] (w2.90) to (0.7,1.5) to (mut.330);
			\draw [white, double=black, line width=3pt, double distance=1pt] (mut.90) to (w2f.270);
		\end{tikzpicture}
	\end{matrix}
	=
	\begin{matrix}
		\begin{tikzpicture}[scale=0.8, out=up, in=down, line width=0.5pt]
			\node (mub) at (-0.5,1) [draw] {$\scriptstyle{\mu_{W}}$};
			\node (mut) at (0,3.8) [draw,minimum width=30pt,minimum height=10pt,thick, fill=white] {$\scriptstyle{\mu_{W_2}}$};
			\node (a1) at (-1,-0.3) {$\scriptstyle{A}$};
			\node (a2) at (0,-0.3) {$\scriptstyle{A}$};
			\node (w1) at (1,-0.3) {$\scriptstyle{W_1}$};
			\node (w2) at (2,-0.3) {$\scriptstyle{W_2}$};
			\node (w1f) at (0,5.8) {$\scriptstyle{W_1}$};
			\node (w2f) at (2,5.8) {$\scriptstyle{W_2}$};
			\draw (a2.90) to (mub.330);
			\draw (a1.90) to (mub.210);
			\draw [line width=1pt] (w1.90) to (1,1.5) to (2,3.5) to (0,5.5);
			\draw  (mub.90) to (mut.210);
			\draw [white, double=black, line width=3pt, double distance=1pt] (w2.90) to (2,1.5) to (mut.330);
			\draw [white, double=black, line width=3pt, double distance=1pt] (mut.90) to (w2f.270);
		\end{tikzpicture}
	\end{matrix}
	\end{align*}
\end{proof}

Since cokernels exist in $\sC$, we can define the tensor product of $W_1$ and $W_2$ in $\mathrm{Rep}\,A$ to be
\begin{equation*}
 W_1\boxtimes_A W_2=\mathrm{Coker}(\mu^{(1)}-\mu^{(2)})
\end{equation*}
with cokernel $\cC$-morphism
\begin{equation*}
 \eta_{W_1,W_2}: W_1\boxtimes W_2\rightarrow W_1\boxtimes_A W_2.
\end{equation*}
Both $\mu^{(1)}$ and $\mu^{(2)}$ are compositions of even morphisms and are therefore even; thus by Proposition \ref{evenkercoker} we may and do take $\eta_{W_1,W_2}$ even.
Our definition of $W_1\boxtimes_A W_2$ is equivalent to that in \cite{KO},
\begin{equation*}
W_1\boxtimes_A W_2=W_1\boxtimes W_2/\mathrm{Im}(\mu^{(1)}-\mu^{(2)}),
\end{equation*}
with $\eta_{W_1,W_2}$ now the quotient morphism.

The left action of $A$ on $W_1\boxtimes_A W_2$ is given by the following proposition:
\begin{propo}\label{propo:welldefmult}
There is a unique $\sC$-morphism
\begin{equation*}
 \mu_{W_1\boxtimes_A W_2}: A\boxtimes(W_1\boxtimes_A W_2)\rightarrow W_1\boxtimes_A W_2
\end{equation*}
such that for both $i=1,2$, the following diagram commutes:
\begin{equation}\label{repAtensdef}
\begin{matrix}
\xymatrixcolsep{4pc}
\xymatrix{
	A\boxtimes(W_1\boxtimes W_2) 
	\ar[d]_{1_A\boxtimes\eta_{W_1,W_2}}
	\ar[r]^(0.53){\mu^{(i)} }
	& W_1\boxtimes W_2 \ar[d]^{\eta_{W_1,W_2}} \\
	A\boxtimes(W_1\boxtimes_A W_2)
	\ar[r]^(0.54){\mu_{W_1\boxtimes_A W_2}}
	& W_1\boxtimes_A W_2 \\}
\end{matrix}.
\end{equation}
Moreover, $\mu_{W_1\boxtimes_A W_2}$ is even and $(W_1\boxtimes_A W_2, \mu_{W_1\boxtimes_A W_2})$ is an object of $\repA$.
\end{propo}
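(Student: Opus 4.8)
The plan is to construct $\mu_{W_1\boxtimes_A W_2}$ from the universal property of cokernels, after first observing that $1_A\boxtimes\eta_{W_1,W_2}$ is itself a cokernel. First I would note that, since $\eta_{W_1,W_2}$ is a cokernel of $\mu^{(1)}-\mu^{(2)}$ in $\sC$ and the functor $A\boxtimes\cdot$ on $\sC$ is right exact, the morphism $1_A\boxtimes\eta_{W_1,W_2}$ is a cokernel of $(1_A\boxtimes\mu^{(1)})-(1_A\boxtimes\mu^{(2)})$ (using bilinearity of $\boxtimes$ on morphisms); in particular it is an epimorphism. Put $\zeta:=\eta_{W_1,W_2}\circ\mu^{(1)}$, which equals $\eta_{W_1,W_2}\circ\mu^{(2)}$ since $\eta_{W_1,W_2}$ annihilates $\mu^{(1)}-\mu^{(2)}$. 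The key point is then the identity $\zeta\circ(1_A\boxtimes\mu^{(1)})=\zeta\circ(1_A\boxtimes\mu^{(2)})$: using Lemma \ref{muiassoc} for $i=1$ one has $\eta_{W_1,W_2}\circ\mu^{(1)}\circ(1_A\boxtimes\mu^{(1)})=\eta_{W_1,W_2}\circ\mu^{(1)}\circ(\mu\boxtimes 1_{W_1\boxtimes W_2})\circ\sA_{A,A,W_1\boxtimes W_2}$; replacing the morphism $\eta_{W_1,W_2}\circ\mu^{(1)}$ by the equal morphism $\eta_{W_1,W_2}\circ\mu^{(2)}$ and then applying Lemma \ref{muiassoc} for $i=2$ turns the right-hand side into $\eta_{W_1,W_2}\circ\mu^{(2)}\circ(1_A\boxtimes\mu^{(2)})$, so the two sides agree. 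The universal property of the cokernel $1_A\boxtimes\eta_{W_1,W_2}$ then yields a unique $\mu_{W_1\boxtimes_A W_2}$ with $\mu_{W_1\boxtimes_A W_2}\circ(1_A\boxtimes\eta_{W_1,W_2})=\zeta$, which is precisely the commutativity of \eqref{repAtensdef} for both $i=1,2$; uniqueness of any morphism making \eqref{repAtensdef} commute follows at once because $1_A\boxtimes\eta_{W_1,W_2}$ is epi.

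It then remains to check that $\mu_{W_1\boxtimes_A W_2}$ is even and satisfies the two axioms of Definition \ref{repSAdef}, and for all three I would use the same device of cancelling an epimorphism built from $\eta_{W_1,W_2}$. For evenness: $\eta_{W_1,W_2}$ may be chosen even (Proposition \ref{evenkercoker}) and each $\mu^{(i)}$ is a composite of even morphisms (the structure isomorphisms of $\sC$ and the $\mu_{W_i}$ are even), so $\zeta$ is even; precomposing $P\circ\mu_{W_1\boxtimes_A W_2}$ and $\mu_{W_1\boxtimes_A W_2}\circ P$ (the relevant parity involutions) with the even epimorphism $1_A\boxtimes\eta_{W_1,W_2}$ gives $P\circ\zeta=\zeta\circ P$ in both cases, so they coincide. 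For the unit axiom I would compose $\mu_{W_1\boxtimes_A W_2}\circ(\iota_A\boxtimes 1)\circ\sleft_{W_1\boxtimes_A W_2}^{-1}$ with $\eta_{W_1,W_2}$ on the right, slide $\eta_{W_1,W_2}$ to the outside using naturality of $\sleft$ and the exchange condition, reduce to $\mu^{(1)}\circ(\iota_A\boxtimes 1_{W_1\boxtimes W_2})\circ\sleft_{W_1\boxtimes W_2}^{-1}$, and then unravel $\mu^{(1)}=(\mu_{W_1}\boxtimes 1_{W_2})\circ\sA_{A,W_1,W_2}$ together with the standard monoidal coherence identity relating $\sleft_{W_1\boxtimes W_2}$ to $\sleft_{W_1}$ and $\sA_{\sunit,W_1,W_2}$, so that the unit property of $\mu_{W_1}$ finishes it. For the associativity axiom I would compose both sides with the epimorphism $1_A\boxtimes(1_A\boxtimes\eta_{W_1,W_2})$ (right exactness of $A\boxtimes\cdot$ applied twice), then use the exchange condition and naturality of $\sA$ to move $\eta_{W_1,W_2}$ past everything on each side; both sides collapse to $\eta_{W_1,W_2}\circ\mu^{(1)}\circ(\mu\boxtimes 1_{W_1\boxtimes W_2})\circ\sA_{A,A,W_1\boxtimes W_2}$ after one further application of Lemma \ref{muiassoc}, and cancelling the epimorphism completes the proof.

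I do not expect a genuine obstacle here: all the substantive content is packaged in Lemma \ref{muiassoc}, and the rest is a diagram chase. The point demanding the most care is the systematic use of right exactness — one needs $\eta_{W_1,W_2}$, $1_A\boxtimes\eta_{W_1,W_2}$, and $1_A\boxtimes(1_A\boxtimes\eta_{W_1,W_2})$ all to be epimorphisms (indeed cokernels), which is exactly why the right exactness of the tensor functor on $\sC$ (not merely on $\cC$) established above is needed. A secondary bookkeeping point is the coherence identity for the left unit isomorphism on a tensor product, a routine consequence of the triangle and pentagon axioms. Finally, no sign factors intervene anywhere, since every morphism in sight — $\eta_{W_1,W_2}$, $\iota_A$, $\mu$, the $\mu_{W_i}$, the structure isomorphisms of $\sC$, and all identities — is even, so the exchange condition \eqref{exchange} and the supernaturality of $\sR$ reduce to their ordinary forms throughout.
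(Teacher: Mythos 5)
Your proposal is correct and follows essentially the same route as the paper: construct $\mu_{W_1\boxtimes_A W_2}$ from the universal property of the cokernel $1_A\boxtimes\eta_{W_1,W_2}$ after verifying via Lemma \ref{muiassoc} that $\eta_{W_1,W_2}\circ\mu^{(i)}$ annihilates $1_A\boxtimes(\mu^{(1)}-\mu^{(2)})$, then establish evenness, the unit axiom, and associativity by cancelling the relevant epimorphisms built from $\eta_{W_1,W_2}$. The only stylistic difference is minor: you establish $\zeta\circ(1_A\boxtimes\mu^{(1)})=\zeta\circ(1_A\boxtimes\mu^{(2)})$ by alternating the two applications of Lemma \ref{muiassoc} with the identity $\eta\circ\mu^{(1)}=\eta\circ\mu^{(2)}$, and you cancel the epi $1_A\boxtimes(1_A\boxtimes\eta_{W_1,W_2})$ directly in the associativity step, whereas the paper expands $\mu^{(i)}\circ(1_A\boxtimes(\mu^{(1)}-\mu^{(2)}))$ algebraically and invokes the full cokernel universal property — both are equivalent in substance.
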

\begin{proof}
Since $A\boxtimes\cdot$ is right exact, $(A\boxtimes(W_1\boxtimes_A W_2), 1_A\boxtimes \eta_{W_1,W_2})$
is a cokernel of the morphism
\begin{equation*}
 1_A\boxtimes(\mu^{(1)}-\mu^{(2)}): A\boxtimes(A\boxtimes(W_1\boxtimes W_2))\rightarrow A\boxtimes(W_1\boxtimes W_2).
\end{equation*}
Thus if we can show that 
\begin{equation}\label{repAtenswelldef}
 \eta_{W_1,W_2}\circ\mu^{(i)}\circ(1_A\boxtimes(\mu^{(1)}-\mu^{(2)}))=0
\end{equation}
for $i=1,2$, the universal property of the cokernel will imply that there are unique morphisms
\begin{equation*}
 \mu^{(i)}_{W_1,W_2}: A\boxtimes(W_1\boxtimes_A W_2)\rightarrow W_1\boxtimes_A W_2
\end{equation*}
such that the diagram
\begin{equation}
\label{reAtensdef2}
\xymatrixcolsep{4pc}
\xymatrix{
	A\boxtimes(W_1\boxtimes W_2) 
	\ar[d]_{1_A\boxtimes\eta_{W_1,W_2}}
	\ar[r]^(0.53){\mu^{(i)} }
	& W_1\boxtimes W_2 \ar[d]^{\eta_{W_1,W_2}} \\
	A\boxtimes(W_1\boxtimes_A W_2)
	\ar[r]^(0.54){\mu^{(i)}_{W_1,W_2}}
	& W_1\boxtimes_A W_2 \\}
\end{equation}
commutes for $i=1,2$. But then since $\eta_{W_1,W_2}\circ\mu^{(1)}=\eta_{W_1,W_2}\circ\mu^{(2)}$, the uniqueness will imply $\mu^{(1)}_{W_1,W_2}=\mu^{(2)}_{W_1,W_2}$, and we can call this common morphism $\mu_{W_1\boxtimes_A W_2}$.

To show \eqref{repAtenswelldef}, we see from Lemma \ref{muiassoc} that
	\begin{align*}
	\mu^{(1)}\circ & (1_A\boxtimes (\mu^{(1)}-\mu^{(2)}))=\mu^{(1)}\circ (\mu\boxtimes 1_{W_1\boxtimes W_2})\circ\sA_{A,A,W_1\boxtimes W_2}-\mu^{(1)}\circ(1_A\boxtimes\mu^{(2)})\nonumber\\
	& =(\mu^{(1)}-\mu^{(2)})\circ (\mu\boxtimes 1_{W_1\boxtimes W_2})\circ\sA_{A,A,W_1\boxtimes W_2}-(\mu^{(1)}-\mu^{(2)})\circ(1_A\boxtimes\mu^{(2)}).  
	\end{align*}
	Similarly,
	\begin{equation*}
	\mu^{(2)}\circ (1_A\boxtimes (\mu^{(1)}-\mu^{(2)}))=(\mu^{(1)}-\mu^{(2)})\circ ((\mu\boxtimes 1_{W_1\boxtimes W_2})\circ\sA_{A,A,W_1\boxtimes W_2}-1_A\boxtimes \mu^{(1)}).
	\end{equation*}
	Thus \eqref{repAtenswelldef} holds for $i=1,2$ because $\eta_{W_1,W_2}\circ(\mu^{(1)}-\mu^{(2)})=0$. This proves the existence and uniqueness of $\mu_{W_1\boxtimes_A W_2}$. The evenness of $\mu_{W_1\boxtimes_A W_2}$ follows easily from the evenness of $\mu^{(i)}$ and $\eta_{W_1,W_2}$ and the surjectivity of the cokernel $1_A\boxtimes\eta_{W_1,W_2}$.

Now we show that $(W_1\boxtimes_A W_2, \mu_{W_1\boxtimes_A W_2})$ is an object of $\repA$. For associativity of $\mu_{W_1\boxtimes_A W_2}$, we first calculate
	\begin{align*}
		\begin{matrix}
			\begin{tikzpicture}
				[scale = 1, baseline = {(current bounding box.center)}, line width=0.75pt]
				\node at (0,-0.3) {$\scriptstyle{A}$};
				\node at (0.5,-0.3) {$\scriptstyle{A}$};
				\node at (1.1,-0.3) {$\scriptstyle{W_1}$};
				\node at (1.9,-0.3) {$\scriptstyle{W_2}$};
				\node at (1,3.3) {$\scriptstyle{W_1\boxtimes_A W_2}$};
				\node (mu) at (0.25, 1.4) [draw,minimum width=21pt,minimum 	height=10pt, fill=white] {$\scriptstyle{\mu}$};
				\node (eta) at (1.5, 0.6) [draw,minimum width=21pt,minimum 	height=10pt, fill=white] {$\scriptstyle{\eta_{W_1,W_2}}$};
				\node (mut) at (1, 2.5) [draw,minimum width=21pt,minimum 	height=10pt, fill=white] {$\scriptstyle{\mu_{W_1\boxtimes_A W_2}}$};
				\draw [line width=0.5pt] (0,0) to[out=up,in=down] (mu.220);
				\draw [line width=0.5pt] (0.5,0) to[out=up,in=down] (mu.320);
				\draw [line width=1pt] (1.1,0) to[out=up,in=down] (eta.210);
				\draw [line width=1pt] (1.9,0) to[out=up,in=down] (eta.330);
				\draw [line width=0.5pt] (mu.90) to[out=up,in=down] (mut.210);
				\draw [line width=1pt] (eta.90) to[out=up,in=down] (mut.330);
				\draw [line width=1pt] (mut.90) to[out=up,in=down] (1,3);
			\end{tikzpicture}
		\end{matrix}				
		=
		\begin{matrix}
			\begin{tikzpicture}
				[scale = 1, baseline = {(current bounding box.center)}, line width=0.75pt]
				\node at (0,-0.3) {$\scriptstyle{A}$};
				\node at (0.5,-0.3) {$\scriptstyle{A}$};
				\node at (1,-0.3) {$\scriptstyle{W_1}$};
				\node at (2,-0.3) {$\scriptstyle{W_2}$};
				\node at (1,3.3) {$\scriptstyle{W_1\boxtimes_A W_2}$};
				\node (mu) at (0.25, 0.6) [draw,minimum width=21pt,minimum 	height=10pt, fill=white] {$\scriptstyle{\mu}$};
				\node (eta) at (1.5, 1.4) [draw,minimum width=21pt,minimum 	height=10pt, fill=white] {$\scriptstyle{\eta_{W_1,W_2}}$};
				\node (mut) at (1, 2.5) [draw,minimum width=21pt,minimum 	height=10pt, fill=white] {$\scriptstyle{\mu_{W_1\boxtimes_A W_2}}$};
				\draw [line width=0.5pt] (0,0) to[out=up,in=down] (mu.220);
				\draw [line width=0.5pt] (0.5,0) to[out=up,in=down] (mu.320);
				\draw [line width=1pt] (1,0) to[out=up,in=down] (eta.210);
				\draw [line width=1pt] (2,0) to[out=up,in=down] (eta.330);
				\draw [line width=0.5pt] (mu.90) to[out=up,in=down] (mut.210);
				\draw [line width=1pt] (eta.90) to[out=up,in=down] (mut.330);
				\draw [line width=1pt] (mut.90) to[out=up,in=down] (1,3);
			\end{tikzpicture}
		\end{matrix}				
		=
		\begin{matrix}
			\begin{tikzpicture}
				[scale = 1, baseline = {(current bounding box.center)}, line width=0.75pt]
				\node at (0,-0.3) {$\scriptstyle{A}$};
				\node at (0.5,-0.3) {$\scriptstyle{A}$};
				\node at (1,-0.3) {$\scriptstyle{W_1}$};
				\node at (1.5,-0.3) {$\scriptstyle{W_2}$};
				\node at (1,3.3) {$\scriptstyle{W_1\boxtimes_A W_2}$};
				\node (mu) at (0.25, 0.6) [draw,minimum width=21pt,minimum 	height=10pt, fill=white] {$\scriptstyle{\mu}$};
				\node (mui) at (1, 1.6) [draw,minimum width=30pt,minimum 	height=10pt, fill=white] {$\scriptstyle{\mu^{(i)}}$};
				\node (eta) at (1, 2.5) [draw,minimum width=21pt,minimum 	height=10pt, fill=white] {$\scriptstyle{\eta_{W_1, W_2}}$};
				\draw [line width=0.5pt] (0,0) to[out=up,in=down] (mu.220);
				\draw [line width=0.5pt] (0.5,0) to[out=up,in=down] (mu.320);
				\draw [line width=1pt] (1,0) to[out=up,in=down] (mui.270);
				\draw [line width=1pt] (1.5,0) to[out=up,in=down] (mui.330);
				\draw [line width=0.5pt] (mu.90) to[out=up,in=down] (mui.210);
				\draw [line width=1pt] (mui.30) to[out=up,in=down] (eta.330);
				\draw [line width=1pt] (mui.150) to[out=up,in=down] (eta.210);
				\draw [line width=1pt] (eta.90) to[out=up,in=down] (1,3);
			\end{tikzpicture}
		\end{matrix}				
		=	
		\begin{matrix}
			\begin{tikzpicture}
				[scale = 1, baseline = {(current bounding box.center)}, line width=0.75pt]
				\node at (0,-0.3) {$\scriptstyle{A}$};
				\node at (0.5,-0.3) {$\scriptstyle{A}$};
				\node at (1,-0.3) {$\scriptstyle{W_1}$};
				\node at (1.5,-0.3) {$\scriptstyle{W_2}$};
				\node at (0.8,3.3) {$\scriptstyle{W_1\boxtimes_A W_2}$};
				\node (mui1) at (1,0.6) [draw,minimum width=30pt,minimum height=10pt,fill=white] {$\scriptstyle{\mu^{(i)}}$};
				\node (mui2) at (0.8,1.6) [draw,minimum width=30pt,minimum height=10pt,fill=white] {$\scriptstyle{\mu^{(i)}}$};
				\node (eta) at (0.8, 2.5) [draw,minimum width=21pt,minimum 	height=10pt, fill=white] {$\scriptstyle{\eta_{W_1, W_2}}$};
				\draw [line width=0.5pt] (0,0) to[out=up,in=down] (mui2.210);
				\draw [line width=0.5pt] (0.5,0) to[out=up,in=down] (mui1.210);
				\draw [line width=1pt] (1,0) to[out=up,in=down] (mui1.270);
				\draw [line width=1pt] (1.5,0) to[out=up,in=down] (mui1.330);
				\draw [line width=1pt] (mui1.140) to[out=up,in=down] (mui2.270);
				\draw [line width=1pt] (mui1.40) to[out=up,in=down] (mui2.330);
				\draw [line width=1pt] (mui2.145) to[out=up,in=down] (eta.210);
				\draw [line width=1pt] (mui2.35) to[out=up,in=down] (eta.330);
				\draw [line width=1pt] (eta.90) to[out=up,in=down] (0.8,3);
			\end{tikzpicture}
		\end{matrix}						
		=
		\begin{matrix}
			\begin{tikzpicture}
				[scale = 1, baseline = {(current bounding box.center)}, line width=0.75pt]
				\node at (0,-0.3) {$\scriptstyle{A}$};
				\node at (0.5,-0.3) {$\scriptstyle{A}$};
				\node at (1.1,-0.3) {$\scriptstyle{W_1}$};
				\node at (1.9,-0.3) {$\scriptstyle{W_2}$};
				\node at (0.8,3.3) {$\scriptstyle{W_1\boxtimes_A W_2}$};
				\node (eta) at (1.5,0.6) [draw,minimum width=21pt,minimum height=10pt, fill=white] {$\scriptstyle{\eta_{W_1,W_2}}$};
				\node (mu1) at (1.2,1.5) [draw,minimum width=21pt,minimum height=10pt, fill=white] {$\scriptstyle{\mu_{W_1\boxtimes_A W_2}}$};
				\node (mu2) at (0.8,2.5) [draw,minimum width=21pt,minimum height=10pt, fill=white] {$\scriptstyle{\mu_{W_1\boxtimes_A W_2}}$};
				\draw [line width=0.5pt] (0,0) to[out=up, in=down] (mu2.200);
				\draw [line width=0.5pt] (0.5,0) to[out=up, in=down] (mu1.200);
				\draw [line width=1pt] (1.1,0) to[out=up, in=down] (eta.210);
				\draw [line width=1pt] (1.9,0) to[out=up, in=down] (eta.330);
				\draw [line width=1pt] (eta.90) to[out=up, in=down] (mu1.320);				
				\draw [line width=1pt] (mu1.90) to[out=up, in=down] (mu2.330);
				\draw [line width=1pt] (mu2.90) to[out=up, in=down] (0.8,3);
			\end{tikzpicture}
		\end{matrix}						
	\end{align*}
The first equality uses properties of associativity, the second follows from \eqref{repAtensdef}, the third is due to Lemma \ref{muiassoc}, and the last is a repeated application of \eqref{reAtensdef2}. Now associativity of $\mu_{W_1\boxtimes_A W_2}$ follows since $\eta_{W_1,W_2}$ is surjective and $A\boxtimes \cdot$ is right exact.

For the unit property of $\mu_{W_1\boxtimes_A W_2}$, we need to prove
\begin{align}
	\mu_{W_1\boxtimes_A W_2}\circ (\iota_A\boxtimes 1_{W_1\boxtimes_A W_2}) \circ \sleft_{W_1\boxtimes_A W_2}^{-1} = 1_{W_1\boxtimes_A W_2}.
	\label{eqn:murepAunitproperty}
\end{align}
Using first naturality of the left unit isomorphism and the triangle axiom, then \eqref{repAtensdef}, and finally the unit property of $\mu_{W_1}$, we get 
	\begin{align*}
		\begin{matrix}
			\begin{tikzpicture}
				[scale = 1, baseline = {(current bounding box.center)}, line width=0.5pt]
				\node at (0,-0.3) {$\scriptstyle{W_1}$};
				\node at (1,-0.3) {$\scriptstyle{W_2}$};
				\node at (0.1,3.8) {$\scriptstyle{W_1\boxtimes_AW_2}$};
				\node (eta) at (0.5,1)  [draw,minimum width=21pt,minimum height=10pt, fill=white] {$\scriptstyle{\eta_{W_1,W_2}}$};
				\node (mu) at (0.1,2.8) [draw,minimum width=21pt,minimum height=10pt, fill=white] {$\scriptstyle{\mu_{W_1\boxtimes_AW_2}}$};
				\node (unit) at (-0.2,2) {$\bullet$};
				\draw[white, double=black, line width = 1pt, double distance=1.25 pt] (0,0) -- (eta.210);
				\draw[white, double=black, line width = 1pt, double distance=1.25 pt] (1,0) -- (eta.330);
				\draw[white, double=black, line width = 1pt, double distance=1.25 pt] (eta.90) to[out=up,in=down] (mu.330);
				\draw[white, double=black, line width = 1pt, double distance=1.25 pt] (mu.90) to[out=up,in=down] (0.1,3.5);
				\draw[dashed] (0.5,1.5) to[out=left, in=down] (-0.2,2);
				\draw[line width=0.5 pt] (-0.2,2) to[out=up, in=down] (mu.220);
			\end{tikzpicture}
		\end{matrix}				
		=
		\begin{matrix}
			\begin{tikzpicture}
				[scale = 1, baseline = {(current bounding box.center)}, line width=0.5pt]
				\node at (0.5,-0.3) {$\scriptstyle{W_1}$};
				\node at (1.5,-0.3) {$\scriptstyle{W_2}$};
				\node at (0.1,3.8) {$\scriptstyle{W_1\boxtimes_AW_2}$};
				\node (eta) at (1,1.8)  [draw,minimum width=21pt,minimum 	height=10pt, fill=white] {$\scriptstyle{\eta_{W_1,W_2}}$};
				\node (mu) at (0.1,2.8) [draw,minimum width=21pt,minimum height=10pt, fill=white] {$\scriptstyle{\mu_{W_1\boxtimes_AW_2}}$};
				\node (unit) at (-0.2,1) {$\bullet$};
				\draw[white, double=black, line width = 1pt, double distance=1.25 pt] (0.5,0) -- (eta.210);
				\draw[white, double=black, line width = 1pt, double distance=1.25 pt] (1.5,0) -- (eta.330);
				\draw[white, double=black, line width = 1pt, double distance=1.25 pt] (eta.90) to[out=up,in=down] (mu.330);
				\draw[white, double=black, line width = 1pt, double distance=1.25 pt] (mu.90) to[out=up,in=down] (0.1,3.5);
				\draw[dashed] (0.5,0.5) to[out=left, in=down] (-0.2,1);
				\draw[line width=0.5 pt] (-0.2,1) to[out=up, in=down] (mu.220);
			\end{tikzpicture}
		\end{matrix}				
		=
		\begin{matrix}
			\begin{tikzpicture}
				[scale = 1, baseline = {(current bounding box.center)}, line width=0.5pt]
				\node at (0.5,-0.3) {$\scriptstyle{W_1}$};
				\node at (1.5,-0.3) {$\scriptstyle{W_2}$};
				\node at (1,3.8) {$\scriptstyle{W_1\boxtimes_AW_2}$};
				\node (eta) at (1,2.8)  [draw,minimum width=21pt,minimum 	height=10pt, fill=white] {$\scriptstyle{\eta_{W_1,W_2}}$};
				\node (mu) at (0.2,1.8) [draw,minimum width=21pt,minimum height=10pt, fill=white] {$\scriptstyle{\mu_{W_1}}$};
				\node (unit) at (-0.2,0.8) {$\bullet$};
				\draw[white, double=black, line width = 1pt, double distance=1.25 pt] (0.5,0) -- (mu.320);
				\draw[white, double=black, line width = 1pt, double distance=1.25 pt] (1.5,0) -- (eta.330);
				\draw[white, double=black, line width = 1pt, double distance=1.25 pt] (mu.90) to[out=up,in=down] (eta.210);
				\draw[white, double=black, line width = 1pt, double distance=1.25 pt] (eta.90) to[out=up,in=down] (1,3.5);
				\draw[dashed] (0.5,0.3) to[out=left, in=down] (-0.2,0.8);
				\draw[line width=0.5 pt] (-0.2,0.8) to[out=up, in=down] (mu.220);
			\end{tikzpicture}
		\end{matrix}
		=				
		\begin{matrix}
			\begin{tikzpicture}
				[scale = 1, baseline = {(current bounding box.center)}, line width=0.5pt]
				\node at (0.5,-0.3) {$\scriptstyle{W_1}$};
				\node at (1.5,-0.3) {$\scriptstyle{W_2}$};
				\node at (1,3.8) {$\scriptstyle{W_1\boxtimes_AW_2}$};
				\node (eta) at (1,2)  [draw,minimum width=21pt,minimum 	height=10pt, fill=white] {$\scriptstyle{\eta_{W_1,W_2}}$};
				\draw[white, double=black, line width = 1pt, double distance=1.25 pt] (0.5,0) -- (eta.210);
				\draw[white, double=black, line width = 1pt, double distance=1.25 pt] (1.5,0) -- (eta.330);
				\draw[white, double=black, line width = 1pt, double distance=1.25 pt] (eta.90) to[out=up,in=down] (1,3.5);
			\end{tikzpicture}
		\end{matrix}
	\end{align*}
This calculation indeed proves	\eqref{eqn:murepAunitproperty} since the cokernel morphism $\eta_{W_1,W_2}$ is surjective.
\end{proof}

For $\boxtimes_A$ to define a functor $\repA\times\repA\rightarrow\repA$, we also need tensor products of morphisms:
\begin{propo}\label{propo:repAtensnat}
	If $f_1: W_1\rightarrow \widetilde{W}_1$ and $f_2: W_2\rightarrow \widetilde{W}_2$ are morphisms in $\repA$, then there is a unique $\repA$-morphism
	\begin{equation*}
	 f_1\boxtimes_A f_2: W_1\boxtimes_A W_2\rightarrow \widetilde{W}_1\boxtimes_A \widetilde{W}_2
	\end{equation*}
such that the following diagram commutes.
\begin{equation}\label{repAtensprodmorphdef}
\begin{matrix}
\xymatrixcolsep{3pc}
\xymatrix{
	W_1\boxtimes W_2 
	\ar[d]_{\eta_{W_1,W_2}}
	\ar[r]^(0.5){ f_1\boxtimes f_2}
	& \widetilde{W}_1\boxtimes \widetilde{W}_2 \ar[d]^{\eta_{\widetilde{W}_1,\widetilde{W}_2}} \\
	W_1\boxtimes_A W_2
	\ar[r]^(0.5){f_1\boxtimes_A f_2}
	& \widetilde{W}_1\boxtimes_A \widetilde{W}_2 \\}
\end{matrix}
\end{equation}
\end{propo}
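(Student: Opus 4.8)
The plan is to obtain $f_1\boxtimes_A f_2$ from the universal property of the cokernel that defines $W_1\boxtimes_A W_2$, exactly in the style of the construction of $\boxtimes_A$ itself. Recall that $\eta_{W_1,W_2}\colon W_1\boxtimes W_2\to W_1\boxtimes_A W_2$ is a cokernel of $\mu^{(1)}-\mu^{(2)}$ (the morphisms of Definition~\ref{def:mu1mu2} for the pair $W_1,W_2$), and similarly $\eta_{\widetilde W_1,\widetilde W_2}$ is a cokernel of the analogous difference for $\widetilde W_1,\widetilde W_2$. Thus, to produce the bottom arrow in \eqref{repAtensprodmorphdef}, it suffices to check that $\eta_{\widetilde W_1,\widetilde W_2}\circ(f_1\boxtimes f_2)$ annihilates $\mu^{(1)}-\mu^{(2)}$; the universal property then yields a unique $\sC$-morphism $f_1\boxtimes_A f_2$ with $(f_1\boxtimes_A f_2)\circ\eta_{W_1,W_2}=\eta_{\widetilde W_1,\widetilde W_2}\circ(f_1\boxtimes f_2)$, and uniqueness in the statement is immediate since $\eta_{W_1,W_2}$, being a cokernel, is an epimorphism.

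The key lemma is the intertwining identity
\[
(f_1\boxtimes f_2)\circ\mu^{(i)}=\mu^{(i)}\circ\bigl(1_A\boxtimes(f_1\boxtimes f_2)\bigr),\qquad i=1,2,
\]
where on the left $\mu^{(i)}$ is formed from $W_1,W_2$ and on the right from $\widetilde W_1,\widetilde W_2$. By bilinearity of $\boxtimes$ on morphisms in $\sC$ we may assume $f_1$ and $f_2$ are parity-homogeneous. For $i=1$ this follows by expanding $\mu^{(1)}=(\mu_{W_1}\boxtimes 1_{W_2})\circ\sA_{A,W_1,W_2}$ and invoking naturality of $\sA$, the exchange condition \eqref{exchange}, the evenness of the structure maps $\mu_{W_1}$ and $\mu_{\widetilde W_1}$, and the fact that $f_1$ is a $\repA$-morphism. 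For $i=2$ I would use the alternate expression for $\mu^{(2)}$ from Remark~\ref{rem:KOmu2}; here the parity-twisted naturality \eqref{braidingsupernatural} of the braiding must be used twice --- once to pass $f_1\boxtimes f_2$ through $\sR_{W_2,W_1}$ and once through $\sR_{W_2,W_1}^{-1}$ --- and the two factors of $(-1)^{\vert f_1\vert\vert f_2\vert}$ cancel, so the identity holds on the nose. Granting this, $\eta_{\widetilde W_1,\widetilde W_2}\circ(f_1\boxtimes f_2)\circ(\mu^{(1)}-\mu^{(2)})=\eta_{\widetilde W_1,\widetilde W_2}\circ(\mu^{(1)}-\mu^{(2)})\circ(1_A\boxtimes(f_1\boxtimes f_2))=0$, which is what we wanted.

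It then remains to check that $f_1\boxtimes_A f_2$ is actually a morphism in $\repA$, i.e.\ that $(f_1\boxtimes_A f_2)\circ\mu_{W_1\boxtimes_A W_2}=\mu_{\widetilde W_1\boxtimes_A\widetilde W_2}\circ(1_A\boxtimes(f_1\boxtimes_A f_2))$. Since $A\boxtimes\cdot$ is right exact, $1_A\boxtimes\eta_{W_1,W_2}$ is a cokernel and hence an epimorphism, so it is enough to verify this after precomposition with $1_A\boxtimes\eta_{W_1,W_2}$. There one chases the defining squares \eqref{repAtensdef} and \eqref{repAtensprodmorphdef} together with the intertwining identity for $\mu^{(1)}$ and the interchange law (with trivial signs, as $1_A$ and all the $\eta$'s are even), and both sides reduce to $\eta_{\widetilde W_1,\widetilde W_2}\circ\mu^{(1)}\circ(1_A\boxtimes(f_1\boxtimes f_2))$ (with $\mu^{(1)}$ formed from $\widetilde W_1,\widetilde W_2$).

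I expect the only genuine obstacle to be the sign bookkeeping in the $i=2$ case of the intertwining identity: one must check that the two sign factors coming from \eqref{braidingsupernatural} indeed cancel, so that no residual parity sign survives. Once that is confirmed, everything else is a routine diagram chase via cokernel universal properties, structurally identical to Proposition~\ref{propo:welldefmult} and the proof of Theorem~\ref{thm:repAabelian}.
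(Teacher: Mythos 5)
Your proposal is correct and follows essentially the same structure as the paper's proof: establish the intertwining identity $(f_1\boxtimes f_2)\circ\mu^{(i)}=\mu^{(i)}\circ(1_A\boxtimes(f_1\boxtimes f_2))$ for $i=1,2$, then invoke the universal property of the cokernel $\eta_{W_1,W_2}$ to get $f_1\boxtimes_A f_2$, and finally verify it is a $\repA$-morphism via the two commuting squares and the epimorphism $1_A\boxtimes\eta_{W_1,W_2}$.

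The only difference lies in your handling of the $i=2$ case. You reach for the alternate form of $\mu^{(2)}$ from Remark~\ref{rem:KOmu2}, which uses $\sR_{W_2,W_1}^{-1}$ and $\sR_{W_2,W_1}$, so the parity-twisted naturality \eqref{braidingsupernatural} produces two factors of $(-1)^{\vert f_1\vert\vert f_2\vert}$ that you must argue cancel --- they do, as you flag. The paper instead works directly from Definition~\ref{def:mu1mu2}, where the braiding involved is $\sR_{A,W_1}$; since one is passing $1_A$ (even) and $f_1$ through it, the sign factor $(-1)^{\vert 1_A\vert\vert f_1\vert}$ is trivially $1$ and no cancellation needs to be tracked. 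Both routes work, but the paper's choice sidesteps the sign bookkeeping you correctly identify as the one potentially delicate point.
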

\begin{proof}
	For $i=1,2$, it is easy to show that
	\begin{equation}\label{repAtensprodmorph}
	\mu^{(i)}\circ(1_A\boxtimes (f_1\boxtimes f_2))=(f_1\boxtimes f_2)\circ\mu^{(i)}: A\boxtimes(W_1\boxtimes W_2)\rightarrow \widetilde{W}_1\boxtimes \widetilde{W}_2.
	\end{equation}
	Indeed, for $i=1$, this follows from the naturality of associativity, evenness  of $\mu_{W_1}$, and the fact that $f_1$ is a morphism in $\repA$, while
	for $i=2$, we use the same properties of $\mu_{W_2}$ and $f_2$ plus naturality of the braiding. 
	Now, \eqref{repAtensprodmorph} implies that the composition
	\begin{equation*}
	 \eta_{\widetilde{W}_1,\widetilde{W}_2}\circ(f_1\boxtimes f_2)\circ(\mu^{(1)}-\mu^{(2)}): A\boxtimes(W_1\boxtimes W_2)\rightarrow \widetilde{W}_1\boxtimes_A \widetilde{W}_2
	\end{equation*}
is $0$. Thus the universal property of the cokernel $(W_1\boxtimes_A W_2, \eta_{W_1,W_2})$ induces a unique $\sC$-morphism 
$$f_1\boxtimes_A f_2: W_1\boxtimes_A W_2\rightarrow \widetilde{W}_1\boxtimes_A \widetilde{W}_2$$
making the diagram \eqref{repAtensprodmorphdef} commute. 
To show that $f_1\boxtimes_A f_2$ is a $\repA$-morphism:
\begin{align*}
	\begin{matrix}
		\begin{tikzpicture}[scale = 0.8, baseline = {(current bounding box.center)}, line width=1pt,out=up, in=down]
			\node (a) at (0,-0.3) {$\scriptstyle{A}$};
			\node (w1) at (0.75,-0.3) {$\scriptstyle{W_1}$};
			\node (w2) at (1.75,-0.3) {$\scriptstyle{W_2}$};
			\node (wt) at (0.8,5) {$\scriptstyle{\widetilde{W}_1\boxtimes_A\widetilde{W}_2 }$};
			\node (eta) at (1.25, 1) [draw, fill=white,line width=0.5pt] {$\scriptstyle{\eta_{W_1,W_2}}$};
			\node (mu) at (0.8,2.5) [draw, fill=white,line width=0.5pt] {$\scriptstyle{\mu_{W_1\boxtimes_A W_2}}$};
			\node (f) at (0.8,3.75) [draw,fill=white,line width=0.5pt] {$\scriptstyle{f_1\boxtimes_A f_2}$};
			\draw [line width=0.5pt] (a.90) to (mu.210);
			\draw (w1.90) to (eta.210);
			\draw (w2.90) to (eta.330);
			\draw (eta.90) to (mu.330);
			\draw (mu.90) to (f.270);
			\draw (f.90) to (wt.270);
		\end{tikzpicture}	
	\end{matrix}
	=
	\begin{matrix}
		\begin{tikzpicture}[scale = 0.8, baseline = {(current bounding box.center)}, line width=1pt,out=up, in=down]
			\node (a) at (0,-0.3) {$\scriptstyle{A}$};
			\node (w1) at (0.5,-0.3) {$\scriptstyle{W_1}$};
			\node (w2) at (1.1,-0.3) {$\scriptstyle{W_2}$};
			\node (wt) at (0.5,5) {$\scriptstyle{\widetilde{W}_1\boxtimes_A\widetilde{W}_2 }$};
			\node (mui) at (0.5,1) [draw, fill=white, minimum width=30pt, line width=0.5pt] {$\scriptstyle{\mu^{(i)}}$};
			\node (eta) at (0.5, 2.5) [draw, fill=white, line width=0.5pt] {$\scriptstyle{\eta_{W_1,W_2}}$};
			\node (f) at (0.5,3.75) [draw,fill=white, line width=0.5pt] {$\scriptstyle{f_1\boxtimes_A f_2}$};
			\draw [line width=0.5pt] (a.90) to (mui.215);
			\draw (w1.90) to (mui.270);
			\draw (w2.90) to (mui.330);
			\draw (mui.140) to (eta.220);
			\draw (mui.40) to (eta.320);
			\draw (eta.90) to (f.270);
			\draw (f.90) to (wt.270);
		\end{tikzpicture}	
	\end{matrix}	
	=
	\begin{matrix}
		\begin{tikzpicture}[scale = 0.8, baseline = {(current bounding box.center)}, line width=1pt,out=up, in=down]
			\node (a) at (0,-0.3) {$\scriptstyle{A}$};
			\node (w1) at (0.5,-0.3) {$\scriptstyle{W_1}$};
			\node (w2) at (1.1,-0.3) {$\scriptstyle{W_2}$};
			\node (wt) at (0.5,5) {$\scriptstyle{\widetilde{W}_1\boxtimes_A\widetilde{W}_2 }$};
			\node (eta) at (0.5, 3.75) [draw, fill=white, line width=0.5pt] {$\scriptstyle{\eta_{\widetilde{W}_1,\widetilde{W}_2}}$};
			\node (f1) at (0.1,2.5) [draw,fill=white, line width=0.5pt] {$\scriptstyle{f_1}$};
			\node (f2) at (0.9,2.5) [draw,fill=white, line width=0.5pt] {$\scriptstyle{f_2}$};
			\node (mui) at (0.5,1) [draw, fill=white, minimum width=30pt, line width=0.5pt] {$\scriptstyle{\mu^{(i)}}$};
			\draw [line width=0.5pt] (a.90) to (mui.215);
			\draw (w1.90) to (mui.270);
			\draw (w2.90) to (mui.330);
			\draw (mui.140) to (f1.270);
			\draw (mui.40) to (f2.270);
			\draw (f1.90) to (eta.220);
			\draw (f2.90) to (eta.320);
			\draw (eta.90) to (wt.270);
		\end{tikzpicture}	
	\end{matrix}	
	=
	\begin{matrix}
		\begin{tikzpicture}[scale = 0.8, baseline = {(current bounding box.center)}, line width=1pt,out=up, in=down]
			\node (a) at (0,-0.3) {$\scriptstyle{A}$};
			\node (w1) at (0.5,-0.3) {$\scriptstyle{W_1}$};
			\node (w2) at (1.3,-0.3) {$\scriptstyle{W_2}$};
			\node (wt) at (0.5,5) {$\scriptstyle{\widetilde{W}_1\boxtimes_A\widetilde{W}_2 }$};
			\node (eta) at (0.5, 3.75) [draw, fill=white, line width=0.5pt] {$\scriptstyle{\eta_{\widetilde{W}_1,\widetilde{W}_2}}$};
			\node (mui) at (0.5,2.5) [draw, fill=white, minimum width=30pt, line width=0.5pt] {$\scriptstyle{\mu^{(i)}}$};
			\node (f1) at (0.5,1) [draw,fill=white, line width=0.5pt] {$\scriptstyle{f_1}$};
			\node (f2) at (1.3,1) [draw,fill=white, line width=0.5pt] {$\scriptstyle{f_2}$};
			\draw [line width=0.5pt] (a.90) to (mui.215);
			\draw (w1.90) to (f1.270);
			\draw (w2.90) to (f2.270);
			\draw (f1.90) to (mui.270);
			\draw (f2.90) to (mui.320);
			\draw (mui.140) to (eta.220);
			\draw (mui.40) to (eta.320);
			\draw (eta.90) to (wt.270);
		\end{tikzpicture}	
	\end{matrix}	
	=
	\begin{matrix}
		\begin{tikzpicture}[scale = 0.8, baseline = {(current bounding box.center)}, line width=1pt,out=up, in=down]
			\node (a) at (0,-0.3) {$\scriptstyle{A}$};
			\node (w1) at (0.6,-0.3) {$\scriptstyle{W_1}$};
			\node (w2) at (1.4,-0.3) {$\scriptstyle{W_2}$};
			\node (wt) at (0.5,5) {$\scriptstyle{\widetilde{W}_1\boxtimes_A\widetilde{W}_2 }$};
			\node (mu) at (0.5,3.75) [draw, fill=white, minimum width=30pt, line width=0.5pt] {$\scriptstyle{\mu_{\widetilde{W}_1\boxtimes_A\widetilde{W}_2}}$};
			\node (eta) at (1, 2.5) [draw, fill=white, line width=0.5pt] {$\scriptstyle{\eta_{\widetilde{W}_1,\widetilde{W}_2}}$};
			\node (f1) at (0.6,1) [draw,fill=white, line width=0.5pt] {$\scriptstyle{f_1}$};
			\node (f2) at (1.4,1) [draw,fill=white, line width=0.5pt] {$\scriptstyle{f_2}$};
			\draw [line width=0.5pt] (a.90) to (mu.215);
			\draw (w1.90) to (f1.270);
			\draw (w2.90) to (f2.270);
			\draw (f1.90) to (eta.220);
			\draw (f2.90) to (eta.320);
			\draw (eta.90) to (mu.330);
			\draw (mu.90) to (wt.270);
		\end{tikzpicture}	
	\end{matrix}	
	=
	\begin{matrix}
		\begin{tikzpicture}[scale = 0.8, baseline = {(current bounding box.center)}, line width=1pt,out=up, in=down]
			\node (a) at (0,-0.3) {$\scriptstyle{A}$};
			\node (w1) at (0.6,-0.3) {$\scriptstyle{W_1}$};
			\node (w2) at (1.4,-0.3) {$\scriptstyle{W_2}$};
			\node (wt) at (0.5,5) {$\scriptstyle{\widetilde{W}_1\boxtimes_A\widetilde{W}_2 }$};
			\node (mu) at (0.5,3.75) [draw, fill=white, minimum width=30pt, line width=0.5pt] {$\scriptstyle{\mu_{\widetilde{W}_1\boxtimes_A\widetilde{W}_2}}$};
			\node (eta) at (1, 1) [draw, fill=white, line width=0.5pt] {$\scriptstyle{\eta_{W_1,W_2}}$};
			\node (f) at (1,2.5) [draw,fill=white, line width=0.5pt] {$\scriptstyle{f_1\boxtimes_A f_2}$};
			\draw [line width=0.5pt] (a.90) to (mu.215);
			\draw (w1.90) to (eta.220);
			\draw (w2.90) to (eta.320);
			\draw (eta.90) to (f.270);
			\draw (f.90) to (mu.320);
			\draw (mu.90) to (wt.270);
		\end{tikzpicture}	
	\end{matrix}	
\end{align*}	
We use \eqref{repAtensdef} for the first and fourth equalities, \eqref{repAtensprodmorphdef} for the second and last, and \eqref{repAtensprodmorph} for the third.
Recalling that $1_A\boxtimes\eta_{W_1,W_2}$ is surjective because $A\boxtimes\cdot$ is right exact, $f_1\boxtimes_A f_2$ is a $\repA$-morphism.
\end{proof}

The characterization of $f_1\boxtimes_A f_2$ as the unique morphism making \eqref{repAtensprodmorphdef} commute implies that $\boxtimes_A$ is a functor from $\repA\times\repA$ to $\repA$. That is, the super interchange law
\begin{equation*}
 (f_1\boxtimes_A f_2)\circ(g_1\boxtimes_A g_2)=(-1)^{\vert f_2\vert \vert g_1\vert}(f_1\circ g_1)\boxtimes_A (f_2\circ g_2)
\end{equation*}
holds for appropriately composable parity-homogeneous morphisms $f_1$, $f_2$, $g_1$, and $g_2$ in $\repA$, and $1_{W_1}\boxtimes_A 1_{W_2}=1_{W_1\boxtimes_A W_2}$ for objects $W_1$ and $W_2$ of $\repA$. The characterization \eqref{repAtensprodmorphdef} also implies that the tensor product of morphisms in $\repA$ is bilinear, since the tensor product of morphisms in the tensor supercategory $\sC$ is bilinear.

Now we construct the left and right unit isomorphisms in $\repA$.
\begin{propo}\label{repAleftunit}
	If $(W,\mu_W)$ is an object of $\mathrm{Rep}\,A$, there is a unique isomorphism $l^A_W: A\boxtimes_A W\rightarrow W$ in $\repA$ such that
	\begin{equation}\label{leftisodef}
	 l^A_W\circ\eta_{A, W}=\mu_W: A\boxtimes W\rightarrow W,
	\end{equation}
	and $l^A_W$ is even.
\end{propo}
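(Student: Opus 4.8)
\emph{Proof proposal.} The plan is to obtain $l^A_W$ from the universal property of the cokernel $\eta_{A,W}\colon A\boxtimes W\to A\boxtimes_A W$ and then check, in turn, that $l^A_W$ is even, that it is a morphism in $\repA$, and that it is an isomorphism. Throughout, $\mu^{(1)},\mu^{(2)}\colon A\boxtimes(A\boxtimes W)\to A\boxtimes W$ denote the morphisms of Definition \ref{def:mu1mu2} applied to the pair $(W_1,W_2)=(A,W)$, so that $\mu_{W_1}=\mu$. First I would prove $\mu_W\circ(\mu^{(1)}-\mu^{(2)})=0$. Since $\mu^{(1)}=(\mu\boxtimes 1_W)\circ\sA_{A,A,W}$, the associativity axiom for the $A$-module $W$ (Definition \ref{repSAdef}) gives $\mu_W\circ\mu^{(1)}=\mu_W\circ(1_A\boxtimes\mu_W)$; and writing $\mu^{(2)}=(1_A\boxtimes\mu_W)\circ\sA_{A,A,W}^{-1}\circ(\sR_{A,A}\boxtimes 1_W)\circ\sA_{A,A,W}$, the same axiom together with functoriality of $\boxtimes$ and the supercommutativity $\mu\circ\sR_{A,A}=\mu$ (Definition \ref{defi:SA}) yields $\mu_W\circ\mu^{(2)}=\mu_W\circ(\mu\boxtimes 1_W)\circ\sA_{A,A,W}=\mu_W\circ\mu^{(1)}$. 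Since $(A\boxtimes_A W,\eta_{A,W})$ is a cokernel of $\mu^{(1)}-\mu^{(2)}$ in $\sC$, there is a unique $\sC$-morphism $l^A_W$ with $l^A_W\circ\eta_{A,W}=\mu_W$; this is \eqref{leftisodef}, and uniqueness (even among $\sC$-morphisms) is immediate from surjectivity of $\eta_{A,W}$. Evenness of $l^A_W$ follows from evenness of $\mu_W$ and of $\eta_{A,W}$ (the latter by Proposition \ref{evenkercoker}, as $\mu^{(1)}-\mu^{(2)}$ is even) together with surjectivity of $\eta_{A,W}$.

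To see that $l^A_W$ is a morphism in $\repA$, I would verify $l^A_W\circ\mu_{A\boxtimes_A W}=\mu_W\circ(1_A\boxtimes l^A_W)$ by precomposing with $1_A\boxtimes\eta_{A,W}$, which is an epimorphism by right exactness of $A\boxtimes\cdot$ in $\sC$. Using \eqref{repAtensdef} and $l^A_W\circ\eta_{A,W}=\mu_W$, the left-hand side reduces to $\mu_W\circ\mu^{(1)}=\mu_W\circ(\mu\boxtimes 1_W)\circ\sA_{A,A,W}$ and the right-hand side to $\mu_W\circ(1_A\boxtimes\mu_W)$; these agree by the module associativity axiom, so the two sides become equal after precomposition with the epimorphism, hence are equal.

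For the isomorphism, I propose as inverse the morphism $\nu_W:=\eta_{A,W}\circ(\iota_A\boxtimes 1_W)\circ\sleft_W^{-1}\colon W\to A\boxtimes_A W$. The identity $l^A_W\circ\nu_W=1_W$ is immediate from \eqref{leftisodef} and the unit axiom for the $A$-module $W$. For $\nu_W\circ l^A_W=1_{A\boxtimes_A W}$, since $\eta_{A,W}$ is epic it suffices to show $\nu_W\circ\mu_W=\eta_{A,W}$, i.e.\ that $\eta_{A,W}$ annihilates $1_{A\boxtimes W}-(\iota_A\boxtimes 1_W)\circ\sleft_W^{-1}\circ\mu_W$; and since $\eta_{A,W}$ is the cokernel of $\mu^{(1)}-\mu^{(2)}$, it is enough to produce $\phi\colon A\boxtimes W\to A\boxtimes(A\boxtimes W)$ with $(\mu^{(1)}-\mu^{(2)})\circ\phi=1_{A\boxtimes W}-(\iota_A\boxtimes 1_W)\circ\sleft_W^{-1}\circ\mu_W$. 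I expect $\phi=(1_A\boxtimes(\iota_A\boxtimes 1_W))\circ(1_A\boxtimes\sleft_W^{-1})$ to work: naturality of $\sA$, the triangle axiom, and the right unit property $\mu\circ(1_A\boxtimes\iota_A)\circ\sright_A^{-1}=1_A$ (Remark \ref{rightunitSA}) give $\mu^{(1)}\circ\phi=1_{A\boxtimes W}$, while naturality of $\sA$, $\sR$, and $\sleft$, the triangle axiom, and the standard coherence identity $\sR_{A,\sunit}\circ\sright_A^{-1}=\sleft_A^{-1}$ give $\mu^{(2)}\circ\phi=(\iota_A\boxtimes 1_W)\circ\sleft_W^{-1}\circ\mu_W$. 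Together with $l^A_W\circ\nu_W=1_W$, this shows $l^A_W$ is an isomorphism with inverse $\nu_W$.

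The delicate step is the last one: guessing the correct witness $\phi$ and carrying out the coherence computation of $\mu^{(2)}\circ\phi$ in the non-strict supercategory $\sC$, where one must invoke the triangle axiom and the braiding--unit coherence identity in exactly the right form. By contrast, the construction of $l^A_W$ and the verifications of evenness and $\repA$-linearity are routine diagram chases built from the universal property of the cokernel and the module/algebra axioms. It is also worth keeping track throughout that $\mu^{(1)}$, $\mu^{(2)}$, $\phi$, $\nu_W$, and $\eta_{A,W}$ are all even, so that the argument stays inside the even part of $\sC$ where cokernels behave as needed.
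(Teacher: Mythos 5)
Your proposal is correct and follows essentially the same route as the paper's own proof: the same reduction of $\mu_W\circ\mu^{(1)}=\mu_W\circ\mu^{(2)}$ via module associativity and supercommutativity of $\mu$, the same inverse candidate $\nu_W=\eta_{A,W}\circ(\iota_A\boxtimes 1_W)\circ\sleft_W^{-1}$, and the same key witness $\phi=(1_A\boxtimes(\iota_A\boxtimes 1_W))\circ(1_A\boxtimes\sleft_W^{-1})$ realizing the difference $1_{A\boxtimes W}-(\iota_A\boxtimes 1_W)\circ\sleft_W^{-1}\circ\mu_W$ as $(\mu^{(1)}-\mu^{(2)})\circ\phi$ (the paper writes this as two identities (id-1) and (L-2) rather than naming $\phi$, but it is the identical computation using the triangle axiom, the right unit property of $\mu$, and $\sR_{A,\sunit}\circ\sright_A^{-1}=\sleft_A^{-1}$).
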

\begin{proof}
The existence and uniqueness of the $\cC$-morphism $l^A_W$ follows from the universal property of the cokernel $(A\boxtimes_A W, \eta_{A,W})$, provided $\mu_W\circ\mu^{(1)}=\mu_W\circ\mu^{(2)}$. 
In fact, this is immediate from
\begin{align*}
	\begin{matrix}
		\begin{tikzpicture}[line width=0.5pt,out=up, in=down]
			\node (a1) at (0,-0.3) {$\scriptstyle{A}$};
			\node (a2) at (0.5,-0.3) {$\scriptstyle{A}$};
			\node (w) at (1,-0.3) {$\scriptstyle{W}$};
			\node (wf) at (0.6,3)  {$\scriptstyle{W}$};
			\node (mu) at (0.25,1) [draw] {$\scriptstyle{\mu}$};
			\node (muw) at (0.6, 2)[draw] {$\scriptstyle{\mu_W}$};
			\draw (a1.90) to (mu.230);
			\draw (a2.90) to (mu.310);
			\draw (mu.90) to (muw.220);
			\draw [line width=1 pt] (w.90) to (muw.320);
			\draw [line width=1 pt] (muw.90) to (wf.270);
		\end{tikzpicture}		
	\end{matrix}
	=
	\begin{matrix}
		\begin{tikzpicture}[line width=0.5pt,out=up, in=down]
			\node (a1) at (0,-0.3) {$\scriptstyle{A}$};
			\node (a2) at (0.5,-0.3) {$\scriptstyle{A}$};
			\node (w) at (1,-0.3) {$\scriptstyle{W}$};
			\node (wf) at (0.6,3)  {$\scriptstyle{W}$};
			\node (mu) at (0.25,1) [draw] {$\scriptstyle{\mu}$};
			\node (muw) at (0.6, 2)[draw] {$\scriptstyle{\mu_W}$};
			\draw [white, line width=1 pt, double=black, double distance= 0.5 pt] (a2.90) to (mu.230);
			\draw [white, line width=3 pt, double=black, double distance= 0.5 pt] (a1.90) to (mu.310);
			\draw (mu.90) to (muw.220);
			\draw [line width=1 pt] (w.90) to (muw.320);
			\draw [line width=1 pt] (muw.90) to (wf.270);
		\end{tikzpicture}		
	\end{matrix}
	=
	\begin{matrix}
		\begin{tikzpicture}[ line width=0.5pt,out=up, in=down]
			\node (a1) at (0,-0.3) {$\scriptstyle{A}$};
			\node (a2) at (0.5,-0.3) {$\scriptstyle{A}$};
			\node (w) at (1,-0.3) {$\scriptstyle{W}$};
			\node (wf) at (0.5,3)  {$\scriptstyle{W}$};
			\node (mu) at (0.75,1) [draw] {$\scriptstyle{\mu_W}$};
			\node (muw) at (0.5, 2)[draw] {$\scriptstyle{\mu_W}$};
			\draw (a2.90) to (0.1,1) to (muw.220);
			\draw [white, line width=3 pt, double=black, double distance= 0.5 pt] (a1.90) to (mu.230);
			\draw [line width=1 pt] (mu.90) to (muw.320);
			\draw [line width=1 pt] (w.90) to (mu.310);
			\draw [line width=1 pt] (muw.90) to (wf.270);
		\end{tikzpicture}		
	\end{matrix}
\end{align*}
The evenness of $l^A_W$ is clear from the the evenness of $\mu_W$ and $\eta_{A,W}$ and the surjectivity of $\eta_{A,W}$.

Now we need to show that $l^A_W$ is a morphism in $\mathrm{Rep}\,A$, that is, that 
$$l^A_W\circ\mu_{A\boxtimes_A W}=\mu_W\circ(1_A\boxtimes l^A_W): A\boxtimes(A\boxtimes_A W)\rightarrow W.$$
By \eqref{repAtensdef}, \eqref{leftisodef}, and the associativity of $\mu_W$,
\begin{align*}
l^A_W\circ\mu_{A\boxtimes_A W}\circ(1_A\boxtimes\eta_{A,W}) & = l^A_W\circ\eta_{A,W}\circ\mu^{(1)} = \mu_W\circ(\mu\boxtimes 1_A)\circ\sA_{A,A,W}\nonumber\\
& = \mu_W\circ(1_A\boxtimes\mu_W) = \mu_W\circ(1_A\boxtimes l^A_W)\circ(1_A\boxtimes\eta_{A,W})
\end{align*}
as morphisms from $A\boxtimes(A\boxtimes W)\rightarrow W$. Because $1_A\boxtimes\eta_{A,W}$ is an epimorphism, $l^A_W$ is a $\repA$-morphism.
	
To show that $l^A_W$ is an isomorphism in $\mathrm{Rep}\,A$, we construct its inverse. It is enough to show that $l^A_W$ has an inverse in $\mathcal{SC}$, since an inverse in $\mathcal{SC}$ will automatically commute with $A$-actions because $l^A_W$ does. We will show that the composition 
\begin{equation*}
\widetilde{l}^A_W: W\xrightarrow{\sleft_W^{-1}}\sunit\boxtimes W\xrightarrow{\iota_A\boxtimes 1_W} A\boxtimes W\xrightarrow{\eta_{A,W}} A\boxtimes_A W
\end{equation*}
is the inverse to $l^A_W$. It is clear from \eqref{leftisodef} and the unit property for $\mu_W$ that $l^A_W\circ\widetilde{l}^A_W=1_W$. On the other hand, let us use $\lambda$ to denote the composition
\begin{equation*}
\lambda: A\boxtimes W\xrightarrow{\mu_W} W\xrightarrow{\sleft_W^{-1}}\sunit\boxtimes W\xrightarrow{\iota_A\boxtimes 1_W} A\boxtimes W.
\end{equation*}
Since $\eta_{A,W}\circ\lambda=\widetilde{l}^A_W\circ l^A_W\circ\eta_{A,W}$, the surjectivity of $\eta_{A,W}$ will imply $\widetilde{l}^A_W\circ l^A_W=1_{A\boxtimes_A W}$ provided that $\eta_{A,W}\circ\lambda=\eta_{A,W}$. We will show
\begin{align}\label{id-1}
1_{A\boxtimes W}\circ (1_A\boxtimes \sleft_W)&=\mu^{(1)}\circ (1_A\boxtimes (\iota_A\boxtimes 1_W)),\\
\label{L-2}
\lambda\circ (1_A\boxtimes \sleft_W)&=\mu^{(2)}\circ (1_A\boxtimes (\iota_A\boxtimes 1_W))
\end{align}
as morphisms from $A\boxtimes(\sunit\boxtimes W)\rightarrow A\boxtimes W$, and then
\begin{align*}
\eta_{A,W}\circ(1_{A\boxtimes W}-\lambda) & = \eta_{A,W}\circ(1_{A\boxtimes W}-\lambda)\circ(1_A\boxtimes \sleft_W)\circ(1_A\boxtimes \sleft_W^{-1})\nonumber\\
& = \eta_{A,W}\circ(\mu^{(1)}-\mu^{(2)})\circ(1_A\boxtimes(\iota_A\boxtimes 1_W))\circ(1_A\boxtimes \sleft_W^{-1})=0,
\end{align*}
as desired.
Equation \eqref{id-1} follows from the right unit property for $A$ (Remark \ref{rightunit}) and the triangle axiom:
\begin{align*}
	\begin{matrix}
		\begin{tikzpicture}[scale = 1, baseline = {(current bounding box.center)}, line width=0.5pt,out=up, in=down]
			\node (a) at (0,-0.3) {$\scriptstyle{A}$};
			\node (o) at (0.5,-0.3) {$\scriptstyle{\sunit}$};
			\node (w) at (1,-0.3) {$\scriptstyle{W}$};
			\node (af) at (0.25,2.3) {$\scriptstyle{A}$};
			\node (wf) at (1,2.3) {$\scriptstyle{W}$};
			\node (i) at (0.5, 0.5) {$\scriptstyle{\bullet}$};
			\node (mu) at (0.25, 1.5) [draw, minimum width=20 pt] {$\scriptstyle{\mu}$};
			\draw (a.90) to (mu.220);
			\draw [dashed] (o.90) -- (0.5,0.5);
			\draw (0.5,0.5) -- (mu.320);
			\draw [line width=1pt](w.90) to (wf.270);
			\draw (mu.90) to (af.270);
		\end{tikzpicture}
	\end{matrix}
	=
	\begin{matrix}
		\begin{tikzpicture}[scale = 1, baseline = {(current bounding box.center)}, line width=0.5pt,out=up, in=down]
			\node (a) at (0,-0.3) {$\scriptstyle{A}$};
			\node (o) at (0.5,-0.3) {$\scriptstyle{\sunit}$};
			\node (w) at (1,-0.3) {$\scriptstyle{W}$};
			\node (af) at (0,2.3) {$\scriptstyle{A}$};
			\node (wf) at (1,2.3) {$\scriptstyle{W}$};
			\node (i) at (0, 1.5) {$\scriptstyle{\bullet}$};
			\draw (a.90) to (af.270);
			\draw [dashed] (o.90) to (0.5,0.5) to[in=east] (i);
			\draw [line width=1pt](w.90) to (wf.270);
		\end{tikzpicture}
	\end{matrix}
	=
	\begin{matrix}
		\begin{tikzpicture}[scale = 1, baseline = {(current bounding box.center)}, line width=0.5pt,out=up, in=down]
			\node (a) at (0,-0.3) {$\scriptstyle{A}$};
			\node (o) at (0.5,-0.3) {$\scriptstyle{\sunit}$};
			\node (w) at (1,-0.3) {$\scriptstyle{W}$};
			\node (af) at (0,2.3) {$\scriptstyle{A}$};
			\node (wf) at (1,2.3) {$\scriptstyle{W}$};
			\node (i) at (1, 1.5) {$\scriptstyle{\bullet}$};
			\draw (a.90) to (af.270);
			\draw [dashed] (o.90) to (0.5,0.5) to[in=west] (i);
			\draw [line width=1pt](w.90) to (wf.270);
		\end{tikzpicture}
	\end{matrix}
\end{align*}
The proof of \eqref{L-2} is guided by the diagrams:
\begin{align*}
\begin{matrix}
	\begin{tikzpicture}[scale=0.8,out=up,in=down,line width=0.5pt]
		\node (mW) at (1,3) [draw,minimum width=20pt] {$\scriptstyle{\mu_{W}}$};
		\node (b) at (0.7,1) {$\bullet$};
		\node (a) at (0,-0.3) {$\scriptstyle{A}$};
		\node (u) at (0.7,-0.3) {$\scriptstyle{\sunit}$};
		\node (w) at (1.5,-0.3) {$\scriptstyle{W}$};
		\node (af) at (0,3.8) {$\scriptstyle{A}$};
		\node (wf) at (1,3.8) {$\scriptstyle{W}$};
		\draw (b.90) to (0.7, 1.5) to (0,2.7) to (af.270);
		\draw [dashed] (u.90) to (0.7,1);
		\draw [white, double=black, double distance=0.5pt, line width=3pt] (a.90) to (0,1.5) to (mW.210);
		\draw [line width=1pt] (mW.north) to (wf.south);
		\draw [line width=1pt] (w.90) to (mW.330);
	\end{tikzpicture}
\end{matrix}
=
\begin{matrix}
	\begin{tikzpicture}[scale=0.8,out=up,in=down,line width=0.5pt]
		\node (mW) at (1,3) [draw,minimum width=20pt] {$\scriptstyle{\mu_{W}}$};
		\node (b) at (0.7,1.4) {$\bullet$};
		\node (a) at (0,-0.3) {$\scriptstyle{A}$};
		\node (u) at (0.7,-0.3) {$\scriptstyle{\sunit}$};
		\node (w) at (1.5,-0.3) {$\scriptstyle{W}$};
		\node (af) at (0,3.8) {$\scriptstyle{A}$};
		\node (wf) at (1,3.8) {$\scriptstyle{W}$};
		\draw (b.90) to (0.7, 1.5) to (0,2.7) to (af.270);
		\draw [dashed] (u.90) to[in=west] (1.5,0.5);
		\draw [dashed] (1.5,0.9) to[out=west] (0.7,1.2);
		\draw [white, double=black, double distance=0.5pt, line width=3pt] (a.90) to (0,1.5) to (mW.210);
		\draw [line width=1pt] (mW.north) to (wf.south);
		\draw [line width=1pt] (w.90) to (mW.330);
	\end{tikzpicture}
\end{matrix}		
=
\begin{matrix}
	\begin{tikzpicture}[scale=0.8,out=up,in=down,line width=0.5pt]
		\node (mW) at (1,3) [draw,minimum width=20pt] {$\scriptstyle{\mu_{W}}$};
		\node (b) at (0,2.7) {$\bullet$};
		\node (a) at (0,-0.3) {$\scriptstyle{A}$};
		\node (u) at (0.7,-0.3) {$\scriptstyle{\sunit}$};
		\node (w) at (1.5,-0.3) {$\scriptstyle{W}$};
		\node (af) at (0,3.8) {$\scriptstyle{A}$};
		\node (wf) at (1,3.8) {$\scriptstyle{W}$};
		\draw (b.90) to (af.270);
		\draw [dashed] (u.90) to[in=west] (1.5,0.5);
		\draw [dashed] (1.5,0.9) to[out=west] (0.7,1.2) to (0,2.7);
		\draw [white, double=black, double distance=0.5pt, line width=3pt] (a.90) to (0,1.25) to (mW.210);
		\draw [line width=1pt] (mW.north) to (wf.south);
		\draw [line width=1pt] (w.90) to (mW.330);
	\end{tikzpicture}
\end{matrix}		
=
\begin{matrix}
	\begin{tikzpicture}[scale=0.8,out=up,in=down,line width=0.5pt]
		\node (mW) at (1,3) [draw,minimum width=20pt] {$\scriptstyle{\mu_{W}}$};
		\node (b) at (0,2.7) {$\bullet$};
		\node (a) at (0,-0.3) {$\scriptstyle{A}$};
		\node (u) at (0.7,-0.3) {$\scriptstyle{\sunit}$};
		\node (w) at (1.5,-0.3) {$\scriptstyle{W}$};
		\node (af) at (0,3.8) {$\scriptstyle{A}$};
		\node (wf) at (1,3.8) {$\scriptstyle{W}$};
		\draw (b.90) to (af.270);
		\draw [dashed] (u.90) to[in=west] (1.5,0.5);
		\draw [dashed] (0,0.9) to[out=east] (0.7,1.2) to (0,2.7);
		\draw [white, double=black, double distance=0.5pt, line width=3pt] (a.90) to (0,1.25) to (mW.210);
		\draw [line width=1pt] (mW.north) to (wf.south);
		\draw [line width=1pt] (w.90) to (mW.330);
	\end{tikzpicture}
\end{matrix}		
\,=\,
\begin{matrix}
	\begin{tikzpicture}[scale=0.8, out=up, in=down]
		\node (mW) at (1,3) [draw,minimum width=20pt] {$\scriptstyle{\mu_{W}}$};
		\node (b) at (0,2.7) {$\bullet$};
		\node at (0.5,-0.3) {$\scriptstyle{A}$};
		\node at (1,-0.3) {$\scriptstyle{\sunit}$};
		\node at (1.5,-0.3) {$\scriptstyle{W}$};
		\node at (0,3.8) {$\scriptstyle{A}$};
		\node at (1,3.8) {$\scriptstyle{W}$};
		\draw (0.5,0) to(mW.210);
		\draw[line width=1pt] (mW.north) to (1,3.5);
		\draw[line width=1pt](1.5,0) to (mW.330);
		\draw[dashed] (1,0)	to [in=left] (1.5,0.5);
		\draw[dashed] (0.5,1) to [out=left] (0,1.5) to (0,2.7);
		\draw (0,2.7) to (0,3.5);
	\end{tikzpicture}
\end{matrix}
=
\begin{matrix}
	\begin{tikzpicture}[scale=0.8,out=up, in=down,line width=0.5pt]
		\node (mW) at (1,1.7) [draw,minimum width=20pt] {$\scriptstyle{\mu_{W}}$};
		\node (d) at (0.5,3) {$\bullet$};
		\node at (0.5,-0.3) {$\scriptstyle{A}$};
		\node at (1,-0.3) {$\scriptstyle{\sunit}$};
		\node at (1.5,-0.3) {$\scriptstyle{W}$};
		\node at (0.5,3.8) {$\scriptstyle{A}$};
		\node at (1.1,3.8) {$\scriptstyle{W}$};
		\draw (0.5,0) to (mW.210);	
		\draw[line width=1pt] (mW.north) to (1,3.5);
		\draw[line width=1pt] (1.5,0) to (mW.330);
		\draw[dashed] (1,0) to[in=west] (1.5,0.5);
		\draw[dashed] (1,2.5) to[out=west] (0.5,3);
		\draw (0.5,3) to (0.5,3.5);
	\end{tikzpicture}
\end{matrix}
\end{align*}		
The detailed argument heavily uses properties of the unit isomorphisms, especially 
those in Proposition XIII.1.2 and Lemma XI.2.2 of \cite{Ka}.
This completes the proof that $\widetilde{l}^A_W\circ l^A_W=1_{A\boxtimes_A W}$.
\end{proof}

\begin{propo}
	The isomorphisms $l^A_W$ for $W$ in $\mathrm{Rep}\,A$ define a natural isomorphism from $A\boxtimes_A\cdot$ to the identity functor on $\mathrm{Rep}\,A$.
\end{propo}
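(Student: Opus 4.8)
Since Proposition \ref{repAleftunit} already establishes that each $l^A_W$ is an isomorphism in $\repA$, the only thing left to verify is naturality: for every $\repA$-morphism $f: W_1\rightarrow W_2$, one must show
\begin{equation*}
f\circ l^A_{W_1}=l^A_{W_2}\circ(1_A\boxtimes_A f): A\boxtimes_A W_1\rightarrow W_2.
\end{equation*}
The plan is a short diagram chase, exploiting that $\eta_{A,W_1}$ is a cokernel morphism (hence an epimorphism) so that it suffices to prove the identity after precomposition with $\eta_{A,W_1}$.

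First I would invoke the defining commuting square \eqref{repAtensprodmorphdef} for $1_A\boxtimes_A f$, which gives $(1_A\boxtimes_A f)\circ\eta_{A,W_1}=\eta_{A,W_2}\circ(1_A\boxtimes f)$, together with the defining property \eqref{leftisodef} of $l^A_{W_1}$ and $l^A_{W_2}$, namely $l^A_{W_i}\circ\eta_{A,W_i}=\mu_{W_i}$. Then I would compute
\begin{align*}
l^A_{W_2}\circ(1_A\boxtimes_A f)\circ\eta_{A,W_1} &=l^A_{W_2}\circ\eta_{A,W_2}\circ(1_A\boxtimes f)=\mu_{W_2}\circ(1_A\boxtimes f)\\
&=f\circ\mu_{W_1}=f\circ l^A_{W_1}\circ\eta_{A,W_1},
\end{align*}
where the third equality is precisely the statement that $f$ is a morphism in $\repA$. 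Since $\eta_{A,W_1}$ is an epimorphism, this yields $l^A_{W_2}\circ(1_A\boxtimes_A f)=f\circ l^A_{W_1}$, which is the desired naturality. Finally I would remark that $l^A$ is therefore a natural isomorphism from $A\boxtimes_A\cdot$ to the identity functor, and note that since each $l^A_W$ is even (Proposition \ref{repAleftunit}), this natural isomorphism respects the supercategory structure; in particular it restricts to a natural isomorphism on $\underline{\repA}$.

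There is no serious obstacle here: the argument is entirely formal once the universal property of the cokernel defining $\boxtimes_A$ (giving epiness of $\eta_{A,W_1}$ and the square \eqref{repAtensprodmorphdef}) and the compatibility of $f$ with the $A$-actions are in hand. The only point requiring a moment's care is to make sure one uses the surjectivity of $\eta_{A,W_1}$ (the source object's structure map), not of $\eta_{A,W_2}$, to cancel it on the right.
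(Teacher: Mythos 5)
Your argument is correct and matches the paper's proof essentially step for step: both reduce naturality to the equality after precomposition with the epimorphism $\eta_{A,W_1}$, then use the defining square \eqref{repAtensprodmorphdef} for $1_A\boxtimes_A f$, the characterization \eqref{leftisodef} of $l^A_{W_i}$, and the fact that $f$ intertwines the $A$-actions. Nothing further is needed.
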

\begin{proof}
	We need to show that for any morphism $f: W_1\rightarrow W_2$ in $\mathrm{Rep}\,A$, we have
	\begin{equation}\label{leftunitnat}
	l^A_{W_2}\circ(1_A\boxtimes_A f)=f\circ l^A_{W_1}: A\boxtimes_A W_1\rightarrow W_2. 
	\end{equation}
	Noting that $\eta_{A,W_1}$ is surjective, this follows from the calculation
	\begin{align*}
	\begin{matrix}
		\begin{tikzpicture}[scale=1,line width=0.5pt, out=up, in=down]
			\node (a) at (0,0) {$\scriptstyle{A}$};
			\node (w1) at (0.5,0) {$\scriptstyle{W_1}$};
			\node (eta) at (0.25, 1) [draw] {$\scriptstyle{\eta_{A,W_1}}$};
			\node (f) at (0.25,2) [draw] {$\scriptstyle{1_A\boxtimes_A\,f}$};
			\node (l) at (0.25,3) [draw] {$\scriptstyle{l^A_{W_2}}$};
			\node (w2) at (0.25, 4) {$\scriptstyle{W_2}$};
			\draw (a.90) to (eta.220);
			\draw [line width=1pt] (w1.90) to (eta.320);
			\draw [line width=1pt] (eta.90) to (f.270);
			\draw [line width=1pt] (f.90) to (l.270);
			\draw [line width=1pt] (l.90) to (w2.270);
		\end{tikzpicture}
	\end{matrix}
	=
	\begin{matrix}
		\begin{tikzpicture}[scale=1,line width=0.5pt, out=up, in=down]
			\node (a) at (0,0) {$\scriptstyle{A}$};
			\node (w1) at (0.5,0) {$\scriptstyle{W_1}$};
			\node (f) at (0.5,1) [draw] {$\scriptstyle{f}$};
			\node (eta) at (0.25, 2) [draw] {$\scriptstyle{\eta_{A,W_2}}$};
			\node (l) at (0.25,3) [draw] {$\scriptstyle{l^A_{W_2}}$};
			\node (w2) at (0.25, 4) {$\scriptstyle{W_2}$};
			\draw (a.90) to (eta.220);
			\draw [line width=1pt] (w1.90) to (f.270);
			\draw [line width=1pt] (f.90) to (eta.320);
			\draw [line width=1pt] (eta.90) to (l.270);
			\draw [line width=1pt] (l.90) to (w2.270);
		\end{tikzpicture}
	\end{matrix}
	=
	\begin{matrix}
		\begin{tikzpicture}[scale=1,line width=0.5pt, out=up, in=down]
			\node (a) at (0,0) {$\scriptstyle{A}$};
			\node (w1) at (0.5,0) {$\scriptstyle{W_1}$};
			\node (mu) at (0.25, 2.5) [draw] {$\scriptstyle{\mu_{W_2}}$};
			\node (f) at (0.5,1) [draw] {$\scriptstyle{f}$};
			\node (w2) at (0.25, 4) {$\scriptstyle{W_2}$};
			\draw (a.90) to (mu.220);
			\draw [line width=1pt] (w1.90) to (f.270);
			\draw [line width=1pt] (f.90) to (mu.320);
			\draw [line width=1pt] (mu.90) to (w2.270);
		\end{tikzpicture}
	\end{matrix}
	=
	\begin{matrix}
		\begin{tikzpicture}[scale=1,line width=0.5pt, out=up, in=down]
			\node (a) at (0,0) {$\scriptstyle{A}$};
			\node (w1) at (0.5,0) {$\scriptstyle{W_1}$};
			\node (mu) at (0.25, 1) [draw] {$\scriptstyle{\mu_{W_1}}$};
			\node (f) at (0.25,2.5) [draw] {$\scriptstyle{f}$};
			\node (w2) at (0.25, 4) {$\scriptstyle{W_2}$};
			\draw (a.90) to (mu.220);
			\draw [line width=1pt] (w1.90) to (mu.320);
			\draw [line width=1pt] (f.90) to (w2.270);
			\draw [line width=1pt] (mu.90) to (f.270);
		\end{tikzpicture}
	\end{matrix}
	=
	\begin{matrix}
		\begin{tikzpicture}[scale=1,line width=0.5pt, out=up, in=down]
			\node (a) at (0,0) {$\scriptstyle{A}$};
			\node (w1) at (0.5,0) {$\scriptstyle{W_1}$};
			\node (f) at (0.25,3) [draw] {$\scriptstyle{f}$};
			\node (l) at (0.25,2) [draw] {$\scriptstyle{l^A_{W_1}}$};
			\node (eta) at (0.25, 1) [draw] {$\scriptstyle{\eta_{A,W_1}}$};
			\node (w2) at (0.25, 4) {$\scriptstyle{W_2}$};
			\draw (a.90) to (eta.220);
			\draw [line width=1pt] (w1.90) to (eta.320);
			\draw [line width=1pt] (eta.90) to (l.270);
			\draw [line width=1pt] (l.90) to (f.270);
			\draw [line width=1pt] (f.90) to (w2.270);
		\end{tikzpicture}
	\end{matrix}
	\end{align*}
	The first equality is due to \eqref{repAtensprodmorphdef}, the second and last are due to \eqref{leftisodef}, and the third follows because $f$ is a morphism in $\repA$.
%
\end{proof}

\begin{propo}\label{prop:rightisodef}
If $(W, \mu_W)$ is an object of $\repA$, then there is a unique $\repA$-isomorphism $r^A_W: W\boxtimes_A A\rightarrow W$ such that
\begin{equation}\label{rightisodef}
 r^A_{W}\circ\eta_{W, A}=\mu_W\circ\sR_{A,W}^{-1}: W\boxtimes A\rightarrow W,
\end{equation}
and $r^A_W$ is even.
\end{propo}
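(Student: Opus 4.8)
The plan is to follow the proof of Proposition~\ref{repAleftunit} almost line by line, replacing throughout the left action of $A$ on $W$ by the ``right action'' $\nu_W:=\mu_W\circ\sR_{A,W}^{-1}\colon W\boxtimes A\to W$, which is even because $\mu_W$ and $\sR_{A,W}$ are. First I would produce $r^A_W$ as an $\sC$-morphism via the universal property of the cokernel $(W\boxtimes_A A,\eta_{W,A})$ of $\mu^{(1)}-\mu^{(2)}$, where $\mu^{(1)},\mu^{(2)}\colon A\boxtimes(W\boxtimes A)\to W\boxtimes A$ are the morphisms of Definition~\ref{def:mu1mu2} for $W_1=W$, $W_2=A$ (so $\mu_{W_2}=\mu$); this requires checking $\nu_W\circ\mu^{(1)}=\nu_W\circ\mu^{(2)}$. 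Writing $\mu^{(2)}$ in the form of Remark~\ref{rem:KOmu2}, the two copies of $\sR_{A,W}^{\pm1}$ cancel and the associativity of $\mu_W$ collapses the composite to $\nu_W\circ\mu^{(2)}=\mu_W\circ(1_A\boxtimes\nu_W)$; to obtain the same morphism from $\nu_W\circ\mu^{(1)}$ I would push $\sR_{A,W}^{-1}$ past $\mu_W\boxtimes 1_A$ by naturality of $\sR^{-1}$ for even morphisms, apply associativity of $\mu_W$, rewrite the resulting associator--braiding block by a hexagon identity, and finally invoke supercommutativity $\mu\circ\sR_{A,A}=\mu$ --- exactly the step in which Proposition~\ref{repAleftunit} uses associativity of the action together with supercommutativity of $\mu$. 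Uniqueness and evenness of $r^A_W$ then follow from surjectivity of $\eta_{W,A}$ (the latter together with evenness of $\nu_W$ and $\eta_{W,A}$), and $r^A_W$ is a $\repA$-morphism because, after precomposition with the epimorphism $1_A\boxtimes\eta_{W,A}$ and use of \eqref{repAtensdef} and \eqref{rightisodef}, both $r^A_W\circ\mu_{W\boxtimes_A A}$ and $\mu_W\circ(1_A\boxtimes r^A_W)$ reduce to $\nu_W\circ\mu^{(1)}=\mu_W\circ(1_A\boxtimes\nu_W)$ just established. As in Proposition~\ref{repAleftunit}, it then suffices to invert $r^A_W$ in $\sC$.

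The candidate inverse is
\[
\widetilde r^A_W\colon\ W\xrightarrow{\ \sright_W^{-1}\ }W\boxtimes\sunit\xrightarrow{\ 1_W\boxtimes\iota_A\ }W\boxtimes A\xrightarrow{\ \eta_{W,A}\ }W\boxtimes_A A .
\]
That $r^A_W\circ\widetilde r^A_W=1_W$ is immediate from \eqref{rightisodef} once one records the right unit property $\nu_W\circ(1_W\boxtimes\iota_A)\circ\sright_W^{-1}=1_W$, which follows from naturality of $\sR^{-1}$, the coherence identity $\sR_{\sunit,W}^{-1}\circ\sright_W^{-1}=\sleft_W^{-1}$, and the unit axiom for $\mu_W$. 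For the reverse composite, $\widetilde r^A_W\circ r^A_W\circ\eta_{W,A}=\eta_{W,A}\circ(1_W\boxtimes\iota_A)\circ\sright_W^{-1}\circ\nu_W$, so by surjectivity of $\eta_{W,A}$ it suffices to show $\eta_{W,A}\circ(1_W\boxtimes\iota_A)\circ\sright_W^{-1}\circ\nu_W=\eta_{W,A}$. For this I would introduce
\[
\psi:=\bigl(1_A\boxtimes(1_W\boxtimes\iota_A)\bigr)\circ(1_A\boxtimes\sright_W^{-1})\circ\sR_{A,W}^{-1}\colon\ W\boxtimes A\to A\boxtimes(W\boxtimes A)
\]
and show, in analogy with equations \eqref{id-1} and \eqref{L-2}, that $\mu^{(1)}\circ\psi=(1_W\boxtimes\iota_A)\circ\sright_W^{-1}\circ\nu_W$ and $\mu^{(2)}\circ\psi=1_{W\boxtimes A}$; then $(\mu^{(1)}-\mu^{(2)})\circ\psi=(1_W\boxtimes\iota_A)\circ\sright_W^{-1}\circ\nu_W-1_{W\boxtimes A}$, and composing with $\eta_{W,A}$ on the left annihilates it because $\eta_{W,A}\circ(\mu^{(1)}-\mu^{(2)})=0$, so $\widetilde r^A_W\circ r^A_W=1_{W\boxtimes_A A}$. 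The first identity uses naturality of $\sA$, the coherence relation $\sright_{A\boxtimes W}\circ\sA_{A,W,\sunit}=1_A\boxtimes\sright_W$, and naturality of $\sright$; the second uses the same ingredients to reduce the outer factor to $\sright_{W\boxtimes A}^{-1}$, then naturality of $\sR$ and $\sright$ to cancel the two copies of $\sR_{A,W}^{\pm1}$, and finally the right unit property of $A$ (Remark~\ref{rightunitSA}) to collapse $\mu\circ(1_A\boxtimes\iota_A)\circ\sright_A^{-1}$ to the identity.

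The most delicate step is the identity $\mu^{(2)}\circ\psi=1_{W\boxtimes A}$: an informal ``element'' computation makes transparent that the $\sR_{A,W}^{-1}$ in $\psi$ cancels the $\sR_{A,W}$ inside $\mu^{(2)}$ and that the inserted unit multiplies away, but turning this into a rigorous chain of naturality and triangle/pentagon coherences requires care, exactly as for equations \eqref{id-1} and \eqref{L-2} in Proposition~\ref{repAleftunit} --- though here every morphism involved is even, so no sign factors arise. The remaining verifications are routine diagram chases transcribed from the left unit case.
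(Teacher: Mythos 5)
Your proof is correct and follows essentially the same route as the paper's own argument: the same verification that $\mu_W\circ\sR_{A,W}^{-1}\circ\mu^{(1)}=\mu_W\circ\sR_{A,W}^{-1}\circ\mu^{(2)}$ via braiding cancellations, the hexagon axiom, associativity of $\mu_W$, and supercommutativity of $\mu$; the same candidate inverse $\widetilde{r}^A_W$; and the same strategy of exhibiting $\rho-1_{W\boxtimes A}$ in the image of $\mu^{(1)}-\mu^{(2)}$. Pre-composing with your $\psi$ is the same computation as the paper's pre-composition with the isomorphism $\sR_{A,W}\circ(1_A\boxtimes\sright_W)$ (the two formulations are equivalent by moving that isomorphism across the equation), and routing the existence check through the intermediate form $\mu_W\circ(1_A\boxtimes(\mu_W\circ\sR_{A,W}^{-1}))$ is merely a reorganization of the paper's single chain.
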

\begin{proof}
The existence and uniqueness of the $\sC$-morphism $r^A_W$ satisfying \eqref{rightisodef} follows from the universal property of the cokernel $(W\boxtimes_A A, \eta_{W,A})$, provided that $\mu_W\circ \sR_{A,W}^{-1}\circ\mu^{(2)}=\mu_W\circ \sR_{A,W}^{-1}\circ\mu^{(1)}$. 
This is proved in the following diagrams:	
\begin{align*}
	\begin{matrix}
		\begin{tikzpicture}[scale=0.9,out=up, in=down,line width=0.5pt]
			\node at (-0.2, -0.3) {$\scriptstyle{A}$};
			\node at (0.7, -0.3) {$\scriptstyle{W}$};
			\node at (1.5, -0.3) {$\scriptstyle{A}$};
			\node at (0.7, 4.3) {$\scriptstyle{W}$};
			\node (m) at (0.2,1.3) [draw,minimum width=20pt] {$\scriptstyle{\mu}$};
			\node (mW) at (0.7,3.5) [draw,minimum width=30pt] {$\scriptstyle{\mu_{W}}$};
			\draw [line width=1pt] (0.7,0) to (1.5,1) to (1.5,1.35) to (0,2.3) to (mW.330);
			\draw (-0.2,0) to (m.210);
			\draw [double=black, white, line width=3pt, double distance=0.5pt] (1.5,0) to (m.330);
			\draw [double=black, white, line width=3pt, double distance=0.5pt] (m.north) to (1.5,2.4) to (mW.210);
			\draw[line width=1pt](mW.north) to (0.7,4);
		\end{tikzpicture}
	\end{matrix}
	\,=\,	
	\begin{matrix}
		\begin{tikzpicture}[scale=0.9, out=up, in=down, line width=0.5pt]
			\node at (-0.3, -0.3) {$\scriptstyle{A}$};
			\node at (0.7, -0.3) {$\scriptstyle{W}$};
			\node at (1.5, -0.3) {$\scriptstyle{A}$};
			\node at (0.7, 4.3) {$\scriptstyle{W}$};
			\node (m) at (0.2,1.3) [draw,minimum width=20pt] {$\scriptstyle{\mu}$};
			\node (mW) at (0.7,3.5) [draw,minimum width=30pt] {$\scriptstyle{\mu_{W}}$};
			\draw[](-0.3,0) to (m.210);
			\draw[](m.north) to (mW.210);
			\draw[line width=1pt] (0.7,0) to (1.2,1) to (mW.330);
			\draw[line width=1pt](mW.north) to (0.7,4);
			\draw[line width=3pt, white, double=black, double distance=0.5pt](1.5,0) to (m.330);
		\end{tikzpicture}
	\end{matrix}
	\,=\,	
	\begin{matrix}
		\begin{tikzpicture}[scale=0.9, out=up, in=down,line width=0.5pt]
			\node at (-0.3, -0.3) {$\scriptstyle{A}$};
			\node at (0.7, -0.3) {$\scriptstyle{W}$};
			\node at (1.5, -0.3) {$\scriptstyle{A}$};
			\node at (0.7, 4.3) {$\scriptstyle{W}$};
			\node (m) at (0.2,2.3) [draw,minimum width=20pt] {$\scriptstyle{\mu}$};
			\node (mW) at (0.7,3.5) [draw,minimum width=30pt] {$\scriptstyle{\mu_{W}}$};
			\draw[](m.north) to (mW.210);
			\draw[](-0.3,0) to (m.330);
			\draw[line width=1pt] (0.7,0) to (1.2,2.3) to (mW.330);
			\draw[line width=1pt](mW.north)	to (0.7,4);
			\draw[white, line width=3pt, double=black, double distance=0.5pt](1.5,0)	to (m.210);
		\end{tikzpicture}
	\end{matrix}
	\,=\,
	\begin{matrix}
		\begin{tikzpicture}[scale=0.9, out=up, in=down,line width=0.5pt]
			\node at (0, -0.3) {$\scriptstyle{A}$};
			\node at (0.7, -0.3) {$\scriptstyle{W}$};
			\node at (1.5, -0.3) {$\scriptstyle{A}$};
			\node at (0.7, 4.3) {$\scriptstyle{W}$};
			\node (m) at (1.2,2.3) [draw,minimum width=30pt] {$\scriptstyle{\mu_W}$};
			\node (mW) at (0.7,3.5) [draw,minimum width=30pt] {$\scriptstyle{\mu_{W}}$};
			\draw[line width=1pt](m.north) to (mW.330);
			\draw[](0,0) to (m.210);
			\draw[line width=1pt] (0.7,0) to (m.330);
			\draw[line width=1pt](mW.north) to (0.7,4);
			\draw[white, line width=3pt, double=black, double distance=0.5pt](1.5,0) to (0.2,2.0)	to (mW.210);
		\end{tikzpicture}
	\end{matrix}
	\,=\,	
	\begin{matrix}
		\begin{tikzpicture}[scale=0.9, out=up, in=down,line width=0.5pt]
			\node at (0, -0.3) {$\scriptstyle{A}$};
			\node at (1, -0.3) {$\scriptstyle{W}$};
			\node at (1.5, -0.3) {$\scriptstyle{A}$};
			\node at (1, 4.3) {$\scriptstyle{W}$};
			\node (m) at (0.5,1) [draw,minimum width=30pt] {$\scriptstyle{\mu_W}$};
			\node (mW) at (1,3.5) [draw,minimum width=30pt] {$\scriptstyle{\mu_{W}}$};
			\draw[line width=1pt](m.north)  to (0.5,2) to (mW.330);
			\draw[](0,0) to (m.210);
			\draw[line width=1pt] (1,0) to (m.330);
			\draw[line width=1pt](mW.north) to (1,4);
			\draw[white, line width=3pt, double=black, double distance=0.5pt](1.5,0) to (1.5,2.0) to (mW.210);
		\end{tikzpicture}
	\end{matrix}
\end{align*}
In the first figure, we have used Remark \ref{rem:KOmu2} to rewrite $\mu^{(2)}$. The second equality uses commutativity of $\mu$, the third uses associativity of $\mu_W$, and the last uses the hexagon identity and naturality of the braiding applied to the even morphism $\mu_W$. This shows $r^A_W$ exists, and $r_W^A$ is even because $\eta_{W,A}$, $\mu_W$, $\sR_{A,W}^{-1}$ are even, and because $\eta_{W,A}$ is surjective.
	
	Next, we need to show that $r^A_W$ is a morphism in $\mathrm{Rep}\,A$, that is, $$r^A_W\circ\mu_{W\boxtimes_A A}=\mu_W\circ (1_A\boxtimes r^A_W): A\boxtimes (W\boxtimes_A A)\rightarrow W.$$
	For this, we use the following diagrams together with the fact that 
$1_A\boxtimes\eta_{W,A}$ is an epimorphism:
	\begin{align*}
		\begin{matrix}
			\begin{tikzpicture}[scale=1, line width=0.5pt, out=up, in=down]
				\node (a1) at (0,-0.3) {$\scriptstyle{A}$};
				\node (w) at (0.5,-0.3) {$\scriptstyle{W}$};
				\node (a2) at (1,-0.3) {$\scriptstyle{A}$};
				\node (wf) at (0.5, 3.7) {$\scriptstyle{W}$};
				\node (eta) at (0.75,0.7) [draw] {$\scriptstyle{\eta_{W,A}}$};
				\node (mu) at (0.5, 1.8) [draw] {$\scriptstyle{\mu_{W\boxtimes_A A}}$};
				\node (r) at (0.5, 2.8) [draw] {$\scriptstyle{r^A_W}$};
				\draw (a1.90) to (mu.210);
				\draw [line width=1pt] (w.90) to (eta.220);
				\draw (a2.90) to (eta.320);
				\draw [line width=1pt] (eta.90) to (mu.320);
				\draw [line width=1pt] (mu.90) to (r.270);
				\draw [line width=1pt] (r.90) to (wf.270);
			\end{tikzpicture}	
		\end{matrix}
		=
		\begin{matrix}
			\begin{tikzpicture}[scale=1, line width=0.5pt, out=up, in=down]
				\node (a1) at (0,-0.3) {$\scriptstyle{A}$};
				\node (w) at (0.5,-0.3) {$\scriptstyle{W}$};
				\node (a2) at (1,-0.3) {$\scriptstyle{A}$};
				\node (wf) at (0.5, 3.7) {$\scriptstyle{W}$};
				\node (mu) at (0.8, 0.7) [draw, minimum width=20 pt] {$\scriptstyle{\mu}$};
				\node (eta) at (0.5,1.8) [draw] {$\scriptstyle{\eta_{W,A}}$};
				\node (r) at (0.5, 2.8) [draw] {$\scriptstyle{r^A_W}$};
				\draw [line width=1pt] (w.90) to (0,0.7) to (eta.220);
				\draw [white, line width=3 pt, double=black] (a1.90) to (mu.220);
				\draw (a2.90) to (mu.320);
				\draw [line width=1pt] (eta.90) to (r.270);
				\draw [line width=1pt] (mu.90) to (eta.320);
				\draw [line width=1pt] (r.90) to (wf.270);
			\end{tikzpicture}	
		\end{matrix}
		=
		\begin{matrix}
			\begin{tikzpicture}[scale=1, line width=0.5pt, out=up, in=down]
				\node (a1) at (0,-0.3) {$\scriptstyle{A}$};
				\node (w) at (0.5,-0.3) {$\scriptstyle{W}$};
				\node (a2) at (1,-0.3) {$\scriptstyle{A}$};
				\node (wf) at (0.5, 3.7) {$\scriptstyle{W}$};
				\node (mu) at (0.8, 1.2) [draw, minimum width=20 pt] {$\scriptstyle{\mu}$};
				\node (mu2) at (0.5, 2.8) [draw] {$\scriptstyle{\mu_W}$};
				\draw [line width=1pt] (w.90) to (0,1.2) to (mu2.320);
				\draw [white, line width=3 pt, double=black] (a1.90) to (mu.220);
				\draw [white, line width=3 pt, double=black] (mu.90) to (mu2.220);
				\draw (a2.90) to (mu.320);
				\draw [line width=1pt] (mu2.90) to (wf.270);
			\end{tikzpicture}	
		\end{matrix}
		=
		\begin{matrix}
			\begin{tikzpicture}[scale=1, line width=0.5pt, out=up, in=down]
				\node (a1) at (0,-0.3) {$\scriptstyle{A}$};
				\node (w) at (0.5,-0.3) {$\scriptstyle{W}$};
				\node (a2) at (1,-0.3) {$\scriptstyle{A}$};
				\node (wf) at (0.5, 3.7) {$\scriptstyle{W}$};
				\node (mu) at (0.2, 1.2) [draw, minimum width=20 pt] {$\scriptstyle{\mu}$};
				\node (mu2) at (0.5, 2.8) [draw] {$\scriptstyle{\mu_W}$};
				\draw [line width=1pt] (w.90) to (1,1) to (mu2.320);
				\draw [white, line width=3 pt, double=black] (a1.90) to (mu.220);
				\draw [white, line width=3 pt, double=black] (mu.90) to (mu2.220);
				\draw [white, line width=3 pt, double=black] (a2.90) to (mu.320);
				\draw [line width=1pt] (mu2.90) to (wf.270);
			\end{tikzpicture}	
		\end{matrix}	
		=
		\begin{matrix}
			\begin{tikzpicture}[scale=1, line width=0.5pt, out=up, in=down]
				\node (a1) at (0,-0.3) {$\scriptstyle{A}$};
				\node (w) at (0.5,-0.3) {$\scriptstyle{W}$};
				\node (a2) at (1,-0.3) {$\scriptstyle{A}$};
				\node (wf) at (0.5, 3.7) {$\scriptstyle{W}$};
				\node (mu) at (0.8, 1.2) [draw, minimum width=20 pt] {$\scriptstyle{\mu_W}$};
				\node (mu2) at (0.5, 2.8) [draw] {$\scriptstyle{\mu_W}$};
				\draw [line width=1pt] (w.90) to  (mu.320);
				\draw [white, line width=3 pt, double=black] (a1.90) to (mu2.220);
				\draw [line width=1pt] (mu.90) to (mu2.320);
				\draw [white, line width=3 pt, double=black] (a2.90) to (mu.220);
				\draw [line width=1pt] (mu2.90) to (wf.270);
			\end{tikzpicture}	
		\end{matrix}	
		=
		\begin{matrix}
			\begin{tikzpicture}[scale=1, line width=0.5pt, out=up, in=down]
				\node (a1) at (0,-0.3) {$\scriptstyle{A}$};
				\node (w) at (0.6,-0.3) {$\scriptstyle{W}$};
				\node (a2) at (1.2,-0.3) {$\scriptstyle{A}$};
				\node (wf) at (0.5, 3.7) {$\scriptstyle{W}$};
				\node (mu2) at (0.5, 2.8) [draw] {$\scriptstyle{\mu_W}$};
				\node (r) at ( 0.9,1.8) [draw] {$\scriptstyle{r^A_W}$};
				\node (eta) at ( 0.9,0.8) [draw] {$\scriptstyle{\eta_{W,A}}$};
				\draw [white, line width=3 pt, double=black] (a1.90) to (mu2.220);
				\draw [line width=1pt] (mu2.90) to (wf.270);
				\draw [line width=1pt] (w.90) to (eta.220);
				\draw (a2.90) to (eta.320);
				\draw [line width=1pt] (eta.90) to (r.270);
				\draw [line width=1pt] (r.90) to (mu2.325);
			\end{tikzpicture}	
		\end{matrix}	
	\end{align*}
 The first equality uses \eqref{repAtensdef} with $i=2$,	while the second and last use \eqref{rightisodef}. The third and fourth equalities use naturality of the braiding applied to $\mu$, the hexagon axiom, and the associativity of $\mu_W$.
	
	Now we show that $r^A_W$ has an inverse in $\mathcal{SC}$ (which will also be an inverse in $\repA$). We define $\widetilde{r}^A_W$ to be the composition
	\begin{equation*}
	W\xrightarrow{\sright_W^{-1}} W\boxtimes\sunit\xrightarrow{1_W\boxtimes\iota_A} W\boxtimes A\xrightarrow{\eta_{W,A}} W\boxtimes_A A.
	\end{equation*}
	To show that $\widetilde{r}^A_W$ is the inverse of $r^A_W$, note that
	by \eqref{rightisodef}, $r^A_W\circ\widetilde{r}^A_W$ equals the composition
	\begin{align*}
	W\xrightarrow{\sright_W^{-1}} W\boxtimes\sunit\xrightarrow{1_W\boxtimes\iota_A} W\boxtimes A\xrightarrow{\sR_{A,W}^{-1}} A\boxtimes W\xrightarrow{\mu_W} W.
	\end{align*}
	Using naturality of the braiding with respect to even morphisms, this in turn equals
	\begin{equation*}
	W\xrightarrow{\sright_W^{-1}} W\boxtimes\sunit\xrightarrow{\sR_{1,W}^{-1}}\sunit\boxtimes W\xrightarrow{\iota_A\boxtimes 1_W} A\boxtimes W\xrightarrow{\mu_W} W.
	\end{equation*}
	Now, $\sR_{\sunit,W}^{-1}\circ \sright_W^{-1}=\sleft_W^{-1}$ (see for instance Proposition XIII.1.2 in \cite{Ka}), so the above composition equals $1_W$ by the unit property of $\mu_W$. Consequently $r^A_W\circ\widetilde{r}^A_W=1_W$.
	
	On the other hand, $\widetilde{r}^A_W\circ r^A_W\circ\eta_{W,A}=\eta_{W,A}\circ\rho$, where $\rho$ is the composition
	\begin{align*}
	W\boxtimes A\xrightarrow{\sR_{A,W}^{-1}} A\boxtimes W\xrightarrow{\mu_W} W\xrightarrow{\sright_W^{-1}} W\boxtimes\sunit\xrightarrow{1_W\boxtimes\,\iota_A} W\boxtimes A.
	\end{align*}
	It is now enough to show that $\eta_{W,A}\circ(\rho-1_{W\boxtimes A})=0$, and for this it is enough to show that
	\begin{equation*}
	(\rho-1_{W\boxtimes A})\circ \sR_{A,W}\circ (1_A\boxtimes \sright_W)=(\mu^{(1)}-\mu^{(2)})\circ(1_A\boxtimes (1_W\boxtimes\iota_A)): A\boxtimes (W\boxtimes\sunit)\rightarrow W\boxtimes A.
	\end{equation*}
	First, we prove 
		$\mu^{(2)}\circ(1_A\boxtimes(1_W\boxtimes\iota_A))=1_{W\boxtimes A}\circ \sR_{A,W}\circ(1_A\boxtimes \sright_W)$
	using the following diagrams:
		\begin{align*}
		\begin{matrix}
			\begin{tikzpicture}[scale=1, out=up, in=down, line width=0.5pt]
				\node at (0,-0.3) {$\scriptstyle{A}$};
				\node at (0.7,-0.3) {$\scriptstyle{W}$};
				\node at (1.5,-0.3) {$\scriptstyle{\sunit}$};
				\node at (0.3,3.8) {$\scriptstyle{W}$};
				\node at (1.3,3.8) {$\scriptstyle{A}$};
				\node (m) at (0.5,2) [draw,minimum width=30pt] {$\scriptstyle{\mu}$};
				\node (mW) at (1.5,0.5) {$\bullet$};
				\draw[line width=1pt] (0.7,0) to  (0.7,0.5) to  (1.7,2) to  (0.3,3.5);
				\draw[](0,0) to  (m.210);
				\draw[dashed](1.5,0) to  (1.5,0.5);
				\draw[white, line width=3pt, double=black, double distance=0.5pt](mW.90) to  (m.330);
				\draw[white, line width=3pt, double=black, double distance=0.5pt](m.north) to  (1.3,3.5);
			\end{tikzpicture}
		\end{matrix}
		\,=\,	
		\begin{matrix}
			\begin{tikzpicture}[scale=1, out=up, in=down, line width=0.5pt]
				\node at (0,-0.3) {$\scriptstyle{A}$};
				\node at (0.7,-0.3) {$\scriptstyle{W}$};
				\node at (1.5,-0.3) {$\scriptstyle{\sunit}$};
				\node at (0.3,3.8) {$\scriptstyle{W}$};
				\node at (1.3,3.8) {$\scriptstyle{A}$};
				\node (m) at (0.5,2) [draw,minimum width=30pt] {$\scriptstyle{\mu}$};
				\node (mW) at (1,1.2) {$\bullet$};
				\draw[line width=1pt] (0.7,0) to (1.7,1) to (1.7,2) 		to (0.3,3.5);
				\draw[](0,0)to (m.210);
				\draw[white, line width=6pt](1.5,0) to  (mW.270);
				\draw[dashed](1.5,0) to (mW.270);
				\draw[](mW.90) to (m.330);
				\draw[white, line width=3pt, double=black, double distance=0.5pt](m.north) to (1.3,3.5);
			\end{tikzpicture}
		\end{matrix}
		\,=\,
		\begin{matrix}
			\begin{tikzpicture}[scale=1, out=up, in=down, line width=0.5pt]
				\node at (0,-0.3) {$\scriptstyle{A}$};
				\node at (0.7,-0.3) {$\scriptstyle{W}$};
				\node at (1.5,-0.3) {$\scriptstyle{\sunit}$};
				\node at (0.3,3.8) {$\scriptstyle{W}$};
				\node at (1.3,3.8) {$\scriptstyle{A}$};
				\draw[line width=1pt] (0.7,0) to (1.5,2) to (0.3,3.5);
				\draw[white, line width=4 pt,double=black, double distance=0.5pt](0,0) to (0,2) to (1.3,3.5); 					
				\draw[white, line width=6 pt](1.5,0) to [out=up, in=right] (0,1.8);
				\draw[dashed](1.5,0) to [out=up, in=right] (0,1.8);
			\end{tikzpicture}
		\end{matrix}
		\,=\,		
		\begin{matrix}
			\begin{tikzpicture}[scale=1, out=up, in=down, line width=0.5pt]
				\node at (0,-0.3) {$\scriptstyle{A}$};
				\node at (0.7,-0.3) {$\scriptstyle{W}$};
				\node at (1.5,-0.3) {$\scriptstyle{\sunit}$};
				\node at (0.3,3.8) {$\scriptstyle{W}$};
				\node at (1.3,3.8) {$\scriptstyle{A}$};
				\draw[line width=1pt] (0.7,0) to (1.5,1) to (1.5,2) to (0.3,3.5);
				\draw[white, line width=4pt, double=black, double distance=0.5pt](0,0) to (0,2) to (1.3,3.5);
				\draw[white, line width=6pt](1.5,0) to  (0.7,1) to [in=left] (1.5,1.8);
				\draw[dashed](1.5,0) to  (0.7,1) to [in=left] (1.5,1.8);
			\end{tikzpicture}
		\end{matrix}
		\,=\,
		\begin{matrix}
			\begin{tikzpicture}[scale=1, out=up, in=down, line width=0.5pt]
				\node at (0,-0.3) {$\scriptstyle{A}$};
				\node at (1,-0.3) {$\scriptstyle{W}$};
				\node at (1.5,-0.3) {$\scriptstyle{\sunit}$};
				\node at (0,3.8) {$\scriptstyle{W}$};
				\node at (1,3.8) {$\scriptstyle{A}$};
				\draw[line width=1pt] (1,0) to (1,2) to (0,3.5);
				\draw[white, line width=3pt, double=black, double distance=0.5pt](0,0) to (0,2) to (1,3.5);
				\draw[dashed](1.5,0) to (1.5,1) to [in=right] (1,1.8);
			\end{tikzpicture}
		\end{matrix}			
	\end{align*}
	We complete the proof by establishing $\rho\circ \sR_{A,W}\circ(1_A\boxtimes \sright_W)=\mu^{(1)}\circ(1_A\boxtimes(1_W\boxtimes\iota_A))$:
	\begin{align*}
	\begin{matrix}
		\begin{tikzpicture}[scale=1, out=up, in=down, line width=0.5pt]
			\node at (0,-0.3) {$\scriptstyle{A}$};
			\node at (1,-0.3) {$\scriptstyle{W}$};
			\node at (1.5,-0.3) {$\scriptstyle{\sunit}$};
			\node at (0.5, 4.3) {$\scriptstyle{W}$};
			\node at (1.5, 4.3) {$\scriptstyle{A}$};
			\node (d) at (1.5,3.5) {$\bullet$};
			\node (mW) at (0.5,2.5) [draw,minimum width=30pt] {$\scriptstyle{\mu_W}$};
			\draw[dashed](1.5,0) to [out=up, in=right] (1,0.5);
			\draw[line width=1pt] (1,0) to (1,0.6) to (0,1.5) to (mW.330);
			\draw[line width=1pt](mW.north) to (0.5,4);
			\draw[dashed](0.5,3) to [out=right, in=down] (1.5,3.5);
			\draw [] (1.5,3.5) to [out=up, in=down] (1.5,4);
			\draw[white, line width=3 pt, double=black, double distance=0.5pt](0,0) to (0,0.6) to (1,1.5) to (mW.210);
		\end{tikzpicture}
	\end{matrix}
	\,=\,	
	\begin{matrix}
		\begin{tikzpicture}[scale=1, out=up, in=down, line width=0.5pt]
			\node at (0,-0.3) {$\scriptstyle{A}$};
			\node at (1,-0.3) {$\scriptstyle{W}$};
			\node at (1.5,-0.3) {$\scriptstyle{\sunit}$};
			\node at (0.5, 4.3) {$\scriptstyle{W}$};
			\node at (1.5, 4.3) {$\scriptstyle{A}$};
			\node (d) at (1.5,3.5) {$\bullet$};
			\node (mW) at (0.5,2) [draw,minimum width=30pt,minimum height=8pt,fill=white] {$\scriptstyle{\mu_W}$};
			\draw[](0,0) to (0,0.6) to (mW.210);
			\draw[dashed](1.5,0) to [out=up, in=right] (1,0.5);
			\draw[line width=1pt] (1,0) to (1,0.6) to (mW.330);
			\draw[line width=1pt](mW.north) to (0.5,4);
			\draw[dashed](0.5,3) to [out=right, in=down] (1.5,3.5);
			\draw [] (1.5,3.5) to (1.5,4);
		\end{tikzpicture}
	\end{matrix}
	\,=\,	
	\begin{matrix}
		\begin{tikzpicture}[scale=1, out=up, in=down, line width=0.5pt]
			\node at (0,-0.3) {$\scriptstyle{A}$};
			\node at (1,-0.3) {$\scriptstyle{W}$};
			\node at (1.5,-0.3) {$\scriptstyle{\sunit}$};
			\node at (0.5, 4.3) {$\scriptstyle{W}$};
			\node at (1.5, 4.3) {$\scriptstyle{A}$};
			\node (d) at (1.5,3.5) {$\bullet$};
			\node (mW) at (0.5,1) [draw,minimum width=30pt] {$\scriptstyle{\mu_W}$};
			\draw[](0,0) to (mW.210);
			\draw[dashed](1.5,0) to [out=up, in=right] (0.5,2.5);
			\draw[line width=1pt] (1,0) to (mW.330);
			\draw[line width=1pt](mW.north) to (0.5,4);
			\draw[dashed](0.5,3) to [out=right, in=down] (1.5,3.5);
			\draw [] (1.5,3.5) to (1.5,4);
		\end{tikzpicture}
	\end{matrix}
	\,=\,	
	\begin{matrix}
		\begin{tikzpicture}[scale=1, out=up, in=down, line width=0.5pt]	
			\node at (0,-0.3) {$\scriptstyle{A}$};
			\node at (1,-0.3) {$\scriptstyle{W}$};
			\node at (1.5,-0.3) {$\scriptstyle{\sunit}$};
			\node at (0.5, 4.3) {$\scriptstyle{W}$};
			\node at (1.5, 4.3) {$\scriptstyle{A}$};
			\node (c) at (1.5,1) {$\bullet$};
			\node (mW) at (0.5,2.5) [draw,minimum width=30pt] {$\scriptstyle{\mu_W}$};
			\draw[](0,0) to (mW.210);
			\draw[dashed](1.5,0) to  (1.5,1);
			\draw[line width=1pt] (1,0) to (mW.330);
			\draw[line width=1pt](mW.north) to (0.5,4);
			\draw [] (1.5,1) to (1.5,4);
		\end{tikzpicture}
	\end{matrix}.
	\end{align*}	
The second step uses $1_A\boxtimes \sright_W=\sright_{A\boxtimes W}\circ\sA_{A,W,\sunit}$ (see Lemma XI.2.2 in \cite{Ka}) and naturality of $\sright$.
\end{proof}

\begin{propo}
	The $\repA$-isomorphisms $r^A_W$ define a natural isomorphism from $\cdot\boxtimes_A A$ to the identity functor on $\repA$.
\end{propo}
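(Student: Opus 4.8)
The strategy is to mirror the proof of naturality for the left unit isomorphisms $l^A_W$ given just above. Fix a morphism $f\colon W_1\rightarrow W_2$ in $\repA$; the goal is to show that
\begin{equation*}
 r^A_{W_2}\circ(f\boxtimes_A 1_A)=f\circ r^A_{W_1}\colon W_1\boxtimes_A A\rightarrow W_2.
\end{equation*}
Since the cokernel morphism $\eta_{W_1,A}$ is an epimorphism, it is enough to verify this identity after precomposing with $\eta_{W_1,A}$.

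First I would combine the defining property \eqref{repAtensprodmorphdef} of the tensor product of morphisms, namely $(f\boxtimes_A 1_A)\circ\eta_{W_1,A}=\eta_{W_2,A}\circ(f\boxtimes 1_A)$, with the defining property \eqref{rightisodef} of $r^A_{W_2}$, namely $r^A_{W_2}\circ\eta_{W_2,A}=\mu_{W_2}\circ\sR_{A,W_2}^{-1}$, to rewrite $r^A_{W_2}\circ(f\boxtimes_A 1_A)\circ\eta_{W_1,A}$ as $\mu_{W_2}\circ\sR_{A,W_2}^{-1}\circ(f\boxtimes 1_A)$. The key step is then the naturality \eqref{braidingsupernatural} of the braiding in $\sC$: applied to the even morphism $1_A$ and to $f$, it gives (with no sign, since $\vert 1_A\vert=\even$)
\begin{equation*}
 \sR_{A,W_2}^{-1}\circ(f\boxtimes 1_A)=(1_A\boxtimes f)\circ\sR_{A,W_1}^{-1}.
\end{equation*}
Substituting this, then using that $f$ is a $\repA$-morphism (so $\mu_{W_2}\circ(1_A\boxtimes f)=f\circ\mu_{W_1}$), and finally applying \eqref{rightisodef} for $r^A_{W_1}$ in the form $\mu_{W_1}\circ\sR_{A,W_1}^{-1}=r^A_{W_1}\circ\eta_{W_1,A}$, yields $f\circ r^A_{W_1}\circ\eta_{W_1,A}$. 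Cancelling the epimorphism $\eta_{W_1,A}$ then finishes the proof, and $r^A$ is a natural isomorphism because each $r^A_W$ was already shown to be an isomorphism in Proposition \ref{prop:rightisodef}.

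I do not anticipate any genuine difficulty: the argument is formal and exactly parallel to the one just given for $l^A$. The only point requiring a little care — as throughout this section — is bookkeeping of parity signs, that is, making sure the braiding naturality is invoked with the even identity morphism $1_A$ in the appropriate slot so that no sign appears, and, when $f$ is not parity-homogeneous, first decomposing $f$ into its even and odd components and using the bilinearity of $\boxtimes_A$ and of composition to reduce to the homogeneous case.
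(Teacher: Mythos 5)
Your proof is correct and follows essentially the same route as the paper's: both reduce to precomposing with the epimorphism $\eta_{W_1,A}$, unwind the defining properties of $f\boxtimes_A 1_A$ and $r^A_\bullet$, and then invoke the naturality of the braiding (with the even morphism $1_A$, hence no sign) together with the fact that $f$ commutes with the $A$-action. The paper packages the bookkeeping into two commutative diagrams where you spell it out as a chain of equalities, but the substance is identical, and your remark about reducing a non-homogeneous $f$ to its parity-homogeneous components by bilinearity is a reasonable precaution.
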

\begin{proof}
	We just need to verify that for any morphism $f: W_1\rightarrow W_2$ in $\repA$, we have
	\begin{equation}\label{rightunitnat}
	r^A_{W_2}\circ(f\boxtimes_A 1_A)=f\circ r^A_{W_1}: W_1\boxtimes_A A\rightarrow W_2.
	\end{equation}
	Since $\eta_{W_1,A}$ is an epimorphism, \eqref{rightunitnat} follows  from:
	\begin{align*}
		\begin{matrix}
			\begin{tikzpicture}[scale=1, line width=0.5pt, out=up, in=down]
				\node (w) at (0,0) {$\scriptstyle{W_1}$};
				\node (a) at (0.5,0) {$\scriptstyle{A}$};
				\node (eta) at (0.25,1) [draw] {$\scriptstyle{\eta_{W_1,A}}$};
				\node (f) at (0.25,2) [draw] {$\scriptstyle{f\,\boxtimes_A 1_A}$};
				\node (r) at (0.25,3) [draw] {$\scriptstyle{r^A_{W_2}}$};
				\node (wf) at (0.25, 4) {$\scriptstyle{W_2}$};
				\draw [line width=1pt] (w.90) to (eta.220);
				\draw (a.90) to (eta.320);
				\draw [line width=1pt] (eta.90) to (f.270);
				\draw [line width=1pt] (f.90) to (r.270);
				\draw [line width=1pt] (r.90) to (wf.270);
			\end{tikzpicture}					
		\end{matrix}
		=
		\begin{matrix}
			\begin{tikzpicture}[scale=1, line width=0.5pt, out=up, in=down]
				\node (w) at (0,0) {$\scriptstyle{W_1}$};
				\node (a) at (0.5,0) {$\scriptstyle{A}$};
				\node (f) at (0,1) [draw] {$\scriptstyle{f}$};
				\node (eta) at (0.25,2) [draw] {$\scriptstyle{\eta_{W_2,A}}$};
				\node (r) at (0.25,3) [draw] {$\scriptstyle{r^A_{W_2}}$};
				\node (wf) at (0.25, 4) {$\scriptstyle{W_2}$};
				\draw [line width=1pt] (w.90) to (f.270);
				\draw (a.90) to (eta.320);
				\draw [line width=1pt] (eta.90) to (r.270);
				\draw [line width=1pt] (f.90) to (eta.220);
				\draw [line width=1pt] (r.90) to (wf.270);
			\end{tikzpicture}					
		\end{matrix}
		=
		\begin{matrix}
			\begin{tikzpicture}[scale=1, line width=0.5pt, out=up, in=down]
				\node (w) at (0,0) {$\scriptstyle{W_1}$};
				\node (a) at (0.5,0) {$\scriptstyle{A}$};
				\node (f) at (0,1) [draw] {$\scriptstyle{f}$};
				\node (mu) at (0.25,3) [draw] {$\scriptstyle{\mu_{W_2}}$};
				\node (wf) at (0.25, 4) {$\scriptstyle{W_2}$};
				\draw [line width=1pt] (w.90) to (f.270);
				\draw [line width=1pt] (f.90) to (0,1.5) to (mu.320);
				\draw [white, line width=3pt, double=black] (a.90) to (0.5,1.5) to (mu.220);
				\draw [line width=1pt] (mu.90) to (wf.270);
			\end{tikzpicture}					
		\end{matrix}
		=
		\begin{matrix}
			\begin{tikzpicture}[scale=1, line width=0.5pt, out=up, in=down]
				\node (w) at (0,0) {$\scriptstyle{W_1}$};
				\node (a) at (0.5,0) {$\scriptstyle{A}$};
				\node (f) at (0.5,2) [draw] {$\scriptstyle{f}$};
				\node (mu) at (0.25,3) [draw] {$\scriptstyle{\mu_{W_2}}$};
				\node (wf) at (0.25, 4) {$\scriptstyle{W_2}$};
				\draw [line width=1pt] (w.90) to (f.270);
				\draw [line width=1pt] (f.90) to (mu.320);
				\draw [white, line width=3pt, double=black] (a.90) to (0,2) to (mu.220);
				\draw [line width=1pt] (mu.90) to (wf.270);
			\end{tikzpicture}					
		\end{matrix}
		=	
		\begin{matrix}
			\begin{tikzpicture}[scale=1, line width=0.5pt, out=up, in=down]
				\node (w) at (0,0) {$\scriptstyle{W_1}$};
				\node (a) at (0.6,0) {$\scriptstyle{A}$};
				\node (wf) at (0.25, 4) {$\scriptstyle{W_2}$};
				\node (f) at (0.25, 3) [draw, minimum width=20 pt] {$\scriptstyle{f}$};
				\node (mu) at (0.25,2) [draw] {$\scriptstyle{\mu_{W_1}}$};
				\draw [line width=1pt] (f.90) to (wf.270);
				\draw [line width=1pt] (mu.90) to (f.270);
				\draw [line width=1pt] (w.90) to (mu.320);
				\draw [white, line width=3pt, double=black] (a.90) to (mu.220);
			\end{tikzpicture}					
		\end{matrix}
		=
		\begin{matrix}
			\begin{tikzpicture}[scale=1, line width=0.5pt, out=up, in=down]
				\node (w) at (0,0) {$\scriptstyle{W_1}$};
				\node (a) at (0.52,0) {$\scriptstyle{A}$};
				\node (wf) at (0.25, 4) {$\scriptstyle{W_2}$};
				\node (f) at (0.25, 3) [draw, minimum width=20 pt] {$\scriptstyle{f}$};
				\node (r) at (0.25,2) [draw] {$\scriptstyle{r^A_{W_1}}$};
				\node (eta) at (0.25,1) [draw] {$\scriptstyle{\eta_{W_1,A}}$};
				\draw [line width=1pt] (f.90) to (wf.270);
				\draw [line width=1pt] (r.90) to (f.270);
				\draw [line width=1pt] (eta.90) to (r.270);
				\draw [line width=1pt] (w.90) to (eta.220);
				\draw [white, line width=3pt, double=black] (a.90) to (eta.320);
			\end{tikzpicture}					
		\end{matrix}
	\end{align*}
The first equality uses \eqref{repAtensprodmorphdef}, the second and last use \eqref{rightisodef}, and the third is due to naturality of braiding and evenness of $1_A$. The fourth follows because $f$ is a morphism in $\repA$.
\end{proof}

\subsection{Categorical intertwining operators and a universal property of tensor products in \texorpdfstring{$\repA$}{Rep A}}
\label{subsec:catintwop}

The goal of this subsection is to characterize the tensor product in $\repA$ by a universal property analogous to the defining property of tensor products of vertex operator (super)algebra modules in terms of intertwining operators. For this, we introduce ``categorical $\repA$-intertwining operators'' that exhibit, in the abstract tensor-categorical setting, the commutativity and associativity properties of vertex-algebraic intertwining operators.

\begin{defi}\label{intwop}
Given objects $(W_1,\mu_{W_1})$, $(W_2, \mu_{W_2})$, and $(W_3, \mu_{W_3})$ of $\repA$,	a \textit{categorical $\repA$-intertwining operator of type $\binom{W_3}{W_1\,W_2}$} is an $\mathcal{SC}$-morphism $\eta: W_1\boxtimes W_2\rightarrow W_3$ such that the three compositions
	\begin{align*}
	A\boxtimes (W_1\boxtimes W_2)\xrightarrow{\sA_{A,W_1,W_2}} (A\boxtimes W_1)\boxtimes W_2\xrightarrow{\mu_{W_1}\boxtimes 1_{W_2}} W_1\boxtimes W_2\xrightarrow{\eta} W_3,
	\end{align*}
	\begin{align*}
	A\boxtimes(W_1\boxtimes W_2)\xrightarrow{\sA_{A,W_1,W_2}} (A\boxtimes W_1) & \boxtimes W_2  \xrightarrow{\sR_{A,W_1}\boxtimes 1_{W_2}} (W_1\boxtimes A)\boxtimes W_2\nonumber\\
	&\xrightarrow{\sA_{W_1,A,W_2}^{-1}} W_1\boxtimes(A\boxtimes W_2)\xrightarrow{1_{W_1}\boxtimes\mu_{W_2}} W_1\boxtimes W_2\xrightarrow{\eta} W_3,
	\end{align*}
	and
	\begin{align*}
	A\boxtimes(W_1\boxtimes W_2)\xrightarrow{1_A\boxtimes \eta} A\boxtimes W_3\xrightarrow{\mu_{W_3}} W_3
	\end{align*}
	are equal. Below, we shall often drop the qualifier ``categorical'' when it is clear.
\end{defi}

\begin{rema}
	The first two compositions in Definition \ref{intwop} are $\eta\circ\mu^{(1)}$ and $\eta\circ\mu^{(2)}$, respectively, so a $\repA$-intertwining operator $\eta$ of type $\binom{W_3}{W_1\,W_2}$ is defined by the equalities
	\begin{equation*}
	\eta\circ\mu^{(1)}=\eta\circ\mu^{(2)}=\mu_{W_3}\circ(1_A\boxtimes \eta).
	\end{equation*}
	In other words, a $\repA$-intertwining operator intertwines the action of $A$ on $W_3$ with the left and right actions of $A$ on $W_1\boxtimes W_2$.
\end{rema}

From the preceding remark and the definition of the action of $A$ on $W_1\boxtimes_A W_2$, we have:
\begin{propo}\label{prop:univIntw}
	If $W_1$ and $W_2$ are objects of $\repA$, the cokernel morphism $\eta_{W_1,W_2}: W_1\boxtimes W_2\rightarrow W_1\boxtimes_A W_2$ is a $\repA$-intertwining operator of type $\binom{W_1\boxtimes_A W_2}{W_1\,W_2}$.
\end{propo}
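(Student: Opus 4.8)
The plan is simply to unwind the definitions, since this proposition is an immediate consequence of the construction of $\boxtimes_A$ in the preceding subsection. By the remark following Definition \ref{intwop}, it suffices to establish the chain of equalities
\begin{equation*}
\eta_{W_1,W_2}\circ\mu^{(1)} = \eta_{W_1,W_2}\circ\mu^{(2)} = \mu_{W_1\boxtimes_A W_2}\circ(1_A\boxtimes\eta_{W_1,W_2})
\end{equation*}
as morphisms $A\boxtimes(W_1\boxtimes W_2)\rightarrow W_1\boxtimes_A W_2$, where $\mu^{(1)}$ and $\mu^{(2)}$ are as in Definition \ref{def:mu1mu2}.

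First I would note that $\eta_{W_1,W_2}$ is, by construction, a cokernel morphism for $\mu^{(1)}-\mu^{(2)}$, so $\eta_{W_1,W_2}\circ(\mu^{(1)}-\mu^{(2)})=0$; this is the first equality. For the second equality I would invoke Proposition \ref{propo:welldefmult} directly: the morphism $\mu_{W_1\boxtimes_A W_2}$ is \emph{defined} so that the diagram \eqref{repAtensdef} commutes for both $i=1$ and $i=2$, and this commutativity says precisely that $\eta_{W_1,W_2}\circ\mu^{(i)}=\mu_{W_1\boxtimes_A W_2}\circ(1_A\boxtimes\eta_{W_1,W_2})$. Since $\eta_{W_1,W_2}$ is an $\sC$-morphism (and even, though evenness plays no role in the intertwining-operator property), all three conditions in Definition \ref{intwop} hold with $W_3=W_1\boxtimes_A W_2$ and $\eta=\eta_{W_1,W_2}$, so $\eta_{W_1,W_2}$ is a categorical $\repA$-intertwining operator of type $\binom{W_1\boxtimes_A W_2}{W_1\,W_2}$.

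There is no genuine obstacle here; the only point worth verifying is the bookkeeping that the compositions $\mu^{(i)}$ appearing in Definition \ref{intwop} are literally the same morphisms occurring in \eqref{repAtensdef}, which is immediate upon comparing the definitions. One could also remark that this proposition, together with the forthcoming universal property of $\boxtimes_A$, is the categorical counterpart of the fact that the canonical surjection onto a tensor product of vertex operator (super)algebra modules is itself an intertwining operator.
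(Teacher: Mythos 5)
Your argument is correct and is exactly the observation the paper makes: the first equality is the cokernel relation defining $\eta_{W_1,W_2}$, and the second is the commutativity of diagram \eqref{repAtensdef} from Proposition \ref{propo:welldefmult}. The paper leaves the proof implicit for precisely this reason, so your write-up matches its intent.
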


The pair $(W_1\boxtimes_A W_2, \eta_{W_1,W_2})$ satisfies the following universal property:
\begin{propo}\label{prop:repAuniv}
	For any $\repA$-intertwining operator $\eta$ of type $\binom{W_3}{W_1\,W_2}$, there is a unique $\repA$-morphism
	\begin{equation*}
	f_\eta: W_1\boxtimes_A W_2\rightarrow W_3
	\end{equation*}
	such that $f_\eta\circ \eta_{W_1,W_2}=\eta$.
\end{propo}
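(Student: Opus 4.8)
The plan is to pin down $f_\eta$ via the universal property of the cokernel used to define $W_1\boxtimes_A W_2$. Recall that $W_1\boxtimes_A W_2=\mathrm{Coker}(\mu^{(1)}-\mu^{(2)})$ with cokernel morphism $\eta_{W_1,W_2}\colon W_1\boxtimes W_2\to W_1\boxtimes_A W_2$, computed in the abelian supercategory $\sC$ (Proposition \ref{SCabelian}). First I would note that, by the remark following Definition \ref{intwop}, a categorical $\repA$-intertwining operator $\eta$ of type $\binom{W_3}{W_1\,W_2}$ satisfies $\eta\circ\mu^{(1)}=\eta\circ\mu^{(2)}=\mu_{W_3}\circ(1_A\boxtimes\eta)$; in particular $\eta\circ(\mu^{(1)}-\mu^{(2)})=0$. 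The universal property of the cokernel in $\sC$ then yields a unique $\sC$-morphism $f_\eta\colon W_1\boxtimes_A W_2\to W_3$ with $f_\eta\circ\eta_{W_1,W_2}=\eta$. There is no parity issue here: $\sC$ is abelian, so cokernels enjoy their universal property against arbitrary (not necessarily parity-homogeneous) morphisms, and $\eta$ itself need not be even.

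Next I would verify that this $f_\eta$ is in fact a morphism in $\repA$, i.e.\ that $f_\eta\circ\mu_{W_1\boxtimes_A W_2}=\mu_{W_3}\circ(1_A\boxtimes f_\eta)$. The key tool is that $1_A\boxtimes\eta_{W_1,W_2}$ is an epimorphism, by the standing assumption that $A\boxtimes\cdot$ is right exact on $\sC$. Precomposing the desired identity with $1_A\boxtimes\eta_{W_1,W_2}$, the left-hand side becomes $f_\eta\circ\mu_{W_1\boxtimes_A W_2}\circ(1_A\boxtimes\eta_{W_1,W_2})=f_\eta\circ\eta_{W_1,W_2}\circ\mu^{(1)}=\eta\circ\mu^{(1)}$ by the defining square \eqref{repAtensdef} and $f_\eta\circ\eta_{W_1,W_2}=\eta$; while the right-hand side becomes $\mu_{W_3}\circ(1_A\boxtimes f_\eta)\circ(1_A\boxtimes\eta_{W_1,W_2})=\mu_{W_3}\circ(1_A\boxtimes\eta)=\eta\circ\mu^{(1)}$, where the middle equality uses that $\boxtimes$ respects composition (with no sign, since $1_A$ is even) and the last uses the third equality in the definition of a $\repA$-intertwining operator. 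Since $1_A\boxtimes\eta_{W_1,W_2}$ is epi, the two sides agree, so $f_\eta$ is a $\repA$-morphism. Uniqueness of $f_\eta$ (even among all $\sC$-morphisms) satisfying $f_\eta\circ\eta_{W_1,W_2}=\eta$ is immediate from the fact that the cokernel morphism $\eta_{W_1,W_2}$ is an epimorphism.

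I do not expect any genuine obstacle; the proof is a short, direct transcription of the vertex-algebraic pattern in which a universal tensor product is characterized by intertwining-operator data, and it largely reuses Proposition \ref{prop:univIntw} and the construction surrounding \eqref{repAtensdef}. The only two points meriting a moment's care are that the cokernel universal property in $\sC$ is being applied to a possibly non-even morphism $\eta$ (legitimate because $\sC$ is abelian), and that the descent of the $A$-module compatibility condition along $1_A\boxtimes\eta_{W_1,W_2}$ rests on the right-exactness hypothesis guaranteeing that this morphism is an epimorphism.
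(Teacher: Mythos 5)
Your proof is correct and follows essentially the same route as the paper: you invoke the universal property of the cokernel $(W_1\boxtimes_A W_2,\eta_{W_1,W_2})$ to produce the unique $\sC$-morphism $f_\eta$, then verify the $\repA$-morphism property by precomposing with the epimorphism $1_A\boxtimes\eta_{W_1,W_2}$ (which is epi by right exactness of $A\boxtimes\cdot$) and using the defining square \eqref{repAtensdef} together with the fact that $\eta$ is a $\repA$-intertwining operator. Your extra remark on the legitimacy of applying the cokernel universal property to a possibly non-even $\eta$ (since $\sC$ is abelian) is a fair point of care but does not change the argument.
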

\begin{proof}
	Since $\eta\circ\mu^{(1)} =\eta\circ\mu^{(2)}$, the universal property of the cokernel $(W_1\boxtimes_A W_2, \eta_{W_1,W_2})$ induces a unique $\sC$-morphism $f_\eta: W_1\boxtimes_A W_2\rightarrow W_3$ such that $f_\eta\circ\eta_{W_1,W_2}=\eta$. To see that $f_\eta$ is a morphism in $\repA$, the definitions imply
	\begin{align*}
	\mu_{W_3}\circ (1_A & \boxtimes f_\eta)\circ(1_A\boxtimes \eta_{W_1,W_2})=\mu_{W_3}\circ(1_A\boxtimes \eta)=\eta\circ\mu^{(i)}\nonumber\\
	&=f_\eta\circ \eta_{W_1,W_2}\circ\mu^{(i)}=f_\eta\circ\mu_{W_1\boxtimes_A  W_2}\circ(1_A\boxtimes \eta_{W_1,W_2}),
	\end{align*}
	where $i=1$ or $2$. Since $1_A\boxtimes \eta_{W_1,W_2}$ is surjective, $\mu_{W_3}\circ(1_A\boxtimes f_\eta)=f_\eta\circ\mu_{W_1\boxtimes_A W_2}$ as required.
\end{proof}

As another example of $\repA$-intertwining operators, we have:
\begin{propo}
	For any object $(W, \mu_W)$ of $\repA$, $\mu_W$ is a $\repA$-intertwining operator of type $\binom{W}{A\,W}$.
\end{propo}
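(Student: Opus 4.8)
The plan is to unwind Definition \ref{intwop} in the special case $W_1 = A$, $W_2 = W_3 = W$, with the candidate operator $\eta = \mu_W$, and to recognize the three resulting compositions as (re)statements of the module axioms for $W$ together with the supercommutativity of $\mu$. Since $(A,\mu)$ is itself an object of $\repA$ with $\mu_A = \mu$, specializing Definition \ref{def:mu1mu2} gives $\mu^{(1)} = (\mu\boxtimes 1_W)\circ\sA_{A,A,W}$ and $\mu^{(2)} = (1_A\boxtimes\mu_W)\circ\sA_{A,A,W}^{-1}\circ(\sR_{A,A}\boxtimes 1_W)\circ\sA_{A,A,W}$, both morphisms $A\boxtimes(A\boxtimes W)\to A\boxtimes W$.

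First I would verify $\mu_W\circ\mu^{(1)} = \mu_W\circ(1_A\boxtimes\mu_W)$. Expanding $\mu^{(1)}$, this is exactly $\mu_W\circ(\mu\boxtimes 1_W)\circ\sA_{A,A,W} = \mu_W\circ(1_A\boxtimes\mu_W)$, i.e.\ the associativity axiom for $W$ as an object of $\repA$ (axiom 1 of Definition \ref{repSAdef}). Next I would verify $\mu_W\circ\mu^{(2)} = \mu_W\circ(1_A\boxtimes\mu_W)$: using the associativity just checked to rewrite $\mu_W\circ(1_A\boxtimes\mu_W)\circ\sA_{A,A,W}^{-1} = \mu_W\circ(\mu\boxtimes 1_W)$, one obtains $\mu_W\circ\mu^{(2)} = \mu_W\circ\bigl((\mu\circ\sR_{A,A})\boxtimes 1_W\bigr)\circ\sA_{A,A,W}$, and the supercommutativity of $\mu$ (axiom 2 of Definition \ref{defi:SA}), $\mu\circ\sR_{A,A} = \mu$, reduces this to $\mu_W\circ(\mu\boxtimes 1_W)\circ\sA_{A,A,W} = \mu_W\circ(1_A\boxtimes\mu_W)$ by associativity again. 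This is the same manipulation already carried out inside the proof of Proposition \ref{repAleftunit}. Since all three compositions of Definition \ref{intwop} coincide with $\mu_W\circ(1_A\boxtimes\mu_W)$, the morphism $\mu_W$ is a categorical $\repA$-intertwining operator of type $\binom{W}{A\,W}$.

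There is no real obstacle: the content is a bookkeeping repackaging of the module axioms, parallel to the vertex-algebraic fact that the module vertex operator $Y_W$ of type $\binom{W}{A\,W}$ automatically satisfies the intertwining-operator Jacobi identity. The only point requiring a little care is keeping track of the associativity isomorphisms, which cancel cleanly because $\mu^{(2)}$ takes the symmetric form displayed above when $W_1 = A$; no use of the pentagon axiom or of naturality beyond what is built into these cancellations is needed.
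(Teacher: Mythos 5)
Your proof is correct and takes essentially the same approach as the paper's: verify $\mu_W\circ\mu^{(1)}=\mu_W\circ(1_A\boxtimes\mu_W)$ from the associativity axiom for $\mu_W$, and then derive $\mu_W\circ\mu^{(2)}=\mu_W\circ\mu^{(1)}$ from supercommutativity of $\mu$ together with associativity. The paper simply cites the computation already performed at the start of the proof of Proposition \ref{repAleftunit} for the second equality, whereas you redo it explicitly; the content is identical.
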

\begin{proof}
The associativity of $\mu_W$ implies
\begin{equation*}
 \mu_W\circ\mu^{(1)}=\mu_W\circ(\mu\boxtimes 1_W)\circ\sA_{A,A,W}=\mu_W\circ(1_A\boxtimes\mu_W),
\end{equation*}
and we already showed at the beginning of the proof of Proposition \ref{repAleftunit} that $\mu_W\circ\mu^{(1)}=\mu_W\circ\mu^{(2)}$. 
\end{proof}

Composing an intertwining operator with a morphism in $\repA$ yields a new intertwining operator:
\begin{propo}\label{compintwopmorph}
 If $\eta$ is a $\repA$-intertwining operator of type $\binom{W_3}{W_1\,W_2}$ and $f: W_3\rightarrow\widetilde{W}_3$ is a morphism in $\repA$, then $f\circ\eta$ is a $\repA$-intertwining operator of type $\binom{\widetilde{W}_3}{W_1\,W_2}$.
\end{propo}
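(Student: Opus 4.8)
The plan is to check directly the three conditions of Definition~\ref{intwop} for $f\circ\eta$, deducing each from the fact that $\eta$ already satisfies them together with the fact that $f$ is a morphism in $\repA$. Recall that $\eta$ being a $\repA$-intertwining operator of type $\binom{W_3}{W_1\,W_2}$ means precisely $\eta\circ\mu^{(1)}=\eta\circ\mu^{(2)}=\mu_{W_3}\circ(1_A\boxtimes\eta)$, so what must be shown is
\begin{equation*}
(f\circ\eta)\circ\mu^{(1)}=(f\circ\eta)\circ\mu^{(2)}=\mu_{\widetilde{W}_3}\circ(1_A\boxtimes(f\circ\eta)).
\end{equation*}

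First I would dispose of the equality $(f\circ\eta)\circ\mu^{(1)}=(f\circ\eta)\circ\mu^{(2)}$, which is immediate upon precomposing the identity $\eta\circ\mu^{(1)}=\eta\circ\mu^{(2)}$ with $f$. For the remaining identity the key observation is that $1_A\boxtimes(f\circ\eta)=(1_A\boxtimes f)\circ(1_A\boxtimes\eta)$ as morphisms in $\sC$; this is the exchange condition \eqref{exchange} applied with the even morphism $1_A$ in the first slot, so that the sign factors are all trivial, and the general case of possibly non-parity-homogeneous $f$ and $\eta$ follows by bilinearity of $\boxtimes$ on morphisms. Granting this, I would compute
\begin{equation*}
\mu_{\widetilde{W}_3}\circ(1_A\boxtimes(f\circ\eta))=\mu_{\widetilde{W}_3}\circ(1_A\boxtimes f)\circ(1_A\boxtimes\eta)=f\circ\mu_{W_3}\circ(1_A\boxtimes\eta)=f\circ\eta\circ\mu^{(i)}=(f\circ\eta)\circ\mu^{(i)},
\end{equation*}
where the second step uses that $f$ is a $\repA$-morphism and the third uses that $\eta$ is a $\repA$-intertwining operator; here $i=1$ or $2$. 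This establishes all three required equalities.

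I do not anticipate any genuine obstacle here: the argument is purely formal. The only point requiring a little attention is the sign bookkeeping in $\sC$ when expanding $1_A\boxtimes(f\circ\eta)$, but because one factor is the even identity morphism $1_A$ this reduces to ordinary functoriality of the tensor product, and the $\repA$-intertwining property of $f\circ\eta$ then drops out at once.
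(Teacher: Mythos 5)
Your argument is correct and follows essentially the same chain of equalities as the paper's proof, just written from right to left; the paper also implicitly uses that $1_A$ is even so the sign in the exchange condition vanishes, which you make explicit. No issues.
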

\begin{proof}
 We have
 \begin{equation*}
  (f\circ\eta)\circ\mu^{(1)}=(f\circ\eta)\circ\mu^{(2)}=f\circ\mu_{W_3}\circ(1_A\boxtimes\eta)=\mu_{\widetilde{W}_3}\circ(1_A\boxtimes f)\circ(1_A\boxtimes\eta)=\mu_{\widetilde{W}_3}\circ(1_A\boxtimes(f\circ\eta))
 \end{equation*}
since $\eta$ is a $\repA$-intertwining operator and $f$ is a $\repA$-morphism.
\end{proof}

 From the bilinearity of composition and tensor products in $\sC$, categorical $\repA$-intertwining operators of type $\binom{W_3}{W_1\,W_2}$ form a subspace of $\hom_{\sC}(W_1\boxtimes W_2, W_3)$. In fact, we can use the universal property of $\repA$-tensor products to show that they form a sub-superspace:
 \begin{propo}
  The parity-homogeneous components of a $\repA$-intertwining operator are $\repA$-intertwining operators.
 \end{propo}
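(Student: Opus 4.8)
The plan is to deduce this from the universal property of $\repA$-tensor products (Proposition~\ref{prop:repAuniv}) together with the superspace structure on $\hom$-spaces in $\repA$ that was established in the proof of Proposition~\ref{SuperRepAadditive}. Fix objects $W_1$, $W_2$, $W_3$ of $\repA$ and a categorical $\repA$-intertwining operator $\eta\colon W_1\boxtimes W_2\rightarrow W_3$ of type $\binom{W_3}{W_1\,W_2}$. The first step is to apply Proposition~\ref{prop:repAuniv} to obtain the unique $\repA$-morphism $f_\eta\colon W_1\boxtimes_A W_2\rightarrow W_3$ with $f_\eta\circ\eta_{W_1,W_2}=\eta$.

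Next I would use that $\repA$ is an $\mathbb{F}$-linear additive supercategory (Proposition~\ref{SuperRepAadditive}), so that $f_\eta$ decomposes uniquely as $f_\eta=f_\eta^\even+f_\eta^\odd$ with $f_\eta^\even$ and $f_\eta^\odd$ parity-homogeneous $\repA$-morphisms. Since the cokernel morphism $\eta_{W_1,W_2}\colon W_1\boxtimes W_2\rightarrow W_1\boxtimes_A W_2$ is even (as recorded just after the definition of $\boxtimes_A$), the composition $f_\eta^{\,i}\circ\eta_{W_1,W_2}$ is a morphism of parity $i$ in $\hom_\sC(W_1\boxtimes W_2,W_3)$ for each $i\in\ztwo$. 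Thus $\eta=f_\eta^\even\circ\eta_{W_1,W_2}+f_\eta^\odd\circ\eta_{W_1,W_2}$ displays a decomposition of $\eta$ into an even and an odd morphism of $\sC$, and by the uniqueness of the parity decomposition in $\sC$ established in Section~\ref{subsec:superalgebra} I conclude $\eta^\even=f_\eta^\even\circ\eta_{W_1,W_2}$ and $\eta^\odd=f_\eta^\odd\circ\eta_{W_1,W_2}$.

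Finally I would invoke Proposition~\ref{prop:univIntw}, which says $\eta_{W_1,W_2}$ is itself a categorical $\repA$-intertwining operator of type $\binom{W_1\boxtimes_A W_2}{W_1\,W_2}$, together with Proposition~\ref{compintwopmorph}, which says post-composing a categorical $\repA$-intertwining operator with any $\repA$-morphism again yields one. Applying the latter with $f_\eta^\even$ and with $f_\eta^\odd$ shows that $\eta^\even$ and $\eta^\odd$ are categorical $\repA$-intertwining operators of type $\binom{W_3}{W_1\,W_2}$; combined with the already-noted fact that such operators form a subspace of $\hom_\sC(W_1\boxtimes W_2,W_3)$, this gives that they form a sub-superspace.

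I do not expect a genuine obstacle: the whole argument is bookkeeping with the universal property and the parity decompositions, and every ingredient is already in hand. The only point that wants a moment's care is reconciling the two parity decompositions — that of $f_\eta$ inside $\repA$ with that of $\eta$ inside $\sC$ — which is exactly why I route things through the evenness of $\eta_{W_1,W_2}$ and uniqueness of parity decompositions, rather than splitting the defining equations $\eta\circ\mu^{(1)}=\eta\circ\mu^{(2)}=\mu_{W_3}\circ(1_A\boxtimes\eta)$ into components directly; that direct route also works, using that $\mu^{(1)}$, $\mu^{(2)}$, $\mu_{W_3}$, and $1_A$ are all even, and could be given as an alternative.
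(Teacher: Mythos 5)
Your proof is correct and follows essentially the same route as the paper's: decompose $f_\eta$ into its parity components in $\repA$, use the evenness of $\eta_{W_1,W_2}$ to identify these with the parity components of $\eta$, and conclude via Proposition \ref{compintwopmorph}. Your remark at the end about the alternative direct route (splitting the defining equations using evenness of $\mu^{(1)}$, $\mu^{(2)}$, $\mu_{W_3}$) is a sound observation but not needed.
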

\begin{proof}
 Suppose $\eta$ is a $\repA$-intertwining operator of type $\binom{W_3}{W_1\,W_2}$. Then
 \begin{equation*}
  \eta=f_\eta\circ\eta_{W_1,W_2}=f_\eta^\even\circ\eta_{W_1,W_2}+f_\eta^\odd\circ\eta_{W_1,W_2}.
 \end{equation*}
Since $\eta_{W_1,W_2}$ is even, $f_\eta^\even\circ\eta_{W_1,W_2}$ and $f_\eta^\odd\circ\eta_{W_1,W_2}$ are the even and odd components, respectively, of $\eta$. Because $\repA$ is a supercategory by Proposition \ref{SuperRepAadditive}, the even and odd components of $f_\eta$ are $\repA$-morphisms, so the even and odd components of $\eta$ are $\repA$-intertwining operators by Proposition \ref{compintwopmorph}.
\end{proof}

\subsection{Associativity of categorical \texorpdfstring{$\repA$}{Rep A}-intertwining operators and the tensor product in \texorpdfstring{$\repA$}{Rep A}} \label{subsec:associnrepA}

In this subsection we prove an associativity theorem for categorical $\repA$-intertwining operators that is motivated by the associativity of intertwining operators among modules for a vertex operator algebra proved in \cite{H-tensor4, HLZ6}. As in the vertex operator algebra setting, associativity of $\repA$-intertwining operators yields associativity isomorphisms in $\repA$. Using these associativity isomorphisms, we prove that $\repA$ is a monoidal supercategory.

\begin{theo}\label{repAintwopassoc}
	Suppose $(W_i,\mu_{W_i})$ for $i=1,2,3,4$ and $(M_i,\mu_{M_i})$ for $i=1,2$ are objects of $\repA$.
	\begin{enumerate}
		\item If $\eta_1$ and $\eta_2$ are $\repA$-intertwining operators of types $\binom{W_4}{W_1\,M_1}$ and $\binom{M_1}{W_2\,W_3}$, respectively, then there is a unique $\repA$-intertwining operator $\eta$ of type $\binom{W_4}{W_1\boxtimes_A W_2\,W_3}$ such that
		\begin{equation}\label{eqn:repAintwopassoc_1_def}
		\eta_1\circ(1_{W_1}\boxtimes\eta_2)=\eta\circ(\eta_{W_1,W_2}\boxtimes 1_{W_3})\circ\sA_{W_1,W_2,W_3}: W_1\boxtimes (W_2\boxtimes W_3)\rightarrow W_4.
		\end{equation}
		
		\item If $\eta^1$ and $\eta^2$ are $\repA$-intertwining operators of types $\binom{W_4}{M_2\,W_3}$ and $\binom{M_2}{W_1\,W_2}$, respectively, then there is a unique $\repA$-intertwining operator $\eta$ of type $\binom{W_4}{W_1\,W_2\boxtimes_A W_3}$ such that 
		\begin{equation}\label{eqn:repAintwopassoc_2_def}
		\eta^1\circ(\eta^2\boxtimes 1_{W_3})=\eta\circ(1_{W_1}\boxtimes\eta_{W_2,W_3})\circ\sA^{-1}_{W_1,W_2,W_3}: (W_1\boxtimes W_2)\boxtimes W_3\rightarrow W_4.
		\end{equation}
	\end{enumerate}
\end{theo}
\begin{proof}
We present the proof of the first assertion only, since the second is similar. Since $\cdot\boxtimes W_3$ is right exact, the sequence
	\begin{equation*}
	(A\boxtimes(W_1\boxtimes W_2))\boxtimes W_3\xrightarrow{(\mu^{(1)}-\mu^{(2)})\boxtimes 1_{W_3}}(W_1\boxtimes W_2)\boxtimes W_3\xrightarrow{\eta_{W_1,W_2}\boxtimes 1_{W_3}} (W_1\boxtimes_A W_2)\boxtimes W_3\rightarrow 0
	\end{equation*}
	is exact, that is, $((W_1\boxtimes_A W_2)\boxtimes W_3, \eta_{W_1,W_2}\boxtimes 1_{W_3})$ is a cokernel of $(\mu^{(1)}-\mu^{(2)})\boxtimes 1_{W_3}$. Thus the universal property of the cokernel will imply the existence of a unique $\mathcal{SC}$-morphism
	\begin{equation*}
	\eta: (W_1\boxtimes_A W_2)\boxtimes W_3\rightarrow W_4
	\end{equation*}
	such that
	\begin{equation*}
	\eta_1\circ(1_{W_1}\boxtimes\eta_2)=\eta\circ(\eta_{W_1,W_2}\boxtimes 1_{W_3})\circ\sA_{W_1,W_2,W_3}
	\end{equation*}
	provided that
	\begin{equation*}
	\eta_1\circ(1_{W_1}\boxtimes\eta_2)\circ\sA_{W_1,W_2,W_3}^{-1}\circ(\mu^{(2)}\boxtimes 1_{W_3})=
	\eta_1\circ(1_{W_1}\boxtimes\eta_2)\circ\sA_{W_1,W_2,W_3}^{-1}\circ(\mu^{(1)}\boxtimes 1_{W_3}).
	\end{equation*}
	This is proved as indicated in the diagrams:
	\begin{align*}
	\begin{matrix}
		\begin{tikzpicture}[scale=0.85, out=up, in=down, line width=0.5pt]
			\node at (0,-0.3) {$\scriptstyle{A}$};
			\node at (1,-0.3) {$\scriptstyle{W_1}$};
			\node at (2,-0.3) {$\scriptstyle{W_2}$};
			\node at (3,-0.3) {$\scriptstyle{W_3}$};
			\node at (0.5, 4.8) {$\scriptstyle{W_4}$};
			\node (mW2) at (1.5,1.2) [draw,minimum width=30pt,minimum height=8pt,fill=white] {$\scriptstyle{\mu_{W_2}}$};
			\node (n1) at (0.5,3.5) [draw,minimum width=30pt,minimum height=8pt,fill=white] {$\scriptstyle{\eta_1}$};
			\node (n2) at (2,2.3) [draw,minimum width=30pt,minimum height=8pt,fill=white] {$\scriptstyle{\eta_2}$};
			\draw[line width=1pt] (1,0) to (0,1) to (0,2) to (n1.210);
			\draw[line width=1pt] (2,0) to (mW2.330);
			\draw[line width=1pt] (3,0) to (n2.330);
			\draw[line width=1pt] (mW2.north) to (n2.210);
			\draw[line width=1pt] (n2.north) to (n1.330);
			\draw[line width=1pt] (n1.north) to (0.5,4.5);
			\draw[white, line width=3pt, double=black, double distance=0.5pt](0,0) to (mW2.210);
		\end{tikzpicture}
	\end{matrix}
	=
	\begin{matrix}
		\begin{tikzpicture}[scale=0.85, out=up, in=down, line width=0.5pt]
			\node at (0,-0.3) {$\scriptstyle{A}$};
			\node at (0.9,-0.3) {$\scriptstyle{W_1}$};
			\node at (1.7,-0.3) {$\scriptstyle{W_2}$};
			\node at (2.5,-0.3) {$\scriptstyle{W_3}$};
			\node at (0.5, 4.8) {$\scriptstyle{W_4}$};
			\node (mW3) at (2,1.3) [draw,minimum width=30pt,minimum height=8pt,fill=white] {$\scriptstyle{\eta_2}$};
			\node (n1) at (0.5,3.5) [draw,minimum width=30pt,minimum height=8pt,fill=white] {$\scriptstyle{\eta_1}$};
			\node (n2) at (1.5,2.35) [draw,minimum width=30pt,minimum height=8pt,fill=white] {$\scriptstyle{\mu_{M_1}}$};
			\draw[line width=1pt] (1,0) to (0,1) to (0,2) to (n1.210);
			\draw[line width=1pt] (1.5,0) to (mW3.210); 
			\draw[line width=1pt] (2.5,0) to (mW3.330);
			\draw[line width=1pt] (mW3.north) to (n2.330);
			\draw[line width=1pt] (n2.north) to (n1.330);
			\draw[line width=1pt] (n1.north) to (0.5,4.5);
			\draw[white, line width=3pt, double=black, double distance=0.5pt](0,0) to (1,1) to (n2.210);
		\end{tikzpicture}
	\end{matrix}
	=
	\begin{matrix}
		\begin{tikzpicture}[scale=0.85, out=up, in=down, line width=0.5pt]
			\node at (0,-0.3) {$\scriptstyle{A}$};
			\node at (0.9,-0.3) {$\scriptstyle{W_1}$};
			\node at (1.7,-0.3) {$\scriptstyle{W_2}$};
			\node at (2.5,-0.3) {$\scriptstyle{W_3}$};
			\node at (0.5, 4.8) {$\scriptstyle{W_4}$};
			\node (mW3) at (2,0.7) [draw,minimum width=30pt,minimum height=8pt,fill=white] {$\scriptstyle{\eta_2}$};
			\node (n1) at (0.5,3.5) [draw,minimum width=30pt,minimum height=8pt,fill=white] {$\scriptstyle{\eta_1}$};
			\node (n2) at (1.5,2.3) [draw,minimum width=30pt,minimum height=8pt,fill=white] {$\scriptstyle{\mu_{M_1}}$};
			\draw[line width=1pt] (1,0) to (1,1) to (0,2.3) to (n1.210);
			\draw[line width=1pt] (1.5,0) to (mW3.210);
			\draw[line width=1pt] (2.5,0) to (mW3.330);
			\draw[line width=1pt] (mW3.north) to (n2.330);
			\draw[line width=1pt] (n2.north) to (n1.330);
			\draw[line width=1pt] (n1.north) to (0.5,4.5);
			\draw[white, line width=3pt, double=black, double distance=0.5pt](0,0) to (0,1) to (n2.210);
		\end{tikzpicture}
	\end{matrix}
	=
	\begin{matrix}
		\begin{tikzpicture}[scale=0.85, out=up, in=down, line width=0.5pt]
			\node at (0,-0.3) {$\scriptstyle{A}$};
			\node at (0.9,-0.3) {$\scriptstyle{W_1}$};
			\node at (1.7,-0.3) {$\scriptstyle{W_2}$};
			\node at (2.5,-0.3) {$\scriptstyle{W_3}$};
			\node at (1.5, 4.8) {$\scriptstyle{W_4}$};
			\node (mW3) at (2,1) [draw,minimum width=30pt,minimum height=8pt,fill=white] {$\scriptstyle{\eta_2}$};
			\node (n1) at (1.5,3.5) [draw,minimum width=30pt,minimum height=8pt,fill=white] {$\scriptstyle{\eta_1}$};
			\node (n2) at (0.5,2.3) [draw,minimum width=30pt,minimum height=8pt,fill=white] {$\scriptstyle{\mu_{W_1}}$};
			\draw[](0,0) to (n2.210);
			\draw[line width=1pt] (1,0) to (n2.330);
			\draw[line width=1pt] (1.5,0) to (mW3.210);
			\draw[line width=1pt] (2.5,0) to (mW3.330);
			\draw[line width=1pt] (mW3.north) to (n1.330);
			\draw[line width=1pt] (n2.north) to (n1.210);
			\draw[line width=1pt] (n1.north) to (1.5,4.5);
		\end{tikzpicture}
	\end{matrix}			
	=
	\begin{matrix}
		\begin{tikzpicture}[scale=0.85, out=up, in=down, line width=0.5pt]
			\node at (0,-0.3) {$\scriptstyle{A}$};
			\node at (0.9,-0.3) {$\scriptstyle{W_1}$};
			\node at (1.7,-0.3) {$\scriptstyle{W_2}$};
			\node at (2.5,-0.3) {$\scriptstyle{W_3}$};
			\node at (1.5, 4.8) {$\scriptstyle{W_4}$};
			\node (mW3) at (2,2.3) [draw,minimum width=30pt,minimum height=8pt,fill=white] {$\scriptstyle{\eta_2}$};
			\node (n1) at (1.5,3.5) [draw,minimum width=30pt,minimum height=8pt,fill=white] {$\scriptstyle{\eta_1}$};
			\node (n2) at (0.5,1) [draw,minimum width=30pt,minimum height=8pt,fill=white] {$\scriptstyle{\mu_{W_1}}$};
			\draw[](0,0) to (n2.210);
			\draw[line width=1pt] (1,0) to (n2.330);
			\draw[line width=1pt] (1.5,0) to (mW3.210);
			\draw[line width=1pt] (2.5,0) to (mW3.330);
			\draw[line width=1pt] (mW3.north) to (n1.330);
			\draw[line width=1pt] (n2.north) to (n1.210);
			\draw[line width=1pt] (n1.north) to (1.5,4.5);
		\end{tikzpicture}
	\end{matrix}			
	\end{align*}
The first step uses $\eta_2\circ\mu^{(1)}=\mu_{M_1}\circ(1_A\boxtimes\eta_2)$, while the third uses $\eta_1\circ\mu^{(2)}= \eta_1\circ\mu^{(1)}$.
The second and last equalities use evenness of the braiding and $\mu_{W_1}$, respectively, to exchange their orders with $\eta_2$.

	Now we need to show that $\eta$ is a $\repA$-intertwining operator, that is, we need to show
\begin{equation}\label{eqn:eta_repAintwop}	\mu_{W_4}\circ(1_A\boxtimes\eta)=\eta\circ\mu^{(1)}=\eta\circ\mu^{(2)}.
\end{equation}
 Since $\eta_{W_1,W_2}$ is surjective and $\cdot \boxtimes W_3$ and
$A\boxtimes \cdot$ are right exact, $1_A\boxtimes(\eta_{W_1,W_2}\boxtimes 1_{W_3})$ is a surjective morphism.
Thus, to prove \eqref{eqn:eta_repAintwop}, it is enough to show
\begin{align}	\mu_{W_4}\circ(1_A\boxtimes\eta)&\circ (1_A\boxtimes(\eta_{W_1,W_2}\boxtimes 1_{W_3}))
	=\eta\circ\mu^{(1)}\circ (1_A\boxtimes(\eta_{W_1,W_2}\boxtimes 1_{W_3}))\nonumber\\
	&	=\eta\circ\mu^{(2)}\circ (1_A\boxtimes(\eta_{W_1,W_2}\boxtimes 1_{W_3})).\label{eqn:eta_repAintwop_alt}
\end{align}
To prove the first equality in \eqref{eqn:eta_repAintwop_alt}, we proceed as follows, starting with $\eta\circ\mu^{(1)}\circ (1_A\boxtimes(\eta_{W_1,W_2}\boxtimes 1_{W_3}))$:
	\begin{align*}
		\begin{matrix}
			\begin{tikzpicture}
				[line width=0.5pt, out=up, in=down]
				\node at (0,-0.3) {$\scriptstyle{A}$};
				\node at (0.5,-0.3) {$\scriptstyle{W_1}$};
				\node at (1.5,-0.3) {$\scriptstyle{W_2}$};
				\node at (2,-0.3) {$\scriptstyle{W_3}$};
				\node at (1.2,3.8) {$\scriptstyle{W_4}$};
				\node (etab) at (1,1)  [draw,minimum width=21pt,minimum height=10pt] {$\scriptstyle{\eta_{W_1,W_2}}$};
				\node (mu) at (0.5,2) [draw,minimum width=21pt,minimum height=10pt] {$\scriptstyle{\mu_{W_1\boxtimes_AW_2}}$};
				\node (eta) at (1.2,3) [draw,minimum width=21pt,minimum height=10pt] {$\scriptstyle{\eta}$};
				\draw[white, double=black, line width = 1pt] (0,0) to (mu.220);
				\draw[white, double=black, line width = 1pt, double distance=1.25 pt] (0.5,0) to (etab.220);
				\draw[white, double=black, line width = 1pt, double distance=1.25 pt] (1.5,0) to (etab.320);
				\draw[white, double=black, line width = 1pt, double distance=1.25 pt] (2,0) to (eta.320);
				\draw[white, double=black, line width = 1pt, double distance=1.25 pt] (etab.90) to (mu.320);
				\draw[white, double=black, line width = 1pt, double distance=1.25 pt] (mu.90) to (eta.220);
				\draw[white, double=black, line width = 1pt, double distance=1.25 pt]  (eta.90) to (1.2,3.5) ;
			\end{tikzpicture}						
		\end{matrix}
		=
		\begin{matrix}
			\begin{tikzpicture}
				[line width=0.5pt,out=up,in=down]
				\node at (0,-0.3) {$\scriptstyle{A}$};
				\node at (1,-0.3) {$\scriptstyle{W_1}$};
				\node at (1.5,-0.3) {$\scriptstyle{W_2}$};
				\node at (2,-0.3) {$\scriptstyle{W_3}$};
				\node at (1.5,3.8) {$\scriptstyle{W_4}$};
				\node (mu) at (0.5,1)  [draw,minimum width=21pt,minimum height=10pt] {$\scriptstyle{\mu_{W_1}}$};
				\node (etab) at (1,2)  [draw,minimum width=21pt,minimum height=10pt] {$\scriptstyle{\eta_{W_1,W_2}}$};
				\node (eta) at (1.5,3)  [draw,minimum width=21pt,minimum height=10pt] {$\scriptstyle{\eta}$};
				\draw[white, double=black, line width = 1pt] (0,0) to (mu.220);
				\draw[white, double=black, line width = 1pt, double distance=1.25 pt] (1,0) to (mu.320);
				\draw[white, double=black, line width = 1pt, double distance=1.25 pt] (1.5,0) to (etab.320);
				\draw[white, double=black, line width = 1pt, double distance=1.25 pt] (2,0) to (eta.320);
				\draw[white, double=black, line width = 1pt, double distance=1.25 pt] (mu.90) to (etab.220);
				\draw[white, double=black, line width = 1pt, double distance=1.25 pt] (etab.90) to (eta.220);
				\draw[white, double=black, line width = 1pt, double distance=1.25 pt] (eta.90) to (1.5,3.5);
			\end{tikzpicture}						
		\end{matrix}
		=
		\begin{matrix}
			\begin{tikzpicture}
				[line width=0.5pt,out=up,in=down]
				\node at (0,-0.3) {$\scriptstyle{A}$};
				\node at (1,-0.3) {$\scriptstyle{W_1}$};
				\node at (1.5,-0.3) {$\scriptstyle{W_2}$};
				\node at (2.3,-0.3) {$\scriptstyle{W_3}$};
				\node at (1,3.8) {$\scriptstyle{W_4}$};
				\node (mu) at (0.5,1)  [draw,minimum width=21pt,minimum height=10pt] {$\scriptstyle{\mu_{W_1}}$};
				\node (etab) at (1.7,2)  [draw,minimum width=21pt,minimum height=10pt] {$\scriptstyle{\eta_2}$};
				\node (eta) at (1,3)  [draw,minimum width=21pt,minimum height=10pt] {$\scriptstyle{\eta_1}$};
				\draw[white, double=black, line width = 1pt] (0,0) to (mu.220);
				\draw[white, double=black, line width = 1pt, double distance=1.25 pt] (1,0) to (mu.320);
				\draw[white, double=black, line width = 1pt, double distance=1.25 pt] (1.5,0) to (etab.220);
				\draw[white, double=black, line width = 1pt, double distance=1.25 pt] (2.3,0) to (etab.320);
				\draw[white, double=black, line width = 1pt, double distance=1.25 pt] (mu.90) to (eta.220);
				\draw[white, double=black, line width = 1pt, double distance=1.25 pt] (etab.90) to (eta.320);
				\draw[white, double=black, line width = 1pt, double distance=1.25 pt] (eta.90) to (1,3.5);
			\end{tikzpicture}						
		\end{matrix}
		=
		\begin{matrix}
			\begin{tikzpicture}
				[line width=0.5pt, out=up,in=down]
				\node at (0,-0.3) {$\scriptstyle{A}$};
				\node at (0.5,-0.3) {$\scriptstyle{W_1}$};
				\node at (1,-0.3) {$\scriptstyle{W_2}$};
				\node at (2,-0.3) {$\scriptstyle{W_3}$};
				\node at (0.5,3.8) {$\scriptstyle{W_4}$};
				\node (mu) at (0.5,3)  [draw,minimum width=21pt,minimum height=10pt] {$\scriptstyle{\mu_{W_4}}$};
				\node (etab) at (1.5,1)  [draw,minimum width=21pt,minimum height=10pt] {$\scriptstyle{\eta_2}$};
				\node (eta) at (1,2)  [draw,minimum width=21pt,minimum height=10pt] {$\scriptstyle{\eta_1}$};
				\draw[white, double=black, line width = 1pt] (0,0) to (mu.220);
				\draw[white, double=black, line width = 1pt, double distance=1.25 pt] (0.5,0) to (eta.220);
				\draw[white, double=black, line width = 1pt, double distance=1.25 pt] (1,0) to (etab.220);
				\draw[white, double=black, line width = 1pt, double distance=1.25 pt] (2,0) to (etab.320);
				\draw[white, double=black, line width = 1pt, double distance=1.25 pt] (eta.90) to (mu.320);
				\draw[white, double=black, line width = 1pt, double distance=1.25 pt] (etab.90) to (eta.320);
				\draw[white, double=black, line width = 1pt, double distance=1.25 pt] (mu.90) to (0.5,3.5);
			\end{tikzpicture}						
		\end{matrix}
		=
		\begin{matrix}
			\begin{tikzpicture}
				[line width=0.5pt, out=up,in=down]
				\node at (0,-0.3) {$\scriptstyle{A}$};
				\node at (0.5,-0.3) {$\scriptstyle{W_1}$};
				\node at (1.5,-0.3) {$\scriptstyle{W_2}$};
				\node at (2,-0.3) {$\scriptstyle{W_3}$};
				\node at (0.5,3.8) {$\scriptstyle{W_4}$};
				\node (mu) at (0.5,3)  [draw,minimum width=21pt,minimum height=10pt] {$\scriptstyle{\mu_{W_4}}$};
				\node (etab) at (1,1)  [draw,minimum width=21pt,minimum height=10pt] {$\scriptstyle{\eta_{W_1,W_2}}$};
				\node (eta) at (1.5,2)  [draw,minimum width=21pt,minimum height=10pt] {$\scriptstyle{\eta}$};
				\draw[white, double=black, line width = 1pt] (0,0) to (mu.220);
				\draw[white, double=black, line width = 1pt, double distance=1.25 pt] (0.5,0) to (etab.220);
				\draw[white, double=black, line width = 1pt, double distance=1.25 pt] (1.5,0) to (etab.320);
				\draw[white, double=black, line width = 1pt, double distance=1.25 pt] (2,0) to (eta.320);
				\draw[white, double=black, line width = 1pt, double distance=1.25 pt] (eta.90) to (mu.320);
				\draw[white, double=black, line width = 1pt, double distance=1.25 pt] (etab.90) to (eta.220);
				\draw[white, double=black, line width = 1pt, double distance=1.25 pt] (mu.90) to (0.5,3.5);
			\end{tikzpicture}						
		\end{matrix}
\end{align*}
The first step uses the definition of $\mu_{W_1\boxtimes_A W_2}$ from \eqref{repAtensdef}, while the second and last steps use \eqref{eqn:repAintwopassoc_1_def}. For the third step, we first use evenness of $\mu_{W_1}$ to exchange the order of $\mu_{W_1}$ and $\eta_1$, and then we use the fact that $\eta_2$ is an intertwining operator.

For the other equality in \eqref{eqn:eta_repAintwop_alt}, we start with $\eta\circ\mu^{(2)}\circ (1_A\boxtimes(\eta_{W_1,W_2}\boxtimes 1_{W_3}))$:
	\begin{align*}
		\begin{matrix}
			\begin{tikzpicture}
				[scale = 0.87, baseline = {(current bounding box.center)}, line width=0.5pt, out=up, in=down]
				\node at (0,-0.3) {$\scriptstyle{A}$};
				\node at (0.7,-0.3) {$\scriptstyle{W_1}$};
				\node at (1.3,-0.3) {$\scriptstyle{W_2}$};
				\node at (2,-0.3) {$\scriptstyle{W_3}$};
				\node at (0.7,3.8) {$\scriptstyle{W_4}$};
				\node (etab) at (1,0.5)  [draw,minimum width=21pt,minimum height=10pt] {$\scriptstyle{\eta_{W_1,W_2}}$};
				\node (mu) at (1.3,2) [draw,minimum width=20pt,minimum height=10pt] {$\scriptstyle{\mu_{W_3}}$};
				\node (eta) at (0.7,3) [draw,minimum width=21pt,minimum height=10pt] {$\scriptstyle{\eta}$};
				\draw[white, double=black, line width = 1pt, double distance=1.25 pt] (etab.90) to [out=up, in=down] (0,1.5) to [out=up, in=down] (eta.220) ;
				\draw[white, double=black, line width = 3pt] (0,0) to [out=up, in=down] (0,0.8) to [out=up, in=down] (mu.220);
				\draw[white, double=black, line width = 1pt, double distance=1.25 pt] (0.7,0) to (etab.220);
				\draw[white, double=black, line width = 1pt, double distance=1.25 pt] (1.3,0) to (etab.320);
				\draw[white, double=black, line width = 1pt, double distance=1.25 pt] (2,0) to [out=up,in=down] (mu.320);
				\draw[white, double=black, line width = 1pt, double distance=1.25 pt] (mu.90) to [out=up,in=down] (eta.320);
				\draw[white, double=black, line width = 1pt, double distance=1.25 pt] (eta.90) to (0.7,3.5);
			\end{tikzpicture}						
		\end{matrix}
		=
		\begin{matrix}
			\begin{tikzpicture}
				[scale = 0.87, baseline = {(current bounding box.center)}, line width=0.5pt, out=up, in=down]
				\node at (0,-0.3) {$\scriptstyle{A}$};
				\node at (0.5,-0.3) {$\scriptstyle{W_1}$};
				\node at (1,-0.3) {$\scriptstyle{W_2}$};
				\node at (2,-0.3) {$\scriptstyle{W_3}$};
				\node at (1.3,3.8) {$\scriptstyle{W_4}$};
				\node (etab) at (0.7,2)  [draw,minimum width=21pt,minimum height=10pt] {$\scriptstyle{\eta_{W_1,W_2}}$};
				\node (mu) at (1.6,1) [draw,minimum width=20pt,minimum height=10pt] {$\scriptstyle{\mu_{W_3}}$};
				\node (eta) at (1.3,3) [draw,minimum width=21pt,minimum height=10pt] {$\scriptstyle{\eta}$};
				\draw[white, double=black, line width = 1pt, double distance=1.25 pt] (etab.90) to  (eta.220) ;
				\draw[white, double=black, line width = 1pt, double distance=1.25 pt] (0.5,0) to [out=up,in=down] (0.25,1) to [out=up,in=down] (etab.220);
				\draw[white, double=black, line width = 1pt, double distance=1.25 pt] (1,0)  to [out=up,in=down] (0.55,1) to [out=up,in=down] (etab.320);
				\draw[white, double=black, line width = 1pt, double distance=1.25 pt] (2,0) to [out=up,in=down] (mu.320);
				\draw[white, double=black, line width = 3pt] (0,0) to [out=up,in=down] (mu.220);
				\draw[white, double=black, line width = 1pt, double distance=1.25 pt] (mu.90) to [out=up,in=down] (eta.320);
				\draw[white, double=black, line width = 1pt, double distance=1.25 pt] (eta.90) to (1.3,3.5);
			\end{tikzpicture}						
		\end{matrix}
		=
		\begin{matrix}
			\begin{tikzpicture}
				[scale = 0.87, baseline = {(current bounding box.center)}, line width=0.5pt, out=up, in=down]
				\node at (0,-0.3) {$\scriptstyle{A}$};
				\node at (0.5,-0.3) {$\scriptstyle{W_1}$};
				\node at (1,-0.3) {$\scriptstyle{W_2}$};
				\node at (1.8,-0.3) {$\scriptstyle{W_3}$};
				\node at (1,3.8) {$\scriptstyle{W_4}$};
				\node (etab) at (1.4,2.2)  [draw,minimum width=21pt,minimum height=10pt] {$\scriptstyle{\eta_2}$};
				\node (mu) at (1.6,1.2) [draw,minimum width=20pt,minimum height=10pt] {$\scriptstyle{\mu_{W_3}}$};
				\node (eta) at (1,3) [draw,minimum width=21pt,minimum height=10pt] {$\scriptstyle{\eta_1}$};
				\draw[white, double=black, line width = 1pt, double distance=1.25 pt] (etab.90) to  (eta.320) ;
				\draw[white, double=black, line width = 1pt, double distance=1.25 pt] (0.5,0) to [out=up,in=down] (0.25,1) to [out=up,in=down] (eta.220);
				\draw[white, double=black, line width = 1pt, double distance=1.25 pt] (1,0)  to [out=up,in=down] (0.55,1) to [out=up,in=down] (etab.220);
				\draw[white, double=black, line width = 1pt, double distance=1.25 pt] (1.8,0) to [out=up,in=down] (mu.320);
				\draw[white, double=black, line width = 3pt] (0,0) to [out=up,in=down] (mu.220);
				\draw[white, double=black, line width = 1pt, double distance=1.25 pt] (mu.90) to [out=up,in=down] (etab.320);
				\draw[white, double=black, line width = 1pt, double distance=1.25 pt] (eta.90) to (1,3.5);
			\end{tikzpicture}						
		\end{matrix} 
		=
		\begin{matrix}
			\begin{tikzpicture}
				[scale = 0.87, baseline = {(current bounding box.center)}, line width=0.5pt,out=up, in=down]
				\node at (0,-0.3) {$\scriptstyle{A}$};
				\node at (0.5,-0.3) {$\scriptstyle{W_1}$};
				\node at (1.1,-0.3) {$\scriptstyle{W_2}$};
				\node at (1.7,-0.3) {$\scriptstyle{W_3}$};
				\node at (0.5,3.8) {$\scriptstyle{W_4}$};
				\node (etab) at (1.4,1.2)  [draw,minimum width=21pt,minimum height=10pt] {$\scriptstyle{\eta_2}$};
				\node (mu) at (1,2) [draw,minimum width=20pt,minimum height=10pt] {$\scriptstyle{\mu_{M_1}}$};
				\node (eta) at (0.5,3) [draw,minimum width=21pt,minimum height=10pt] {$\scriptstyle{\eta_1}$};
				\draw[white, double=black, line width = 1pt, double distance=1.25 pt] (etab.90) to [out=up,in=down] (mu.320) ;
				\draw[white, double=black, line width = 1pt, double distance=1.25 pt] (0.5,0) to (0,1) to [out=up,in=down] (eta.220);
				\draw[white, double=black, line width = 1pt, double distance=1.25 pt] (1.1,0) to  (etab.220);
				\draw[white, double=black, line width = 1pt, double distance=1.25 pt] (1.7,0) to [out=up,in=down] (etab.320);
				\draw[white, double=black, line width = 3pt] (0,0) to [out=up,in=down] (0.7,1) to [out=up,in=down] (mu.220);
				\draw[white, double=black, line width = 1pt, double distance=1.25 pt] (mu.90) to [out=up,in=down] (eta.320);
				\draw[white, double=black, line width = 1pt, double distance=1.25 pt] (eta.90) to (0.5,3.5);
			\end{tikzpicture}						
		\end{matrix} 
		=
		\begin{matrix}
			\begin{tikzpicture}
				[scale = 0.87, baseline = {(current bounding box.center)}, line width=0.5pt,out=up, in=down]
				\node at (0,-0.3) {$\scriptstyle{A}$};
				\node at (0.5,-0.3) {$\scriptstyle{W_1}$};
				\node at (1,-0.3) {$\scriptstyle{W_2}$};
				\node at (1.5,-0.3) {$\scriptstyle{W_3}$};
				\node at (0.4,3.8) {$\scriptstyle{W_4}$};
				\node (mu) at (0.4,3) [draw,minimum width=20pt,minimum height=10pt] {$\scriptstyle{\mu_{W_4}}$};
				\node (eta2) at (0.8,2) [draw,minimum width=20pt,minimum height=10pt] {$\scriptstyle{\eta_1}$};
				\node (eta1) at (1.2,1)  [draw,minimum width=20pt,minimum height=10pt] {$\scriptstyle{\eta_2}$};
				\draw[white, double=black, line width = 3pt] (0,0) to [out=up,in=down] (mu.220);
				\draw[white, double=black, line width = 1pt, double distance=1.25 pt] (0.5,0) to [out=up,in=down] (eta2.220);
				\draw[white, double=black, line width = 1pt, double distance=1.25 pt] (1,0) to [out=up,in=down] (eta1.220);
				\draw[white, double=black, line width = 1pt, double distance=1.25 pt] (1.5,0) to [out=up,in=down] (eta1.320);
				\draw[white, double=black, line width = 1pt, double distance=1.25 pt] (eta1.90) to [out=up,in=down] (eta2.320);
				\draw[white, double=black, line width = 1pt, double distance=1.25 pt] (eta2.90) to [out=up,in=down] (mu.320);
				\draw[white, double=black, line width = 1pt, double distance=1.25 pt] (mu.90) to [out=up,in=down] (0.4,3.5);
			\end{tikzpicture}						
		\end{matrix} 
		=
		\begin{matrix}
			\begin{tikzpicture}
				[scale = 0.85, baseline = {(current bounding box.center)}, line width=0.5pt, out=up, in=down]
				\node at (0,-0.3) {$\scriptstyle{A}$};
				\node at (0.5,-0.3) {$\scriptstyle{W_1}$};
				\node at (1.3,-0.3) {$\scriptstyle{W_2}$};
				\node at (2,-0.3) {$\scriptstyle{W_3}$};
				\node at (0.4,3.8) {$\scriptstyle{W_4}$};
				\node (mu) at (0.4,3) [draw,minimum width=20pt,minimum height=10pt] {$\scriptstyle{\mu_{W_4}}$};
				\node (eta) at (1.5,2) [draw,minimum width=20pt,minimum height=10pt] {$\scriptstyle{\eta}$};
				\node (etab) at (0.9,1)  [draw,minimum width=20pt,minimum height=10pt] {$\scriptstyle{\eta_{W_1,W_2}}$};
				\draw[white, double=black, line width = 3pt] (0,0) to [out=up,in=down] (mu.220);
				\draw[white, double=black, line width = 1pt, double distance=1.25 pt] (0.5,0) to [out=up,in=down] (etab.220);
				\draw[white, double=black, line width = 1pt, double distance=1.25 pt] (1.3,0) to [out=up,in=down] (etab.320);
				\draw[white, double=black, line width = 1pt, double distance=1.25 pt] (2,0) to [out=up,in=down] (eta.320);
				\draw[white, double=black, line width = 1pt, double distance=1.25 pt] (etab.90) to [out=up,in=down] (eta.220);
				\draw[white, double=black, line width = 1pt, double distance=1.25 pt] (eta.90) to [out=up,in=down] (mu.320);
				\draw[white, double=black, line width = 1pt, double distance=1.25 pt] (mu.90) to [out=up,in=down] (0.4,3.5);
			\end{tikzpicture}						
		\end{matrix} 
\end{align*}
The first equality uses naturality of braiding applied to the even morphism $\eta_{W_1,W_2}$, and then the evenness
of either $\eta_{W_1,W_2}$ or $\mu_{W_3}$ to exchange their order. The
second and the last equalities are due to \eqref{eqn:repAintwopassoc_1_def}.
The third equality uses the hexagon axiom and then $\eta_2\circ\mu^{(2)}=\mu_{M_1}\circ(1_A\boxtimes \eta_2)$. In the fourth equality, we change the order of the braiding and $\eta_1$ using evenness of the braiding and then use $\eta_1\circ\mu^{(2)}=\mu_{W_4}\circ(1_A\boxtimes\eta_1)$.
\end{proof}

The associativity of $\repA$-intertwining operators yields associativity isomorphisms in $\repA$:
\begin{theo}
	There is a unique natural isomorphism $\cA^A: \boxtimes_A\circ(1_{\repA}\times\boxtimes_A)\rightarrow\boxtimes_A\circ(\boxtimes_A\times 1_{\repA})$ such that for any objects $(W_1,\mu_{W_1})$, $(W_2,\mu_{W_2})$, and $(W_3,\mu_{W_3})$ in $\repA$, the diagram
	\begin{equation}\label{repAassocdef}
	\xymatrixcolsep{6pc}
	\xymatrix{W_1\boxtimes(W_2\boxtimes W_3) \ar[d]^{1_{W_1}\boxtimes\eta_{W_2,W_3}} \ar[r]^{\sA_{W_1,W_2,W_3}} & (W_1\boxtimes W_2)\boxtimes W_3 \ar[d]^{\eta_{W_1,W_2}\boxtimes 1_{W_3}}\\
		W_1\boxtimes (W_2\boxtimes_A W_3) \ar[d]^{\eta_{W_1,W_2\boxtimes_A W_3}} & (W_1\boxtimes_A W_2)\boxtimes W_3 \ar[d]^{\eta_{W_1\boxtimes_A W_2, W_3}}\\
		W_1\boxtimes_A (W_2\boxtimes_A W_3) \ar[r]^{\cA^A_{W_1,W_2,W_3}} & (W_1\boxtimes_A W_2)\boxtimes_A W_3\\}
	\end{equation}
	commutes. Moreover, $\cA^A_{W_1, W_2,W_3}$ is even.
\end{theo}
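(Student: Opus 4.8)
The idea is that the substance has already been isolated in Theorem~\ref{repAintwopassoc}: the associativity isomorphism $\cA^A$ will be read off from the associativity of categorical $\repA$-intertwining operators together with the universal property of $\boxtimes_A$ from Proposition~\ref{prop:repAuniv}. Fix objects $W_1,W_2,W_3$ of $\repA$. By Proposition~\ref{prop:univIntw}, the cokernel morphisms $\eta_{W_1\boxtimes_A W_2,W_3}$ and $\eta_{W_1,W_2}$ are $\repA$-intertwining operators of types $\binom{(W_1\boxtimes_A W_2)\boxtimes_A W_3}{W_1\boxtimes_A W_2\,\,W_3}$ and $\binom{W_1\boxtimes_A W_2}{W_1\,\,W_2}$, respectively. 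Applying Theorem~\ref{repAintwopassoc}(2) with $M_2=W_1\boxtimes_A W_2$ and $W_4=(W_1\boxtimes_A W_2)\boxtimes_A W_3$ produces a unique $\repA$-intertwining operator $\eta$ of type $\binom{(W_1\boxtimes_A W_2)\boxtimes_A W_3}{W_1\,\,W_2\boxtimes_A W_3}$ with
\begin{equation*}
\eta_{W_1\boxtimes_A W_2,W_3}\circ(\eta_{W_1,W_2}\boxtimes 1_{W_3})\circ\sA_{W_1,W_2,W_3}=\eta\circ(1_{W_1}\boxtimes\eta_{W_2,W_3}).
\end{equation*}
Since $\eta$ is a $\repA$-intertwining operator, Proposition~\ref{prop:repAuniv} yields a unique $\repA$-morphism $\cA^A_{W_1,W_2,W_3}\colon W_1\boxtimes_A(W_2\boxtimes_A W_3)\to (W_1\boxtimes_A W_2)\boxtimes_A W_3$ with $\cA^A_{W_1,W_2,W_3}\circ\eta_{W_1,W_2\boxtimes_A W_3}=\eta$. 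Postcomposing this with $1_{W_1}\boxtimes\eta_{W_2,W_3}$ and using the displayed identity gives precisely the commutativity of \eqref{repAassocdef}.

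For uniqueness and evenness I would record that $e:=\eta_{W_1,W_2\boxtimes_A W_3}\circ(1_{W_1}\boxtimes\eta_{W_2,W_3})$ is an even epimorphism: $\eta_{W_2,W_3}$ is a cokernel morphism, $W_1\boxtimes\cdot$ is right exact and hence preserves epimorphisms, $\eta_{W_1,W_2\boxtimes_A W_3}$ is again a cokernel morphism, and all these maps may be taken even. Thus any $\repA$-morphism making \eqref{repAassocdef} commute agrees with $\cA^A_{W_1,W_2,W_3}$ after precomposition with $e$, hence equals it. Moreover the right-hand side of \eqref{repAassocdef} is a composite of even morphisms, so the relation $\cA^A_{W_1,W_2,W_3}\circ e=(\text{even})$ with $e$ even and epimorphic forces $\cA^A_{W_1,W_2,W_3}$ to commute with the parity involutions, i.e.\ to be even.

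For naturality, given $\repA$-morphisms $f_i\colon W_i\to\widetilde W_i$, I would check that $\cA^A_{\widetilde W_1,\widetilde W_2,\widetilde W_3}\circ(f_1\boxtimes_A(f_2\boxtimes_A f_3))$ and $((f_1\boxtimes_A f_2)\boxtimes_A f_3)\circ\cA^A_{W_1,W_2,W_3}$ coincide after precomposition with the epimorphism $e$ above, using the defining relation \eqref{repAtensprodmorphdef} for $\boxtimes_A$ on morphisms, the naturality of $\sA$ in $\sC$, and \eqref{repAassocdef}; since $e$ is epi the two composites are equal. Finally, to see that $\cA^A_{W_1,W_2,W_3}$ is an isomorphism I would run the same construction with Theorem~\ref{repAintwopassoc}(1) in place of (2): taking $\eta_1=\eta_{W_1,W_2\boxtimes_A W_3}$ and $\eta_2=\eta_{W_2,W_3}$ gives a $\repA$-intertwining operator $\eta'$ of type $\binom{W_1\boxtimes_A(W_2\boxtimes_A W_3)}{W_1\boxtimes_A W_2\,\,W_3}$, and Proposition~\ref{prop:repAuniv} then produces a $\repA$-morphism $h\colon(W_1\boxtimes_A W_2)\boxtimes_A W_3\to W_1\boxtimes_A(W_2\boxtimes_A W_3)$; precomposing $h\circ\cA^A_{W_1,W_2,W_3}$ with $e$ and $\cA^A_{W_1,W_2,W_3}\circ h$ with the (even) epimorphism $\eta_{W_1\boxtimes_A W_2,W_3}\circ(\eta_{W_1,W_2}\boxtimes 1_{W_3})$, and invoking \eqref{repAassocdef} together with its $h$-analogue, identifies both composites with identity morphisms.

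The genuinely nontrivial ingredient is Theorem~\ref{repAintwopassoc}; the rest is bookkeeping. The only mildly delicate point is the naturality square, and even there the argument reduces cleanly to an equality of $\repA$-morphisms that can be verified after composing with the surjection $e$.
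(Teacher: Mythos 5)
Your proposal is correct and follows essentially the same route as the paper's own proof: construct $\cA^A$ from Theorem~\ref{repAintwopassoc}(2) together with the universal property of $\boxtimes_A$ (Proposition~\ref{prop:repAuniv}), construct a candidate inverse from Theorem~\ref{repAintwopassoc}(1) in the same way, and settle invertibility, evenness, and naturality by reducing to equalities of morphisms that can be checked after precomposition with the even epimorphism $\eta_{W_1,W_2\boxtimes_A W_3}\circ(1_{W_1}\boxtimes\eta_{W_2,W_3})$ (and its mirror image). The paper organizes the invertibility and naturality checks via explicit commutative diagrams, but the underlying argument you sketch is the same.
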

\begin{proof}
	Suppose $(W_i,\mu_{W_i})$ for $i=1,2,3$ are objects of $\repA$. By the second assertion of Theorem \ref{repAintwopassoc}, there is a unique $\repA$-intertwining operator $\eta$ of type $\binom{(W_1\boxtimes_A W_2)\boxtimes_A W_3}{W_1\,W_2\boxtimes_A W_3}$ such that
	\begin{equation*}
	\eta\circ(1_{W_1}\boxtimes\eta_{W_2,W_3})=\eta_{W_1\boxtimes_A W_2, W_3}\circ(\eta_{W_1,W_2}\boxtimes 1_{W_3})\circ\sA_{W_1,W_2,W_3}.
	\end{equation*}
	Then the universal property of the tensor product $W_1\boxtimes_A (W_2\boxtimes_A W_3)$ induces a unique $\repA$-morphism
	\begin{equation*}
	\cA^A_{W_1,W_2,W_3}: W_1\boxtimes_A (W_2\boxtimes_A W_3)\rightarrow (W_1\boxtimes_A W_2)\boxtimes_A W_3
	\end{equation*}
	such that $\eta=\cA^A_{W_1,W_2,W_3}\circ\eta_{W_1, W_2\boxtimes_A W_3}$. Thus $\cA^A_{W_1,W_2,W_3}$ is the unique $\repA$-morphism such that \eqref{repAassocdef} commutes. Similarly, by the first assertion of Theorem \ref{repAintwopassoc}, there is a unique $\repA$-morphism
	\begin{equation*}
	\widetilde{\cA}^A_{W_1,W_2,W_3}: (W_1\boxtimes_A W_2)\boxtimes_A W_3\rightarrow W_1\boxtimes_A (W_2\boxtimes_A W_3)
	\end{equation*}
	such that the diagram
	\begin{equation*}
	\xymatrixcolsep{6pc}
	\xymatrix{(W_1\boxtimes W_2)\boxtimes W_3 \ar[d]^{\eta_{W_1,W_2}\boxtimes 1_{W_3}} \ar[r]^{\sA_{W_1,W_2,W_3}^{-1}} & W_1\boxtimes (W_2\boxtimes W_3) \ar[d]^{1_{W_1}\boxtimes\eta_{W_2,W_3}}\\
		(W_1\boxtimes_A W_2)\boxtimes W_3 \ar[d]^{\eta_{W_1\boxtimes_A W_2,W_3}} & W_1\boxtimes (W_2\boxtimes_{A} W_3) \ar[d]^{\eta_{W_1, W_2\boxtimes_A W_3}}\\
		(W_1\boxtimes_A W_2)\boxtimes_A W_3 \ar[r]^{\widetilde{\cA}^A_{W_1,W_2,W_3}} & W_1\boxtimes_A (W_2\boxtimes_A W_3)\\}
	\end{equation*}
	commutes.
	
	To show that $\widetilde{\cA}^A_{W_1,W_2,W_3}$ is inverse to $\cA^A_{W_1,W_2,W_3}$, so that $\cA^A_{W_1,W_2,W_3}$ is an isomorphism, note that
	\begin{equation*}
	\xymatrixcolsep{10pc}
	\xymatrix{W_1\boxtimes(W_2\boxtimes W_3) \ar[d]^{\eta_{W_1,W_2\boxtimes_A W_3}\circ(1_{W_1}\boxtimes\eta_{W_2,W_3})} \ar[r]^{1_{W_1\boxtimes(W_2\boxtimes W_3)}} & W_1\boxtimes (W_2\boxtimes W_3) \ar[d]^{\eta_{W_1,W_2\boxtimes_A W_3}\circ(1_{W_1}\boxtimes\eta_{W_2,W_3})}\\
		W_1\boxtimes_A (W_2\boxtimes_A W_3) \ar[r]_{\widetilde{\cA}^A_{W_1,W_2,W_3}\circ\cA^A_{W_1,W_2,W_3}} & W_1\boxtimes_A (W_2\boxtimes_A W_3)\\}
	\end{equation*}
	commutes. Since $\eta_{W_1,W_2\boxtimes_A W_3}$ and $1_{W_1}\boxtimes\eta_{W_2,W_3}$ are both cokernels (by definition and since $W_1\boxtimes\cdot$ is right exact), they are both epimorphisms, and we conclude that $\widetilde{\cA}^A_{W_1,W_2,W_3}\circ\cA^A_{W_1,W_2,W_3}=1_{W_1\boxtimes_A (W_2\boxtimes_A W_3)}$. Similarly, $\cA^A_{W_1,W_2,W_3}\circ\widetilde{\cA}^A_{W_1,W_2,W_3}=1_{(W_1\boxtimes_A W_2)\boxtimes_A W_3}$.
	
	The evenness of $\cA^A_{W_1,W_2,W_3}$ follows from the evenness of $\sA_{W_1,W_2,W_3}$, $\eta_{W_1,W_2}\boxtimes 1_{W_3}$, and $\eta_{W_1\boxtimes_A W_2, W_3}$, and the surjectivity and evenness of $1_{W_1}\boxtimes\eta_{W_2,W_3}$ and $\eta_{W_1,W_2\boxtimes_A W_3}$.

	Now we have to show that the $\repA$-isomorphisms $\cA^A_{W_1,W_2,W_3}$ define a natural transformation, that is, we need to show that if $f_i: W_i\rightarrow\widetilde{W}_i$ for $i=1,2,3$ are $\repA$-morphisms, then the diagram
	\begin{equation*}
	\xymatrixcolsep{6pc}
	\xymatrix{
		W_1\boxtimes_A (W_2\boxtimes_A W_3) \ar[r]^{\cA^A_{W_1,W_2,W_3}} \ar[d]^{f_1\boxtimes_A (f_2\boxtimes_A f_3)} & (W_1\boxtimes_A W_2)\boxtimes_A W_3 \ar[d]^{(f_1\boxtimes_A f_2)\boxtimes_A f_3} \\
		\widetilde{W}_1\boxtimes_A(\widetilde{W}_2\boxtimes_A\widetilde{W}_3) \ar[r]^{\cA^A_{\widetilde{W}_1,\widetilde{W}_2,\widetilde{W}_3}} &  (\widetilde{W}_1\boxtimes_A\widetilde{W}_2)\boxtimes_A\widetilde{W}_3\\}
	\end{equation*}
	commutes. First, note that the diagrams
	\begin{equation*}
	\xymatrixcolsep{6pc}
	\xymatrix{
		W_1\boxtimes(W_2\boxtimes W_3) \ar[r]^{\sA_{W_1,W_2,W_3}} \ar[d]^{1_{W_1}\boxtimes \eta_{W_2,W_3}} & (W_1\boxtimes W_2)\boxtimes W_3 \ar[r]^{(f_1\boxtimes f_2)\boxtimes f_3} \ar[d]^{\eta_{W_1,W_2}\boxtimes 1_{W_3}} & (\widetilde{W}_1\boxtimes\widetilde{W}_2)\boxtimes\widetilde{W}_3) \ar[d]^{\eta_{\widetilde{W}_1,\widetilde{W}_2}\boxtimes 1_{\widetilde{W}_3}} \\
		W_1\boxtimes(W_2\boxtimes_A W_3) \ar[d]^{\eta_{W_1,W_2\boxtimes_A W_3}} & (W_1\boxtimes_A W_2)\boxtimes W_3 \ar[d]^{\eta_{W_1\boxtimes_A W_2,W_3}} \ar[r]^{(f_1\boxtimes_A f_2)\boxtimes f_3} & (\widetilde{W}_1\boxtimes_A\widetilde{W}_2)\boxtimes\widetilde{W}_3 \ar[d]^{\eta_{\widetilde{W}_1\boxtimes_A\widetilde{W}_2,\widetilde{W}_3}} \\
		W_1\boxtimes_A (W_2\boxtimes_A W_3) \ar[r]^{\cA^{A}_{W_1,W_2,W_3}} & (W_1\boxtimes_A W_2)\boxtimes_A W_3 \ar[r]^{(f_1\boxtimes_A f_2)\boxtimes_A f_3} & (\widetilde{W}_1\boxtimes_A\widetilde{W}_2)\boxtimes_A\widetilde{W}_3 \\} 
	\end{equation*}
	and
	\begin{equation*}
	\xymatrixcolsep{6pc}
	\xymatrix{
		W_1\boxtimes(W_2\boxtimes W_3) \ar[r]^{f_1\boxtimes (f_2\boxtimes f_3)} \ar[d]^{1_{W_1}\boxtimes\eta_{W_2,W_3}} & \widetilde{W}_1\boxtimes(\widetilde{W}_2\boxtimes\widetilde{W}_3) \ar[r]^{\sA_{\widetilde{W}_1,\widetilde{W}_2,\widetilde{W}_3}} \ar[d]^{1_{\widetilde{W}_1}\boxtimes\eta_{\widetilde{W}_2,\widetilde{W}_3}} & (\widetilde{W}_1\boxtimes\widetilde{W}_2)\boxtimes\widetilde{W}_3 \ar[d]^{\eta_{\widetilde{W}_1,\widetilde{W}_2}\boxtimes 1_{\widetilde{W}_3}} \\
		W_1\boxtimes(W_2\boxtimes_A W_3) \ar[r]^{f_1\boxtimes(f_2\boxtimes_A f_3)} \ar[d]^{\eta_{W_1,W_2\boxtimes_A W_3}} & \widetilde{W}_1\boxtimes(\widetilde{W}_2\boxtimes_A \widetilde{W}_3) \ar[d]^{\eta_{\widetilde{W}_1,\widetilde{W}_2\boxtimes_A\widetilde{W}_3}} & (\widetilde{W}_1\boxtimes_A\widetilde{W}_2)\boxtimes\widetilde{W}_3 \ar[d]^{\eta_{\widetilde{W}_1\boxtimes_A\widetilde{W}_2,\widetilde{W}_3}} \\ 
		W_1\boxtimes_A (W_2\boxtimes_A W_3) \ar[r]^{f_1\boxtimes_A(f_2\boxtimes_A f_3)} & \widetilde{W}_1\boxtimes_A (\widetilde{W}_2\boxtimes_A \widetilde{W}_3) \ar[r]^{\cA^A_{\widetilde{W}_1,\widetilde{W}_2,\widetilde{W}_3}} & (\widetilde{W}_1\boxtimes_A \widetilde{W}_2)\boxtimes_A \widetilde{W}_3 \\   }
	\end{equation*}
	commute by the definition of $\cA^A$, the definition of tensor products of morphisms in $\repA$, and the evenness of $\eta_{W_1,W_2}$ and $\eta_{\widetilde{W}_2,\widetilde{W}_3}$. Since the associativity isomorphisms in $\sC$ define a natural transformation, we therefore have
	\begin{align*}
	((f_1\boxtimes_A f_2)\boxtimes_A f_3) & \circ  \cA^A_{W_1,W_2,W_3}\circ\eta_{W_1,W_2\boxtimes_A W_3}\circ(1_{W_1}\boxtimes\eta_{W_2,W_3})\nonumber\\
	&=\cA^A_{\widetilde{W}_1,\widetilde{W}_2,\widetilde{W}_3}\circ(f_1\boxtimes_A(f_2\boxtimes_A f_3))\circ\eta_{W_1,W_2\boxtimes_A W_3}\circ(1_{W_1}\boxtimes\eta_{W_2,W_3}).
	\end{align*}
	Since $\eta_{W_1,W_2\boxtimes_A W_3}$ and $1_{W_1}\boxtimes\eta_{W_2,W_3}$ are both cokernels (by definition and since $W_1\boxtimes\cdot$ is right exact), they are both epimorphisms, and so we conclude
	\begin{equation*}
	((f_1\boxtimes_A f_2)\boxtimes_A f_3)\circ \cA^A_{W_1,W_2,W_3}=\cA^A_{\widetilde{W}_1,\widetilde{W}_2,\widetilde{W}_3}\circ(f_1\boxtimes_A(f_2\boxtimes_A f_3)),
	\end{equation*}
	as desired.
\end{proof}

We have now shown that $\repA$ is an $\mathbb{F}$-linear additive supercategory, and we have constructed bilinear tensor products, natural even unit isomorphisms, and now even natural associativity isomorphisms in $\repA$. To show that $\repA$ is a monoidal supercategory, we still need to check the pentagon and triangle axioms.
We now prove these properties and conclude the following theorem:

\begin{theo}\label{thm:repAmonoidal}
	The category $\repA$ with tensor product bifunctor $\boxtimes_A$, unit object $(A,\mu)$, left unit isomorphisms $l^A$, right unit isomorphisms $r^A$, and associativity isomorphisms $\cA^A$, is an $\mathbb{F}$-linear additive monoidal supercategory.
\end{theo}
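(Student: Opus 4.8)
The plan is to verify the only two things still missing, namely the triangle and pentagon axioms, since all the other ingredients are already in place: Proposition~\ref{SuperRepAadditive} gives that $\repA$ is an $\mathbb{F}$-linear additive supercategory, Propositions~\ref{propo:welldefmult} and \ref{propo:repAtensnat} give the tensor product bifunctor $\boxtimes_A$ with bilinear action on morphisms, Proposition~\ref{repAleftunit} and Proposition~\ref{prop:rightisodef} (together with the following naturality propositions) give the even natural unit isomorphisms $l^A$ and $r^A$, and the theorem immediately preceding this one gives the even natural associativity isomorphism $\cA^A$.

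The mechanism used for both axioms is the same. Each cokernel morphism $\eta_{W_1,W_2}\colon W_1\boxtimes W_2\to W_1\boxtimes_A W_2$ is an epimorphism, and since $\boxtimes$ is right exact on $\cC$, hence on $\sC$, any morphism of the form $U\boxtimes\eta_{W_1,W_2}$ is again an epimorphism; therefore any composite of such morphisms (tensored up in any way) is an epimorphism. To prove an identity between two $\repA$-morphisms whose common source is an iterated $\boxtimes_A$-product, it thus suffices to prove it after precomposition with a sufficiently fine ``resolution'' built from iterated tensorings of the $\eta$'s, and then to strip these $\eta$'s off one at a time using the defining commuting squares \eqref{repAtensdef}, \eqref{repAtensprodmorphdef}, \eqref{leftisodef}, \eqref{rightisodef}, and \eqref{repAassocdef}, until the identity has been reduced to the corresponding identity among $\sC$-morphisms. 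That reduced identity then holds because $\sC$ is a (braided) monoidal supercategory by Theorem~\ref{propo:superCisbraided}.

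Concretely, for the triangle axiom I would show $(r^A_{W_1}\boxtimes_A 1_{W_2})\circ\cA^A_{W_1,A,W_2}=1_{W_1}\boxtimes_A l^A_{W_2}$ by precomposing both sides with the epimorphism $\eta_{W_1,A\boxtimes_A W_2}\circ(1_{W_1}\boxtimes\eta_{A,W_2})$: using \eqref{repAassocdef} to replace $\cA^A_{W_1,A,W_2}$ by $\sA_{W_1,A,W_2}$, then \eqref{repAtensprodmorphdef} to unwind the $\boxtimes_A$-products of morphisms, then \eqref{leftisodef} and \eqref{rightisodef} to replace $l^A_{W_2}$ by $\sleft_{W_2}$ and $r^A_{W_1}$ by $\mu_{W_1}\circ\sR_{A,W_1}^{-1}$, the claim is turned into an equation among $\sC$-morphisms built from $\sA$, the multiplication and action morphisms, the braiding, and the unit isomorphisms; this then follows from the triangle axiom in $\sC$, the naturality of $\sA$ and $\sR$, and the right-unit property of $\mu$ recorded in Remark~\ref{rightunitSA}. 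For the pentagon axiom I would precompose the two five-term composites with the epimorphism
\[
\eta_{W_1,W_2\boxtimes_A(W_3\boxtimes_A W_4)}\circ(1_{W_1}\boxtimes\eta_{W_2,W_3\boxtimes_A W_4})\circ\bigl(1_{W_1}\boxtimes(1_{W_2}\boxtimes\eta_{W_3,W_4})\bigr),
\]
which is an epimorphism by iterated right exactness, and again apply \eqref{repAassocdef} and \eqref{repAtensprodmorphdef} repeatedly (one layer for each $\cA^A$ and each $\boxtimes_A 1$ or $1\boxtimes_A$) to reduce both sides to composites built solely from the $\sA$-isomorphisms of $\sC$ and the $\eta$'s, where the pentagon axiom of $\sC$ finishes the proof.

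The step I expect to be the main obstacle is the bookkeeping in the pentagon, and specifically the handling of the middle isomorphism $\cA^A_{W_1,\,W_2\boxtimes_A W_3,\,W_4}$, whose outer tensor factor $W_2\boxtimes_A W_3$ is itself a $\boxtimes_A$-product. Peeling off the $\eta$'s attached to this factor forces one to observe that composites such as $\eta_{W_1,\,(W_2\boxtimes_A W_3)\boxtimes_A W_4}\circ\bigl(1_{W_1}\boxtimes(\eta_{W_2,W_3}\boxtimes 1_{W_4})\bigr)$ --- and the analogous composites appearing at each stage --- remain epimorphisms, so that the defining universal property of $\boxtimes_A$ can be invoked at every step. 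Once one commits to always working with such epimorphic resolutions and to inserting the naturality of $\sA$ and the pentagon of $\sC$ at exactly the right places, the argument is entirely routine; the only real labor is organizing the many iterated tensor products, which is precisely why this verification is isolated as a separate theorem.
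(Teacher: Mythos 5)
Your proposal is correct and matches the paper's proof in both structure and mechanism: the paper likewise reduces the triangle and pentagon axioms to the corresponding axioms in $\sC$ by precomposing with epimorphisms built from the $\eta$'s (using right exactness of $\boxtimes$ to guarantee these stay epimorphic), then strips the $\eta$'s off via \eqref{repAtensdef}, \eqref{repAtensprodmorphdef}, \eqref{leftisodef}, \eqref{rightisodef}, and \eqref{repAassocdef}. The only cosmetic difference is that for the triangle axiom the paper begins the diagram chase one step earlier, from $\eta_{W_1,W_2}$ on $W_1\boxtimes W_2$, and inserts $\iota_A\boxtimes\sleft_{W_2}^{-1}$ to reach $W_1\boxtimes(A\boxtimes W_2)$, after which the reduction is handled exactly by the relation $\eta_{W_1,W_2}\circ\mu^{(1)}=\eta_{W_1,W_2}\circ\mu^{(2)}$; your resolution starting directly from $\eta_{W_1,A\boxtimes_A W_2}\circ(1_{W_1}\boxtimes\eta_{A,W_2})$ lands in the same place.
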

\begin{proof}

	To prove the pentagon axiom, we need to show that for any objects $W_1$, $W_2$, $W_3$, and $W_4$ of $\repA$, the compositions
	\begin{align}\label{pent1}
	W_1\boxtimes_A & (W_2\boxtimes_A(W_3\boxtimes_A W_4))\xrightarrow{1_{W_1}\boxtimes_A\cA^A_{W_2,W_3,W_4}} W_1\boxtimes_A((W_2\boxtimes_A W_3)\boxtimes_A W_4)\nonumber\\
	&\xrightarrow{\cA^A_{W_1,W_2\boxtimes_A W_3,W_4}} (W_1\boxtimes_A (W_2\boxtimes_A W_3))\boxtimes_A W_4\xrightarrow{\cA^A_{W_1,W_2,W_3}\boxtimes_A 1_{W_4}} ((W_1\boxtimes_A W_2)\boxtimes_A W_3)\boxtimes_A W_4
	\end{align}
	and
	\begin{align}\label{pent2}
	W_1\boxtimes_A  (W_2\boxtimes_A(W_3\boxtimes_A W_4) ) & \xrightarrow{\cA^A_{W_1,W_2,W_3\boxtimes_A W_4}} (W_1\boxtimes_A W_2)\boxtimes_A (W_3\boxtimes_A W_4)\nonumber\\
	&\xrightarrow{\cA^A_{W_1\boxtimes_A W_2,W_3,W_4}} ((W_1\boxtimes_A W_2)\boxtimes_A W_3)\boxtimes_A W_4
	\end{align}
	are equal. Using the naturality of the associativity isomorphisms in $\sC$ and the definition of the associativity isomorphisms in $\repA$, we can prove that the diagram
	\begin{equation*}
	\xymatrixcolsep{2pc}
	\xymatrix{
		W_1\boxtimes(W_2\boxtimes(W_3\boxtimes W_4)) \ar[d]^{   } \ar[r] &  ((W_1\boxtimes W_2)\boxtimes W_3)\boxtimes W_4 \ar[d] \\
		W_1\boxtimes_A(W_2\boxtimes_A(W_3\boxtimes_A W_4)) \ar[r] & ((W_1\boxtimes_A W_2)\boxtimes_A W_3)\boxtimes_A W_4 \\} 
	\end{equation*}
	commutes, where the lower horizontal arrow is either \eqref{pent1} or \eqref{pent2}, the top horizontal arrow is the corresponding composition of associativity isomorphisms in $\sC$, and the left and right vertical arrows are 
	\begin{equation}\label{pent3}
	\eta_{W_1,W_2\boxtimes_A(W_2\boxtimes_A W_3)}\circ(1_{W_1}\boxtimes\eta_{W_2,W_3\boxtimes_A W_4})\circ(1_{W_1}\boxtimes(1_{W_2}\boxtimes\eta_{W_3,W_4}))              \end{equation}
	and $$\eta_{(W_1\boxtimes_A W_2)\boxtimes_A W_3, W_4}\circ(\eta_{W_1\boxtimes_A W_2,W_3}\boxtimes1_{W_4})\circ((\eta_{W_1,W_2}\boxtimes 1_{W_3})\boxtimes 1_{W_4}),$$ respectively. Since the pentagon axiom holds in $\sC$, it follows that
	\begin{equation*}
	(\mathcal{A}^A_{W_1,W_2,W_3}\boxtimes_A 1_{W_4})\circ\cA^A_{W_1,W_2\boxtimes_A W_3,W_4}\circ(1_{W_1}\boxtimes_A\cA^A_{W_2,W_3,W_4})\circ\eta=\cA^A_{W_1\boxtimes_A W_2,W_3,W_4}\circ\mathcal{A}^A_{W_1,W_2,W_3\boxtimes_A W_4}\circ\eta,
	\end{equation*}
	where $\eta$ is the composition \eqref{pent3}. Since $\eta$ is a composition of cokernel morphisms (since $W_1\boxtimes\cdot$ and $W_2\boxtimes\cdot$ are right exact), $\eta$ is an epimorphism, and we conclude that \eqref{pent1} and \eqref{pent2} are the same.
	
	To prove the triangle axiom, we need to show that the composition
	\begin{equation*}
	W_1\boxtimes_A W_2\xrightarrow{1_{W_1}\boxtimes_A (l^A_{W_2})^{-1}} W_1\boxtimes_A(A\boxtimes_A W_2)\xrightarrow{\cA^A_{W_1,A,W_2}} (W_1\boxtimes_A A)\boxtimes_A W_2\xrightarrow{r^A_{W_1}\boxtimes_A 1_{W_2}} W_1\boxtimes_A W_2
	\end{equation*}
	equals $1_{W_1\boxtimes_A W_2}$. By the definitions of the unit and associativity isomorphisms in $\repA$, the definition of tensor products of morphisms in $\repA$, and the evenness of all morphisms involved, the diagram
	\begin{equation*}
	 \xymatrixcolsep{4.5pc}
	 \xymatrix{
	 W_1\boxtimes W_2 \ar[rr]^{\eta_{W_1,W_2}} \ar[rd]^(.55){1_{W_1}\boxtimes(l^A_{W_2})^{-1}} \ar[d]_{1_{W_1}\boxtimes((\iota_A\boxtimes 1_{W_2})\circ\sleft_{W_2}^{-1})} & & W_1\boxtimes_A W_2 \ar[d]^{1_{W_1}\boxtimes_A (l^A_{W_2})^{-1}}  \\
	 W_1\boxtimes(A\boxtimes W_2) \ar[r]^{1_{W_1}\boxtimes\eta_{A,W_2}} \ar[d]_{\sA_{W_1,A,W_2}} & W_1\boxtimes(A\boxtimes_A W_2) \ar[r]^{\eta_{W_1,A\boxtimes_A W_2}} & W_1\boxtimes_A(A\boxtimes_A W_2) \ar[d]^{\cA^A_{W_1,A,W_2}} \\
	 (W_1\boxtimes A)\boxtimes W_2 \ar[r]^{\eta_{W_1,A}\boxtimes 1_{W_2}} \ar[d]_{\sR_{A,W_1}^{-1}\boxtimes 1_{W_2}} & (W_1\boxtimes_A A)\boxtimes W_2 \ar[r]^{\eta_{W_1\boxtimes_A A, W_2}} \ar[d]^{r^A_{W_1}\boxtimes 1_{W_2}} & (W_1\boxtimes_A A)\boxtimes_A W_2 \ar[d]^{r^A_{W_1}\boxtimes_A 1_{W_2}} \\
	 (A\boxtimes W_1)\boxtimes W_2 \ar[r]^{\mu_{W_1}\boxtimes 1_{W_2}} & W_1\boxtimes W_2 \ar[r]^{\eta_{W_1, W_2}} & W_1\boxtimes_A W_2 \\
	 }
	\end{equation*}
commutes. Since $\eta_{W_1,W_2}$ is an epimorphism, 
it is enough to show that the top and right composition equals $\eta_{W_1,W_2}$. By commutativity of the diagram, it is then enough to show that the left and bottom composition equals $\eta_{W_1,W_2}$.
	For this, we use:
	\begin{align*}
		\begin{matrix}
			\begin{tikzpicture}[line width=0.5pt, out=up, in=down]
				\node (w1) at (0,0) {$\scriptstyle{W_1}$};
				\node (w2) at (1,0) {$\scriptstyle{W_2}$};
				\node (i) at (0.5,1.25) {$\scriptstyle{\bullet}$};
				\node (mu) at (0.2, 2.75) [draw] {$\scriptstyle{\mu_{W_1}}$};
				\node (eta) at (0.6, 3.75) [draw] {$\scriptstyle{\eta_{W_1,W_2}}$};
				\node (wf) at (0.6, 4.5) {$\scriptstyle{W_1\boxtimes_A W_2}$};
				\draw [line width=1pt] (w1.90) to (0,1.25) to (mu.320);
				\draw [dashed] (1,0.5) to[out=west] (i.270);
				\draw [white, double=black, line width=3pt, double distance=0.5pt] (i.90) to (mu.220);
				\draw [line width=1pt] (w2.90) to (eta.330);
				\draw [line width=1pt] (mu.90) to (eta.210);
				\draw [ line width=1pt] (eta.90) to (wf.270);
			\end{tikzpicture}
		\end{matrix}
		=
		\begin{matrix}
			\begin{tikzpicture}[line width=0.5pt, out=up, in=down]
				\node (w1) at (0,0) {$\scriptstyle{W_1}$};
				\node (w2) at (1.2,0) {$\scriptstyle{W_2}$};
				\node (i) at (0.5,1) {$\scriptstyle{\bullet}$};
				\node (mu) at (1, 3) [draw] {$\scriptstyle{\mu_{W_2}}$};
				\node (eta) at (0.6, 3.75) [draw] {$\scriptstyle{\eta_{W_1,W_2}}$};
				\node (wf) at (0.6, 4.5) {$\scriptstyle{W_1\boxtimes_A W_2}$};
				\draw [line width=1pt] (w1.90) to (0,1.25) to (0.5,2) to (0.2,3) to (eta.210);
				\draw [dashed] (1.2,0.5) to[out=west] (i.270);
				\draw [white, double=black, line width=3pt, double distance=0.5pt] (i.90) to (0,2) to (mu.220);
				\draw [line width=1pt] (w2.90) to (mu.320);
				\draw [line width=1pt] (mu.90) to (eta.330);
				\draw [ line width=1pt] (eta.90) to (wf.270);
			\end{tikzpicture}
		\end{matrix}
		=
		\begin{matrix}
			\begin{tikzpicture}[line width=0.5pt, out=up, in=down]
				\node (w1) at (0,0) {$\scriptstyle{W_1}$};
				\node (w2) at (1.2,0) {$\scriptstyle{W_2}$};
				\node (i) at (0.6,1.5) {$\scriptstyle{\bullet}$};
				\node (mu) at (0.9, 2.5) [draw] {$\scriptstyle{\mu_{W_2}}$};
				\node (eta) at (0.5, 3.75) [draw] {$\scriptstyle{\eta_{W_1,W_2}}$};
				\node (wf) at (0.5, 4.5) {$\scriptstyle{W_1\boxtimes_A W_2}$};
				\draw [line width=1pt] (w1.90) to (eta.210);
				\draw [dashed] (1.2,0.5) to[out=west] (i.270);
				\draw [white, double=black, line width=3pt, double distance=0.5pt] (i.90) to  (mu.220);
				\draw [line width=1pt] (w2.90) to (mu.320);
				\draw [line width=1pt] (mu.90) to (eta.330);
				\draw [ line width=1pt] (eta.90) to (wf.270);
			\end{tikzpicture}
		\end{matrix}
		=
		\begin{matrix}
			\begin{tikzpicture}[line width=0.5pt, out=up, in=down]
				\node (w1) at (0,0) {$\scriptstyle{W_1}$};
				\node (w2) at (1,0) {$\scriptstyle{W_2}$};
				\node (eta) at (0.5, 3) [draw] {$\scriptstyle{\eta_{W_1,W_2}}$};
				\node (wf) at (0.5, 4.5) {$\scriptstyle{W_1\boxtimes_A W_2}$};
				\draw [line width=1pt] (w1.90) to (eta.210);
				\draw [line width=1pt] (w2.90) to (eta.330);
				\draw [line width=1pt] (eta.90) to (wf.270);
			\end{tikzpicture}
		\end{matrix}
	\end{align*}
	The first step uses $\eta_{W_1,W_2}\circ\mu^{(1)}=\eta_{W_1,W_2}\circ\mu^{(2)}$
	and the third is the unit property of $\mu_{W_2}$.
\end{proof}

\subsection{The braided tensor category \texorpdfstring{$\repzA$}{Rep0 A}}\label{subsec:rep0A}

We have shown that $\repA$ is a monoidal category, but it is not braided. However, it was shown in \cite{Pa} that $\repA$ has a braided tensor subcategory $\rep^0 A$ consisting of ``local modules.'' In this subsection, we define $\repzA$ and construct its braiding isomorphisms.

For any objects $W_1$, $W_2$ of $\sC$, we define the \textit{monodromy isomorphism} to be
\begin{equation*}
 \cM_{W_1,W_2}=\sR_{W_2,W_1}\circ\sR_{W_1,W_2}: W_1\boxtimes W_2\rightarrow W_1\boxtimes W_2.
\end{equation*}
These isomorphisms thus define a natural isomorphism $\cM$ from $\boxtimes$ to itself.
\begin{defi}
 The category $\mathrm{Rep}^0\,A$ is the full subcategory of $\repA$ consisting of objects $(W, \mu_W)$ which satisfy
 \begin{equation*}
  \mu_W\circ\cM_{A,W}=\mu_W: A\boxtimes W\rightarrow W.
 \end{equation*}
\end{defi}
\begin{theo}\label{thm:rep0}
The category $\mathrm{Rep}^0\,A$ is an $\mathbb{F}$-linear additive braided monoidal supercategory with tensor product $\boxtimes_A$, unit object $(A,\mu)$, left unit isomorphisms $l^A$, right unit isomorphisms $r^A$, associativity isomorphisms $\mathcal{A}^A$, and (even) braiding isomorphisms $\cR^A$ characterized by the commutativity of
 \begin{equation}\label{rep0Abraidingdef}
  \xymatrixcolsep{4pc}
  \xymatrix{
  W_1\boxtimes W_2 \ar[d]_{\eta_{W_1,W_2}} \ar[r]^{\sR_{W_1,W_2}} & W_2\boxtimes W_1 \ar[d]^{\eta_{W_2,W_1}} \\
  W_1\boxtimes_A W_2 \ar[r]^{\cR^A_{W_1,W_2}} & W_2\boxtimes_A W_1\\
  }
 \end{equation}
for objects $W_1$ and $W_2$ of $\mathrm{Rep}^0\,A$.
\end{theo}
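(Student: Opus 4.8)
The plan is to treat $\repzA$ as a full subcategory of the monoidal supercategory $\repA$ from Theorem \ref{thm:repAmonoidal} and verify, in turn, that it is an additive subcategory, that it contains $(A,\mu)$ and is closed under $\boxtimes_A$ (so that the monoidal structure restricts), and finally that it admits a braiding $\cR^A$ with the stated properties. The additive part is routine: the zero object is local, and for a biproduct $\bigoplus W_i$ of local objects with $A$-action $\mu_{\bigoplus W_i}=\sum q_i\circ\mu_{W_i}\circ(1_A\boxtimes p_i)$ as in the proof of Theorem \ref{thm:repAabelian}, one combines the naturality of the monodromy $\cM$ with $\mu_{W_i}\circ\cM_{A,W_i}=\mu_{W_i}$ to get $\mu_{\bigoplus W_i}\circ\cM_{A,\bigoplus W_i}=\mu_{\bigoplus W_i}$; here one first notes that in $\sC$ the two sign factors coming from the two braidings in $\cM_{W_1,W_2}=\sR_{W_2,W_1}\circ\sR_{W_1,W_2}$ cancel, so $\cM$ is natural with respect to all (also odd) morphisms. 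The unit object is local because $A$ is supercommutative: $\mu\circ\cM_{A,A}=\mu\circ\sR_{A,A}\circ\sR_{A,A}=\mu\circ\sR_{A,A}=\mu$.

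The central step is closure under $\boxtimes_A$. Given $W_1,W_2\in\repzA$, since $1_A\boxtimes\eta_{W_1,W_2}$ is an epimorphism (right exactness of $A\boxtimes\cdot$) it suffices to show $\mu_{W_1\boxtimes_A W_2}\circ\cM_{A,W_1\boxtimes_A W_2}\circ(1_A\boxtimes\eta_{W_1,W_2})=\mu_{W_1\boxtimes_A W_2}\circ(1_A\boxtimes\eta_{W_1,W_2})$, and by naturality of $\cM$ together with the defining relation \eqref{repAtensdef} of $\mu_{W_1\boxtimes_A W_2}$ this reduces to the purely $\sC$-theoretic identity $\eta_{W_1,W_2}\circ\mu^{(1)}\circ\cM_{A,W_1\boxtimes W_2}=\eta_{W_1,W_2}\circ\mu^{(1)}$ (here $\mu^{(1)},\mu^{(2)}$ are the two actions on $W_1\boxtimes W_2$ of Definition \ref{def:mu1mu2}). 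I would prove this by expanding $\cM_{A,W_1\boxtimes W_2}=\sR_{W_1\boxtimes W_2,A}\circ\sR_{A,W_1\boxtimes W_2}$ with the two hexagon axioms for $\sR$ into a composite --- modulo associativity isomorphisms, which are harmless by the pentagon axiom --- in which the ``$A$-monodromy along $W_1$'' is absorbed by locality of $W_1$ through $\mu_{W_1}$ in $\mu^{(1)}$, while the remaining ``$A$-monodromy along $W_2$'' is moved, using Remark \ref{rem:KOmu2} and the relation $\eta_{W_1,W_2}\circ(\mu^{(1)}-\mu^{(2)})=0$, into a form absorbed by locality of $W_2$ through $\mu_{W_2}$; a braid-diagram computation in the style of the proofs of Lemma \ref{muiassoc} and Propositions \ref{repAleftunit}, \ref{prop:rightisodef} makes this precise. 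Once closure holds, the unit isomorphisms $l^A$, $r^A$ and the associativity isomorphisms $\cA^A$ restrict to $\repzA$, and the triangle and pentagon axioms are inherited from Theorem \ref{thm:repAmonoidal}; thus $\repzA$ is a monoidal supercategory.

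For the braiding I claim that, for $W_1,W_2\in\repzA$, the composite $\eta_{W_2,W_1}\circ\sR_{W_1,W_2}\colon W_1\boxtimes W_2\to W_2\boxtimes_A W_1$ is a categorical $\repA$-intertwining operator of type $\binom{W_2\boxtimes_A W_1}{W_1\,W_2}$; granting this, Proposition \ref{prop:repAuniv} produces the unique $\repA$-morphism $\cR^A_{W_1,W_2}$ with $\cR^A_{W_1,W_2}\circ\eta_{W_1,W_2}=\eta_{W_2,W_1}\circ\sR_{W_1,W_2}$, i.e. making \eqref{rep0Abraidingdef} commute, and its evenness follows from evenness of $\sR$ and of the cokernel morphisms together with surjectivity of $\eta_{W_1,W_2}$. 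To check the three defining equalities of Definition \ref{intwop}: that $(\eta_{W_2,W_1}\circ\sR_{W_1,W_2})\circ\mu^{(1)}=\mu_{W_2\boxtimes_A W_1}\circ(1_A\boxtimes(\eta_{W_2,W_1}\circ\sR_{W_1,W_2}))$ follows from naturality of $\sR$ for even morphisms, a hexagon axiom, Remark \ref{rem:KOmu2}, and the intertwining property of $\eta_{W_2,W_1}$, and requires no locality; the remaining equality with $(\eta_{W_2,W_1}\circ\sR_{W_1,W_2})\circ\mu^{(2)}$ is once more a monodromy-absorption argument using locality of $W_1$ and $W_2$ (and $\eta_{W_2,W_1}\circ(\mu^{(1)}-\mu^{(2)})=0$), parallel to the one used for closure. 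Naturality of $\cR^A$ then follows from naturality of $\sR$ in $\sC$ and surjectivity of the relevant cokernel maps, exactly as naturality of $\cA^A$ was obtained, and the two hexagon axioms for $\cR^A$ follow from the hexagon axioms for $\sR$, the defining diagrams of $\cA^A$ and $\cR^A$, and surjectivity of the morphisms $\eta_{\bullet,\bullet}$, $1_W\boxtimes\eta_{\bullet,\bullet}$, etc.\ appearing in those diagrams --- just as the pentagon axiom for $\cA^A$ was deduced in the proof of Theorem \ref{thm:repAmonoidal}.

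I expect the main obstacle to be the two ``monodromy versus locality'' computations --- closure of $\repzA$ under $\boxtimes_A$, and the second intertwining equality for $\eta_{W_2,W_1}\circ\sR_{W_1,W_2}$ --- where one has to follow carefully how the hexagon-expanded monodromy $\cM_{A,W_1\boxtimes W_2}$ threads through the two module actions, invoking locality of $W_1$ and of $W_2$ and the relation $\eta_{W_1,W_2}\circ(\mu^{(1)}-\mu^{(2)})=0$. In the super setting one must additionally keep track of the sign factors built into $\sR$ on $\sC$, although these turn out to be harmless here because all structure morphisms of $\sC$ and all the cokernel maps $\eta_{\bullet,\bullet}$ are even.
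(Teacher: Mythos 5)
Your plan reproduces the paper's proof nearly step for step: you reduce closure of $\repzA$ under $\boxtimes_A$ to the monodromy identity $\eta_{W_1,W_2}\circ\mu^{(1)}\circ\cM_{A,W_1\boxtimes W_2}=\eta_{W_1,W_2}\circ\mu^{(2)}$ (equivalent to the paper's \eqref{Rep0Atensprod}, since $\eta_{W_1,W_2}\circ(\mu^{(1)}-\mu^{(2)})=0$), and your ``monodromy-absorption'' computation using the hexagon axioms, locality of $W_1$, locality of $W_2$, and Remark \ref{rem:KOmu2} is exactly what the paper does. A mild repackaging on your side is that, by checking $\eta_{W_2,W_1}\circ\sR_{W_1,W_2}$ is a categorical $\repA$-intertwining operator, Proposition \ref{prop:repAuniv} gives you existence of $\cR^A_{W_1,W_2}$ and the fact that it is an $\repA$-morphism in one stroke, whereas the paper first invokes the cokernel universal property to produce a $\sC$-morphism and then separately checks it is an $\repA$-morphism; the underlying computations coincide.

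The one substantive omission is that you never verify that $\cR^A_{W_1,W_2}$ is \emph{invertible}, which the theorem requires (``braiding isomorphisms''). Note that $\cR^A_{W_2,W_1}$ is not the inverse: by \eqref{rep0Abraidingdef} and surjectivity of $\eta_{W_1,W_2}$, $\cR^A_{W_2,W_1}\circ\cR^A_{W_1,W_2}$ equals the monodromy $\cM^A_{W_1,W_2}$, not the identity. So a separate, parallel argument with $\sR^{-1}$ in place of $\sR$ is needed: show (again via hexagon axioms and locality of $W_1$) that $\eta_{W_1,W_2}\circ\sR_{W_1,W_2}^{-1}\circ\mu^{(1)}=\eta_{W_1,W_2}\circ\sR_{W_1,W_2}^{-1}\circ\mu^{(2)}$, so that the cokernel universal property yields a morphism $(\cR^A_{W_1,W_2})^{-1}\colon W_2\boxtimes_A W_1\to W_1\boxtimes_A W_2$, and then deduce from the two characterizing diagrams and surjectivity of $\eta_{W_1,W_2}$, $\eta_{W_2,W_1}$ that the two composites are identities. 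With that added, your outline is complete and coincides with the paper's.
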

\begin{proof}
To show that $\repzA$ is $\mathbb{F}$-linear additive, note first that the zero object of $\repA$ is in $\repzA$ since $\mu_0\circ\cM_{A,0}$ and $\mu_0$ are both the unique morphism $A\boxtimes 0\rightarrow 0$. Second, since $\repzA$ is a full subcategory of the $\mathbb{F}$-linear supercategory $\repA$, morphism sets have $\mathbb{F}$-superspace structure for which composition (and tensor product) of morphisms is bilinear. Third, if $\lbrace W_i\rbrace$ is a finite set of objects in $\repzA$, then the $\repA$-biproduct $(\bigoplus W_i, \lbrace p_i\rbrace, \lbrace q_i\rbrace)$ is also in $\repzA$ because
\begin{align*}
 \mu_{\bigoplus W_i}\circ\cM_{A,\bigoplus W_i}&= \sum q_i\circ\mu_{W_i}\circ(1_A\boxtimes p_i)\circ\cM_{A,\bigoplus W_i} =\sum q_i\circ\mu_{W_i}\circ\cM_{A,W_i}\circ(1_A\boxtimes p_i)\nonumber\\
 &=\sum q_i\circ\mu_{W_i}\circ(1_A\boxtimes p_i)=\mu_{\bigoplus W_i}.
\end{align*}

To show that $\repzA$ is braided monoidal, note first that $(A,\mu)$ is an object of $\mathrm{Rep}^0\,A$ by the commutativity of $\mu$. Then we need to show that $\boxtimes_A$ is closed on $\mathrm{Rep}^0\,A$ and that \eqref{rep0Abraidingdef} defines an even $\mathrm{Rep}^0\,A$-isomorphism which satisfies the hexagon axiom and is natural in the sense of \eqref{braidingsupernatural}.
 
 First we show that if $(W_1,\mu_{W_1})$ and $(W_2, \mu_{W_2})$ are objects of $\repzA$, then so is $(W_1\boxtimes_A W_2,\mu_{W_1\boxtimes_A W_2})$. Consider the commutative diagram
 \begin{equation*}
  \xymatrixcolsep{5pc}
  \xymatrix{
  A\boxtimes(W_1\boxtimes W_2) \ar[r]^{\cM_{A, W_1\boxtimes W_2}} \ar[d]_{1_A\boxtimes\eta_{W_1,W_2}} & A\boxtimes(W_1\boxtimes W_2) \ar[r]^(.54){\mu^{(1)}\,or\,\mu^{(2)}} \ar[d]^{1_A\boxtimes\eta_{W_1,W_2}} & W_1\boxtimes W_2 \ar[d]^{\eta_{W_1,W_2}} \\
  A\boxtimes(W_1\boxtimes_A W_2) \ar[r]^{\cM_{A, W_1\boxtimes_A W_2}} & A\boxtimes(W_1\boxtimes_A W_2) \ar[r]^(.54){\mu_{W_1\boxtimes_A W_2}} & W_1\boxtimes_A W_2\\
  }
 \end{equation*}
Since $1_A\boxtimes\eta_{W_1,W_2}$ is an epimorphism and since $\mu_{W_1\boxtimes_AW_2}\circ (1_A\boxtimes\eta_{W_1,W_2})=\eta_{W_1,W_2}\circ\mu^{(2)}$, to show that
\begin{equation*}
 \mu_{W_1\boxtimes_A W_2}\circ\cM_{A,W_1\boxtimes_A W_2}=\mu_{W_1\boxtimes_A W_2},
\end{equation*}
it is enough to show that 
\begin{equation*}\label{Rep0Atensprod}
 \eta_{W_1,W_2}\circ\mu^{(1)}\circ\cM_{A,W_1\boxtimes W_2}=\eta_{W_1,W_2}\circ\mu^{(2)}.
\end{equation*}
The proof is guided by the diagrams:
	\begin{align*}
	\begin{matrix}
		\begin{tikzpicture}[scale=0.9, out=up, in=down, line width=0.5pt]
			\node at (0,-0.3) {$\scriptstyle{A}$};
			\node at (1.4,-0.3) {$\scriptstyle{W_1}$};
			\node at (2.1,-0.3) {$\scriptstyle{W_2}$};
			\node at (1.5,5.5) {$\scriptstyle{W_1\boxtimes_A W_2}$};
			\node (m) at (0.5,3.2) [draw,minimum width=25pt,minimum height=10pt,thick, fill=white] {$\scriptstyle{\mu_{W_1}}$};
			\node (e) at (1.5,4.6) [draw,minimum width=25pt,minimum height=10pt,fill=white] {$\scriptstyle{\eta_{W_1,W_2}}$};
			\draw [line width=1pt] (2,0) to (1,1.5);
			\draw [line width=1pt] (1.5,0) to (0,1.5);
			\draw [white, line width=3pt, double=black,double distance=0.5pt] (0,0) to (2,1.2);
			\draw (2,1.2) to (m.210);
			\draw [white, line width=3pt, double=black, double distance=1pt] (0,1.5) to (m.330);
			\draw [white, line width=3pt, double=black, double distance=1pt] (1,1.5) to (2,3) to (e.330);				
			\draw [line width=1pt] (m.90) to (e.210);
			\draw [line width=1pt] (e.90) to (1.5,5.2);
		\end{tikzpicture}
	\end{matrix}
	=
	\begin{matrix}
		\begin{tikzpicture}[scale=0.9, out=up, in=down, line width=0.5pt]
			\node at (0,-0.3) {$\scriptstyle{A}$};
			\node at (1.4,-0.3) {$\scriptstyle{W_1}$};
			\node at (2.1,-0.3) {$\scriptstyle{W_2}$};
			\node at (1.5,5.5) {$\scriptstyle{W_1\boxtimes_A W_2}$};
			\node (m) at (0.5,3.2) [draw,minimum width=25pt,minimum height=10pt,thick, fill=white] {$\scriptstyle{\mu_{W_1}}$};
			\node (e) at (1.5,4.6) [draw,minimum width=25pt,minimum height=10pt] {$\scriptstyle{\eta_{W_1,W_2}}$};
			\draw [line width=1pt] (2,0) to (1,1.5);
			\draw [line width=1pt] (1.5,0) to (0,1.5);
			\draw [white, line width=3pt, double=black,double distance=0.5pt] (0,0) to (2,1.2);
			\draw [white, line width=3pt, double=black, double distance=1pt] (0,1.5) to (m.330);
			\draw [white, line width=3pt, double=black, double distance=0.5pt] (2,1.2) to (m.210);
			\draw [white, line width=3pt, double=black, double distance=1pt] (1,1.5) to (2,3) to (e.330);				
			\draw [line width=1pt] (m.90) to (e.210);
			\draw [line width=1pt] (e.90) to (1.5,5.2);
		\end{tikzpicture}
	\end{matrix}
	=
	\begin{matrix}
		\begin{tikzpicture}[scale=0.9, out=up, in=down, line width=0.5pt]
			\node at (0,-0.3) {$\scriptstyle{A}$};
			\node at (1.4,-0.3) {$\scriptstyle{W_1}$};
			\node at (2.1,-0.3) {$\scriptstyle{W_2}$};
			\node at (0.5,5.5) {$\scriptstyle{W_1\boxtimes_A W_2}$};
			\node (m) at (1.5,3.3) [draw,minimum width=25pt,minimum height=10pt,thick, fill=white] {$\scriptstyle{\mu_{W_2}}$};
			\node (e) at (0.5,4.6) [draw,minimum width=25pt,minimum height=10pt] {$\scriptstyle{\eta_{W_1,W_2}}$};
			\draw [line width=1pt] (1.5,0) 	to (0,1.2) to (0.7,2.3) to (0,3.5) to (e.210);
			\draw [white, line width=3pt, double=black, double distance=0.5pt] (2,1.2) to (0,2.3) to (m.210);
			\draw [line width=1pt] (2,0) to (1,1.2);
			\draw[line width=1pt] (m.north) 	to (e.330);
			\draw[line width=1pt] (e.north)	to (0.5,5.2);
			\draw[white, line width=3pt, double=black, double distance=0.5pt] (0,0) to (2,1.2);
			\draw[white, line width=3pt, double=black, double distance=1pt]	 (1,1.2) to (2,2.3) to (m.330);
		\end{tikzpicture}
	\end{matrix}
	=
	\begin{matrix}
		\begin{tikzpicture}[scale=0.9, out=up, in=down, line width=0.5pt]
			\node at (0,-0.3) {$\scriptstyle{A}$};
			\node at (1.4,-0.3) {$\scriptstyle{W_1}$};
			\node at (2.1,-0.3) {$\scriptstyle{W_2}$};
			\node at (0.5,5.5) {$\scriptstyle{W_1\boxtimes_A W_2}$};
			\node (m) at (1.5,3.3) [draw,minimum width=25pt,minimum height=10pt] {$\scriptstyle{\mu_{W_2}}$};
			\node (e) at (0.5,4.6) [draw,minimum width=25pt,minimum height=10pt,line width=1pt, fill=white] {$\scriptstyle{\eta_{W_1,W_2}}$};
			\draw (2,1.2) to (1,2.3) to (m.210);
			\draw [line width=1pt] (2,0) to (1,1.2);
			\draw [line width=1pt] (1.5,0) 	to (0,1.2) to (e.210);
			\draw[line width=1pt] (m.north) 	to (e.330);
			\draw[line width=1pt] (e.north)	to (0.5,5.2);
			\draw[white, line width=3pt, double=black, double distance=1pt]	 (1,1.2) to (2,2.3) to (m.330);
			\draw[white, line width=3pt, double=black, double distance=0.5pt] (0,0) to (2,1.2);
		\end{tikzpicture}
	\end{matrix}
	=
	\begin{matrix}
		\begin{tikzpicture}[scale=0.9, out=up, in=down, line width=0.5pt]
			\node at (0,-0.3) {$\scriptstyle{A}$};
			\node at (0.9,-0.3) {$\scriptstyle{W_1}$};
			\node at (1.6,-0.3) {$\scriptstyle{W_2}$};
			\node at (0.5,5.5) {$\scriptstyle{W_1\boxtimes_A W_2}$};
			\node (m) at (1.2,3.3) [draw,minimum width=25pt,minimum height=10pt] {$\scriptstyle{\mu_{W_2}}$};
			\node (e) at (0.5,4.6) [draw,minimum width=25pt,minimum height=10pt,line width=1pt, fill=white] {$\scriptstyle{\eta_{W_1,W_2}}$};
			\draw [line width=1pt] (1,0) to (0,2.5) to (e.210);
			\draw [line width=1pt] (1.5,0) to (m.330);
			\draw [line width=1pt] (m.north) to (e.330);
			\draw [line width=1pt] (e.north) to (0.5,5.2);
			\draw [white, line width=3pt, double=black, double distance=0.5pt] (0,0) to (m.210);
		\end{tikzpicture}
	\end{matrix}
\end{align*}
The first and last equalities use the hexagon axiom together with the assumption that $W_1$ and $W_2$ are objects of $\repzA$, while the second uses $\eta_{W_1,W_2}\circ\mu^{(1)}=\eta_{W_1,W_2}\circ\mu^{(2)}$.
This proves that $(W_1\boxtimes_A W_2, \mu_{W_1\boxtimes_A W_2})$ is an object of $\repzA$, so $\boxtimes_A$ is a bifunctor on $\repzA$.

To get braiding isomorphisms in $\repzA$, the universal property of the cokernel $(W_1\boxtimes_A W_2, \eta_{W_1,W_2})$ induces a unique $\sC$-morphism $\cR^A_{W_1,W_2}: W_1\boxtimes_A W_2\rightarrow W_2\boxtimes_A W_1$ such that \eqref{rep0Abraidingdef} commutes, provided
\begin{equation*}
 \eta_{W_2,W_1}\circ\sR_{W_1,W_2}\circ\mu^{(2)}=\eta_{W_2,W_1}\circ\sR_{W_1,W_2}\circ\mu^{(1)}.
\end{equation*}
We use the following diagrams to establish this:
	\begin{align*}
	\begin{matrix}
		\begin{tikzpicture}[scale=0.8, out=up, in=down, line width=0.5pt]
			\node at (0,-0.3) {$\scriptstyle{A}$};
			\node at (1,-0.3) {$\scriptstyle{W_1}$};
			\node at (2,-0.3) {$\scriptstyle{W_2}$};
			\node at (1,6.5) {$\scriptstyle{W_1\boxtimes_A W_2}$};
			\node (m) at (1.5,2.5) [draw,minimum width=25pt,minimum height=10pt] {$\scriptstyle{\mu_{W_2}}$};
			\node (e) at (1,5.5) [draw,minimum width=25pt,minimum height=10pt] {$\scriptstyle{\eta_{W_2,W_1}}$};
			\draw[line width=1pt] (2,0) to (m.330);
			\draw[line width=1pt] (m.90) to (e.210);
			\draw[line width=1pt] (e.90) to (1,6.2); 
			\draw[line width=1pt] (1,0) to (0.2,2);
			\draw[white, line width=3pt, double=black, double distance=0.5pt] (0,0) to (m.210);
			\draw[white, line width=3pt, double=black, double distance=1pt] (0.2,2) to (e.330);
		\end{tikzpicture}
	\end{matrix}
	=
	\begin{matrix}
		\begin{tikzpicture}[scale=0.8, out=up, in=down, line width=0.5pt]
			\node at (0,-0.3) {$\scriptstyle{A}$};
			\node at (1,-0.3) {$\scriptstyle{W_1}$};
			\node at (1.7,-0.3) {$\scriptstyle{W_2}$};
			\node at (1,6.5) {$\scriptstyle{W_1\boxtimes_A W_2}$};
			\node (m) at (0.5,4) [draw,minimum width=25pt,minimum height=10pt] {$\scriptstyle{\mu_{W_2}}$};
			\node (e) at (1,5.5) [draw,minimum width=25pt,minimum height=10pt] {$\scriptstyle{\eta_{W_2,W_1}}$};
			\draw[line width=1pt] (1.7,0) to (1.7,2) to (m.330);
			\draw[line width=1pt] (1,0) to  (0,1.7); 
			\draw[white, line width=3pt, double=black, double distance=0.5pt] (0,0) to (1,2) to (m.210);
			\draw[white, line width=3pt, double=black, double  distance=1pt] (0,1.7) to (1.7,3.6) to (e.330);
			\draw[line width=1pt] (m.north)to (e.210);
			\draw[line width=1pt] (e.north) to (1,6.2);
		\end{tikzpicture}
	\end{matrix}
	=
	\begin{matrix}
		\begin{tikzpicture}[scale=0.8, out=up, in=down, line width=0.5pt]
			\node at (0,-0.3) {$\scriptstyle{A}$};
			\node at (1,-0.3) {$\scriptstyle{W_1}$};
			\node at (2,-0.3) {$\scriptstyle{W_2}$};
			\node at (1,6.5) {$\scriptstyle{W_1\boxtimes_A W_2}$};
			\node (m) at (1.5,4) [draw,minimum width=25pt,minimum height=10pt] {$\scriptstyle{\mu_{W_1}}$};
			\node (e) at (1,5.5) [draw,minimum width=25pt,minimum height=10pt] {$\scriptstyle{\eta_{W_2,W_1}}$};
			\draw [line width=1pt] (2,0) to (2,1.5) to (0.5,3.5) to (e.210);
			\draw [line width=1pt] (1,0) to (0,1.5);
			\draw [white,line width=3pt,double=black,double  distance=0.5pt] (0,0) to (1,1.5) to (0,2.7) to (m.210);
			\draw [white,line width=3pt,double=black,double distance=1pt] (0,1.5) to (m.330);
			\draw[line width=3pt,white,double=black,double distance=1pt]  (m.north) to (e.330);
			\draw[line width=1pt] (e.north) to (1,6.2);
		\end{tikzpicture}
	\end{matrix}
	=
	\begin{matrix}
		\begin{tikzpicture}[scale=0.8, out=up, in=down, line width=0.5pt]
			\node at (0,-0.3) {$\scriptstyle{A}$};
			\node at (1,-0.3) {$\scriptstyle{W_1}$};
			\node at (2,-0.3) {$\scriptstyle{W_2}$};
			\node at (1,6.5) {$\scriptstyle{W_1\boxtimes_A W_2}$};
			\node (m) at (0.5,3) [draw,minimum width=25pt,minimum height=10pt] {$\scriptstyle{\mu_{W_1}}$};
			\node (e) at (1,5.5) [draw,minimum width=25pt,minimum height=10pt] {$\scriptstyle{\eta_{W_2,W_1}}$};
			\draw [line width=1pt] (1,0) to (0,1.5);
			\draw [white,line width=3pt,double=black,double distance=0.5pt] (0,0) to (1,1.5) to (m.210);
			\draw [white,line width=3pt,double=black,double distance=1pt] (0,1.5) to (m.330);
			\draw [line width=1pt] (2,0) to (2,2.5) to (e.210);
			\draw[line width=3pt,white,double=black,double distance=1pt] (m.north) to (e.330);
			\draw[line width=1pt] (e.north) to (1,6.2);
		\end{tikzpicture}
	\end{matrix}
	=
	\begin{matrix}
		\begin{tikzpicture}[scale=0.8, out=up, in=down, line width=0.5pt]
			\node at (0,-0.3) {$\scriptstyle{A}$};
			\node at (1,-0.3) {$\scriptstyle{W_1}$};
			\node at (2,-0.3) {$\scriptstyle{W_2}$};
			\node at (1,6.5) {$\scriptstyle{W_1\boxtimes_A W_2}$};
			\node (m) at (0.5,3) [draw,minimum width=25pt,minimum height=10pt] {$\scriptstyle{\mu_{W_1}}$};
			\node (e) at (1,5.5) [draw,minimum width=25pt,minimum height=10pt] {$\scriptstyle{\eta_{W_2,W_1}}$};
			\draw [line width=1pt] (2,0) to (2,2.5) to (e.210);
			\draw[] (0,0) to (m.210);
			\draw[line width=1pt] (1,0) to (m.330);
			\draw[line width=3pt,white,double=black,double distance=1pt] (m.north) to (e.330);
			\draw[line width=1pt] (e.north) to (1,6.2);
		\end{tikzpicture}
	\end{matrix}
\end{align*}
These steps use naturality of braiding applied to the even morphisms $\mu_{W_2}$ and $\mu_{W_1}$, the hexagon axiom twice, the relation $\eta_{W_2,W_1}\circ\mu^{(1)}=\eta_{W_2,W_1}\circ\mu^{(2)}$, and finally the assumption that $W_1$ is an object of $\repzA$. This shows  $\cR^A_{W_1,W_2}$ exists, and $\cR^A_{W_1,W_2}$ is even because  $\sR_{W_1,W_2}$, $\eta_{W_2,W_1}$ are even and because $\eta_{W_1,W_2}$ is even and surjective.

To show that $\cR^A_{W_1,W_2}$ is an isomorphism in $\sC$, we will use the universal property of the cokernel $(W_2\boxtimes_A W_1, \eta_{W_2,W_1})$ to show that there is a unique $\sC$-morphism $(\cR^A_{W_1,W_2})^{-1}: W_2\boxtimes_A W_1\rightarrow W_1\boxtimes_A W_2$ such that
\begin{equation}\label{rep0Abraidinv}
 \xymatrixcolsep{4pc}
 \xymatrix{
 W_2\boxtimes W_1 \ar[r]^{\sR_{W_1,W_2}^{-1}} \ar[d]_{\eta_{W_2,W_1}} & W_1\boxtimes W_2 \ar[d]^{\eta_{W_1,W_2}} \\
 W_2\boxtimes_A W_1 \ar[r]^{(\cR^A_{W_1,W_2})^{-1}} & W_1\boxtimes_A W_2 \\
 }
\end{equation}
commutes. The commutativity of \eqref{rep0Abraidingdef} and \eqref{rep0Abraidinv} together with the surjectivity of $\eta_{W_1,W_2}$ and $\eta_{W_2,W_1}$ will then imply that $\cR^A_{W_1,W_2}$ and $(\cR^A_{W_1,W_2})^{-1}$ are indeed inverses of each other. To show that $(\cR^A_{W_1,W_2})^{-1}$ exists, it suffices to show
\begin{equation}
 \eta_{W_1,W_2}\circ\sR_{W_1,W_2}^{-1}\circ\mu^{(2)}=\eta_{W_1,W_2}\circ\sR^{-1}_{W_1,W_2}\circ\mu^{(1)}.
\end{equation}
We use the following sequence of diagrams to prove this, first using \eqref{eqn:mu2alt} to rewrite $\mu^{(2)}$:
	\begin{align*}
	\begin{matrix}
		\begin{tikzpicture}[scale=0.8, out=up, in=down, line width=0.5pt]
			\node at (0,-0.3) {$\scriptstyle{A}$};
			\node at (1,-0.3) {$\scriptstyle{W_2}$};
			\node at (2,-0.3) {$\scriptstyle{W_1}$};				
			\node at (1, 5.3) {$\scriptstyle{W_1\boxtimes_A W_2}$};
			\node (m) at (0.5,1.5) [draw,minimum width=25pt,minimum height=10pt] {$\scriptstyle{\mu_{W_1}}$};
			\node (e) at (1,4) [draw,minimum width=25pt,minimum height=10pt] {$\scriptstyle{\eta_{W_1,W_2}}$};
			\draw (0,0) to (m.210);
			\draw [line width=1pt] (e.90) to (1,5);
			\draw [line width=1pt] (1,0) to (2,1.4) to (0.5,2.8) to (e.330);
			\draw [white, line width=3pt, double=black, double  distance=1pt] (m.90) to (1.5,2.8) to (e.210);
			\draw [white, line width=3pt, double=black, double  distance=1pt] (2,0) to (m.330);
		\end{tikzpicture}
	\end{matrix}
	=
	\begin{matrix}
		\begin{tikzpicture}[scale=0.8, out=up, in=down, line width=0.5pt]	
			\node at (0,-0.3) {$\scriptstyle{A}$};
			\node at (1,-0.3) {$\scriptstyle{W_2}$};
			\node at (2,-0.3) {$\scriptstyle{W_1}$};				
			\node at (1, 5.3) {$\scriptstyle{W_1\boxtimes_A W_2}$};
			\node (m) at (0.5,1.5) [draw,minimum width=25pt,minimum height=10pt] {$\scriptstyle{\mu_{W_1}}$};
			\node (e) at (1,4) [draw,minimum width=25pt,minimum height=10pt] {$\scriptstyle{\eta_{W_1,W_2}}$};
			\draw (0,0) to (m.210);
			\draw[line width=1pt] (m.90) to (e.210);
			\draw [line width=1pt] (e.90) to (1,5);
			\draw [line width=1pt] (1,0) to (1.7,1.3) to (e.330);
			\draw [white, line width=3pt, double=black, double distance=1pt] (2,0) to (m.330);
		\end{tikzpicture}
	\end{matrix}
	=
	\begin{matrix}
		\begin{tikzpicture}[scale=0.8, out=up, in=down, line width=0.5pt]
			\node at (0,-0.3) {$\scriptstyle{A}$};
			\node at (1,-0.3) {$\scriptstyle{W_2}$};
			\node at (2,-0.3) {$\scriptstyle{W_1}$};				
			\node at (1, 5.3) {$\scriptstyle{W_1\boxtimes_A W_2}$};
			\node (m) at (0.5,2.7) [draw,minimum width=25pt,minimum height=10pt] {$\scriptstyle{\mu_{W_1}}$};
			\node (e) at (1,4) [draw,minimum width=25pt,minimum height=10pt] {$\scriptstyle{\eta_{W_1,W_2}}$};
			\draw (0,0) to (1,1.3);
			\draw[line width=1pt] (1,0) to (1.7,1.7) to (e.330);
			\draw[white, line width=3pt, double=black, double distance=1pt] (2,0) to (0,1.3) to (m.330);
			\draw[white, line width=3pt, double=black, double distance=0.5pt] (1,1.3) to (m.210);				
			\draw [line width=1pt] (m.90) to (e.210);
			\draw [line width=1pt] (e.90) to (1,5);
		\end{tikzpicture}
	\end{matrix}
	=
	\begin{matrix}
		\begin{tikzpicture}[scale=0.8, out=up, in=down, line width=0.5pt]
			\node at (0,-0.3) {$\scriptstyle{A}$};
			\node at (1,-0.3) {$\scriptstyle{W_2}$};
			\node at (2,-0.3) {$\scriptstyle{W_1}$};				
			\node at (0.5, 5.3) {$\scriptstyle{W_1\boxtimes_A W_2}$};
			\node (m) at (1.5,2.8) [draw,minimum width=25pt,minimum height=10pt] {$\scriptstyle{\mu_{W_2}}$};
			\node (e) at (0.5,4) [draw,minimum width=25pt,minimum height=10pt] {$\scriptstyle{\eta_{W_1,W_2}}$};
			\draw (0,0) to (1,1);
			\draw [line width=1pt] (1,0) to (2,1.5) to (m.330);
			\draw [white, line width=3pt, double=black, double distance=1pt] (0,1) to (1,1.7) to (0,2.8) to (e.210);
			\draw [white, line width=3pt, double=black, double distance=0.5pt] (1,1) to (0,1.7) to (m.210);
			\draw [white, line width=3pt, double=black, double distance=1pt] (2,0) to (0,1);
			\draw [line width=1pt] (m.90) to (e.330);
			\draw [line width=1pt] (e.90) to (0.5,5);
		\end{tikzpicture}
	\end{matrix}
	=
	\begin{matrix}
		\begin{tikzpicture}[scale=0.8, out=up, in=down, line width=0.5pt]
			\node at (0,-0.3) {$\scriptstyle{A}$};
			\node at (1,-0.3) {$\scriptstyle{W_2}$};
			\node at (2,-0.3) {$\scriptstyle{W_1}$};				
			\node at (0.5, 5.3) {$\scriptstyle{W_1\boxtimes_A W_2}$};
			\node (m) at (1.5,2.8) [draw,minimum width=25pt,minimum height=10pt] {$\scriptstyle{\mu_{W_2}}$};
			\node (e) at (0.5,4) [draw,minimum width=25pt,minimum height=10pt] {$\scriptstyle{\eta_{W_1,W_2}}$};
			\draw (0,0) to (m.210);
			\draw [line width=1pt] (1,0) to (m.330);
			\draw [line width=1pt] (m.90) to (e.330);
			\draw[white, line width=3pt, double=black, double  distance=1pt] (2,0) to (0,2) to (e.210);
			\draw [line width=1pt] (e.90) to (0.5,5);
		\end{tikzpicture}
	\end{matrix}
	=
	\begin{matrix}
		\begin{tikzpicture}[scale=0.8, out=up, in=down, line width=0.5pt]
			\node at (0,-0.3) {$\scriptstyle{A}$};
			\node at (1,-0.3) {$\scriptstyle{W_2}$};
			\node at (2,-0.3) {$\scriptstyle{W_1}$};				
			\node at (1, 5.3) {$\scriptstyle{W_1\boxtimes_A W_2}$};
			\node (m) at (0.5,1.5) [draw,minimum width=25pt,minimum height=10pt] {$\scriptstyle{\mu_{W_2}}$};
			\node (e) at (1,4) [draw,minimum width=25pt,minimum height=10pt] {$\scriptstyle{\eta_{W_1,W_2}}$};
			\draw (0,0) to (m.210);
			\draw [line width=1pt] (1,0) to (m.330);
			\draw [line width=1pt] (m.90) to (e.330);
			\draw[white, line width=3pt, double=black, double distance=1pt] (2,0) to (2,1) to (e.210);
			\draw [line width=1pt] (e.90) to (1,5);
		\end{tikzpicture}
	\end{matrix}
\end{align*}
The second equality uses the assumption that $W_1$ is in $\repzA$ along with the hexagon axiom, the third uses $\eta_{W_1,W_2}\circ\mu^{(1)}=\eta_{W_1,W_2}\circ\mu^{(2)}$, and finally we apply naturality of braiding to the even morphism $\mu_{W_2}$.

Now to show that $\cR^A_{W_1,W_2}$ is an isomorphism in $\repA$, and thus also in $\repzA$, it remains to show that $\cR^A_{W_1,W_2}$ is a morphism in $\repA$, that is,
\begin{equation}
 \mu_{W_2\boxtimes_A W_1}\circ(1_A\boxtimes\cR^A_{W_1,W_2})=\cR^A_{W_1,W_2}\circ\mu_{W_1\boxtimes_A W_2}.
\end{equation}
This is a consequence of  the surjectivity of $1_A\boxtimes\eta_{W_1,W_2}$ and the following diagrams:
\begin{align*}
	\begin{matrix}
		\begin{tikzpicture}[scale=1, line width=0.5pt, out=up, in=down]
			\node (a) at (0,0) {$\scriptstyle{A}$};
			\node (w1) at (0.6,0) {$\scriptstyle{W_1}$};
			\node (w2) at (1.2, 0) {$\scriptstyle{W_2}$};
			\node (eta) at (0.9,1.2) [draw] {$\scriptstyle{\eta_{W_1,W_2}}$};
			\node (r) at (0.9,2.4) [draw] {$\scriptstyle{\cR^A_{W_1,W_2}}$};
			\node (mu) at (0.5,3.6) [draw] {$\scriptstyle{\mu_{W_2\boxtimes_A W_1}}$};
			\node (wf) at (0.5,4.5) {$\scriptstyle{W_2\boxtimes_A W_1}$};
			\draw (a.90) to (mu.210);
			\draw [line width=1pt] (w1.90) to (eta.220);
			\draw [line width=1pt] (w2.90) to (eta.320);
			\draw [line width=1pt] (eta.90) to (r.270);
			\draw [line width=1pt] (r.90) to (mu.330);
			\draw [line width=1pt] (mu.90) to (wf.270);
		\end{tikzpicture}
	\end{matrix}
	=
	\begin{matrix}
		\begin{tikzpicture}[scale=1, line width=0.5pt, out=up, in=down]
			\node (a) at (0,0) {$\scriptstyle{A}$};
			\node (w1) at (0.6,0) {$\scriptstyle{W_1}$};
			\node (w2) at (1.2, 0) {$\scriptstyle{W_2}$};
			\node (eta) at (0.9,2.4) [draw] {$\scriptstyle{\eta_{W_2,W_1}}$};
			\node (mu) at (0.5,3.6) [draw] {$\scriptstyle{\mu_{W_2\boxtimes_A W_1}}$};
			\node (wf) at (0.5,4.5) {$\scriptstyle{W_2\boxtimes_A W_1}$};
			\draw (a.90) to (mu.210);
			\draw [line width=1pt] (w2.90) to (eta.220);
			\draw [white, line width=3pt, double=black, double distance=1pt] (w1.90) to (eta.320);
			\draw [line width=1pt] (eta.90) to (mu.330);
			\draw [line width=1pt] (mu.90) to (wf.270);
		\end{tikzpicture}
	\end{matrix}
	=
	\begin{matrix}
		\begin{tikzpicture}[scale=1, line width=0.5pt, out=up, in=down]
			\node (a) at (0,0) {$\scriptstyle{A}$};
			\node (w1) at (0.6,0) {$\scriptstyle{W_1}$};
			\node (w2) at (1.2, 0) {$\scriptstyle{W_2}$};
			\node (mu) at (1,2.3) [draw] {$\scriptstyle{\mu_{W_1}}$};
			\node (eta) at (0.6,3.6) [draw] {$\scriptstyle{\eta_{W_2, W_1}}$};
			\node (wf) at (0.6,4.5) {$\scriptstyle{W_2\boxtimes_A W_1}$};
			\draw [line width=1pt] (w2.90) to (0.2,2) to (eta.210);
			\draw [white, line width=3pt, double=black, double distance=0.5pt] (a.90) to (mu.220);
			\draw [white, line width=3pt, double=black, double distance=1pt] (w1.90) to (mu.320);
			\draw [line width=1pt] (mu.90) to (eta.330);
			\draw [line width=1pt] (eta.90) to (wf.270);
		\end{tikzpicture}
	\end{matrix}
	=
	\begin{matrix}
		\begin{tikzpicture}[scale=1, line width=0.5pt, out=up, in=down]
			\node (a) at (0,0) {$\scriptstyle{A}$};
			\node (w1) at (0.6,0) {$\scriptstyle{W_1}$};
			\node (w2) at (1.2, 0) {$\scriptstyle{W_2}$};
			\node (mu) at (0.3,1.2) [draw] {$\scriptstyle{\mu_{W_1}}$};
			\node (eta) at (0.6,3.6) [draw] {$\scriptstyle{\eta_{W_2, W_1}}$};
			\node (wf) at (0.6,4.5) {$\scriptstyle{W_2\boxtimes_A W_1}$};
			\draw [line width=1pt] (w2.90) to (1.2,1.2) to (eta.210);
			\draw [white, line width=3pt, double=black, double distance=0.5pt] (a.90) to (mu.220);
			\draw [white, line width=3pt, double=black, double distance=1pt] (w1.90) to (mu.320);
			\draw [white, line width=3pt, double=black, double distance=1pt] (mu.90) to (eta.330);
			\draw [line width=1pt] (eta.90) to (wf.270);
		\end{tikzpicture}
	\end{matrix}
	=
	\begin{matrix}
		\begin{tikzpicture}[scale=1, line width=0.5pt, out=up, in=down]
			\node (a) at (0,0) {$\scriptstyle{A}$};
			\node (w1) at (0.6,0) {$\scriptstyle{W_1}$};
			\node (w2) at (1.2, 0) {$\scriptstyle{W_2}$};
			\node (mu) at (0.3,1.2) [draw] {$\scriptstyle{\mu_{W_1}}$};
			\node (eta) at (0.6,2.4) [draw] {$\scriptstyle{\eta_{W_1, W_2}}$};
			\node (r) at (0.6,3.6) [draw] {$\scriptstyle{\cR^A_{W_1, W_2}}$};
			\node (wf) at (0.6,4.5) {$\scriptstyle{W_2\boxtimes_A W_1}$};
			\draw (a.90) to (mu.220);
			\draw [line width=1pt] (w1.90) to (mu.320);
			\draw [line width=1pt] (mu.90) to (eta.220);
			\draw [line width=1pt] (w2.90) to (eta.330);
			\draw [line width=1pt] (eta.90) to (r.270);
			\draw [line width=1pt] (r.90) to (wf.270);
		\end{tikzpicture}
	\end{matrix}
	=
	\begin{matrix}
		\begin{tikzpicture}[scale=1, line width=0.5pt, out=up, in=down]
			\node (a) at (0,0) {$\scriptstyle{A}$};
			\node (w1) at (0.7,0) {$\scriptstyle{W_1}$};
			\node (w2) at (1.4, 0) {$\scriptstyle{W_2}$};
			\node (mu) at (0.6,2.4) [draw] {$\scriptstyle{\mu_{W_1\boxtimes_A W_2}}$};
			\node (eta) at (1,1.2) [draw] {$\scriptstyle{\eta_{W_1, W_2}}$};
			\node (r) at (0.6,3.6) [draw] {$\scriptstyle{\cR^A_{W_1, W_2}}$};
			\node (wf) at (0.6,4.5) {$\scriptstyle{W_2\boxtimes_A W_1}$};
			\draw (a.90) to (mu.210);
			\draw [line width=1pt] (w1.90) to (eta.220);
			\draw [line width=1pt] (eta.90) to (mu.330);
			\draw [line width=1pt] (w2.90) to (eta.330);
			\draw [line width=1pt] (mu.90) to (r.270);
			\draw [line width=1pt] (r.90) to (wf.270);
		\end{tikzpicture}
	\end{matrix}
\end{align*}
The first and fourth equalities  use the definition of $\cR^A_{W_1,W_2}$ from \eqref{rep0Abraidingdef}. The second and last equalities use \eqref{repAtensdef} with $i=2$ and $i=1$, respectively, and the third uses naturality of braiding applied to the even morphism $\mu_{W_1}$.

Now we show that the braiding isomorphism $\cR^A$ is natural, that is,
\begin{equation*}
 \cR^A_{\widetilde{W}_1,\widetilde{W}_2}\circ(f_1\boxtimes_A f_2)=(-1)^{\vert f_1\vert \vert f_2\vert}(f_2\boxtimes_A f_1)\circ\cR^A_{W_1,W_2}
\end{equation*}
for parity-homogeneous morphisms $f_1: W_1\rightarrow\widetilde{W}_1$ and $f_2: W_2\rightarrow\widetilde{W}_2$ in $\repzA$. This follows from the commutative diagrams
\begin{equation*}
\xymatrixcolsep{4pc}
\xymatrix{
W_1\boxtimes W_2 \ar[r]^{f_1\boxtimes f_2} \ar[d]_{\eta_{W_1,W_2}} & \widetilde{W}_1\boxtimes\widetilde{W}_2 \ar[r]^{\sR_{\widetilde{W}_1,\widetilde{W}_2}} \ar[d]^{\eta_{\widetilde{W}_1,\widetilde{W}_2}} & \widetilde{W}_2\boxtimes\widetilde{W}_1 \ar[d]^{\eta_{\widetilde{W}_2,\widetilde{W}_1}} \\
W_1\boxtimes_A W_2 \ar[r]^{f_1\boxtimes_A f_2} & \widetilde{W}_1\boxtimes_A\widetilde{W}_2 \ar[r]^{\cR^A_{\widetilde{W}_1,\widetilde{W}_2}} & \widetilde{W}_2\boxtimes_A\widetilde{W}_1 \\
}
\end{equation*}
and
\begin{equation*}
\xymatrixcolsep{4pc}
\xymatrix{
W_1\boxtimes W_2 \ar[r]^{\sR_{W_1,W_2}} \ar[d]_{\eta_{W_1,W_2}} & W_2\boxtimes W_1 \ar[r]^{f_2\boxtimes f_1} \ar[d]_{\eta_{W_2,W_1}} & \widetilde{W}_2\boxtimes\widetilde{W}_1 \ar[d]^{\eta_{\widetilde{W}_2,\widetilde{W}_1}} \\
W_1\boxtimes_A W_2 \ar[r]^{\cR^A_{W_1,W_2}} & W_2\boxtimes_A W_1 \ar[r]^{f_2\boxtimes_A f_1} & \widetilde{W}_2\boxtimes_A\widetilde{W}_1 \\
}
\end{equation*}
together with the surjectivity of $\eta_{W_1,W_2}$ and the naturality \eqref{braidingsupernatural} of the braiding isomorphisms in $\sC$.
%
%

Finally, we prove the hexagon axiom for $\cR^A$, that is, we show that
\begin{equation}\label{repzAhex}
\xymatrixcolsep{5pc}
\xymatrix{
 W_1\boxtimes_A (W_2\boxtimes_A W_3) \ar[r]^{1_{W_1}\boxtimes_A\cR^A_{W_2,W_3}} \ar[d]_{\cA^A_{W_1,W_2,W_3}} & W_1\boxtimes_A (W_3\boxtimes_A W_2) \ar[r]^{\cA^A_{W_1,W_3,W_2}} & (W_1\boxtimes_A W_3)\boxtimes_A W_2 \ar[d]^{\cR^A_{W_1,W_3}\boxtimes_A 1_{W_2}} \\
 (W_1\boxtimes_A W_2)\boxtimes_A W_3 \ar[r]^{\cR^A_{W_1\boxtimes_A W_2, W_3}} & W_3\boxtimes_A (W_1\boxtimes_A W_2) \ar[r]^{\cA^A_{W_3,W_1,W_2}} & (W_3\boxtimes_A W_1)\boxtimes_A W_2 \\
 }
\end{equation}
commutes, as does the same diagram with $\cR^A$ replaced by $(\cR^A)^{-1}$. The commutativity of \eqref{repzAhex} follows from the commutative diagrams
\begin{equation*}
 \xymatrixcolsep{4pc}
 \xymatrix{
 W_1\boxtimes(W_2\boxtimes W_3) \ar[r]^{1_{W_1}\boxtimes\eta_{W_2,W_3}} \ar[d]_{1_{W_1}\boxtimes\sR_{W_2,W_3}} & W_1\boxtimes(W_2\boxtimes_A W_3) \ar[r]^{\eta_{W_1, W_2\boxtimes_A W_3}} \ar[d]^{1_{W_1}\boxtimes\cR^A_{W_2,W_3}} & W_1\boxtimes_A (W_2\boxtimes_A W_3) \ar[d]^{1_{W_1}\boxtimes_A\cR^A_{W_2,W_3}} \\
 W_1\boxtimes(W_3\boxtimes W_2) \ar[r]^{1_{W_1}\boxtimes\eta_{W_3,W_2}} \ar[d]_{\sA_{W_1,W_3,W_2}} & W_1\boxtimes (W_3\boxtimes_A W_2) \ar[r]^{\eta_{W_1,W_3\boxtimes_A W_2}} & W_1\boxtimes_A (W_3\boxtimes_A W_2) \ar[d]^{\cA^A_{W_1,W_3,W_2}} \\
 (W_1\boxtimes W_3)\boxtimes W_2 \ar[r]^{\eta_{W_1,W_3}\boxtimes 1_{W_2}} \ar[d]_{\sR_{W_1,W_3}\boxtimes 1_{W_2}} & (W_1\boxtimes_A W_3)\boxtimes W_2 \ar[r]^{\eta_{W_1\boxtimes_A W_3,W_2}} \ar[d]^{\cR^A_{W_1,W_3}\boxtimes 1_{W_2}} & (W_1\boxtimes_A W_3)\boxtimes_A W_2 \ar[d]^{\cR^A_{W_1,W_3}\boxtimes_A 1_{W_2}} \\
 (W_3\boxtimes W_1)\boxtimes W_2 \ar[r]^{\eta_{W_3,W_1}\boxtimes 1_{W_2}} & (W_3\boxtimes_A W_1)\boxtimes W_2 \ar[r]^{\eta_{W_3\boxtimes_A W_1, W_2}} & (W_3\boxtimes_A W_1)\boxtimes_A W_2
 }
\end{equation*}
and
\begin{equation*}
 \xymatrixcolsep{4pc}
 \xymatrix{
 W_1\boxtimes(W_2\boxtimes W_3) \ar[r]^{1_{W_1}\boxtimes\eta_{W_2,W_3}} \ar[d]_{\sA_{W_1,W_2,W_3}} & W_1\boxtimes(W_2\boxtimes_A W_3) \ar[r]^{\eta_{W_1,W_2\boxtimes_A W_3}} & W_1\boxtimes_A (W_2\boxtimes_A W_3) \ar[d]^{\cA^A_{W_1,W_2,W_3}} \\
 (W_1\boxtimes W_2)\boxtimes W_3 \ar[r]^{\eta_{W_1,W_2}\boxtimes 1_{W_3}} \ar[d]_{\sR_{W_1\boxtimes W_2, W_3}} & (W_1\boxtimes_A W_2)\boxtimes W_3 \ar[r]^{\eta_{W_1\boxtimes_A W_2, W_3}} \ar[d]^{\sR_{W_1\boxtimes_A W_2, W_3}} & (W_1\boxtimes_A W_2)\boxtimes_A W_3 \ar[d]^{\cR^A_{W_1\boxtimes_A W_2, W_3}} \\
 W_3\boxtimes(W_1\boxtimes W_2) \ar[r]^{1_{W_3}\boxtimes\eta_{W_1,W_2}} \ar[d]_{\sA_{W_3,W_1,W_2}} & W_3\boxtimes(W_1\boxtimes_A W_2) \ar[r]^{\eta_{W_3,W_1\boxtimes_A W_2}} & W_3\boxtimes_A (W_1\boxtimes_A W_2) \ar[d]^{\sA^A_{W_3,W_1,W_2}} \\
 (W_3\boxtimes W_1)\boxtimes W_2 \ar[r]^{\eta_{W_3,W_1}\boxtimes 1_{W_2}} & (W_3\boxtimes_A W_1)\boxtimes W_2 \ar[r]^{\eta_{W_3\boxtimes_A W_1, W_2}} & (W_3\boxtimes_A W_1)\boxtimes_A W_2 \\
 },
\end{equation*}
the surjectivity of $\eta_{W_1,W_2\boxtimes_A W_3}\circ(1_{W_1}\boxtimes\eta_{W_2,W_3})$, and the hexagon axiom in $\sC$. The commutativity of \eqref{repzAhex} with $\cR^A$ replaced by $(\cR^A)^{-1}$ follows similarly.
\end{proof}

We cannot say that $\repzA$ is a braided tensor supercategory because $\repzA$ may not be abelian. However:
\begin{propo}
 Every parity-homogeneous morphism in $\repzA$ has a kernel and cokernel. Moreover, every parity-homogeneous monomorphism in $\repzA$ is a kernel, and every parity-homogeneous epimorphism is a cokernel.
\end{propo}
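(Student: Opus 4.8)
The plan is to reduce everything to Proposition~\ref{SuperRepAadditive} by checking that the kernels and cokernels it produces for parity-homogeneous morphisms in $\repA$ already lie in the full subcategory $\repzA$. So first I would fix a parity-homogeneous morphism $f\colon W_1\to W_2$ in $\repzA$. By Proposition~\ref{SuperRepAadditive} it has a kernel $(K,\mu_K)$ with even structure map $k\colon K\to W_1$ and a cokernel $(C,\mu_C)$ with even structure map $c\colon W_2\to C$, both in $\repA$. The only real work is to verify that $(K,\mu_K)$ and $(C,\mu_C)$ satisfy the locality condition $\mu_\bullet\circ\cM_{A,\bullet}=\mu_\bullet$.

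For the kernel I would start from the identity $k\circ\mu_K=\mu_{W_1}\circ(1_A\boxtimes k)$ (see \eqref{muKdef}), use naturality of the monodromy isomorphism $\cM$ with respect to the even morphisms $1_A$ and $k$ to commute $\cM_{A,K}$ past $1_A\boxtimes k$, and then apply $\mu_{W_1}\circ\cM_{A,W_1}=\mu_{W_1}$; since $k$ is monic this yields $\mu_K\circ\cM_{A,K}=\mu_K$. The cokernel case is dual: from $\mu_C\circ(1_A\boxtimes c)=c\circ\mu_{W_2}$ (see \eqref{muCdef}), the same naturality together with the locality of $W_2$ gives $\mu_C\circ\cM_{A,C}\circ(1_A\boxtimes c)=\mu_C\circ(1_A\boxtimes c)$, whence $\mu_C\circ\cM_{A,C}=\mu_C$ because $1_A\boxtimes c$ is epic by right exactness of $A\boxtimes\cdot$. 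Since $\repzA$ is full in $\repA$, the triples $(K,\mu_K,k)$ and $(C,\mu_C,c)$ then automatically satisfy the universal properties of kernel and cokernel of $f$ in $\repzA$.

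Granting this, the remaining two assertions follow by rerunning the relevant parts of the proof of Theorem~\ref{thm:repAabelian}. For a parity-homogeneous monomorphism $f$ in $\repzA$, the map $k$ above is now a $\repzA$-morphism with $f\circ k=0$, so $k=0$ and hence $f$ is monic in $\cC$; then, exactly as in Theorem~\ref{thm:repAabelian}, $f$ is the kernel of $c\colon W_2\to C$ in $\repA$, and since $C\in\repzA$ this already exhibits $f$ as a kernel in $\repzA$. The epimorphism case is entirely parallel. I do not expect a genuine obstacle: the only point requiring a little care is that the naturality of $\cM$ must be invoked in its supercategory form, but this is harmless here since all morphisms appearing ($1_A$, $k$, $c$) are even, so the sign factors from the two applications of \eqref{braidingsupernatural} cancel.
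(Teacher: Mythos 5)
Your proposal is correct and follows essentially the same approach as the paper: lift the kernel and cokernel from Proposition~\ref{SuperRepAadditive}, verify the locality condition using naturality of $\cM$ together with the monic $k$ (respectively epic $1_A\boxtimes c$), and then rerun the monomorphism/epimorphism arguments from Theorem~\ref{thm:repAabelian} one level up. The only cosmetic difference is that you conclude directly that a $\repzA$-monomorphism is monic in $\cC$ via $k=0$, whereas the paper phrases this as passing first to $\repA$; the substance is identical.
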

\begin{proof}
We showed in Proposition \ref{SuperRepAadditive} that every parity-homogeneous morphism $f: W_1\rightarrow W_2$ in $\repzA$ has a kernel $(K,\mu_K, k)$ and cokernel $(C,\mu_C, c)$ in $\repA$. So we just need to show that $K$ and $C$ are objects of $\repzA$. By the definition of $\mu_K$, we have
\begin{equation*}
 k\circ\mu_K\circ\cM_{A,K}=\mu_{W_1}\circ(1_A\boxtimes k)\circ\cM_{A,K}=\mu_{W_1}\circ\cM_{A,W_1}\circ(1_A\boxtimes k)=\mu_{W_1}\circ(1_A\boxtimes k)=k\circ\mu_K;
\end{equation*}
since $k$ is a monomorphism, $K$ is an object of $\repzA$. We get $\mu_C\circ\cM_{A,C}=\mu_C$ similarly, so $C$ is an object of $\repzA$ as well.

To show that every parity-homogeneous monomorphism in $\repzA$ is a kernel and every parity-homogeneous epimorphism in $\repzA$ is a cokernel, the same arguments as in the proof of Theorem \ref{thm:repAabelian}, with $\cC$ replaced by $\repA$ and with $\repA$ replaced by $\repzA$, show that a monomorphism in $\repzA$ is also monic in $\repA$ and an epimorphism in $\repzA$ is also epic in $\repA$. Then similar to Theorem \ref{thm:repAabelian}, any monomorphism in $\repzA$ is the kernel of its cokernel (which is an object of $\repzA$), and any epimorphism in $\repzA$ is the cokernel of its kernel (which is an object of $\repzA$).
\end{proof}

\begin{rema}
 The preceding theorem and proposition show that the subcategory $\underline{\repzA}$ of $\repzA$ which has the same objects but only even morphisms is an $\mathbb{F}$-linear braided tensor category. In general, if $A$ is a commutative associative algebra in a tensor category $\cC$, $\repzA$ is a braided tensor category.
\end{rema}

\subsection{Induction and restriction functors}
\label{subsec:inductionfunctor}

The bulk of this subsection contains a construction (following \cite{KO}, but generalized to the superalgebra setting) of an induction functor $\cF: \mathcal{SC}\rightarrow\repA$. Since induction is critical for the applications later in this paper, we give a full proof (in diagram form) that it is a tensor functor. We also briefly discuss the corresponding right adjoint restriction functor from $\repA$ to $\sC$.

Suppose $W$ is an object of $\sC$; we define $\cF(W)=A\boxtimes W$ and
\begin{equation*}
\mu_{\cF(W)}: A\boxtimes (A\boxtimes W)\xrightarrow{\sA_{A,A,W}} (A\boxtimes A)\boxtimes W\xrightarrow{\mu\boxtimes 1_W} A\boxtimes W.
\end{equation*}
\begin{propo}
	The pair $(\cF(W),\mu_{\cF(W)})$ is an object of $\repA$ and $$\cF: W\mapsto (\cF(W),\mu_{\cF(W)});\,\,f\mapsto 1_A\boxtimes f$$ defines a functor from $\sC$ to $\repA$.
\end{propo}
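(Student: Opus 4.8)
The plan is to verify the two defining properties of a $\repA$-object (associativity and unit) for $(\cF(W),\mu_{\cF(W)})$, then check functoriality. This amounts to bookkeeping with associativity isomorphisms, the defining properties of the algebra $A$, and the pentagon/triangle axioms in the braided tensor supercategory $\sC$.

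First I would check the unit property: the composition $\mu_{\cF(W)}\circ(\iota_A\boxtimes 1_{A\boxtimes W})\circ\sleft_{A\boxtimes W}^{-1}$ must equal $1_{A\boxtimes W}$. Unwinding the definition of $\mu_{\cF(W)}$, this is $(\mu\boxtimes 1_W)\circ\sA_{A,A,W}\circ(\iota_A\boxtimes 1_{A\boxtimes W})\circ\sleft_{A\boxtimes W}^{-1}$. Using naturality of the associativity isomorphisms to move $\iota_A\boxtimes 1_{A\boxtimes W}$ past $\sA_{A,A,W}$, and the triangle axiom to identify $\sA_{\sunit,A,W}$ together with $\sleft$ appropriately, this reduces to $(\mu\circ(\iota_A\boxtimes 1_A)\circ\sleft_A^{-1})\boxtimes 1_W$, which is $1_A\boxtimes 1_W=1_{A\boxtimes W}$ by the unit axiom for $A$ (Definition \ref{defi:SA}(3)). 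For associativity, I would show $\mu_{\cF(W)}\circ(1_A\boxtimes\mu_{\cF(W)})=\mu_{\cF(W)}\circ(\mu\boxtimes 1_{A\boxtimes W})\circ\sA_{A,A,A\boxtimes W}$. Substituting the definition of $\mu_{\cF(W)}$ on both sides turns the left side into a composite involving $1_A\boxtimes((\mu\boxtimes 1_W)\circ\sA_{A,A,W})$ followed by $(\mu\boxtimes 1_W)\circ\sA_{A,A,W}$; repeated use of naturality of $\sA$ and the pentagon axiom lets one rebracket everything so that the two occurrences of $\mu$ collapse via the associativity axiom for $A$ (Definition \ref{defi:SA}(1)), yielding the right side. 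This is exactly the standard computation showing $\cF(W)$ is the free $A$-module on $W$; the evenness of $\mu_{\cF(W)}$ is immediate since it is a composite of even morphisms ($\mu$, $\sA$, identities).

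Next I would check that $f\mapsto 1_A\boxtimes f$ sends a morphism $f:W_1\to W_2$ in $\sC$ to a morphism in $\repA$, i.e.\ $(1_A\boxtimes f)\circ\mu_{\cF(W_1)}=\mu_{\cF(W_2)}\circ(1_A\boxtimes(1_A\boxtimes f))$. Expanding $\mu_{\cF(W_i)}$ and applying naturality of $\sA$ to pass $1_A\boxtimes(1_A\boxtimes f)$ through $\sA_{A,A,W_1}$, both sides become $(\mu\boxtimes f)\circ\sA_{A,A,W_1}$ (using the exchange condition \eqref{exchange} and evenness of $1_A$ and $\mu$). Finally, functoriality itself — $\cF(1_W)=1_{\cF(W)}$ and $\cF(g\circ f)=\cF(g)\circ\cF(f)$ — follows from $1_A\boxtimes 1_W=1_{A\boxtimes W}$ and the exchange condition $(1_A\boxtimes g)\circ(1_A\boxtimes f)=(1_A\circ 1_A)\boxtimes(g\circ f)=1_A\boxtimes(g\circ f)$, with no sign since $1_A$ is even.

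The main obstacle, such as it is, is purely organizational: keeping the bracketings of iterated tensor products straight in the associativity computation, where one must insert the pentagon axiom at the right place and be careful that the ``unimportant'' associativity reshufflings really are forced by pentagon. There is no genuine difficulty with signs here because every structure morphism in play ($\mu$, $\iota_A$, $\sA$, $\sleft$, and all identities) is even, so the supercategory refinements of naturality reduce to ordinary naturality throughout; the proof is essentially identical to the non-super case treated in \cite{KO}.
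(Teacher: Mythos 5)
Your proposal is correct and follows essentially the same route as the paper: expand $\mu_{\cF(W)}$, use naturality of $\sA$ together with the pentagon axiom to rebracket, invoke the associativity and unit axioms for $A$, and observe that all structure morphisms are even so no parity signs arise. The one small imprecision is that the identity $\sA_{\sunit,A,W}\circ\sleft_{A\boxtimes W}^{-1}=\sleft_A^{-1}\boxtimes 1_W$ is not literally the triangle axiom (which concerns $\sA_{W_1,\sunit,W_2}$ and the unit in the \emph{middle} slot) but a standard consequence of the triangle and pentagon axioms, as the paper notes by citing Lemma XI.2.2 in Kassel; this does not affect the validity of the argument.
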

\begin{proof}
	First, $\mu_{\cF({W})}$ is even because $\sA_{A,A,W}$ and $\mu\boxtimes 1_W$ are even. The proof that $\mu_{\cF(W)}$ is associative is shown in the following diagrams; note that the pentagon axiom and naturality of associativity isomorphisms are needed in the detailed proof:
	\begin{align*}
		\begin{matrix}
			\begin{tikzpicture}[scale=1, out=up, in=down, line width=0.5pt]
				\node (a1) at (0,0) {$\scriptstyle{A}$};
				\node (a2) at (0.5,0) {$\scriptstyle{A}$};
				\node (w) at (1.1,0) {$\scriptstyle{\cF(W)}$};
				\node (mu1) at (0.75, 1) [draw] {$\scriptstyle{\mu_{\cF(W)}}$};
				\node (mu2) at (0.25, 2) [draw] {$\scriptstyle{\mu_{\cF(W)}}$};		
				\node (wf) at (0.25, 3) {$\scriptstyle{\cF(W)}$};
				\draw (a1.90) to (mu2.220);
				\draw (a2.90) to (mu1.220);
				\draw [line width=1pt] (w.90) to (mu1.320);
				\draw [line width=1pt] (mu1.90) to (mu2.320);
				\draw [line width=1pt] (mu2.90) to (wf.270);
			\end{tikzpicture}
		\end{matrix}
		=
		\begin{matrix}
			\begin{tikzpicture}[out=up,in=down,line width=0.5pt]
				\node (a1) at (0,0) {$\scriptstyle{A}$};
				\node (a2) at (0.5,0) {$\scriptstyle{A}$};
				\node (a3) at (1,0) {$\scriptstyle{A}$};
				\node (w) at (1.5,0) {$\scriptstyle{W}$};
				\node (mu1) at (0.75, 1) [draw, minimum width=20pt] {$\scriptstyle{\mu}$};
				\node (mu2) at (0.25, 2) [draw, minimum width=20pt] {$\scriptstyle{\mu}$};		
				\node (af) at (0.25, 3) {$\scriptstyle{A}$};
				\node (wf) at (1.5, 3) {$\scriptstyle{W}$};
				\draw (a1.90) to (mu2.220);
				\draw (a2.90) to (mu1.220);
				\draw (a3.90) to (mu1.320);
				\draw [line width=1pt] (w.90) to (wf.270);
				\draw  (mu1.90) to (mu2.320);
				\draw (mu2.90) to (af.270);
			\end{tikzpicture}
		\end{matrix}
		=
		\begin{matrix}
			\begin{tikzpicture}[out=up,in=down,line width=0.5pt]
				\node (a1) at (0,0) {$\scriptstyle{A}$};
				\node (a2) at (0.5,0) {$\scriptstyle{A}$};
				\node (a3) at (1,0) {$\scriptstyle{A}$};
				\node (w) at (1.5,0) {$\scriptstyle{W}$};
				\node (mu1) at (0.25, 1) [draw, minimum width=20pt] {$\scriptstyle{\mu}$};
				\node (mu2) at (0.5, 2) [draw, minimum width=20pt] {$\scriptstyle{\mu}$};		
				\node (af) at (0.5, 3) {$\scriptstyle{A}$};
				\node (wf) at (1.5, 3) {$\scriptstyle{W}$};
				\draw (a1.90) to (mu1.220);
				\draw (a2.90) to (mu1.320);
				\draw (a3.90) to (mu2.320);
				\draw [line width=1pt] (w.90) to (wf.270);
				\draw  (mu1.90) to (mu2.220);
				\draw (mu2.90) to (af.270);
			\end{tikzpicture}
		\end{matrix}
		=
		\begin{matrix}
			\begin{tikzpicture}[scale=1, out=up, in=down, line width=0.5pt]
				\node (a1) at (0,0) {$\scriptstyle{A}$};
				\node (a2) at (0.5,0) {$\scriptstyle{A}$};
				\node (w) at (1.1,0) {$\scriptstyle{\cF(W)}$};
				\node (mu1) at (0.25, 1) [draw,minimum width=20pt] {$\scriptstyle{\mu}$};
				\node (mu2) at (0.6, 2) [draw] {$\scriptstyle{\mu_{\cF(W)}}$};		
				\node (wf) at (0.6, 3) {$\scriptstyle{\cF(W)}$};
				\draw (a1.90) to (mu1.220);
				\draw (a2.90) to (mu1.320);
				\draw [line width=1pt] (w.90) to (mu2.320);
				\draw  (mu1.90) to (mu2.220);
				\draw [line width=1pt] (mu2.90) to (wf.270);
			\end{tikzpicture}
		\end{matrix}
	\end{align*}
The unit property of $\mu_{\cF(W)}$ follows from properties of the associativity and unit isomorphisms in $\sC$ along with the unit property of $\mu$:
\begin{align*}
	\begin{matrix}
		\begin{tikzpicture}[line width=0.5pt, out=up, in=down]
			\node (w) at (0.5,0) {$\scriptstyle{\cF(W)}$};
			\node (i) at (0,1.2) {$\scriptstyle{\bullet}$};
			\node (l) at (0.5,0.6) {};
			\node (mu) at (0.25,2) [draw] {$\scriptstyle{\mu_{\cF(W)}}$};
			\node (wf) at (0.25,3) {$\scriptstyle{{\cF(W)}}$};
			\draw [line width=1pt] (w.90) to (mu.320);
			\draw [dashed] (l.180) to[out=west] (i.270);
			\draw (i.90) to (mu.220);
			\draw [line width=1pt] (mu.90) to (wf.270);
		\end{tikzpicture}
	\end{matrix}
	=
	\begin{matrix}
		\begin{tikzpicture}[line width=0.5pt, out=up, in=down]
			\node (a) at (0.5,0) {$\scriptstyle{A}$};
			\node (i) at (0,1.2) {$\scriptstyle{\bullet}$};
			\node (l) at (0.5,0.6) {};
			\node (w) at (1,0) {$\scriptstyle{W}$};
			\node (mu) at (0.25,2) [draw,minimum width=20pt] {$\scriptstyle{\mu}$};
			\node (af) at (0.25,3) {$\scriptstyle{A}$};
			\node (wf) at (1,3) {$\scriptstyle{W}$};
			\draw (a.90) to (mu.320);
			\draw [dashed] (l.180) to[out=west] (i.270);
			\draw (i.90) to (mu.220);
			\draw  (mu.90) to (af.270);
			\draw [line width=1pt] (w.90) to (wf.270);
		\end{tikzpicture}
	\end{matrix}
	=
	\begin{matrix}
		\begin{tikzpicture}[line width=0.5pt, out=up, in=down]
			\node (a) at (0,0) {$\scriptstyle{A}$};
			\node (w) at (0.5,0) {$\scriptstyle{W}$};
			\node (af) at (0,3) {$\scriptstyle{A}$};
			\node (wf) at (0.5,3) {$\scriptstyle{W}$};
			\draw (a.90) to (af.270);
			\draw [line width=1pt] (w.90) to (wf.270);
		\end{tikzpicture}
	\end{matrix}
\end{align*}

	To show that $\cF$ defines a functor from $\sC$ to $\repA$, we need to show that if $f: W_1\rightarrow W_2$ is a morphism in $\sC$, then
	\begin{equation*}
	\cF(f)=1_A\boxtimes f: A\boxtimes W_1\rightarrow A\boxtimes W_2
	\end{equation*}
	is a morphism in $\repA$.  This follows from
	\begin{align*}
	\begin{matrix}
		\begin{tikzpicture}[out=up,in=down,line width=0.5pt]
			\node (a) at (0,0) {$\scriptstyle{A}$};
			\node (w) at (1,0) {$\scriptstyle{\cF(W_1)}$};
			\node (f) at (1, 1) [draw] {$\scriptstyle{\cF(f)}$};
			\node (mu) at (0.5, 2) [draw] {$\scriptstyle{\mu_{\cF(W_2)}}$};
			\node (wf) at (0.5, 3) {$\scriptstyle{{\cF(W_2)}}$};
			\draw (a.90) to (mu.210);
			\draw [line width=1pt] (w.90) to (f.270);
			\draw [line width=1pt] (f.90) to (mu.330);
			\draw [line width=1pt] (mu.90) to (wf.270);
		\end{tikzpicture}
	\end{matrix}
	=
	\begin{matrix}
		\begin{tikzpicture}[out=up,in=down,line width=0.5pt]
			\node (a) at (0,0) {$\scriptstyle{A}$};
			\node (a2) at (0.5,0) {$\scriptstyle{A}$};
			\node (w) at (1,0) {$\scriptstyle{W_1}$};
			\node (f) at (1, 1) [draw] {$\scriptstyle{f}$};
			\node (mu) at (0.25, 2) [draw,minimum width=20pt] {$\scriptstyle{\mu}$};
			\node (af) at (0.25, 3)  {$\scriptstyle{A}$};
			\node (wf) at (1, 3) {$\scriptstyle{W_2}$};
			\draw (a.90) to (mu.220);
			\draw (a2.90) to (mu.320);
			\draw [line width=1pt] (w.90) to (f.270);
			\draw [line width=1pt] (f.90) to (wf.270);
			\draw  (mu.90) to (af.270);
		\end{tikzpicture}
	\end{matrix}
	=	
	\begin{matrix}
		\begin{tikzpicture}[out=up,in=down,line width=0.5pt]
			\node (a) at (0,0) {$\scriptstyle{A}$};
			\node (a2) at (0.5,0) {$\scriptstyle{A}$};
			\node (w) at (1,0) {$\scriptstyle{W_1}$};
			\node (f) at (1, 2) [draw] {$\scriptstyle{f}$};
			\node (mu) at (0.25, 1) [draw,minimum width=20pt] {$\scriptstyle{\mu}$};
			\node (af) at (0.25, 3)  {$\scriptstyle{A}$};
			\node (wf) at (1, 3) {$\scriptstyle{W_2}$};
			\draw (a.90) to (mu.220);
			\draw (a2.90) to (mu.320);
			\draw [line width=1pt] (w.90) to (f.270);
			\draw [line width=1pt] (f.90) to (wf.270);
			\draw  (mu.90) to (af.270);
		\end{tikzpicture}
	\end{matrix}
	=
	\begin{matrix}
		\begin{tikzpicture}[out=up,in=down,line width=0.5pt]
			\node (a) at (0,0) {$\scriptstyle{A}$};
			\node (w) at (1,0) {$\scriptstyle{\cF(W_1)}$};
			\node (mu) at (0.5,1) [draw] {$\scriptstyle{\mu_{\cF(W_1)}}$};
			\node (f) at (0.5,2) [draw] {$\scriptstyle{\cF(f)}$};
			\node (wf) at (0.5,3)  {$\scriptstyle{\cF(W_2)}$};
			\draw (a.90) to (mu.210);
			\draw [line width=1pt] (w.90) to (mu.330);
			\draw [line width=1pt] (mu.90) to (f.270);
			\draw [line width=1pt] (f.90) to (wf.270);
		\end{tikzpicture}
	\end{matrix}
	\end{align*} 
	It is clear that with this definition of $\cF(f)$, $\cF$ satisfies the axioms of a functor.
\end{proof}

For applications later in this paper, it is crucial that the induction functor $\cF$ be compatible with the tensor structures on its source and target categories.
The following theorem is essentially Theorem 1.6 in \cite{KO}, but we provide a detailed proof since we work in the generality of superalgebras and make no assumptions on $\sC$ besides right exactness
of the tensor functor.
\begin{theo}[{\cite[Theorem\ 1.6]{KO}}]\label{thm:inductionfucntor}
	The induction functor $\cF$ is a tensor functor.
\end{theo}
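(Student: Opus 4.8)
To prove that $\cF$ is a tensor (monoidal) functor I must exhibit an even isomorphism $\cF(\sunit)\xrightarrow{\sim}(A,\mu)$ in $\repA$ identifying $\cF$ of the unit of $\sC$ with the unit of $\repA$, a natural family of even isomorphisms $J_{W_1,W_2}\colon \cF(W_1)\boxtimes_A\cF(W_2)\xrightarrow{\sim}\cF(W_1\boxtimes W_2)$ in $\repA$, and a verification that these are compatible with the unit and associativity isomorphisms. The unit comparison is easy: $\cF(\sunit)=A\boxtimes\sunit$ carries the $A$-action $(\mu\boxtimes 1_\sunit)\circ\sA_{A,A,\sunit}$, and I take the isomorphism to be $\sright_A$; that $\sright_A$ is a $\repA$-morphism follows from the naturality of $\sright$ applied to $\mu$ together with the standard consequence $\sright_{A\boxtimes A}\circ\sA_{A,A,\sunit}=1_A\boxtimes\sright_A$ of the triangle axiom, and $\sright_A$ is invertible in $\sC$, hence in $\repA$.

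The main step is the construction of $J_{W_1,W_2}$, which I approach via categorical $\repA$-intertwining operators. Following \cite{KO}, I first define an $\sC$-morphism
\[
\eta_0\colon\cF(W_1)\boxtimes\cF(W_2)=(A\boxtimes W_1)\boxtimes(A\boxtimes W_2)\longrightarrow A\boxtimes(W_1\boxtimes W_2)=\cF(W_1\boxtimes W_2)
\]
by using the associativity isomorphisms and the braiding $\sR_{W_1,A}$ (equivalently $\sR_{A,W_1}^{-1}$) to move the second copy of $A$ past $W_1$, bringing the two copies of $A$ adjacent at the front, and then applying $\mu\boxtimes 1_{W_1\boxtimes W_2}$. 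Every constituent is even, so $\eta_0$ is even. I then show $\eta_0$ is a categorical $\repA$-intertwining operator of type $\binom{\cF(W_1\boxtimes W_2)}{\cF(W_1)\,\cF(W_2)}$ in the sense of Definition \ref{intwop}, i.e.\ that the three composites there agree; this is a braid-diagram computation using the hexagon axiom, the associativity and supercommutativity of $\mu$, the naturality of $\sR$ for even morphisms, and the module axiom for $\mu_{\cF(W_i)}$. Proposition \ref{prop:repAuniv} then produces a unique $\repA$-morphism $J_{W_1,W_2}:=f_{\eta_0}$ with $J_{W_1,W_2}\circ\eta_{\cF(W_1),\cF(W_2)}=\eta_0$; since $\eta_0$ is even and $\eta_{\cF(W_1),\cF(W_2)}$ is even and surjective, $J_{W_1,W_2}$ is even.

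To see $J_{W_1,W_2}$ is an isomorphism I produce an explicit inverse $G_{W_1,W_2}\colon\cF(W_1\boxtimes W_2)\to\cF(W_1)\boxtimes_A\cF(W_2)$, namely the composite of $\sA_{A,W_1,W_2}^{-1}$, the insertion $1_{A\boxtimes W_1}\boxtimes\big((\iota_A\boxtimes 1_{W_2})\circ\sleft_{W_2}^{-1}\big)$ of a trivial copy of $A$ into the second slot, and the cokernel map $\eta_{\cF(W_1),\cF(W_2)}$; a short check in the style of the proof that $l^A_W$ is a morphism shows $G_{W_1,W_2}$ lies in $\repA$. Then $J_{W_1,W_2}\circ G_{W_1,W_2}=1_{\cF(W_1\boxtimes W_2)}$ follows directly, because inside $\eta_0$ the inserted $\iota_A$ meets $\mu$ and collapses by the right unit property for $A$ (Remark \ref{rightunitSA}) and the triangle axiom; and $G_{W_1,W_2}\circ J_{W_1,W_2}=1$ follows after precomposing with the epimorphism $\eta_{\cF(W_1),\cF(W_2)}$, which reduces the claim to $G_{W_1,W_2}\circ\eta_0=\eta_{\cF(W_1),\cF(W_2)}$, a braid-diagram identity using the relation $\eta_{\cF(W_1),\cF(W_2)}\circ\mu^{(1)}=\eta_{\cF(W_1),\cF(W_2)}\circ\mu^{(2)}$ of Definition \ref{def:mu1mu2}, the module axioms for $\cF(W_1)$ and $\cF(W_2)$, and the unit properties. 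Naturality of $J$ in both variables is then immediate from the naturality of each constituent of $\eta_0$ together with the surjectivity of $1_A\boxtimes\eta$ and $\eta$.

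Finally I check the two coherence axioms. For compatibility with associativity, the relevant diagram built from the $J$'s, $\cA^A$, and $1_A\boxtimes\sA$ is shown to commute by precomposing with the epimorphism $\eta_{\cF(W_1),\cF(W_2)\boxtimes_A\cF(W_3)}\circ(1_{\cF(W_1)}\boxtimes\eta_{\cF(W_2),\cF(W_3)})$ (a composite of cokernels, hence epi as $\cF(W_1)\boxtimes\cdot$ is right exact), which reduces it to an identity of $\sC$-morphisms following from the pentagon and hexagon axioms in $\sC$, the naturality of $\sA$ and $\sR$, and the associativity of $\mu$. For compatibility with the unit isomorphisms, one reduces via surjectivity of $\eta_{A,W}$ and $\eta_{W,A}$ to $\sC$-identities following from the triangle axiom, the defining relations for $l^A_W$ and $r^A_W$ (Propositions \ref{repAleftunit} and \ref{prop:rightisodef}), and the unit property of $\mu_W$. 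I expect the main obstacle to be precisely the verification that $\eta_0$ is a categorical $\repA$-intertwining operator, and secondarily the associativity-coherence diagram: both are multi-step reassociation arguments in a highly non-strict braided tensor supercategory where one must track the associativity and braiding isomorphisms together with the sign conventions built into $\sR$, and, as elsewhere in this section, organizing them as braid diagrams is what makes them tractable.
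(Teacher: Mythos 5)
Your proposal is correct and follows essentially the same route as the paper, with two cosmetic differences worth noting.

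First, your comparison isomorphism $J_{W_1,W_2}$ is oriented from $\cF(W_1)\boxtimes_A\cF(W_2)$ to $\cF(W_1\boxtimes W_2)$, whereas the paper's $f_{W_1,W_2}$ goes the other way; your $\eta_0$ is precisely the paper's $\widetilde{g}_{W_1,W_2}$ and your $G_{W_1,W_2}$ is precisely the paper's $f_{W_1,W_2}$ (defined in \eqref{eqn:fdef}). This is purely a matter of convention. Second — and this is the genuinely nice observation in your write-up — you obtain $J$ from the universal property of $\boxtimes_A$ by checking that $\eta_0$ is a full categorical $\repA$-intertwining operator (all three composites in Definition \ref{intwop} agreeing), which via Proposition \ref{prop:repAuniv} hands you $J$ \emph{already as a $\repA$-morphism}. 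The paper instead only proves $\widetilde{g}\circ\mu^{(1)}=\widetilde{g}\circ\mu^{(2)}$ (i.e.\ the first two composites agree), producing $g$ merely as an $\sC$-morphism, and then recovers the fact that $g$ is a $\repA$-morphism after the fact, from $g$ being the $\sC$-inverse of the $\repA$-morphism $f$. Your route front-loads a slightly longer braid-diagram check (you also need $\eta_0\circ\mu^{(i)}=\mu_{\cF(W_1\boxtimes W_2)}\circ(1_A\boxtimes\eta_0)$, which follows from associativity of $\mu$) in exchange for not having to verify separately that one of the two comparison maps is a $\repA$-morphism, and it leans more systematically on the $\repA$-intertwining-operator machinery developed in Section \ref{subsec:catintwop}. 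Both are sound; the burden of the argument — the braid-diagram identities and the coherence diagrams reduced to $\sC$-identities via the epimorphisms $\eta$ and $1_A\boxtimes\eta$ — is the same in either packaging.
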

\begin{proof}
To show that $\cF$ is a tensor functor, we need to show the following:
\begin{enumerate}
 \item There is an even isomorphism $\varphi: \cF(\sunit)\rightarrow A$.
 
 \item There is an even natural isomorphism $f: \cF\circ\boxtimes\rightarrow\boxtimes_A\circ(\cF\times\cF)$.
 
 \item The natural isomorphism $f$ and the isomorphism $\varphi$ are compatible with the unit isomorphisms in $\sC$ and $\repA$ in the sense that the diagrams
 \begin{equation}\label{Funitcompat}
  \xymatrixcolsep{4pc}
  \xymatrix{
  \cF(\sunit\boxtimes W) \ar[r]^{f_{\sunit, W}} \ar[d]_{\cF(\sleft_W)} & \cF(\sunit)\boxtimes_A \cF(W) \ar[d]^{\varphi\boxtimes_A 1_{\cF(W)}} \\
  \cF(W) \ar[r]^{(l^A_{\cF(W)})^{-1}} & A\boxtimes_A \cF(W)\\
  }\hspace{1em}\mathrm{and}\hspace{1em}
  \xymatrixcolsep{4pc}
  \xymatrix{
  \cF(W\boxtimes\sunit) \ar[r]^{f_{W, \sunit}} \ar[d]_{\cF(\sright_W)} & \cF(W)\boxtimes_A \cF(\sunit) \ar[d]^{1_{\cF(W)}\boxtimes_A \varphi} \\
  \cF(W) \ar[r]^{(r^A_{\cF(W)})^{-1}} & \cF(W)\boxtimes_A A\\
  }
 \end{equation}
commute for any object $W$ in $\sC$.

\item The natural isomorphism $f$ is compatible with the associativity isomorphisms in $\sC$ and $\repA$ in the sense that the diagram
\begin{equation}\label{Fassoc}
	\xymatrixcolsep{7pc}
	\xymatrix{
	\cF(W_1\boxtimes(W_2\boxtimes W_3)) 
	\ar[r]^{\cF(\sA_{W_1,W_2,W_3})} \ar[d]_{ f_{W_1,W_2\boxtimes W_3}} & \cF((W_1\boxtimes W_2)\boxtimes W_3) \ar[d]^{f_{W_1\boxtimes W_2,W_3}}\\
	\cF(W_1)\boxtimes_A \cF(W_2\boxtimes W_3) \ar[d]_{1_{\cF(W_1)}\boxtimes_A f_{W_2,W_3}} & \cF(W_1\boxtimes W_2)\boxtimes_A \cF(W_3) \ar[d]^{f_{W_1,W_2}\boxtimes_A 1_{\cF(W_3)}}\\
	\cF(W_1)\boxtimes_A (\cF(W_2)\boxtimes_A \cF(W_3)) \ar[r]^{\mathcal{A}^A_{\cF(W_1),\cF(W_2),\cF(W_3)}} & (\cF(W_1)\boxtimes_A \cF(W_2))\boxtimes_A \cF(W_3)\\
	}
	\end{equation}
	commutes for all objects $W_1$, $W_2$, and $W_3$ in $\sC$.
\end{enumerate}

Since $\cF(\sunit)=A\boxtimes\sunit$, we can take the isomorphism $\varphi: \cF(\sunit)\rightarrow A$ to be $\sright_A$, provided this is a morphism in $\repA$. In fact, $\sright_A\circ\mu_{\cF(\sunit)}$ is the composition
\begin{equation*}
 A\boxtimes(A\boxtimes\sunit)\xrightarrow{\sA_{A,A,\sunit}} (A\boxtimes A)\boxtimes\sunit\xrightarrow{\mu\boxtimes 1_\sunit} A\boxtimes\sunit\xrightarrow{\sright_A} A.
\end{equation*}
By the naturality of the right unit, this composition equals
\begin{equation*}
  A\boxtimes(A\boxtimes\sunit)\xrightarrow{\sA_{A,A,\sunit}} (A\boxtimes A)\boxtimes\sunit\xrightarrow{\sright_{A\boxtimes A}} A\boxtimes A\xrightarrow{\mu} A.
\end{equation*}
Then since $\sright_{A\boxtimes A}\circ\sA_{A,A,\sunit}=1_A\boxtimes\sright_{A}$, this composition equals $\mu\circ(1_A\boxtimes\sright_A)$, as required.

	Next, for objects $W_1$ and $W_2$ in $\mathcal{SC}$, we need a natural isomorphism
	\begin{equation*}
	f_{W_1,W_2}: \cF(W_1\boxtimes W_2)\rightarrow \cF(W_1)\boxtimes_A \cF(W_2).
	\end{equation*}
	We can define $f_{W_1,W_2}$ by the composition
	\begin{align}
	A\boxtimes (W_1\boxtimes W_2)\xrightarrow{\sA_{A,W_1,W_2}} & (A\boxtimes W_1)\boxtimes W_2  \xrightarrow{1_{A\boxtimes W_1}\boxtimes \sleft_{W_2}^{-1}} (A\boxtimes W_1)\boxtimes (\sunit\boxtimes W_2)\label{eqn:fdef}\\
	& \xrightarrow{1_{A\boxtimes W_1}\boxtimes(\iota_A\boxtimes 1_{W_2})} (A\boxtimes W_1)\boxtimes (A\boxtimes W_2)\xrightarrow{\eta_{A\boxtimes W_1, A\boxtimes W_2}} (A\boxtimes W_1)\boxtimes_A (A\boxtimes W_2).\nonumber
	\end{align}
	We need to verify that $f_{W_1,W_2}$ so defined is a morphism in $\repA$, that is,
	\begin{align*}
	\mu_{(A\boxtimes W_1)\boxtimes_A (A\boxtimes W_2)}\circ(1_A\boxtimes f_{W_1,W_2})=f_{W_1,W_2}\circ\mu_{A\boxtimes(W_1\boxtimes W_2)}.
	\end{align*}
	This can be proved as follows:
	\begin{align*}
		\begin{matrix}
			\begin{tikzpicture}[out=up,in=down,line width=0.5pt]
				\node (a1) at (0,0) {$\scriptstyle{A}$};
				\node (a2) at (1,0) {$\scriptstyle{A}$};
				\node (w1) at (1.5,0) {$\scriptstyle{W_1}$};
				\node (w2) at (2.5,0) {$\scriptstyle{W_2}$};
				\node (l) at (2.5,0.5) {};
				\node (i) at (2,1) {$\scriptstyle{\bullet}$};
				\node (eta) at (1.75,2) [draw] {$\scriptstyle{\eta_{A\boxtimes W_1,A\boxtimes W_2}}$};
				\node (mu) at (1,3) [draw] {$\scriptstyle{\mu_{(A\boxtimes W_1)\boxtimes_A (A\boxtimes W_2)}}$};
				\node (wf) at (1,4) {$\scriptstyle{{(A\boxtimes W_1)\boxtimes_A (A\boxtimes W_2)}}$};
				\draw (a1.90) to (mu.210);
				\draw (a2.90) to (eta.200);
				\draw [line width=1pt] (w1.90) to (eta.210);
				\draw [line width=1pt] (w2.90) to (eta.340);
				\draw [dashed] (l.180) to[out=west] (i.270);
				\draw (i.90) to (eta.330);
				\draw [line width=1pt] (eta.90) to (mu.330);
				\draw [line width=1pt] (mu.90) to (wf.270);
			\end{tikzpicture}	
		\end{matrix}
		=
		\begin{matrix}
			\begin{tikzpicture}[out=up,in=down,line width=0.5pt]
				\node (a1) at (0,0) {$\scriptstyle{A}$};
				\node (a2) at (0.5,0) {$\scriptstyle{A}$};
				\node (w1) at (1,0) {$\scriptstyle{W_1}$};
				\node (w2) at (2,0) {$\scriptstyle{W_2}$};
				\node (l) at (2,0.5) {};
				\node (i) at (1.5,1) {$\scriptstyle{\bullet}$};
				\node (eta) at (1.3,3) [draw] {$\scriptstyle{\eta_{A\boxtimes W_1,A\boxtimes W_2}}$};
				\node (mu) at (0.25,2) [draw, minimum width=20pt] {$\scriptstyle{\mu}$};
				\node (wf) at (1.3,4) {$\scriptstyle{{(A\boxtimes W_1)\boxtimes_A (A\boxtimes W_2)}}$};
				\draw (a1.90) to (mu.220);
				\draw (a2.90) to (mu.320);
				\draw (mu.90) to (eta.200);
				\draw [line width=1pt] (w1.90) to (eta.210);
				\draw [line width=1pt] (w2.90) to (eta.340);
				\draw [dashed] (l.180) to[out=west] (i.270);
				\draw (i.90) to (eta.330);
				\draw [line width=1pt] (eta.90) to (wf.270);
			\end{tikzpicture}	
		\end{matrix}
		=
		\begin{matrix}
			\begin{tikzpicture}[out=up,in=down,line width=0.5pt]
				\node (a1) at (0,0) {$\scriptstyle{A}$};
				\node (a2) at (0.5,0) {$\scriptstyle{A}$};
				\node (w1) at (1,0) {$\scriptstyle{W_1}$};
				\node (w2) at (2,0) {$\scriptstyle{W_2}$};
				\node (l) at (2,1.5) {};
				\node (i) at (1.5,2) {$\scriptstyle{\bullet}$};
				\node (eta) at (1.3,3) [draw] {$\scriptstyle{\eta_{A\boxtimes W_1,A\boxtimes W_2}}$};
				\node (mu) at (0.25,1) [draw, minimum width=20pt] {$\scriptstyle{\mu}$};
				\node (wf) at (1.3,4) {$\scriptstyle{{(A\boxtimes W_1)\boxtimes_A (A\boxtimes W_2)}}$};
				\draw (a1.90) to (mu.220);
				\draw (a2.90) to (mu.320);
				\draw (mu.90) to (eta.200);
				\draw [line width=1pt] (w1.90) to (eta.210);
				\draw [line width=1pt] (w2.90) to (eta.340);
				\draw [dashed] (l.180) to[out=west] (i.270);
				\draw (i.90) to (eta.330);
				\draw [line width=1pt] (eta.90) to (wf.270);
			\end{tikzpicture}	
		\end{matrix}
	\end{align*}
	The detailed argument frequently uses properties of associativity isomorphisms, and in the first equality, we have used $\mu^{(1)}$ to define $\mu_{(A\boxtimes W_1)\boxtimes_A (A\boxtimes W_2)}$.
	
	Now we prove that the $\repA$-morphisms $f_{W_1,W_2}$ determine a natural transformation from $\cF\circ\boxtimes$ to $\boxtimes_A\circ(\cF\times \cF)$, that is, we show that if $f_1: W_1\rightarrow \widetilde{W}_1$ and $f_2: W_2\rightarrow \widetilde{W}_2$ are morphisms in $\sC$, then
	\begin{equation*}
	(\cF(f_1)\boxtimes_A \cF(f_2))\circ f_{W_1,W_2}=f_{\widetilde{W}_1,\widetilde{W}_2}\circ \cF(f_1\boxtimes f_2).
	\end{equation*}
	This follows using these diagrams: 
	\begin{align*}
		\begin{matrix}
			\begin{tikzpicture}[out=up,in=down,line width=0.5pt]
				\node (a) at (0,0) {$\scriptstyle{A}$};
				\node (w1) at (0.75,0) {$\scriptstyle{W_1}$};
				\node (w2) at (2,0) {$\scriptstyle{W_2}$};
				\node (l) at (2,0.5) {};
				\node (i) at (1.25,1) {$\scriptstyle{\bullet}$};
				\node (eta) at (1, 2) [draw] {$\scriptstyle{\eta_{A\boxtimes W_1,A\boxtimes W_2}}$};
				\node (f) at (1, 3) [draw] {$\scriptstyle{\cF(f_1)\boxtimes_A\cF(f_2) }$};
				\node (wf) at (1,4) {$\scriptstyle{\widetilde{W}_1\boxtimes_A\widetilde{W}_2}$};
				\draw (a.90) to (eta.200);
				\draw [line width=1pt] (w1.90) to (eta.210);
				\draw (i.90) to (eta.330);
				\draw [line width=1pt] (w2.90) to (eta.340);
				\draw [dashed] (l.180) to[out=west] (i.270);
				\draw [line width=1pt] (eta.90) to (f.270);
				\draw [line width=1pt] (f.90) to (wf.270);
			\end{tikzpicture}
		\end{matrix}		
		=
		\begin{matrix}
			\begin{tikzpicture}[out=up,in=down,line width=0.5pt]
				\node (a) at (0,0) {$\scriptstyle{A}$};
				\node (w1) at (0.75,0) {$\scriptstyle{W_1}$};
				\node (w2) at (2,0) {$\scriptstyle{W_2}$};
				\node (l) at (2,0.5) {};
				\node (i) at (1.35,1) {$\scriptstyle{\bullet}$};
				\node (eta) at (1, 3) [draw] {$\scriptstyle{\eta_{A\boxtimes \widetilde{W}_1,A\boxtimes \widetilde{W}_2}}$};
				\node (f1) at (0.75, 1.75) [draw] {$\scriptstyle{f_1}$};
				\node (f2) at (2, 1.75) [draw] {$\scriptstyle{f_2}$};
				\node (wf) at (1,4) {$\scriptstyle{\widetilde{W}_1\boxtimes_A\widetilde{W}_2}$};
				\draw (a.90) to (eta.200);
				\draw [line width=1pt] (w1.90) to (f1.270);
				\draw [line width=1pt] (f1.90) to (eta.210);
				\draw (i.90) to (eta.330);
				\draw [line width=1pt] (w2.90) to (f2.270);
				\draw [line width=1pt] (f2.90) to (eta.340);
				\draw [dashed] (l.180) to[out=west] (i.270);
				\draw [line width=1pt] (f.90) to (wf.270);
			\end{tikzpicture}
		\end{matrix}		
		=
		\begin{matrix}
			\begin{tikzpicture}[out=up,in=down,line width=0.5pt]
				\node (a) at (0,0) {$\scriptstyle{A}$};
				\node (w1) at (0.75,0) {$\scriptstyle{W_1}$};
				\node (w2) at (2,0) {$\scriptstyle{W_2}$};
				\node (l) at (2,1.5) {};
				\node (i) at (1.35,2) {$\scriptstyle{\bullet}$};
				\node (eta) at (1, 3) [draw] {$\scriptstyle{\eta_{A\boxtimes \widetilde{W}_1,A\boxtimes \widetilde{W}_2}}$};
				\node (f1) at (0.75, 0.8) [draw] {$\scriptstyle{f_1}$};
				\node (f2) at (2, 0.8) [draw] {$\scriptstyle{f_2}$};
				\node (wf) at (1,4) {$\scriptstyle{\widetilde{W}_1\boxtimes_A\widetilde{W}_2}$};
				\draw (a.90) to (eta.200);
				\draw [line width=1pt] (w1.90) to (f1.270);
				\draw [line width=1pt] (f1.90) to (eta.210);
				\draw (i.90) to (eta.330);
				\draw [line width=1pt] (w2.90) to (f2.270);
				\draw [line width=1pt] (f2.90) to (eta.340);
				\draw [dashed] (l.180) to[out=west] (i.270);
				\draw [line width=1pt] (f.90) to (wf.270);
			\end{tikzpicture}
		\end{matrix}		
	\end{align*}
In the second equality, we have used naturality of the left unit isomorphism and the evenness of $\iota_A$  and $l_{W_2}$ to exchange their order with $\cF(f_1)\boxtimes \cF(f_2)$.
	
	Next, we show that each $f_{W_1,W_2}$ is an isomorphism by constructing an inverse. We define $\widetilde{g}_{W_1,W_2}$ to be the composition
	\begin{align*}
	(A\boxtimes W_1)\boxtimes (A\boxtimes W_2) & \xrightarrow{assoc.} (A\boxtimes (W_1\boxtimes A))\boxtimes W_2\xrightarrow{(1_A\boxtimes \sR_{A,W_1}^{-1})\boxtimes 1_{W_2}} (A\boxtimes (A\boxtimes W_1))\boxtimes W_2\nonumber\\
	&\xrightarrow{assoc.} (A\boxtimes A)\boxtimes (W_1\boxtimes W_2)\xrightarrow{\mu\boxtimes 1_{W_1\boxtimes W_2}} A\boxtimes (W_1\boxtimes W_2),
	\end{align*}
	where the arrows marked $assoc.$ refer to any composition of associativity isomorphisms between the source and target of the arrows. If we can show that
	\begin{equation}\label{finversewelldef}	\widetilde{g}_{W_1,W_2}\circ\mu^{(1)}=\widetilde{g}_{W_1,W_2}\circ\mu^{(2)}: A\boxtimes((A\boxtimes W_1)\boxtimes (A\boxtimes W_2))\rightarrow A\boxtimes (W_1\boxtimes W_2),
	\end{equation}
	then the universal property of the cokernel $((A\boxtimes W_1)\boxtimes_A (A\boxtimes W_2), \eta_{A\boxtimes W_1, A\boxtimes W_2})$ implies that there is a unique morphism 
	\begin{equation*}
	g_{W_1,W_2}: (A\boxtimes W_1)\boxtimes_A (A\boxtimes W_2)\rightarrow A\boxtimes (W_1\boxtimes W_2).
	\end{equation*}
	such that $g_{W_1,W_2}\circ\eta_{A\boxtimes W_1, A\boxtimes W_2}=\widetilde{g}_{W_1,W_2}$. To show \eqref{finversewelldef}, we use the following diagrams, starting with $\widetilde{g}_{W_1,W_2}\circ\mu^{(2)}$ on the left and ending with $\widetilde{g}_{W_1,W_2}\circ\mu^{(1)}$ on the right:
	\begin{align*}
	\begin{matrix}
		\begin{tikzpicture}[scale=0.7, out=up, in=down, line width=0.5pt]
			\node at (0,-0.3) {$\scriptstyle{A}$};
			\node at (1,-0.3) {$\scriptstyle{A}$};
			\node at (1.6,-0.3) {$\scriptstyle{W_1}$};
			\node at (2.4, -0.3) {$\scriptstyle{A}$};
			\node at (3.1, -0.3) {$\scriptstyle{W_2}$};
			\node at (0.5,6.3) {$\scriptstyle{A}$};
			\node at (2.4,6.3) {$\scriptstyle{W_1}$};
			\node at (3.1,6.3) {$\scriptstyle{W_2}$};
			\node (U) at (0.5,5.3) [draw,minimum width=30pt,minimum height=8pt] {$\scriptstyle{\mu}$};
			\node (D) at (2,3) [draw,minimum width=30pt,minimum height=8pt] {$\scriptstyle{\mu}$};
			\draw[] (1,0) to (0,2) to (0,3.5) to (U.210);
			\draw[line width=1pt] (1.5,0) to (0.5,2) to (0.5,3) to (2.5,5) to (2.5,6);
			\draw[white, line width=3pt, double=black, double distance=0.5pt] (0,0) to (D.210);
			\draw[] (2.5,0)
			to (D.330);
			\draw[line width=1pt] (3,0) to (3,6);
			\draw[white, line width=3pt, double=black, double distance=0.5pt] (D.north) to (U.330);
			\draw[] (U.north) to (0.5,6);
		\end{tikzpicture}
	\end{matrix}
	=
	\begin{matrix}
		\begin{tikzpicture}[scale=0.7, out=up, in=down, line width=0.5pt]
			\node at (0,-0.3) {$\scriptstyle{A}$};
			\node at (1,-0.3) {$\scriptstyle{A}$};
			\node at (1.9,-0.3) {$\scriptstyle{W_1}$};
			\node at (2.5, -0.3) {$\scriptstyle{A}$};
			\node at (3.1, -0.3) {$\scriptstyle{W_2}$};
			\node at (.5,6.3) {$\scriptstyle{A}$};
			\node at (2.4,6.3) {$\scriptstyle{W_1}$};
			\node at (3.1,6.3) {$\scriptstyle{W_2}$};
			\node (U) at (0.5,5.3) [draw,minimum width=30pt,minimum height=8pt] {$\scriptstyle{\mu}$};
			\node (D) at (1.5,4) [draw,minimum width=30pt,minimum height=8pt] {$\scriptstyle{\mu}$};
			\draw[] (1,0)
			to (0,2)
			to (0,3.5)
			to (U.210);
			\draw[line width=1pt] (2,0)
			to (0.5,2)
			to (2.5,3.3)
			to (2.5,5)
			to (2.5,6);
			\draw[white, line width=3pt, double=black, double distance=0.5pt] (0,0)
			to (1,2.8)
			to (D.210);
			\draw[white, line width=3pt, double=black, double distance=0.5pt] (2.5,0)
			to (D.330);
			\draw[line width=1pt] (3,0)
			to (3,6);
			\draw[] (D.north)
			to (U.330);
			\draw[] (U.north)
			to (0.5,6);
			\draw[fill=white] (3.2,0)
			to (3.2,0);
		\end{tikzpicture}
	\end{matrix}
	=
	\begin{matrix}
		\begin{tikzpicture}[scale=0.7, out=up, in=down, line width=0.5pt]			
			\node at (0,-0.3) {$\scriptstyle{A}$};
			\node at (1,-0.3) {$\scriptstyle{A}$};
			\node at (1.9,-0.3) {$\scriptstyle{W_1}$};
			\node at (2.5, -0.3) {$\scriptstyle{A}$};
			\node at (3.1, -0.3) {$\scriptstyle{W_2}$};
			\node at (1,6.3) {$\scriptstyle{A}$};
			\node at (2.4,6.3) {$\scriptstyle{W_1}$};
			\node at (3.1,6.3) {$\scriptstyle{W_2}$};
			\node (U) at (1,5.3) [draw,minimum width=30pt,minimum height=8pt] {$\scriptstyle{\mu}$};
			\node (D) at (0.5,4) [draw,minimum width=30pt,minimum height=8pt] {$\scriptstyle{\mu}$};
			\draw[] (1,0)
			to (0,2)
			to (0,3.5)
			to (D.210);
			\draw[line width=1pt] (2,0)
			to (0.5,2)
			to (2.5,3.3)
			to (2.5,5)
			to (2.5,6);
			\draw[white, line width=3pt, double=black, double distance=0.5pt] (0,0)
			to (1,2.8)
			to (D.330);
			\draw[white, line width=3pt, double=black, double distance=0.5pt] (2.5,0)
			to (2.5,1)
			to (U.330);
			\draw[line width=1pt] (3,0)
			to (3,6);
			\draw[] (D.north)
			to (U.210);
			\draw[] (U.north)
			to (1,6);
		\end{tikzpicture}
	\end{matrix}
	=
	\begin{matrix}
		\begin{tikzpicture}[scale=0.7, out=up, in=down, line width=0.5pt]	
			\node at (0,-0.3) {$\scriptstyle{A}$};
			\node at (0.9,-0.3) {$\scriptstyle{A}$};
			\node at (1.6,-0.3) {$\scriptstyle{W_1}$};
			\node at (2.4, -0.3) {$\scriptstyle{A}$};
			\node at (3.1, -0.3) {$\scriptstyle{W_2}$};
			\node at (1,6.3) {$\scriptstyle{A}$};
			\node at (2.4,6.3) {$\scriptstyle{W_1}$};
			\node at (3.1,6.3) {$\scriptstyle{W_2}$};
			\node (U) at (1,5.3) [draw,minimum width=30pt,minimum height=8pt, fill=white] {$\scriptstyle{\mu}$};
			\node (D) at (0.5,4.2) [draw,minimum width=30pt,minimum height=8pt, fill=white] {$\scriptstyle{\mu}$};
			\draw[] (1,0)
			to (1,1.5)
			to (0,3)
			to (D.210);
			\draw[line width=1pt] (1.5,0)
			to (2.5,4)
			to (2.5,6);
			\draw[white, line width=3pt, double=black, double distance=0.5pt] (0,0)
			to (0,1.5)
			to (1,3)
			to (D.330);
			\draw[white, line width=3pt, double=black, double distance=0.5pt](2.5,0)
			to (U.330);
			\draw[line width=1pt](3,0)
			to (3,6);
			\draw[] (D.north)
			to (U.210);
			\draw[] (U.north)
			to (1,6);
		\end{tikzpicture}
	\end{matrix}
	=
	\begin{matrix}
		\begin{tikzpicture}[scale=0.7, out=up, in=down, line width=0.5pt]		
			\node at (0,-0.3) {$\scriptstyle{A}$};
			\node at (0.9,-0.3) {$\scriptstyle{A}$};
			\node at (1.6,-0.3) {$\scriptstyle{W_1}$};
			\node at (2.4, -0.3) {$\scriptstyle{A}$};
			\node at (3.1, -0.3) {$\scriptstyle{W_2}$};
			\node at (1,6.3) {$\scriptstyle{A}$};
			\node at (2.4,6.3) {$\scriptstyle{W_1}$};
			\node at (3.1,6.3) {$\scriptstyle{W_2}$};
			\node (U) at (1,5.3) [draw,minimum width=30pt,minimum height=8pt, fill=white] {$\scriptstyle{\mu}$};
			\node (D) at (0.5,1.5) [draw,minimum width=30pt,minimum height=8pt, fill=white] {$\scriptstyle{\mu}$};
			\draw[] (0,0)
			to (D.210);
			\draw[] (1,0)
			to (D.330);
			\draw[line width=1pt] (1.5,0)
			to (1.5,3.5)
			to (2.5,5.3)
			to (2.5,6);
			\draw[white, line width=3pt, double=black, double distance=0.5pt](2.5,0)
			to (2.5,3.5)
			to (U.330);
			\draw[line width=1pt](3,0)
			to (3,6);
			\draw[] (D.north)
			to (U.210);
			\draw[] (U.north)
			to (1,6);
		\end{tikzpicture}
	\end{matrix}
\end{align*}

  Next, we show that $g_{W_1,W_2}$ is the inverse of $f_{W_1,W_2}$.
  To show $g_{W_1,W_2}\circ f_{W_1,W_2}=1_{A\boxtimes(W_1\boxtimes W_2)}$:
	\begin{align*}
	\begin{matrix}
		\begin{tikzpicture}[scale=0.7, out=up, in=down, line width=0.5pt=0.75]
			\node at (0,-0.3) {$\scriptstyle{A}$};
			\node at (1,-0.3) {$\scriptstyle{W_1}$};
			\node at (2.5,-0.3) {$\scriptstyle{W_2}$};
			\node at (0.5, 3.8) {$\scriptstyle{A}$};
			\node at (1.9, 3.8) {$\scriptstyle{W_1}$};
			\node at (2.6, 3.8) {$\scriptstyle{W_2}$};
			\node (U) at (0.5,3) [draw,minimum width=30pt,minimum height=8pt, fill=white] {$\scriptstyle{\mu}$};
			\node (alg) at (2,1.5)  {$\bullet$};
			\draw[](0,0) to (U.210);
			\draw[line width=1pt](1,0) to (1,1.5) to (2,3.5);
			\draw[line width=1pt](2.5,0) to (2.5,3.5);
			\draw[dashed] (2.5,0.5) to [out=left, in=down] (2,1.5);
			\draw[white, line width=3pt, double=black, double distance=0.5pt](2,1.5) to (U.330);
			\draw[](U.north) to (0.5,3.5);
			\node at (2,1.5)  {$\bullet$};
		\end{tikzpicture}
	\end{matrix}
	\,=\,
	\begin{matrix}
		\begin{tikzpicture}[scale=0.7, out=up, in=down, line width=0.5pt]
			\node at (0,-0.3) {$\scriptstyle{A}$};
			\node at (1,-0.3) {$\scriptstyle{W_1}$};
			\node at (2.5,-0.3) {$\scriptstyle{W_2}$};
			\node at (0.5, 3.8) {$\scriptstyle{A}$};
			\node at (1.9, 3.8) {$\scriptstyle{W_1}$};
			\node at (2.6, 3.8) {$\scriptstyle{W_2}$};
			\node (U) at (0.5,3) [draw,minimum width=30pt,minimum height=8pt, fill=white] {$\scriptstyle{\mu}$};
			\node (alg) at (2,1.5) {$\bullet$};
			\draw[](0,0) to (U.210);
			\draw[line width=1pt](1,0) to (1,1.5) to (2,3.5);
			\draw[line width=1pt](2.5,0) to (2.5,3.5);
			\draw[dashed] (1,0.5) to [out=right, in=down] (2,1.5);
			\draw[white, line width=3pt, double=black, double distance=0.5pt](2,1.5) to (U.330);
			\draw[](U.north) to  (0.5,3.5);
			\node  at (2,1.5) {$\bullet$};
		\end{tikzpicture}
	\end{matrix}
	=
	\begin{matrix}
		\begin{tikzpicture}[scale=0.7, out=up, in=down, line width=0.5pt]
			\node at (0,-0.3) {$\scriptstyle{A}$};
			\node at (1,-0.3) {$\scriptstyle{W_1}$};
			\node at (2.5,-0.3) {$\scriptstyle{W_2}$};
			\node at (0.5, 3.8) {$\scriptstyle{A}$};
			\node at (1.9, 3.8) {$\scriptstyle{W_1}$};
			\node at (2.6, 3.8) {$\scriptstyle{W_2}$};
			\node (U) at (0.5,3) [draw,minimum width=30pt,minimum height=8pt, fill=white] {$\scriptstyle{\mu}$};
			\node (alg) at (1,2.2) {$\bullet$};
			\draw[](0,0) to (U.210);
			\draw[line width=1pt](1,0) to (1,1) to (2,2) to (2,3.5);
			\draw[line width=1pt](2.5,0) to (2.5,3.5);
			\draw[white, line width=6pt] (1,0.5) to[out=right] (2,1.1) to (1,2.2);
			\draw[dashed] (1,0.5) to[out=right] (2,1.1) to (1,2.2);
			\draw[](1,2.2) to (U.330);
			\draw[](U.north) to (0.5,3.5);
			\draw[fill=white](2.6,0) to (2.6,0);
			\node (alg) at (1,2.2) {$\bullet$};
		\end{tikzpicture}
	\end{matrix}
	=
	\begin{matrix}
		\begin{tikzpicture}[scale=0.7, out=up, in=down, line width=0.5pt]
			\node at (0,-0.3) {$\scriptstyle{A}$};
			\node at (0.6,-0.3) {$\scriptstyle{W_1}$};
			\node at (2.1,-0.3) {$\scriptstyle{W_2}$};
			\node at (0, 3.8) {$\scriptstyle{A}$};
			\node at (1.4, 3.8) {$\scriptstyle{W_1}$};
			\node at (2.1, 3.8) {$\scriptstyle{W_2}$};
			\node (alg) at (0,2.2) {$\bullet$};
			\draw[](0,0) to (0,3.5);
			\draw[line width=1pt](0.5,0) to (0.5,1) to (1.5,2) to (1.5,3.5);
			\draw[line width=1pt](2,0) to (2,3.5);
			\draw[white, line width=6pt] (0.5,0.5) to [out=right,in=down] (1.5,1.1) to [out=up, in=right] (0,2.2);
			\draw[dashed] (0.5,0.5) to [out=right,in=down] (1.5,1.1) to [out=up, in=right] (0,2.2);
			\node  at (0,2.2) {$\bullet$};
		\end{tikzpicture}
	\end{matrix}
	=
	\begin{matrix}
		\begin{tikzpicture}[scale=0.7, out=up, in=down, line width=0.5pt]
			\node at (0,-0.3) {$\scriptstyle{A}$};
			\node at (1,-0.3) {$\scriptstyle{W_1}$};
			\node at (2,-0.3) {$\scriptstyle{W_2}$};
			\node at (0, 3.8) {$\scriptstyle{A}$};
			\node at (1, 3.8) {$\scriptstyle{W_1}$};
			\node at (2, 3.8) {$\scriptstyle{W_2}$};
			\draw[](0,0) to (0,3.5);
			\draw[line width=1pt](1,0) to (1,3.5);
			\draw[line width=1pt](2,0) to (2,3.5);
			\draw[dashed] (1,0.5) to (0,2.2);
		\end{tikzpicture}
	\end{matrix}
	=
	\begin{matrix}
		\begin{tikzpicture}[scale=0.7, out=up, in=down, line width=0.5pt]
			\node at (0,-0.3) {$\scriptstyle{A}$};
			\node at (1,-0.3) {$\scriptstyle{W_1}$};
			\node at (2,-0.3) {$\scriptstyle{W_2}$};
			\node at (0, 3.8) {$\scriptstyle{A}$};
			\node at (1, 3.8) {$\scriptstyle{W_1}$};
			\node at (2, 3.8) {$\scriptstyle{W_2}$};
			\draw[](0,0) to (0,3.5);
			\draw[line width=1pt](1,0) to (1,3.5);
			\draw[line width=1pt](2,0) to (2,3.5);
			\draw[fill=white](2.2,0) to (2.2,0);
		\end{tikzpicture}
	\end{matrix}
	\end{align*}
	Now we consider $f_{W_1,W_2}\circ g_{W_1,W_2}$; note that
	\begin{equation*}
	 f_{W_1,W_2}\circ g_{W_1,W_2}\circ\eta_{A\boxtimes W_1, A\boxtimes W_2}=\eta_{A\boxtimes W_1, A\boxtimes W_2}\circ I,
	\end{equation*}
where $I$ is the endomorphism of $(A\boxtimes W_1)\boxtimes (A\boxtimes W_2)$ given by the composition
	\begin{align*}
	(A\boxtimes W_1) & \boxtimes(A\boxtimes W_2)\xrightarrow{assoc.}(A\boxtimes(W_1\boxtimes A))\boxtimes W_2\xrightarrow{(1_A\boxtimes \sR_{A,W_1}^{-1})\boxtimes 1_{W_2}}(A\boxtimes(A\boxtimes W_1))\boxtimes W_2\nonumber\\
	& \xrightarrow{assoc.}(A\boxtimes A)\boxtimes(W_1\boxtimes W_2)\xrightarrow{\mu\boxtimes 1_{W_1\boxtimes W_2}} A\boxtimes (W_1\boxtimes W_2)\xrightarrow{\sA_{A,W_1,W_2}} (A\boxtimes W_1)\boxtimes W_2\nonumber\\
	&\xrightarrow{1_{A\boxtimes W_1}\boxtimes((\iota_A\boxtimes 1_{W_2})\circ \sleft_{W_2}^{-1})}(A\boxtimes W_1)\boxtimes(A\boxtimes W_2).
	\end{align*}
Thus by the surjectivity of $\eta_{A\boxtimes W_1, A\boxtimes W_2}$, it is enough to prove that $\eta_{A\boxtimes W_1, A\boxtimes W_2}\circ I=\eta_{A\boxtimes W_1, A\boxtimes W_2}$, or
\begin{equation*}
 \eta_{A\boxtimes W_1, A\boxtimes W_2}\circ(I-1_{(A\boxtimes W_1)\boxtimes (A\boxtimes W_2)})=0.
\end{equation*}
For this, it is in turn enough to prove 
\begin{align}
	I\circ J &= \mu^{(1)}\circ(1_A\boxtimes(1_{A\boxtimes W_1}\boxtimes(\iota_A\boxtimes 1_{W_2})))\label{Icomp},\\
	J&=\mu^{(2)}\circ(1_A\boxtimes(1_{A\boxtimes W_1}\boxtimes(\iota_A\boxtimes 1_{W_2})))\label{Idcomp},
\end{align}
where $J:A\boxtimes ((A\boxtimes W_1)\boxtimes(\sunit\boxtimes W_2)) \rightarrow  (A\boxtimes W_1)\boxtimes(A\boxtimes W_2)$ is the isomorphism
\begin{align*}
	J:	A\boxtimes & ((A\boxtimes W_1)\boxtimes(\sunit\boxtimes W_2))\xrightarrow{\sA_{A,A\boxtimes W_1,\sunit\boxtimes W_2}}(A\boxtimes(A\boxtimes W_1))\boxtimes(\sunit\boxtimes W_2)\nonumber\\
	&\xrightarrow{\sR_{A,A\boxtimes W_1}\boxtimes \sleft_{W_2}}((A\boxtimes W_1)\boxtimes A)\boxtimes W_2\xrightarrow{\sA_{A\boxtimes W_1,A,W_2}^{-1}} (A\boxtimes W_1)\boxtimes(A\boxtimes W_2).
\end{align*}
	For \eqref{Icomp} we proceed as follows:	
	\begin{align*}
	\begin{matrix}
		\begin{tikzpicture}[scale=0.9, out=up, in=down, line width=0.5pt]
			\node at (0,-0.3) {$\scriptstyle{A}$};
			\node at (0.9,-0.3) {$\scriptstyle{A}$};
			\node at (1.6,-0.3) {$\scriptstyle{W_1}$};
			\node at (2.2,-0.3) {$\scriptstyle{\sunit}$};
			\node at (3,-0.3) {$\scriptstyle{W_2}$};
			\node at (0.5, 4) {$\scriptstyle{A}$};
			\node at (1.5, 4) {$\scriptstyle{W_1}$};
			\node at (2.2, 4) {$\scriptstyle{A}$};
			\node at (3, 4) {$\scriptstyle{W_2}$};
			\node (U) at (0.5,3) [draw,minimum width=30pt,minimum height=8pt] {$\scriptstyle{\mu}$};
			\node (alg) at (2.2,3)  {$\bullet$};
			\draw[] (1,0) to (0,1.5) to (U.210);
			\draw[line width=1pt] (1.5,0) to (0.8,1.5) to (1.5,3) to (1.5,3.8);
			\draw[line width=1pt] (3,0) to (3,3.8);
			\draw[dashed](2.2,0) to [out=up, in=left] (3,1);
			\draw[dashed](3,2) to [out=left, in=down] (2.2,3);
			\draw[](2.2,3) to (2.2,3.8);
			\draw[](U.north) to (0.5,3.8);
			\draw[white, line width=3pt, double=black,double  distance=0.5pt] (0,0) to (1.7,1.5) to (U.330);
		\end{tikzpicture}
	\end{matrix}
	\,\,=\,\,
	\begin{matrix}
		\begin{tikzpicture}[scale=0.9, out=up, in=down, line width=0.5pt]
			\node at (0,-0.3) {$\scriptstyle{A}$};
			\node at (0.9,-0.3) {$\scriptstyle{A}$};
			\node at (1.6,-0.3) {$\scriptstyle{W_1}$};
			\node at (2.2,-0.3) {$\scriptstyle{\sunit}$};
			\node at (3,-0.3) {$\scriptstyle{W_2}$};
			\node at (0.5, 4) {$\scriptstyle{A}$};
			\node at (1.5, 4) {$\scriptstyle{W_1}$};
			\node at (2.2, 4) {$\scriptstyle{A}$};
			\node at (3, 4) {$\scriptstyle{W_2}$};
			\node (U) at (0.5,3) [draw,minimum width=30pt,minimum height=8pt] {$\scriptstyle{\mu}$};
			\node (alg) at (2.2,3)  {$\bullet$};
			\draw[] (1,0) to (0,1.5) to (U.210);
			\draw[line width=1pt] (1.5,0) to (1.5,3.8);
			\draw[line width=1pt] (3,0) to (3,3.8);
			\draw[dashed](2.2,0) to (2.2,3);
			\draw[](2.2,3) to (2.2,3.8);
			\draw[](U.north) to (0.5,3.8);
			\draw[white, line width=3pt, double=black, double distance=0.5pt] (0,0) to (1,1.5) to (U.330);
		\end{tikzpicture}
	\end{matrix}
	\,\,=\,\,
	\begin{matrix}
		\begin{tikzpicture}[scale=0.9, out=up, in=down, line width=0.5pt]
			\node at (0,-0.3) {$\scriptstyle{A}$};
			\node at (0.9,-0.3) {$\scriptstyle{A}$};
			\node at (1.6,-0.3) {$\scriptstyle{W_1}$};
			\node at (2.2,-0.3) {$\scriptstyle{\sunit}$};
			\node at (3,-0.3) {$\scriptstyle{W_2}$};
			\node at (0.5, 4) {$\scriptstyle{A}$};
			\node at (1.5, 4) {$\scriptstyle{W_1}$};
			\node at (2.2, 4) {$\scriptstyle{A}$};
			\node at (3, 4) {$\scriptstyle{W_2}$};
			\node (U) at (0.5,3) [draw,minimum width=30pt,minimum height=8pt] {$\scriptstyle{\mu}$};
			\node (alg) at (2.2,1)  {$\bullet$};
			\draw[] (0,0) to (U.210);
			\draw[] (1,0) to (U.330);
			\draw[line width=1pt] (1.5,0) to (1.5,3.8);
			\draw[line width=1pt] (3,0) to (3,3.8);
			\draw[dashed](2.2,0) to (2.2,1);
			\draw[](2.2,1) to (2.2,3.8);
			\draw[](U.north) to (0.5,3.8);
		\end{tikzpicture}
	\end{matrix}
\end{align*}
On the other hand, $\mu^{(2)}\circ(1_A\boxtimes(1_{A\boxtimes W_1}\boxtimes(\iota_A\boxtimes 1_{W_2})))$
can be handled with the diagrams:
	\begin{align*}
	\begin{matrix}
		\begin{tikzpicture}[scale=0.85, out=up, in=down, line width=0.5pt]
			\node at (0,-0.3) {$\scriptstyle{A}$};
			\node at (0.9,-0.3) {$\scriptstyle{A}$};
			\node at (1.5,-0.3) {$\scriptstyle{W_1}$};
			\node at (2,-0.3) {$\scriptstyle{\sunit}$};
			\node at (2.6,-0.3) {$\scriptstyle{W_2}$};
			\node at (0,3.8) {$\scriptstyle{A}$};
			\node at (0.5,3.8) {$\scriptstyle{W_1}$};
			\node at (1.6,3.8) {$\scriptstyle{A}$};
			\node at (2.5,3.8) {$\scriptstyle{W_2}$};
			\node (U) at (1.6,2.3) [draw,minimum width=25pt,minimum height=8pt, fill=white] {$\scriptstyle{\mu}$};
			\node (alg) at (2,0.5)  {$\bullet$};
			\draw[](2,0.5) to (2,1) to (U.330);
			\draw[line width=1pt](2.5,0) to (2.5,3.5);
			\draw[dashed](2,0) to (2,0.5);
			\draw[](U.north) to (1.6,3.5);
			\draw[] (1,0) to (0,2) to (0,3.5);
			\draw[line width=1pt] (1.5,0) to (0.5,2) to (0.5,3.5);
			\draw[white, line width=3pt, double=black, double distance=0.5pt] (0,0) to (U.210);
		\end{tikzpicture}
	\end{matrix}
	\,=\,
	\begin{matrix}
		\begin{tikzpicture}[scale=0.85, out=up, in=down, line width=0.5pt]
			\node at (0.4,-0.3) {$\scriptstyle{A}$};
			\node at (0.9,-0.3) {$\scriptstyle{A}$};
			\node at (1.5,-0.3) {$\scriptstyle{W_1}$};
			\node at (2.3,-0.3) {$\scriptstyle{\sunit}$};
			\node at (3,-0.3) {$\scriptstyle{W_2}$};
			\node at (0.4,3.8) {$\scriptstyle{A}$};
			\node at (1,3.8) {$\scriptstyle{W_1}$};
			\node at (1.6,3.8) {$\scriptstyle{A}$};
			\node at (3,3.8) {$\scriptstyle{W_2}$};
			\draw[line width=1pt](2.8,0) to (2.8,3.5);
			\draw[dashed](2.3,0) to (2.3,0.5)
			to [out=up,in=right] (1.6,3);
			\draw[] (1,0) to (1,1) to (0.5,2.8) to (0.5,3.5);
			\draw[line width=1pt] (1.5,0) to (1.5,1) to (1,2.8) to (1,3.5);
			\draw[white, line width=3pt, double=black, double distance=0.5pt] (0.5,0) to (0.5,1) to (1.6,2.8) to (1.6,3.5);
		\end{tikzpicture}
	\end{matrix}
	\,=\,
	\begin{matrix}
		\begin{tikzpicture}[scale=0.85, out=up, in=down, line width=0.5pt]
			\node at (0.4,-0.3) {$\scriptstyle{A}$};
			\node at (0.9,-0.3) {$\scriptstyle{A}$};
			\node at (1.5,-0.3) {$\scriptstyle{W_1}$};
			\node at (2,-0.3) {$\scriptstyle{\sunit}$};
			\node at (2.6,-0.3) {$\scriptstyle{W_2}$};
			\node at (0.4,3.8) {$\scriptstyle{A}$};
			\node at (1,3.8) {$\scriptstyle{W_1}$};
			\node at (1.7,3.8) {$\scriptstyle{A}$};
			\node at (2.6,3.8) {$\scriptstyle{W_2}$};		
			\draw[line width=1pt](2.5,0) to (2.5,3.5);
			\draw[dashed](2,0) to (2,0.5) to [out=up,in=left] (2.5,1);
			\draw[] (1,0) to (1,1.5) to (0.5,3.5);
			\draw[line width=1pt] (1.5,0) to (1.5,1.5) to (1,3.5);
			\draw[white, line width=3pt, double=black, double distance=0.5pt] (0.5,0) to (0.5,1.5) to (1.8,3.5);
		\end{tikzpicture}
	\end{matrix}
\end{align*}
This completes the proof that $f_{W_1,W_2}\circ g_{W_1,W_2}=1_{(A\boxtimes W_1)\boxtimes_A (A\boxtimes W_2)}$, and thus that the $\repA$-morphisms $f_{W_1,W_2}$ define an (even) natural isomorphism $f :\cF\circ\boxtimes\rightarrow\boxtimes_A\circ(\cF\times \cF)$.
	
	Now we show that $f$ and $\varphi$ are compatible with the unit isomorphisms.
	To prove \eqref{Funitcompat}, we establish that for any object $W$ of $\mathcal{SC}$,
	\begin{align*}
	(l^A_{A\boxtimes W})\circ (\varphi\boxtimes_A 1_{A\boxtimes W})\circ f_{\sunit,W}&=\cF(\sleft_W): A\boxtimes(\sunit\boxtimes W)\rightarrow A\boxtimes W,\\
	(r^A_{A\boxtimes W})\circ(1_{A\boxtimes W}\boxtimes_A\varphi)\circ f_{W,\sunit}&=\cF(\sright_W): A\boxtimes(W\boxtimes\sunit)\rightarrow A\boxtimes W.
	\end{align*}
	The first follows from:
	\begin{align*}
		\begin{matrix}
			\begin{tikzpicture}[out=up,in=down,line width=0.5pt]
				\node (a) at (0,0) {$\scriptstyle{A}$};
				\node (u) at (0.5,0) {$\scriptstyle{\sunit}$};
				\node (w) at (1.5,0) {$\scriptstyle{W}$};
				\node (l) at (1.5,0.5) {};
				\node (i) at (1,1) {$\scriptstyle{\bullet}$};
				\node (eta) at (0.75,2) [draw] {$\scriptstyle{\eta_{A\boxtimes\sunit,A\boxtimes W}}$};
				\node (phi) at (0.75,3) [draw] {$\scriptstyle{\varphi\boxtimes_A\,1_{A\boxtimes W}}$};
				\node (le) at (0.75,4) [draw] {$\scriptstyle{l^A_{A\boxtimes W}}$};
				\node (wf) at (0.75, 5) {$\scriptstyle{A\boxtimes W}$};
				\draw (a.90) to (eta.200);
				\draw [dashed] (u.90) to (eta.210);
				\draw [dashed] (l.west) to[out=west] (i.south);
				\draw (i.90) to (eta.330);
				\draw [line width=1 pt] (w.90) to (eta.340);
				\draw [line width=1 pt] (eta.90) to (phi.270);
				\draw [line width=1 pt] (phi.90) to (le.270);
				\draw [line width=1 pt] (le.90) to (wf.270);
			\end{tikzpicture}
		\end{matrix}
		=
		\begin{matrix}
			\begin{tikzpicture}[out=up,in=down,line width=0.5pt]
				\node (a) at (0,0) {$\scriptstyle{A}$};
				\node (u) at (0.5,0) {$\scriptstyle{\sunit}$};
				\node (w) at (1.5,0) {$\scriptstyle{W}$};
				\node (l) at (1.5,0.5) {};
				\node (i) at (1,1) {$\scriptstyle{\bullet}$};
				\node (eta) at (0.75,3) [draw] {$\scriptstyle{\eta_{A,A\boxtimes W}}$};
				\node (le) at (0.75,4) [draw] {$\scriptstyle{l^A_{A\boxtimes W}}$};
				\node (wf) at (0.75, 5) {$\scriptstyle{A\boxtimes W}$};
				\draw (a.90) to (0.1,2.1);
				\draw (0.1,2) to (eta.200);
				\draw [dashed] (u.90) to[in=east] (0.1,2.1);
				\draw [dashed] (l.west) to[out=west] (i.south);
				\draw (i.90) to (eta.330);
				\draw [line width=1 pt] (w.90) to (eta.340);
				\draw [line width=1 pt] (eta.90) to (le.270);
				\draw [line width=1 pt] (le.90) to (wf.270);
			\end{tikzpicture}
		\end{matrix}
		=
		\begin{matrix}
			\begin{tikzpicture}[out=up,in=down,line width=0.5pt]
				\node (a) at (0,0) {$\scriptstyle{A}$};
				\node (u) at (0.5,0) {$\scriptstyle{\sunit}$};
				\node (w) at (1.5,0) {$\scriptstyle{W}$};
				\node (l) at (1.5,0.5) {};
				\node (i) at (1,1) {$\scriptstyle{\bullet}$};
				\node (wf) at (1.5, 5) {$\scriptstyle{W}$};
				\node (af) at (0.5, 5) {$\scriptstyle{A}$};
				\node (mu) at (0.5,4) [draw, minimum width=20pt] {$\scriptstyle{\mu}$};
				\draw (a.90) to (0.1,2.1);
				\draw (0.1,2) to (mu.240);
				\draw [dashed] (u.90) to[in=east] (0.1,2.1);
				\draw [dashed] (l.west) to[out=west] (i.south);
				\draw (i.90) to (mu.320);
				\draw [line width=1pt] (w.90) to (wf.270);
				\draw (mu.90) to (af.270);
			\end{tikzpicture}
		\end{matrix}
		=
		\begin{matrix}
			\begin{tikzpicture}[out=up,in=down,line width=0.5pt]
				\node (a) at (0,0) {$\scriptstyle{A}$};
				\node (u) at (0.5,0) {$\scriptstyle{\sunit}$};
				\node (w) at (1,0) {$\scriptstyle{W}$};
				\node (l) at (1,2.3) {};
				\node (i) at (0.6,3) {$\scriptstyle{\bullet}$};
				\node (wf) at (1, 5) {$\scriptstyle{W}$};
				\node (af) at (0.25, 5) {$\scriptstyle{A}$};
				\node (mu) at (0.25,4) [draw, minimum width=20pt] {$\scriptstyle{\mu}$};
				\draw (a.90) to (0.1,2.1);
				\draw (0.1,2) to (mu.240);
				\draw [dashed] (u.90) to[in=east] (0.1,1.8);
				\draw [dashed] (l.west) to[out=west] (i.south);
				\draw (i.90) to (mu.320);
				\draw [line width=1pt] (w.90) to (wf.270);
				\draw (mu.90) to (af.270);
			\end{tikzpicture}
		\end{matrix}
		=
		\begin{matrix}
			\begin{tikzpicture}[out=up,in=down,line width=0.5pt]
				\node (a) at (0,0) {$\scriptstyle{A}$};			
				\node (u) at (0.5,0) {$\scriptstyle{\sunit}$};
				\node (w) at (1,0) {$\scriptstyle{W}$};
				\node (af) at (0,5) {$\scriptstyle{A}$};
				\node (wf) at (1,5) {$\scriptstyle{W}$};
				\draw (a.90) to (af.270);
				\draw [line width=1pt] (w.90) to (wf.270);
				\draw [dashed] (u.90) to[in=east] (0,1.8);
				\draw [dashed] (1,3) to[out=west, in=east] (0,4);
			\end{tikzpicture}
		\end{matrix}
		=
		\begin{matrix}
			\begin{tikzpicture}[out=up,in=down,line width=0.5pt]
				\node (a) at (0,0) {$\scriptstyle{A}$};			
				\node (u) at (0.5,0) {$\scriptstyle{\sunit}$};
				\node (w) at (1,0) {$\scriptstyle{W}$};
				\node (af) at (0,5) {$\scriptstyle{A}$};
				\node (wf) at (1,5) {$\scriptstyle{W}$};
				\draw (a.90) to (af.270);
				\draw [line width=1pt] (w.90) to (wf.270);
				\draw [dashed] (u.90) to[in=east] (0,1.8);
			\end{tikzpicture}
		\end{matrix}
		=
		\begin{matrix}
			\begin{tikzpicture}[out=up,in=down,line width=0.5pt]
				\node (a) at (0,0) {$\scriptstyle{A}$};			
				\node (u) at (0.5,0) {$\scriptstyle{\sunit}$};
				\node (w) at (1,0) {$\scriptstyle{W}$};
				\node (af) at (0,5) {$\scriptstyle{A}$};
				\node (wf) at (1,5) {$\scriptstyle{W}$};
				\draw (a.90) to (af.270);
				\draw [line width=1pt] (w.90) to (wf.270);
				\draw [dashed] (u.90) to[in=west] (1,1.8);
			\end{tikzpicture}
		\end{matrix}
	\end{align*}
	The second follows from:
	\begin{align*}
		\begin{matrix}
			\begin{tikzpicture}[out=up,in=down,line width=0.5pt]
				\node (a) at (0,0) {$\scriptstyle{A}$};
				\node (w) at (0.5,0) {$\scriptstyle{W}$};
				\node (i) at (1,1) {$\scriptstyle{\bullet}$};
				\node (l) at (1.5,0.5) {};
				\node (u) at (1.5,0) {$\scriptstyle{\sunit}$};
				\node (eta) at (0.75,2) [draw] {$\scriptstyle{\eta_{A\boxtimes W, A\boxtimes \sunit}}$};
				\node (phi) at (0.75,3) [draw] {$\scriptstyle{1_{A\boxtimes W}\boxtimes_A\,\varphi}$};
				\node (r) at (0.75,4) [draw] {$\scriptstyle{r^A_{A\boxtimes W}}$};
				\node (wf) at (0.75, 5) {$\scriptstyle{A\boxtimes W}$};
				\draw (a.90) to (eta.200);
				\draw [dashed] (u.90) to (eta.340);
				\draw [dashed] (l.west) to[out=west] (i.south);
				\draw (i.90) to (eta.330);
				\draw [line width=1 pt] (w.90) to (eta.210);
				\draw [line width=1 pt] (eta.90) to (phi.270);
				\draw [line width=1 pt] (phi.90) to (r.270);
				\draw [line width=1 pt] (r.90) to (wf.270);
			\end{tikzpicture}
		\end{matrix}
		=
		\begin{matrix}
			\begin{tikzpicture}[out=up,in=down,line width=0.5pt]
				\node (a) at (0,0) {$\scriptstyle{A}$};
				\node (w) at (0.5,0) {$\scriptstyle{W}$};
				\node (i) at (1,1) {$\scriptstyle{\bullet}$};
				\node (l) at (1.5,0.5) {};
				\node (u) at (1.5,0) {$\scriptstyle{\sunit}$};
				\node (eta) at (0.75,3) [draw] {$\scriptstyle{\eta_{A\boxtimes W, A}}$};
				\node (r) at (0.75,4) [draw] {$\scriptstyle{r^A_{A\boxtimes W}}$};
				\node (wf) at (0.75, 5) {$\scriptstyle{A\boxtimes W}$};
				\draw (a.90) to (eta.200);
				\draw [dashed] (u.90) to (1.5,1) to[in=east] (1,2);
				\draw (i.90) to (1,2) to (eta.330);
				\draw [dashed] (l.west) to[out=west] (i.south);
				\draw [line width=1 pt] (w.90) to (eta.210);
				\draw [line width=1 pt] (eta.90) to (r.270);
				\draw [line width=1 pt] (r.90) to (wf.270);
			\end{tikzpicture}
		\end{matrix}
		=
		\begin{matrix}
			\begin{tikzpicture}[out=up,in=down,line width=0.5pt]
				\node (a) at (0,0) {$\scriptstyle{A}$};
				\node (w) at (0.5,0) {$\scriptstyle{W}$};
				\node (i) at (1,1) {$\scriptstyle{\bullet}$};
				\node (u) at (1,0) {$\scriptstyle{\sunit}$};
				\node (eta) at (0.5,3) [draw] {$\scriptstyle{\eta_{A\boxtimes W, A}}$};
				\node (r) at (0.5,4) [draw] {$\scriptstyle{r^A_{A\boxtimes W}}$};
				\node (wf) at (0.5, 5) {$\scriptstyle{A\boxtimes W}$};
				\draw (a.90) to (eta.210);
				\draw [line width=1 pt] (w.90) to (eta.220);
				\draw [dashed] (u.90) to (i.270);
				\draw  (i.90) to (eta.330);
				\draw [line width=1 pt] (eta.90) to (r.270);
				\draw [line width=1 pt] (r.90) to (wf.270);
			\end{tikzpicture}
		\end{matrix}
		=
		\begin{matrix}
			\begin{tikzpicture}[out=up,in=down,line width=0.5pt]
				\node (a) at (0,0) {$\scriptstyle{A}$};
				\node (w) at (0.5,0) {$\scriptstyle{W}$};
				\node (i) at (1,1) {$\scriptstyle{\bullet}$};
				\node (u) at (1,0) {$\scriptstyle{\sunit}$};
				\node (mu) at (0.3, 4) [draw, minimum width=20pt] {$\scriptstyle{\mu}$};
				\node (af) at (0.3, 5) {$\scriptstyle{A}$};
				\node (wf) at (1, 5) {$\scriptstyle{W}$};
				\draw (a.90) to (mu.320);
				\draw [line width=1pt] (w.90) to (wf.270);
				\draw [dashed] (u.90) to (i.270);
				\draw [white, line width=3pt, double=black, double distance=0.5pt] (i.90) to (mu.220);
				\draw (mu.90) to (af.270);
			\end{tikzpicture}
		\end{matrix}		
		=
		\begin{matrix}
			\begin{tikzpicture}[out=up,in=down,line width=0.5pt]
				\node (a) at (0,0) {$\scriptstyle{A}$};
				\node (w) at (0.5,0) {$\scriptstyle{W}$};
				\node (i) at (0.1,3) {$\scriptstyle{\bullet}$};
				\node (u) at (1,0) {$\scriptstyle{\sunit}$};
				\node (mu) at (0.3, 4) [draw, minimum width=20pt] {$\scriptstyle{\mu}$};
				\node (af) at (0.3, 5) {$\scriptstyle{A}$};
				\node (wf) at (1, 5) {$\scriptstyle{W}$};
				\draw (a.90) to (mu.320);
				\draw [line width=1pt] (w.90) to (wf.270);
				\draw [white, line width=6pt]  (u.90) to (i.270);
				\draw [dashed]  (u.90) to (i.270);
				\draw [white, line width=3pt, double=black, double distance=0.5pt] (i.90) to (mu.220);
				\draw (mu.90) to (af.270);
			\end{tikzpicture}
		\end{matrix}		
		=
		\begin{matrix}
			\begin{tikzpicture}[out=up,in=down,line width=0.5pt]
				\node (a) at (0, 0) {$\scriptstyle{A}$};
				\node (w) at (0.4, 0) {$\scriptstyle{W}$};
				\node (u) at (0.8, 0) {$\scriptstyle{\sunit}$};
				\node (l) at (0.5, 4) {};
				\node (af) at (0.4, 5) {$\scriptstyle{A}$};
				\node (wf) at (0.8, 5) {$\scriptstyle{W}$};
				\draw (a.90) to (af.270);
				\draw [line width=1pt] (w.90) to (wf.270);
				\draw [white, line width=6pt] (u.90) to (0,3.5);
				\draw [dashed] (u.90) to (0,3.5) to[in=west] (l.180);
			\end{tikzpicture}
		\end{matrix}
		=
		\begin{matrix}
			\begin{tikzpicture}[out=up,in=down,line width=0.5pt]
				\node (a) at (0, 0) {$\scriptstyle{A}$};
				\node (w) at (0.4, 0) {$\scriptstyle{W}$};
				\node (u) at (1, 0) {$\scriptstyle{\sunit}$};
				\node (af) at (0, 5) {$\scriptstyle{A}$};
				\node (wf) at (0.4, 5) {$\scriptstyle{W}$};
				\draw (a.90) to (af.270);
				\draw [line width=1pt] (w.90) to (wf.270);
				\draw [dashed] (u.90) to[in=east] (0.4,1.5);
			\end{tikzpicture}
		\end{matrix}
	\end{align*}
	
	Finally, to prove that $f$ is compatible with the associativity isomorphisms in $\sC$ and $\repA$, we have to prove the commutativity of \eqref{Fassoc}
	for objects $W_1$, $W_2$, and $W_3$ in $\mathcal{SC}$. To simplify notation, we use $\iota_W$ for the composition
	\begin{equation*}
	W\xrightarrow{\sleft_{W}^{-1}} \sunit\boxtimes W\xrightarrow{\iota_A\boxtimes 1_{W}} A\boxtimes W,
	\end{equation*}
	where $W$ is an object of $\sC$. Then the lower left composition of morphisms in \eqref{Fassoc} is given by
	\begin{align*}
	&A\boxtimes  (W_1\boxtimes(W_2\boxtimes W_3))\xrightarrow{\sA_{A,W_1,W_2\boxtimes W_3}} (A\boxtimes W_1)\boxtimes(W_2\boxtimes W_3)\nonumber\\
	&\xrightarrow{1_{A\boxtimes W_1}\boxtimes\iota_{W_2\boxtimes W_3}} (A\boxtimes W_1)\boxtimes(A\boxtimes(W_2\boxtimes W_3))\xrightarrow{1_{A\boxtimes W_1}\boxtimes\sA_{A,W_2,W_3}} (A\boxtimes W_1)\boxtimes((A\boxtimes W_2)\boxtimes W_3)\nonumber\\
	&\xrightarrow{1_{A\boxtimes W_1}\boxtimes(1_{A\boxtimes W_2}\boxtimes\iota_{W_3})}(A\boxtimes W_1)\boxtimes((A\boxtimes W_2)\boxtimes(A\boxtimes W_3))\nonumber\\
	&\xrightarrow{\sA_{A\boxtimes W_1,A\boxtimes W_2,A\boxtimes W_3}}((A\boxtimes W_1)\boxtimes(A\boxtimes W_2))\boxtimes(A\boxtimes W_3)\nonumber\\
	&\rightarrow ((A\boxtimes W_1)\boxtimes_A(A\boxtimes W_2))\boxtimes_A(A\boxtimes W_3),
	\end{align*}
	where the last arrow is the natural composition of cokernel morphisms. Now, since $\sA_{\sunit,W_2,W_3}\circ \sleft_{W_2\boxtimes W_3}^{-1}=\sleft_{W_2}^{-1}\boxtimes 1_{W_3}$, we have
	\begin{equation*}
	\sA_{A,W_2,W_3}\circ\iota_{W_2\boxtimes W_3}=\iota_{W_2}\boxtimes 1_{W_3}.
	\end{equation*}
	This together with the naturality of the associativity isomorphisms applied to $\iota_{W_2}$ and $\iota_{W_3}$ yields
	\begin{align*}
	A\boxtimes & (W_1\boxtimes(W_2\boxtimes W_3))\xrightarrow{assoc.} ((A\boxtimes W_1)\boxtimes W_2)\boxtimes W_3\nonumber\\
	&\xrightarrow{(1_{A\boxtimes W_1}\boxtimes\iota_{W_2})\boxtimes\iota_{W_3}} ((A\boxtimes W_1)\boxtimes(A\boxtimes W_2))\boxtimes(A\boxtimes W_3)\nonumber\\
	&\rightarrow ((A\boxtimes W_1)\boxtimes_A(A\boxtimes W_2))\boxtimes_A(A\boxtimes W_3).
	\end{align*}
	By the pentagon axiom, we can take the composition of associativity isomorphisms represented by the first arrow to be
	\begin{equation*}
	(\sA_{A,W_1,W_2}\boxtimes 1_{W_3})\circ\sA_{A,W_1\boxtimes W_2,W_3}\circ(1_A\boxtimes\sA_{W_1,W_2,W_3}).
	\end{equation*}
	Then we get
	\begin{align*}
	A\boxtimes & (W_1\boxtimes(W_2\boxtimes W_3))\xrightarrow{1_A\boxtimes\sA_{W_1,W_2,W_3}} A\boxtimes((W_1\boxtimes W_2)\boxtimes W_3)\nonumber\\
	&\xrightarrow{\sA_{A,W_1\boxtimes W_2,W_3}} (A\boxtimes(W_1\boxtimes W_2))\boxtimes W_3\xrightarrow{1_{A\boxtimes(W_1\boxtimes W_2)}\boxtimes\iota_{W_3}} (A\boxtimes(W_1\boxtimes W_2))\boxtimes(A\boxtimes W_3)\nonumber\\
	&\xrightarrow{\sA_{A,W_1,W_2}\boxtimes 1_{A\boxtimes W_3}} ((A\boxtimes W_1)\boxtimes W_2)\boxtimes(A\boxtimes W_3)\nonumber\\
	&\xrightarrow{(1_{A\boxtimes W_1}\boxtimes\iota_{W_2})\boxtimes 1_{A\boxtimes W_3}}((A\boxtimes W_1)\boxtimes(A\boxtimes W_2))\boxtimes(A\boxtimes W_3)\nonumber\\
	&\rightarrow ((A\boxtimes W_1)\boxtimes_A(A\boxtimes W_2))\boxtimes_A(A\boxtimes W_3),
	\end{align*}
	and this is the upper right composition in \eqref{Fassoc}, completing the proof of the theorem.
\end{proof}

In addition to the induction functor, there is a restriction functor from $\repA$ to $\sC$:
\begin{defi}\label{def:restriction}
The restriction functor $\cG: \repA\rightarrow\sC$ is given by
\begin{equation*}
 \cG: (W,\mu_W)\mapsto W;\,\,f\mapsto f.
\end{equation*}
\end{defi}
The following adjointness of $\cF$ and $\cG$ is well known (see for instance \cite[Lemma 7.8.12]{EGNO}):
\begin{lemma}\label{lem:FGadjoint}
	For any object $X$ of $\sC$ and any object $W$ of $\repA$, there is a natural isomorphism between 
	$\hom_{\repA}(\cF(X),W)$ and $\hom_{\sC}(X,\cG(W))$.
\end{lemma}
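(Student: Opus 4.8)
The plan is to construct the adjunction isomorphism explicitly using the unit map $\iota_A$ of $A$ and the module structure $\mu_W$ of $W$, exactly as in the classical tensor-categorical setting. First I would define, for an $\repA$-morphism $g\colon \cF(X)=A\boxtimes X\rightarrow W$, the $\sC$-morphism
\begin{equation*}
  \Phi(g)\colon X\xrightarrow{\sleft_X^{-1}} \sunit\boxtimes X\xrightarrow{\iota_A\boxtimes 1_X} A\boxtimes X\xrightarrow{g} W,
\end{equation*}
which is manifestly a morphism in $\sC$ (no $A$-equivariance is required since $\cG(W)$ is just the underlying object). In the other direction, for an $\sC$-morphism $h\colon X\rightarrow \cG(W)$, I would define
\begin{equation*}
  \Psi(h)\colon A\boxtimes X\xrightarrow{1_A\boxtimes h} A\boxtimes W\xrightarrow{\mu_W} W.
\end{equation*}
Here one must check that $\Psi(h)$ is actually a morphism in $\repA$, i.e.\ that $\Psi(h)\circ\mu_{\cF(X)}=\mu_W\circ(1_A\boxtimes\Psi(h))$. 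This follows by unravelling $\mu_{\cF(X)}=(\mu\boxtimes 1_X)\circ\sA_{A,A,X}$, applying naturality of the associativity isomorphisms to slide $h$ past, and then using the associativity axiom for the $A$-action $\mu_W$ on $W$; the evenness of $\mu$, $\mu_W$, and $\sA$ ensures no sign subtleties arise.

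Next I would verify that $\Phi$ and $\Psi$ are mutually inverse. For $\Phi\circ\Psi=\mathrm{id}$: starting from $h$, the composite $\Phi(\Psi(h))$ is $\mu_W\circ(1_A\boxtimes h)\circ(\iota_A\boxtimes 1_X)\circ\sleft_X^{-1}$; using naturality of the unit isomorphisms to move $h$ to the end and then the unit property of $\mu_W$ (namely $\mu_W\circ(\iota_A\boxtimes 1_W)\circ\sleft_W^{-1}=1_W$) collapses this to $h$. For $\Psi\circ\Phi=\mathrm{id}$: starting from an $\repA$-morphism $g$, the composite $\Psi(\Phi(g))=\mu_W\circ(1_A\boxtimes g)\circ(1_A\boxtimes((\iota_A\boxtimes 1_X)\circ\sleft_X^{-1}))$; since $g$ is a morphism in $\repA$ we have $\mu_W\circ(1_A\boxtimes g)=g\circ\mu_{\cF(X)}$, so this becomes $g\circ\mu_{\cF(X)}\circ(1_A\boxtimes((\iota_A\boxtimes 1_X)\circ\sleft_X^{-1}))$, and the latter composite is precisely the expression that the unit property for $\mu_{\cF(X)}$ (proved in the course of showing $\cF(X)\in\repA$) identifies with $1_{A\boxtimes X}$, using the right unit property of $\mu$ on $A$ (Remark~\ref{rightunitSA}) together with the triangle axiom. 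Hence $\Psi(\Phi(g))=g$.

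Finally I would check naturality in both $X$ and $W$: for $\repA$-morphisms $W\rightarrow W'$ this is immediate from the definition of $\Phi$, and for $\sC$-morphisms $X'\rightarrow X$ it follows from naturality of $\sleft$ and of $\iota_A\boxtimes\cdot$. I expect the main obstacle to be purely bookkeeping rather than conceptual: carefully tracking the associativity and unit isomorphisms in the non-strict supercategory $\sC$ when verifying that $\Psi(h)$ respects the $A$-action and that $\Psi\circ\Phi$ is the identity — in particular making sure the reduction in the $\Psi\circ\Phi$ step really does reproduce the same composite that appeared in the unit-property verification for $\mu_{\cF(X)}$. Since all structure morphisms of $A$ and all the coherence isomorphisms of $\sC$ are even, no sign factors intervene, so the superalgebra generality adds nothing essential here beyond what appears in \cite{EGNO}.
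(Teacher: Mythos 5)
Your proposal is correct and takes essentially the same approach as the paper (which itself reproduces Lemma~7.8.12 of \cite{EGNO}): the maps $\Phi$ and $\Psi$ are defined identically, $\Psi(h)\in\repA$ is checked the same way, $\Phi\circ\Psi=\mathrm{id}$ uses the unit property of $\mu_W$, and $\Psi\circ\Phi=\mathrm{id}$ uses $A$-equivariance of $g$ plus a reduction of $\mu_{\cF(X)}\circ(1_A\boxtimes((\iota_A\boxtimes 1_X)\circ\sleft_X^{-1}))$ to the identity.

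One point to tighten, since you flagged it yourself as a concern: that last composite is \emph{not} literally the expression appearing in the $A$-module unit-property verification for $\mu_{\cF(X)}$. The unit axiom for $\mu_{\cF(X)}$ is $\mu_{\cF(X)}\circ(\iota_A\boxtimes 1_{A\boxtimes X})\circ\sleft_{A\boxtimes X}^{-1}=1_{A\boxtimes X}$, with $\iota_A$ attached to the \emph{outer} $A$-factor, and it reduces to the identity via the \emph{left} unit property of $\mu$. Your composite instead inserts $\iota_A$ on the \emph{inner} factor and requires a separate (though parallel) computation: slide $1_A\boxtimes(\iota_A\boxtimes 1_X)$ through $\sA_{A,A,X}$ by naturality, use the triangle axiom to collapse $\sA_{A,\sunit,X}\circ(1_A\boxtimes\sleft_X^{-1})=\sright_A^{-1}\boxtimes 1_X$, and finish with the \emph{right} unit property of $\mu$ (Remark~\ref{rightunitSA}). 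So the tools you name are exactly right, but the reduction is not a re-use of the one from the verification that $\cF(X)\in\repA$; it must be carried out independently.
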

	For future reference, we record here the maps used in the proof:
	\begin{align*}
		\Phi: \hom_{\repA}(\cF(X),W)&\longrightarrow \hom_{\sC}(X,\cG(W))\\
		f&\longmapsto  f \circ (\iota_A \boxtimes 1_X)\circ \sleft_X^{-1}\\
		\Psi:  \hom_{\sC}(X,\cG(W))&\longrightarrow \hom_{\repA}(\cF(X),W)\\
		g&\longmapsto  \mu_W\circ(1_A\boxtimes g).
	\end{align*}
	It is easy to verify that $\Psi(g)$ is indeed a morphism in $\repA$, that $\Phi$ and $\Psi$ are mutual inverses, and that they are natural isomorphisms.

\subsection{More properties of \texorpdfstring{$\repA$}{Rep A} and the induction functor}\label{subsec:miscresults}

In this subsection we study the relation between the induction functor (and restriction functor) constructed in the previous subsection and additional structures that can be imposed on $\sC$, such as rigidity and ribbon structure, which we recall below. Many of the results here are contained in \cite{KO} and \cite{EGNO}, but in some cases a little additional care is needed in the superalgebra generality.

Our first goal is to show that tensoring in $\repA$ preserves right exact sequences of parity-homogeneous morphisms; for this we need the following lemma:
\begin{lemma}[{\cite[Theorem 1.6(1)]{KO}}]\label{lem:Gexact}
	The restriction functor $\cG$ is injective on morphisms and preserves exact sequences of parity-homogeneous morphisms.
\end{lemma}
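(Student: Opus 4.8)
\textbf{Proof plan for Lemma \ref{lem:Gexact}.}

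The plan is to prove the two assertions separately. For injectivity on morphisms, I would simply observe that the restriction functor $\cG$ is the identity on underlying morphisms: if $f,g\colon (W_1,\mu_{W_1})\to (W_2,\mu_{W_2})$ are $\repA$-morphisms with $\cG(f)=\cG(g)$, then $f=g$ as $\sC$-morphisms, so $f=g$ in $\repA$. This is immediate from Definition \ref{def:restriction}. (In particular $\cG$ is faithful.)

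For the exactness statement, I would unwind what needs to be shown. Since $\sC$ (or $\underline{\sC}$) need not contain all kernels and cokernels of non-homogeneous morphisms, I interpret ``preserves exact sequences of parity-homogeneous morphisms'' as: if $W_1\xrightarrow{f_1}W_2\xrightarrow{f_2}W_3$ is a sequence of parity-homogeneous morphisms in $\repA$ such that $f_2$ is a cokernel of $f_1$ in $\repA$, then $\cG(f_2)$ is a cokernel of $\cG(f_1)$ in $\sC$; and similarly for kernels. (It suffices to handle cokernels, since the kernel case is dual and also not strictly needed in what follows.) First I would recall from Proposition \ref{SuperRepAadditive} (whose proof builds on Theorem \ref{thm:repAabelian}) the explicit construction: the cokernel of a parity-homogeneous $\repA$-morphism $f\colon W_1\to W_2$ is the triple $(C,\mu_C,c)$, where $(C,c)$ is a cokernel of $f$ in $\sC$ and $\mu_C$ is the induced (even) $A$-action determined by $\mu_C\circ(1_A\boxtimes c)=c\circ\mu_{W_2}$. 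Thus by construction the $\sC$-morphism underlying the $\repA$-cokernel of $f$ \emph{is} a cokernel of $\cG(f)$ in $\sC$. Since any two cokernels of $f_1$ in $\repA$ are canonically isomorphic via an $\repA$-isomorphism, and $\cG$ sends isomorphisms to isomorphisms, it follows that whenever $f_2$ is a cokernel of $f_1$ in $\repA$, $\cG(f_2)$ is a cokernel of $\cG(f_1)$ in $\sC$, which is the desired statement.

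The one point requiring a little care in the superalgebra generality — and the main (mild) obstacle — is making sure the parity-homogeneity hypothesis is genuinely used and that the relevant cokernels exist: Proposition \ref{evenkercoker} guarantees that a parity-homogeneous morphism in $\sC$ has kernel and cokernel morphisms that may be taken even, and Proposition \ref{SuperRepAadditive} lifts this to $\repA$; for a non-homogeneous morphism neither $\sC$ nor $\repA$ need have the relevant (co)kernel, which is why the statement is restricted to parity-homogeneous morphisms. I would also remark that on the underlying categories $\underline{\sC}$ and $\underline{\repA}$ (which are abelian by the remarks following Propositions \ref{evenkercoker} and Theorem \ref{thm:repAmonoidal}) the restriction functor is exact in the usual sense, which recovers the statement of \cite[Theorem 1.6(1)]{KO} in the ordinary (non-super) setting where every morphism is even.
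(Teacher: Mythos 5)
Your proof is correct and follows essentially the same route as the paper: injectivity is immediate because $\cG$ is the identity on underlying morphisms, and exactness follows by noting that the explicit cokernel $(C,\mu_C,c)$ constructed in Proposition \ref{SuperRepAadditive} has underlying $\sC$-cokernel $(C,c)$, then transporting this along the unique $\repA$-isomorphism to the given cokernel. Your supplementary remarks about why parity-homogeneity is genuinely needed and about the underlying abelian categories $\sCeven$, $\underline{\repA}$ are sound and in the spirit of the surrounding text.
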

\begin{proof}
	The first assertion is obvious. For the second, let $W_1\xrightarrow{f_1} W_2\xrightarrow{f_2} W_3\rightarrow 0$ be right exact in $\repA$ (that is, $(W_3, f_2)$ is a $\repA$-cokernel of $f_1$) with $f_1$ and $f_2$  parity-homogeneous. We need to show that $(W_3, f_2)$ is also an $\sC$-cokernel of $f_1$. Proposition \ref{SuperRepAadditive} and its proof show that $f_1$ has a $\repA$-cokernel $(C,c)$ which is also a cokernel of $f_1$ in $\sC$. Thus there is a $\repA$-isomorphism $\eta: W_3\rightarrow C$ such that $\eta\circ f_2=c$; using this isomorphism, it is clear that $(W_3, f_2)$ is a cokernel of $f_1$ in $\sC$.
	
	One can show similarly that if $0\rightarrow W_1\xrightarrow{f_1} W_2\xrightarrow{f_2} W_3$ is a left exact sequence of parity-homogeneous morphisms in $\repA$, then $(W_1, f_1)$ is also a kernel of $f_2$ in $\sC$.
\end{proof}

Now we can prove that tensoring in $\repA$ preserves right exact sequences of homogeneous morphisms:
\begin{propo}\label{lem:tensrightexact} 
For any object $W$ in $\repA$ and right exact sequence $W_1\xrightarrow{f_1} W_2\xrightarrow{f_2} W_3\rightarrow 0$ in $\repA$ with $f_1$ parity-homogeneous, the sequences
\begin{equation*}
 W\boxtimes_A W_1\xrightarrow{1_W\boxtimes_A f_1} W\boxtimes_A W_2\xrightarrow{1_W\boxtimes_A f_2} W\boxtimes_A W_3\rightarrow 0 
\end{equation*}
and
\begin{equation*}
  W_1\boxtimes_A W\xrightarrow{f_1\boxtimes_A 1_W} W_2\boxtimes_A W\xrightarrow{f_2\boxtimes_A 1_W} W_3\boxtimes_A W\rightarrow 0 
\end{equation*}
are also right exact.
\end{propo}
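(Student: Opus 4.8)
The plan is to reduce the statement to the analogous right-exactness assertion in the ambient supercategory $\sC$, and then to prove that assertion by a two-step descent through the cokernel maps defining $\boxtimes_A$. I treat the first sequence; the second is entirely symmetric, using the right exactness of $\cdot\boxtimes W$ on $\sC$ in place of that of $W\boxtimes\cdot$. First I would record the reduction. Since $1_W$ is even, $1_W\boxtimes_A f_1$ is parity-homogeneous, so by Proposition \ref{SuperRepAadditive} it has a cokernel in $\repA$, which by the proof of Theorem \ref{thm:repAabelian} is also a cokernel in $\sC$; consequently, once $(W\boxtimes_A W_3,\,1_W\boxtimes_A f_2)$ is shown to be a cokernel of $1_W\boxtimes_A f_1$ in $\sC$, the comparison isomorphism with the $\repA$-cokernel is forced to respect the $A$-actions (using that $1_A\boxtimes c$ is epic for $c$ a cokernel morphism, by right exactness of $A\boxtimes\cdot$), so $(W\boxtimes_A W_3,\,1_W\boxtimes_A f_2)$ is also a cokernel in $\repA$. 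Note also that $(1_W\boxtimes_A f_2)\circ(1_W\boxtimes_A f_1)=1_W\boxtimes_A(f_2\circ f_1)=0$, and that by Lemma \ref{lem:Gexact} (applied after replacing $f_2$ by an even representative of the cokernel map, if necessary) the original sequence $W_1\xrightarrow{f_1}W_2\xrightarrow{f_2}W_3\to 0$ is right exact in $\sC$, so applying the right exact functor $W\boxtimes\cdot$ on $\sC$ gives that
\begin{equation*}
W\boxtimes W_1\xrightarrow{1_W\boxtimes f_1} W\boxtimes W_2\xrightarrow{1_W\boxtimes f_2} W\boxtimes W_3\to 0
\end{equation*}
is right exact in $\sC$.

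For the cokernel property in $\sC$ I would verify the universal property directly. Given an object $T$ of $\sC$ and an $\sC$-morphism $h\colon W\boxtimes_A W_2\to T$ with $h\circ(1_W\boxtimes_A f_1)=0$, the square \eqref{repAtensprodmorphdef} shows that $h\circ\eta_{W,W_2}$ kills $1_W\boxtimes f_1$, so by the cokernel property of $(W\boxtimes W_3,\,1_W\boxtimes f_2)$ there is a unique $\sC$-morphism $k\colon W\boxtimes W_3\to T$ with $k\circ(1_W\boxtimes f_2)=h\circ\eta_{W,W_2}$. Next, the naturality identity \eqref{repAtensprodmorph} for $\mu^{(1)},\mu^{(2)}$ (Definition \ref{def:mu1mu2}) with respect to $1_W\boxtimes f_2$, together with the fact that $\eta_{W,W_2}$ is the cokernel of $\mu^{(1)}-\mu^{(2)}$ for the pair $(W,W_2)$, shows that $k\circ\bigl(\mu^{(1)}-\mu^{(2)}\bigr)$ for the pair $(W,W_3)$, precomposed with $1_A\boxtimes(1_W\boxtimes f_2)$, vanishes; since $f_2$ is epic and both $W\boxtimes\cdot$ and $A\boxtimes\cdot$ are right exact on $\sC$, the morphism $1_A\boxtimes(1_W\boxtimes f_2)$ is epic, whence $k\circ\bigl(\mu^{(1)}-\mu^{(2)}\bigr)=0$. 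The universal property of $\eta_{W,W_3}$ then yields a unique $\overline{h}\colon W\boxtimes_A W_3\to T$ with $\overline{h}\circ\eta_{W,W_3}=k$, and \eqref{repAtensprodmorphdef} together with the surjectivity of $\eta_{W,W_2}$ gives $\overline{h}\circ(1_W\boxtimes_A f_2)=h$. Uniqueness of $\overline{h}$ follows because $1_W\boxtimes_A f_2$ is epic: its composite with the epimorphism $\eta_{W,W_2}$ equals $\eta_{W,W_3}\circ(1_W\boxtimes f_2)$, which is a composite of epimorphisms.

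The main obstacle here is organizational rather than technical: one must carefully track which morphisms are parity-homogeneous (so that Proposition \ref{SuperRepAadditive} produces the $\repA$-cokernel of $1_W\boxtimes_A f_1$ and Lemma \ref{lem:Gexact} applies to the input sequence) and must arrange the descent of the test morphism $h$ through $1_W\boxtimes f_2$ and then through $\eta_{W,W_3}$ in the correct order, so that each epimorphism needed to cancel — namely $\eta_{W,W_2}$, $1_A\boxtimes(1_W\boxtimes f_2)$, and $1_W\boxtimes_A f_2$ — is genuinely available from the right exactness of the tensor functor on $\sC$. Once this is set up, the individual diagram chases are routine given \eqref{repAtensprodmorphdef}, \eqref{repAtensprodmorph}, and the defining cokernel properties of the maps $\eta_{W,-}$, and the second sequence is handled verbatim with $W\boxtimes\cdot$ replaced by $\cdot\boxtimes W$.
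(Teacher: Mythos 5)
Your proof is correct, and it arrives at the result by a route that is organized differently from the paper's, though the underlying technical engine is the same. The shared core is the surjectivity argument: both proofs take a test morphism out of $W\boxtimes_A W_2$ that kills $1_W\boxtimes_A f_1$, pull it back along $\eta_{W,W_2}$, descend along the $\sC$-cokernel $1_W\boxtimes f_2$ to a map $k$ out of $W\boxtimes W_3$, and then use the fact that $1_A\boxtimes(1_W\boxtimes f_2)$ is epic to prove the vanishing needed to factor $k$ further through $\eta_{W,W_3}$.

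Where you diverge from the paper is in how the last factorization is promoted to an $\repA$-morphism. The paper keeps the test object in $\repA$ throughout: it observes that $g\circ\eta_{W,W_2}$ is a categorical $\repA$-intertwining operator (Proposition \ref{compintwopmorph}), proves that the descended map $\eta$ is also one — this requires verifying the full identity $\eta\circ\mu^{(i)}=\mu_{\widetilde{W}}\circ(1_A\boxtimes\eta)$ for $i=1,2$ — and then invokes the universal property of $\boxtimes_A$ (Proposition \ref{prop:repAuniv}) to get the desired $\repA$-morphism directly. You instead take the test object in $\sC$, so you only need the weaker vanishing $k\circ(\mu^{(1)}-\mu^{(2)})=0$ (the intertwining-with-$\mu_T$ half is vacuous since $T$ has no $A$-action), and you then separately compare $(W\boxtimes_A W_3,\,1_W\boxtimes_A f_2)$ with the abstractly-existing $\repA$-cokernel of the parity-homogeneous morphism $1_W\boxtimes_A f_1$, using the surjectivity of $1_A\boxtimes c$ to force the comparison isomorphism to be $A$-equivariant. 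Your version trades one extra layer (the comparison/transfer step, which is genuinely necessary since an $\sC$-cokernel of a $\repA$-morphism need not a priori be an $\repA$-cokernel) for a lighter verification at the descent stage; the paper's is a bit more self-contained since everything stays inside $\repA$. Your parenthetical caveat about replacing $f_2$ by an even cokernel morphism before applying Lemma \ref{lem:Gexact} is well taken — the lemma is stated for two parity-homogeneous morphisms while the proposition only assumes $f_1$ is homogeneous — and in fact the paper's proof relies on the same (unstated) adjustment.
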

\begin{proof}
For the first sequence, we need to show that $(W\boxtimes_A W_3, 1_W\boxtimes_A f_2)$ is a cokernel of $1_W\boxtimes_A f_1$. By Lemma \ref{lem:Gexact}, $(W_3, f_2)$ is a cokernel of $f_1$ in $\sC$, so because $W\boxtimes\cdot$ is right exact in $\sC$, the top row of the following commutative diagram is right exact:
\begin{equation*}
 \xymatrixcolsep{3pc}
 \xymatrix{
 W\boxtimes W_1 \ar[r]^{1_W\boxtimes f_1} \ar[d]^{\eta_{W,W_1}} & W\boxtimes W_2 \ar[r]^{1_W\boxtimes f_2} \ar[d]^{\eta_{W,W_2}} & W\boxtimes W_3 \ar[d]^{\eta_{W,W_3}} \ar[r]^{} & 0\\
 W\boxtimes_A W_1 \ar[r]^{1_W\boxtimes_A f_1} & W\boxtimes_A W_2 \ar[r]^{1_W\boxtimes_A f_2} & W\boxtimes_A W_3 \ar[r]^{} & 0\\
 }
\end{equation*}
Now, if $g: W\boxtimes_A W_2\rightarrow\widetilde{W}$ is a morphism in $\repA$ such that $g\circ(1_W\boxtimes_A f_1)=0$, then $g\circ\eta_{W,W_2}\circ(1_W\boxtimes f_1)=0$ as well. Thus the universal property of the cokernel $(W\boxtimes W_3, 1_W\boxtimes f_2)$ implies that there is a unique $\sC$-morphism $\eta: W\boxtimes W_3\rightarrow\widetilde{W}$ such that 
\begin{equation*}
 \eta\circ(1_W\boxtimes f_2)=g\circ\eta_{W,W_2}.
\end{equation*}
To complete the proof that $W\boxtimes_A\cdot$ is right exact, we need to show that $\eta$ is a $\repA$-intertwining operator, since then the universal property of the $\repA$-tensor product will induce a unique $\repA$-morphism $\widetilde{g}: W\boxtimes_A W_3\rightarrow\widetilde{W}$ such that $\widetilde{g}\circ\eta_{W,W_3}=\eta$, from which we get
\begin{equation*}
 \widetilde{g}\circ(1_W\boxtimes_A f_2)=g
\end{equation*}
using the surjectivity of $\eta_{W,W_2}$.

To see that $\eta$ is a $\repA$-intertwining operator, we use the fact that $f_2$ is a $\repA$-morphism, \eqref{repAtensprodmorph}, the definition of $\eta$, and the fact that $g\circ\eta_{W,W_2}$ is a $\repA$-intertwining operator (recall Proposition \ref{compintwopmorph}):
\begin{align*}
 \eta\circ\mu^{(i)}\circ(1_A\boxtimes(1_W\boxtimes f_2)) & = \eta\circ(1_W\boxtimes f_2)\circ\mu^{(i)} = g\circ\eta_{W,W_2}\circ\mu^{(i)}\nonumber\\
 &= \mu_{\widetilde{W}}\circ(1_A\boxtimes(g\circ\eta_{W,W_2})) =\mu_{\widetilde{W}}\circ(1_A\boxtimes\eta)\circ(1_A\boxtimes(1_W\boxtimes f_2))
\end{align*}
for $i=1,2$. Since $1_A\boxtimes(1_W\boxtimes f_2)$ is surjective, $\eta\circ\mu^{(i)}=\mu_{\widetilde{W}}\circ(1_A\boxtimes \eta)$ for $i=1,2$, as required.
	
The proof that $(W_3\boxtimes_A W, f_2\boxtimes_A 1_W)$ is a cokernel of $f_1\boxtimes_A 1_W$ is the same. See also \cite[Exercise 7.8.23]{EGNO}, but note that they work in a rigid category, which we do not assume here.
\end{proof}

\begin{rema}
 The preceding lemma and proposition show that $\cG$ is an exact functor from $\underline{\repA}$ to $\sCeven$, and that for any object in $W$ in $\underline{\repA}$, $W\boxtimes_A\cdot$ and $\cdot\boxtimes_A W$ are right exact functors on $\underline{\repA}$. An easy consequence of Lemmas \ref{lem:FGadjoint} and \ref{lem:Gexact} is that if $W$ is a projective object in $\sCeven$, then $\cF(W)$ is projective in $\underline{\repA}$. All these results hold when $\sCeven$ is replaced by $\cC$ and $A$ is an ordinary algebra in $\cC$.
\end{rema}

We now study the interplay between induction, the braided monoidal supercategory $\repzA$ of local $A$-modules, and additional structures on $\sC$. First, we show when an object of $\sC$ to induces to an object in $\repzA$:
\begin{propo}\label{propo:rep0inductioncriteria}
 For an object $W$ of $\sC$,  $\cF(W)$ is an object of $\repzA$ if and only if $\cM_{A,W}=1_{A\boxtimes W}$.
\end{propo}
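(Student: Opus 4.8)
The plan is to prove both implications by reducing the condition defining $\repzA$ — namely $\mu_{\cF(W)}\circ\cM_{A,\cF(W)}=\mu_{\cF(W)}$ — to the stated condition $\cM_{A,W}=1_{A\boxtimes W}$, using the explicit formula $\mu_{\cF(W)}=(\mu\boxtimes 1_W)\circ\sA_{A,A,W}$ together with the hexagon axioms and the associativity, commutativity, and unit properties of $A$. First I would unwind $\mu_{\cF(W)}\circ\cM_{A,A\boxtimes W}=\mu_{\cF(W)}\circ\sR_{A\boxtimes W,A}\circ\sR_{A,A\boxtimes W}$. Applying the hexagon axioms to both braiding isomorphisms $\sR_{A,A\boxtimes W}$ and $\sR_{A\boxtimes W,A}$ rewrites the composite in terms of $\sR_{A,A}$, $\sR_{A,W}$, and $\sR_{W,A}$; after using the commutativity axiom for $A$ to cancel the contribution of $\sR_{A,A}$ (much as in the proof of Proposition \ref{repAleftunit}, where $\mu_W\circ\mu^{(2)}=\mu_W\circ\mu^{(1)}$ was shown), one is left with a composite that, via the associativity of $\mu$ and naturality of the structure isomorphisms, simplifies to $\mu_{\cF(W)}\circ(1_A\boxtimes\cM_{A,W}')$ for an appropriate rearrangement of $\cM_{A,W}$, or more precisely to an expression of the form $(\mu\boxtimes 1_W)\circ\sA_{A,A,W}\circ(1_{A}\boxtimes\cM_{A,W})$ up to associativity isomorphisms.

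For the ``if'' direction this is then immediate: if $\cM_{A,W}=1_{A\boxtimes W}$, the extra factor disappears and $\mu_{\cF(W)}\circ\cM_{A,\cF(W)}=\mu_{\cF(W)}$, so $\cF(W)\in\repzA$. For the ``only if'' direction I would exploit that $\cF$ is left adjoint to the restriction functor $\cG$ (Lemma \ref{lem:FGadjoint}): the identity $\mu_{\cF(W)}\circ\cM_{A,\cF(W)}=\mu_{\cF(W)}$ is an equality of $\repA$-morphisms $\cF(A\boxtimes W)\to\cF(W)$ (after precomposing with a suitable map, or by viewing $\cM_{A,\cF(W)}$ as induced), and composing with $\iota_A\boxtimes 1$ and the left unit isomorphism — the unit of the adjunction $\Phi$ from Lemma \ref{lem:FGadjoint} — should strip away the $A$-action and leave precisely $\cM_{A,W}=1_{A\boxtimes W}$. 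Concretely, one precomposes the two sides with the morphism $(\iota_A\boxtimes 1_{A\boxtimes W})\circ\sleft_{A\boxtimes W}^{-1}\colon A\boxtimes W\to A\boxtimes(A\boxtimes W)$, uses the unit property of $\mu_{\cF(W)}$ (which gives $\mu_{\cF(W)}\circ(\iota_A\boxtimes 1_{\cF(W)})\circ\sleft_{\cF(W)}^{-1}=1_{\cF(W)}$) on the right-hand side, and on the left-hand side pushes $\iota_A$ through the monodromy using naturality of $\cM$ for even morphisms (as $\iota_A$ is even) to recover $\cM_{A,W}$.

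The main obstacle I expect is bookkeeping in the first step: carefully applying the two hexagon axioms and the pentagon axiom to reduce $\mu_{\cF(W)}\circ\sR_{A\boxtimes W,A}\circ\sR_{A,A\boxtimes W}\circ\sA_{A,A\boxtimes W, ?}$ to the clean form $\mu_{\cF(W)}\circ(1_A\boxtimes\cM_{A,W})$ (modulo associators) involves the same kind of intricate diagram-chasing seen in the proof of Theorem \ref{thm:rep0} for $\boxtimes_A$ being closed on $\repzA$, and one must be scrupulous about which braiding — $\sR_{A,W}$ versus $\sR_{W,A}$ — appears where. A braid-diagram argument in the style used throughout this section (thin strands for $A$, thick for $W$, coupons for $\mu$ and $\iota_A$) would make the cancellation transparent. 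Once the first step is in place, both implications follow quickly, so I would organize the proof around establishing the single identity $\mu_{\cF(W)}\circ\cM_{A,\cF(W)}=(\mu\boxtimes 1_W)\circ\sA_{A,A,W}\circ(1_A\boxtimes\cM_{A,W})$ and then reading off each direction.
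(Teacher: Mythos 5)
Your overall strategy—decompose $\cM_{A,\cF(W)}$ via the hexagon axioms for the ``if'' direction, and strip off the ambient $A$-copy via a unit insertion plus naturality of $\cM$ for the ``only if'' direction—is essentially the same as the paper's. But the ``only if'' direction as you describe it has a concrete error: you precompose with $(\iota_A\boxtimes 1_{A\boxtimes W})\circ\sleft_{A\boxtimes W}^{-1}\colon A\boxtimes W\to A\boxtimes(A\boxtimes W)$, which inserts the unit into the \emph{outer} $A$-slot. Pushing $\iota_A\colon\sunit\to A$ through $\cM_{A,A\boxtimes W}$ by naturality of $\cM$ then replaces $\cM_{A,A\boxtimes W}$ with $\cM_{\sunit,A\boxtimes W}=1$, so both sides reduce to $1_{A\boxtimes W}$ and you learn nothing. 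The same happens if you phrase it via the adjunction $\Phi$ of Lemma~\ref{lem:FGadjoint}: the unit of that adjunction hits the left slot, and $\Phi(\mu_{\cF(W)}\circ\cM_{A,\cF(W)})=\Phi(\mu_{\cF(W)})=1_{A\boxtimes W}$ tautologically. To recover $\cM_{A,W}$ you must insert $\iota_A$ into the \emph{inner} slot, i.e.\ precompose with $1_A\boxtimes\bigl((\iota_A\boxtimes 1_W)\circ\sleft_W^{-1}\bigr)$; then naturality of $\cM$ with respect to the even morphism $(\iota_A\boxtimes 1_W)\circ\sleft_W^{-1}\colon W\to A\boxtimes W$ pulls out precisely $\cM_{A,W}$, and the right unit property of $\mu$ (Remark~\ref{rightunit}, together with the triangle axiom) collapses the rest to $1_{A\boxtimes W}$. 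This is what the paper does, phrased in reverse: it starts from $\cM_{A,W}$, inserts the unit via $\sright_A^{-1}\boxtimes 1_W$ and $(1_A\boxtimes\iota_A)\boxtimes 1_W$ (hence into the middle slot), and then pushes the monodromy outward to land on $\cM_{A,A\boxtimes W}$ where the $\repzA$ hypothesis applies.

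A smaller issue: your proposed master identity $\mu_{\cF(W)}\circ\cM_{A,\cF(W)}=(\mu\boxtimes 1_W)\circ\sA_{A,A,W}\circ(1_A\boxtimes\cM_{A,W})$ is not literally correct—the hexagon decomposition of $\cM_{A,A\boxtimes W}$ produces two extra $\sR_{A,A}\boxtimes 1_W$ factors sandwiching the $1_A\boxtimes\cM_{A,W}$ piece, and they only cancel on the nose after one invokes $\mu\circ\sR_{A,A}=\mu$ (commutativity of $A$) at the end. You do mention using commutativity, so the ``if'' direction is salvageable once you track these braids carefully; but the identity as stated does not hold, and consequently the ``only if'' direction cannot just be ``read off'' from it even after fixing the unit-slot issue—the expression $(\mu\boxtimes 1_W)\circ\sA_{A,A,W}$ is not a monomorphism, so equality after postcomposing with it does not yield $\cM_{A,W}=1$ directly.
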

\begin{proof}
 First suppose $\cF(W)=A\boxtimes W$ is an object of $\repzA$, that is, $\mu_{A\boxtimes W}\circ\cM_{A,A\boxtimes W}=\mu_{A\boxtimes W}$. Then the following calculation shows that $\cM_{A,W}=1_{A\boxtimes W}$:
 \begin{align*}
 	\begin{matrix}
 		\begin{tikzpicture}[out=up,in=down,line width=0.5pt]
 			\node (a) at (0,0) {$\scriptstyle{A}$};
 			\node (w) at (1,0) {$\scriptstyle{W}$};
 			\node (af) at (0, 3.6) {$\scriptstyle{A}$};
 			\node (wf) at (1, 3.6) {$\scriptstyle{W}$};
 			\draw [line width=1pt] (w.90) to (0,1);
 			\draw [white, double=black, line width=3pt, double distance=0.5pt] (a.90) to (1,1) to (0,1.8) to (af.270);
 			\draw [white, double=black, line width=3pt, double distance=1pt] (0,1) to (1,1.8) to (wf.270);			
 		\end{tikzpicture}			
 	\end{matrix}
 	=			
 	\begin{matrix}
 		\begin{tikzpicture}[out=up,in=down,line width=0.5pt]
 			\node (a) at (0,0) {$\scriptstyle{A}$};
 			\node (w) at (1,0) {$\scriptstyle{W}$};
 			\node (mu) at (0.3,3) [draw, minimum width=20pt] {$\scriptstyle{\mu}$};
 			\node (af) at (0.3, 3.6) {$\scriptstyle{A}$};
 			\node (wf) at (1, 3.6) {$\scriptstyle{W}$};
 			\node (i) at (0.6,2.3) {$\scriptstyle{\bullet}$};
 			\node (r) at (0,1.8) {};
 			\draw [line width=1pt] (w.90) to (0,1) to (1,1.8);
 			\draw [white, line width=3pt, double=black, double distance=0.5pt] (a.90) to (1,1) to (0,1.8) to (mu.220);
 			\draw [white, line width=3pt, double=black, double distance=1pt] (0,1) to (1,1.8) to (wf.270);
 			\draw [dashed] (r.0) to[out=east] (i.270);
 			\draw (i.90) to (mu.320);
 			\draw (mu.90) to (af.270);
 		\end{tikzpicture}
 	\end{matrix}
 	=
 	\begin{matrix}
 		\begin{tikzpicture}[out=up,in=down,line width=0.5pt]
 			\node (a) at (0,0) {$\scriptstyle{A}$};
 			\node (w) at (1,0) {$\scriptstyle{W}$};
 			\node (mu) at (0.3,3) [draw, minimum width=20pt] {$\scriptstyle{\mu}$};
 			\node (af) at (0.3, 3.6) {$\scriptstyle{A}$};
 			\node (wf) at (1, 3.6) {$\scriptstyle{W}$};
 			\node (i) at (0.6,2.3) {$\scriptstyle{\bullet}$};
 			\node (r) at (1,1.8) {};
 			\draw [line width=1pt] (w.90) to (0,1) to (1,1.8);
 			\draw [white, line width=3pt, double=black, double distance=0.5pt] (a.90) to (1,1) to (0,1.8) to (mu.220);
 			\draw [white, line width=3pt, double=black, double distance=1pt] (0,1) to (1,1.8) to (wf.270);
 			\draw [dashed] (r.180) to[out=west] (i.270);
 			\draw (i.90) to (mu.320);
 			\draw (mu.90) to (af.270);
 		\end{tikzpicture}
 	\end{matrix}
 	=
 	\begin{matrix}
 		\begin{tikzpicture}[out=up,in=down,line width=0.5pt]
 			\node (a) at (0,0) {$\scriptstyle{A}$};
 			\node (w) at (1,0) {$\scriptstyle{W}$};
 			\node (mu) at (0.3,3) [draw, minimum width=20pt] {$\scriptstyle{\mu}$};
 			\node (af) at (0.3, 3.6) {$\scriptstyle{A}$};
 			\node (wf) at (1, 3.6) {$\scriptstyle{W}$};
 			\node (i) at (0.7,0.9) {$\scriptstyle{\bullet}$};
 			\node (r) at (1,0.4) {};
 			\draw [line width=1pt] (w.90) to (1,1.25) to	(0.5,2); 
 			\draw [dashed] (r.180) to[out=west] (i.270);
 			\draw (i.90) to (0,2);
 			\draw [white, line width=3pt, double=black, double distance=0.5pt] (a.90) to (0,1) to (1,2);
 			\draw (1,2) to (mu.220);
 			\draw [white, line width=3pt, double=black, double distance=1pt] (0.5, 2) to (1,2.75) to (wf.270);
 			\draw [white, line width=3pt, double=black,double distance=0.5pt] (0,2) to (mu.320);
 			\draw (mu.90) to (af.270);
 		\end{tikzpicture}
 	\end{matrix}
 	=
 	\begin{matrix}
 		\begin{tikzpicture}[out=up,in=down,line width=0.5pt]
 			\node (a) at (0,0) {$\scriptstyle{A}$};
 			\node (w) at (1,0) {$\scriptstyle{W}$};
 			\node (mu) at (0.3,3) [draw, minimum width=20pt] {$\scriptstyle{\mu}$};
 			\node (af) at (0.3, 3.6) {$\scriptstyle{A}$};
 			\node (wf) at (1, 3.6) {$\scriptstyle{W}$};
 			\node (i) at (0.6,2) {$\scriptstyle{\bullet}$};
 			\node (r) at (1,1) {};
 			\draw (a.90) to (mu.220);
 			\draw (i.90) to (mu.320);
 			\draw [dashed] (r.180) to[out=west] (i.270);
 			\draw [line width=1pt] (w.90) to (wf.270);
 			\draw (mu.90) to (af.270);
 		\end{tikzpicture}
 	\end{matrix}
 	=
 	\begin{matrix}
 		\begin{tikzpicture}[out=up,in=down,line width=0.5pt]
 			\node (a) at (0,0) {$\scriptstyle{A}$};
 			\node (w) at (1,0) {$\scriptstyle{W}$};
 			\node (mu) at (0.3,3) [draw, minimum width=20pt] {$\scriptstyle{\mu}$};
 			\node (af) at (0.3, 3.6) {$\scriptstyle{A}$};
 			\node (wf) at (1, 3.6) {$\scriptstyle{W}$};
 			\node (i) at (0.6,2) {$\scriptstyle{\bullet}$};
 			\node (r) at (0,1) {};
 			\draw (a.90) to (mu.220);
 			\draw (i.90) to (mu.320);
 			\draw [dashed] (r.0) to[out=east] (i.270);
 			\draw [line width=1pt] (w.90) to (wf.270);
 			\draw (mu.90) to (af.270);
 		\end{tikzpicture}
 	\end{matrix}
 	=
 	\begin{matrix}
 		\begin{tikzpicture}[out=up,in=down,line width=0.5pt]
 			\node (a) at (0,0) {$\scriptstyle{A}$};
 			\node (w) at (0.5,0) {$\scriptstyle{W}$};
 			\node (af) at (0, 3.6) {$\scriptstyle{A}$};
 			\node (wf) at (0.5, 3.6) {$\scriptstyle{W}$};
 			\draw (a.90) to (af.270);
 			\draw [line width=1pt] (w.90) to (wf.270);
 		\end{tikzpicture}
 	\end{matrix}
 \end{align*}
Conversely, if $\cM_{A, W}=1_{A\boxtimes W}$, then $\mu_{A\boxtimes W}\circ\cM_{A,A\boxtimes W}=\mu_{A\boxtimes W}$ follows from:
\begin{align*}
	\begin{matrix}
		\begin{tikzpicture}[out=up,in=down,line width=0.5pt]
			\node (a) at (0,0) {$\scriptstyle{A}$};
			\node (a2) at (0.75,0) {$\scriptstyle{A}$};
			\node (w) at (1.25,0) {$\scriptstyle{W}$};
			\node (af) at (0.25, 3.1) {$\scriptstyle{A}$};
			\node (wf) at (1.25, 3.1) {$\scriptstyle{W}$};
			\node (mu) at (0.25, 2.5) [draw, minimum width=20pt] {$\scriptstyle{\mu}$};
			\draw [line width=1pt] (w.90) to (0.5,1);
			\draw  (a2.90) to (0,1);
			\draw [white, double=black, line width=3pt, double distance=0.5pt] (a.90) to (1.25,1) to (0,2) to (mu.220);
			\draw [white, double=black, line width=3pt, double distance=1pt] (0.5,1) to (1.25,2) to (wf.270);			
			\draw [white, double=black, line width=3pt, double distance=0.5pt] (0,1) to (0.5,2) to (mu.320);			
			\draw (mu.90) to (af.270);
		\end{tikzpicture}			
	\end{matrix}
	=
	\begin{matrix}
		\begin{tikzpicture}[out=up,in=down,line width=0.5pt]
			\node (a) at (0,0) {$\scriptstyle{A}$};
			\node (a2) at (0.75,0) {$\scriptstyle{A}$};
			\node (w) at (1.25,0) {$\scriptstyle{W}$};
			\node (af) at (0.25, 3.1) {$\scriptstyle{A}$};
			\node (wf) at (1.25, 3.1) {$\scriptstyle{W}$};
			\node (mu) at (0.25, 2.5) [draw, minimum width=20pt] {$\scriptstyle{\mu}$};
			\draw [line width=1pt] (w.90) to (wf.270);
			\draw  (a2.90) to (0,1);
			\draw [white, double=black, line width=3pt, double distance=0.5pt] (a.90) to (0.5,1) to (0,2) to (mu.220);
			\draw [white, double=black, line width=3pt, double distance=0.5pt] (0,1) to (0.5,2) to (mu.320);			
			\draw (mu.90) to (af.270);
		\end{tikzpicture}	
	\end{matrix}
	=
	\begin{matrix}
		\begin{tikzpicture}[out=up,in=down,line width=0.5pt]
			\node (a) at (0,0) {$\scriptstyle{A}$};
			\node (a2) at (0.5,0) {$\scriptstyle{A}$};
			\node (w) at (1,0) {$\scriptstyle{W}$};
			\node (af) at (0.25, 3.1) {$\scriptstyle{A}$};
			\node (wf) at (1, 3.1) {$\scriptstyle{W}$};
			\node (mu) at (0.25, 2.5) [draw, minimum width=20pt] {$\scriptstyle{\mu}$};
			\draw [line width=1pt] (w.90) to (wf.270);
			\draw (a.90) to (mu.220);
			\draw  (a2.90) to (mu.320);
			\draw (mu.90) to (af.270);
		\end{tikzpicture}	
	\end{matrix}
\end{align*}
\end{proof}

\begin{defi}
Let $\sC^0$ denote the full subcategory of objects in $\sC$ that induce to $\repzA$. 
\end{defi}
In the next theorem, we show that induction $\cF: \sC^0\rightarrow\repzA$ is a braided tensor functor. Recall the natural isomorphism $f: \cF\circ\boxtimes\rightarrow\boxtimes_A\circ(\cF\times\cF)$ and its inverse $g$ from the proof of Theorem \ref{thm:inductionfucntor}.
\begin{theo}\label{thm:Fisbraidedtensor}
The category $\sC^0$ is an $\mathbb{F}$-linear additive braided monoidal supercategory, with structures induced from $\sC$, and $\cF: \sC^0\rightarrow\repzA$ is a braided tensor functor.
\end{theo}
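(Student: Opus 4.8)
The plan is to break the proof into two parts: first show that $\sC^0$ is closed under all of the braided monoidal supercategory structure of $\sC$, and then verify that the tensor functor of Theorem~\ref{thm:inductionfucntor} restricts to $\sC^0\to\repzA$ and is compatible with braidings.

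For the first part, recall from Proposition~\ref{propo:rep0inductioncriteria} that an object $W$ of $\sC$ lies in $\sC^0$ if and only if $\cM_{A,W}=1_{A\boxtimes W}$. I would check three things. (i) The unit $\sunit$ lies in $\sC^0$: since the odd part of $\sunit$ is $0$, the braiding isomorphisms $\sR_{A,\sunit}$ and $\sR_{\sunit,A}$ carry no sign factors and reduce to unit isomorphisms (cf.\ Proposition XIII.1.2 in \cite{Ka}), so $\cM_{A,\sunit}=1_{A\boxtimes\sunit}$. (ii) If $W_1,W_2\in\sC^0$ then $W_1\boxtimes W_2\in\sC^0$: applying the hexagon axiom for $\sR$ (Theorem~\ref{propo:superCisbraided}) twice, one rewrites $\cM_{A,W_1\boxtimes W_2}$ as a composite involving $\cM_{A,W_1}\boxtimes 1_{W_2}$ and $1_{W_1}\boxtimes\cM_{A,W_2}$ conjugated by associativity and braiding isomorphisms; setting both monodromies equal to the identity, this composite collapses to the identity. (iii) The zero object lies in $\sC^0$ trivially, and biproducts of objects of $\sC^0$ lie in $\sC^0$ because $\cM$ is natural and $A\boxtimes\cdot$, being right exact, preserves biproducts. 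Consequently the full subcategory $\sC^0$ contains $\sunit$, is closed under $\boxtimes$ and under biproducts, and therefore inherits the unit, associativity and braiding isomorphisms $\sleft,\sright,\sA,\sR$ of $\sC$, yielding an $\mathbb{F}$-linear additive braided monoidal supercategory.

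For the second part, since $\sC^0\subseteq\sC$ is full, $\cF$ carries objects of $\sC^0$ into $\repzA$ by definition, and $\repzA\subseteq\repA$ is a full subcategory on which $\boxtimes_A$ is closed (Theorem~\ref{thm:rep0}), the tensor-functor data $(\cF,f,\varphi)$ of Theorem~\ref{thm:inductionfucntor} restricts verbatim to a tensor functor $\cF\colon\sC^0\to\repzA$ --- the coherence diagrams \eqref{Funitcompat} and \eqref{Fassoc} are literally the same. It then remains to prove the braiding compatibility: for $W_1,W_2\in\sC^0$, the composite $\cR^A_{\cF(W_1),\cF(W_2)}\circ f_{W_1,W_2}$ equals $f_{W_2,W_1}\circ\cF(\sR_{W_1,W_2})$ as morphisms $\cF(W_1\boxtimes W_2)\to\cF(W_2)\boxtimes_A\cF(W_1)$ (no sign intervenes, as $\sR$, $f$ and $\cR^A$ are all even). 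I would prove this using the adjunction of Lemma~\ref{lem:FGadjoint}: two $\repA$-morphisms out of $\cF(W_1\boxtimes W_2)=A\boxtimes(W_1\boxtimes W_2)$ coincide once they agree after precomposition with $\iota_{W_1\boxtimes W_2}:=(\iota_A\boxtimes 1_{W_1\boxtimes W_2})\circ\sleft_{W_1\boxtimes W_2}^{-1}$, in the notation of the proof of Theorem~\ref{thm:inductionfucntor}. Using the triangle and unit axioms one first shows $f_{W_1,W_2}\circ\iota_{W_1\boxtimes W_2}=\eta_{\cF(W_1),\cF(W_2)}\circ(\iota_{W_1}\boxtimes\iota_{W_2})$; then by the characterization \eqref{rep0Abraidingdef} of $\cR^A$ together with the naturality of $\sR$ on the even morphisms $\iota_{W_i}$,
\begin{align*}
\cR^A_{\cF(W_1),\cF(W_2)}\circ f_{W_1,W_2}\circ\iota_{W_1\boxtimes W_2}
&=\eta_{\cF(W_2),\cF(W_1)}\circ\sR_{\cF(W_1),\cF(W_2)}\circ(\iota_{W_1}\boxtimes\iota_{W_2})\\
&=\eta_{\cF(W_2),\cF(W_1)}\circ(\iota_{W_2}\boxtimes\iota_{W_1})\circ\sR_{W_1,W_2},
\end{align*}
whereas, using the naturality of $\sleft$ and the definition of $f$, one gets $f_{W_2,W_1}\circ\cF(\sR_{W_1,W_2})\circ\iota_{W_1\boxtimes W_2}=f_{W_2,W_1}\circ\iota_{W_2\boxtimes W_1}\circ\sR_{W_1,W_2}=\eta_{\cF(W_2),\cF(W_1)}\circ(\iota_{W_2}\boxtimes\iota_{W_1})\circ\sR_{W_1,W_2}$. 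The two expressions agree, so the square commutes and $\cF$ is a braided tensor functor.

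The conceptually easy steps are the inheritance of structure on $\sC^0$ and the restriction of the tensor functor; the genuinely computational ones are (ii), the behaviour of monodromy under tensor products, which is a short but careful hexagon manipulation, and the identity $f_{W_1,W_2}\circ\iota_{W_1\boxtimes W_2}=\eta_{\cF(W_1),\cF(W_2)}\circ(\iota_{W_1}\boxtimes\iota_{W_2})$, a coherence computation of the same flavour as those in the proof of Theorem~\ref{thm:inductionfucntor}. I expect the main practical obstacle to be the usual one in these arguments: choosing, at each stage, the right composites of associativity isomorphisms (justified by the pentagon axiom) so that naturality can be applied cleanly --- there is no conceptual difficulty beyond that.
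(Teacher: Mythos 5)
Your proposal is correct, and on the two genuinely substantive points it takes a different (and arguably cleaner) route than the paper.

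For closure of $\sC^0$ under $\boxtimes$, you argue directly with the hexagon axiom: $\cM_{A,W_1\boxtimes W_2}$ decomposes (up to associativity conjugation) as a composite of $\cM_{A,W_1}\boxtimes 1_{W_2}$ and $1_{W_1}\boxtimes\cM_{A,W_2}$, both of which are the identity. This is the standard ``M\"uger centralizer is a braided monoidal subcategory'' argument, and it is perfectly valid. The paper instead observes that $\cF(W_1)\boxtimes_A\cF(W_2)\in\repzA$ by Theorem~\ref{thm:rep0}, and then $\cF(W_1\boxtimes W_2)\cong\cF(W_1)\boxtimes_A\cF(W_2)$ because $\cF$ is already known to be a tensor functor; so $W_1\boxtimes W_2\in\sC^0$. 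Your approach is more elementary, at the cost of one more hexagon manipulation; the paper's is shorter here because it leverages machinery already established.

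For the braiding compatibility, the difference is more striking. You reduce the desired equality of $\repA$-morphisms out of $\cF(W_1\boxtimes W_2)$ to an equality of $\sC$-morphisms by precomposing with the unit $\iota_{W_1\boxtimes W_2}$ of the adjunction $\cF\dashv\cG$ (Lemma~\ref{lem:FGadjoint}). Once you've established the auxiliary identity $f_{W_1,W_2}\circ\iota_{W_1\boxtimes W_2}=\eta_{\cF(W_1),\cF(W_2)}\circ(\iota_{W_1}\boxtimes\iota_{W_2})$ --- a quick application of the triangle axiom, unit coherence, and naturality of $\sA$ for $\iota_A$ --- both sides collapse to $\eta_{\cF(W_2),\cF(W_1)}\circ(\iota_{W_2}\boxtimes\iota_{W_1})\circ\sR_{W_1,W_2}$ by naturality of $\sR$, of $\sleft$, and the defining property \eqref{rep0Abraidingdef} of $\cR^A$. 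The paper instead proves \eqref{eqn:inducedbraiding} by explicitly composing $g_{W_2,W_1}\circ\cR^A\circ f_{W_1,W_2}$ and simplifying via a long chain of hexagon, pentagon, naturality, and commutativity-of-$\mu$ moves until $1_A\boxtimes\sR_{W_1,W_2}$ emerges. Your adjunction trick sidesteps that diagram chase entirely and is substantially shorter; it buys conceptual clarity at the minor cost of having to state and prove the one auxiliary coherence identity. Both formulations agree because $g_{W_2,W_1}=f_{W_2,W_1}^{-1}$.

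One small remark: you explicitly verify $\sunit\in\sC^0$, which the paper implicitly relies on (it is the statement that $\cF(V)\cong A\in\repzA$); this is a harmless and correct addition.
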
	
\begin{proof}
Since $\sC^0$ is a full subcategory of $\sC$, morphisms in $\sC^0$ form $\mathbb{F}$-superspaces, and the zero object of $\sC$ is in $\sC^0$. For biproducts,
let $\lbrace W_i\rbrace$ be a finite set of objects in $\sC^0$. Then Proposition \ref{propo:rep0inductioncriteria}, 
distributivity of tensor products over direct sums, and naturality of monodromy
shows that $\bigoplus W_i$ induces to $\repzA$. Thus $\sC^0$ is an $\mathbb{F}$-linear additive supercategory. To show that $\sC^0$ is braided monoidal, it is enough to show that it is closed under $\boxtimes$. By \cref{thm:rep0}, $\cF(W_1)\boxtimes_A\cF(W_2)$ is an object $\repzA$ for $W_1$, $W_2$ in $\sC^0$, and moreover $\cF(W_1\boxtimes W_2)\cong \cF(W_1)\boxtimes_A\cF(W_2)$ since $\cF$ is a tensor functor. Hence $W_1\boxtimes W_2$ is an object of $\sC^0$.

To show that $\cF$ is a braided tensor functor on $\sC^0$, we need to show that for objects $W_1$, $W_2$ in $\sC^0$, 
	\begin{align}
	\cF(\sR_{W_1,W_2})=g_{W_2,W_1}\circ\cR^A_{\cF(W_1),\cF(W_2)}\circ f_{W_1,W_2}.\label{eqn:inducedbraiding}
	\end{align}
By definition, $g_{W_2,W_1}\circ\cR^A_{\cF(W_1),\cF(W_2)}\circ f_{W_1,W_2}$ is the composition
\begin{align*}
 & A\boxtimes(W_1\boxtimes W_2)\xrightarrow{\sA_{A,W_1,W_2}} (A\boxtimes W_1)\boxtimes W_2\xrightarrow{1_{A\boxtimes W_1}\boxtimes\iota_{W_2}} (A\boxtimes W_1)\boxtimes(A\boxtimes W_2)\nonumber\\
 &\xrightarrow{\eta_{A\boxtimes W_1,A\boxtimes W_2}} (A\boxtimes W_1)\boxtimes_A (A\boxtimes W_2)\xrightarrow{\cR^A_{A\boxtimes W_1,A\boxtimes W_2}} (A\boxtimes W_2)\boxtimes_A(A\boxtimes W_1)\xrightarrow{g_{W_2,W_1}} A\boxtimes(W_2\boxtimes W_1),
\end{align*}
where $\iota_{W_2}=(\iota_A\boxtimes 1_{W_2})\circ\sleft_{W_2}^{-1}$. Then using the definition of $\cR^A_{A\boxtimes W_1,A\boxtimes W_2}$ and borrowing notation from the proof of Theorem \ref{thm:inductionfucntor}, 
\begin{align*}
 g_{W_2,W_1}\circ\cR^A_{A\boxtimes W_1,A\boxtimes W_2}\circ\eta_{A\boxtimes W_1,A\boxtimes W_2} & = g_{W_2,W_1}\circ\eta_{A\boxtimes W_2,A\boxtimes W_1}\circ\sR_{A\boxtimes W_1,A\boxtimes W_2}= \widetilde{g}_{W_2,W_1}\circ\sR_{A\boxtimes W_1, A\boxtimes W_2}.
\end{align*}
So we get the composition
\begin{align}\label{Fbraided1}
  A & \boxtimes(W_1\boxtimes W_2) \xrightarrow{\sA_{A,W_1,W_2}} (A\boxtimes W_1)\boxtimes W_2\xrightarrow{1_{A\boxtimes W_1}\boxtimes\iota_{W_2}} (A\boxtimes W_1)\boxtimes(A\boxtimes W_2)\nonumber\\
 &\xrightarrow{\sR_{A\boxtimes W_1, A\boxtimes W_2}} (A\boxtimes W_2)\boxtimes(A\boxtimes W_1)\xrightarrow{assoc.} (A\boxtimes(W_2\boxtimes A))\boxtimes W_1\nonumber\\
 & \xrightarrow{(1_A\boxtimes\sR_{A,W_2}^{-1})\boxtimes 1_{W_1}} (A\boxtimes(A\boxtimes W_2))\boxtimes W_1\xrightarrow{assoc.}(A\boxtimes A)\boxtimes(W_2\boxtimes W_1)\xrightarrow{\mu\boxtimes 1_{W_2\boxtimes W_1}} A\boxtimes(W_2\boxtimes W_1).
\end{align}
	Now we proceed as guided by the following diagrams:
	\begin{align*}
	\begin{matrix}
		\begin{tikzpicture}[scale=0.8, out=up, in=down, line width=0.5pt]
			\node at (0,-0.3) {$\scriptstyle{A}$};
			\node at (0.7,-0.3) {$\scriptstyle{W_1}$};
			\node at (2.3,-0.3) {$\scriptstyle{W_2}$};
			\node at (0.5,5.5) {$\scriptstyle{A}$};
			\node at (1.6,5.5) {$\scriptstyle{W_2}$};
			\node at (2.3,5.5) {$\scriptstyle{W_1}$};
			\node (unit) at (1.5,1) {$\bullet$};
			\node (A) at (0.5,4.5) [draw,minimum width=25pt,minimum height=10pt] {$\scriptstyle{\mu}$};
			\draw[](A.north) to (0.5,5.3);
			\draw[dashed](2.2,0.3) to [out=left, in=down] (1.5,1);
			\draw[](unit.north) to (0,3) to (A.210);
			\draw[line width=1pt](2.2,0) to(2.2,1.5) to(0.7,3) to(1.7,4) to(1.7,5.3); 
			\draw[white, line width=3pt, double=black, double distance=0.5pt](0,0) to (0,1.3) to (1.5,3) to (A.330);
			\draw[white, line width=3pt, double=black, double distance=1pt](0.7,0) to (0.7,1) to (2.2,3) to (2.2,5.3);
		\end{tikzpicture}
	\end{matrix}
	=
	\begin{matrix}
		\begin{tikzpicture}[scale=0.8, out=up, in=down, line width=0.5pt]
			\node at (0,-0.3) {$\scriptstyle{A}$};
			\node at (0.7,-0.3) {$\scriptstyle{W_1}$};
			\node at (2.3,-0.3) {$\scriptstyle{W_2}$};
			\node at (0.5,5.5) {$\scriptstyle{A}$};
			\node at (1.6,5.5) {$\scriptstyle{W_2}$};
			\node at (2.3,5.5) {$\scriptstyle{W_1}$};
			\node (unit) at (1.5,1) {$\bullet$};
			\node (A) at (0.5,4.5) [draw,minimum width=25pt,minimum height=10pt] {$\scriptstyle{\mu}$};
			\draw[](A.north) to (0.5,5.3);
			\draw[dashed](2.2,0.3) to [out=left, in=down] (1.5,1);
			\draw[](unit.north) to (0,3) to (A.210);
			\draw[line width=1pt](2.2,0) to(2.2,1.5) to(1.7,3) to(1.7,4) to(1.7,5.3);
			\draw[white, line width=3pt, double=black, double distance=0.5pt](0,0) to (0,1.3) to (1,3) to (A.330);
			\draw[white, line width=3pt, double=black, double distance=1pt](0.7,0) to (0.7,1) to (2.2,3) to (2.2,5.3);
		\end{tikzpicture}
	\end{matrix}
	=
	\begin{matrix}
		\begin{tikzpicture}[scale=0.8, out=up, in=down, line width=0.5pt]
			\node at (0,-0.3) {$\scriptstyle{A}$};
			\node at (0.7,-0.3) {$\scriptstyle{W_1}$};
			\node at (2.3,-0.3) {$\scriptstyle{W_2}$};			
			\node at (0.5,5.5) {$\scriptstyle{A}$};
			\node at (1.6,5.5) {$\scriptstyle{W_2}$};
			\node at (2.3,5.5) {$\scriptstyle{W_1}$};
			\node (unit) at (1.5,1) {$\bullet$};
			\node (A) at (0.5,4.5) [draw,minimum width=25pt,minimum height=10pt] {$\scriptstyle{\mu}$};
			\draw[](A.north) to (0.5,5.3);
			\draw[](0,0) to (A.210);
			\draw[dashed](2.2,0.3) to [out=left, in=down] (1.5,1);
			\draw[](unit.north) to (A.330);
			\draw[line width=1pt](2.2,0) to(2.2,1.5) to(1.7,3) to(1.7,4) to(1.7,5.3);
			\draw[white, line width=3pt, double=black, double distance=1pt](0.7,0) to (0.7,1) to (2.2,3) to (2.2,5.3);
		\end{tikzpicture}
	\end{matrix}
	=
	\begin{matrix}
		\begin{tikzpicture}[scale=0.8, out=up, in=down, line width=0.5pt]
			\node at (0,-0.3) {$\scriptstyle{A}$};
			\node at (1.1,-0.3) {$\scriptstyle{W_1}$};
			\node at (2.2,-0.3) {$\scriptstyle{W_2}$};
			\node at (0,5.5) {$\scriptstyle{A}$};
			\node at (1.1,5.5) {$\scriptstyle{W_2}$};
			\node at (2.2,5.5) {$\scriptstyle{W_1}$};
			\draw[](0,0) to (0,5.3);
			\draw[line width=1pt](2.2,0) to(1.1,5.3);
			\draw[dashed] (2.1,0.5) to [out=left,in=right] (0,4.5);
			\draw[white, line width=3pt, double=black, double distance=1pt](1.1,0) to (2.2,5.3);
		\end{tikzpicture}
	\end{matrix}
	=
	\begin{matrix}
		\begin{tikzpicture}[scale=0.8, out=up, in=down, line width=0.5pt]
			\node at (0.3,-0.3) {$\scriptstyle{A}$};
			\node at (1.1,-0.3) {$\scriptstyle{W_1}$};
			\node at (2.2,-0.3) {$\scriptstyle{W_2}$};
			\node at (0.3,5.5) {$\scriptstyle{A}$};
			\node at (1.1,5.5) {$\scriptstyle{W_2}$};
			\node at (2.2,5.5) {$\scriptstyle{W_1}$};
			\draw[](0.3,0) to (0.3,5.3);
			\draw[line width=1pt](2.2,0) to(1.1,5.3);
			\draw[white, line width=3pt, double=black, double distance=1pt](1.1,0) to (2.2,5.3);
		\end{tikzpicture}
	\end{matrix}.
\end{align*}
The last diagram simply represents $1_A\boxtimes\sR_{W_1,W_2}=\cF(\sR_{W_1,W_2})$.
\end{proof}

\begin{corol}\label{Fmonodromy}
	If $W_1$ and $W_2$ are objects of $\sC^0$,  then 
	\[f_{W_1,W_2}\circ\cF(\cM_{W_1,W_2})=\cM^A_{\cF(W_1),\cF(W_1)}\circ f_{W_1,W_2}.\]
\end{corol}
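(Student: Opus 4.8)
The plan is to deduce this immediately from \cref{thm:Fisbraidedtensor}, which established that $\cF\colon\sC^0\to\repzA$ is a braided tensor functor with the compatibility relation \eqref{eqn:inducedbraiding}. First I would rewrite \eqref{eqn:inducedbraiding} in the form that will be most convenient: since $f$ and $g$ are mutually inverse natural isomorphisms (proof of \cref{thm:inductionfucntor}), composing both sides of \eqref{eqn:inducedbraiding} on the left with $f_{W_2,W_1}$ gives
\begin{equation*}
f_{W_2,W_1}\circ\cF(\sR_{W_1,W_2})=\cR^A_{\cF(W_1),\cF(W_2)}\circ f_{W_1,W_2}
\end{equation*}
for all objects $W_1,W_2$ of $\sC^0$. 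Applying this with the roles of $W_1$ and $W_2$ interchanged yields the companion identity $f_{W_1,W_2}\circ\cF(\sR_{W_2,W_1})=\cR^A_{\cF(W_2),\cF(W_1)}\circ f_{W_2,W_1}$.

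Next I would recall that the monodromy isomorphisms in $\sC$ and in $\repzA$ are $\cM_{W_1,W_2}=\sR_{W_2,W_1}\circ\sR_{W_1,W_2}$ and $\cM^A_{\cF(W_1),\cF(W_2)}=\cR^A_{\cF(W_2),\cF(W_1)}\circ\cR^A_{\cF(W_1),\cF(W_2)}$, respectively (the latter built from the braiding $\cR^A$ of \cref{thm:rep0} exactly as $\cM$ is built from $\sR$). Then, using functoriality of $\cF$ and the two identities from the previous paragraph in succession,
\begin{align*}
f_{W_1,W_2}\circ\cF(\cM_{W_1,W_2}) &= f_{W_1,W_2}\circ\cF(\sR_{W_2,W_1})\circ\cF(\sR_{W_1,W_2})\\
&= \cR^A_{\cF(W_2),\cF(W_1)}\circ f_{W_2,W_1}\circ\cF(\sR_{W_1,W_2})\\
&= \cR^A_{\cF(W_2),\cF(W_1)}\circ\cR^A_{\cF(W_1),\cF(W_2)}\circ f_{W_1,W_2}\\
&= \cM^A_{\cF(W_1),\cF(W_2)}\circ f_{W_1,W_2},
\end{align*}
which is the claimed identity.

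There is essentially no real obstacle here: the entire content is packaged in \cref{thm:Fisbraidedtensor}, and the remaining work is only bookkeeping of indices together with the observation that monodromy is a composite of two braidings. The one point requiring a small amount of care is making sure the subscripts of $\cR^A$ and of $f$ match up correctly in the two successive substitutions (in particular that $\cF(W_i)=A\boxtimes W_i$ is indeed an object of $\repzA$, so that $\cR^A_{\cF(W_1),\cF(W_2)}$ and $\cR^A_{\cF(W_2),\cF(W_1)}$ are defined), which follows from $W_1,W_2\in\sC^0$ and \cref{propo:rep0inductioncriteria}. I would also remark that the dual identity $g_{W_1,W_2}\circ\cM^A_{\cF(W_1),\cF(W_2)}=\cF(\cM_{W_1,W_2})\circ g_{W_1,W_2}$ follows at once by conjugating with the inverse isomorphisms $g$.
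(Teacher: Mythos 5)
Your proof is correct and is precisely the route the paper implicitly takes: the paper states this as an immediate corollary of \cref{thm:Fisbraidedtensor} without writing out the computation, and your argument is the obvious unwinding of ``braided tensor functor'' into the two braiding compatibilities plus the definition of monodromy as the composite of two braidings.
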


\begin{rema}
 The category $\sC^0$ may not be abelian: since $\boxtimes$ on $\sC$ is not necessarily left exact, $\sC^0$ might not be closed under kernels.
\end{rema}

Next, we look at the relation between induction and duals in $\sC$; first we recall the notion of duals in monoidal (super)categories:
\begin{defi} ({\cite[Definition\ 2.10.1]{EGNO}})
	A \textit{left dual} $(W^*, e_W, i_W)$ of an object $W$ in a monoidal (super)category is an object $W^*$ equipped with (even) morphisms $e_W: W^*\boxtimes W\rightarrow\sunit$ and $i_W: \sunit\rightarrow W\boxtimes W^*$ satisfying the \textit{rigidity axioms}:
	\begin{align}
		W\xrightarrow{\sleft_W^{-1}}\sunit\boxtimes W\xrightarrow{i_W\boxtimes 1_W} (W\boxtimes W^*)\boxtimes W \xrightarrow{\sA_{W,W^*,W}^{-1}}W\boxtimes(W^*\boxtimes W) \xrightarrow{1_W\boxtimes e_W}
		W\boxtimes\sunit\xrightarrow{\sright_W}W\label{eqn:rigid1}
	\end{align} 
	is $1_W$ and
	\begin{align}
		W^*\xrightarrow{\sright_{W^*}^{-1}}W^*\boxtimes\sunit\xrightarrow{1_{W^*}\boxtimes i_{W}} W^*\boxtimes ( W\boxtimes W^*) \xrightarrow{\sA_{W^*,W,W^*}} (W^*\boxtimes W)\boxtimes W^* \xrightarrow{e_W\boxtimes 1_{W^*}}
		\sunit\boxtimes W^*\xrightarrow{l_{W^*}}W^*
		\label{eqn:rigid2}
	\end{align}
	is $1_{W^*}$.
	
	A \textit{right dual} $({}^* W, e_W', i_W')$ of $W$ where now $e_W': W\boxtimes {}^*W\rightarrow\sunit$ and $i_W': \sunit\rightarrow {}^*W\boxtimes W$ can be defined using analogous rigidity axioms.

	A monoidal (super)category is \textit{rigid} if every object has a left and a right dual.
\end{defi}

\begin{rema}
 Left and right duals are unique up to unique isomorphism, if they exist (see for instance \cite[Proposition 2.10.5]{EGNO}).
\end{rema}

If the braided tensor category $\cC$ is rigid, then so is the associated supercategory $\sC$:
\begin{lemma}\label{lem:CrigidthenSCrigid}
Let $W=(W^\zero, W^\one)$ be an object of $\sC$. If both $W^\zero$
and $W^\one$ possess left, respectively right, duals in $\cC$, then the same holds for $W$ in $\sC$.
In particular, if $\cC$ is rigid then so is $\sC$.
\end{lemma}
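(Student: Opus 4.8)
The plan is to construct the left dual of $W=(W^\zero,W^\one)$ in $\sC$ directly out of the left duals $(W^\zero)^*$ and $(W^\one)^*$ of its components in $\cC$, and then check the rigidity axioms using the explicit descriptions of the structure maps on $\sC$ recorded in Remark \ref{rema:SCdata}. Concretely, I would set $W^*=((W^\zero)^*,(W^\one)^*)$ as an object of $\sC$, so that $\cJ(W^*)=(W^\zero)^*\oplus(W^\one)^*$. The evaluation morphism $e_W\colon W^*\boxtimes W\to\sunit$ should be built from the evaluations $e_{W^\zero}$ and $e_{W^\one}$ of $\cC$: using Proposition \ref{distribute} to identify $\cJ(W^*\boxtimes W)$ with $\bigoplus_{i,j} (W^i)^*\boxtimes W^j$, define $e_W$ to be $e_{W^\zero}\circ p_{(W^\zero)^*\boxtimes W^\zero}+e_{W^\one}\circ p_{(W^\one)^*\boxtimes W^\one}$ (i.e. zero on the mixed summands), which lands in $\unit=\cJ(\sunit)$ and is manifestly even. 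Similarly $i_W\colon\sunit\to W\boxtimes W^*$ is defined from $i_{W^\zero}$ and $i_{W^\one}$, landing in the summands $W^i\boxtimes(W^i)^*$, hence again even. The right-dual case is entirely analogous, using $e'$ and $i'$ in place of $e$ and $i$; and since every object of $\sC$ decomposes into its two $\cC$-components, rigidity of $\cC$ immediately gives left and right duals for both $W^\zero$ and $W^\one$, yielding the final sentence of the lemma.

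The key verification is that $(W^*,e_W,i_W)$ satisfies the rigidity axioms \eqref{eqn:rigid1} and \eqref{eqn:rigid2} in $\sC$. Here I would exploit the formulas in Remark \ref{rema:SCdata}: $\sleft_W=l_{W^\zero\oplus W^\one}\circ F^{-1}$, $\sright_W=r_{W^\zero\oplus W^\one}\circ F^{-1}$, and $\sA_{W_1,W_2,W_3}$ is conjugate (by the distributivity isomorphisms $F$) to the associativity isomorphism of $\cC$ applied to $\cJ(W_1)\boxtimes_\cC(\cJ(W_2)\boxtimes_\cC\cJ(W_3))$. The crucial simplification is that $e_W$ and $i_W$ are ``block-diagonal'' with respect to the parity decomposition: $e_W$ kills the summands $(W^i)^*\boxtimes W^j$ with $i\neq j$, and $i_W$ is supported on $W^i\boxtimes(W^i)^*$. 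Consequently, when one unwinds the composition \eqref{eqn:rigid1} for $\sC$ and conjugates everything by the $F$'s, the mixed parity contributions drop out and what remains is exactly the direct sum over $i\in\ztwo$ of the $\cC$-rigidity composition \eqref{eqn:rigid1} for $W^i$, each of which is $1_{W^i}$ by hypothesis; summing gives $1_{W^\zero\oplus W^\one}=1_W$ in $\sC$. The second axiom \eqref{eqn:rigid2} is handled identically, using that $W^*$ also decomposes block-diagonally and that $(W^i)^*$ satisfies \eqref{eqn:rigid2} in $\cC$. Since all the structure morphisms of $\sC$ are even (as noted after Theorem \ref{propo:superCisbraided}), one need not worry about sign factors here --- the braiding, which is where the $(-1)^{i_1i_2}$ factors live, does not enter the rigidity axioms at all.

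The main obstacle --- really a bookkeeping obstacle rather than a conceptual one --- is managing the repeated conjugation by the distributivity isomorphisms $F$ of Proposition \ref{distribute} when expanding the rigidity compositions, since each tensor factor in \eqref{eqn:rigid1}--\eqref{eqn:rigid2} introduces its own instance of $F$ and one must check these cancel correctly against the $F$'s built into $\sleft$, $\sright$, $\sA$, and into the definition of $\boxtimes_\sC$ on morphisms. I would organize this by working throughout in the ``forgetful'' picture $\cJ(-)$, where $\sC$-morphisms are literally $\cC$-morphisms between direct sums: then $e_W$, $i_W$, $\sleft$, $\sright$ become honest $\cC$-morphisms, the $\sC$-tensor product $f_1\boxtimes_\sC f_2$ becomes $F\circ((f_1\circ P^{\vert f_2\vert})\boxtimes_\cC f_2)\circ F^{-1}$ (and here $P^{\vert f_2\vert}=1$ since the relevant morphisms are even), and $\sA$ becomes conjugate to $\cA$ by Remark \ref{rema:SCdata}. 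In this picture the whole computation collapses, summand-by-summand in $\ztwo$, to the rigidity computation already known in $\cC$. I do not expect any genuine difficulty beyond carefully tracking these identifications, so a clean proof should amount to: (i) define $W^*$, $e_W$, $i_W$ (and the primed analogues); (ii) observe they are even; (iii) pass to $\cJ$ and use Remark \ref{rema:SCdata} to reduce each rigidity axiom to a direct sum of the corresponding $\cC$-axioms; (iv) conclude.
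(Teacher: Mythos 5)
Your construction of $W^*=((W^\zero)^*,(W^\one)^*)$ with block-diagonal evaluation and coevaluation is exactly the paper's proof, which simply states the same $e_W$ and $i_W$ and leaves the rigidity verification as ``easily checked.'' Your proposal is correct and follows the same route, just with the bookkeeping (conjugation by $F$, reduction summand-by-summand via Remark \ref{rema:SCdata}) spelled out explicitly.
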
	
\begin{proof}
It is easy to check that if $W=W^\zero\oplus W^\one$ is an object of $\sC$,
then $W^*$ can be taken to be $(W^\zero)^*\oplus (W^\one)^*$ with
\begin{equation*}
 e_W: \left(((W^\even)^*\boxtimes W^\even)\oplus((W^\odd)^*\boxtimes W^\odd)\right)\oplus\left(((W^\even)^*\boxtimes W^\odd)\oplus((W^\odd)^*\boxtimes W^\even)\right)\rightarrow\unit\oplus 0
\end{equation*}
given by $e_W=(e_{W^\even}\circ p_{(W^\even)^*\boxtimes W^\even}+e_{W^\odd}\circ p_{(W^\odd)^*\boxtimes W^\odd})\oplus 0$ and 
\begin{equation*}
 i_W: \unit\oplus 0\rightarrow \left((W^\even\boxtimes (W^\even)^*)\oplus(W^\odd\boxtimes (W^\odd)^*)\right)\oplus\left((W^\even\boxtimes (W^\odd)^*)\oplus(W^\odd\boxtimes (W^\even)^*)\right)
\end{equation*}
given by $i_W=(q_{W^\even\boxtimes (W^\even)^*}\circ i_{W^\even} +q_{W^\odd\boxtimes (W^\odd)^*}\circ i_{W^\odd})\oplus 0$. Right duals can be arranged analogously.
\end{proof}

\begin{defi}[{See \cite[Equation 2.47]{EGNO}}]\label{def:dualmorph}
If $f: W_1\rightarrow W_2$ is a morphism and $W_1$, $W_2$ possess left duals, the left dual morphism $f^*$ is defined to be the composition
	\begin{align}\label{eqn:dualmorph}	W_2^*&\xrightarrow{\sright_{W_2^*}^{-1}}W_2^*\boxtimes\sunit
	\xrightarrow{1_{W_2^*}\boxtimes i_{W_1}}	W_2^*\boxtimes(W_1\boxtimes W_1^*)
	\xrightarrow{\sA_{W_2^*,W_1,W_1^*}}(W_2^*\boxtimes W_1)\boxtimes W_1^*\\
	&\xrightarrow{(1_{W_2^*}\boxtimes f)\boxtimes 1_{W_1^*}}(W_2^*\boxtimes W_2)\boxtimes W_1^*
	\xrightarrow{e_{W_2}\boxtimes 1_{W_1}}\sunit\boxtimes W_1^*\xrightarrow{\sleft_{W_1^*}}W_1^*.\nonumber
	\end{align}
The right dual morphism ${}^*f$ is defined analogously.
\end{defi}

\begin{rema}
 Left duality preserves the parity of morphisms because all structure morphisms including $i_{W_1}$ and $e_{W_2}$ are even. If all objects have left duals, the duality functor $W\mapsto W^*$, $f\mapsto f^*$ is contravariant in the sense that
 \begin{equation}\label{dualofcomp}
  (f_1\circ f_2)^*=(-1)^{\vert f_1\vert \vert f_2\vert} f_2^*\circ f_1^*
 \end{equation}
for parity-homogeneous morphisms $f_1$ and $f_2$. Similar assertions hold for right duals.
\end{rema}

We will need the following important properties of (left) dual morphisms:
\begin{propo}\label{dualmorphchar}
 Suppose $W_1$, $W_2$ are objects in a monoidal (super)category with left duals and $f: W_1\rightarrow W_2$ is a morphism. Then the diagrams
 \begin{equation*}
  \xymatrixcolsep{4pc}
  \xymatrix{
  \sunit \ar[r]^{i_{W_2}} \ar[d]^{i_{W_1}} & W_2\boxtimes W_2^* \ar[d]^{1_{W_2}\boxtimes f^*} \\
  W_1\boxtimes W_1^* \ar[r]^{f\boxtimes 1_{W_1^*}} & W_2\boxtimes W_1^* \\
  }\,\,\,\mathrm{and}\,\,\,
  \xymatrixcolsep{4pc}
  \xymatrix{
  W_2^*\boxtimes W_1 \ar[r]^{1_{W_2^*}\boxtimes f} \ar[d]^{f^*\boxtimes 1_{W_1}} & W_2^*\boxtimes W_2 \ar[d]^{e_{W_2}} \\
  W_1^*\boxtimes W_1 \ar[r]^{e_{W_1}} & \sunit\\
  }
 \end{equation*}
commute.
\end{propo}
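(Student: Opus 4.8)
The plan is to verify the commutativity of each of the two diagrams directly from the definition of the dual morphism \eqref{eqn:dualmorph} together with the rigidity axioms \eqref{eqn:rigid1} and \eqref{eqn:rigid2}. Both squares are proved by essentially the same ``zig-zag'' manipulation, so I would present the first in detail and then indicate the (entirely parallel) argument for the second. In a strict monoidal category the identity is visually obvious from the snake diagrams; the only real work is bookkeeping with the associativity and unit isomorphisms, using naturality repeatedly and the triangle/pentagon axioms to slide them past each other. All structure morphisms here ($i_{W_j}$, $e_{W_j}$, $f^*$, and the coherence isomorphisms in $\sC$) are even, so no sign factors intervene and naturality of $\sR$, $\sA$, $\sleft$, $\sright$ may be used freely; this is the one place where the evenness remarks after Definition \ref{def:dualmorph} are needed.

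For the first diagram, I would start from $(1_{W_2}\boxtimes f^*)\circ i_{W_2}$ and substitute the defining composition \eqref{eqn:dualmorph} for $f^*$, obtaining a long composition beginning with $i_{W_2}$ and then $1_{W_2}\boxtimes(\text{the six arrows of } f^*)$. Using naturality of the unit and associativity isomorphisms, I would pull the initial $i_{W_2}$ rightward past the first few arrows until it sits adjacent to the copy of $e_{W_2}$ coming from $f^*$; at that point the composition $W_2 \to W_2\boxtimes(W_2^*\boxtimes W_2) \to (W_2\boxtimes W_2^*)\boxtimes W_2 \to W_2$ built from $i_{W_2}$ and $e_{W_2}$ is precisely the left-hand side of the rigidity axiom \eqref{eqn:rigid1} for $W_2$, hence equals $1_{W_2}$. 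What remains after this cancellation should simplify, again by naturality of the coherence isomorphisms and the triangle axiom, exactly to $(f\boxtimes 1_{W_1^*})\circ i_{W_1}$, which is the other side of the square. The second diagram is handled symmetrically: expand $f^*$ inside $e_{W_2}\circ(f^*\boxtimes 1_{W_1})$, move the terminal $e_{W_2}$ leftward past the coherence isomorphisms until it meets the internal $i_{W_1}$, invoke \eqref{eqn:rigid1} for $W_1$ (or equivalently \eqref{eqn:rigid2}, depending on which way one threads the zig-zag) to cancel a snake, and read off $e_{W_1}\circ(1_{W_2^*}\boxtimes f)$.

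The main obstacle is purely organizational rather than conceptual: keeping track of exactly which associativity isomorphism appears where, and confirming that the coherence moves needed to bring the $i$--$e$ pair into the configuration of the rigidity axiom are licensed by naturality plus the triangle and pentagon axioms. In a non-strict category this is the classical ``unit and associativity wrangling'' that MacLane coherence guarantees can always be done, but which must be checked in our convention since $\sC$ is highly non-strict. I would either carry out the bookkeeping explicitly — it is several lines of diagram-chasing of the same flavour as the proofs of Propositions \ref{repAleftunit} and \ref{prop:rightisodef} — or appeal to the fact that these identities are exactly \cite[(2.48)]{EGNO}, noting that the proof given there goes through verbatim in a braided tensor supercategory because every morphism involved is even. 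I expect to take the second route for brevity, remarking only that evenness of $i_{W_1}$, $e_{W_2}$, $f^*$ is what makes \cite{EGNO}'s argument apply in the super setting without sign corrections.
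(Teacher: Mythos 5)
Your plan is essentially identical to the paper's proof: expand $f^*$, use evenness to switch $i_{W_2}$ (resp.\ $e_{W_1}$) past the interior morphisms via naturality of the coherence data, and cancel the resulting snake by \eqref{eqn:rigid1} applied to $W_2$ (resp.\ $W_1$). Two small slips in your second paragraph do not affect the plan but are worth fixing before writing it out: the composite to expand is $e_{W_1}\circ(f^*\boxtimes 1_{W_1})$ rather than $e_{W_2}\circ(\cdots)$, and the snake that appears there is \eqref{eqn:rigid1} for $W_1$ — \eqref{eqn:rigid2} is the snake for $W_1^*$, so it is not an interchangeable alternative here.
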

\begin{proof}
The proof of $(1_{W_2}\boxtimes f^*)\circ i_{W_2}=(f\boxtimes 1_{W_1^*})\circ i_{W_1}$ is as follows:
\begin{align*}
	\begin{matrix}
		\begin{tikzpicture}[line width=0.5 pt]
			\node (u) at (0.25,0) {$\scriptstyle{\sunit}$};
			\node (w2f) at (0,4) {$\scriptstyle{W_2}$};
			\node (w1f) at (1.8,4) {$\scriptstyle{W_1^*}$};
			\node (f) at (1.3, 2.5) [draw] {$\scriptstyle{f}$};
			\draw [line width=1pt] (w2f.270) to[out=down,in=up] (0,1.25) to[out=down,in=west] (0.25,0.75) to[out=east,in=down] (0.5, 1.25) to[out=up,in=down] (0.5,2.5) to[out=up,in=west] (0.9,3) to[out=east,in=up] (f.90);
			\draw [line width=1pt] (f.270) to[out=down,in=west] (1.55,1.9) to[out=east,in=down] (1.8,2.5) to[out=up,in=down] (w1f.270);
			\draw [dashed] (u.90) to (0.25,0.75);
			\draw [dashed] (0.9,3) to[out=up,in=west] (1.8,3.5);
			\draw [dashed] (0.5,1.4) to[out=east,in=down] (1.55,1.9);
			\node at (-0.25,0.9) {$\scriptstyle{W_2}$};
			\node at (0.75,0.9) {$\scriptstyle{W_2^*}$};
			\node at (1.2,1.9) {$\scriptstyle{W_1}$};
			\node at (2,1.9) {$\scriptstyle{W_1^*}$};
		\end{tikzpicture}
	\end{matrix}
	=
	\begin{matrix}
		\begin{tikzpicture}[line width=0.5 pt]
			\node (u) at (0.25,0) {$\scriptstyle{\sunit}$};
			\node (w2f) at (0,4) {$\scriptstyle{W_2}$};
			\node (w1f) at (1.8,4) {$\scriptstyle{W_1^*}$};
			\node (f) at (1.3, 2.5) [draw] {$\scriptstyle{f}$};
			\draw [line width=1pt] (w2f.270) to[out=down,in=up] (0,1.5) to[out=down,in=west] (0.25,1) to[out=east,in=down] (0.5, 1.5) to[out=up,in=down] (0.5,2.5) to[out=up,in=west] (0.9,3) to[out=east,in=up] (f.90);
			\draw [line width=1pt] (f.270) to[out=down,in=west] (1.55,1.9) to[out=east,in=down] (1.8,2.5) to[out=up,in=down] (w1f.270);
			\draw [dashed] (u.90) to (0.25,1);
			\draw [dashed] (0.9,3) to[out=up,in=east] (0,3.5);
			\draw [dashed] (0.25,0.5) to[out=east,in=down] (1.55,1.9);
			\node at (-0.25,1.2) {$\scriptstyle{W_2}$};
			\node at (0.75,1.2) {$\scriptstyle{W_2^*}$};
			\node at (1.2,1.9) {$\scriptstyle{W_1}$};
			\node at (2,1.9) {$\scriptstyle{W_1^*}$};
		\end{tikzpicture}
	\end{matrix}
	=
	\begin{matrix}
		\begin{tikzpicture}[line width=0.5 pt]
			\node (u) at (0.25,0) {$\scriptstyle{\sunit}$};
			\node (w2f) at (0,4) {$\scriptstyle{W_2}$};
			\node (w1f) at (1.8,4) {$\scriptstyle{W_1^*}$};
			\node (f) at (1.3, 1.75) [draw] {$\scriptstyle{f}$};
			\draw [line width=1pt] (w2f.270) to[out=down,in=up] (0,2.8) to[out=down,in=west] (0.25,2.2)   to[out=east,in=west] (0.9,3) to[out=east,in=up] (f.90);
			\draw [line width=1pt] (f.270) to[out=down,in=west] (1.55,1.25) to[out=east,in=down] (1.8,2.5) to[out=up,in=down] (w1f.270);
			\draw [dashed] (u.90) to (0.25,2.2);
			\draw [dashed] (0.9,3) to[out=up,in=east] (0,3.5);
			\draw [dashed] (0.25,0.5) to[out=east,in=down] (1.55,1.25);
			\node at (-0.1,2.2) {$\scriptstyle{W_2}$};
			\node at (0.75,2.2) {$\scriptstyle{W_2^*}$};
			\node at (1.2,1.2) {$\scriptstyle{W_1}$};
			\node at (2,1.2) {$\scriptstyle{W_1^*}$};
		\end{tikzpicture}
	\end{matrix}
	=
	\begin{matrix}
		\begin{tikzpicture}[line width=0.5 pt]
			\node (u) at (1.55,0) {$\scriptstyle{\sunit}$};
			\node (w2f) at (0,4) {$\scriptstyle{W_2}$};
			\node (w1f) at (1.8,4) {$\scriptstyle{W_1^*}$};
			\node (f) at (1.3, 1.25) [draw] {$\scriptstyle{f}$};
			\draw [line width=1pt] (w2f.270) to[out=down,in=up] (0,2.8) to[out=down,in=west] (0.25,2.2)   to[out=east,in=west] (0.9,3) to[out=east,in=up] (f.90);
			\draw [line width=1pt] (f.270) to[out=down,in=west] (1.55,0.75) to[out=east,in=down] (1.8,2.5) to[out=up,in=down] (w1f.270);
			\draw [dashed] (u.90) to (1.55,0.75);
			\draw [dashed] (0.9,3) to[out=up,in=east] (0,3.5);
			\draw [dashed] (1.3,1.8) to[out=west,in=down] (0.25,2.2);
			\node at (-0.1,2.2) {$\scriptstyle{W_2}$};
			\node at (0.75,2.2) {$\scriptstyle{W_2^*}$};
			\node at (1.2,0.7) {$\scriptstyle{W_1}$};
			\node at (2.1,0.7) {$\scriptstyle{W_1^*}$};
		\end{tikzpicture}
	\end{matrix}
	=
	\begin{matrix}
		\begin{tikzpicture}[line width=0.5 pt]
			\node (u) at (0.25,0) {$\scriptstyle{\sunit}$};
			\node (w2f) at (0,4) {$\scriptstyle{W_2}$};
			\node (w1f) at (0.5,4) {$\scriptstyle{W_1^*}$};
			\node (f) at (0, 2) [draw] {$\scriptstyle{f}$};
			\draw [line width=1pt] (w2f.270) to[out=down,in=up] (f.90);
			\draw [line width=1pt] (f.270) to[out=down,in=west] (0.25,1) to[out=east,in=down] (0.5,2) to[out=up,in=down] (w1f.270);
			\draw [dashed] (u.90) to (0.25,1);
			\node at (-0.1,1) {$\scriptstyle{W_1}$};
			\node at (0.7,1) {$\scriptstyle{W_1^*}$};
		\end{tikzpicture}
	\end{matrix}
\end{align*}
The first and third equalities follow from properties of unit and associativity isomorphisms. In the second equality, we use evenness of $i_{W_2}$ to avoid a sign factor when we exchange its order with $i_{W_1}$ and $f$. The last equality follows from the rigidity of $W_2$.

The proof of $e_{W_1}\circ(f^*\boxtimes 1_{W_1})=e_{W_2}\circ(1_{W_2^*}\boxtimes f)$ is similar and uses the rigidity of $W_1$.
\end{proof}
\begin{rema}
 One can show that the left dual morphism $f^*$ is characterized by the commutativity of either diagram in the preceding proposition. This property allows a relatively easy proof of \eqref{dualofcomp}.
\end{rema}

Now we show that the induction functor $\cF: \sC\rightarrow\repA$ preserves duals; recall the natural isomorphism $f$, its inverse $g$, and the isomorphism $\varphi: \cF(\sunit)\rightarrow A$ from the proof of Theorem \ref{thm:inductionfucntor}:
\begin{propo}[{\cite[Exercise 2.10.6]{EGNO}, \cite[Lemma 1.16]{KO}}]
	\label{lem:induceduals}
	If $W$ is an object of $\sC$ with left dual $(W^*, e_W, i_W)$, then $(\cF(W^*), \widetilde{e}_W, \widetilde{i}_W)$ is a left dual of $\cF(W)$ in $\repA$, where
	\begin{equation*}
	 \widetilde{e}_W=\varphi\circ\cF(e_W)\circ g_{W^*,W}
	\end{equation*}
	and
	\begin{equation*}
	 \widetilde{i}_W=f_{W,W^*}\circ\cF(i_W)\circ\varphi^{-1}.
	\end{equation*}
An analogous result holds for right duals.
\end{propo}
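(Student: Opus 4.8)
The plan is to verify the two rigidity axioms \eqref{eqn:rigid1} and \eqref{eqn:rigid2} for the triple $(\cF(W^*),\widetilde{e}_W,\widetilde{i}_W)$ as a left dual of $\cF(W)$ in $\repA$; this is the familiar statement that a strong monoidal functor preserves duals, applied to the tensor functor $\cF$ of \cref{thm:inductionfucntor}.

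First I would check that $\widetilde{e}_W$ and $\widetilde{i}_W$ are even morphisms in $\repA$. Indeed, $g_{W^*,W}$ is the inverse of the even $\repA$-isomorphism $f_{W^*,W}$, $\cF(e_W)$ and $\cF(i_W)$ are $\cF$ applied to the even morphisms $e_W$, $i_W$ (left duality preserves parity, as noted after \cref{def:dualmorph}), and $\varphi=\sright_A$ is an even $\repA$-isomorphism; hence $\widetilde{e}_W$ and $\widetilde{i}_W$ are even composites of $\repA$-morphisms. In particular both sides of each rigidity axiom live in $\repA$, so it suffices to check the identities there.

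Next, for the first rigidity axiom I would expand the composition
\begin{align*}
\cF(W)\xrightarrow{(l^A_{\cF(W)})^{-1}} & A\boxtimes_A\cF(W)\xrightarrow{\widetilde{i}_W\boxtimes_A 1_{\cF(W)}} (\cF(W)\boxtimes_A\cF(W^*))\boxtimes_A\cF(W)\\
&\xrightarrow{(\cA^A_{\cF(W),\cF(W^*),\cF(W)})^{-1}} \cF(W)\boxtimes_A(\cF(W^*)\boxtimes_A\cF(W))\\
&\xrightarrow{1_{\cF(W)}\boxtimes_A\widetilde{e}_W}\cF(W)\boxtimes_A A\xrightarrow{r^A_{\cF(W)}}\cF(W)
\end{align*}
and rewrite it via the coherence of $\cF$: the compatibility squares \eqref{Funitcompat} let me replace $(l^A_{\cF(W)})^{-1}$ and $r^A_{\cF(W)}$ by $\cF$ of the unit isomorphisms in $\sC$ conjugated by $\varphi^{\pm1}$ and the $f$'s, the associativity compatibility \eqref{Fassoc} lets me replace $(\cA^A)^{-1}$ by $\cF$ of $\sA^{-1}$ conjugated by $f$'s, and the naturality of $f$ (equivalently of its inverse $g$) lets me slide $\cF(i_W)$ and $\cF(e_W)$ past the remaining $f$'s so that all the $f$'s and $\varphi$'s cancel in pairs. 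What survives is $\cF$ applied to the composition \eqref{eqn:rigid1} for $(W^*,e_W,i_W)$ in $\sC$, which is $\cF(1_W)=1_{\cF(W)}$ since that composition equals $1_W$. (Alternatively, one can phrase this using the characterization of dual morphisms in \cref{dualmorphchar}, but the direct substitution above is cleanest here.)

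The second rigidity axiom \eqref{eqn:rigid2} is handled in exactly the same way, now applying the naturality of $f$ to $\cF(e_W)$ and $\cF(i_W)$ in the opposite order, and the assertion for right duals follows by the analogous computation with $e_W'$, $i_W'$ in place of $e_W$, $i_W$. The main obstacle is purely bookkeeping: one must carefully keep track of which instances of $f$ versus its inverse $g$ occur (since $\widetilde{i}_W$ is built from $f$ while $\widetilde{e}_W$ is built from $g$) and where $\varphi=\sright_A$ sits, so that the triangle- and pentagon-type coherence identities for the tensor functor $\cF$ — precisely the commuting diagrams \eqref{Funitcompat} and \eqref{Fassoc} — apply without sign or placement errors. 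Once the reductions are carried out, the remaining step is just the rigidity identity already available in $\sC$.
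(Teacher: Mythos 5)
Your proposal is correct and takes essentially the same approach as the paper's proof: the paper assembles the substitutions you describe into a single commutative ladder whose left column is $\cF$ applied to the $\sC$-rigidity composite \eqref{eqn:rigid1} and whose right column is the $\repA$-rigidity composite for $\cF(W)$, with the rungs commuting precisely by \eqref{Funitcompat}, \eqref{Fassoc}, naturality of $f$, and the definitions of $\widetilde{e}_W$, $\widetilde{i}_W$. Your "slide-and-cancel" bookkeeping is just the step-by-step reading of that diagram.
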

\begin{proof}
The following diagram whose left side is obtained by applying $\cF$ to \eqref{eqn:rigid1} commutes because $\cF$ is a tensor functor:
\begin{align*}
 \xymatrixcolsep{5pc}
 \xymatrix{
  & \cF(W) \ar[ld]_{\cF(\sleft_{W}^{-1})} \ar[rd]^{(l^A_{\cF(W)})^{-1}} & \\
  \cF(\sunit\boxtimes W) \ar[r]^{f_{\sunit, W}} \ar[d]^{\cF(i_W\boxtimes 1_W)} & \cF(\sunit)\boxtimes_A \cF(W) \ar[r]^{\varphi\boxtimes_A 1_{\cF(W)}} \ar[d]^{\cF(i_W)\boxtimes_A 1_{\cF(W)}} & A\boxtimes_A \cF(W) \ar[d]^{\widetilde{i}_W\boxtimes_A 1_{\cF(W)}} \\
  \cF((W\boxtimes W^*)\boxtimes W) \ar[r]^{f_{W\boxtimes W^*, W}} \ar[d]^{\cF(\sA_{W,W^*,W}^{-1})} & \cF(W\boxtimes W^*)\boxtimes_A \cF(W) \ar[r]^(.47){f_{W,W^*}\boxtimes_A 1_{\cF(W)}} & (\cF(W)\boxtimes_A\cF(W^*))\boxtimes_A\cF(W) \ar[d]_{(\cA^A_{\cF(W),\cF(W^*),\cF(W)})^{-1}} \\
  \cF(W\boxtimes(W^*\boxtimes W)) \ar[r]^{f_{W, W^*\boxtimes W}} \ar[d]^{\cF(1_W\boxtimes e_W)} & \cF(W)\boxtimes_A\cF(W^*\boxtimes W) \ar[r]^(.47){1_{\cF(W)}\boxtimes_A f_{W^*,W}} \ar[d]^{1_{\cF(W)}\boxtimes_A \cF(e_W)} & \cF(W)\boxtimes_A(\cF(W^*)\boxtimes_A\cF(W)) \ar[d]^{1_{\cF(W)}\boxtimes_A \widetilde{e}_W} \\
  \cF(W\boxtimes\sunit) \ar[rd]_{\cF(\sright_W)} \ar[r]^{f_{W,\sunit}} & \cF(W)\boxtimes_A\cF(\sunit) \ar[r]^{1_{\cF(W)}\boxtimes_A\varphi} & \cF(W)\boxtimes_A A \ar[ld]^{r^A_{\cF(W)}} \\
  & \cF(W) & \\
 }
\end{align*}
Since the left side equals $\cF(1_W)=1_A\boxtimes 1_W = 1_{A\boxtimes W} = 1_{\cF(W)}$, 
	the same holds for the right side. The second duality axiom \eqref{eqn:rigid2} follows from a similar commutative diagram.

The analogous assertion for right duals follows similarly.
\end{proof}

We also show that the induced dual of an induced object in $\repzA$ is an object in $\repzA$:
\begin{lemma}\label{lem:dualinRep0}
Let $W$ be an object of $\sC$ such that $\cF(W)$ is an object of $\repzA$. If $(W^*,e_W,i_W)$ is a left dual of $W$ in $\sC$,
then $\cF(W^*)$ is an object of $\rep^0 A$; a similar result holds for right duals.
\end{lemma}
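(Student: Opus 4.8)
The plan is to use the criterion from \cref{propo:rep0inductioncriteria}: $\cF(W^*)$ is an object of $\repzA$ if and only if $\cM_{A,W^*}=1_{A\boxtimes W^*}$. Since we already know $\cF(W)$ is an object of $\repzA$, the same proposition tells us $\cM_{A,W}=1_{A\boxtimes W}$. So the task reduces to deducing $\cM_{A,W^*}=1_{A\boxtimes W^*}$ from $\cM_{A,W}=1_{A\boxtimes W}$, which is a purely braided-tensor-categorical statement about monodromy and duals (no algebra structure on $A$ needed beyond $A$ being an object of $\sC$).

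The key point is a general lemma: in any braided monoidal (super)category with left duals, for any objects $X$ and $Y$ one has
\begin{equation*}
 \cM_{X,Y^*} = (1_X\boxtimes(\cM_{X,Y})^*)\ \text{conjugated appropriately},
\end{equation*}
or more precisely, $\cM_{X,Y}=1_{X\boxtimes Y}$ implies $\cM_{X,Y^*}=1_{X\boxtimes Y^*}$. First I would establish this. The monodromy $\cM_{A,W^*}=\sR_{W^*,A}\circ\sR_{A,W^*}$ can be ``slid off'' $W^*$ using the rigidity axioms \eqref{eqn:rigid1}, \eqref{eqn:rigid2} and the hexagon axioms: inserting $i_W$ and $e_W$, one rewrites the $A$-strand braiding around $W^*$ in terms of the $A$-strand braiding around $W$, picking up $\cM_{A,W}$ in the middle. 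Concretely, I would write $\cM_{A,W^*}$ as the composite that first introduces $W\boxtimes W^*$ via $i_W$, uses naturality of braiding and associativity plus the hexagon to move the double braiding of $A$ past $W$ (producing $\cM_{A,W}$ acting on the $A\boxtimes W$ factor) and past $W^*$ trivially by naturality, then contracts via $e_W$. Setting $\cM_{A,W}=1_{A\boxtimes W}$ collapses this composite to $1_{A\boxtimes W^*}$ by the rigidity axioms again. This is the diagrammatic computation analogous to several already carried out in the excerpt (e.g. in the proof of \cref{thm:Fisbraidedtensor}), and could be summarized with a braid diagram in the style used elsewhere; it uses only the hexagon axiom, naturality of $\sR$ and $\sA$ for the even morphisms $i_W$, $e_W$, and the two rigidity axioms.

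Then I would invoke \cref{propo:rep0inductioncriteria} in the forward direction to conclude $\cF(W^*)=A\boxtimes W^*$ is an object of $\repzA$. For the right-dual statement, the argument is entirely parallel, using the right-duality rigidity axioms and the other hexagon axiom in place of the left-dual versions, together with the criterion $\cM_{A,{}^*W}=1_{A\boxtimes {}^*W}$.

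The main obstacle is the diagrammatic lemma relating $\cM_{A,W^*}$ to $\cM_{A,W}$: getting the bookkeeping of associativity isomorphisms and hexagon applications right in the non-strict setting requires care, though it is routine in the way the other braid-diagram proofs in this section are routine. One subtlety worth flagging: since $i_W$ and $e_W$ are even morphisms, all naturality applications of $\sR$ here reduce to ordinary (sign-free) naturality, so no parity signs intervene; this is consistent with \cref{lem:CrigidthenSCrigid} guaranteeing that $W^*$ exists in $\sC$ with even structure maps. There is also the trivial observation that $\cF(W^*)$ is already known to be an object of $\repA$ (it is $\cF$ applied to an object of $\sC$), so only the locality condition $\mu\circ\cM=\mu$ remains, which is exactly what \cref{propo:rep0inductioncriteria} handles.
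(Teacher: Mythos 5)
Your proposal takes essentially the same route as the paper: reduce via Proposition \ref{propo:rep0inductioncriteria} to showing $\cM_{A,W^*}=1_{A\boxtimes W^*}$ from $\cM_{A,W}=1_{A\boxtimes W}$, then use rigidity together with the braiding axioms to transfer triviality of monodromy from $W$ to $W^*$. The only difference is that the paper handles the middle step by citing the duality formula for braidings from \cite[Lemma 8.9.1]{EGNO} (rewriting $\cM_{A,W}=1$ as $\sR_{A,W}=\sR_{W,A}^{-1}$ and then expressing both $\sR_{W^*,A}$ and $\sR_{A,W^*}^{-1}$ through that datum), whereas you plan to re-derive the same relation by an explicit braid-diagram computation; the content is identical.
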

\begin{proof}
	By Proposition \ref{propo:rep0inductioncriteria}, we need to prove $\cM_{A,W^*}=1_{A\boxtimes W^*}$.
	Proposition \ref{propo:rep0inductioncriteria} implies that $\cM_{A,W}=1_{A\boxtimes W}$, which is equivalent to $\sR_{A,W} = \sR^{-1}_{W,A}$.
	Using \cite[Lemma 8.9.1]{EGNO}, which holds in any braided monoidal supercategory,
	we have (with $\sA$ denoting suitable compositions of associativity isomorphisms):
	\begin{align*}
	\sR_{W^*,A} &= \sleft_{A\boxtimes W^*}\circ(e_W\boxtimes 1_{A\boxtimes W^*})\circ\sA\circ(1_{W^*}\boxtimes \sR_{W,A}^{-1}\boxtimes 1_{W^*})\circ \sA\circ(1_{W^*\boxtimes A}\boxtimes i_W)
	\circ \sright_{W^*\boxtimes A}^{-1}\\
	\sR^{-1}_{A,W^*} &=\sleft_{A\boxtimes W^*}\circ (e_W\boxtimes 1_{A\boxtimes W^*})\circ\sA\circ(1_{W^*}\boxtimes \sR_{A,W}\boxtimes 1_{W^*})\circ \sA\circ(1_{W^*\boxtimes A}\boxtimes i_W)
	\circ \sright_{W^*\boxtimes A}^{-1}.
	\end{align*}
	Since $\sR_{A,W}=\sR_{W,A}^{-1}$, we get $\sR_{W^*,A}=\sR^{-1}_{A,W^*}$ and thus $\cM_{A,W^*}=1_{A\boxtimes W^*}$.
The proof for right duals is similar.
\end{proof}

Now we examine the relations between $\repzA$, the induction functor, and a twist on $\sC$ (we recall the notion of a twist in the following definition). If $\sC$ is a braided tensor category of modules for a vertex operator (super)algebra, then $\sC$ has a twist given by $\theta_W=e^{2\pi i L_W(0)}$ for any module $W$ in $\sC$.
\begin{defi}[{see \cite[Definition 8.10.1]{EGNO}}]
	A \textit{twist}, or \textit{balancing transformation}, is an even natural automorphism $\theta$ of the identity functor on $\sC$ which satisfies $\theta_{\sunit}=1_{\sunit}$ and the \textit{balancing equation}
	\[ \theta_{W_1\boxtimes W_2}=\cM_{W_1,W_2}\circ(\theta_{W_1}\boxtimes \theta_{W_2})\]
	for all objects $W_1$, $W_2$.
	
	A twist $\theta$ is a \textit{ribbon structure} if $\sC$ is rigid 
	and $\theta$ satisfies $(\theta_W)^* = \theta_{W^*}$ for all objects $W$.
\end{defi}
\begin{rema}
 In a ribbon category, left and right duals can be taken to be the same, so from now on, we shall omit references to right duals.
\end{rema}

In the case of a vertex operator (super)algebra extension $V\subseteq A$, we have $L_V(0)=L_A(0)\vert_{V}$, so that the braided monoidal (super)category $\repzA$ has a natural twist, provided each $\theta_W$ for $W$ in $\repzA$ is a morphism in $\repzA$. This is shown in the following lemma under a mild condition on $A$: 
\begin{lemma}[{see also \cite[Theorem 1.17(1)]{KO}}]\label{twistonrep0A}
Suppose $\sC$ has a twist $\theta$ such that $\theta_A=1_A$. Then an object $(W,\mu_W)$ of $\repA$ is an object of $\repzA$ if and only if $\theta_W\in\mathrm{End}_{\repA}\,W$. Moreover,
 $\theta$ satisfies
	$$\theta_{W_1\boxtimes_A W_2} = \cM^{A}_{W_1,W_2}\circ(\theta_{W_1}\boxtimes_A \theta_{W_2})$$
for objects $W_1$, $W_2$ in $\repzA$. In particular, $\theta$ is a twist on $\repzA$.
\end{lemma}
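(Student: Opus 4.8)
The plan is to prove the three assertions in turn: (i) $(W,\mu_W)\in\repzA$ if and only if $\theta_W$ is a $\repA$-morphism; (ii) the balancing equation holds in $\repzA$; and (iii) conclude that $\theta$ (restricted to objects of $\repzA$) is a twist on $\repzA$. For (i), fix an object $(W,\mu_W)$ of $\repA$. I would compute $\theta_W\circ\mu_W$ using the naturality of $\theta$ applied to the morphism $\mu_W:A\boxtimes W\to W$ (so $\theta_W\circ\mu_W=\mu_W\circ\theta_{A\boxtimes W}$), then apply the balancing equation $\theta_{A\boxtimes W}=\cM_{A,W}\circ(\theta_A\boxtimes\theta_W)$ together with the hypothesis $\theta_A=1_A$ to get $\theta_W\circ\mu_W=\mu_W\circ\cM_{A,W}\circ(1_A\boxtimes\theta_W)$. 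Since $\theta_W$ is an isomorphism, multiplying on the right by $(1_A\boxtimes\theta_W)^{-1}=1_A\boxtimes\theta_W^{-1}$ shows that $\theta_W\circ\mu_W=\mu_W\circ(1_A\boxtimes\theta_W)$ (i.e.\ $\theta_W\in\Endo_{\repA}W$) is \emph{equivalent} to $\mu_W\circ\cM_{A,W}=\mu_W$, which is exactly the condition defining $\repzA$. (I should double-check that $\theta_W$ is automatically even, which is part of the definition of a twist, so there is no parity issue.)

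For (ii), let $W_1,W_2$ be objects of $\repzA$. The balancing equation in $\sC$ gives $\theta_{W_1\boxtimes W_2}=\cM_{W_1,W_2}\circ(\theta_{W_1}\boxtimes\theta_{W_2})$. I would compose this with the cokernel morphism $\eta_{W_1,W_2}:W_1\boxtimes W_2\to W_1\boxtimes_A W_2$ and push everything through $\eta$: using the naturality of $\theta$ applied to $\eta_{W_1,W_2}$ (so $\theta_{W_1\boxtimes_A W_2}\circ\eta_{W_1,W_2}=\eta_{W_1,W_2}\circ\theta_{W_1\boxtimes W_2}$), the naturality of $\cM$ together with the defining diagram \eqref{rep0Abraidingdef} for $\cR^A$ (which yields $\eta_{W_1,W_2}\circ\cM_{W_1,W_2}=\cM^A_{W_1,W_2}\circ\eta_{W_1,W_2}$, since $\cM^A=\cR^A_{W_2,W_1}\circ\cR^A_{W_1,W_2}$ and $\eta$ intertwines $\sR$ with $\cR^A$ on both sides), and the characterization \eqref{repAtensprodmorphdef} of $\theta_{W_1}\boxtimes_A\theta_{W_2}$ as the unique morphism with $(\theta_{W_1}\boxtimes_A\theta_{W_2})\circ\eta_{W_1,W_2}=\eta_{W_1,W_2}\circ(\theta_{W_1}\boxtimes\theta_{W_2})$. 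Assembling these gives
\[
\theta_{W_1\boxtimes_A W_2}\circ\eta_{W_1,W_2}=\cM^A_{W_1,W_2}\circ(\theta_{W_1}\boxtimes_A\theta_{W_2})\circ\eta_{W_1,W_2},
\]
and since $\eta_{W_1,W_2}$ is an epimorphism (it is a cokernel) the desired identity $\theta_{W_1\boxtimes_A W_2}=\cM^A_{W_1,W_2}\circ(\theta_{W_1}\boxtimes_A\theta_{W_2})$ follows.

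Finally, for (iii) I would observe that $\theta_{W_1\boxtimes_A W_2}$, which a priori is only known to be a $\sC$-automorphism, is in fact a $\repzA$-morphism: by part (i) each $\theta_{W_i}$ is a $\repA$-morphism, hence $\theta_{W_1}\boxtimes_A\theta_{W_2}$ is a $\repA$-morphism by functoriality of $\boxtimes_A$, and $\cM^A_{W_1,W_2}=\cR^A_{W_2,W_1}\circ\cR^A_{W_1,W_2}$ is a $\repzA$-morphism by Theorem \ref{thm:rep0}; therefore the composite, which equals $\theta_{W_1\boxtimes_A W_2}$ by (ii), is a $\repzA$-morphism, so $W_1\boxtimes_A W_2$ again satisfies the criterion of (i) (consistent with the fact, already established in Theorem \ref{thm:rep0}, that $\boxtimes_A$ is closed on $\repzA$). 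Naturality of $\theta$ on $\repzA$ is inherited from $\sC$, evenness is inherited, and $\theta_{\sunit_{\repA}}=\theta_A=1_A$ by hypothesis, so the restriction of $\theta$ to $\repzA$ satisfies all the axioms of a twist. I expect the main obstacle to be bookkeeping in step (ii): carefully verifying the intermediate identity $\eta_{W_1,W_2}\circ\cM_{W_1,W_2}=\cM^A_{W_1,W_2}\circ\eta_{W_1,W_2}$ requires applying \eqref{rep0Abraidingdef} twice (once for each braiding in $\cM$ versus $\cM^A$) and chasing a small diagram, but no genuinely new ideas beyond the universal property of the cokernel and the surjectivity of $\eta$.
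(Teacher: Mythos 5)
Your proof is correct and follows essentially the same route as the paper's: for the first assertion, expanding the monodromy $\cM_{A,W}$ (or equivalently $\theta_W\circ\mu_W$) via naturality of $\theta$, the balancing equation, and $\theta_A=1_A$, then cancelling the automorphism $1_A\boxtimes\theta_W$; and for the second, pushing the balancing identity in $\sC$ forward through the epimorphism $\eta_{W_1,W_2}$ using the defining squares for $\boxtimes_A$ of morphisms and for $\cR^A$. The paper stops after the balancing identity in $\repzA$ (part (iii) being automatic from Theorem \ref{thm:rep0}), but your observation that $\theta_{W_1\boxtimes_A W_2}$ is then a $\repzA$-morphism is a harmless consistency check and does not introduce circularity.
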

\begin{proof}
	By definition, $(W,\mu_W)$ is an object of $\mathrm{Rep}^0 A$ if and only if $\mu_W\circ \sR_{W,A}\circ\sR_{A,W} = \mu_W$.
	We have 
	$$\mu_W \circ \sR_{W,A}\circ\sR_{A,W} = \mu_W\circ\theta_{A\boxtimes W}\circ (\theta_A^{-1}\boxtimes \theta_W^{-1})=
	\theta_W\circ \mu_W\circ (1_A\boxtimes \theta_W^{-1}),
	$$
	where we have used the naturality of $\theta$ and $\theta_A=1_A$ in the second equality. Hence, $\mu_W=\mu_W \circ \sR_{W,A}\circ\sR_{A,W}$ if and only if 
	$\mu_W\circ(1_A\boxtimes \theta_W) = \theta_W\circ \mu_W$.
	
	Now the second assertion follows using the diagrams: 
	\begin{align*}
	\xymatrixcolsep{4pc}
	\xymatrix{
		W_1\boxtimes W_2 \ar[r]^{\theta_{W_1}\boxtimes \theta_{W_2}} \ar[d]^{\eta_{W_1,W_2}}
		 & W_1\boxtimes W_2 \ar[r]^{\cM_{W_1,W_2}}\ar[d]^{\eta_{W_1,W_2}} & W_1\boxtimes W_2 \ar[d]^{\eta_{W_1,W_2}} \\
		W_1\boxtimes_A W_2 \ar[r]^{\theta_{W_1}\boxtimes_A \theta_{W_2}} & W_1\boxtimes_A W_2 \ar[r]^{\cM_{W_1,W_2}^{A}} & W_1\boxtimes_A W_2 
		} , 
	\xymatrix{
		W_1\boxtimes W_2 \ar[r]^{\theta_{W_1\boxtimes W_2}} \ar[d]^{\eta_{W_1,W_2}} & W_1\boxtimes W_2 \ar[d]^{\eta_{W_1,W_2}} \\
		W_1\boxtimes_A W_2 \ar[r]^{\theta_{W_1\boxtimes_A W_2}} & W_1\boxtimes_A W_2 
		}		
	\end{align*}
	where the first square commutes by the definition of tensor product of morphisms in $\repA$, since $\theta_{W_1}$ and $\theta_{W_2}$ are morphisms in $\rep^0A$; the second square commutes by \eqref{rep0Abraidingdef}; and the last square commutes by the naturality of $\theta$.
	Since the top rows of the two diagrams are equal, and since $\eta_{W_1,W_2}$ is an epimorphism, the bottom rows
	are also equal.
\end{proof}

\begin{corol}\label{cor:inducedtheta}
Suppose $\sC$ has a twist $\theta$ such that $\theta_A=1_A$. If $W$ is an object of $\sC$ such that $\cF(W)$ is an object of $\repzA$, then $\cF(\theta_W)=\theta_{\cF(W)}$ as morphisms in $\rep^0 A$.
\end{corol}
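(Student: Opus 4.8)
The plan is to reduce the identity $\cF(\theta_W)=\theta_{\cF(W)}$ to the balancing equation for the twist $\theta$ together with the monodromy criterion of Proposition \ref{propo:rep0inductioncriteria}. First I would unwind the definitions: since $\cF(W)=A\boxtimes W$ and $\cF(\theta_W)=1_A\boxtimes\theta_W$ by the definition of the induction functor, the claim is equivalent to the equality
\[
\theta_{A\boxtimes W}=1_A\boxtimes\theta_W
\]
of morphisms $A\boxtimes W\rightarrow A\boxtimes W$ in $\sC$.

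To establish this, I would apply the balancing equation for $\theta$ on $\sC$ to the object $A\boxtimes W$, obtaining $\theta_{A\boxtimes W}=\cM_{A,W}\circ(\theta_A\boxtimes\theta_W)$. Since $\theta_A=1_A$ by hypothesis, the right-hand side is $\cM_{A,W}\circ(1_A\boxtimes\theta_W)$. Now the assumption that $\cF(W)=A\boxtimes W$ is an object of $\repzA$ allows me to invoke Proposition \ref{propo:rep0inductioncriteria}, which says exactly that $\cM_{A,W}=1_{A\boxtimes W}$ in this situation. Substituting gives $\theta_{A\boxtimes W}=1_A\boxtimes\theta_W=\cF(\theta_W)$, which is the desired identity.

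It remains to note that the asserted equality holds ``as morphisms in $\repzA$'': the morphism $\cF(\theta_W)=1_A\boxtimes\theta_W$ lies in $\repA$ because it is in the image of the functor $\cF$, and $\theta_{\cF(W)}$ lies in $\repA$ by Lemma \ref{twistonrep0A} applied to the object $\cF(W)$ of $\repzA$ (using $\theta_A=1_A$); since $\repzA$ is a full subcategory of $\repA$, both are morphisms in $\repzA$, and the equality of $\sC$-morphisms established above is their equality in $\repzA$. There is essentially no serious obstacle here: the content is contained entirely in Proposition \ref{propo:rep0inductioncriteria} and the balancing equation, both already available, and the only care needed is the routine bookkeeping just described.
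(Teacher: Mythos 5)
Your proof is correct and matches the paper's own argument essentially verbatim: both unwind $\theta_{\cF(W)}=\theta_{A\boxtimes W}=\cM_{A,W}\circ(\theta_A\boxtimes\theta_W)$, invoke $\theta_A=1_A$, and then use \cref{propo:rep0inductioncriteria} to kill the monodromy factor. The closing remark about both sides being morphisms in $\repzA$ via \cref{twistonrep0A} is also exactly the observation the paper makes.
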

\begin{proof}
By the previous lemma, $\cF(\theta_W)$ and $\theta_{\cF(W)}$ are morphisms in $\repzA$. Then by Proposition \ref{propo:rep0inductioncriteria},
\begin{equation*}
 \theta_{\cF(W)}=\theta_{A\boxtimes W}=\cM_{A,W}\circ(\theta_A\boxtimes \theta_W)=1_A\boxtimes\theta_W=\cF(\theta_W)
\end{equation*}
as required.
\end{proof}

In ribbon categories, there is a natural notion of trace of an endomorphism:
\begin{defi}\label{def:tr}
	The \textit{trace} of a morphism $f: W\rightarrow W$ in a ribbon (super)category with ribbon structure $\theta$ is the morphism $\tr(f)\in\Endo(\sunit)$ given by the composition
	\[
	\sunit\xrightarrow{i_W} W\boxtimes W^*\xrightarrow{(\theta_W\circ f)\boxtimes 1_{W^*}}	W\boxtimes W^*
	\xrightarrow{\sR_{W,W^*}} W^*\boxtimes W\xrightarrow{e_W}\sunit.
	\]
\end{defi}	

The trace of a morphism in a ribbon (super)category satisfies the following expected super-symmetry:
\begin{propo}
 If $f: W_2\rightarrow W_1$ and $g: W_1\rightarrow W_2$ are parity-homogeneous morphisms in a ribbon (super)category, then
 \begin{equation*}
  \tr(f\circ g)=(-1)^{\vert f\vert \vert g\vert} \tr(g\circ f).
 \end{equation*}
 \end{propo}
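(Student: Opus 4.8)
The plan is to prove the cyclicity of the trace by a direct diagram computation in the ribbon supercategory, carefully tracking the sign factors that arise from swapping the order of parity-homogeneous morphisms. First I would write out $\tr(f\circ g)$ as the composition
\[
\sunit\xrightarrow{i_{W_1}} W_1\boxtimes W_1^*\xrightarrow{(\theta_{W_1}\circ f\circ g)\boxtimes 1_{W_1^*}} W_1\boxtimes W_1^*\xrightarrow{\sR_{W_1,W_1^*}} W_1^*\boxtimes W_1\xrightarrow{e_{W_1}}\sunit,
\]
and similarly $\tr(g\circ f)$ with $W_1$ replaced by $W_2$. The strategy is to transport the computation from $W_1$ to $W_2$ using the maps $g$ and $f$ together with their dual morphisms $g^*$ and $f^*$, invoking the two commuting squares of Proposition \ref{dualmorphchar} to slide $i_{W_1}$ past $g\boxtimes 1$ into $(1\boxtimes g^*)\circ i_{W_2}$, and to slide $e_{W_1}$ past $f^*\boxtimes 1$ into $e_{W_2}\circ(1\boxtimes f)$.

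The key steps, in order, are: (1) use naturality of $\theta$ to move $\theta_{W_1}$ outside the bracket, writing $\theta_{W_1}\circ f\circ g = f\circ g\circ\theta_{W_1}$ or, better, arrange so that after the manipulations the factor $\theta_{W_2}$ reappears in the correct position — here I would use $\theta_{W_1}\circ f = f\circ\theta_{W_2}$, which follows from naturality of $\theta$ applied to $f\colon W_2\to W_1$; (2) apply the first diagram of Proposition \ref{dualmorphchar} to rewrite $(g\boxtimes 1_{W_1^*})\circ i_{W_1}$ in terms of $i_{W_2}$ and $g^*$; (3) use naturality of the braiding $\sR$ (in the supercategory sense, equation \eqref{braidingsupernatural}) to commute the braiding past the appropriate tensor products of morphisms, which is where a sign $(-1)^{|f||g|}$ or related sign will be generated; (4) apply the second diagram of Proposition \ref{dualmorphchar} together with $e_{W_1}\circ(f^*\boxtimes 1_{W_1}) = e_{W_2}\circ(1_{W_2^*}\boxtimes f)$ to convert $e_{W_1}$ into $e_{W_2}$; (5) use \eqref{dualofcomp}, $(g\circ f)^* = (-1)^{|f||g|} f^*\circ g^*$ if needed, and the ribbon property $(\theta_{W})^* = \theta_{W^*}$ to reassemble everything into the composition defining $\tr(g\circ f)$, collecting the accumulated sign. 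Throughout, I would present the argument as a sequence of commuting diagrams or as chains of morphism compositions rather than grinding through coordinates.

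The main obstacle I anticipate is bookkeeping of signs: the supercategory naturality of the braiding, the contragredient property \eqref{dualofcomp} of dual morphisms, and the exchange condition \eqref{exchange} for the tensor product of morphisms each contribute sign factors depending on $|f|$, $|g|$, and the parities of the dual morphisms (recall from the remark after Definition \ref{def:dualmorph} that $|f^*| = |f|$). One must check that all these signs combine to exactly $(-1)^{|f||g|}$ and not some spurious extra sign; this requires care because the braiding is applied to $W_1\boxtimes W_1^*$, where one of the strands carries the parity of a dual object. A clean way to control this is to do the swap in a single application of \eqref{braidingsupernatural} to the morphism $(\theta_{W_2}\circ f\circ g\circ\,?)\boxtimes(\text{dual morphism})$, so that only one sign is introduced, and then verify the remaining rewrites (using Proposition \ref{dualmorphchar}, which involves only even structure morphisms $i$ and $e$) are sign-free. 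I expect that once the computation is organized so that the braiding move is performed exactly once, the identity falls out immediately, and the whole proof is short — essentially the classical argument for ordinary ribbon categories with one sign inserted at the braiding step.
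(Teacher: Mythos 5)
Your overall route — slide $\theta$ and $f$ past the braiding, convert $i_{W_1}$ to $i_{W_2}$ via $g^*$ and $e_{W_1}$ to $e_{W_2}$ via $f^*$ using Proposition~\ref{dualmorphchar}, then reassemble — is exactly what the paper does. However, you have the locus of the sign wrong. You expect the $(-1)^{|f||g|}$ to be ``generated'' by the application of supernaturality of the braiding, with \eqref{dualofcomp} only ``if needed,'' and you suggest engineering the proof so that the braiding is swapped ``in a single application'' to isolate the sign. In fact the opposite happens: every use of \eqref{braidingsupernatural} and of the exchange condition \eqref{exchange} in this computation involves one strand carrying an \emph{even} morphism ($1_{W_1^*}$, $1_{W_2^*}$, $1_{W_2}$, or $\theta_{W_2}$, the twist being even by definition), so all of those moves are sign-free. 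The sole source of the sign is the contragredient identity $(g\circ f)^* = (-1)^{|f||g|}f^*\circ g^*$ from \eqref{dualofcomp}: after pushing $g^*$ and $f^*$ onto the dual strand one obtains $\theta_{W_2}\boxtimes(f^*\circ g^*)$, and rewriting $f^*\circ g^*$ as $(-1)^{|f||g|}(g\circ f)^*$ so that Proposition~\ref{dualmorphchar} can be applied once more (to transport $(g\circ f)^*$ back to $(g\circ f)$ on the first strand) is precisely what produces the factor. Also, the ribbon compatibility $(\theta_W)^*=\theta_{W^*}$ you list as an ingredient is not actually used; only naturality and evenness of $\theta$ are needed, since $\theta$ stays on the $W_2$ strand throughout and is never dualized. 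With these two corrections your proof would go through; as written, someone following your sign-hunting strategy would look for a nontrivial sign at the braiding step, find none, and likely conclude the identity fails.
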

\begin{proof}
We use the following diagrams in the proof:
\begin{align*}
	\begin{matrix}
		\begin{tikzpicture}[line width=0.5pt]
			\node (u) at (0.4,-0.3) {$\scriptstyle{\sunit}$};
			\node (uf) at (0.4,5.3) {$\scriptstyle{\sunit}$};
			\node (t) at (0,3) [draw] {$\scriptstyle{\theta_{W_1}}$};
			\draw [line width=1pt]
			(t.270) 
			to[out=down, in=up] (0,1)
			to[out=down, in=west] (0.4,0.5)
			to[out=east, in=down] (0.8,1)
			to[out=up, in=down] (0.8,3.2)
			to[out=up, in=down] (0,4)
			to[out=up, in=west] (0.4,4.5);
			\draw [line width=3pt,white, double=black, double distance=1pt]
			(0.4,4.5)
			to[out=east, in=up] (0.8,4)
			to[out=down, in=up] (t.90);
			\node at (-0.1, 0.5) {$\scriptstyle{W_1}$};
			\node at (1, 0.5) {$\scriptstyle{W_1^*}$};
			\node at (-0.3, 4.2) {$\scriptstyle{W_1^*}$};
			\node at (1, 4.2) {$\scriptstyle{W_1}$};
			\draw [dashed] (u.90) to (0.4,0.5);
			\draw [dashed] (0.4,4.5) to (uf.270);
			\node (g) at (0,1.5) [draw,fill=white] {$\scriptstyle{g}$};
			\node (f) at (0,2) [draw,fill=white] {$\scriptstyle{f}$};
		\end{tikzpicture}
	\end{matrix}
	=
	\begin{matrix}
		\begin{tikzpicture}[line width=0.5pt]
			\node (u) at (0.4,-0.3) {$\scriptstyle{\sunit}$};
			\node (uf) at (0.4,5.3) {$\scriptstyle{\sunit}$};
			\node (t) at (0,2.5) [draw] {$\scriptstyle{\theta_{W_2}}$};
			\draw [line width=1pt]
			(t.270) 
			to[out=down, in=up] (0,1)
			to[out=down, in=west] (0.4,0.5)
			to[out=east, in=down] (0.8,1)
			to[out=up, in=down] (0.8,2.6)
			to[out=up, in=down] (0,3.7)
			to[out=up, in=west] (0.4,4.5);
			\draw [line width=3pt,white, double=black, double distance=1pt]
			(0.4,4.5)
			to[out=east, in=up] (0.8,3.6)
			to[out=down, in=up] (t.90);
			\node at (-0.1, 0.5) {$\scriptstyle{W_1}$};
			\node at (1, 0.5) {$\scriptstyle{W_1^*}$};
			\node at (-0.3, 4.2) {$\scriptstyle{W_1^*}$};
			\node at (1, 4.2) {$\scriptstyle{W_1}$};
			\draw [dashed] (u.90) to (0.4,0.5);
			\draw [dashed] (0.4,4.5) to (uf.270);
			\node (g) at (0,1.5) [draw,fill=white] {$\scriptstyle{g}$};
			\node (f) at (0.8,3.6) [draw,fill=white] {$\scriptstyle{f}$};
		\end{tikzpicture}
	\end{matrix}
	=
	\begin{matrix}
		\begin{tikzpicture}[line width=0.5pt]
			\node (u) at (0.4,-0.3) {$\scriptstyle{\sunit}$};
			\node (uf) at (0.4,5.3) {$\scriptstyle{\sunit}$};
			\node (t) at (0,2.5) [draw] {$\scriptstyle{\theta_{W_2}}$};
			\draw [line width=1pt]
			(t.270) 
			to[out=down, in=up] (0,1)
			to[out=down, in=west] (0.4,0.5)
			to[out=east, in=down] (0.8,1)
			to[out=up, in=down] (0.8,2.6)
			to[out=up, in=down] (0,3.7)
			to[out=up, in=west] (0.4,4.5);
			\draw [line width=3pt,white, double=black, double distance=1pt]
			(0.4,4.5)
			to[out=east, in=up] (0.8,3.6)
			to[out=down, in=up] (t.90);
			\node at (-0.1, 0.5) {$\scriptstyle{W_1}$};
			\node at (1, 0.5) {$\scriptstyle{W_1^*}$};
			\node at (-0.3, 4.3) {$\scriptstyle{W_2^*}$};
			\node at (1, 4.3) {$\scriptstyle{W_2}$};
			\draw [dashed] (u.90) to (0.4,0.5);
			\draw [dashed] (0.4,4.5) to (uf.270);
			\node (g) at (0,1.5) [draw,fill=white] {$\scriptstyle{g}$};
			\node (f) at (0,3.8) [draw,fill=white] {$\scriptstyle{f^*}$};
		\end{tikzpicture}
	\end{matrix}
	=
	\scriptstyle{(-1)^{|f||g|}}
	\begin{matrix}
		\begin{tikzpicture}[line width=0.5pt]
			\node (u) at (0.4,-0.3) {$\scriptstyle{\sunit}$};
			\node (uf) at (0.4,5.3) {$\scriptstyle{\sunit}$};
			\node (t) at (0,2.5) [draw] {$\scriptstyle{\theta_{W_2}}$};
			\draw [line width=1pt]
			(t.270) 
			to[out=down, in=up] (0,1)
			to[out=down, in=west] (0.4,0.5)
			to[out=east, in=down] (0.8,1)
			to[out=up, in=down] (0.8,2.6)
			to[out=up, in=down] (0,3.7)
			to[out=up, in=west] (0.4,4.5);
			\draw [line width=3pt,white, double=black, double distance=1pt]
			(0.4,4.5)
			to[out=east, in=up] (0.8,3.6)
			to[out=down, in=up] (t.90);
			\node at (-0.1, 0.5) {$\scriptstyle{W_1}$};
			\node at (1, 0.5) {$\scriptstyle{W_1^*}$};
			\node at (-0.3, 4.2) {$\scriptstyle{W_2^*}$};
			\node at (1, 4.2) {$\scriptstyle{W_2}$};
			\draw [dashed] (u.90) to (0.4,0.5);
			\draw [dashed] (0.4,4.5) to (uf.270);
			\node (g) at (0,2) [draw,fill=white] {$\scriptstyle{g}$};
			\node (f) at (0.8,1.5) [draw,fill=white] {$\scriptstyle{f^*}$};
		\end{tikzpicture}
	\end{matrix}
	=
	\scriptstyle{(-1)^{|f||g|}}
	\begin{matrix}
		\begin{tikzpicture}[line width=0.5pt]
			\node (u) at (0.4,-0.3) {$\scriptstyle{\sunit}$};
			\node (uf) at (0.4,5.3) {$\scriptstyle{\sunit}$};
			\node (t) at (0,3) [draw] {$\scriptstyle{\theta_{W_2}}$};
			\draw [line width=1pt]
			(t.270) 
			to[out=down, in=up] (0,1)
			to[out=down, in=west] (0.4,0.5)
			to[out=east, in=down] (0.8,1)
			to[out=up, in=down] (0.8,3.2)
			to[out=up, in=down] (0,4)
			to[out=up, in=west] (0.4,4.5);
			\draw [line width=3pt,white, double=black, double distance=1pt]
			(0.4,4.5)
			to[out=east, in=up] (0.8,4)
			to[out=down, in=up] (t.90);
			\node at (-0.1, 0.5) {$\scriptstyle{W_2}$};
			\node at (1, 0.5) {$\scriptstyle{W_2^*}$};
			\node at (-0.3, 4.2) {$\scriptstyle{W_2^*}$};
			\node at (1, 4.2) {$\scriptstyle{W_2}$};
			\draw [dashed] (u.90) to (0.4,0.5);
			\draw [dashed] (0.4,4.5) to (uf.270);
			\node (g) at (0,2) [draw,fill=white] {$\scriptstyle{g}$};
			\node (f) at (0,1.5) [draw,fill=white] {$\scriptstyle{f}$};
		\end{tikzpicture}
	\end{matrix}
\end{align*}
The first equality uses naturality of $\theta$ and the braiding, while the second and last use Proposition \ref{dualmorphchar}. 
The third equality uses the super interchange law \eqref{exchange}.
\end{proof}

\begin{corol}\label{conjtr}
 If $f: W_1\rightarrow W_1$ is a parity-homogenous endomorphism in a ribbon (super)category and $g: W_2\rightarrow W_1$ is a parity-homogeneous isomorphism, then
 \begin{equation*}
  \tr(g^{-1}\circ f\circ g)=(-1)^{\vert g\vert(\vert g\vert +\vert f\vert)} \tr(f).
 \end{equation*}
\end{corol}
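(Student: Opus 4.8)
The plan is to deduce this directly from the preceding proposition, which gives the super-cyclicity of the trace: for parity-homogeneous $f\colon W_2\rightarrow W_1$ and $g\colon W_1\rightarrow W_2$ one has $\tr(f\circ g)=(-1)^{\vert f\vert\vert g\vert}\tr(g\circ f)$. The only preliminary point to record is that the inverse of a parity-homogeneous isomorphism is again parity-homogeneous of the same parity: since $g\circ g^{-1}=1_{W_1}$ is even and composition is an even bilinear map, $\vert g\vert+\vert g^{-1}\vert=\zero$ in $\ZZ/2\ZZ$, so $\vert g^{-1}\vert=\vert g\vert$.

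Next I would factor the endomorphism in question. Set $p=g^{-1}\circ f\colon W_1\rightarrow W_2$ and $q=g\colon W_2\rightarrow W_1$; these are parity-homogeneous with $\vert p\vert=\vert g^{-1}\vert+\vert f\vert=\vert g\vert+\vert f\vert$ and $\vert q\vert=\vert g\vert$. Then $g^{-1}\circ f\circ g=p\circ q$ is an endomorphism of $W_2$, and applying the preceding proposition to $p$ and $q$ yields $\tr(p\circ q)=(-1)^{\vert p\vert\vert q\vert}\tr(q\circ p)$. Since $q\circ p=g\circ g^{-1}\circ f=f$ and $(-1)^{\vert p\vert\vert q\vert}=(-1)^{(\vert g\vert+\vert f\vert)\vert g\vert}=(-1)^{\vert g\vert(\vert g\vert+\vert f\vert)}$, the desired identity follows at once.

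There is essentially no obstacle in this argument; it is a one-line substitution into the already-established cyclicity statement, and the only step requiring any care is the remark $\vert g^{-1}\vert=\vert g\vert$, which is immediate from the evenness of composition. (In the non-super case all signs are trivial and the statement is the usual conjugation-invariance of the trace.)
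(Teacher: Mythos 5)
Your proof is correct and is exactly the intended deduction: the paper states the corollary immediately after the super-cyclicity proposition without spelling out a proof, and your factorization $g^{-1}\circ f\circ g = p\circ q$ with $p=g^{-1}\circ f$, $q=g$ (together with the observation $\lvert g^{-1}\rvert=\lvert g\rvert$) is precisely the one-line substitution that justifies it.
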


Recall from Lemma \ref{twistonrep0A} that if $\sC$ has a twist $\theta$ such that $\theta_A=1_A$, then $\theta$ also defines a twist on $\repzA$. Moreover, if $\sC$ is rigid and $\theta$ is a ribbon structure on $\sC$, Proposition \ref{lem:induceduals} and Lemma \ref{lem:dualinRep0} imply that the subcategory of induced objects in $\repzA$ is rigid as well. In fact the subcategory of induced modules in $\repzA$ is ribbon:
\begin{propo}\label{prop:inducedrepzAribbon}
 If $\sC$ is a ribbon category with twist $\theta$ such that $\theta_A=1_A$, then the induced objects in $\repzA$ form a ribbon category.
\end{propo}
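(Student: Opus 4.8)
The plan is to identify the relevant structure and assemble it from the results already proved. Write $\mathcal{D}$ for the full subcategory of $\repzA$ whose objects are isomorphic to $\cF(W)$ for some $W$ in $\sC^0$. I will show that $\mathcal{D}$, equipped with $\boxtimes_A$, unit $(A,\mu)$, the unit/associativity isomorphisms $l^A,r^A,\cA^A$, the braiding $\cR^A$, the twist $\theta$, and duals coming from Proposition \ref{lem:induceduals}, is a rigid braided monoidal (super)category on which $\theta$ is a ribbon structure. All but the last compatibility is bookkeeping; the genuine content is the ribbon condition.

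First I would check that $\mathcal{D}$ is braided monoidal. The unit $(A,\mu)$ of $\repzA$ lies in $\mathcal{D}$ because $\varphi\colon\cF(\sunit)\to A$ is an isomorphism in $\repA$ and hence, $\repzA$ being full in $\repA$, in $\repzA$. Closure under $\boxtimes_A$ follows from Theorem \ref{thm:Fisbraidedtensor}: for $W_1,W_2$ in $\sC^0$ the morphism $g_{W_1,W_2}$ exhibits $\cF(W_1)\boxtimes_A\cF(W_2)\cong\cF(W_1\boxtimes W_2)$, and $W_1\boxtimes W_2$ again lies in $\sC^0$ since $\sC^0$ is closed under $\boxtimes$. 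The isomorphisms $l^A,r^A,\cA^A,\cR^A$ of $\repzA$ from Theorem \ref{thm:rep0} restrict to $\mathcal{D}$ and still satisfy the triangle, pentagon, and hexagon axioms and naturality. Moreover, since $\theta_A=1_A$, Lemma \ref{twistonrep0A} shows $\theta$ is a twist on $\repzA$, hence on $\mathcal{D}$.

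Next I would establish rigidity of $\mathcal{D}$. Given $W$ in $\sC^0$ with dual $(W^*,e_W,i_W)$ in the ribbon category $\sC$, Proposition \ref{lem:induceduals} gives a dual $(\cF(W^*),\widetilde e_W,\widetilde i_W)$ of $\cF(W)$ in $\repA$; since $\cF(W^*)\boxtimes_A\cF(W)$, $A$, and $\cF(W)\boxtimes_A\cF(W^*)$ all lie in $\repzA$ by Theorem \ref{thm:rep0}, and $\cF(W^*)$ lies in $\repzA$ by Lemma \ref{lem:dualinRep0}, the morphisms $\widetilde e_W,\widetilde i_W$ are morphisms in the full subcategory $\repzA$, and $W^*$ belongs to $\sC^0$ so that $\cF(W^*)$ belongs to $\mathcal{D}$. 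Thus every object of $\mathcal{D}$ has a dual in $\mathcal{D}$, and $\mathcal{D}$ is rigid.

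Finally — and this is the step I expect to be the main obstacle — I would verify the ribbon condition $(\theta_{\cF(W)})^{*}=\theta_{(\cF(W))^{*}}$ on $\mathcal{D}$, with the duality data fixed as above. By Corollary \ref{cor:inducedtheta}, $\theta_{\cF(W)}=\cF(\theta_W)$ and $\theta_{(\cF(W))^{*}}=\theta_{\cF(W^*)}=\cF(\theta_{W^*})$. Using the characterization of the left dual morphism by commutativity of the first diagram of Proposition \ref{dualmorphchar} (as noted in the remark following it), it suffices to show
\[
(1_{\cF(W)}\boxtimes_A\cF(\theta_{W^*}))\circ\widetilde i_W=(\cF(\theta_W)\boxtimes_A 1_{\cF(W^*)})\circ\widetilde i_W .
\]
Since $\widetilde i_W=f_{W,W^*}\circ\cF(i_W)\circ\varphi^{-1}$, pulling $\cF(\theta_{W^*})$ and $\cF(\theta_W)$ through $f_{W,W^*}$ by naturality of the tensor structure isomorphism $f$ rewrites the two sides as $f_{W,W^*}\circ\cF\bigl((1_W\boxtimes\theta_{W^*})\circ i_W\bigr)\circ\varphi^{-1}$ and $f_{W,W^*}\circ\cF\bigl((\theta_W\boxtimes 1_{W^*})\circ i_W\bigr)\circ\varphi^{-1}$ respectively; these agree because $(\theta_W)^{*}=\theta_{W^*}$ in the ribbon category $\sC$, again by Proposition \ref{dualmorphchar}. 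Hence $\cF(\theta_{W^*})$ satisfies the characterizing relation, so $(\theta_{\cF(W)})^{*}=\cF(\theta_{W^*})=\theta_{(\cF(W))^{*}}$, and $\mathcal{D}$ is ribbon. The only subtlety is this last compatibility, which amounts to tracking how the chosen duality data on induced objects interacts with the induced twist; everything else follows from Theorems \ref{thm:rep0} and \ref{thm:Fisbraidedtensor}, Lemmas \ref{twistonrep0A} and \ref{lem:dualinRep0}, Proposition \ref{lem:induceduals}, and Corollary \ref{cor:inducedtheta}.
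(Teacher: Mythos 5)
Your proof is correct and takes essentially the same approach as the paper: reduce the ribbon condition $\theta_{\cF(W)}^* = \theta_{\cF(W^*)}$ to $\cF(\theta_W)^* = \cF(\theta_{W^*})$ via Corollary \ref{cor:inducedtheta}, then push this through the tensor-functor structure of $\cF$ and the ribbon property of $\sC$. The one small difference is that the paper sketches a direct comparison of the dual-morphism formula \eqref{eqn:dualmorph} applied in $\repA$ with $\cF$ of the same formula in $\sC$ (via a commutative diagram like the one in Proposition \ref{lem:induceduals}), whereas you appeal to the shorter characterization of dual morphisms from Proposition \ref{dualmorphchar} and naturality of $f$; this is a streamlining, not a different argument.
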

\begin{proof}
 We just need to show that if $\cF(W)$ is an induced object in $\repzA$, with dual $(\cF(W^*), \widetilde{e}_W, \widetilde{i}_W)$ as in Proposition \ref{lem:induceduals}, then $\theta_{\cF(W)}^*=\theta_{\cF(W^*)}$. To prove this, one first uses the definition of dual morphisms and a diagram similar to the one in the proof of Proposition \ref{lem:induceduals} (more precisely, the corresponding commutative diagram for $\cF(W^*)$), to show that $\theta_{\cF(W)}^*=\cF(\theta_W^*)$. Then because $\sC$ is a ribbon category and using Corollary \ref{cor:inducedtheta},
 \begin{equation*}
  \cF(\theta_W^*)=\cF(\theta_{W^*})=\theta_{\cF(W^*)}
 \end{equation*}
as required.
%
%
%
%
\end{proof}

Now we can relate traces in $\sC$ to traces in the ribbon category of induced objects in $\repzA$: 
\begin{lemma}\label{lem:indtr}
	Suppose $\sC$ is a ribbon category with twist $\theta$ such that $\theta_A=1_A$. If $W$ is an object of $\sC$ such that $\cF(W)$ is an object of $\repzA$ and if $f\in\Endo(W)$, then
	$$\tr(\cF(f))=\varphi\circ \cF(\tr(f))\circ \varphi^{-1},$$
	recalling the isomorphism $\varphi: \cF(\sunit)\rightarrow A$ of Theorem \ref{thm:inductionfucntor}.
\end{lemma}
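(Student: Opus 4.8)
The strategy is to unwind the definition of $\tr$ in $\repzA$ (Definition \ref{def:tr}) applied to $\cF(f)$, using the explicit dual datum $(\cF(W^*),\widetilde{e}_W,\widetilde{i}_W)$ constructed in Proposition \ref{lem:induceduals}, and then transport everything back to $\sC$ via the tensor structure isomorphisms $f$, $g$, $\varphi$ of Theorem \ref{thm:inductionfucntor}. Concretely, $\tr(\cF(f))$ is the composition $A \xrightarrow{\widetilde{i}_W} \cF(W)\boxtimes_A\cF(W^*) \xrightarrow{(\theta_{\cF(W)}\circ\cF(f))\boxtimes_A 1} \cF(W)\boxtimes_A\cF(W^*) \xrightarrow{\cR^A_{\cF(W),\cF(W^*)}} \cF(W^*)\boxtimes_A\cF(W) \xrightarrow{\widetilde{e}_W} A$, regarding $A = \Endo(A)$'s relevant object since the unit of $\repzA$ is $(A,\mu)$. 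First I would substitute $\widetilde{i}_W = f_{W,W^*}\circ\cF(i_W)\circ\varphi^{-1}$ and $\widetilde{e}_W = \varphi\circ\cF(e_W)\circ g_{W^*,W}$, so that the outer $\varphi^{-1}$ and $\varphi$ appear at the two ends; this already isolates a composition $\cF(\sunit)\to\cF(\sunit)$ in the middle that we want to identify with $\cF(\tr(f))$.

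The key step is then to rewrite the middle portion entirely inside the image of $\cF$. For this I would use: (i) Corollary \ref{cor:inducedtheta}, which gives $\theta_{\cF(W)} = \cF(\theta_W)$ (valid since $\cF(W)\in\repzA$ and $\theta_A = 1_A$), so that $\theta_{\cF(W)}\circ\cF(f) = \cF(\theta_W\circ f)$ by functoriality of $\cF$; (ii) the compatibility of $f_{W,W^*}$ and $g_{W^*,W}$ with tensor products of morphisms, together with the fact that $g = f^{-1}$ is a natural isomorphism $\boxtimes_A\circ(\cF\times\cF)\to\cF\circ\boxtimes$, to move the morphism $(\theta_{\cF(W)}\circ\cF(f))\boxtimes_A 1_{\cF(W^*)}$ past $f_{W,W^*}$, turning it into $\cF((\theta_W\circ f)\boxtimes 1_{W^*})$ conjugated appropriately; (iii) Theorem \ref{thm:Fisbraidedtensor}, specifically equation \eqref{eqn:inducedbraiding}, which says $\cF(\sR_{W,W^*}) = g_{W^*,W}\circ\cR^A_{\cF(W),\cF(W^*)}\circ f_{W,W^*}$, to replace the categorical braiding $\cR^A_{\cF(W),\cF(W^*)}$ sandwiched between $g_{W^*,W}$ (coming from $\widetilde{e}_W$) and $f_{W,W^*}$ (coming from $\widetilde{i}_W$) by $\cF(\sR_{W,W^*})$. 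After these three replacements and cancellation of the matched $f$/$g$ pairs using naturality, the middle composition collapses to $\cF$ applied to the composition $\sunit\xrightarrow{i_W} W\boxtimes W^* \xrightarrow{(\theta_W\circ f)\boxtimes 1_{W^*}} W\boxtimes W^*\xrightarrow{\sR_{W,W^*}} W^*\boxtimes W\xrightarrow{e_W}\sunit$, which is exactly $\cF(\tr(f))$.

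The bookkeeping obstacle — and the main thing requiring care — is the order in which $f_{W,W^*}$, $g_{W^*,W}$, and $\cF(\sR_{W,W^*})$ must be inserted and matched up, because $\tr(\cF(f))$ as written has $\widetilde{i}_W$ ending in $f_{W,W^*}$ and $\widetilde{e}_W$ starting with $g_{W^*,W}$, so the braiding $\cR^A$ sits between them in precisely the configuration of \eqref{eqn:inducedbraiding}, but the morphism $(\theta_{\cF(W)}\circ\cF(f))\boxtimes_A 1$ is interposed and must first be commuted through using the naturality of $g$ (equivalently $f$). One must also keep track that all morphisms in sight are even (so no sign factors intervene): $\theta$ is even by definition of a twist, $\cF(f)$ has the same parity as $f$ but that parity is irrelevant here since $f\boxtimes_A 1$ and the naturality squares only involve $f$ paired with identities and even structure maps. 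Once the even naturality squares for $f$ and $g$ are drawn, the computation is a diagram chase analogous to the proof of Proposition \ref{prop:inducedrepzAribbon}; no genuinely new input beyond Corollary \ref{cor:inducedtheta}, Theorem \ref{thm:Fisbraidedtensor}, and the tensor-functoriality of $\cF$ is needed.
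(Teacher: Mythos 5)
Your proof proposal is correct and follows essentially the same approach as the paper: both unwind $\tr(\cF(f))$ via the induced duality data of Proposition \ref{lem:induceduals}, substitute $\theta_{\cF(W)}=\cF(\theta_W)$ from Corollary \ref{cor:inducedtheta}, move $(\theta_{\cF(W)}\circ\cF(f))\boxtimes_A 1$ through $f_{W,W^*}$ by naturality, and apply Theorem \ref{thm:Fisbraidedtensor} to replace $\cR^A_{\cF(W),\cF(W^*)}$ with $\cF(\cR_{W,W^*})$. The paper packages this as one commutative diagram whose left column is $\cF(\tr(f))$ and whose right column is $\tr(\cF(f))$, which is exactly the diagram chase you describe.
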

\begin{proof}
Consider the diagram
\begin{equation*}
 \xymatrixcolsep{6pc}
 \xymatrix{
 \cF(\sunit) \ar[r]^{\varphi} \ar[d]^{\cF(i_W)} & A \ar[d]^{\widetilde{i}_{\cF(W)}} \\
 \cF(W\boxtimes W^*) \ar[r]^{f_{W,W^*}} \ar[d]^{\cF((\theta_W\circ f)\boxtimes 1_{W^*})} & \cF(W)\boxtimes_A\cF(W^*) \ar[d]^{(\theta_{\cF(W)}\circ\cF(f))\boxtimes_A 1_{\cF(W^*)}} \\
\cF(W\boxtimes W^*) \ar[r]^{f_{W,W^*}} \ar[d]^{\cF(\sR_{W,W^*})} & \cF(W)\boxtimes_A\cF(W^*) \ar[d]^{\cR^A_{\cF(W),\cF(W^*)}} \\
\cF(W^*\boxtimes W) \ar[r]^{f_{W^*,W}} \ar[d]^{\cF(e_W)} & \cF(W^*)\boxtimes_A \cF(W) \ar[d]^{\widetilde{e}_{W}} \\
\cF(\sunit) \ar[r]^{\varphi} & A\\
 }
\end{equation*}
The top and bottom squares commute by the definitions of $\widetilde{i}_{\cF(W)}$ and $\widetilde{e}_{\cF(W)}$ (recall Proposition \ref{lem:induceduals}). The second square commutes because $f$ is a natural isomorphism and because 
\begin{equation*}
 \cF(\theta_W\circ f)=\cF(\theta_W)\circ\cF(f)=\theta_{\cF(W)}\circ\cF(f)
\end{equation*}
(recall Corollary \ref{cor:inducedtheta}). Also the third square commutes by Theorem \ref{thm:Fisbraidedtensor}. Now the proposition follows because the left column of the diagram is $\cF(\tr(f))$ while the right column is $\tr(\cF(f))$.
\end{proof}

The results presented so far in this subsection build up to the final result which shows how induction relates categorical $S$-matrices in $\sC$ and $\repzA$,  provided $\sC$ and the subcategory of induced objects in $\repzA$ are both ribbon categories:
\begin{defi} 
For objects $W_1$ and $W_2$ of a ribbon category, the \textit{$S^\hopflink$-matrix} is $S^\hopflink_{W_1,W_2}=\tr(\cM_{W_1,W_2})$.
\end{defi}

\begin{theo}\label{prop:liftingS}
Suppose $\sC$ is a ribbon category with twist $\theta$ such that $\theta_A=1_A$. Then for objects $W_1$ and $W_2$ of $\sC$ such that $\cF(W_1)$ and $\cF(W_2)$ are objects in $\repzA$,
 $$S^\hopflink_{\cF(W_1),\cF(W_2)}=\varphi\circ\cF(S^\hopflink_{W_1,W_2})\circ\varphi^{-1}.$$
\end{theo}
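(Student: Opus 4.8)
The plan is to deduce the statement from two facts established earlier: Corollary \ref{Fmonodromy}, which intertwines the monodromy in $\sC$ with the monodromy in $\repzA$ via the comparison isomorphism $f_{W_1,W_2}$, and Lemma \ref{lem:indtr}, which transports traces along induction. First I would record the setup: since $\cF(W_1)$ and $\cF(W_2)$ lie in $\repzA$, both $W_1$ and $W_2$ lie in $\sC^0$, so by Theorem \ref{thm:Fisbraidedtensor} the object $W_1\boxtimes W_2$ also lies in $\sC^0$ and $f_{W_1,W_2}\colon\cF(W_1\boxtimes W_2)\to\cF(W_1)\boxtimes_A\cF(W_2)$ is an isomorphism in $\repzA$; moreover Proposition \ref{prop:inducedrepzAribbon} ensures that the relevant objects all lie in a ribbon subcategory of $\repzA$, so that $S^\hopflink_{\cF(W_1),\cF(W_2)}=\tr(\cM^A_{\cF(W_1),\cF(W_2)})$ is meaningful and the trace is computed with respect to the duals induced in Proposition \ref{lem:induceduals}.

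Next I would rewrite the left-hand side. By Corollary \ref{Fmonodromy}, together with the identity $g_{W_1,W_2}=f_{W_1,W_2}^{-1}$ from the proof of Theorem \ref{thm:inductionfucntor}, we have
\begin{equation*}
\cM^A_{\cF(W_1),\cF(W_2)}=f_{W_1,W_2}\circ\cF(\cM_{W_1,W_2})\circ g_{W_1,W_2}.
\end{equation*}
The comparison isomorphism $f_{W_1,W_2}$ is even (it is a composition of even structure morphisms), and the monodromy $\cM_{W_1,W_2}=\sR_{W_2,W_1}\circ\sR_{W_1,W_2}$ is even, so $\cF(\cM_{W_1,W_2})$ is an even endomorphism since $\cF$ preserves parity. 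Hence Corollary \ref{conjtr}, applied with the even isomorphism $g_{W_1,W_2}$, has trivial sign factor and yields
\begin{equation*}
\tr\bigl(\cM^A_{\cF(W_1),\cF(W_2)}\bigr)=\tr\bigl(\cF(\cM_{W_1,W_2})\bigr).
\end{equation*}

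Finally I would apply Lemma \ref{lem:indtr} with the object $W_1\boxtimes W_2\in\sC^0$ and the endomorphism $\cM_{W_1,W_2}$, obtaining
\begin{equation*}
\tr\bigl(\cF(\cM_{W_1,W_2})\bigr)=\varphi\circ\cF\bigl(\tr(\cM_{W_1,W_2})\bigr)\circ\varphi^{-1}=\varphi\circ\cF(S^\hopflink_{W_1,W_2})\circ\varphi^{-1},
\end{equation*}
and combining the last two displays gives the theorem. No genuinely new computation is involved; the only subtlety is the parity bookkeeping that makes the sign in Corollary \ref{conjtr} vanish, namely verifying that both $f_{W_1,W_2}$ and $\cM_{W_1,W_2}$ are even, together with the compatibility check that the ribbon structure on $\repzA$ used to define the traces is exactly the induced one, so that Lemma \ref{lem:indtr} genuinely applies to $W_1\boxtimes W_2$.
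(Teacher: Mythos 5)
Your proposal is correct and uses exactly the same three ingredients—Lemma \ref{lem:indtr}, Corollary \ref{Fmonodromy}, and Corollary \ref{conjtr}—in the same roles as the paper's one-line calculation; the only difference is that you unfold the chain of equalities, justify that $W_1\boxtimes W_2\in\sC^0$, and spell out the parity bookkeeping that makes the sign in Corollary \ref{conjtr} trivial, all of which the paper leaves implicit.
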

\begin{proof}
We calculate
\begin{align*}
\varphi\circ\cF(S^\hopflink_{W_1,W_2})\circ\varphi^{-1}
&=\varphi\circ\cF(\tr(\cM_{W_1,W_2}))\circ\varphi^{-1}=\tr(\cF(\cM_{W_1,W_2}))\nonumber\\
&=\tr(\cM^A_{\cF(W_1),\cF(W_2)})=S^\hopflink_{\cF(W_1),\cF(W_2)},
\end{align*}
using Lemma \ref{lem:indtr}, Corollary \ref{Fmonodromy}, and Corollary \ref{conjtr}.
\end{proof}

\section{Vertex tensor categories}\label{sec:vtc}

The goal of this section is to interpret the abstract categorical constructions of the previous section in the vertex operator algebra context. As shown in \cite{HKL, CKL}, a vertex operator (super)algebra extension $V\subseteq A$ amounts to a (super)algebra in a braided tensor category of $V$-modules. In this context, the category $\repzA$ of local $A$-modules recalled in the previous section agrees (as categories) with the category of modules for the vertex operator algebra $A$ (see \cite{HKL}). Our main result in this section is that in fact these two categories agree as braided monoidal categories, where the category of vertex-algebraic $A$-modules has the braided monoidal category structure of \cite{HLZ8}. As a consequence, all the tensor-categorical constructions and results of Section \ref{sec:TensCats}, especially the induction functor, apply naturally to the vertex-algebraic setting. We start this section with a discussion of the basic definitions and constructions of vertex tensor category theory before moving on to the main results. In particular, we include a discussion of the natural superalgebra generalization of the vertex tensor category constructions in \cite{HLZ8} and show how these agree with the braided monoidal supercategory structure on $\repzA$ when $A$ is a vertex operator superalgebra extension of a vertex operator algebra $V$.

\subsection{Modules and intertwining operators}
\label{subsec:intw}

We are interested in extensions $V\subseteq A$ where $V$ is an ordinary $\ZZ$-graded vertex operator algebra and $A$ is one of the following four possibilities:
\begin{enumerate}
 \item A $\ZZ$-graded vertex operator algebra,
 
 \item A $\ZZ$-graded vertex operator superalgebra,
 
 \item A $\frac{1}{2}\ZZ$-graded vertex operator algebra, or
 
 \item A $\frac{1}{2}\ZZ$-graded vertex operator superalgebra.
\end{enumerate}
For the notion of ($\ZZ$-graded) vertex operator algebra $(V, Y, \unit, \omega)$ and corresponding notation, see for instance \cite{FLM, FHL, LL}. The definition of  $\frac{1}{2}\ZZ$-graded vertex operator algebra is exactly the same, except that half-integral conformal weights are allowed; quantum Hamiltonian reduction \cite{KRW, KW-qH} is a prominent source of $\frac{1}{2}\ZZ$-graded \VOAs, including the Bershadsky-Polyakov algebra discussed in Section \ref{sec:BP} below.

For the notion of vertex operator superalgebra, see for instance \cite{DL, Xu, WLi-thesis}. Since slightly variant definitions of vertex operator superalgebra exist in the literature, let us clarify what we mean by this term. For us, a vertex operator superalgebra $V$ has a parity decomposition $V^\even\oplus V^\odd$ as well as typically a second $\ztwo$-grading coming from the conformal weight $\frac{1}{2}\ZZ$-grading: $V=V_{+}\oplus V_{-}$ where
\begin{equation*}
 V_{+}=\bigoplus_{n\in\ZZ} V_{n}\hspace{2em}\mathrm{and}\hspace{2em}V_{-}=\bigoplus_{n\in\ZZ} V_{n+\frac{1}{2}}.
\end{equation*}
In this section, we will not assume that these two $\ztwo$-gradings are the same; in fact, one could be trivial and the other not, yielding the possibilities of $\ZZ$-graded vertex operator superalgebra and $\frac{1}{2}\ZZ$-graded vertex operator algebra. In any case, the vacuum and conformal vector of $V$ are assumed even in both senses ($\unit, \omega\in V^\even\cap V_+$) and the vertex operator $Y$ is even with respect to both $\ztwo$-gradings. This means that the involutions $P_V=1_{V^\even}\oplus (-1_{V^\odd})$ (the \textit{parity involution}) and $\theta_V=1_{V_{+}}\oplus (-1_{V_{-}})=e^{2\pi i L(0)}$ (the \textit{twist}) are vertex operator superalgebra automorphisms. For a parity-homogeneous element $v$ of a vertex operator superalgebra $V$, we use $\vert v\vert\in\ztwo$ to denote the parity of $v$.

Since all four possibilities for an extended algebra that we shall consider are special cases of $\frac{1}{2}\ZZ$-graded vertex operator superalgebra (where both, one, or none of $V^\odd$ and $V_{-}$ are zero), in this and the next two subsections, we will fix a $\frac{1}{2}\ZZ$-graded vertex operator superalgebra $V$. Later, in Section \ref{subsec:complexIntw} where we begin specifically studying extensions, we will take $V$ to be a $\ZZ$-graded vertex operator algebra and $A$ to be a $\frac{1}{2}\ZZ$-graded superalgebra. The modules for a vertex operator superalgebra that we will consider fall in the following class (see for instance \cite{HLZ1} for the vertex operator algebra case): 
\begin{defi}\label{defi:genlmodule} A \emph{grading-restricted generalized $V$-module} is a $\CC$-graded superspace 
	\begin{equation*}
	 W=\bigoplus_{i\in\ztwo} W^i = \bigoplus_{h\in\CC} W_{[h]}
	\end{equation*}
(graded by \textit{parity} and \textit{conformal weights}) equipped with a \textit{vertex operator}
	\begin{align*}
	 Y_W: V\otimes W & \rightarrow W[[x,x^{-1}]]\nonumber\\
	  v\otimes w & \mapsto Y_W(v, x)w=\sum_{n\in\ZZ} v_n w\,x^{-n-1}
	\end{align*}
such that the following axioms hold:
\begin{enumerate}
\item Grading compatibility: For $i=\even, \odd$, $W^i=\bigoplus_{h\in\CC} W^i_{[h]}$, where $W^i_{[h]}=W^i\cap W_{[h]}$.

 \item The \textit{grading restriction conditions}: For any $h\in\CC$, $W_{[h]}$ is finite dimensional and $W_{[h+n]}=0$ for $n\in\ZZ$ sufficiently negative.
 
 \item Evenness of the vertex operator: For any $v\in V^i$ and $w\in W^j$, $i,j\in\ztwo$, $Y_W(v,x)w\in W^{i+j}[[x,x^{-1}]]$.
 
 \item \textit{Lower truncation}: For any $v\in V$ and $w\in W$, $v_n w=0$ for $n$ sufficiently positive, that is, $Y_W(v,x)w\in W((x))$.
 
 \item The \textit{vacuum property}: $Y_W(\mathbf{1}, x)=1_W$.
 
 \item The \textit{Jacobi identity}: For any parity-homogeneous $u, v\in V$,
 \begin{align*}
  x_0^{-1}\delta\left(\dfrac{x_1-x_2}{x_0}\right) Y_W(u, x_1)Y_W(v, x_2) & -(-1)^{\vert u\vert \vert v\vert} x_0^{-1}\delta\left(\dfrac{-x_2+x_1}{x_0}\right) Y_W(v,x_2)Y_W(u,x_1)\nonumber\\
  & =x_2^{-1}\delta\left(\dfrac{x_1-x_0}{x_2}\right) Y_W(Y(u,x_0)v,x_2).
 \end{align*}

 \item The Virasoro algebra properties: If we write $Y_W(\omega, x)=\sum_{n\in\ZZ} L(n) x^{-n-2}$, then
 \begin{equation*}
  [L(m), L(n)]=(m-n)L(m+n)+\dfrac{m^3-m}{12}\delta_{m+n,0} c
 \end{equation*}
where $c$ is the central charge of $V$. Moreover, for any $h\in\CC$, $W_{[h]}$ is the generalized eigenspace of $L(0)$ with generalized eigenvalue $h$.

\item The \textit{$L(-1)$-derivative property}: For any $v\in V$,
\begin{equation*}
 Y_W(L(-1)v,x)=\dfrac{d}{dx}Y_W(v,x).
\end{equation*}
\end{enumerate}

If $(W_1, Y_{W_1})$ and $(W_2, Y_{W_2})$ are grading-restricted generalized $V$-modules, then a \textit{parity-homogeneous homomorphism} $f: W_1\rightarrow W_2$ is a parity-homogeneous linear map which satisfies
\begin{equation*}
f(Y_{W_1}(v,x)w_1)=(-1)^{\vert f\vert \vert v\vert } Y_{W_2}(v,x)f(w_1)
\end{equation*}
for $w_1\in W_1$ and parity-homogeneous $v\in V$. A \textit{homomorphism} $f: W_1\rightarrow W_2$ is any sum of an even and an odd homomorphism.
\end{defi}
\begin{rema}
 We do not require homomorphisms between grading-restricted generalized modules to be even. In fact, homomorphisms between $V$-modules $W_1$ and $W_2$ form a superspace. Note that some references (for example \cite{WLi-thesis}) do not include the sign factor in the definition of $V$-module homomorphism, even when odd homomorphisms are allowed. However, the sign factor is natural in light of the general superalgebra principle that a sign factor should occur whenever the order of two entities with parity is reversed. Moreover, this sign factor is necessary for the notion of superalgebra module homomorphism to agree with the notion of $\repA$-morphism from Section \ref{sec:TensCats} in the context of a superalgebra extension $V\subseteq A$. (Recall the parity factor in the definition of tensor product of morphisms in $\sC$ from Section \ref{subsec:superalgebra}.)
\end{rema}

\begin{rema}
 If $(W, Y_W)$ is a grading-restricted generalized $V$-module, then $(\Pi(W), Y_{\Pi(W)})$ is as well, where $\Pi(W)=W$ with reversed parity decomposition: $\Pi(W)^i=W^{i+\odd}$ for $i=\even, \odd$, and with $Y_{\Pi(W)}=Y_W$. Because $Y_W$ is even, the parity involution $P_W=1_{W^\even}\oplus (-1_{W^\odd})$ on $W$ is an odd $V$-module isomorphism between $W$ and $\Pi(W)$.
\end{rema}

\begin{nota}
	From now on, we will typically use \emph{module} to mean a {grading-restricted generalized module}. Also, for a parity-homogeneous element $w$ in a $V$-module, we will denote the parity of $w$ by $\vert w\vert$.
\end{nota}

If $(W, Y_W)$ is a $V$-module, then the \textit{contragredient module} is the \textit{graded dual}
\begin{equation*}
 W'=\bigoplus_{h\in\CC} W_{[h]}^*
\end{equation*}
with parity decomposition $(W')^i=\bigoplus_{h\in\CC} (W^i_{[h]})^*$ for $i=\even, \odd$. Since $V$ is $\frac{1}{2}\ZZ$-graded, there are two natural choices for the vertex operator $Y_{W'}$:
\begin{equation}\label{contramodvrtxop}
 \langle Y_{W'}^\pm(v,x)w', w\rangle_W=(-1)^{\vert v\vert \vert w'\vert }\langle w', Y_W(e^{xL(1)} (e^{\pm\pi i} x^{-2})^{L(0)} v, x^{-1})w\rangle_W,
\end{equation}
for $w\in W$ and parity-homogeneous $v\in V$, $w'\in W'$, where $\langle\cdot,\cdot\rangle_W$ is the natural bilinear pairing between $W$ and its graded dual. The proof that $Y_{W'}^\pm$ give $V$-module structures on $W'$ is an easy superalgebra generalization of the proof for the vertex operator algebra case  in \cite{FHL}. Since $Y_{W'}^+(v, x)=Y_{W'}^-(\theta_V(v),x)$ for $v\in V$, the twist $\theta_{W'}=e^{2\pi i L_{W'}(0)}$ is an isomorphism between $(W', Y_{W'}^+)$ and $(W', Y_{W'}^-)$ (due to the general fact that for any $V$-module $W$, $\theta_W(Y_W(v,x)w)=Y_W(\theta_V(v), x)\theta_W(w)$ for $v\in V$, $w\in W$; see for instance \cite[Equation 3.86]{HLZ2}). Note that $\theta_{W'}=e^{2\pi iL_{W'}(0)}$ is well defined: because $\omega$ is even with integer conformal weight, $L_{W'}(0)$ is independent of the vertex operator structure on $W'$.

\begin{rema}
 There are further $V$-module structures on $W'$ appearing in the literature. For example, the definition in \cite{Xu} replaces the $(-1)^{\vert v\vert \vert w'\vert }$ factor in \eqref{contramodvrtxop} with $(-1)^{\vert v\vert\vert w\vert}$. These structures are isomorphic to the ones in \eqref{contramodvrtxop} via the parity automorphism $P_{W'}=1_{(W')^\even}\oplus(-1_{(W')^\odd})$. More structures occur in \cite{WLi-thesis, Ya3} where the $(-1)^{\vert v\vert \vert w'\vert}$ factor in \eqref{contramodvrtxop} is replaced by $e^{\pm\pi i \vert v\vert/2}$. These structures are equivalent to \eqref{contramodvrtxop} via certain square roots of the parity involution on $W'$: $1_{(W')^\even}\oplus (\pm i 1_{(W')^\odd})$.
\end{rema}

\begin{rema}
 The Jacobi identity in the definition of grading-restricted generalized $V$-module can be replaced with the following commutativity and associativity axiom: for any $w\in W$, $w'\in W'$, and parity-homogeneous $u, v\in V$, the series
 \begin{equation*}
  \langle w', Y_W(u,z_1)Y_W(v, z_2) w\rangle_W
 \end{equation*}
for $\vert z_1\vert>\vert z_2\vert>0$,
\begin{equation*}
(-1)^{\vert u\vert \vert v\vert } \langle w', Y_W(v,z_2)Y_W(u,z_1) w\rangle_W
\end{equation*}
for $\vert z_2\vert>\vert z_1\vert>0$, and 
\begin{equation*}
 \langle w', Y_W(Y(u,z_1-z_2)v,z_2)w\rangle_W
\end{equation*}
for $\vert z_2\vert>\vert z_1-z_2\vert>0$ converge absolutely to a common rational function on their respective domains.
\end{rema}

We  now recall the definition of (logarithmic) intertwining operator (from, for instance, \cite{HLZ2} in the vertex operator algebra setting):
\begin{defi}\label{def:intwop}
	Suppose $W_1$,  $W_2$ and $W_3$ are modules for a vertex operator superalgebra $V$. A \textit{parity-homogeneous intertwining 
		operator} of type $\binom{W_3}{W_1\,W_2}$ is a parity-homogeneous linear map
	\begin{align*}
		\mathcal{Y}: W_1\otimes W_2&\rightarrow W_3[\mathrm{log}\,x]\{x\},
		\\w_{1}\otimes w_{2}&\mapsto \mathcal{Y}(w_{1},x)w_{(2)}=\sum_{n\in
			{\mathbb C}}\sum_{k\in\NN} (w_{1})^\cY_{n; k}
		w_{2}\,x^{-n-1} (\mathrm{log}\,x)^k\in W_3[\mathrm{log}\,x]\{x\}
	\end{align*}
	satisfying the following conditions:
	\begin{enumerate}
		\item  {\it Lower truncation}: For any $w_{1}\in W_1$, $w_{2}\in W_2$ and 
		$n\in\mathbb{C}$,
		\begin{equation}\label{log:ltc}
		(w_{1})^\cY_{n+m; k}w_{2}=0\;\;\mbox{ for }\;m\in {\mathbb
			N} \;\mbox{ sufficiently large, independently of}\,k.
		\end{equation}
		
		\item The {\it Jacobi identity}:
		\begin{align}\label{intwopjac}
		(-1)^{\vert\cY\vert \vert v\vert} x^{-1}_0\delta \left(\frac{x_1-x_2}{x_0}\right) 
		Y_{W_3}(v,x_1)\mathcal{Y}(w_{1},x_2) & - (-1)^{\vert v\vert \vert w_1\vert }
		x^{-1}_0\delta\left(\frac{x_2-x_1}{-x_0}\right)\mathcal{Y}(w_{1},x_2)Y_{W_2}(v
		,x_1)\nonumber\\
		& =  x^{-1}_2\delta \left(\frac{x_1-x_0}{x_2}\right) 
		\mathcal{Y}(Y_{W_1}(v,x_0)w_{1},x_2)
		\end{align}
		for parity-homogeneous $v\in V$ and $w_{1}\in W_1$.
		
		\item The {\it $L(-1)$-derivative property:} for any
		$w_{1}\in W_1$,
		\begin{equation}\label{intwopderiv}
		\mathcal{Y}(L(-1)w_{1},x)=\frac{d}{dx}\mathcal{Y}(w_{1},x).
		\end{equation}
	\end{enumerate}
An \textit{intertwining operator} of type $\binom{W_3}{W_1\,W_2}$ is any sum of an even and an odd intertwining operator.
\end{defi}
\begin{rema}
	We use $\mathcal{V}^{W_3}_{W_1, W_2}$ to denote the superspace of intertwining 
	operators of type $\binom{W_3}{W_1\,W_2}$; the dimension and superdimension of 
	$\mathcal{V}^{W_3}_{W_1 W_2}$ are the corresponding \textit{fusion rules}: 
	\begin{equation*}
	 {N^\pm}^{W_3}_{W_1, W_2} =\mathrm{dim}\left(\mathcal{V}^{W_3}_{W_1, W_2}\right)^\even\pm\mathrm{dim}\left(\mathcal{V}^{W_3}_{W_1, W_2}\right)^\odd.
	\end{equation*}
\end{rema}

By slight generalizations (for the possibility of non-even intertwining operators) of \cite[Proposition 7.1]{HL-tensor2},   \cite[Theorem 4.1]{Xu}, \cite[Proposition 3.3.2]{WLi-thesis}, \cite[Proposition 3.44]{HLZ2}, we have for any $V$-modules $W_1$, $W_2$, and $W_3$ and $r\in\ZZ$ an even isomorphism $\Omega_r: \mathcal{V}^{W_3}_{W_1 W_2}\rightarrow \mathcal{V}^{W_3}_{W_2 W_1}$ given by
\begin{equation*}
\Omega_r(\mathcal{Y})(w_2,x)w_1=(-1)^{\vert w_1\vert \vert w_2\vert} e^{xL(-1)}\mathcal{Y}(w_1, e^{(2r+1)\pi i} x)w_2
\end{equation*}
for $\mathcal{Y}\in\mathcal{V}^{W_3}_{W_1 W_2}$ and parity-homogeneous $w_1\in W_1$, $w_2\in W_2$. The inverse of $\Omega_r$ is $\Omega_{-r-1}$. When $\cY=Y_W$ is a module vertex operator, $\Omega_r(Y_W)$ is independent of $r$ since $Y_W(\cdot, x)\cdot$ involves only integral powers of $x$. In this case, we drop the subscript $r$ on $\Omega$ and use the notation $\Omega(Y_W)$. By generalizations of \cite[Proposition 7.3]{HL-tensor2}, \cite[Theorem 4.4]{Xu}, \cite[Proposition 3.3.4]{WLi-thesis},  \cite[Proposition 3.46]{HLZ2}, we also have for any $r\in\ZZ$ an even linear isomorphism $A_r: \mathcal{V}^{W_3}_{W_1\,W_2}\rightarrow\mathcal{V}^{W_2'}_{W_1\,W_3'}$ given by
\begin{equation*}
 \langle A_r(\mathcal{Y})(w_1, x)w_3', w_2\rangle_{W_2}=(-1)^{(\vert\cY\vert+\vert w_1\vert)\vert w_3'\vert}\langle w_3', \mathcal{Y}(e^{xL(1)} (e^{(2r+1)\pi i} x^{-2})^{L(0)} w_1, x^{-1})w_2\rangle_{W_3}
\end{equation*}
for $w_2\in W_2$ and parity-homogeneous $\mathcal{Y}\in\mathcal{V}^{W_3}_{W_1\,W_2}$, $w_1\in W_1$, and $w_3'\in W_3'$. The inverse of $A_r$ is $A_{-r-1}$. A consequence of these isomorphisms is that the fusion rules ${N^\pm}^{W_k'}_{W_i, W_j}$ for any permutation $(i, j, k)$ of $(1, 2, 3)$ are equal.

Let us now fix a full subcategory $\cC$ of $V$-modules. In order to apply the vertex tensor category theory of \cite{HLZ1}-\cite{HLZ8} to $\cC$, we will need to impose a number of conditions on $\cC$. The following conditions come from Assumption 10.1 in \cite{HLZ6}, and more assumptions will be clarified later:
\begin{assum}\label{assum:grading} 
All modules in $\mathcal{C}$ are $\RR$-graded by conformal weights, and for any module $W=\bigoplus_{n\in\RR} W_{[n]}$ in $\mathcal{C}$, there is some $K\in\ZZ_+$ such that $(L(0)-n)^K\cdot W_{[n]}=0$ for all $n\in\RR$.
\end{assum}
\begin{rema}
	The assumption that modules in $\cC$ are $\RR$-graded by conformal weights can be modified somewhat; see for instance \cite{H-correctZ}.
\end{rema}
\begin{rema}
 The assumption that the possible Jordan block sizes of $L(0)$ acting on conformal weight spaces are uniformly bounded for each $V$-module $W$ in $\cC$ implies that for any intertwining operator $\cY$ of type $\binom{W_3}{W_1\,W_2}$ in $\cC$, there is some $K\in\NN$ depending on $W_1$, $W_2$, and $W_3$ such that
	\begin{equation*}
	\cY(w_1, x)w_2=\sum_{n\in\RR}\sum_{k=0}^K (w_1)^\mathcal{Y}_{n; k} w_2\,x^{-n-1} (\mathrm{log}\,x)^k
	\end{equation*}
	for any $w_1\in W_1$ and $w_2\in W_2$. That is, finitely many powers of $\mathrm{log}\,x$ appear in the expansion of $\cY(w_1,x)w_2$ independently of $w_1$ and $w_2$. (See Proposition 3.20 in \cite{HLZ2}.)
\end{rema}

\subsection{Multivalued functions, intertwining maps, and compositions of intertwining operators}
\label{sec:mult}

In this subsection, we recall the notion of $P(z)$-intertwining map among $V$-modules for $z\in\CC^\times$ and discuss assumptions and properties of our category $\cC$ of $V$-modules related to convergence of multivalued
functions defined using compositions of intertwining operators.

We first fix some notation that we will use throughout this section for branches of logarithm. For any $z\in\CC^\times$, we use the notation
\begin{equation*}
\mathrm{log}\,z=\mathrm{log}\,\vert z\vert+i\,\mathrm{arg}\,z,
\end{equation*}
where $0\leq\mathrm{arg}\,z<2\pi$, to denote the principal branch of logarithm on $\CC^\times$ with a cut along the positive real axis. More generally, we use the notation
\begin{equation*}
l_p(z)=\mathrm{log}\,z+2\pi i p
\end{equation*}
for any $p\in\ZZ$, so that $\mathrm{log}\,z=l_0(z)$. For branches of logarithm with a cut along the negative real axis, we use the notation $l_{p-1/2}(z)$ for $p\in\ZZ$ to denote the branch with 
\begin{equation*}
(2p-1)\pi i\leq \mathrm{Im}\,l_{p-1/2}(z)<(2p+1)\pi i.
\end{equation*}
For a formal series $f(x)\in W[\mathrm{log}\,x]\lbrace x\rbrace$, where $W$ is some superspace, we use the notation
\begin{equation*}
f(e^{l_p(z)})=f(x)\vert_{x^n=e^{n\,l_p(z)},\,\mathrm{log}\,x=l_p(z)}
\end{equation*}
for any $p\in\frac{1}{2}\ZZ$ to denote the (not necessarily convergent in any sense) series obtained by substituting $x$ with $z$ using the branch $l_p(z)$ of logarithm. We use similar notation for substituting formal variables with complex numbers in multivariable formal series.

We will need to consider multivalued functions of several complex variables in what follows, so we discuss some terminology and properties of multivalued functions here. Recall that a multivalued function $f$ defined on a connected set $U\subseteq\CC^n$, $n\in\ZZ_+$, is \textit{analytic} if for any $(z_1,\ldots, z_n)\in U$, any value of $f(z_1,\ldots,z_n)$ is the value at $(z_1,\ldots, z_n)$ of a single-valued analytic branch of $f$ defined on a simply-connected open neighborhood of $(z_1,\ldots, z_n)$. If $f$ is a multivalued analytic function on a connected open set $U\subseteq\CC^n$ and $\widetilde{U}\subseteq U$ is a connected open subset, we say that $\widetilde{f}$ is a \textit{restriction} of $f$ to $\widetilde{U}$ if the values of $\widetilde{f}(z_1,\ldots,z_n)$ for $(z_1,\ldots,z_n)\in\widetilde{U}$ are a subset of the values of $f(z_1,\ldots, z_n)$, and if any two values of $\widetilde{f}$ at any two points in $\widetilde{U}$ can be obtained from each other by analytic continuation along a continuous path in $\widetilde{U}$.

We say that two multivalued functions $f_1$ and $f_2$ on an open set $U\subseteq\CC^n$ are equal if for any simply-connected open set $\widetilde{U}\subseteq U$ and any single-valued branch $\widetilde{f}_1$ of $f_1$ on $\widetilde{U}$, there is a single-valued branch $\widetilde{f}_2$ of $f_2$ on $\widetilde{U}$ which equals $\widetilde{f}_1$.
\begin{propo}\label{equalmulti}
Suppose $f_1$ and $f_2$ are multivalued analytic functions defined on connected open sets $U_1, U_2\subseteq\CC^n$, respectively. Then $f_1$ and $f_2$ have equal restrictions to an open connected subset $\widetilde{U}\subseteq U_1\cap U_2$ if for some particular non-empty simply-connected open set $U'\subseteq\widetilde{U}$ and some particular single-valued branch $f_1'$ of $f_1$ on $U'$, there is a single-valued branch $f_2'$ of $f_2$ on $U'$ which equals $f_1'$.
\end{propo}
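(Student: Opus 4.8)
The plan is to reduce the statement to the standard uniqueness theorem for analytic continuation along paths, applied locally to single-valued branches. First I would fix the given data: a non-empty simply-connected open set $U'\subseteq\widetilde U$, a single-valued branch $f_1'$ of $f_1$ on $U'$, and a single-valued branch $f_2'$ of $f_2$ on $U'$ with $f_1'=f_2'$ on $U'$. Then, to prove that $f_1$ and $f_2$ have equal restrictions to $\widetilde U$, I must produce, for every simply-connected open subset $\widetilde U_0\subseteq\widetilde U$ and every single-valued branch $\widetilde f_1$ of the restriction of $f_1$ to $\widetilde U_0$, a single-valued branch $\widetilde f_2$ of the restriction of $f_2$ that agrees with $\widetilde f_1$ on $\widetilde U_0$; and the restrictions must themselves be ``connected'' in the sense of the definition (any two values obtainable from each other by continuation within $\widetilde U$).

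The key steps, in order, are as follows. (1) Connect: since $\widetilde U$ is connected and open in $\CC^n$, it is path-connected, so fix for each point a continuous path in $\widetilde U$ to a fixed basepoint $p_0\in U'$; by the definition of multivalued analytic function, each value of $f_i$ at a point of $\widetilde U$ is obtained from a value at $p_0$ by analytic continuation along such a path. (2) Transport the identity $f_1'=f_2'$: start with the germ of $f_1'$ at $p_0$, which coincides with the germ of $f_2'$ at $p_0$, and continue it along an arbitrary continuous path $\gamma$ in $\widetilde U$; the monodromy theorem (uniqueness of analytic continuation along a path) shows that continuing the common germ along $\gamma$ yields at the endpoint a germ which is simultaneously a branch of $f_1$ and of $f_2$, hence these agree everywhere along $\gamma$. (3) Patch over a simply-connected $\widetilde U_0$: given a single-valued branch $\widetilde f_1$ of $f_1$ on $\widetilde U_0$, pick a point $q_0\in\widetilde U_0$ and a path in $\widetilde U$ from $p_0$ to $q_0$; continuing the common germ along that path and then, using simple-connectedness of $\widetilde U_0$ together with the monodromy theorem, extending within $\widetilde U_0$, produces a single-valued branch $\widetilde f_2$ of $f_2$ on $\widetilde U_0$ that agrees with $\widetilde f_1$ at $q_0$, hence on all of $\widetilde U_0$ by the identity theorem for (single-valued) analytic functions on a connected set. (4) Conclude that the restrictions of $f_1$ and $f_2$ to $\widetilde U$ are equal multivalued functions, and note that they are genuine restrictions in the sense of the paper's definition because every value is reached by continuation within $\widetilde U$ from a value over $U'$.

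The main obstacle is bookkeeping rather than depth: one must be careful that analytic continuation of the common germ of $f_1'$ and $f_2'$ genuinely stays a branch of \emph{both} $f_1$ and $f_2$ at every stage. This is exactly where the hypothesis that $f_1$ and $f_2$ are multivalued \emph{analytic} functions on the \emph{connected} open sets $U_1$, $U_2$ (so that continuation of any branch of $f_i$ never leaves the ``sheet system'' of $f_i$) is used, and where one invokes the monodromy theorem to guarantee well-definedness over the simply-connected pieces. Once that is in place, everything else is the classical identity theorem. I would not write out the $\epsilon$--ball chains for the continuations explicitly; a reference to the monodromy theorem and the identity theorem suffices.
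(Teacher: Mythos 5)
Your proposal is correct and follows essentially the same route as the paper: fix a basepoint in $U'$, define the restrictions to $\widetilde U$ by analytic continuation of $f_1'$ and $f_2'$ along paths in $\widetilde U$, and use the coincidence of germs at the basepoint together with uniqueness of continuation along a path to conclude that any branch of one restriction over a simply-connected piece matches a branch of the other. The only cosmetic difference is that you make the germ-language and the appeal to the monodromy theorem explicit, while the paper keeps that implicit in its definitions of multivalued analytic function and restriction.
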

\begin{proof}
Fix $(z_1',\ldots, z_n')\in U'$, and consider the restrictions $\widetilde{f}_1$ of $f_1$ and $\widetilde{f}_2$ of $f_2$ to $\widetilde{U}$ whose values at any point  $(z_1,\ldots,z_n)\in \widetilde{U}$ are obtained by analytic continuation of $f_1'$ and $f_2'$, respectively, along continuous paths in $\widetilde{U}$ from $(z_1',\ldots,z_n')$ to $(z_1,\ldots, z_n)$. To see that $\widetilde{f}_1$ and $\widetilde{f}_2$ are equal, suppose $U''$ is any non-empty simply-connected open subset of $\widetilde{U}$ and $f_1''$ is any single-valued branch of $\widetilde{f}_1$ on $U''$. Now, $f_1''$ is completely determined by its values in a simply-connected neighborhood of some fixed $(z_1'',\ldots,z_n'')\in U''$, and these values can be obtained by analytic continuation of the branch $f_1'$ along some continous path in $\widetilde{U}$ from $(z_1',\ldots,z_n')$ to $(z_1'',\ldots,z_n'')$. Then we can analytically continue $f_2'$ along the same path to obtain a single-valued branch $f_2''$ of $\widetilde{f}_2$ on $U''$ which is equal to $f_1''$. Since $U'$ and $f_1'$ are arbitrary, this proves that $\widetilde{f}_1=\widetilde{f}_2$.
\end{proof}

For a formal series $f(x)\in\CC[\mathrm{log}\,x]\lbrace x\rbrace$ which converges absolutely on some open set in $\CC$ when the formal variable $x$ is replaced by a complex number $z$ using any branch of logarithm, we use the notation $f(z)$ to denote the corresponding multivalued function. We use similar notation for multivariable formal series.

The multivalued analytic functions that we will consider arise from composing intertwining operators, or more precisely intertwining maps, among $V$-modules in our category $\cC$.
We now give the superalgebra generalization of the notion of $P(z)$-intertwining map among $V$-modules for $z\in\CC^\times$ from \cite{HL-tensor1, HLZ3}.
Here, the notation ``$P(z)$'' stands for a Riemann sphere with a negatively oriented puncture at $\infty$ with local coordinate $w\mapsto1/w$,
and positively oriented punctures at $z$ and $0$ with local coordinates $w\mapsto w-z$ and $w\mapsto w$, respectively.
First, if
\begin{equation*}
W=\bigoplus_{h\in\mathbb{C}} W_{[h]}
\end{equation*}
is a $\mathbb{C}$-graded superspace (such as a $V$-module), then the 
\textit{algebraic completion} of $W$ is the superspace
\begin{equation*}
\overline{W}=\prod_{h\in\mathbb{C}} W_{[h]},
\end{equation*}
where $\overline{W}^i=\overline{W^i}$ for $i=\even,\odd$.
For any $h\in\mathbb{C}$, we use $\pi_h$ to denote the canonical even projection 
$\overline{W}\rightarrow W_{[h]}$. It is easy to see that $\overline{W}=(W')^*$.
\begin{defi}
	Suppose $W_1$, $W_2$, and $W_3$ are modules in $\mathcal{C}$ and $z\in\mathbb{C}^\times$. A 
	\textit{parity-homogeneous $P(z)$-intertwining map} of type $\binom{W_3}{W_1\,W_2}$ is a parity-homogeneous linear map
	\begin{equation*}
	I: W_1\otimes W_2\rightarrow \overline{W_3}
	\end{equation*}
	satisfying the following conditions:
	\begin{enumerate}
		\item \textit{Lower truncation}: For any $w_{1}\in W_1$, $w_{2}\in W_2$ 
		and $n\in\mathbb{C}$,
		\begin{equation}\label{map:ltc}
		\pi_{n-m}(I(w_{1}\otimes w_{2}))=0\;\;\mbox{ for }\;m\in {\mathbb
			N} \;\mbox{ sufficiently large.}
		\end{equation}
		\item The {\it Jacobi identity}:
		\begin{align}\label{intwmapjac}
		(-1)^{\vert I\vert \vert v\vert} x^{-1}_0\delta \left(\frac{x_1-z}{x_0}\right) 
		Y_{W_3}(v,x_1) I(w_{1}\otimes w_{2}) & - 
		(-1)^{\vert v\vert \vert w_1\vert } x^{-1}_0\delta\left(\frac{z-x_1}{-x_0}\right)I(w_{(1)}\otimes 
		Y_{W_2}(v,x_1)w_{2})\nonumber\\
		& =  z^{-1}\delta \left(\frac{x_1-x_0}{z}\right) 
		I(Y_{W_1}(v,x_0)w_{1}\otimes w_{2})
		\end{align}
		for $w_2\in W_2$ and parity-homogeneous $v\in V$, $w_{1}\in W_1$.
	\end{enumerate}
A \textit{$P(z)$-intertwining map} of type $\binom{W_3}{W_1\,W_2}$ is any sum of an even and an odd $P(z)$-intertwining map.
\end{defi}
\begin{rema}
	We use $\mathcal{M}[P(z)]^{W_3}_{W_1 W_2}$, or simply $\mathcal{M}^{W_3}_{W_1 
		W_2}$ if $z$ is clear, to denote the superspace of $P(z)$-intertwining maps of type 
	$\binom{W_3}{W_1\,W_2}$.
\end{rema}

The definitions suggest that a $P(z)$-intertwining map is essentially an 
intertwining operator with the formal variable $x$ specialized to the complex 
number $z$. In fact, generalizing Proposition 4.8 in \cite{HLZ3} to the superalgebra setting, we have:
\begin{propo}\label{opmapiso}
	For any $p\in\ZZ$, there is an even linear isomorphism $\mathcal{V}^{W_3}_{W_1 
		W_2}\rightarrow\mathcal{M}^{W_3}_{W_1 W_2}$ given by
	\begin{equation*}
	\mathcal{Y}\mapsto I_{\mathcal{Y},p},
	\end{equation*}
	where
	\begin{equation*}
	I_{\mathcal{Y},p}(w_{1}\otimes w_{2})=\mathcal{Y}(w_{1}, e^{l_p(z)})w_2
	\end{equation*}
	for $w_{1}\in W_1$ and $w_{2}\in W_2$. The inverse is given by
	\begin{equation*}
	I\mapsto\mathcal{Y}_{I,p}
	\end{equation*}
	where 
	\begin{equation*}
	\mathcal{Y}_{I,p}(w_{1},x)w_{2}= 
	\left(\dfrac{e^{l_p(z)}}{x}\right)^{-L(0)} 
	I\left(\left(\dfrac{e^{l_p(z)}}{x}\right)^{L(0)} w_{1}\otimes 
	\left(\dfrac{e^{l_p(z)}}{x}\right)^{L(0)} w_{2}\right)
	\end{equation*}
	for $w_{1}\in W_1$ and $w_{2}\in W_2$.
\end{propo}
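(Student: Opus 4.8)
The plan is to follow the strategy of the proof of Proposition 4.8 in \cite{HLZ3}, adapting it to accommodate the parity decompositions and the possibility of non-even intertwining operators and $P(z)$-intertwining maps. First I would observe that both maps $\cY\mapsto I_{\cY,p}$ and $I\mapsto\cY_{I,p}$ are manifestly parity-preserving (even) linear maps: the substitution $x^n\mapsto e^{n\,l_p(z)}$ does not affect the parity of coefficients, and the operators $(e^{l_p(z)}/x)^{\pm L(0)}$ are even since $\omega$ is even and integer-graded. Thus it suffices to check the two assertions on parity-homogeneous $\cY$ and $I$, after which the general (not necessarily even) case follows by taking sums of even and odd components.

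The heart of the argument is verifying that the two formulas are well defined and mutually inverse. First I would check that for parity-homogeneous $\cY$ of type $\binom{W_3}{W_1\,W_2}$, the substitution defining $I_{\cY,p}(w_1\otimes w_2)=\cY(w_1,e^{l_p(z)})w_2$ lands in $\overline{W_3}$: this follows from the lower truncation condition \eqref{log:ltc} in Definition \ref{def:intwop} together with the remark after \cref{assum:grading}, which guarantees that only finitely many powers of $\log x$ occur, so that for each conformal weight the coefficient is a finite sum. Then I would verify that $I_{\cY,p}$ satisfies the Jacobi identity \eqref{intwmapjac} for $P(z)$-intertwining maps. This is the step I expect to be the main obstacle, since it requires carefully substituting $x_2\mapsto z$ (via the branch $l_p$) in the intertwining operator Jacobi identity \eqref{intwopjac} and checking that the formal $\delta$-function manipulations — in particular the use of $x_2^{-1}\delta((x_1-x_0)/x_2)$ to extract residues — remain valid after this substitution. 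The sign factors $(-1)^{\vert\cY\vert\vert v\vert}$ and $(-1)^{\vert v\vert\vert w_1\vert}$ in \eqref{intwopjac} match exactly those in \eqref{intwmapjac}, so no new sign bookkeeping beyond that already present in the vertex operator algebra case is needed; the argument is otherwise a routine but somewhat delicate manipulation of generating functions, as in \cite{HLZ3}. The $L(-1)$-derivative property \eqref{intwopderiv} is used to control how the specialization interacts with the grading operators.

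Conversely, given a parity-homogeneous $P(z)$-intertwining map $I$, I would check that $\cY_{I,p}(w_1,x)w_2$ as defined is a well-defined element of $W_3[\log x]\{x\}$ satisfying lower truncation, the Jacobi identity \eqref{intwopjac}, and the $L(-1)$-derivative property; here one uses that $(e^{l_p(z)}/x)^{L(0)}$ expands as a formal power series in $\log x$ with complex powers of $x$, and that \eqref{intwmapjac} transforms into \eqref{intwopjac} under the inverse substitution. Finally, the computation that the two assignments are mutually inverse is a direct formal calculation: substituting one formula into the other, the operators $(e^{l_p(z)}/x)^{\pm L(0)}$ cancel and the substitutions $x\mapsto e^{l_p(z)}$ and its inverse compose to the identity, exactly as in the vertex operator algebra setting of \cite{HLZ3}, with no sign factors intervening since all the intermediate operators are even. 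This completes the proof; the only genuinely new content relative to \cite{HLZ3} is bookkeeping of parities, which is straightforward because the relevant operators are all even.
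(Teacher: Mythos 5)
Your proposal is correct and follows exactly the route the paper itself takes: the paper simply states the proposition as a generalization of Proposition 4.8 in \cite{HLZ3} to the superalgebra setting, leaving the proof implicit, and your expansion—reducing to parity-homogeneous components, noting that the relevant operators are even, and verifying the Jacobi identity and mutual inverse by the same formal-variable manipulations as in \cite{HLZ3}—is precisely the intended argument.
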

\begin{rema}
	We could construct an isomorphism as in Proposition \ref{opmapiso} for $p\in\ZZ+\frac{1}{2}$ as well, but we will not need this.
\end{rema}

We now impose the assumption that products and iterates of intertwining maps among modules in $\cC$ satisfy the convergence condition of Proposition 7.3 and Definition 7.4 in \cite{HLZ5}:
\begin{assum}\label{assum:conv}
Suppose $W_1$, $W_2$, $W_3$, $W_4$, and $M_1$ are modules in $\mathcal{C}$, $I_1$ is a $P(z_1)$-intertwining map of type $\binom{W_4}{W_1\,M_1}$, and $I_2$ is a $P(z_2)$-intertwining map of type $\binom{M_1}{W_2\,W_3}$. Then if $\vert z_1\vert>\vert z_2\vert>0$, the series
\begin{equation*}
\langle w_4', I_1(w_1\otimes I_2(w_2\otimes w_3))\rangle=\sum_{n\in\RR} \langle w_4', I_1(w_1\otimes\pi_n(I_2(w_2\otimes w_3)))\rangle
\end{equation*}
converges absolutely for any $w_1\in W_1$, $w_2\in W_2$, $w_3\in W_3$, and $w_4'\in W_4'$. Moreover, suppose $M_2$ is another module in $\mathcal{C}$, $I^1$ is a $P(z_2)$-intertwining map of type $\binom{W_4}{M_2\,W_3}$, and $I^2$ is a $P(z_0)$-intertwining map of type $\binom{M_2}{W_1\,W_2}$. Then if $\vert z_2\vert>\vert z_0\vert>0$, the series
\begin{equation*}
\langle w_4', I^1(I^2(w_1\otimes w_2)\otimes w_3)\rangle=\sum_{n\in\RR}\langle w_4', I^1(\pi_n(I^2(w_1\otimes w_2))\otimes w_3)\rangle
\end{equation*}
converges absolutely for any $w_1\in W_1$, $w_2\in W_2$, $w_3\in W_3$, and $w_4'\in W_4'$.
\end{assum}

The isomorphism in Proposition \ref{opmapiso} implies that products and iterates of intertwining operators, with formal variables specialized to complex numbers in the appropriate domain using any branch of logarithm, converge absolutely. In fact, a product of intertwining operators of the form
\begin{equation*}
\langle w_4', \cY_1(w_1, z_1)\cY_2(w_2,z_2)w_3\rangle
\end{equation*}
defines a multivalued analytic function on the domain $\vert z_1\vert>\vert z_2\vert>0$, and an iterate of intertwining operators of the form
\begin{equation*}
\langle w_4', \mathcal{Y}^1(\mathcal{Y}^2(w_1,z_0)w_2, z_2)w_3\rangle
\end{equation*}
defines a multivalued analytic function on the domain $\vert z_2\vert>\vert z_0\vert>0$ (see Proposition 7.14 in \cite{HLZ5}). If we set $z_0=z_1-z_2$, the product and iterate restrict to multivalued analytic functions on the intersection $\vert z_1\vert>\vert z_2\vert>\vert z_1-z_2\vert>0$.

We conclude this subsection with two propositions we shall use later to conclude that multivalued functions given by products and iterates of intertwining operators are equal. First we introduce two useful non-empty simply-connected open sets $S_1, S_2\subseteq\CC^2$ that were used to study branches of products and iterates of intertwining operators in \cite{C}:
\begin{equation*}
S_1=\lbrace (z_1, z_2)\in\CC^2\,\vert\,\mathrm{Re}\,z_1>\mathrm{Re}\,z_2>\mathrm{Re}(z_1-z_2)>0,\,\mathrm{Im}\,z_1>\mathrm{Im}\,z_2>\mathrm{Im}(z_1-z_2)>0\rbrace
\end{equation*}
and
\begin{equation*}
S_2=\lbrace (z_1, z_2)\in\CC^2\,\vert\,\mathrm{Re}\,z_2>\mathrm{Re}\,z_1>\mathrm{Re}(z_2-z_1)>0,\,\mathrm{Im}\,z_2>\mathrm{Im}\,z_1>\mathrm{Im}(z_2-z_1)>0\rbrace.
\end{equation*}
Then we have:
\begin{propo}\label{extassoc}
	Suppose $W_1$, $W_2$, $W_3$, $W_4$, $M_1$, and $M_2$ are modules in $\mathcal{C}$ and $\mathcal{Y}_1$, $\mathcal{Y}_2$, $\mathcal{Y}^1$, and $\mathcal{Y}^2$ are intertwining operators of types $\binom{W_4}{W_1\,M_1}$, $\binom{M_1}{W_2\,W_3}$, $\binom{W_4}{M_2\,W_3}$, and $\binom{M_2}{W_1\,W_2}$, respectively, with $\cY_2$ parity-homogeneous. If
	\begin{equation*}
	(-1)^{\vert\cY_2\vert\vert w_1\vert}\langle w_4', \cY_1(w_1, e^{\mathrm{log}\,r_1})\cY_2(w_2, e^{\mathrm{log}\,r_2})w_3\rangle =\langle w_4', \cY^1(\cY^2(w_1, e^{\mathrm{log}(r_1-r_2)})w_2, e^{\mathrm{log}\,r_2})w_3\rangle
	\end{equation*}
	for any $w_2\in W_2$, $w_3\in W_3$, $w_4'\in W_4'$, and parity-homogeneous $w_1\in W_1$, and for some $r_1, r_2\in\RR$ satisfying $r_1>r_2>r_1-r_2>0$, then
	\begin{equation*}
	(-1)^{\vert\cY_2\vert\vert w_1\vert}\langle w_4', \cY_1(w_1, e^{\mathrm{log}\,z_1})\cY_2(w_2, e^{\mathrm{log}\,z_2})w_3\rangle =\langle w_4', \cY^1(\cY^2(w_1, e^{\mathrm{log}(z_1-z_2)})w_2, e^{\mathrm{log}\,z_2})w_3\rangle
	\end{equation*}
	for any $w_2\in W_2$, $w_3\in W_3$, $w_4'\in W_4'$, and parity-homogeneous $w_1\in W_1$, and for any $(z_1, z_2)\in S_1$. Moreover, the multivalued analytic functions
	defined by the product of $\cY_1$ and $\cY_2$ on the domain $\vert z_1\vert>\vert z_2\vert>0$ and by the iterate of $\cY^1$ and $\cY^2$ on the domain $\vert z_2\vert>\vert z_1-z_2\vert>0$ have equal restrictions to the region $\vert z_1\vert>\vert z_2\vert>\vert z_1-z_2\vert>0$.
\end{propo}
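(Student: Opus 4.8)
The plan is to deduce the full statement from the hypothesis on $(r_1,r_2)$ via an analytic continuation argument, using Proposition \ref{equalmulti} as the key tool. First I would observe that the product $\langle w_4', \cY_1(w_1, z_1)\cY_2(w_2, z_2)w_3\rangle$ defines a multivalued analytic function $F_{\mathrm{prod}}$ on $U_1 = \{\vert z_1\vert > \vert z_2\vert > 0\}$ and the iterate $\langle w_4', \cY^1(\cY^2(w_1, z_1-z_2)w_2, z_2)w_3\rangle$ defines a multivalued analytic function $F_{\mathrm{iter}}$ on $U_2 = \{\vert z_2\vert > \vert z_1-z_2\vert > 0\}$, both by Assumption \ref{assum:conv} and Proposition 7.14 of \cite{HLZ5} as recalled in the text. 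The region $\widetilde{U} = \{\vert z_1\vert > \vert z_2\vert > \vert z_1-z_2\vert > 0\}$ is contained in $U_1\cap U_2$, is open and connected, and contains the simply-connected open set $S_1$; the real point $(r_1, r_2)$ lies in (the closure of, or after a small perturbation, in) $S_1$, and more precisely one can choose $(r_1,r_2)\in S_1$ itself since the strict inequalities $r_1>r_2>r_1-r_2>0$ together with having all imaginary parts ordered the same way is arranged by perturbing into $S_1$ — alternatively one takes the single-valued branches determined near $(r_1,r_2)$ by the principal-logarithm substitutions $e^{\log z_i}$.

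Next I would identify the relevant single-valued branches. On $S_1$, the substitution $x\mapsto e^{\log z}$ (principal branch) in a formal series gives a well-defined single-valued analytic branch; the text already notes this, and the sets $S_1, S_2$ are exactly those introduced in \cite{C} for this purpose. So the hypothesis says precisely that a particular single-valued branch $F_{\mathrm{prod}}'$ of $F_{\mathrm{prod}}$ on $S_1$ (with the sign factor $(-1)^{\vert\cY_2\vert\vert w_1\vert}$ absorbed) agrees with a particular single-valued branch $F_{\mathrm{iter}}'$ of $F_{\mathrm{iter}}$ on $S_1$, at least at the one point $(r_1,r_2)$ — and since both branches are analytic on the connected set $S_1$ and agree at an interior point with all their derivatives (they are literally given by the same convergent power-series expansion in the variables once one checks the formal algebraic identity underlying associativity of intertwining operators at that point), they agree on all of $S_1$. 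Then Proposition \ref{equalmulti}, applied with $U' = S_1$, $f_1' = F_{\mathrm{prod}}'$, $f_2' = F_{\mathrm{iter}}'$, yields that $F_{\mathrm{prod}}$ and $F_{\mathrm{iter}}$ have equal restrictions to $\widetilde{U}$. Unwinding the definition of equality of multivalued functions on the simply-connected piece containing $(z_1,z_2)\in S_1$ with the branches $e^{\log z_1}, e^{\log z_2}, e^{\log(z_1-z_2)}$ then gives the displayed equality for all $(z_1,z_2)\in S_1$, which is the first conclusion; the second conclusion (equal restrictions to $\vert z_1\vert>\vert z_2\vert>\vert z_1-z_2\vert>0$) is exactly the output of Proposition \ref{equalmulti}.

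The step I expect to be the main obstacle is verifying carefully that the hypothesis, stated only at the single real point $(r_1, r_2)$ and only for the principal-branch substitutions, actually pins down the single-valued branch $F_{\mathrm{iter}}'$ of the \emph{iterate} on a whole simply-connected neighborhood — i.e., that the equality of values at one point propagates to equality of the germs. The cleanest way around this is to note that the hypothesis as written is really an equality of the formal-variable expressions after substitution, so it holds not just at one point but identically in a neighborhood where the series converge absolutely (the coefficients in the $z_1, z_2$ expansions, graded by conformal weight, match term by term); hence one genuinely has equality of germs at $(r_1,r_2)$, and Proposition \ref{equalmulti} applies verbatim. A secondary bookkeeping point is tracking the sign factor $(-1)^{\vert\cY_2\vert\vert w_1\vert}$ consistently; since it is a constant depending only on the fixed homogeneous $w_1$ and on $\cY_2$, it simply multiplies both sides and plays no role in the analytic-continuation argument, so I would remark on this and move on. Everything else — connectedness of $\widetilde{U}$, containment $S_1\subseteq\widetilde{U}\subseteq U_1\cap U_2$, and the branch choices — is routine and can be stated without computation.
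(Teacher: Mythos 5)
Your overall strategy coincides with the paper's (equality of germs at the real point, then analytic continuation into $S_1$, then Proposition \ref{equalmulti}), but two steps of your execution contain genuine gaps.

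First, the branch of logarithm. The principal branch $\log z$ as defined in this paper has its cut along the \emph{positive} real axis, so $(r_1,r_2)$ — being a pair of positive reals — sits squarely on that cut. Consequently the functions $z_i\mapsto e^{\log z_i}$ are \emph{not} single-valued analytic in any neighborhood of $(r_1,r_2)$, which undermines your plan to work with "the single-valued branches determined near $(r_1,r_2)$ by the principal-logarithm substitutions." Your fallback remark about perturbing $(r_1,r_2)$ into $S_1$ also fails: the hypothesis is given only at the fixed real point, and one cannot transport it to a nearby complex point without already possessing the analytic continuation one is trying to establish. The resolution, which the paper's own proof uses, is to carry out the entire continuation argument with the branch $l_{-1/2}$ (cut along the \emph{negative} real axis, so analytic near positive reals), so that the functions
\begin{equation*}
P(z_1,z_2)=(-1)^{\vert\cY_2\vert\vert w_1\vert}\langle w_4', \cY_1(w_1, e^{l_{-1/2}(z_1)})\cY_2(w_2, e^{l_{-1/2}(z_2)})w_3\rangle,
\quad
I(z_1,z_2)=\langle w_4', \cY^1(\cY^2(w_1, e^{l_{-1/2}(z_1-z_2)})w_2, e^{l_{-1/2}(z_2)})w_3\rangle
\end{equation*}
are single-valued analytic on domains containing $(r_1,r_2)$ in their interior, and only afterwards convert back to $\log$ using the fact that $l_{-1/2}$ and $\log$ agree both at positive reals and throughout $S_1$ (where all three of $z_1,z_2,z_1-z_2$ have positive imaginary part).

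Second, and more seriously, your proposed justification for equality of germs at $(r_1,r_2)$ does not work. You suggest that "the coefficients in the $z_1,z_2$ expansions, graded by conformal weight, match term by term," but the product expands as a sum over weight spaces of $M_1$ in powers of $z_1$ and $z_2$, while the iterate expands as a sum over weight spaces of $M_2$ in powers of $z_1-z_2$ and $z_2$; these are different expansions in different variables and a numerical equality at the single point $(r_1,r_2)$ cannot be unwound into a term-by-term match. What actually forces all partial derivatives at $(r_1,r_2)$ to agree is the universal quantification of the hypothesis over $w_1\in W_1$ and $w_2\in W_2$ together with the $L(-1)$-derivative property: $\partial_{z_1}$ of either side reproduces the hypothesis with $w_1$ replaced by $L(-1)w_1$, and $\partial_{z_2}$ reproduces it (after applying the $L(-1)$-commutator formula to $\cY^2$ in the iterate) with $w_2$ replaced by $L(-1)w_2$. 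That is the mechanism you need to invoke; without it the step from "equal at one point" to "equal germs" is unsupported.
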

\begin{proof}
	The second assertion of the proposition follows immediately from the first and Proposition \ref{equalmulti}. By Proposition 7.14 in \cite{HLZ5},
	\begin{equation*}
	P(z_1,z_2)=(-1)^{\vert\cY_2\vert\vert w_1\vert}\langle w_4', \cY_1(w_1, e^{l_{-1/2}(z_1)})\cY_2(w_2, e^{l_{-1/2}(z_2)})w_3\rangle
	\end{equation*}
	is analytic and single-valued on the simply-connected open set defined by $\vert z_1\vert>\vert z_2\vert>0$ and $\mathrm{arg}\,z_1$, $\mathrm{arg}\,z_2\neq\pi$, and
	\begin{equation*}
	I(z_1, z_2)=\langle w_4', \cY^1(\cY^2(w_1, e^{l_{-1/2}(z_1-z_2)})w_2, e^{l_{-1/2}(z_2)})w_3\rangle
	\end{equation*}
	is analytic and single-valued on the simply-connected open set defined by $\vert z_2\vert>\vert z_1-z_2\vert>0$ and $\mathrm{arg}\,z_2$, $\mathrm{arg}(z_1-z_2)\neq\pi$. Moreover, the partial derivatives of these functions are given by the $L(-1)$-derivative property for intertwining operators.
	
	Since $l_{-1/2}(r)=\mathrm{log}\,r$ for $r$ a positive real number, we have by assumption $P(r_1, r_2)=I(r_1, r_2)$ for some $r_1,r_2\in\RR$ satisfying $r_1>r_2>r_1-r_2>0$. Also, our assumption, the $L(-1)$-derivative property, and the evenness of $L(-1)$ imply that all partial derivatives of $P$ and $I$ agree at $(r_1, r_2)$. This means that the power series expansions of $P$ and $I$ around $(r_1, r_2)$ are the same, so there is a simply-connected open neighborhood of $(r_1, r_2)$ on which $P$ and $I$ are equal. Then it follows that $P(z_1, z_2)=I(z_1,z_2)$ whenever $(z_1, z_2)$ is in the connected component of $(r_1, r_2)$ in the open set defined by
	\begin{equation*}
	\vert z_1\vert>\vert z_2\vert>\vert z_1-z_2\vert>0\,\,\,\mathrm{and}\,\,\,\mathrm{arg}\,z_1, \mathrm{arg}\,z_2, \mathrm{arg}(z_1-z_2)\neq\pi.
	\end{equation*}
	Since $(r_1, r_2)$ is in the boundary of $S_1$ and $S_1$ is connected, $S_1$ is contained in the connected component of $(r_1, r_2)$. Thus $P(z_1, z_2)=I(z_1,z_2)$ for $(z_1,z_2)\in S_1$. Since also $l_{-1/2}(z_1)=\mathrm{log}\,z_1$, $l_{-1/2}(z_2)=\mathrm{log}\,z_2$, and $l_{-1/2}(z_1-z_2)=\mathrm{log}(z_1-z_2)$ for $(z_1,z_2)\in S_1$, the proposition follows.
\end{proof}

\begin{rema}\label{extassocreal}
	In the setting of Proposition \ref{extassoc} and its proof, note that any pair $(r_1', r_2')\in\RR^2$ such that $r_1'>r_2'>r_1'-r_2'>0$ also lies in the connected component of $(r_1, r_2)$ in the region defined by
	\begin{equation*}
	\vert z_1\vert>\vert z_2\vert>\vert z_1-z_2\vert>0\,\,\,\mathrm{and}\,\,\,\mathrm{arg}\,z_1, \mathrm{arg}\,z_2, \mathrm{arg}(z_1-z_2)\neq\pi.
	\end{equation*}
	Moreover, $l_{-1/2}(r)=\mathrm{log}\,r$ for $r$ a positive real number. Thus the conclusion of Proposition \ref{extassoc} holds with $(z_1, z_2)\in S_1$ replaced by $(r_1', r_2')\in\RR^2$ satisfying $r_1'>r_2'>r_1'-r_2'>0$.
\end{rema}

The second proposition has a similar proof to the first:
\begin{propo}\label{extskewassoc}
	Suppose $W_1$, $W_2$, $W_3$, $W_4$, $M_1$, and $M_2$ are modules in $\mathcal{C}$ and $\mathcal{Y}_1$, $\mathcal{Y}_2$, $\mathcal{Y}^1$, and $\mathcal{Y}^2$ are intertwining operators of types $\binom{M_1}{W_1\,W_3}$, $\binom{W_4}{W_2\,M_1}$, $\binom{W_4}{M_2\,W_3}$, and $\binom{M_2}{W_1\,W_2}$, respectively. If
	\begin{equation*}
	(-1)^{\vert w_1\vert \vert w_2\vert}\langle w_4', \cY_2(w_2, e^{\mathrm{log}\,r_2})\cY_1(w_1, e^{\mathrm{log}\,r_1})w_3\rangle =\langle w_4', \cY^1(\cY^2(w_1, e^{l_{-1/2}(r_1-r_2)})w_2, e^{\mathrm{log}\,r_2})w_3\rangle
	\end{equation*}
	for any $w_3\in W_3$, $w_4'\in W_4'$, for any parity-homogeneous $w_1\in W_1$, $w_2\in W_2$,  and for some $r_1, r_2\in\RR$ satisfying $r_2>r_1>r_2-r_1>0$, then
	\begin{equation*}
	(-1)^{\vert w_1\vert \vert w_2\vert}\langle w_4', \cY_2(w_1, e^{\mathrm{log}\,z_2})\cY_1(w_1, e^{\mathrm{log}\,z_1})w_3\rangle =\langle w_4', \cY^1(\cY^2(w_1, e^{l_{-1/2}(z_1-z_2)})w_2, e^{\mathrm{log}\,z_2})w_3\rangle
	\end{equation*}
	for any $w_3\in W_3$, $w_4'\in W_4'$, for any parity-homogeneous $w_1\in W_1$, $w_2\in W_2$, and for any $(z_1, z_2)\in S_2$. Moreover, the multivalued analytic functions
	defined by the product of $\cY_2$ and $\cY_1$ on the domain $\vert z_2\vert>\vert z_1\vert>0$ and by the iterate of $\cY^1$ and $\cY^2$ on the domain $\vert z_2\vert>\vert z_1-z_2\vert>0$ have equal restrictions to the region $\vert z_2\vert>\vert z_1\vert, \vert z_1-z_2\vert>0$.
\end{propo}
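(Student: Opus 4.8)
The plan is to run the argument of Proposition \ref{extassoc} with $S_1$ replaced by $S_2$, the ``product'' now being the composition $\cY_2(w_2,\cdot)\cY_1(w_1,\cdot)$ taken in the domain $\vert z_2\vert>\vert z_1\vert$, and with extra care over the branch of logarithm attached to the difference variable. First I would note that, by Assumption \ref{assum:conv} together with Proposition 7.14 of \cite{HLZ5}, the product $P(z_1,z_2)=(-1)^{\vert w_1\vert\vert w_2\vert}\langle w_4',\cY_2(w_2,e^{l_{-1/2}(z_2)})\cY_1(w_1,e^{l_{-1/2}(z_1)})w_3\rangle$ is single-valued and analytic on the simply-connected open set cut out by $\vert z_2\vert>\vert z_1\vert>0$ and $\mathrm{arg}\,z_1,\mathrm{arg}\,z_2\neq\pi$, that the iterate $I(z_1,z_2)=\langle w_4',\cY^1(\cY^2(w_1,e^{l_{-1/2}(z_1-z_2)})w_2,e^{l_{-1/2}(z_2)})w_3\rangle$ is single-valued and analytic on the simply-connected open set cut out by $\vert z_2\vert>\vert z_1-z_2\vert>0$ and $\mathrm{arg}\,z_2,\mathrm{arg}(z_1-z_2)\neq\pi$, and that the partial derivatives of both are computed by the $L(-1)$-derivative property for intertwining operators. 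One then checks directly from the defining inequalities that $S_2$ is contained in both of these domains (for $(z_1,z_2)\in S_2$ one has $\vert z_2\vert>\vert z_1\vert$ and $\vert z_2\vert>\vert z_1-z_2\vert$ because $z_1$ and $z_2-z_1$ lie in the open first quadrant and $z_2$ is their sum, and none of $z_1,z_2,z_1-z_2$ is a negative real) and that $z_1-z_2$ lies in the open \emph{third} quadrant there; this last fact is why $l_{-1/2}$, and not $\log$, is the branch that must appear on the difference variable, and it accounts for the phrasing of the hypothesis and of the conclusion in terms of $l_{-1/2}(z_1-z_2)$.

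The one genuinely new point compared with Proposition \ref{extassoc} is that the real base point $(r_1,r_2)$, with $r_2>r_1>r_2-r_1>0$, now has $r_1-r_2<0$, so it lies on the branch cut of the $l_{-1/2}$-form of the iterate rather than in the interior of the common domain of analyticity. I would get around this by matching $P$ and $I$ through their continuous boundary values as $(z_1,z_2)\to(r_1,r_2)$ from within $S_2$: since $l_{-1/2}$ is continuous at $z_1-z_2$ when that point is approached from the third quadrant, with limiting value exactly $l_{-1/2}(r_1-r_2)$, the hypothesis is precisely the assertion that these $S_2$-side boundary values of $P$ and $I$ agree, and the $L(-1)$-derivative property (with $L(-1)$ even, so no sign corrections arise) upgrades this to agreement of all partial derivatives at $(r_1,r_2)$ approached from $S_2$. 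Because $P$ is in fact analytic in a full neighborhood of $(r_1,r_2)$, this forces $P$ and $I$ to coincide on a one-sided neighborhood of $(r_1,r_2)$ inside $S_2$, hence on all of $S_2$ by connectedness of $S_2$. Finally, for $(z_1,z_2)\in S_2$ one has $l_{-1/2}(z_1)=\log z_1$ and $l_{-1/2}(z_2)=\log z_2$, so $P=I$ on $S_2$ is exactly the claimed identity; and the ``moreover'' assertion about equality of multivalued functions on $\lbrace\vert z_2\vert>\vert z_1\vert,\vert z_1-z_2\vert>0\rbrace$ follows at once from this together with Proposition \ref{equalmulti}, taking $S_2$ as the non-empty simply-connected set on which the two chosen single-valued branches coincide.

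I expect the branch-of-logarithm bookkeeping to be the only real obstacle: identifying $l_{-1/2}$ as the branch forced on the difference variable over $S_2$, recognizing that $(r_1,r_2)$ sits on the associated cut (unlike the situation in Proposition \ref{extassoc}, where the difference variable stays in a region on which $l_{-1/2}$ agrees with $\log$), and justifying the passage from equality of $S_2$-side boundary values and boundary partial derivatives to equality throughout $S_2$. The convergence inputs (Assumption \ref{assum:conv}, Proposition 7.14 of \cite{HLZ5}), the sign factor $(-1)^{\vert w_1\vert\vert w_2\vert}$ coming from commuting $w_1$ past $w_2$, and the connectedness and boundary structure of $S_2$ are all handled exactly as in Proposition \ref{extassoc} (see also Remark \ref{extassocreal}).
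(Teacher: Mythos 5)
Your identification of the key obstacle is exactly right: the base point $(r_1,r_2)$ with $r_2>r_1>r_2-r_1>0$ has $r_1-r_2<0$ lying on the branch cut of $l_{-1/2}$, so you cannot directly port the Taylor-series argument of Proposition \ref{extassoc}. But the boundary-value workaround you propose does not actually close the gap. You claim that agreement of $P$ and $I$ together with all their partial derivatives at the boundary point $(r_1,r_2)$, approached from inside $S_2$, ``forces $P$ and $I$ to coincide on a one-sided neighborhood.'' This inference is not valid for holomorphic functions on a domain with boundary: a holomorphic function continuous up to the boundary can vanish to infinite order at a single boundary point without being identically zero (singular inner functions on the disk provide counterexamples), and the fact that $P$ extends analytically across the boundary does not rescue the implication, because $P-I$ a priori only lives on one side. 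You do not invoke any boundary uniqueness theorem, nor any additional rigidity (such as the ODE structure of the correlators) that would justify the step.

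The paper's proof avoids this entirely by choosing a different branch for the difference variable. It sets
\[
I(z_1,z_2)=\langle w_4',\cY^1(\cY^2(w_1,e^{l_{-1}(z_1-z_2)})w_2,e^{l_{-1/2}(z_2)})w_3\rangle ,
\]
which is single-valued and analytic on the domain cut by $\arg z_2\neq\pi$, $\arg(z_1-z_2)\neq 0$. Since $l_{-1}$ has its cut along the \emph{positive} real axis and $r_1-r_2<0$, this $I$ is genuinely holomorphic in a full two-sided neighborhood of $(r_1,r_2)$, so the Taylor-series comparison with $P$ goes through verbatim as in Proposition \ref{extassoc}. The branch change is harmless because $l_{-1}(z)=l_{-1/2}(z)$ both for $z$ a negative real (so the hypothesis, stated with $l_{-1/2}(r_1-r_2)$, supplies the initial equality $P(r_1,r_2)=I(r_1,r_2)$) and for $z$ in the open third quadrant (so the conclusion on $S_2$, where $z_1-z_2$ lies in the third quadrant, comes out in terms of $l_{-1/2}$ as required). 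Your write-up implicitly uses this fact when asserting continuity of $l_{-1/2}$ from the third quadrant, but never promotes it to the statement that $I$ with the correct branch is analytic on both sides, which is precisely what makes the derivative comparison at $(r_1,r_2)$ conclusive.
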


\subsection{An exposition of vertex tensor category constructions}
\label{subsec:VTC}

We now discuss tensor structure on the category $\mathcal{C}$ of $V$-modules, giving the natural generalization of \cite{HLZ8} to the superalgebra setting.
The constructions actually give more than braided monoidal supercategory structure: they yield \textit{vertex tensor category} structure, in the terminology of \cite{HL-VTC},
and we shall need this extra structure for our main results. The vertex tensor category structure reflects the complex analytic properties of intertwining operators and their compositions, while the braided monoidal (super)category structure corresponds to topological structure. For further exposition of relevant constructions and results, see \cite{HL-rev}. Most of the definitions and results here are contained (for the vertex operator algebra setting) in \cite{HLZ1}-\cite{HLZ8}, as well as in the earlier papers \cite{HL-tensor1}-\cite{HL-tensor3}, \cite{H-tensor4, H-rigidity}, but we include proofs for some propositions when \cite{HLZ8} does not contain a detailed proof, or when the superalgebra generality differs from the algebra case.

\subsubsection{$P(z)$-tensor products}
\label{subsubsec:pz}
We recall the definition of $P(z)$-tensor product of $V$-modules for $z\in\CC^\times$ from \cite{HLZ3}:
\begin{defi}\label{def:Pztensprod}
	Suppose $W_1$ and $W_2$ are modules in $\mathcal{C}$ and $z\in\CC^\times$. A \textit{$P(z)$-tensor product} of $W_1$ and $W_2$ in $\mathcal{C}$ is a module $W_1\boxtimes_{P(z)} W_2$ in $\mathcal{C}$ together with an even $P(z)$-intertwining map $\boxtimes_{P(z)}$ of type $\binom{W_1\boxtimes_{P(z)} W_2}{W_1\,W_2}$ which satisfies the following universal property: for any module $W_3$ in $\mathcal{C}$ and $P(z)$-intertwining map $I$ of type $\binom{W_3}{W_1\,W_2}$, there is a unique homomorphism $\eta_I: W_1\boxtimes_{P(z)} W_2\rightarrow W_3$ such that $\overline{\eta_I}\circ\boxtimes_{P(z)}=I$, where $\overline{\eta_I}$ denotes the natural extension of $\eta_I$ to $\overline{W_1\boxtimes_{P(z)} W_2}$.
\end{defi}

\begin{rema}\label{rem:VTCtoBTC}
To obtain braided monoidal supercategory structure on $\mathcal{C}$, one must choose a particular tensor product. 
It is conventional to choose $\boxtimes_{P(1)}$, and to simplify notation, we use $\boxtimes$ to denote $\boxtimes_{P(1)}$.
We remark that specialization of vertex tensor structure to braided tensor structure is not automatic due to convergence issues discussed further below.
\end{rema}

\begin{rema}
	A construction (subject to certain conditions) of $W_1\boxtimes_{P(z)} W_2$ as the contragredient of a certain $V$-module contained in $(W_1\otimes W_2)^*$ may be found in \cite{HLZ4}. Because $P(z)$-tensor products are defined by a universal property, any other construction is naturally isomorphic to this one.
\end{rema}

\begin{nota}
	We use the following notation for the tensor product intertwining map $\boxtimes_{P(z)}$ of type $\binom{W_1\boxtimes_{P(z)} W_2}{W_1\,W_2}$:
	\begin{equation*}
	\boxtimes_{P(z)}(w_1\otimes w_2)=w_1\boxtimes_{P(z)} w_2
	\end{equation*}
	for $w_1\in W_1$, $w_2\in W_2$. We also use the simplified notation $\boxtimes_{P(1)}(w_1\otimes w_2)=w_1\boxtimes w_2$.
\end{nota}

For any $z\in\CC^\times$, we want the $P(z)$-tensor product to be a functor from $\mathcal{C}\times\mathcal{C}$ to $\mathcal{C}$, so we also need tensor products of morphisms. If $f_1: W_1\rightarrow\widetilde{W}_1$ and $f_2: W_2\rightarrow\widetilde{W}_2$ are morphisms in $\mathcal{C}$, then $\boxtimes_{P(z)}\circ(f_1\otimes f_2)$, where $\boxtimes_{P(z)}$ is the tensor product intertwining map for $\widetilde{W}_1$ and $\widetilde{W}_2$, is a $P(z)$-intertwining map of type $\binom{\widetilde{W}_1\boxtimes_{P(z)}\widetilde{W}_2}{W_1\,W_2}$. Here $f_1\otimes f_2$ is the tensor product of linear maps in the tensor supercategory of superspaces, defined by 
\begin{equation*}
 (f_1\otimes f_2)(w_1\otimes w_2)=(-1)^{\vert f_2\vert \vert w_1\vert}\left(f_1(w_1)\otimes f_2(w_2)\right)
\end{equation*}
when $f_2$ is parity-homogeneous, for $w_2\in W_2$ and parity-homogeneous $w_1\in W_1$. Then we define
\begin{equation*}
f_1\boxtimes_{P(z)} f_2: W_1\boxtimes_{P(z)}W_2\rightarrow\widetilde{W}_1\boxtimes_{P(z)}\widetilde{W}_2
\end{equation*}
to be the unique morphism induced by the universal property of $W_1\boxtimes_{P(z)} W_2$ and $\boxtimes_{P(z)}\circ(f_1\otimes f_2)$. In particular,
\begin{equation}\label{homtensprod}
\overline{f_1\boxtimes_{P(z)} f_2}(w_1\boxtimes_{P(z)} w_2)=(-1)^{\vert f_2\vert \vert w_1\vert}\left(f_1(w_1)\boxtimes_{P(z)} f_2(w_2)\right)
\end{equation}
if $f_2$ is parity-homogeneous, for any $w_2\in W_2$ and parity-homogeneous $w_1\in W_1$. With this definition, the $P(z)$-tensor product of $V$-module homomorphisms satisfies the super interchange law
\begin{equation*}
 (f_1\boxtimes_{P(z)} f_2)\circ(g_1\boxtimes_{P(z)} g_2)=(-1)^{\vert f_2\vert \vert g_1\vert }(f_1\circ g_1)\boxtimes_{P(z)}(f_2\circ g_2)
\end{equation*}
when $f_2$ and $g_1$ are parity-homogeneous, because the tensor product of linear maps between superspaces satisfies this property.

\subsubsection{Parallel transport isomorphisms}
\label{subsubsec:paralleltransp}

To define braiding and associativity isomorphisms in $\cC$, we shall need the parallel transport
isomorphisms constructed in \cite{HL-VTC, H-rigidity, HLZ8}. Suppose $W_1$ and $W_2$ are modules in $\mathcal{C}$, $z_1, z_2\in\CC^\times$ and $\gamma$ is a continuous path in $\CC^\times$ from $z_1$ to $z_2$. The parallel transport isomorphism (below, we will show that it is indeed an isomorphism, and moreover, a natural isomorphism)
\begin{equation*}
T_\gamma= T_{\gamma;\,W_1, W_2}: W_1\boxtimes_{P(z_1)} W_2\rightarrow W_1\boxtimes_{P(z_2)} W_2
\end{equation*}
is the unique morphism induced by the universal property of $W_1\boxtimes_{P(z_1)} W_2$ and the $P(z_1)$-intertwining map $\cY_{\boxtimes_{P(z_2)}, 0}(\cdot, e^{l(z_1)})\cdot$ of type $\binom{W_1\boxtimes_{P(z_2)} W_2}{W_1\,W_2}$, where $l(z_1)$ is the branch of logarithm determined by $\mathrm{log}\,z_2$ and the path $\gamma$. Notice that $T_\gamma$ is uniquely determined by the condition
\begin{equation}\label{tgammachar}
\overline{T_\gamma}(w_1 \boxtimes_{P(z_1)} w_2)=\cY_{\boxtimes_{P(z_2)}, 0}(w_1, e^{l(z_1)})w_2 
\end{equation}
for all $w_1\in W_1$, $w_2\in W_2$.

Suppose $\gamma_1$ is a path in $\CC^\times$ from $z_1$ to $z_2$ and $\gamma_2$ is a path in $\CC^\times$ from $z_2$ to $z_3$. Let $\gamma_2\cdot\gamma_1$ denote the path from $z_1$ to $z_3$ obtained by following $\gamma_1$ and then $\gamma_2$. 
\begin{propo}\label{compofgamma}
	For modules $W_1$ and $W_2$ in $\mathcal{C}$, $T_{\gamma_2\cdot\gamma_1}=T_{\gamma_2}\circ T_{\gamma_1}$.
\end{propo}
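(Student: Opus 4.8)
The plan is to prove this by invoking the universal property of $W_1\boxtimes_{P(z_1)} W_2$, which characterizes a morphism out of this tensor product uniquely by its composition with the tensor product intertwining map $\boxtimes_{P(z_1)}$. Since both $T_{\gamma_2\cdot\gamma_1}$ and $T_{\gamma_2}\circ T_{\gamma_1}$ are morphisms from $W_1\boxtimes_{P(z_1)} W_2$ to $W_1\boxtimes_{P(z_3)} W_2$, it suffices to check that their natural extensions to the algebraic completion agree when applied to elements of the form $w_1\boxtimes_{P(z_1)} w_2$. Concretely, first I would write down, using \eqref{tgammachar}, that
\begin{equation*}
\overline{T_{\gamma_2\cdot\gamma_1}}(w_1\boxtimes_{P(z_1)} w_2)=\cY_{\boxtimes_{P(z_3)},0}(w_1, e^{l'(z_1)})w_2,
\end{equation*}
where $l'$ is the branch of logarithm at $z_1$ obtained by starting from $\log z_3$ and analytically continuing backwards along $\gamma_2\cdot\gamma_1$.

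Next I would compute the right-hand side. Applying \eqref{tgammachar} for $\gamma_1$ gives $\overline{T_{\gamma_1}}(w_1\boxtimes_{P(z_1)} w_2)=\cY_{\boxtimes_{P(z_2)},0}(w_1,e^{l(z_1)})w_2$, where $l$ is the branch at $z_1$ determined by $\log z_2$ and $\gamma_1$. The key technical point is then that $\overline{T_{\gamma_2}}$, being the canonical extension of a module homomorphism, commutes with the action of the intertwining operator in the sense that applying it to $\cY_{\boxtimes_{P(z_2)},0}(w_1,e^{l(z_1)})w_2$ produces $\cY_{T_{\gamma_2}\circ\boxtimes_{P(z_2)},0}(w_1,e^{l(z_1)})w_2$; and by \eqref{tgammachar} again, $\overline{T_{\gamma_2}}(w_1\boxtimes_{P(z_2)} w_2) = \cY_{\boxtimes_{P(z_3)},0}(w_1,e^{l''(z_2)})w_2$ where $l''$ is the branch at $z_2$ determined by $\log z_3$ and $\gamma_2$. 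Composing carefully, $\overline{T_{\gamma_2}\circ T_{\gamma_1}}(w_1\boxtimes_{P(z_1)} w_2)$ equals $\cY_{\boxtimes_{P(z_3)},0}$ evaluated at $w_1$ with a branch of logarithm at $z_1$ obtained by first taking $\log z_3$, continuing back along $\gamma_2$ to $z_2$, then continuing back along $\gamma_1$ to $z_1$ — and this is precisely the branch $l'$ appearing in the expression for $T_{\gamma_2\cdot\gamma_1}$, since $\gamma_2\cdot\gamma_1$ is by definition the concatenation.

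I anticipate the main obstacle is the bookkeeping of branches of logarithm: one must verify that the branch of $\log$ at $z_1$ that arises from composing the two parallel transports really does coincide with the branch determined by the concatenated path, which amounts to the statement that analytic continuation of $\log$ along $\gamma_2\cdot\gamma_1$ is the composition of continuations along $\gamma_2$ and along $\gamma_1$ — true but requiring care with orientations, since $T_\gamma$ is defined via continuation \emph{from} $z_2$ \emph{back to} $z_1$. A secondary point requiring justification is that $\overline{T_{\gamma_2}}$ intertwines with $\cY_{\boxtimes_{P(z_2)},0}$: this follows because $T_{\gamma_2}$ is a $V$-module homomorphism and $\boxtimes_{P(z_2)}$ is a $P(z_2)$-intertwining map, so the composition $T_{\gamma_2}\circ\boxtimes_{P(z_2)}$ is again a $P(z_2)$-intertwining map whose associated intertwining operator (under the isomorphism of Proposition~\ref{opmapiso}) is $\cY_{\boxtimes_{P(z_3)},0}$ composed with $T_{\gamma_2}$, and one uses the compatibility of module homomorphisms with intertwining operators. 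Once these two facts are in place, the uniqueness clause of the universal property of $W_1\boxtimes_{P(z_1)} W_2$ forces $T_{\gamma_2\cdot\gamma_1}=T_{\gamma_2}\circ T_{\gamma_1}$, completing the proof; in particular, since a constant path induces the identity, each $T_\gamma$ is an isomorphism with inverse $T_{\gamma^{-1}}$.
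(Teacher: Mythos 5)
Your proposal is correct and follows essentially the same strategy as the paper: both reduce to computing $\overline{T_{\gamma_2}\circ T_{\gamma_1}}(w_1\boxtimes_{P(z_1)}w_2)$ via \eqref{tgammachar}, both need the observation that $\overline{T_{\gamma_2}}$ commutes past $\cY_{\boxtimes_{P(z_2)},0}$ (in the paper this is carried out explicitly via the $L(0)$-conjugation property of intertwining operators, which is what justifies your identification of $\overline{T_{\gamma_2}}(\cY_{\boxtimes_{P(z_2)},0}(w_1,e^{l(z_1)})w_2)$ with $\cY_{T_{\gamma_2}\circ\boxtimes_{P(z_2)},0}(w_1,e^{l(z_1)})w_2$ and the subsequent shift to $\cY_{\boxtimes_{P(z_3)},0}$ at the modified branch), and both finish by checking that the resulting branch of $\log$ at $z_1$ is the one determined by $\log z_3$ and the concatenated path. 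The paper simply carries out the branch arithmetic you flag as the ``careful bookkeeping'' step, arriving at $l(z_1)+l'(z_2)-\log z_2$; your reasoning about backwards continuation along $\gamma_2$ then $\gamma_1$ is the same identification.
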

\begin{proof}
	By \eqref{tgammachar} and the $L(0)$-conjugation property of intertwining operators \cite[Proposition 3.36(b)]{HLZ2},
	\begin{align*}
	\overline{T_{\gamma_2}\circ T_{\gamma_1}} & (w_1\boxtimes_{P(z_1)} w_2)  =\overline{T_{\gamma_2}}(\cY_{\boxtimes_{P(z_2)}, 0}(w_1, e^{l(z_1)})w_2)\nonumber\\
	& =e^{(l(z_1)-\mathrm{log}\,z_2) L(0)}\overline{T_{\gamma_2}}(\mathcal{Y}_{\boxtimes_{P(z_2)},0}(e^{-(l(z_1)-\mathrm{log}\,z_2) L(0)} w_1, e^{\mathrm{log}\,z_2})e^{-(l(z_1)-\mathrm{log}\,z_2) L(0)} w_2)\nonumber\\
	& =e^{(l(z_1)-\mathrm{log}\,z_2) L(0)}\overline{T_{\gamma_2}}(e^{-(l(z_1)-\mathrm{log}\,z_2) L(0)}w_1\boxtimes_{P(z_2)} e^{-(l(z_1)-\mathrm{log}\,z_2) L(0)} w_2)\nonumber\\
	& =e^{(l(z_1)-\mathrm{log}\,z_2) L(0)}\cY_{\boxtimes_{P(z_3)},0}(e^{-(l(z_1)-\mathrm{log}\,z_2) L(0)}w_1, e^{l'(z_2)})e^{-(l(z_1)-\mathrm{log}\,z_2) L(0)}w_2\nonumber\\
	& =\cY_{\boxtimes_{P(z_3)},0}(w_1, e^{l(z_1)+l'(z_2)-\mathrm{log}\,z_2})w_2
	\end{align*}
	for any $w_1\in W_1$ and $w_2\in W_2$, where $l(z_1)$ is the branch of logarithm determined by $\mathrm{log}\,z_2$ and $\gamma_1$, and $l'(z_2)$ is the branch of logarithm determined by $\mathrm{log}\,z_3$ and $\gamma_2$. To complete the proof, we just need to show that $l(z_1)+l'(z_2)-\mathrm{log}\,z_2$ is the branch of logarithm at $z_1$ determined by $\mathrm{log}\,z_3$ and $\gamma_2\cdot\gamma_1$. In fact, the branch of logarithm determined by $\mathrm{log}\,z_3$ and $\gamma_2\cdot\gamma_1$ has value 
	\begin{equation*}
	l'(z_2)=\mathrm{log}\,z_2+2\pi i p
	\end{equation*}
	at $z_2$ for some $p\in\ZZ$. Then this branch of logarithm has value $$l(z_1)+2\pi i p=l(z_1)+l'(z_2)-\mathrm{log}\,z_2$$ at $z_1$, as desired.
\end{proof}

\begin{corol}
	The parallel transport $T_\gamma$ is an isomorphism.
\end{corol}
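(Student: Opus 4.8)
The plan is to deduce invertibility of $T_\gamma$ directly from \cref{compofgamma} by exhibiting an explicit two-sided inverse, namely the parallel transport along the reverse path. Let $\gamma^{-1}$ denote the path in $\CC^\times$ traversing $\gamma$ in the opposite direction, so $\gamma^{-1}$ runs from $z_2$ back to $z_1$. Both $\gamma^{-1}\cdot\gamma$ and $\gamma\cdot\gamma^{-1}$ are loops, the first based at $z_1$ and the second based at $z_2$, each homotopic (rel endpoints) to the constant path.

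First I would observe that parallel transport along a constant path at $z$ is the identity on $W_1\boxtimes_{P(z)}W_2$: by \eqref{tgammachar}, the branch $l(z)$ determined by $\mathrm{log}\,z$ and the constant path is just $\mathrm{log}\,z$ itself, so $\overline{T_{\mathrm{const}}}(w_1\boxtimes_{P(z)}w_2)=\cY_{\boxtimes_{P(z)},0}(w_1,e^{\mathrm{log}\,z})w_2=w_1\boxtimes_{P(z)}w_2$, whence $T_{\mathrm{const}}=1_{W_1\boxtimes_{P(z)}W_2}$ by the universal property. Second, I would note that $T_\gamma$ depends only on the homotopy class of $\gamma$ rel endpoints: the branch of logarithm determined by $\mathrm{log}\,z_2$ and $\gamma$ is a homotopy invariant of $\gamma$, so the defining condition \eqref{tgammachar} is unchanged under homotopy. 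Combining these two facts with \cref{compofgamma} gives
\begin{equation*}
 T_{\gamma^{-1}}\circ T_\gamma = T_{\gamma^{-1}\cdot\gamma}=T_{\mathrm{const}\text{ at }z_1}=1_{W_1\boxtimes_{P(z_1)}W_2},
\end{equation*}
and symmetrically $T_\gamma\circ T_{\gamma^{-1}}=T_{\gamma\cdot\gamma^{-1}}=1_{W_1\boxtimes_{P(z_2)}W_2}$, so $T_\gamma$ is an isomorphism with inverse $T_{\gamma^{-1}}$.

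This is essentially all there is to it; there is no serious obstacle, since \cref{compofgamma} does the real work and the two auxiliary observations (the constant-path case and homotopy invariance) are immediate from the characterization \eqref{tgammachar}. The only point requiring a word of care is the homotopy invariance: one must confirm that the branch of logarithm ``determined by $\mathrm{log}\,z_2$ and $\gamma$'' in the definition of $T_\gamma$ is genuinely constant on homotopy classes rel endpoints, which follows because analytic continuation of $\mathrm{log}$ along a path in $\CC^\times$ is a homotopy-invariant operation. Alternatively, if one prefers to avoid invoking homotopy invariance explicitly, one can instead run \cref{compofgamma} with $\gamma_2=\gamma^{-1}$ and $\gamma_1=\gamma$ and compute the resulting branch $l(z_1)+l'(z_2)-\mathrm{log}\,z_2$ appearing in the proof of that proposition: since $\gamma^{-1}$ undoes the winding of $\gamma$, one gets $l'(z_2)=\mathrm{log}\,z_2$ and hence the composite branch is $\mathrm{log}\,z_1$, giving $T_{\gamma^{-1}}\circ T_\gamma=1$ directly from \eqref{tgammachar}. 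Either route completes the corollary.
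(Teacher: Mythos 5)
Your proof is correct and follows essentially the same route as the paper: invoke \cref{compofgamma}, observe that $T_\gamma$ depends only on the homotopy class of $\gamma$ via the branch $l(z_1)$, and conclude that the reversed path gives a two-sided inverse. The paper leaves the identity $T_{\mathrm{const}}=\id$ implicit, whereas you spell it out from \eqref{tgammachar}; this is a reasonable bit of extra care but does not change the argument.
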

\begin{proof}
The parallel transport $T_\gamma$ depends only on the homotopy class of $\gamma$ in $\CC^\times$ because the branch of logarithm $l(z_1)$ depends only on the homotopy class of $\gamma$. Therefore the preceding proposition shows that the inverse of $T_\gamma$ is the parallel transport $T_{\gamma'}$ where $\gamma'$ is the path from $z_2$ to $z_1$ obtained by reversing $\gamma$. Thus $T_\gamma$ is in fact an isomorphism. 
\end{proof}

\begin{propo}
	 The parallel transport isomorphisms determine an even natural isomorphism from $\boxtimes_{P(z_1)}$ to $\boxtimes_{P(z_2)}$. 
	 In particular, for any morphisms $f_1: W_1\rightarrow\widetilde{W}_1$ and $f_2: W_2\rightarrow\widetilde{W}_2$ in $\mathcal{C}$, 
	 \begin{equation*}
	 T_{\gamma;\,\widetilde{W}_1,\widetilde{W}_2}\circ (f_1\boxtimes_{P(z_1)} f_2)=(f_1\boxtimes_{P(z_2)} f_2)\circ T_{\gamma;\,W_1,W_2}.
	 \end{equation*}
\end{propo}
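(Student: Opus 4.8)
The plan is to verify naturality by checking that both sides of the claimed equation agree as homomorphisms out of $W_1\boxtimes_{P(z_1)}W_2$, using the universal property of the $P(z_1)$-tensor product. Since $W_1\boxtimes_{P(z_1)}W_2$ is generated (in the appropriate sense) by the image of the intertwining map $\boxtimes_{P(z_1)}$, and since $\overline{\,\cdot\,}$ preserves such equalities, it suffices to show that
\begin{equation*}
\overline{T_{\gamma;\,\widetilde{W}_1,\widetilde{W}_2}\circ(f_1\boxtimes_{P(z_1)}f_2)}(w_1\boxtimes_{P(z_1)}w_2)=\overline{(f_1\boxtimes_{P(z_2)}f_2)\circ T_{\gamma;\,W_1,W_2}}(w_1\boxtimes_{P(z_1)}w_2)
\end{equation*}
for all $w_1\in W_1$ and all parity-homogeneous $w_2\in W_2$ (by bilinearity it is enough to treat homogeneous $f_2$ and homogeneous $w_2$, say).

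First I would compute the left-hand side: using \eqref{homtensprod} to evaluate $\overline{f_1\boxtimes_{P(z_1)}f_2}$ on $w_1\boxtimes_{P(z_1)}w_2$, we get $(-1)^{\vert f_2\vert\vert w_1\vert}(f_1(w_1)\boxtimes_{P(z_1)}f_2(w_2))$, and then applying $\overline{T_{\gamma;\,\widetilde{W}_1,\widetilde{W}_2}}$ and the defining property \eqref{tgammachar} of parallel transport gives $(-1)^{\vert f_2\vert\vert w_1\vert}\cY_{\boxtimes_{P(z_2)},0}(f_1(w_1),e^{l(z_1)})f_2(w_2)$, where $l(z_1)$ is the branch of logarithm determined by $\log z_2$ and $\gamma$, and here $\boxtimes_{P(z_2)}$ and $\cY_{\boxtimes_{P(z_2)},0}$ refer to the tensor product of $\widetilde W_1$ and $\widetilde W_2$. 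For the right-hand side, I would first apply \eqref{tgammachar} for $T_{\gamma;\,W_1,W_2}$ to get $\overline{T_{\gamma;\,W_1,W_2}}(w_1\boxtimes_{P(z_1)}w_2)=\cY_{\boxtimes_{P(z_2)},0}(w_1,e^{l(z_1)})w_2$ (intertwining map for $W_1$, $W_2$), and then apply $\overline{f_1\boxtimes_{P(z_2)}f_2}$. The key point is that $f_1\boxtimes_{P(z_2)}f_2$ is a $V$-module homomorphism, so passing it through the intertwining operator $\cY_{\boxtimes_{P(z_2)},0}$ is controlled by the naturality of the passage between intertwining operators and intertwining maps, together with the homomorphism property. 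Concretely, $(f_1\boxtimes_{P(z_2)}f_2)\circ\boxtimes_{P(z_2)}=\boxtimes_{P(z_2)}\circ(f_1\otimes f_2)$ as $P(z_2)$-intertwining maps (for $\widetilde W_1,\widetilde W_2$ on the left, $W_1,W_2$ on the right), by definition of $f_1\boxtimes_{P(z_2)}f_2$; translating this through Proposition \ref{opmapiso} gives the corresponding identity for the associated intertwining operators, which, evaluated on $w_1\boxtimes_{P(z_1)}w_2$ via \eqref{tgammachar}, yields exactly $(-1)^{\vert f_2\vert\vert w_1\vert}\cY_{\boxtimes_{P(z_2)},0}(f_1(w_1),e^{l(z_1)})f_2(w_2)$, matching the left-hand side. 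The sign $(-1)^{\vert f_2\vert\vert w_1\vert}$ appears on both sides because the same sign is built into the definition of $f_1\otimes f_2$ on superspaces. Evenness of the parallel transport isomorphisms is immediate since they are induced by even $P(z)$-intertwining maps via the universal property, which produces even homomorphisms.

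I do not expect any serious obstacle here; the proof is a bookkeeping exercise in the universal property plus the operator/map dictionary of Proposition \ref{opmapiso}. The one place to be careful is tracking \emph{which} tensor product (that of $W_1,W_2$ or that of $\widetilde W_1,\widetilde W_2$) the symbols $\boxtimes_{P(z_2)}$ and $\cY_{\boxtimes_{P(z_2)},0}$ refer to at each step, and making sure the branch of logarithm $l(z_1)$ used in \eqref{tgammachar} is the same on both sides — it is, since it depends only on $\gamma$ and $\log z_2$, not on the modules. Once both sides are reduced to $(-1)^{\vert f_2\vert\vert w_1\vert}\cY_{\boxtimes_{P(z_2)},0}(f_1(w_1),e^{l(z_1)})f_2(w_2)$ the claim follows from the generating property of $\boxtimes_{P(z_1)}$ and the fact that a homomorphism out of $W_1\boxtimes_{P(z_1)}W_2$ is determined by its extension's values on elements $w_1\boxtimes_{P(z_1)}w_2$.
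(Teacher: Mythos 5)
Your proof is correct and essentially the same as the paper's: both sides are reduced via \eqref{tgammachar} and \eqref{homtensprod} to $(-1)^{\vert f_2\vert\vert w_1\vert}\cY_{\boxtimes_{P(z_2)},0}(f_1(w_1),e^{l(z_1)})f_2(w_2)$ on elements $w_1\boxtimes_{P(z_1)}w_2$. Where you invoke ``translating through Proposition~\ref{opmapiso}'' to pull $\overline{f_1\boxtimes_{P(z_2)}f_2}$ past $\cY_{\boxtimes_{P(z_2)},0}(\cdot,e^{l(z_1)})\cdot$, the paper carries out the underlying $e^{(l(z_1)-\log z_2)L(0)}$-conjugation explicitly; that is precisely the substance of the compatibility (of the map/operator dictionary with composition by $V$-module homomorphisms) that you are using, and it ultimately rests on $V$-homomorphisms commuting with powers of $L(0)$.
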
	 
\begin{proof}
The evenness of $T_\gamma$ follows from \eqref{tgammachar} and the evenness of $\boxtimes_{P(z_1)}$ and $\cY_{\boxtimes_{P(z_2)},0}$.
Now observe that if $f_2$ is parity-homogeneous,
\begin{equation*}
\overline{T_{\gamma;\,\widetilde{W}_1, \widetilde{W}_2}\circ(f_1\boxtimes_{P(z_1)} f_2)}(w_1\boxtimes_{P(z_1)} w_2)=(-1)^{\vert f_2\vert \vert w_1\vert}\mathcal{Y}_{\boxtimes_{P(z_2)},0}(f_1(w_1), e^{l(z_1)})f_2(w_2),
\end{equation*}
for any $w_2\in W_2$ and parity-homogeneous $w_1\in W_1$, where $l(z_1)$ is the branch of logarithm determined by $\mathrm{log}\,z_2$ and $\gamma$. On the other hand, since $L(0)$ is even,
\begin{align*}
& \overline{(f_1\boxtimes_{P(z_2)} f_2)\circ T_{\gamma;\,W_1,W_2}}  (w_1\boxtimes_{P(z_1)} w_2) = \overline{f_1\boxtimes_{P(z_2)} f_2}\left(\cY_{\boxtimes_{P(z_2)},0}(w_1, e^{l(z_1)})w_2\right)\nonumber\\
& \hspace{3em}= e^{(l(z_1)-\mathrm{log}\,z_2) L(0)}\overline{f_1\boxtimes_{P(z_2)} f_2}\left(\mathcal{Y}_{\boxtimes_{P(z_2)},0}(e^{-(l(z_1)-\mathrm{log}\,z_2) L(0)}w_1, e^{\mathrm{log}\,z_2})e^{-(l(z_1)-\mathrm{log}\,z_2) L(0)}w_2\right)\nonumber\\
&\hspace{3em}=e^{(l(z_1)-\mathrm{log}\,z_2) L(0)}\overline{f_1\boxtimes_{P(z_2)} f_2}\left(e^{-(l(z_1)-\mathrm{log}\,z_2) L(0)}w_1\boxtimes_{P(z_2)} e^{-(l(z_1)-\mathrm{log}\,z_2) L(0)}w_2\right)\nonumber\\
&\hspace{3em}=(-1)^{\vert f_2\vert \vert w_1\vert } e^{(l(z_1)-\mathrm{log}\,z_2) L(0)}\left(e^{-(l(z_1)-\mathrm{log}\,z_2) L(0)}f_1(w_1)\boxtimes_{P(z_2)}e^{-(l(z_1)-\mathrm{log}\,z_2) L(0)} f_2(w_2)\right)\nonumber\\
&\hspace{3em} =(-1)^{\vert f_2\vert \vert w_1\vert }\mathcal{Y}_{\boxtimes_{P(z_2)},0}(f_1(w_1), e^{l(z_1)})f_2(w_2)
\end{align*}
for any $w_2\in W_2$ and parity-homogeneous $w_1\in W_1$. Thus the two compositions are the same.
\end{proof}

We will frequently use parallel transport isomorphisms corresponding to paths $\gamma$ from $z_1$ to $z_2$ in $\CC^\times$ with a cut along the positive real axis. That is, such paths will neither cross the positive real axis nor approach it from the lower half plane. This will ensure that $l(z_1)=\mathrm{log}\,z_1$. Since the corresponding parallel transport isomorphism depends only the endpoints $z_1$ and $z_2$, we will denote it by $T_{z_1\to z_2}$. That is, $T_{z_1\to z_2}: W_1\boxtimes_{P(z_1)} W_2\rightarrow W_1\boxtimes_{P(z_2)} W_2 $ is the isomorphism characterized by
\begin{equation}\label{zonetoztwochar}
\overline{T_{z_1\to z_2}}(w_1\boxtimes_{P(z_1)} w_2)=\mathcal{Y}_{\boxtimes_{P(z_2)},0}(w_1, e^{\mathrm{log}\,z_1})w_2
\end{equation}
for $w_1\in W_1$, $w_2\in W_2$.

\subsubsection{Unit isomorphisms}
\label{subsubsec:unit}

We now discuss unit isomorphisms in $\mathcal{C}$. The unit object in $\cC$ is $V$. For any module $W$ in $\mathcal{C}$ and $z\in\CC^\times$, we have a unique homomorphism $l_{P(z)}: V\boxtimes_{P(z)} W\rightarrow W$ induced by the universal property of $V\boxtimes_{P(z)} W$ and the $P(z)$-intertwining map $Y_W(\cdot, z)\cdot$ (we do not need to specify a branch of logarithm because $Y_W$ involves only integral powers of the formal variable $x$). In the braided monoidal supercategory structure on $\cC$, the left unit isomorphism is $l_W=l_{P(1)}$. For the right unit isomorphism, we have for any $z\in\CC^\times$ a unique homomorphism $r_{P(z)}: W\boxtimes_{P(z)} V\rightarrow W$ induced by the universal property of $W\boxtimes_{P(z)} V$ and the $P(z)$-intertwining map $\Omega(Y_W)(\cdot, z)\cdot$. Then the right unit isomorphism in the braided monoidal supercategory $\mathcal{C}$ is $r_W=r_{P(1)}$. The unit isomorphisms for $W$ in $\mathcal{C}$ are characterized by the conditions
\begin{equation*}
\overline{l_W}(v\boxtimes w)=Y_W(v, 1)w
\end{equation*}
and
\begin{equation*}
\overline{r_W}(w\boxtimes v)=(-1)^{\vert v\vert \vert w\vert }e^{L(-1)}Y_W(v,-1)w
\end{equation*}
for parity-homogeneous $v\in V$ and $w\in W$.

\subsubsection{Braiding isomorphisms}
\label{subsubsec:braiding}

Next we recall the braiding isomorphisms in $\mathcal{C}$.
Let $W_1$ and $W_2$ be modules in $\mathcal{C}$ and let $z\in\CC^\times$. First, we define
\begin{equation*}
\cR^+_{P(z);\,W_1,W_2}=\cR^+_{P(z)}: W_1\boxtimes_{P(z)} W_2\rightarrow W_2\boxtimes_{P(-z)} W_1
\end{equation*}
to be the unique morphism induced by the universal property of $W_1\boxtimes_{P(z)} W_2$ and the $P(z)$-intertwining map $\Omega_0(\mathcal{Y}_{\boxtimes_{P(-z)}, 0})(\cdot, e^{\mathrm{log}\,z})\cdot$ of type $\binom{W_2\boxtimes_{P(-z)} W_1}{W_1\,W_2}$. Thus $\cR^+_{P(z)}$ is determined by the property
\begin{align}\label{Rplusdef}
\overline{\cR^+_{P(z)}}(w_1\boxtimes_{P(z)} w_2) & =(-1)^{\vert w_1\vert \vert w_2\vert} e^{zL(-1)}\mathcal{Y}_{\boxtimes_{P(-z)},0}(w_2, e^{\mathrm{log}\,z+\pi i})w_1\nonumber\\
& =(-1)^{\vert w_1\vert \vert w_2\vert} e^{zL(-1)}\mathcal{Y}_{\boxtimes_{P(-z)},0}(w_2, e^{l_{1/2}(-z)})w_1
\end{align}
for parity-homogeneous $w_1\in W_1$ and $w_2\in W_2$. We also define
\begin{equation*}
\cR^-_{P(z);\,W_1,W_2}=\cR^-_{P(z)}: W_1\boxtimes_{P(z)} W_2\rightarrow W_2\boxtimes_{P(-z)} W_1
\end{equation*}
to be the unique morphism induced by the universal property of $W_1\boxtimes_{P(z)} W_2$ and the $P(z)$-intertwining map $\Omega_{-1}(\cY_{\boxtimes_{P(-z)},0})(\cdot, e^{\mathrm{log}\,z})\cdot$ of type $\binom{W_2\boxtimes_{P(-z)} W_1}{W_1\,W_2}$. Then $\cR^-_{P(z)}$ is determined by the condition
\begin{align}\label{Rminusdef}
\overline{\cR^-_{P(z)}}(w_1\boxtimes_{P(z)} w_2) & = (-1)^{\vert w_1\vert \vert w_2\vert } e^{zL(-1)}\mathcal{Y}_{\boxtimes_{P(-z)},0}(w_2, e^{\mathrm{log}\,z-\pi i})w_1\nonumber\\
& = (-1)^{\vert w_1\vert \vert w_2\vert } e^{zL(-1)}\mathcal{Y}_{\boxtimes_{P(-z)},0}(w_2, e^{l_{-1/2}(-z)})w_1
\end{align}
for parity-homogeneous $w_1\in W_1$ and $w_2\in W_2$.
\begin{propo}\label{R+R-}
	For any $z\in\CC^\times$, $\cR^+_{P(z);\,W_1,W_2}$ and $\cR^-_{P(-z);\,W_2,W_1}$ are inverses of each other.
\end{propo}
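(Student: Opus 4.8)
The plan is to show directly that $\cR^+_{P(z);\,W_1,W_2}$ and $\cR^-_{P(-z);\,W_2,W_1}$ compose to the identity on $W_1\boxtimes_{P(z)}W_2$ (and, by the symmetric computation, on $W_2\boxtimes_{P(-z)}W_1$). Since $\boxtimes_{P(z)}$ is surjective onto $\overline{W_1\boxtimes_{P(z)}W_2}$ in the sense that the images $w_1\boxtimes_{P(z)}w_2$ span a dense subspace, it suffices to compute $\overline{\cR^-_{P(-z);\,W_2,W_1}\circ\cR^+_{P(z);\,W_1,W_2}}(w_1\boxtimes_{P(z)}w_2)$ for parity-homogeneous $w_1\in W_1$, $w_2\in W_2$ and check it equals $w_1\boxtimes_{P(z)}w_2$. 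Using the defining property \eqref{Rplusdef}, this is $(-1)^{\vert w_1\vert\vert w_2\vert}\overline{\cR^-_{P(-z);\,W_2,W_1}}\left(e^{zL(-1)}\cY_{\boxtimes_{P(-z)},0}(w_2,e^{l_{1/2}(-z)})w_1\right)$; then I would commute $\overline{\cR^-_{P(-z)}}$ past $e^{zL(-1)}$ (it is a module map, so this is legitimate) and apply \eqref{Rminusdef} for $\cR^-_{P(-z);\,W_2,W_1}$, which converts $w_2\boxtimes_{P(-z)}w_1$ into $(-1)^{\vert w_1\vert\vert w_2\vert}e^{-zL(-1)}\cY_{\boxtimes_{P(z)},0}(w_1,e^{l_{-1/2}(z)})w_2$.

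The heart of the computation is then a manipulation involving the $L(-1)$-derivative/conjugation properties of intertwining operators and careful bookkeeping of branches of logarithm. After applying both braiding maps I expect to obtain an expression of the form $e^{zL(-1)}$ acting on $\cY_{\boxtimes_{P(-z)},0}$-composed-with-$\cY_{\boxtimes_{P(z)},0}$-type data, but the key point is that $\Omega_{-1}\circ\Omega_0$ (or more precisely $\Omega_{-r-1}\circ\Omega_r$ with the relevant indices) is the identity on the space of intertwining operators, as recorded in Subsection~\ref{subsec:intw} (the inverse of $\Omega_r$ is $\Omega_{-r-1}$). Concretely, the $P(z)$-intertwining map underlying $\cR^-_{P(-z);\,W_2,W_1}\circ\cR^+_{P(z);\,W_1,W_2}$ should, by the uniqueness in the universal property of $W_1\boxtimes_{P(z)}W_2$, be determined by the $P(z)$-intertwining map $I_{\Omega_{-1}(\Omega_0(\cY_{\boxtimes_{P(z)},0})),\,0}$; since $\Omega_{-1}\circ\Omega_0=\mathrm{id}$ on $\cY_{\boxtimes_{P(z)},0}$, this is just $I_{\cY_{\boxtimes_{P(z)},0},0}(\cdot,\cdot)=\boxtimes_{P(z)}$ itself, so the composite morphism is the unique one extending $\boxtimes_{P(z)}$, namely $1_{W_1\boxtimes_{P(z)}W_2}$. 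The main obstacle is getting the branches of logarithm and the $e^{\pm zL(-1)}$ factors to cancel exactly: the first braiding uses $l_{1/2}(-z)=\log z+\pi i$ and the second uses $l_{-1/2}(-z)$ relative to the $P(z)$-side, and one must verify that $\log z+\pi i$ followed by the $-\pi i$ shift (together with the sign factors $(-1)^{\vert w_1\vert\vert w_2\vert}$ appearing twice, which multiply to $1$) reproduces exactly the principal branch $\log z$ that appears in the characterization \eqref{zonetoztwochar}-style identity for $\boxtimes_{P(z)}$.

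I would organize the write-up cleanly by invoking the $\Omega_r$-isomorphism framework rather than grinding through the $\delta$-function Jacobi identities: the cleanest route is to observe that $\cR^+_{P(z);\,W_1,W_2}=\eta_{I}$ where $I=I_{\Omega_0(\cY_{\boxtimes_{P(-z)},0}),0}$, and $\cR^-_{P(-z);\,W_2,W_1}=\eta_{J}$ where $J=I_{\Omega_{-1}(\cY_{\boxtimes_{P(z)},0}),0}$; then by the $L(0)$-conjugation compatibility of the $\Omega_r$ operators and the $I_{\cY,p}$ isomorphisms of Proposition~\ref{opmapiso} together with Propositions~\ref{compofgamma}-style parallel-transport bookkeeping (to handle the $P(-z)\to P(z)$ logarithm shift), the composite $\overline{\eta_J}\circ\boxtimes_{P(-z)}\circ(\text{swap})$ unwinds to $\boxtimes_{P(z)}$. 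Then uniqueness in Definition~\ref{def:Pztensprod} forces $\cR^-_{P(-z);\,W_2,W_1}\circ\cR^+_{P(z);\,W_1,W_2}=1$. The symmetric argument starting from $W_2\boxtimes_{P(-z)}W_1$ and composing in the other order gives the other identity, completing the proof.
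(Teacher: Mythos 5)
Your first paragraph reproduces the paper's proof in all essentials: evaluate the composite on $w_1\boxtimes_{P(z)}w_2$, unfold with \eqref{Rplusdef}, re-express the non-principal branch via $L(0)$-conjugation so that \eqref{Rminusdef} can be applied, and carry out the branch arithmetic $l_{-1/2}(z)+l_{1/2}(-z)-\log(-z)=\log z$. The one organizational difference you take — commuting $\overline{\cR^-_{P(-z)}}$ past $e^{zL(-1)}$ directly (valid, since the braiding is a $V$-module map and hence commutes with $L(-1)$), rather than first moving $e^{(l_{1/2}(-z)-\log(-z))L(0)}$ past the braiding as the paper does — is cosmetic, and you will still need the observation that $e^{(l_{1/2}(-z)-\log(-z))L(0)}=e^{2\pi i p L(0)}$ is a module homomorphism so that it commutes with $e^{-zL(-1)}$.

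Your second paragraph sketches a genuinely different and cleaner route, and it works, but the key lemma it relies on should be named: for a $V$-module homomorphism $f$, $f\circ\Omega_r(\cY)=\Omega_r(f\circ\cY)$, which follows from $f$ commuting with $L(-1)$. With that in hand one has $\overline{\cR^+_{P(z)}}\circ\cY_{\boxtimes_{P(z)},0}=\Omega_0(\cY_{\boxtimes_{P(-z)},0})$ and $\overline{\cR^-_{P(-z)}}\circ\cY_{\boxtimes_{P(-z)},0}=\Omega_{-1}(\cY_{\boxtimes_{P(z)},0})$ (each by Proposition~\ref{opmapiso} and the defining universal property applied at the principal branch), hence $\overline{\cR^-_{P(-z)}\circ\cR^+_{P(z)}}\circ\cY_{\boxtimes_{P(z)},0}=\overline{\cR^-_{P(-z)}}\circ\Omega_0(\cY_{\boxtimes_{P(-z)},0})=\Omega_0\bigl(\overline{\cR^-_{P(-z)}}\circ\cY_{\boxtimes_{P(-z)},0}\bigr)=\Omega_0(\Omega_{-1}(\cY_{\boxtimes_{P(z)},0}))=\cY_{\boxtimes_{P(z)},0}$, and uniqueness in Definition~\ref{def:Pztensprod} forces the composite to be $1$. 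This buys you a conceptual argument in which the branch arithmetic is absorbed once and for all into the definitions of the $\Omega_r$, at the cost of spelling out the naturality of $\Omega_r$ under post-composition (the paper's ground-level computation is in a sense the verification of this compatibility in the special case at hand). Two small corrections: the composite intertwining map you should unwind is $\overline{\cR^-_{P(-z)}}\circ\overline{\cR^+_{P(z)}}\circ\boxtimes_{P(z)}$, not ``$\overline{\eta_J}\circ\boxtimes_{P(-z)}\circ(\text{swap})$''; and the invocation of Proposition~\ref{compofgamma}-style parallel transport is not quite right here, since parallel transport relates $P(z_1)$- and $P(z_2)$-tensor products of the \emph{same ordered} pair, whereas the logarithm shift you are handling is internal to the $\Omega_r$ normalizations.
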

\begin{proof}
	For any $z\in\CC^\times$ and parity-homogeneous $w_1\in W_1$, $w_2\in W_2$, we have
	\begin{align}\label{R+R-calc}
	&\overline{\cR^-_{P(-z)}\circ \cR^+_{P(z)}}  (w_1\boxtimes_{P(z)} w_2) = (-1)^{\vert w_1\vert \vert w_2\vert}\overline{\cR^-_{P(-z)}}\left(e^{zL(-1)}\mathcal{Y}_{\boxtimes_{P(-z)},0}(w_2, e^{l_{1/2}(-z)})w_1\right)\nonumber\\
	& =(-1)^{\vert w_1\vert \vert w_2\vert}e^{zL(-1)} e^{(l_{1/2}(-z)-\mathrm{log}(-z))L(0)}\cdot\nonumber\\
	&\hspace{10em}\cdot\overline{\cR^-_{P(-z)}}\left(e^{-(l_{1/2}(-z)-\mathrm{log}(-z))L(0)}w_2\boxtimes_{P(-z)}e^{-(l_{1/2}(-z)-\mathrm{log}(-z))L(0)} w_1\right)\nonumber\\
	& =e^{zL(-1)} e^{(l_{1/2}(-z)-\mathrm{log}(-z))L(0)}e^{-zL(-1)}\cdot\nonumber\\
	&\hspace{10em}\cdot\mathcal{Y}_{\boxtimes_{P(z)},0}(e^{-(l_{1/2}(-z)-\mathrm{log}(-z))L(0)}w_1, e^{l_{-1/2}(z)})e^{-(l_{1/2}(-z)-\mathrm{log}(-z))L(0)}w_2,
	\end{align}
	using the evenness of $L(0)$. Now, $e^{(l_{1/2}(-z)-\mathrm{log}(-z))L(0)}=e^{2\pi i p L(0)}$ for $p=0$ or $1$. Then Remark 3.40 in \cite{HLZ2} implies that $e^{(l_{1/2}(-z)-\mathrm{log}(-z))L(0)}$ is a $V$-module homomorphism and thus commutes with $e^{-zL(-1)}$. Hence the right side of \eqref{R+R-calc} reduces to
	\begin{align*}
	& e^{(l_{1/2}(-z)-\mathrm{log}(-z))L(0)}\mathcal{Y}_{\boxtimes_{P(z)},0}(e^{-(l_{1/2}(-z)-\mathrm{log}(-z))L(0)}w_1, e^{l_{-1/2}(z)})e^{-(l_{1/2}(-z)-\mathrm{log}(-z))L(0)}w_2\nonumber\\
	& \hspace{5em}=\mathcal{Y}_{\boxtimes_{P(z)},0}(w_1, e^{l_{-1/2}(z)+l_{1/2}(-z)-\mathrm{log}(-z)})w_2\nonumber\\
	& \hspace{5em} =\mathcal{Y}_{\boxtimes_{P(z)},0}(w_1, e^{\mathrm{log}\,z})w_2=w_1\boxtimes_{P(z)} w_2
	\end{align*}
	because
	\begin{equation*}
	l_{-1/2}(z)+l_{1/2}(-z)-\mathrm{log}(-z)=(\mathrm{log}(-z)-\pi i)+(\mathrm{log}\,z+\pi i)-\mathrm{log}(-z)=\mathrm{log}\,z.
	\end{equation*}
	This shows that $\cR^-_{P(z)}\circ \cR^+_{P(z)}=1_{W_1\boxtimes_{P(z)} W_2}$. Similarly, $\cR^+_{P(z)}\circ \cR^-_{P(-z)}=1_{W_2\boxtimes_{P(-z)} W_1}$.
\end{proof}

Following \cite{HLZ8}, we can now define the braiding isomorphism in $\mathcal{C}$ to be
\begin{equation*}
\cR_{W_1,W_2}=T_{-1\to 1}\circ \cR^+_{P(1)}: W_1\boxtimes W_2\rightarrow W_2\boxtimes W_1.
\end{equation*}
The braiding isomorphism satisfies
\begin{align}\label{braidchar}
\overline{\cR_{W_1,W_2}}(w_1&\boxtimes w_2) =(-1)^{\vert w_1\vert \vert w_2\vert} e^{L(-1)}\overline{T_{-1\to 1}}\left(\mathcal{Y}_{\boxtimes_{P(-1)},0}(w_2, e^{l_{1/2}(-1)})w_1\right)\nonumber\\ 
&= (-1)^{\vert w_1\vert \vert w_2\vert}e^{L(-1)}\overline{T_{-1\to 1}}(w_2\boxtimes_{P(-1)} w_1) =(-1)^{\vert w_1\vert \vert w_2\vert}e^{L(-1)}\mathcal{Y}_{\boxtimes_{P(1)},0}(w_2, e^{\mathrm{log}(-1)})w_1\nonumber\\
&=(-1)^{\vert w_1\vert \vert w_2\vert}e^{L(-1)}\mathcal{Y}_{\boxtimes_{P(1)},0}(w_2, e^{\pi i})w_1
\end{align}
for parity-homogeneous $w_1\in W_1$ and $w_2\in W_2$. The braiding isomorphism $\cR_{W_1,W_2}$ is also naturally related to $\cR_{P(z)}^+$ for any $z\in\CC^\times$:
\begin{propo}\label{prop:relntoR+R-}
	For any $z\in\CC^\times$, $\cR_{W_1,W_2}=T_{-z\to 1}\circ \cR^+_{P(z)}\circ T_{1\to z}$. Moreover, $\cR_{W_1, W_2}^{-1}=T_{z\to 1}\circ \cR^-_{P(-z)}\circ T_{1\to -z}$. 
\end{propo}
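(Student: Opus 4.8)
The plan is to prove the identity $\cR_{W_1,W_2}=T_{-z\to 1}\circ \cR^+_{P(z)}\circ T_{1\to z}$ by using the universal property of $W_1\boxtimes_{P(1)} W_2$: since both sides are homomorphisms from $W_1\boxtimes_{P(1)} W_2$ to $W_2\boxtimes_{P(1)} W_1$, it suffices to check that their natural extensions agree on elements of the form $w_1\boxtimes w_2=w_1\boxtimes_{P(1)} w_2$ for parity-homogeneous $w_1\in W_1$, $w_2\in W_2$. On the left side, I would use the characterization \eqref{braidchar} of $\cR_{W_1,W_2}$. On the right side, I would compute step by step: first apply $\overline{T_{1\to z}}$ using \eqref{zonetoztwochar}, then apply the $L(0)$-conjugation property (Proposition 3.36(b) of \cite{HLZ2}) to rewrite the result in terms of $\boxtimes_{P(z)}$, then apply $\overline{\cR^+_{P(z)}}$ using \eqref{Rplusdef}, again use $L(0)$-conjugation to move to $\boxtimes_{P(-z)}$, and finally apply $\overline{T_{-z\to 1}}$ using \eqref{zonetoztwochar}. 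The outcome will be an expression of the form $(-1)^{\vert w_1\vert\vert w_2\vert}e^{L(-1)}\mathcal{Y}_{\boxtimes_{P(1)},0}(w_2,e^{\beta})w_1$ for some branch value $\beta$, and the whole argument reduces to checking that $\beta=\pi i$, i.e.\ tracking the accumulated shifts in the branch of logarithm.

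More precisely, the logarithm bookkeeping is the heart of the matter. Each parallel transport $T_{z_1\to z_2}$ contributes a factor $e^{(\mathrm{log}\,z_1-\mathrm{log}\,z_2)L(0)}$ (after $L(0)$-conjugation), and each $\cR^{\pm}_{P(z)}$ inserts $e^{zL(-1)}$ together with a shift of the argument by $e^{\pm\pi i}$ relative to $\mathrm{log}\,z$. Composing $T_{1\to z}$, $\cR^+_{P(z)}$, and $T_{-z\to 1}$ and collecting all the $L(0)$-exponents, the net argument of $\mathcal{Y}_{\boxtimes_{P(1)},0}(w_2,\cdot)w_1$ will be $\mathrm{log}\,z+\pi i+(\mathrm{log}(-z)-\mathrm{log}\,z)+(\mathrm{log}(-1)-\mathrm{log}(-z))$ after cancellation, which should simplify to $\mathrm{log}(-1)=\pi i$; the key point is that $\mathrm{log}(-z)-\mathrm{log}\,z$ and $\mathrm{log}(-1)-\mathrm{log}(-z)$ are each integer multiples of $2\pi i$, so the corresponding $e^{\bullet L(0)}$ operators are $V$-module endomorphisms (Remark 3.40 of \cite{HLZ2}) that commute with $e^{L(-1)}$, which is what makes the manipulation legitimate. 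I also need to remember that $T_{1\to z}$ and $T_{-z\to 1}$ are parallel transports along paths avoiding the positive real axis from below, so the relevant branch in each case is the principal one, exactly as in \eqref{zonetoztwochar}.

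For the second identity $\cR_{W_1,W_2}^{-1}=T_{z\to 1}\circ \cR^-_{P(-z)}\circ T_{1\to -z}$, I would argue more economically: by Proposition \ref{R+R-}, $\cR^+_{P(z);\,W_1,W_2}$ and $\cR^-_{P(-z);\,W_2,W_1}$ are mutually inverse, and parallel transports are invertible with $T_{\gamma}^{-1}=T_{\gamma'}$ (the reversed path). Applying these facts to the first identity, written as $\cR_{W_1,W_2}=T_{-z\to 1}\circ \cR^+_{P(z);\,W_1,W_2}\circ T_{1\to z}$, gives $\cR_{W_1,W_2}^{-1}=T_{z\to 1}\circ (\cR^+_{P(z);\,W_1,W_2})^{-1}\circ T_{1\to -z}=T_{z\to 1}\circ \cR^-_{P(-z);\,W_2,W_1}\circ T_{1\to -z}$, as desired. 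One subtlety to verify here is that $(T_{1\to z})^{-1}$ equals $T_{z\to 1}$ in the sense used in the statement, i.e.\ that reversing a path avoiding the positive real axis from below yields again such a path with the correct branch, which is immediate from Proposition \ref{compofgamma} and its corollary.

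The main obstacle I anticipate is purely the careful tracking of branches of logarithm through the chain of $L(0)$-conjugations: it is easy to be off by a multiple of $2\pi i$, and the sign conventions in \eqref{Rplusdef}, \eqref{Rminusdef}, \eqref{zonetoztwochar}, and \eqref{braidchar} must be applied in exactly the right order. There is no serious conceptual difficulty — no new structure is needed beyond the universal property and the $L(0)$-conjugation formula — but the computation must be organized so that each intermediate expression is manifestly well-defined (all $e^{2\pi i p L(0)}$ factors recognized as module maps) before the next step is taken.
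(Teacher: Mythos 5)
Your overall strategy is the same as the paper's: reduce the first identity to a check on tensor-product elements via the universal property, compute the right-hand side by successive $L(0)$-conjugations and branch tracking, and derive the second identity from the first using Proposition~\ref{R+R-} together with $T_{z_1\to z_2}^{-1}=T_{z_2\to z_1}$. The derivation of the second identity is fine as you wrote it.

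However, your branch bookkeeping for the first identity is not correct as stated, in two places. First, the telescoping
\[
  \log z + \pi i + (\log(-z)-\log z) + (\log(-1)-\log(-z))
\]
collapses to $\pi i + \log(-1) = 2\pi i$, not $\pi i$; this is off by $\pi i$, which is precisely the kind of error that makes the braiding come out wrong (indeed $e^{2\pi i}=1\neq e^{\pi i}$). Second, the justification you give — that $\log(-z)-\log z$ and $\log(-1)-\log(-z)$ are integer multiples of $2\pi i$, so the corresponding $e^{\bullet L(0)}$ are module endomorphisms commuting with $e^{L(-1)}$ — is false. With the paper's convention ($0\le\arg<2\pi$), $\log(-z)-\log z=\pm\pi i$ depending on the half-plane, not a $2\pi i$-multiple, and $\log(-1)-\log(-z)$ is not even purely imaginary unless $|z|=1$. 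The correct identities the argument hinges on are $\log z - l_{1/2}(-z)+\log(-z)=l_{-1/2}(z)$ and $\log(-z)-l_{-1/2}(z)=\pi i$, and these quantities are deliberately not $2\pi i$-multiples.

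The more substantive gap is that your argument never explains how the factor $e^{zL(-1)}$ coming from $\cR^+_{P(z)}$ (see \eqref{Rplusdef}) becomes the $e^{L(-1)}$ required by the characterization \eqref{braidchar} of $\cR_{W_1,W_2}$. The device is not that $e^{2\pi i p L(0)}$ commutes with $e^{L(-1)}$; rather one needs the genuinely nontrivial commutation $y^{-L(0)}e^{zL(-1)}=e^{zy^{-1}L(-1)}y^{-L(0)}$ (Remark~3.38 of \cite{HLZ2}), applied with $y=e^{\log z}$, so that passing $e^{-\log z\, L(0)}$ through $e^{zL(-1)}$ converts it to $e^{L(-1)}$. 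Without this step the $e^{zL(-1)}$ simply persists and the computation cannot close. So your proposal identifies the right framework and the second half is correct, but the key commutation relation is missing and the branch arithmetic as written gives the wrong value.
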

\begin{proof}
	The second assertion follows immediately from the first by Proposition \ref{R+R-} and the fact that $T_{z_2\to z_1}=T_{z_1\to z_2}^{-1}$ for any $z_1, z_2\in\CC^\times$. To prove the first assertion, we observe that
	\begin{align*}
	\overline{T_{-z\to 1}\circ \cR^+_{P(z)}\circ T_{1\to z}} & (w_1\boxtimes w_2) = \overline{T_{-z\to 1}\circ \cR^+_{P(z)}}(\mathcal{Y}_{\boxtimes_{P(z)},0}(w_1, e^{\mathrm{log}\,1})w_2)\nonumber\\
	& \hspace{-4em}=e^{-\mathrm{log}\,z\,L(0)}\overline{T_{-z\to 1}\circ \cR^+_{P(z)}}(e^{\mathrm{log}\,z\,L(0)}w_1\boxtimes_{P(z)} e^{\mathrm{log}\,z\,L(0)}w_2)\nonumber\\
	& \hspace{-4em}=(-1)^{\vert w_1\vert \vert w_2\vert}e^{-\mathrm{log}\,z\,L(0)} e^{z L(-1)}\overline{T_{-z\to 1}}\left(\mathcal{Y}_{\boxtimes_{P(-z)},0}(e^{\mathrm{log}\,z\,L(0)} w_2, e^{l_{1/2}(-z)})e^{\mathrm{log}\,z\,L(0)} w_1\right)\nonumber\\
	& \hspace{-4em}=(-1)^{\vert w_1\vert \vert w_2\vert}e^{-\mathrm{log}\,z\,L(0)} e^{zL(-1)} e^{(l_{1/2}(-z)-\mathrm{log}(-z))L(0)}\cdot\nonumber\\
	&\hspace{-2em}\cdot\overline{T_{-z\to 1}}(e^{(\mathrm{log}\,z-l_{1/2}(-z)+\mathrm{log}(-z))L(0)}w_2\boxtimes_{P(-z)} e^{(\mathrm{log}\,z-l_{1/2}(-z)+\mathrm{log}(-z))L(0)} w_1)\nonumber\\
	&\hspace{-4em} =(-1)^{\vert w_1\vert \vert w_2\vert}e^{L(-1)} e^{-l_{-1/2}(z) L(0)}\mathcal{Y}_{\boxtimes_{P(1)},0}(e^{l_{-1/2}(z) L(0)}w_2, e^{\mathrm{log}(-z)})e^{l_{-1/2}(z) L(0)} w_1\nonumber\\
	&\hspace{-4em} =(-1)^{\vert w_1\vert \vert w_2\vert}e^{L(-1)}\mathcal{Y}_{\boxtimes_{P(1)},0}(w_2, e^{\mathrm{log}(-z)-l_{-1/2}(z)})w_1\nonumber\\
	&\hspace{-4em} =(-1)^{\vert w_1\vert \vert w_2\vert}e^{L(-1)}\mathcal{Y}_{\boxtimes_{P(1)},0}(w_2, e^{\pi i})w_1
	\end{align*}
	for any parity-homogeneous $w_1\in W_1$, $w_2\in W_2$. To obtain the fifth equality, we have used the commutation relation $z^{-L(0)} e^{zL(-1)}=e^{zz^{-1}L(-1)}z^{-L(0)}$ (see Remark 3.38 in \cite{HLZ2}) and the equality
	\begin{equation*}
	\mathrm{log}\,z-l_{1/2}(-z)+\mathrm{log}(-z)=\mathrm{log}\,z-(\mathrm{log}\,z+\pi i)+(l_{-1/2}(z)+\pi i)=l_{-1/2}(z).
	\end{equation*}
	Then by \eqref{braidchar}, $T_{-z\to 1}\circ \cR^+_{P(z)}\circ T_{1\to z}$ agrees with $\cR_{W_1, W_2}$, as desired.
\end{proof}

Since the naturality of braiding isomorphisms in a supercategory involves a sign factor, we include a proof here to illustrate the essential role of the sign factor in \eqref{homtensprod}:
\begin{propo}
 For parity-homogeneous homomorphisms $f_1: W_1\rightarrow\widetilde{W}_1$ and $f_2: W_2\rightarrow\widetilde{W}_2$ in $\cC$,
 \begin{equation*}
  \cR_{\widetilde{W}_1,\widetilde{W}_2}\circ(f_1\boxtimes f_2)=(-1)^{\vert f_1\vert \vert f_2\vert} (f_2\boxtimes f_1)\circ\cR_{W_1,W_2}.
 \end{equation*}
\end{propo}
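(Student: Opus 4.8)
The plan is to reduce the asserted identity of homomorphisms $W_1\boxtimes W_2\to\widetilde W_2\boxtimes\widetilde W_1$ to an identity of their extensions to algebraic completions evaluated on the generating elements $w_1\boxtimes w_2$, and then to match sign factors. By the universal property of the $P(1)$-tensor product (Definition \ref{def:Pztensprod}), two homomorphisms out of $W_1\boxtimes W_2$ agree as soon as their extensions agree on all $w_1\boxtimes w_2$, and by bilinearity it suffices to treat parity-homogeneous $w_1\in W_1$, $w_2\in W_2$. On such elements the left-hand side unwinds by \eqref{homtensprod} (applicable since $f_2$ is parity-homogeneous) and then \eqref{braidchar}:
\begin{align*}
\overline{\cR_{\widetilde W_1,\widetilde W_2}\circ(f_1\boxtimes f_2)}(w_1\boxtimes w_2)
&=(-1)^{\vert f_2\vert\vert w_1\vert}\,\overline{\cR_{\widetilde W_1,\widetilde W_2}}\bigl(f_1(w_1)\boxtimes f_2(w_2)\bigr)\\
&=(-1)^{\vert f_2\vert\vert w_1\vert+(\vert f_1\vert+\vert w_1\vert)(\vert f_2\vert+\vert w_2\vert)}\,e^{L(-1)}\,\widetilde\cY_{\boxtimes_{P(1)},0}\bigl(f_2(w_2),e^{\pi i}\bigr)f_1(w_1),
\end{align*}
where $\widetilde\cY_{\boxtimes_{P(1)}}$ is the tensor product intertwining operator of type $\binom{\widetilde W_2\boxtimes\widetilde W_1}{\widetilde W_2\,\widetilde W_1}$ and I have used $\vert f_j(w_j)\vert=\vert f_j\vert+\vert w_j\vert$.

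The key auxiliary step is the formal-series identity
\begin{equation*}
\overline{f_2\boxtimes f_1}\bigl(\cY_{\boxtimes_{P(1)}}(w_2,x)w_1\bigr)=(-1)^{\vert f_1\vert\vert w_2\vert}\,\widetilde\cY_{\boxtimes_{P(1)}}\bigl(f_2(w_2),x\bigr)f_1(w_1)
\end{equation*}
in $\overline{\widetilde W_2\boxtimes\widetilde W_1}[\log x]\{x\}$, where $\cY_{\boxtimes_{P(1)}}$ is the tensor product intertwining operator of type $\binom{W_2\boxtimes W_1}{W_2\,W_1}$. Both sides are intertwining operators of type $\binom{\widetilde W_2\boxtimes\widetilde W_1}{W_2\,W_1}$ and parity $\vert f_1\vert+\vert f_2\vert$ — the left since composing an intertwining operator with a parity-homogeneous module homomorphism is again one, the right because of the sign factor intrinsic to the superspace map $f_2\otimes f_1$ acting in the first/second slots. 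By Proposition \ref{opmapiso} (with $p=0$) both of these intertwining operators induce the \emph{same} $P(1)$-intertwining map, namely the one obtained by composing the tensor product intertwining map of $\widetilde W_2$ and $\widetilde W_1$ with $f_2\otimes f_1$; indeed this is exactly the defining property of $f_2\boxtimes f_1$, with the $(-1)^{\vert f_1\vert\vert w_2\vert}$ coming from $(f_2\otimes f_1)(w_2\otimes w_1)=(-1)^{\vert f_1\vert\vert w_2\vert}(f_2(w_2)\otimes f_1(w_1))$. Hence the two intertwining operators coincide. Substituting $x=e^{\pi i}$ coefficientwise, and using that $\overline{f_2\boxtimes f_1}$ commutes with $e^{L(-1)}$ (since $L(-1)=\omega_0$ with $\omega$ even, a parity-homogeneous homomorphism commutes with $L(-1)$), I obtain
\begin{align*}
\overline{(-1)^{\vert f_1\vert\vert f_2\vert}(f_2\boxtimes f_1)\circ\cR_{W_1,W_2}}(w_1\boxtimes w_2)
&=(-1)^{\vert f_1\vert\vert f_2\vert+\vert w_1\vert\vert w_2\vert}\,\overline{f_2\boxtimes f_1}\bigl(e^{L(-1)}\cY_{\boxtimes_{P(1)},0}(w_2,e^{\pi i})w_1\bigr)\\
&=(-1)^{\vert f_1\vert\vert f_2\vert+\vert w_1\vert\vert w_2\vert+\vert f_1\vert\vert w_2\vert}\,e^{L(-1)}\,\widetilde\cY_{\boxtimes_{P(1)},0}\bigl(f_2(w_2),e^{\pi i}\bigr)f_1(w_1).
\end{align*}

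Finally I would match the two sign exponents. The left-hand one is $\vert f_2\vert\vert w_1\vert+(\vert f_1\vert+\vert w_1\vert)(\vert f_2\vert+\vert w_2\vert)=2\vert f_2\vert\vert w_1\vert+\vert f_1\vert\vert f_2\vert+\vert f_1\vert\vert w_2\vert+\vert w_1\vert\vert w_2\vert$, which modulo $2$ equals $\vert f_1\vert\vert f_2\vert+\vert f_1\vert\vert w_2\vert+\vert w_1\vert\vert w_2\vert$ — precisely the right-hand exponent. Thus the two extensions agree on every $w_1\boxtimes w_2$, and the universal property of $\boxtimes=\boxtimes_{P(1)}$ gives the asserted identity. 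The only genuinely delicate point is the sign bookkeeping — in particular correctly locating the $(-1)^{\vert f_1\vert\vert w_2\vert}$ produced by the superspace tensor product $f_2\otimes f_1$ hidden inside the definition of $f_2\boxtimes f_1$ — together with the routine but necessary checks that the substitution $x\mapsto e^{\pi i}$ commutes coefficientwise with all maps involved and that $e^{L(-1)}$ commutes with $f_2\boxtimes f_1$.
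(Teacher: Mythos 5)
Your proof is correct, and its overall skeleton matches the paper's: unwind the left side on $w_1\boxtimes w_2$ via \eqref{homtensprod} and \eqref{braidchar}, show the result equals the right side extension via the same two identities plus the evenness of $L(0)$, $L(-1)$, and conclude by the ``generation'' property of $\boxtimes_{P(1)}$. The one place where you genuinely diverge is the middle step: you first prove the \emph{formal-series} identity $\overline{f_2\boxtimes f_1}\bigl(\cY_{\boxtimes_{P(1)}}(w_2,x)w_1\bigr)=(-1)^{\vert f_1\vert\vert w_2\vert}\widetilde\cY_{\boxtimes_{P(1)}}(f_2(w_2),x)f_1(w_1)$ by arguing that both sides are intertwining operators inducing the same $P(1)$-intertwining map, hence equal by the injectivity in Proposition \ref{opmapiso}, and only then specialize $x\mapsto e^{\pi i}$. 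The paper instead works directly at $z=e^{\pi i}$ and uses the $L(0)$-conjugation formula $\cY_{\boxtimes_{P(1)},0}(u,e^{\pi i})v=e^{\pi i L(0)}\bigl(e^{-\pi i L(0)}u\boxtimes e^{-\pi i L(0)}v\bigr)$ (which already underlies the $\cY_{I,p}\mapsto I$ side of Proposition \ref{opmapiso}) to pull $f_2\boxtimes f_1$ through, reusing \eqref{homtensprod} in reverse. Your route buys a cleaner conceptual statement (the formal-series naturality of $\boxtimes$), but it quietly invokes two extra super-generality facts: that pre- and post-composing a logarithmic intertwining operator with parity-homogeneous $V$-module homomorphisms again gives an intertwining operator of the expected type and parity. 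Those are true and routine, but you assert them without proof; the paper's $L(0)$-conjugation route side-steps them entirely and is therefore more self-contained within what has already been established. Either way the sign bookkeeping you record is correct.
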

\begin{proof}
 Using \eqref{homtensprod}, \eqref{braidchar}, and the evenness of $L(0)$, $L(-1)$, we have:
 \begin{align*}
  \overline{\cR_{\widetilde{W}_1,\widetilde{W}_2}\circ(f_1\boxtimes f_2)} & (w_1\boxtimes w_2) = (-1)^{\vert f_2\vert\vert w_1\vert}\overline{\cR_{\widetilde{W}_1,\widetilde{W}_2}}(f_1(w_1)\boxtimes f_2(w_2))\nonumber\\
  & = (-1)^{\vert f_2\vert\vert w_1\vert+(\vert f_1\vert+\vert w_1\vert)(\vert f_2\vert+\vert w_2\vert)} e^{L(-1)}\cY_{\boxtimes_{P(1)},0}(f_2(w_2), e^{\pi i})f_1(w_1)\nonumber\\
  & = (-1)^{\vert f_1\vert\vert f_2\vert+\vert w_1\vert\vert w_2\vert+\vert f_1\vert\vert w_2\vert} e^{L(-1)} e^{\pi i L(0)}\left(e^{-\pi i L(0)}f_2(w_2)\boxtimes e^{-\pi i L(0)} f_1(w_1)\right)\nonumber\\
  & =(-1)^{\vert f_1\vert\vert f_2\vert}\overline{f_2\boxtimes f_1}\left((-1)^{\vert w_1\vert\vert w_2\vert}e^{L(-1)}\cY_{\boxtimes_{P(1)},0}(w_2, e^{\pi i})w_1\right)\nonumber\\
  & =(-1)^{\vert f_1\vert\vert f_2\vert}\overline{(f_2\boxtimes f_1)\circ\cR_{W_1,W_2}}(w_1\boxtimes w_2)
 \end{align*}
for parity-homogeneous $w_1\in W_1$ and $w_2\in W_2$.
\end{proof}

We now show how to characterize the monodromy isomorphisms in $\cC$. Notice that \eqref{braidchar} implies that $\overline{\cR_{W_1,W_2}}\circ\boxtimes_{P(1)}=\Omega_0(\cY_{\boxtimes_{P(1)},0})(\cdot, e^{\mathrm{log}\,1})\cdot$; by Proposition \ref{opmapiso}, this means that
\begin{equation}\label{formalbraidchar}
 \cR_{W_1,W_2}(\cY_{\boxtimes_{P(1)},0}(w_1,x)w_2)=\Omega_0(\cY_{\boxtimes_{P(1)},0})(w_1,x)w_2=(-1)^{\vert w_1\vert \vert w_2\vert}e^{xL(-1)}\cY_{\boxtimes_{P(1)},0}(w_2,e^{\pi i} x)w_1
\end{equation}
for parity-homogeneous $w_1\in W_1$, $w_2\in W_2$. Now it is easy to see from \eqref{formalbraidchar} that the monodromy isomorphisms in $\cC$ are given by
 \begin{equation}\label{vrtxmonodromy}
  \cM_{W_1,W_2}(\cY_{\boxtimes_{P(1)},0}(w_1,x)w_2)=\cY_{\boxtimes_{P(1)},0}(w_1, e^{2\pi i} x)w_2
 \end{equation}
for $w_1\in W_1$ and $w_2\in W_2$.
Comparing this with \eqref{tgammachar}, we see immediately that the monodromy isomorphisms in $\cC$ are indeed given by the monodromy of certain paths in $\CC^\times$:
\begin{propo}\label{prop:monoandtrans}
For any $p\in\ZZ$, $\cM_{W_1,W_2}^p=T_{\gamma^p; W_1, W_2}$ where $\gamma^p$ is a continuous path from $1$ to $1$ in $\CC^\times$ that wraps around the origin $p$ times clockwise. 
\end{propo}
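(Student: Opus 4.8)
The plan is to show that $\cM_{W_1,W_2}^p$ and $T_{\gamma^p;\,W_1,W_2}$ are \emph{literally the same} homomorphism $W_1\boxtimes W_2\to W_1\boxtimes W_2$, by checking that each is the unique homomorphism induced, via the universal property of the $P(1)$-tensor product (Definition \ref{def:Pztensprod}), by one and the same $P(1)$-intertwining map of type $\binom{W_1\boxtimes W_2}{W_1\,W_2}$ --- namely the map $I_{\cY_{\boxtimes_{P(1)},0},\,p}$, which by Proposition \ref{opmapiso} sends $w_1\otimes w_2\mapsto\cY_{\boxtimes_{P(1)},0}(w_1,e^{l_p(1)})w_2$ with $l_p(1)=\log 1+2\pi ip=2\pi ip$. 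Once this common description is established for both morphisms, the uniqueness clause in Definition \ref{def:Pztensprod} forces $\cM_{W_1,W_2}^p=T_{\gamma^p;\,W_1,W_2}$.

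For the parallel transport this is essentially unwinding the definition and \eqref{tgammachar}. By construction $T_{\gamma^p;\,W_1,W_2}$ is the morphism induced by the universal property of $W_1\boxtimes_{P(1)}W_2$ and the $P(1)$-intertwining map $\cY_{\boxtimes_{P(1)},0}(\cdot,e^{l(1)})\cdot$, where $l(1)$ is the branch of logarithm at $z_1=1$ determined by $\log z_2=\log 1=0$ and the path $\gamma^p$. Parametrizing $\gamma^p$ as $t\mapsto e^{-2\pi ipt}$ for $t\in[0,1]$ (which winds $p$ times clockwise), the continuous branch of logarithm along $\gamma^p$ taking the value $0$ at the endpoint $t=1$ is $t\mapsto 2\pi ip(1-t)$; its value at $z_1=1$, i.e.\ at $t=0$, is $l(1)=2\pi ip=l_p(1)$. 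Hence $T_{\gamma^p;\,W_1,W_2}$ is induced exactly by $I_{\cY_{\boxtimes_{P(1)},0},\,p}$.

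For the monodromy I would start from the formal identity \eqref{vrtxmonodromy}. Since $\cM_{W_1,W_2}$ is a $V$-module homomorphism, it extends to $\overline{W_1\boxtimes W_2}$ and acts coefficientwise on formal series, so iterating \eqref{vrtxmonodromy} (and using that $\cM_{W_1,W_2}$ is an isomorphism to cover $p<0$) gives $\cM_{W_1,W_2}^p(\cY_{\boxtimes_{P(1)},0}(w_1,x)w_2)=\cY_{\boxtimes_{P(1)},0}(w_1,e^{2\pi ip}x)w_2$ for all $p\in\ZZ$. Applying to both sides the specialization $x^n\mapsto e^{n\log 1}$, $(\log x)^k\mapsto(\log 1)^k$, the left side becomes $\overline{\cM_{W_1,W_2}^p}$ applied to $w_1\boxtimes_{P(1)}w_2=\cY_{\boxtimes_{P(1)},0}(w_1,e^{\log 1})w_2$, while the right side becomes $\cY_{\boxtimes_{P(1)},0}(w_1,e^{l_p(1)})w_2=I_{\cY_{\boxtimes_{P(1)},0},\,p}(w_1\otimes w_2)$; the equality of these last two expressions is the bookkeeping identity $\log(e^{2\pi ip}x)=\log x+2\pi ip$, which shows that substituting $y\mapsto e^{2\pi ip}x$ and then evaluating $x$ at the principal branch of $\log 1$ agrees with evaluating $\cY_{\boxtimes_{P(1)},0}$ directly at the branch $l_p(1)$. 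Therefore $\overline{\cM_{W_1,W_2}^p}\circ\boxtimes_{P(1)}=I_{\cY_{\boxtimes_{P(1)},0},\,p}$, and the conclusion follows from Definition \ref{def:Pztensprod}.

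The work here is not conceptual but careful bookkeeping: one must pin down the orientation convention (how ``clockwise'' in $\gamma^p$ matches the sign of the monodromy in \eqref{vrtxmonodromy}) and the precise meaning of specializing a formal variable to a complex number through a chosen branch of logarithm, so that the parallel-transport description and the ``$e^{2\pi ip}x$'' description of $\cM^p$ are seen to yield the identical $P(1)$-intertwining map. The rest is mechanical application of the universal property of $\boxtimes_{P(1)}$.
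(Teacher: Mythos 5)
Your proposal is correct and is essentially the paper's argument spelled out in full: the paper treats the claim as an immediate consequence of comparing \eqref{vrtxmonodromy} with \eqref{tgammachar}, and you supply exactly the bookkeeping one would perform to make that comparison rigorous — iterating \eqref{vrtxmonodromy} to handle $p\neq 1$, tracking the branch of logarithm $l(1)=l_p(1)=2\pi ip$ determined by the clockwise path $\gamma^p$ and the terminal value $\log 1=0$, and invoking the uniqueness clause of the $P(z)$-tensor product universal property to conclude the two morphisms coincide. No gaps; the orientation check (clockwise winding corresponding to the branch $l_p$) is carried out correctly.
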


\subsubsection{Associativity isomorphisms}
\label{subsubsec:assoc}
Now we recall the associativity isomorphisms in $\mathcal{C}$ from \cite{HLZ8}. Suppose $W_1$, $W_2$, and $W_3$ are objects of $\mathcal{C}$. Then for $z_1, z_2\in\CC^\times$ which satisfy $\vert z_1\vert>\vert z_2\vert> 0$, the convergence condition for intertwining maps implies that there is an element
\begin{equation*}
w_1\boxtimes_{P(z_1)}(w_2\boxtimes_{P(z_2)} w_3)\in \overline{W_1\boxtimes_{P(z_1)} (W_2\boxtimes_{P(z_2)} W_3)}
\end{equation*}
for any $w_1\in W_1$, $w_2\in W_2$, and $w_3\in W_3$ defined by its action on $(W_1\boxtimes_{P(z_1)} (W_2\boxtimes_{P(z_2)} W_3))'$:
\begin{equation*}
\langle w', w_1\boxtimes_{P(z_1)}(w_2\boxtimes_{P(z_2)} w_3)\rangle=\sum_{n\in\RR} \langle w', w_1\boxtimes_{P(z_1)}\pi_n(w_2\boxtimes_{P(z_2)} w_3)\rangle
\end{equation*}
for any $w'\in (W_1\boxtimes_{P(z_1)} (W_2\boxtimes_{P(z_2)} W_3))'$. Similarly, for $z_0, z_2\in\CC^\times$ which satisfy $\vert z_2\vert>\vert z_0\vert>0$, there is an element 
\begin{equation*}
(w_1\boxtimes_{P(z_0)} w_2)\boxtimes_{P(z_2)} w_3\in\overline{(W_1\boxtimes_{P(z_0)} W_2)\boxtimes_{P(z_2)} W_3)}
\end{equation*}
which is defined by
\begin{equation*}
\langle w', (w_1\boxtimes_{P(z_0)} w_2)\boxtimes_{P(z_2)} w_3\rangle=\sum_{n\in\RR} \langle w', \pi_n(w_1\boxtimes_{P(z_0)} w_2)\boxtimes_{P(z_2)} w_3\rangle
\end{equation*}
for any $w'\in ((W_1\boxtimes_{P(z_0)} W_2)\boxtimes_{P(z_2)} W_3)'$. Then under suitable conditions, \cite[Theorem 10.3]{HLZ6} shows that for $z_1, z_2\in\CC^\times$ which satisfy
\begin{equation*}
\vert z_1\vert>\vert z_2\vert>\vert z_1-z_2\vert>0,
\end{equation*}
there is a natural isomorphism
\begin{equation*}
\cA_{z_1,z_2}: W_1\boxtimes_{P(z_1)} (W_2\boxtimes_{P(z_2)} W_3)\rightarrow (W_1\boxtimes_{P(z_1-z_2)} W_2)\boxtimes_{P(z_2)} W_3
\end{equation*}
characterized by the condition
\begin{equation}\label{zassocchar}
\overline{\cA_{z_1, z_2}}(w_1\boxtimes_{P(z_1)}(w_2\boxtimes_{P(z_2)} w_3))=(w_1\boxtimes_{P(z_1-z_2)} w_2)\boxtimes_{P(z_2)} w_3
\end{equation}
for any $w_1\in W_1$, $w_2\in W_2$, $w_3\in W_3$. This isomorphism is even because the tensor product intertwining maps $\boxtimes_{P(z)}$ for $z\in\CC^\times$ are even.

To obtain the natural associativity isomorphism $\cA_{W_1,W_2,W_3}$ in the braided monoidal supercategory $\mathcal{C}$, 
choose $r_1,r_2\in\RR$ which satisfy $r_1>r_2>r_1-r_2>0$ and set
\begin{equation*}
\cA_{W_1,W_2,W_3}=T_{r_2\to 1}\circ(T_{r_1-r_2\to 1}\boxtimes_{P(r_2)} 1_{W_3})\circ\cA_{r_1,r_2}\circ(1_{W_1}\boxtimes_{P(r_1)} T_{1\to r_2})\circ T_{1\to r_1}.
\end{equation*}
Note that $\cA_{W_1,W_2,W_3}$ is even because $\cA_{z_1,z_2}$ and the parallel transport isomorphisms are even. It is also independent of the choice of $r_1, r_2$:
\begin{propo}\label{associndofr}
	The associativity isomorphism $\cA_{W_1,W_2,W_3}$ is independent of the choice of $r_1,r_2\in\RR$ satisfying $r_1>r_2>r_1-r_2>0$.
\end{propo}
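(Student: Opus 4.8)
The plan is to reduce the statement to a computation on generic elements of algebraic completions, and then to track how each ingredient depends on $r_1,r_2$. First I would observe that a morphism out of an iterated $P(1)$-tensor product $W_1\boxtimes(W_2\boxtimes W_3)$ is determined by its natural extension to the completion applied to the elements $w_1\boxtimes(w_2\boxtimes w_3)$, $w_i\in W_i$: the components $\pi_h(w_1\boxtimes(w_2\boxtimes w_3))$ span $W_1\boxtimes(W_2\boxtimes W_3)$ (a standard consequence of the construction of $P(1)$-tensor products, the convergence assumptions, and the universal property, exactly as in the non-iterated case), and any $\cC$-morphism commutes with $\pi_h$ up to the target grading. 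Hence it suffices to prove that $\overline{\cA_{W_1,W_2,W_3}}(w_1\boxtimes(w_2\boxtimes w_3))$ is independent of the choice of $r_1,r_2\in\RR$ with $r_1>r_2>r_1-r_2>0$.

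To compute this, I would apply the five maps in the definition of $\cA_{W_1,W_2,W_3}$ one at a time, using the characterization \eqref{zonetoztwochar} of the parallel transport isomorphisms $T_{z_1\to z_2}$, the formula \eqref{homtensprod} for tensor products of morphisms, the characterization \eqref{zassocchar} of $\cA_{z_1,z_2}$, and the $L(0)$-conjugation property of intertwining operators (already exploited in the proof of Proposition \ref{compofgamma}). Since every parallel transport occurring in the definition uses a path in $\CC^\times$ avoiding the positive real axis and the points $1,r_1,r_2,r_1-r_2$ are all positive reals, every branch of logarithm that appears is the principal one; consequently the only conjugation operators that arise are of the form $e^{(\log r)L(0)}=r^{L(0)}$ with $r>0$, and no genuine monodromy operator $e^{2\pi i p L(0)}$, $p\neq 0$, ever intervenes. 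Organizing the resulting expression, the operators $r_1^{\pm L(0)}$ and $r_2^{\pm L(0)}$ introduced by the transports $T_{1\to r_1}$ and $1_{W_1}\boxtimes_{P(r_1)}T_{1\to r_2}$ are cancelled by those introduced by $T_{r_2\to 1}$ and $T_{r_1-r_2\to 1}\boxtimes_{P(r_2)}1_{W_3}$, once $\cA_{r_1,r_2}$ has been passed through by means of \eqref{zassocchar} and the fact that $\cA_{z_1,z_2}$ (being a $V$-module map) commutes with $L(0)$-conjugation; what remains is an expression in $w_1,w_2,w_3$ alone.

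An alternative, perhaps cleaner, route for two choices $(r_1,r_2)$ and $(r_1',r_2')$ is to form $\cA_{W_1,W_2,W_3}^{(r_1',r_2')}\circ\bigl(\cA_{W_1,W_2,W_3}^{(r_1,r_2)}\bigr)^{-1}$ and collapse the composite: adjacent parallel transports combine by Proposition \ref{compofgamma} up to homotopy of paths in $\CC^\times$, the naturality of parallel transport moves the outer transports past the morphisms of the form $1_{W_1}\boxtimes(\cdot)$, and \eqref{zassocchar} together with $L(0)$-conjugation shows that $\cA_{r_1',r_2'}$ and $\cA_{r_1,r_2}$ differ only by parallel transports; the whole composite then reduces to parallel transports along loops in $\CC^\times$ that do not encircle the origin, and such loops are null-homotopic. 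The main obstacle in either route will be the bookkeeping of logarithm branches and $L(0)$-conjugation factors: one must check carefully that only the principal branch ever occurs, equivalently that the straight-line path from $(r_1,r_2)$ to $(r_1',r_2')$ stays inside the connected region $\{r_1>r_2>r_1-r_2>0\}$ — which is exactly the phenomenon recorded in Remark \ref{extassocreal} — so that the naturality statements apply and no nontrivial monodromy operator survives.
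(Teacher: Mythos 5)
Your first route runs into an immediate obstruction: the element $w_1\boxtimes(w_2\boxtimes w_3)$ in $\overline{W_1\boxtimes(W_2\boxtimes W_3)}$ does not exist when the iterated tensor product is taken at $P(1)$. The element $w_1\boxtimes_{P(z_1)}(w_2\boxtimes_{P(z_2)} w_3)$ is only defined (the double series only converges) for $\vert z_1\vert>\vert z_2\vert>0$; at $z_1=z_2=1$ there is nothing to track. Its ``components $\pi_h(w_1\boxtimes(w_2\boxtimes w_3))$'' therefore do not exist either, so the reduction in your first paragraph is not available. You could instead work with $w_1\boxtimes\pi_n(w_2\boxtimes w_3)$ for each $n$, but then the characterization \eqref{zassocchar} of $\cA_{r_1,r_2}$ --- which only describes its action on the summed element $w_1\boxtimes_{P(r_1)}(w_2\boxtimes_{P(r_2)}w_3)$ --- is not directly applicable. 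This is exactly why the paper shifts to an equality of morphisms out of $W_1\boxtimes_{P(r_1')}(W_2\boxtimes_{P(r_2')}W_3)$, where the triple-product elements do converge.

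The deeper issue affects both routes: you present the independence of $\cA_{W_1,W_2,W_3}$ from $(r_1,r_2)$ as an algebraic cancellation of $r_i^{\pm L(0)}$ factors, or (in the alternative route) as a collapse to null-homotopic loop transports. Neither is what is happening. The isomorphisms $\cA_{r_1,r_2}$ and $\cA_{r_1',r_2'}$ are different morphisms between different objects, and the relation between them is not an $L(0)$-conjugation; it is exactly what must be proved. The paper reduces to the identity
\begin{equation*}
\cA_{r_1,r_2}\circ(1_{W_1}\boxtimes_{P(r_1)} T_{r_2'\to r_2})\circ T_{r_1'\to r_1}=T_{r_2'\to r_2}\circ(T_{r_1'-r_2'\to r_1-r_2}\boxtimes_{P(r_2')} 1_{W_3})\circ\cA_{r_1',r_2'},
\end{equation*}
evaluates both sides on the (now convergent) element $w_1\boxtimes_{P(r_1')}(w_2\boxtimes_{P(r_2')}w_3)$, and obtains a product of intertwining operators on one side and an iterate on the other. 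The decisive step is that these agree for all $(r_1',r_2')$ in the region because they agree at the one point $(r_1',r_2')=(r_1,r_2)$ (which is literally the defining relation \eqref{zassocchar}) and are restrictions of a single multivalued analytic function --- this is Proposition \ref{extassoc} and Remark \ref{extassocreal}, whose proof is a genuine analytic-continuation argument powered by the $L(-1)$-derivative property. You do name Remark \ref{extassocreal}, which is the right lemma, but you describe it as merely verifying that paths stay in the domain and principal branches suffice ``so that the naturality statements apply.'' In fact no amount of naturality or transport bookkeeping yields the product-equals-iterate identity at $(r_1',r_2')\neq(r_1,r_2)$; Remark \ref{extassocreal} is not a footnote to the argument but its core, and your write-up would need to make that explicit for the proof to go through.
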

\begin{proof}
	We need to show that for any $r_1',r_2'\in\RR$ satisfying $r_1'>r_2'>r_1'-r_2'>0$, 
	\begin{equation*}
	\cA_{W_1,W_2,W_3}=T_{r_2'\to 1}\circ(T_{r_1'-r_2'\to 1}\boxtimes_{P(r_2')} 1_{W_3})\circ\cA_{r_1',r_2'}\circ(1_{W_1}\boxtimes_{P(r_1')} T_{1\to r_2'})\circ T_{1\to r_1'},
	\end{equation*}
	or equivalently,
	\begin{align*}
	\cA_{r_1,r_2}\circ & (1_{W_1}\boxtimes_{P(r_1)} T_{1\to r_2})\circ T_{1\to r_1}\circ T_{1\to r_1'}^{-1}\circ (1_{W_1}\boxtimes_{P(r_1')} T_{1\to r_2'}^{-1})\nonumber\\
	& =(T_{r_1-r_2\to 1}^{-1}\boxtimes_{P(r_2)} 1_{W_3})\circ T_{r_2\to 1}^{-1}\circ T_{r_2'\to 1}\circ(T_{r_1'-r_2'\to 1}\boxtimes_{P(r_2')} 1_{W_3})\circ\cA_{r_1',r_2'}.
	\end{align*}
	By Proposition \ref{compofgamma}, the fact that $T_{z_1\to z_2}^{-1}=T_{z_2\to z_1}$ for $z_1, z_2\in\CC^\times$, and the naturality of the parallel transport isomorphisms,
	\begin{equation*}
	(1_{W_1}\boxtimes_{P(r_1)} T_{1\to r_2})\circ T_{1\to r_1}\circ T_{1\to r_1'}^{-1}\circ (1_{W_1}\boxtimes_{P(r_1')} T_{1\to r_2'}^{-1})=(1_{W_1}\boxtimes_{P(r_1)} T_{r_2'\to r_2})\circ T_{r_1'\to r_1}
	\end{equation*}
	and
	\begin{equation*}
	(T_{r_1-r_2\to 1}^{-1}\boxtimes_{P(r_2)} 1_{W_3})\circ T_{r_2\to 1}^{-1}\circ T_{r_2'\to 1}\circ(T_{r_1'-r_2'\to 1}\boxtimes_{P(r_2')} 1_{W_3})=T_{r_2'\to r_2}\circ(T_{r_1'-r_2'\to r_1-r_2}\boxtimes_{P(r_2')} 1_{W_3}).
	\end{equation*}
	Thus we are reduced to proving
	\begin{equation*}
	\cA_{r_1,r_2}\circ(1_{W_1}\boxtimes_{P(r_1)} T_{r_2'\to r_2})\circ T_{r_1'\to r_1}=T_{r_2'\to r_2}\circ(T_{r_1'-r_2'\to r_1-r_2}\boxtimes_{P(r_2')} 1_{W_3})\circ\cA_{r_1',r_2'}.
	\end{equation*}
	
	By Corollary 7.17 in \cite{HLZ5}, it is enough to show that
	\begin{align*}
	&\overline{\cA_{r_1,r_2}\circ(1_{W_1}\boxtimes_{P(r_1)} T_{r_2'\to r_2})\circ T_{r_1'\to r_1}}(w_1\boxtimes_{P(r_1')}(w_2\boxtimes_{P(r_2')} w_3))\nonumber\\
	&\hspace{4em}=\overline{T_{r_2'\to r_2}\circ(T_{r_1'-r_2'\to r_1-r_2}\boxtimes_{P(r_2')} 1_{W_3})\circ\cA_{r_1',r_2'}}(w_1\boxtimes_{P(r_1')}(w_2\boxtimes_{P(r_2')} w_3))
	\end{align*}
	for all $w_1\in W_1$, $w_2\in W_2$, and $w_3\in W_3$. In fact, by \eqref{homtensprod}, \eqref{zonetoztwochar}, and \eqref{zassocchar},
	\begin{align}\label{assocwd1}
	&\overline{\cA_{r_1,r_2}\circ(1_{W_1}\boxtimes_{P(r_1)} T_{r_2'\to r_2})\circ T_{r_1'\to r_1}}(w_1\boxtimes_{P(r_1')}(w_2\boxtimes_{P(r_2')} w_3))\nonumber\\
	&\hspace{4em}=(\cA_{r_1,r_2}\circ\mathcal{Y}_{\boxtimes_{P(r_1),0}})(w_1, e^{\mathrm{log}\,r_1'})\mathcal{Y}_{\boxtimes_{P(r_2)},0}(w_2, e^{\mathrm{log}\,r_2'})w_3,
	\end{align}
	while
	\begin{align}\label{assocwd2}
	&\overline{T_{r_2'\to r_2}\circ(T_{r_1'-r_2'\to r_1-r_2}\boxtimes_{P(r_2')} 1_{W_3})\circ\cA_{r_1',r_2'}}(w_1\boxtimes_{P(r_1')}(w_2\boxtimes_{P(r_2')} w_3))\nonumber\\
	&\hspace{4em} =\mathcal{Y}_{\boxtimes_{P(r_2)},0}(\mathcal{Y}_{\boxtimes_{P(r_1-r_2)},0}(w_1, e^{\mathrm{log}(r_1'-r_2')})w_2, e^{\mathrm{log},r_2'})w_3.
	\end{align}
	Now, \eqref{zassocchar} shows that \eqref{assocwd1} equals \eqref{assocwd2} for all $w_1\in W_1$, $w_2\in W_2$, and $w_3\in W_3$ when $(r_1', r_2')=(r_1, r_2)$. Then Remark \ref{extassocreal} shows that \eqref{assocwd1} and \eqref{assocwd2} are equal for general $(r_1', r_2')$.
\end{proof}
\begin{rema}\label{rem:assocparallel}
 By the same argument as in the above proof, using Proposition \ref{extassoc},
	\begin{equation*}
	\cA_{W_1,W_2,W_3}=T_{z_2\to 1}\circ(T_{z_1-z_2\to 1}\boxtimes_{P(z_2)} 1_{W_3})\circ\cA_{z_1,z_2}\circ(1_{W_1}\boxtimes_{P(z_1)} T_{1\to z_2})\circ T_{1\to z_1}
	\end{equation*}
	for $(z_1, z_2)\in S_1$.
\end{rema}

The equality in Remark \ref{rem:assocparallel} need not hold for arbitrary $z_1,z_2\in\CC^\times$ such that $\vert z_1\vert>\vert z_2\vert>\vert z_1-z_2\vert>0$. To see what happens in general, note that $\cA_{z_1,z_2}$ is characterized by the equality
\begin{align*}
 \langle w', (\cA_{z_1,z_2}\circ\cY_{\boxtimes_{P(z_1)},0}) & (w_1, e^{\log z_1})\cY_{\boxtimes_{P(z_2)}, 0}(w_2, e^{\log z_2})w_3\rangle\nonumber\\
 & = \langle w', \cY_{\boxtimes_{P(z_2)}, 0}(\cY_{\boxtimes_{P(z_1-z_2)}, 0}(w_1, e^{\log(z_1-z_2)})w_2, e^{\log z_2})w_3\rangle
\end{align*}
for $w'\in((W_1\boxtimes_{P(z_1-z_2)} W_2)\boxtimes_{P(z_2)} W_3)'$, $w_1\in W_1$, $w_2\in W_2$, and $w_3\in W_3$. This implies an equality of single-valued branches of the multivalued functions given by the product and iterate of intertwining operators in a simply-connected neighborhood of $(z_1,z_2)$. We can analytically continue these single-valued branches along the following path $\Gamma$ which is contained in the region $\vert z_1'\vert>\vert z_2'\vert>\vert z_1'-z_2'\vert>0$: first hold $z_2$ fixed while rotating $z_1$ along the circle of radius $\vert z_1\vert$ until it is collinear with $0$ and $z_2$ (that is, rotate $z_1$ to $\frac{\vert z_1\vert}{\vert z_2\vert} z_2$); then rotate $\frac{\vert z_1\vert}{\vert z_2\vert} z_2$ and $z_2$ together clockwise to $(\vert z_1\vert, \vert z_2\vert)$. As we analytically continue along the first stage of $\Gamma$, the branch of logarithm for $z_1$ and $z_1-z_2$ may change from $\log$ to, say, $l$ and $\widetilde{l}$, respectively. All branches of logarithm remain unchanged during the second stage of $\Gamma$. Thus
\begin{align*}
 \langle w', (\cA_{z_1,z_2}\circ\cY_{\boxtimes_{P(z_1)},0}) & (w_1, e^{l(\vert z_1\vert)})\cY_{\boxtimes_{P(z_2)}, 0}(w_2, e^{\log \vert z_2\vert})w_3\rangle\nonumber\\
 & = \langle w', \cY_{\boxtimes_{P(z_2)}, 0}(\cY_{\boxtimes_{P(z_1-z_2)}, 0}(w_1, e^{\widetilde{l}(\vert z_1\vert-\vert z_2\vert)})w_2, e^{\log \vert z_2\vert})w_3\rangle
\end{align*}
for $w'\in((W_1\boxtimes_{P(z_1-z_2)} W_2)\boxtimes_{P(z_2)} W_3)'$, $w_1\in W_1$, $w_2\in W_2$, and $w_3\in W_3$. Similar to the proof of Proposition \ref{associndofr}, this equality is equivalent to the equality of isomorphisms:
\begin{align*}
 \cA_{z_1,z_2}\circ T_\gamma\circ & (1_{W_1}\boxtimes_{P(z_1)} T_{\vert z_2\vert\to z_2})\circ T_{\vert z_1\vert\to z_1}\nonumber\\
 &= (T_{\widetilde{\gamma}}\boxtimes_{P(z_2)} 1_{W_3})\circ(T_{\vert z_1\vert-\vert z_2\vert\to z_1-z_2}\boxtimes_{P(z_2)} 1_{W_3})\circ T_{\vert z_2\vert\to z_2}\circ\cA_{\vert z_1\vert, \vert z_2\vert},
\end{align*}
where $\gamma$ is a path from $z_1$ to itself such that $l(z_1)$ is the branch of logarithm determined by $\log z_1$ and $\gamma$, and $\widetilde{\gamma}$ is a path from $z_1-z_2$ to itself such that $\widetilde{l}(z_1-z_2)$ is the branch of logarithm determined by $\log(z_1-z_2)$ and $\widetilde{\gamma}$. Combining this with the relation between $\cA_{\vert z_1\vert, \vert z_2\vert}$ and $\cA_{W_1,W_2,W_3}$ in Proposition \ref{associndofr}, we get
\begin{align}\label{Az_1z_2general}
 (T_{\widetilde{\gamma}}^{-1} & \boxtimes_{P(z_2)} 1_{W_3})\circ  \cA_{z_1,z_2}\circ T_\gamma\nonumber\\
 &= (T_{1\to z_1-z_2}\boxtimes_{P(z_2)} 1_{W_3})\circ T_{1\to z_2}\circ\cA_{W_1,W_2,W_3}\circ T_{z_1\to 1}\circ(1_{W_1}\boxtimes_{P(z_1)} T_{z_2\to 1})
\end{align}
for any modules $W_1$, $W_2$, $W_3$ in $\cC$ and $(z_1,z_2)\in(\CC^\times)^2$ such that $\vert z_1\vert>\vert z_2\vert>\vert z_1-z_2\vert>0$. The paths $\gamma$ and $\widetilde{\gamma}$ depend only on $(z_1,z_2)$ and not on $W_1$, $W_2$, $W_3$.

\subsubsection{Existence of vertex tensor category structure}

In the tensor-categorical constructions on $\cC$ above, the vertex tensor category structure consists in the tensor products, unit, and braiding isomorphisms defined for each $z\in\CC^\times$; the associativity isomorphisms $\cA_{z_1,z_2}$ defined for $\vert z_1\vert>\vert z_2\vert>\vert z_1-z_2\vert> 0$; and the parallel transport isomorphisms. We have seen how braided monoidal supercategory structure, using $\boxtimes=\boxtimes_{P(1)}$ as the tensor functor, can be obtained from the vertex tensor category structure using parallel transports. However, the existence of vertex tensor category structure on $\cC$ is in general a hard question because several analytic and algebraic assumptions on $\cC$ are imposed in \cite{HLZ1}-\cite{HLZ8}. Here we review these assumptions and discuss some results showing when these assumptions hold.

So far, we have assumed that $\cC$ is a full subcategory of $V$-modules, all modules in $\cC$ are $\RR$-graded by conformal weights, and the Jordan block sizes of $L(0)$ acting on the conformal weight spaces $W_{[n]}$, $n\in\RR$, are uniformly bounded for each module in $\cC$ (recall Assumption \ref{assum:grading}). We have also assumed convergence for compositions of intertwining maps (recall Assumption \ref{assum:conv}). Based on Assumption 10.1 in \cite{HLZ6} and Assumption 12.1 in \cite{HLZ8}, we impose:
\begin{assum}
 The vertex operator superalgebra $V$ is an object of $\cC$, and $\cC$ is closed under images, contragredients, finite direct sums, $P(z)$-tensor products for at least one (equivalently, all) $z\in\CC^\times$.
\end{assum}

In discussing the associativity isomorphisms in $\cC$, we showed how the convergence condition for intertwining maps implies the existence of triple tensor product elements of the forms $w_1\boxtimes_{P(z_1)}(w_2\boxtimes_{P(z_2)} w_3)$ and $(w_1\boxtimes_{P(z_1-z_2)} w_2)\boxtimes_{P(z_2)} w_3$. However, this is not enough for the existence of associativity isomorphisms relating these triple tensor product elements. For this, sufficient conditions are given in \cite[Theorem 11.4]{HLZ7}. First, we say that a $V$-module $W$ is \textit{lower bounded} if $W_{[n]}=0$ for $n\in\RR$ sufficiently negative.
\begin{assum}\label{assum:convext}
 Every finitely-generated lower bounded $V$-module is an object of $\cC$, and the \textit{convergence and extension property} for either products or iterates of intertwining operators in $\cC$ holds. See \cite{HLZ7} for the technical statement of the convergence and extension property. Esentially, for a product of intertwining operators of the form
 \begin{equation}\label{convextprod}
  \langle w_4', \cY_1(w_1,z_1)\cY_2(w_2, z_2)w_3\rangle,
 \end{equation}
the convergence and extension property holds if \eqref{convextprod} converges absolutely to a multivalued analytic function on $\vert z_1\vert>\vert z_2\vert>0$ and can be analytically continued to a multivalued analytic function on $\vert z_2\vert>\vert z_1-z_2\vert>0$ with at worst logarithmic singularities at $z_1=z_2$ and $z_2=0$. The convergence and extension property for iterates is similar.
\end{assum}
\begin{rema}\label{rem:weaker_assum}
 The condition that every finitely-generated lower bounded $V$-module is an object of $\cC$ is not strictly necessary. In fact, only certain specific finitely-generated lower bounded $V$-modules appearing in the proof of \cite[Theorem 11.4]{HLZ7} must be in $\cC$ (see \cite{H-correctZ}). Since the first version of this paper was written, there has been progress in verifying this weaker condition in the case that $\cC$ is the category of $C_1$-cofinite $V$-modules; see especially Theorems 3.3.4 and 3.3.5 in \cite{CY}.
\end{rema}

Finally, to guarantee that the associativity isomorphisms in $\cC$ satisfy the pentagon axiom, we need to require that products of three intertwining operators in $\cC$ converge to suitable multivalued functions (see Assumption 12.2 in \cite{HLZ8}):
\begin{assum}\label{assum:threeconv}
 For objects $W_1$, $W_2$, $W_3$, $W_4$, $W_5$, $M_1$, and $M_2$ of $\cC$, intertwining operators $\cY_1$, $\cY_2$, and $\cY_3$ of types $\binom{W_5}{W_1\,M_1}$, $\binom{M_1}{W_2\,M_2}$, and $\binom{M_2}{W_3\,W_4}$, respectively, and $z_1,z_2,z_3\in\CC^\times$ satisfying $\vert z_1\vert>\vert z_2\vert>\vert z_3\vert>0$, the double sum
 \begin{equation*}
  \sum_{m,n\in\RR} \left\langle w_5', \cY_1(w_1,z_1)\pi_m\left(\cY_2(w_2,z_2)\pi_n\left(\cY_3(w_3,z_3)w_4\right)\right)\right\rangle
 \end{equation*}
converges absolutely for any $w_1\in W_1$, $w_2\in W_2$, $w_3\in W_3$, $w_4\in W_4$, and $w_5'\in W_5'$ and is a restriction to $\vert z_1\vert>\vert z_2\vert>\vert z_3\vert>0$ of a multivalued analytic function on the region given by $z_i\neq 0$, $z_i\neq z_j$ for $i=1,2,3$ and $j\neq i$. Moreover, near any possible singular point of the form $z_i=0,\infty$ or $z_i=z_j$ for $i=1,2,3$, $j\neq i$, this multivalued function can be expanded as a series having the same form as the expansion around a singular point of a solution to a differential equation with regular singular points.
\end{assum}

\begin{rema}
 In practice, one shows that Assumptions \ref{assum:convext} and \ref{assum:threeconv} hold by showing that products and iterates of intertwining operators are in fact solutions to differential equations with regular singular points (see for instance \cite{H-Virasoro, HL-tensorAffine, H-diff-eqns}).
\end{rema}
If all assumptions listed so far hold, then $\cC$ is a $\CC$-linear monoidal supercategory (see \cite[Theorem 12.15]{HLZ8}). In practice, one would like simple conditions on $V$ that guarantee all these assumptions hold. The strongest result of this type so far is due to Huang \cite{H-cofin} (and also see \cite{H-cofin} for relevant notation):
\begin{theo}[{\cite[Theorem 4.13]{H-cofin}}]\label{thm:VTCexists}
Let $V$ be a \VOA{} satisfying:
\begin{enumerate}
	\item $V$ is $C_1^a$-cofinite, 
	\item Every irreducible $V$-module is $\RR$-graded and $C_1$-cofinite,
	\item There exists a positive integer $N$ that bounds the differences in the lowest
	conformal weights of any two irreducible modules and such that the $N$th Zhu algebra $A_N(V)$ is finite dimensional.	
\end{enumerate}
Then the category $\cC$ of grading-restricted generalized $V$-modules is a vertex tensor category.
\end{theo}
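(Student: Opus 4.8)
The plan is to verify, one hypothesis at a time, the list of conditions on $\cC$ recalled above (\ref{assum:grading}, \ref{assum:conv}, the closure assumptions, \ref{assum:convext}, and \ref{assum:threeconv}), since by the machinery of \cite{HLZ1}--\cite{HLZ8} (in particular Theorem 12.15 of \cite{HLZ8}) these together produce the full vertex tensor category structure on $\cC$. I would begin with the grading and finiteness conditions. From $C_1^a$-cofiniteness of $V$ together with finite-dimensionality of the Zhu algebra $A_N(V)$ one deduces that $V$ has only finitely many inequivalent irreducible grading-restricted generalized modules, and that every finitely-generated lower-bounded $V$-module is grading-restricted with $L(0)$ acting with uniformly bounded Jordan block sizes on conformal weight spaces; combined with hypothesis (2) this yields \ref{assum:grading} and the first part of \ref{assum:convext}. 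Closure of $\cC$ under images, contragredients, and finite direct sums is routine, while closure under $P(z)$-tensor products follows from the construction of \cite{HLZ4}: the $C_1$-cofiniteness of the modules involved forces the candidate tensor product to be a quotient of a finitely-generated lower-bounded module, hence grading-restricted.

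The analytic heart of the argument is to show that matrix coefficients of products and iterates of intertwining operators among modules in $\cC$ satisfy systems of ordinary differential equations with regular singular points. Concretely, for a product $\langle w_4', \cY_1(w_1,z_1)\cY_2(w_2,z_2)w_3\rangle$ I would use the $C_1^a$-cofiniteness of $V$ and the $C_1$-cofiniteness of the modules to produce a finite-dimensional space of functions closed, modulo lower-order contributions, under the operators $\partial/\partial z_i$ arising from the $L(-1)$-derivative property. This is exactly what the various $C_1$-type cofiniteness conditions are engineered to provide: a finite spanning set of ``lowest-weight data'' so that repeated $L(-1)$-differentiation stays inside a finite-dimensional span. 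One then obtains a finite-order ODE system whose only singularities lie at $z_i=0$, $z_1=z_2$, and $z_i=\infty$, and a further estimate bounding the pole orders of the coefficients shows that these singular points are regular. The analogous statements must be established for iterates and, for the pentagon axiom, for products of three intertwining operators, as in the hypothesis of \ref{assum:threeconv}.

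With the differential equations in hand, the classical theory of systems with regular singular points does the rest: solutions converge absolutely on the expected domains (giving \ref{assum:conv}), and any solution extends to a multivalued analytic function on the complement of the singular locus whose local expansions near each singular point take the form dictated by the indicial equation, in particular with at worst logarithmic singularities. This yields the convergence and extension property \ref{assum:convext} and the regular-singular-point expansions near $z_i=0,\infty$ and $z_i=z_j$ needed for \ref{assum:threeconv}. Granted all the assumptions, the $P(z)$-tensor products, parallel transport, unit, braiding, and associativity isomorphisms are constructed exactly as recalled in Section \ref{subsec:VTC}, and the coherence axioms (triangle, pentagon, hexagons, naturality) reduce, as in \cite{H-tensor4} and \cite{HLZ8}, to identities among these multivalued functions that follow from uniqueness of analytic continuation.

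The main obstacle is precisely this derivation of the differential equations and the verification that their singular points are regular rather than irregular. Producing the correct finite-dimensional space—so that one genuinely obtains a finite-order system—requires careful bookkeeping with the $C_1^a$- and $C_1$-cofiniteness filtrations, and controlling the pole orders of the coefficient functions at the singular locus (to conclude regularity) is delicate, all the more so in the three-intertwining-operator case relevant to the pentagon axiom; in practice one shows, as in \cite{HL-tensorAffine}, that these compositions actually solve such equations. Once these analytic inputs are secured, the remainder of the proof is an essentially formal application of the general theory of \cite{HLZ1}--\cite{HLZ8}.
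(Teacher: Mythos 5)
This theorem is cited in the paper from \cite[Theorem~4.13]{H-cofin}; the paper itself does not reprove it, so there is no internal proof to compare against. Your sketch correctly reconstructs the strategy of Huang's cited argument: the theorem is established by verifying, under the listed cofiniteness hypotheses, the package of conditions required by \cite{HLZ1}--\cite{HLZ8} (module gradings and bounded Jordan blocks, closure of $\cC$ under the relevant functors, absolute convergence of products and iterates, the convergence and extension property, and the expansion condition for triple compositions), and the analytic heart of that verification is indeed showing that matrix coefficients of products and iterates of logarithmic intertwining operators satisfy systems of differential equations with regular singular points, a fact extracted from the $C_1^a$- and $C_1$-cofiniteness filtrations. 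You also correctly identify the delicate step (producing a finite-dimensional space closed under $L(-1)$-differentiation and bounding pole orders to establish regularity) as the crux.

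One small caution: the role of the hypothesis on $A_N(V)$ and the bound on lowest-weight differences is not primarily to count irreducibles, but to ensure that finitely generated lower-bounded modules -- in particular candidate $P(z)$-tensor products and the modules arising in the expansion condition -- are actually grading-restricted generalized modules with uniformly bounded $L(0)$-Jordan blocks, so that the HLZ machinery stays inside $\cC$. Your phrasing conflates this a bit with finiteness of irreducibles. Also, ``closure under $P(z)$-tensor products follows from the construction of \cite{HLZ4}'' undersells the work: in Huang's argument one must show that the double dual construction of \cite{HLZ4} lands in $\cC$, and this is where the $A_N(V)$ hypothesis is invoked, not merely $C_1$-cofiniteness of the factors. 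These are points of emphasis rather than gaps; the overall route is the right one.
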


An important consequence of this theorem is the following:
\begin{corol}[{\cite[Proposition 4.1, Theorem 4.13]{H-cofin}}]
 If $V$ is $C_2$-cofinite and has positive energy
($V_n=0$ for $n<0$ and $V_0=\CC\unit$), then the category $\cC$ of grading-restricted generalized $V$-modules is a vertex tensor category.
\end{corol}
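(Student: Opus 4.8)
The plan is to obtain the corollary as a consequence of \cref{thm:VTCexists}: under the hypotheses that $V$ is \ctwo{} and of positive energy (i.e.\ of CFT type, so $V_0=\CC\unit$), I would verify each of the three conditions appearing in the statement of that theorem. This verification is exactly the content of Proposition 4.1 of \cite{H-cofin}, and below I indicate the main points.

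First I would treat the cofiniteness conditions. Since $V$ is \ctwo, the usual PBW-type spanning-set arguments (see \cite{ABD}) show that $V$ is $C_n$-cofinite for every $n\geq 1$, hence in particular $C_1^a$-cofinite; this is condition~(1). The same arguments, applied to modules, show that every irreducible $V$-module is \ctwo, hence $C_1$-cofinite. For the $\RR$-gradedness required in condition~(2): a \ctwo{} vertex operator algebra of CFT type has only finitely many irreducible (grading-restricted generalized) modules, each of them lower bounded with lowest conformal weight a real --- in fact rational --- number, so each is $\RR$-graded; moreover on each such module $L(0)$ acts with Jordan blocks of uniformly bounded size, so that \cref{assum:grading} is also met.

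Next I would verify condition~(3). Because the set of irreducible modules is finite, their finitely many lowest conformal weights differ pairwise by amounts bounded by some positive integer $N$. To see that the $N$th Zhu algebra $A_N(V)$ is finite dimensional, I would use that $A_N(V)$ is a quotient of $V$ and that a \ctwo{} spanning set of $V$ --- an ordered-product spanning set built from a finite generating set --- descends, modulo the defining relations of $A_N(V)$, to a finite spanning set; this is the higher-Zhu-algebra analogue of the well-known fact that $\dim A_0(V)<\infty$ when $V$ is \ctwo. With conditions~(1)--(3) established, \cref{thm:VTCexists} shows that the category $\cC$ of grading-restricted generalized $V$-modules is a vertex tensor category; in particular it satisfies \cref{assum:conv,assum:convext,assum:threeconv} and is a $\CC$-linear monoidal supercategory, which is the assertion.

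The main obstacle is condition~(3): both the finiteness of the set of irreducible modules and the finite dimensionality of $A_N(V)$ are genuine structural facts rather than formal manipulations, and each rests on a careful spanning-set analysis. Both are, however, available in the literature (and are assembled in \cite{H-cofin}), so the substance of the argument is simply to collect these inputs and feed them into \cref{thm:VTCexists}; no additional complex-analytic work is needed here.
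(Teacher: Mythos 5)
Your proposal is correct and takes the same route the paper does: the corollary is obtained by citing \cite[Theorem 4.13]{H-cofin} (stated as \cref{thm:VTCexists}) and \cite[Proposition 4.1]{H-cofin}, which verifies the three hypotheses of that theorem under $C_2$-cofiniteness and positive energy. The only nitpick is your phrasing that $C_1^a$-cofiniteness follows ``in particular'' from $C_n$-cofiniteness for all $n$: $C_1^a$ is a separate notion (not one of the $C_n$'s) and its finiteness is deduced from $C_2$-cofiniteness together with the CFT-type hypothesis, exactly as Huang does in Proposition 4.1, so the conclusion stands but the cited chain of implications is not quite literal.
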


\begin{rema}
As mentioned in Remark \ref{rem:weaker_assum}, there has been recent progress beyond Theorem \ref{thm:VTCexists} in verifying the assumptions of this subsection for the category $\cC_V^1$ of $C_1$-cofinite modules for a vertex operator algebra $V$. Especially, results in \cite{CY} show that $\cC_V^1$ is a vertex tensor category if it is closed under contragredients. Moreover, a noteworthy consequence of our results in the following subsections is that if $V$ is known to have a vertex tensor category $\cC$ of modules and $A\supseteq V$ is a (super)algebra extension of $V$ in $\cC$, then the category of $A$-modules in $\cC$ is also a vertex tensor category. That is, it is not necessary to directly verify the assumptions in this subsection for the category of $A$-modules in $\cC$. This idea was recently applied to singlet vertex operator algebras in \cite{CMY}.
\end{rema}

\subsection{Complex analytic formulation of categorical \texorpdfstring{$\repA$}{Rep A}-intertwining operators}
\label{subsec:complexIntw}

In this subsection, we begin studying (super)algebra extensions of vertex operator algebras. Here, we fix an ordinary $\ZZ$-graded vertex operator algebra $V$, and we assume that $V$ has a full abelian subcategory $\cC$ of modules that has vertex tensor category structure, and thus braided tensor category structure, as described in the previous subsection. Note that in the braided and vertex tensor category structure on $\cC$, all sign factors in the definitions of the previous subsection may be ignored.

Since our goal is to study superalgebra extensions of $V$ in $\cC$, we recall Theorem 3.2 and Remark 3.3 in \cite{HKL}, and their generalization to superalgebra extensions in Theorem 3.13 of \cite{CKL}:
\begin{theo}
 Vertex operator superalgebra extensions $A$ of $V$ in $\cC$ such that $V\subseteq A^\even$ are precisely superalgebras $(A,\mu,\iota_A)$ in the braided tensor category $\cC$ which satisfy:
 \begin{enumerate}
  \item $\iota_A$ is injective;
  \item $A$ is $\frac{1}{2}\ZZ$-graded by conformal weights: $\theta_A^2=1_A$ where $\theta_A=e^{2\pi i L_A(0)}$; and
  \item $\mu$ has no monodromy: $\mu\circ(\theta_A\boxtimes \theta_A)=\theta_A\circ \mu$.
 \end{enumerate}
\end{theo}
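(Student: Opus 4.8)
This theorem is stated as a recollection of \cite[Theorem 3.2, Remark 3.3]{HKL} and its superalgebra extension \cite[Theorem 3.13]{CKL}, so the plan is not to reprove it from scratch but to indicate how the statement follows by combining those references with the auxiliary supercategory formalism developed in Section \ref{sec:TensCats}. The essential observation is that a vertex operator superalgebra $A$ with parity decomposition $A^\even\oplus A^\odd$, together with the requirement $V\subseteq A^\even$, is the same as an algebra object in the braided tensor supercategory $\sC$ associated to $\cC$, having even structure morphisms --- this is precisely Definition \ref{defi:SA}. So the first step is to unpack both sides: on the one hand a superalgebra $(A,\mu,\iota_A)$ in $\cC$ as in Definition \ref{defi:SA} satisfying conditions (1)--(3), on the other hand a vertex operator superalgebra structure $(A,Y_A,\unit,\omega)$ extending that of $V$ in the sense that the vertex operators and the conformal vector are compatible.

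The forward direction (a VOSA extension gives such a superalgebra object) proceeds as in \cite{HKL}, \cite{CKL}: the multiplication $\mu\colon A\boxtimes A\to A$ is the morphism $\cY_\mu$ with $\overline{\mu}(a_1\fus{P(1)}a_2)=Y_A(a_1,e^{\log 1})a_2$, which is well defined as a $V$-module map by the universal property of $\fus{P(1)}$ once one knows $Y_A$ restricted to $V\otimes A$ and to $A\otimes A$ defines appropriate intertwining operators; the unit $\iota_A\colon\sunit\to A$ is injectivity of the inclusion $V\hookrightarrow A^\even$ extended by the vacuum; supercommutativity of $\mu$ encodes skew-symmetry of $Y_A$ with the sign built into the braiding $\sR$ in $\sC$ (Theorem \ref{propo:superCisbraided}); associativity of $\mu$ encodes the complex-analytic associativity/OPE of $Y_A$. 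Condition (2), $\theta_A^2=1_A$, is exactly the statement that $A$ is $\tfrac12\ZZ$-graded by conformal weights, and condition (3), $\mu\circ(\theta_A\boxtimes\theta_A)=\theta_A\circ\mu$, is the ``no monodromy'' statement that $Y_A(a_1,x)a_2$ involves only integral powers of $x$ (since $\theta_A=e^{2\pi iL(0)}$ measures the monodromy of correlation functions), which is forced by the $\tfrac12\ZZ$-grading combined with the fact that $A^\even$ and $A^\odd$ are integrally and half-integrally graded respectively. The key technical input that the three complex-analytic conditions (associativity of $\mu$, supercommutativity of $\mu$, no monodromy) together with injectivity of $\iota_A$ are \emph{sufficient} to reconstruct a VOSA structure is the content of the cited theorems: one uses Proposition \ref{opmapiso} to convert $\mu$ into a formal-variable intertwining operator $Y_A$ of type $\binom{A}{A\,A}$, then verifies the Jacobi identity for $Y_A$ from the associativity and (skew-)commutativity of $\mu$, the $L(-1)$-derivative property from naturality of the tensor structure, and the Virasoro relations from the fact that $\omega$ is inherited from $V$.

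The step I expect to be the genuine obstacle --- and the one handled in \cite{HKL}, \cite{CKL} rather than here --- is passing from the ``commutativity + associativity'' pair of conditions on correlation functions (equality of the analytic continuations of $\langle w', Y_A(a_1,z_1)Y_A(a_2,z_2)w\rangle$ in the three domains \eqref{eqn:VOAv1v2}, \eqref{eqn:VOAv1v2assoc}, \eqref{eqn:VOAv2v1}) to the full formal Jacobi identity of Definition \ref{defi:genlmodule}. This requires the careful complex-analytic arguments about multivalued functions collected in Section \ref{subsec:technical} (Propositions \ref{extassoc}, \ref{extskewassoc} and their ilk), together with the observation that the ``no monodromy'' condition (3) is exactly what kills the branch ambiguities so that the singularities at $z_1=z_2$ become poles rather than branch points, allowing the delta-function expansions that give the Jacobi identity. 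For the present exposition, however, since the theorem is explicitly quoted from \cite{HKL} and \cite{CKL}, it suffices to remark that the statement is the conjunction of \cite[Theorem 3.2, Remark 3.3]{HKL} (the $\ZZ$-graded vertex operator algebra case, giving conditions (1)--(3) with $A^\odd=0$) and \cite[Theorem 3.13]{CKL} (the superalgebra generalization), reinterpreted through the dictionary of Definition \ref{defi:SA} identifying superalgebras in $\cC$ with algebras in $\sC$ having even structure morphisms; the subtlety that the involution coming from the conformal-weight $\tfrac12\ZZ$-grading ($\theta_A$) may differ from the parity involution ($P_A$) is exactly the distinction between the four cases listed at the start of Subsection \ref{subsec:intw}, and condition (3) together with condition (2) precisely controls which case occurs.
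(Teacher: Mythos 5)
The statement you address is explicitly quoted by the paper from \cite[Theorem 3.2, Remark 3.3]{HKL} and \cite[Theorem 3.13]{CKL}; the paper itself gives no proof. You correctly recognize this and your sketch of the argument from the cited sources is reasonable in its broad outlines: the dictionary of Definition \ref{defi:SA} between superalgebras in $\cC$ and algebras in $\sC$ with even structure morphisms, the translation of $\mu$ into an intertwining operator $Y_A$ via Proposition \ref{opmapiso}, and the passage from complex-analytic associativity/commutativity to the formal Jacobi identity along the lines of Section \ref{subsec:technical}.

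One imprecision is worth flagging. You assert that condition (3), the no-monodromy requirement $\mu\circ(\theta_A\boxtimes\theta_A)=\theta_A\circ\mu$, is ``forced by the $\tfrac12\ZZ$-grading combined with the fact that $A^\even$ and $A^\odd$ are integrally and half-integrally graded respectively.'' This assumes the parity decomposition $A=A^\even\oplus A^\odd$ coincides with the conformal-weight decomposition $A=A_+\oplus A_-$, which is exactly the ``correct statistics'' case. The paper is careful to keep these two $\ZZ/2\ZZ$-gradings independent, and indeed devotes the start of Section \ref{subsec:intw} and the discussion of wrong-statistics extensions to the cases where they disagree. You then correctly note that $\theta_A$ and $P_A$ may differ and that this underlies the four cases, but end by saying conditions (2) and (3) ``control which case occurs'' --- they do not. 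Conditions (1)--(3) are the conditions making $A$ a vertex operator superalgebra extension at all; the four cases are distinguished by whether $P_A$ and $\theta_A$ are separately trivial, which is extra data not captured by (1)--(3). (In fact, granting naturality of the twist, the balancing equation $\theta_{A\boxtimes A}=\cM_{A,A}\circ(\theta_A\boxtimes\theta_A)$, and supercommutativity $\mu\circ\sR_{A,A}=\mu$, condition (3) follows and is listed for emphasis rather than as an independent constraint.) Beyond this, your understanding of the statement and its sources is accurate.
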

We now fix a vertex operator superalgebra extension $A$ in $\cC$ such that $V\subseteq A^\even$, and we recall the braided tensor supercategory $\sC$ and $\CC$-linear monoidal supercategory $\repA$ from Section \ref{sec:TensCats}. Since objects of $\cC$ are vector spaces, as a category we can identify $\sC$ simply with $\cC$, except that morphism spaces in $\sC$ are equipped with superspace structure (coming from parity decompositions of objects in $\cC$). With this identification, tensor products of objects, unit isomorphisms, and associativity isomorphisms in $\sC$ agree with those in $\cC$, but tensor products of morphisms and braiding isomorphisms in $\sC$ come with sign factors. Note also that $\sC$ has the vertex tensor category structure of the previous subsection, with all sign factors included. In fact, $\sC$ is the vertex tensor category of $V$-modules in $\cC$ where $V$ is considered as a vertex operator superalgebra with zero odd component.

Our goal is to identify categorical $\repA$-intertwining operators with certain vertex-algebraic intertwining operators among modules in $\cC$. For this purpose, let $(W_1, \mu_{W_1})$, $(W_2, \mu_{W_2})$, and $(W_3, \mu_{W_3})$ be objects of $\repA$. It is easy to see from Definition \ref{intwop} that a $\repA$-intertwining operator of type $\binom{W_3}{W_1\,W_2}$ is a $\cC$-morphism $\eta: W_1\boxtimes W_2\rightarrow W_3$ such that the following pairs of compositions are equal:
\begin{align}
&A\boxtimes (W_1\boxtimes W_2)\xrightarrow{\sA_{A,W_1,W_2}} (A\boxtimes W_1)\boxtimes W_2\xrightarrow{\mu_{W_1}\boxtimes 1_{W_2}} W_1\boxtimes W_2\xrightarrow{\eta} W_3\nonumber\\
&A\boxtimes(W_1\boxtimes W_2)\xrightarrow{1_A\boxtimes \eta} A\boxtimes W_3\xrightarrow{\mu_{W_3}} W_3.
\label{eqn:int2}
\end{align}
and
\begin{align}
&(W_1\boxtimes A)\boxtimes W_2\xrightarrow{\sR_{A,W_1}^{-1}\boxtimes 1_{W_2}} (A\boxtimes W_1)\boxtimes W_2\xrightarrow{\mu_{W_1}\boxtimes 1_{W_2}} W_1\boxtimes W_2\xrightarrow{\eta} W_3 \nonumber\\
&  (W_1\boxtimes A)\boxtimes W_2\xrightarrow{\sA_{W_1,A,W_2}^{-1}} W_1\boxtimes(A\boxtimes W_2)\xrightarrow{1_{W_1}\boxtimes\mu_{W_2}} W_1\boxtimes W_2\xrightarrow{\eta} W_3,
\label{eqn:int1}
\end{align}
The tensor products of morphisms and braiding isomorphism here are in the supercategory $\sC$, while the associativity isomorphisms in $\sC$ may be identified with associativity isomorphisms in $\cC$. We would like to show that \eqref{eqn:int2} and \eqref{eqn:int1} are equivalent to complex analytic associativity and skew-associativity conditions, respectively, for an intertwining operator associated to $\eta$.

Now if $\eta$ is a $\repA$-intertwining operator of type $\binom{W_3}{W_1\,W_2}$, then $I_\eta=\overline{\eta}\circ\boxtimes$ is a $P(1)$-intertwining map among a triple of $V$-modules of type $\binom{W_3}{W_1\,W_2}$, and we define the $V$-intertwining operator $\cY_\eta=\cY_{I_\eta, 0}$; note that
$$\overline{\eta}(w_1\fus{} w_2)=\cY_\eta(w_1,e^{\log 1})w_2.$$
For $z\in\mathbb{C}^\times$, let 
$$\eta_z=\eta\circ T_{z\rightarrow 1}: W_1\boxtimes_{P(z)} W_2\rightarrow W_3.$$
It is clear that $\eta=\eta_1$. Moreover,
\begin{align*}
\overline{\eta_z}(w_1\boxtimes_{P(z)} w_2) & =\overline{\eta\circ T_{z\to 1}}(w_1\fus{P(z)} w_2)\nonumber\\
& =\overline{\eta}(\mathcal{Y}_{\fus{P(1)},0}(w_1, e^{\mathrm{log}\,z})w_2)\nonumber\\
&=\overline{\eta}\left(e^{\log zL(0)}\cY_{\fus{P(1)},0}(e^{-\log zL(0)}w_1,e^{\log 1})e^{-\log zL(0)}w_2\right)\\
&=e^{\log zL(0)}\overline{\eta}\left(e^{-\log zL(0)}w_1 \boxtimes e^{-\log zL(0)}w_2\right)\\
&=e^{\log zL(0)}\cY_\eta(e^{-\log zL(0)}w_1,e^{\log 1})e^{-\log zL(0)}w_2\\
&=\cY_{\eta}(w_1,e^{\log z})w_2
\end{align*}
for any $z\in\CC^\times$, $w_1\in W_1$, and $w_2\in W_2$. This calculation shows that $\eta_z$ is the unique morphism induced by the universal property of $W_1\fus{P(z)} W_2$ and the $P(z)$-intertwining map $\mathcal{Y}_\eta(\cdot, e^{\mathrm{log}\,z})\cdot$.
\begin{rema}\label{rema:YW}
	For the special case where $\eta=\mu_W$ is a $\repA$-intertwining operator of type $\binom{W}{A\,W}$, we set $Y_W=\mathcal{Y}_{\mu_W}$. Then the preceding calculation shows that
	\begin{equation*}
	\overline{\mu_{W; z}}(a\boxtimes_{P(z)} w)=Y_W(a, e^{\mathrm{log}\,z})w
	\end{equation*}
	for $z\in\CC^\times$, $a\in A$, and $w\in W$.
\end{rema}

Our main theorem in this subsection is the following characterization of $\repA$ intertwining operators
in terms of complex variables. In the proof, we will need technical Proposition \ref{braidinganalogue}
presented in Section \ref{subsec:technical}.  The name \textit{skew-associativity} for the second property of $\cY_\eta$ in the theorem comes from \cite{Ro}.
\begin{theo}
	\label{thm:repAtocomplex}
	A parity-homogeneous $\mathcal{C}$-morphism $\eta: W_1\boxtimes W_2\rightarrow W_3$ is a $\repA$-intertwining operator if and only if the following two properties hold:
	\begin{enumerate}
		\item (Associativity) For $a\in A$ homogeneous, $w_1\in W_1$, $w_2\in W_2$,  $w_3'\in W_3'$, and $(z_1,z_2)\in S_1$, 
		\begin{equation}\label{assoc}
		(-1)^{\vert\eta\vert \vert a\vert}\langle w_3', Y_{W_3}(a, e^{\mathrm{log}\,z_1})\mathcal{Y}_\eta(w_1, e^{\mathrm{log}\,z_2})w_2\rangle=\langle w_3', \mathcal{Y}_\eta(Y_{W_1}(a, e^{\mathrm{log}(z_1-z_2)})w_1, e^{\mathrm{log}\,z_2})w_2\rangle.
		\end{equation}
		In particular,
		the multivalued analytic functions 
		\begin{equation*}
		(-1)^{\vert\eta\vert \vert a\vert}\langle w_3', Y_{W_3}(a, z_1)\mathcal{Y}_\eta(w_1, z_2)w_2\rangle
		\end{equation*}
		on the region $\vert z_1\vert>\vert z_2\vert>0$ and
		\begin{equation*}
		\langle w_3', \mathcal{Y}_\eta(Y_{W_1}(a, z_1-z_2)w_1, z_2)w_2\rangle
		\end{equation*}
		on the region $\vert z_2\vert>\vert z_1-z_2\vert>0$ have equal restrictions to their common domain.
		
		\item (Skew-associativity) For $a\in A$ and $w_1\in W_1$ homogeneous, $w_2\in W_2$,  $w_3'\in W_3'$, and $(z_1,z_2)\in S_2$,
		\begin{equation}\label{skewassoc}
		(-1)^{\vert a\vert \vert w_1\vert}\langle w_3', \mathcal{Y}_\eta(w_1, e^{\mathrm{log}\,z_2})Y_{W_2}(a, e^{\mathrm{log}\,z_1})w_2\rangle=\langle w_3', \mathcal{Y}_\eta(Y_{W_1}(a, e^{l_{-1/2}(z_1-z_2)})w_1, e^{\mathrm{log}\,z_2})w_2\rangle.
		\end{equation}
		In particular, the multivalued functions
		\begin{equation*}
		(-1)^{\vert a\vert \vert w_1\vert}\langle w_3', \mathcal{Y}_\eta(w_1, z_2)Y_{W_2}(a, z_1)w_2\rangle
		\end{equation*}
		on the region $\vert z_2\vert>\vert z_1\vert>0$ and
		\begin{equation*}
		\langle w_3', \mathcal{Y}_\eta(Y_{W_1}(a, z_1-z_2)w_1, z_2)w_2\rangle
		\end{equation*}
		on the region $\vert z_2\vert>\vert z_1-z_2\vert>0$ have equal restrictions to their common domain.
	\end{enumerate}
\end{theo}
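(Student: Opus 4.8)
The plan is to unwind the categorical conditions \eqref{eqn:int2} and \eqref{eqn:int1} defining a $\repA$-intertwining operator into the language of $P(z)$-intertwining maps and parallel transports, and then translate these into statements about the multivalued functions attached to $\mathcal{Y}_\eta$. The key observation is that each of the four compositions in \eqref{eqn:int2} and \eqref{eqn:int1}, after conjugating by the appropriate parallel transport isomorphisms and applying to vectors of the form $a \fus{P(z_1)}(w_1 \fus{P(z_2)} w_2)$ (respectively $(w_1 \fus{P(z_1-z_2)} a)\fus{P(z_2)} w_2$), produces a composition of $P(z)$-intertwining maps whose evaluation is exactly one of the multivalued functions in the statement of the theorem. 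Concretely, using Remark \ref{rema:YW}, the morphism $\mu_{W_3}$ evaluated on $a \fus{P(z_1)}(\text{something})$ yields $Y_{W_3}(a, e^{\log z_1})(\cdot)$, while $\eta$ composed with $\mu_{W_1}\fus{} 1_{W_2}$ produces $\mathcal{Y}_\eta(Y_{W_1}(a,z_1-z_2)w_1, z_2)w_2$ after inserting the associativity isomorphism $\cA_{z_1,z_2}$. Thus the categorical equalities become equalities of iterated/product compositions of $P(z)$-intertwining maps, which by Propositions \ref{extassoc} and \ref{extskewassoc} are equivalent to the equalities of multivalued functions on the simply-connected domains $S_1$ and $S_2$.

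\textbf{Main steps.}
First I would fix a parity-homogeneous $\mathcal{C}$-morphism $\eta\colon W_1 \boxtimes W_2 \to W_3$ and form $\mathcal{Y}_\eta$ as in the text. For the forward direction, assuming $\eta$ is a $\repA$-intertwining operator, I would take the equality of the two compositions in \eqref{eqn:int2}, precompose with $\sA_{A,W_1,W_2}^{-1}$, conjugate appropriately by parallel transports (using Proposition \ref{compofgamma} and Remark \ref{rem:assocparallel}), and evaluate on $a \fus{P(r_1)}(w_1 \fus{P(r_2)} w_2)$ for real $r_1 > r_2 > r_1 - r_2 > 0$. The left composition, rewritten via $\cA_{r_1,r_2}$ and \eqref{zassocchar}, yields $\mathcal{Y}_\eta(Y_{W_1}(a, e^{\log(r_1-r_2)})w_1, e^{\log r_2})w_2$; the right composition yields $(-1)^{|\eta||a|}Y_{W_3}(a, e^{\log r_1})\mathcal{Y}_\eta(w_1, e^{\log r_2})w_2$ (the sign coming from the evenness conventions and the parity of $\eta$ through the definition of $\mu_{W_3}\circ(1_A \fus{} \eta)$ in $\sC$). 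Then Proposition \ref{extassoc} (together with Remark \ref{extassocreal} to start from a real point) upgrades this numerical equality to the equality of multivalued functions on $S_1$, giving associativity \eqref{assoc}. Similarly, starting from \eqref{eqn:int1}, precomposing with $(\sR_{A,W_1}\fus{} 1_{W_2})\circ\sA_{A,W_1,W_2}$ or working directly with $(w_1 \fus{} a)\fus{} w_2$, I would identify the left composition with $(-1)^{|a||w_1|}\mathcal{Y}_\eta(w_1, e^{\log z_2})Y_{W_2}(a, e^{\log z_1})w_2$ — here the key input is Proposition \ref{braidinganalogue}, which pins down how the braiding $\sR_{A,W_1}^{-1}$ interacts with $\mu_{W_1}$ to reproduce the skew-symmetry-type branch with $l_{-1/2}$ — and the right composition with the iterate $\mathcal{Y}_\eta(Y_{W_1}(a, e^{l_{-1/2}(z_1-z_2)})w_1, e^{\log z_2})w_2$; Proposition \ref{extskewassoc} then yields \eqref{skewassoc} on $S_2$. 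For the converse direction, I would run the same chain of identifications in reverse: from \eqref{assoc} and \eqref{skewassoc}, evaluating at a single real point and invoking the universal properties of the $P(z)$-tensor products together with Corollary 7.17 of \cite{HLZ5} (equality of morphisms is detected on triple tensor product elements), recover the equalities \eqref{eqn:int2} and \eqref{eqn:int1}, hence that $\eta$ is a $\repA$-intertwining operator.

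\textbf{Expected main obstacle.}
The principal technical difficulty will be the bookkeeping in the skew-associativity half: correctly tracking which branch of logarithm ($\log$ versus $l_{-1/2}$ versus $l_{1/2}$) appears when the braiding isomorphism $\sR_{A,W_1}$ or $\cR^{\pm}_{P(z)}$ is threaded through the composition, and ensuring the sign factor $(-1)^{|a||w_1|}$ is produced consistently by the supercategory tensor product of morphisms and by $\Omega_r$. This is precisely the content that Proposition \ref{braidinganalogue} in Section \ref{subsec:technical} is designed to isolate, so the real work is to set up the evaluation so that that proposition applies cleanly; the domains $S_1$, $S_2$ are chosen so that all relevant branches restrict to $\log$ of positive reals (except $l_{-1/2}$ of $z_1-z_2$ when $z_1-z_2$ is negative real in the $S_2$ case), which is what makes Propositions \ref{extassoc} and \ref{extskewassoc} directly applicable. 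A secondary, more routine obstacle is verifying that the various parallel transport conjugations are set up with paths that avoid the positive real axis so that $T_{z_1 \to z_2}$ is well-defined as in \eqref{zonetoztwochar}; this is handled by the same arguments used in the proof of Proposition \ref{associndofr}.
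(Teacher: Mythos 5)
Your proposal mirrors the paper's proof closely: both start from the two categorical equalities \eqref{eqn:int2}, \eqref{eqn:int1}, conjugate through the parallel transport isomorphisms to the $P(r_1)$- and $P(r_2)$-tensor products at a real point $r_1>r_2>r_1-r_2>0$, identify the two sides with the product and iterate (with Proposition \ref{braidinganalogue} pinning down the $l_{-1/2}$ branch in the skew-associativity case), extend to $S_1$ and $S_2$ via Propositions \ref{extassoc} and \ref{extskewassoc}, and reverse the chain for the converse. You have also correctly located both the source of the sign $(-1)^{|\eta||a|}$ in the supercategory tensor product of morphisms and the main technical obstacle (branch bookkeeping under the braiding), so this is essentially the paper's argument.
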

\begin{proof}
By Proposition \ref{equalmulti} and the observation that the regions $\vert z_1\vert>\vert z_2\vert>\vert z_1-z_2\vert>0$ and $\vert z_2\vert>\vert z_1\vert,\,\vert z_1-z_2\vert>0$ are connected, it is enough to prove that $\eta$ is a $\repA$-intertwining operator if and only if \eqref{assoc} and \eqref{skewassoc} hold. In fact, we will prove that \eqref{eqn:int2} is equivalent to \eqref{assoc} and that \eqref{eqn:int1} is equivalent to \eqref{skewassoc}.

	First we consider the associativity. Fix $r_1,r_2\in \mathbb{R}$ such that $r_1>r_2>r_1-r_2>0$,
	and consider the following diagram:
	\begin{align}\label{assocdiag1}
	\xymatrixcolsep{12pc}
	\xymatrix{
		A\fus{P(r_1)}(W_1\fus{P(r_2)} W_2)  \ar[d]_{1_A\fus{P(r_1)} \eta_{r_2}}
		\ar[r]^(0.53){
			T_{r_1\to 1}\circ (1_A\boxtimes_{P(r_1)} T_{r_2\to 1}) }
		& A\fus{}(W_1\fus{}W_2) \ar[d]^{1_A\boxtimes \eta} \\
		A\fus{P(r_1)}W_3 \ar[r]^(0.52){T_{r_1\to 1}} \ar[rd]_{\mu_{W_3}; r_1}
		& A\fus{}W_3 \ar[d]^{\mu_{W_3}}
		\\
		& W_3
	}
	\end{align}
	The square commutes by definition of $\eta_{r_2}$ and the naturality of the parallel transport isomorphisms, and the triangle commutes by definition of $\mu_{W_3; r_1}$. We also consider the diagram:
	\begin{align}\label{assocdiag2}
	\xymatrixcolsep{12pc}
	\xymatrix{
		A\fus{P(r_1)}(W_1\fus{P(r_2)} W_2) 
		\ar[d]_{\cA_{r_1,r_2}}
		\ar[r]^(0.53){
			T_{r_1\to 1}\circ (1_A\boxtimes_{P(r_1)} T_{r_2\to 1}) }
		& A\fus{} (W_1\fus{}W_2) \ar[d]^{\sA_{A,W_1,W_2}} \\
		(A\fus{P(r_1-r_2)}W_1)\fus{P(r_2)}W_2 \ar[d]_{\mu_{W_1;\,r_1-r_2}\fus{P(r_2)}1_{W_2}}
		\ar[r]^(0.54){T_{r_2\to 1}\circ(T_{r_1-r_2\to 1}\fus{P(r_2)} 1_{W_2})}
		& (A\fus{}W_1)\fus{}W_2 \ar[d]^{\mu_{W_1}\fus{}1_{W_2}} \\
		W_1\fus{P(r_2)}W_2 \ar[r]^(0.52){T_{r_2\to 1}} \ar[rd]_{\eta_{r_2}}
		& W_1\fus{}W_2 \ar[d]^{\eta}
		\\
		& W_3
	}
	\end{align}
	In this diagram, the top square commutes by definition of $\cA_{A,W_1,W_2}$ (or $\sA_{A,W_1,W_2}$)  and the fact that $T_{z_1\to z_2}^{-1}=T_{z_2\to z_1}$ for $z_1,z_2\in\CC^\times$. The middle square commutes
	by definition of $\mu_{W_1; r_1-r_2}$ and naturality of parallel transports, and the  bottom triangle commutes by definition of $\eta_{r_2}$.
	
	Now, \eqref{eqn:int2} translates to the equality of morphisms in the right columns of the 
	two commuting diagrams above. Hence, \eqref{eqn:int2} is equivalent to the
	equality of the left columns.
	The equality of left columns is the equality of morphisms determined by the following mappings:
	\begin{equation*}
	a\fus{P(r_1)}(w_1\fus{P(r_2)}w_2)
	\mapsto (-1)^{\vert\eta\vert\vert a\vert}a\fus{P(r_1)}\cY_\eta(w_1,e^{\log r_2})w_2
	\mapsto (-1)^{\vert\eta\vert\vert a\vert}Y_{W_3}(a,e^{\log r_1})\cY_\eta(w_1,e^{\log r_2})w_2
	\end{equation*}
	and
	\begin{align*}
	a\fus{P(r_1)}(w_1 & \fus{P(r_2)}w_2)  \mapsto (a\fus{P(r_1-r_2)}w_1)\fus{P(r_2)}w_2\nonumber\\
	&\mapsto Y_{W_1}(a,e^{\log(r_1-r_2)}w_1)\fus{P(r_2)}w_2
	\mapsto \cY_\eta(Y_{W_1}(a,e^{\log(r_1-r_2)})w_1,e^{\log r_2})w_2
	\end{align*}
	for all $w_1\in W_1$, $w_2\in W_2$, and parity-homogeneous $a\in A$. (Everything is  absolutely convergent for the $r_1$ and $r_2$ we have chosen.)
	We therefore deduce that \eqref{eqn:int2} implies
	\begin{align}
	(-1)^{\vert\eta\vert\vert a\vert}Y_{W_3}(a,e^{\log r_1})\cY_\eta(w_1,e^{\log r_2})w_2=\cY_\eta(Y_{W_1}(a,e^{\log(r_1-r_2)})w_1,e^{\log r_2})w_2
	\label{eqn:complexint2}
	\end{align}
	as elements of $\overline{W_3}$
	for all $a\in A,w_1\in W_1,w_2\in W_1$. Then by Proposition \ref{extassoc}, \eqref{assoc} holds for all $(z_1,z_2)\in S_1$.
	
	Conversely, \eqref{assoc} for $(z_1,z_2)\in S_1$ implies \eqref{eqn:complexint2} because $(r_1,r_2)$ lies on the boundary of $S_1$. Then \eqref{eqn:complexint2} implies the equality
	of left columns of \eqref{assocdiag1} and \eqref{assocdiag2}, thereby implying the equality of the right columns,
	that is, \eqref{eqn:int2}. This completes the proof that \eqref{eqn:int2} is equivalent to \eqref{assoc}.

	Now we consider the skew-associativity. Fix $r_1, r_2\in\RR$ such that $r_2>r_1>2(r_2-r_1)>0$ (recall Proposition \ref{associndofr}) and consider the diagram:
	\begin{align}\label{skewassocdiag1}
	\xymatrixcolsep{12pc}
	\xymatrix{
		(W_1\fus{P(r_2-r_1)} A)\fus{P(r_1)}W_2 
		\ar[d]_{\cA_{r_2,r_1}^{-1}}
		\ar[r]^(0.54){T_{r_1\to 1}\circ
			(T_{r_2-r_1\to 1}\fus{P(r_1)}1_{W_2}) }
		& (W_1\fus{} A)\fus{}W_2 \ar[d]^{\cA_{W_1,A,W_2}^{-1}} \\
		W_1\fus{P(r_2)}(A\fus{P(r_1)}W_2) \ar[d]_{1_{W_1}\fus{P(r_2)}\mu_{W_2;\,r_1} }
		\ar[r]^(0.53){T_{r_2\rightarrow 1} \circ(1_{W_1}\fus{P(r_2)}T_{r_1\to 1})}
		& W_1\fus{}(A\fus{}W_2)\ar[d]^{1_{W_1}\fus{}\mu_{W_2}} \\
		W_1\fus{P(r_2)}W_2 \ar[r]^(0.55){T_{r_2\rightarrow 1}}\ar[rd]_{\eta_{r_2}}
		& W_1\fus{}W_2\ar[d]^{\eta}
		\\
		& W_3
	}
	\end{align}
	This diagram commutes for the same reasons as does \eqref{assocdiag2}. Moreover, the composition of morphisms in the left column is the morphism uniquely determined by
	\begin{align*}
	(w_1\fus{P(r_2-r_1)}a)\fus{P(r_1)}w_2\mapsto \cY_\eta(w_1,e^{\log r_2})Y_{W_2}(a,e^{\log r_1})w_2
	\end{align*}
	for $w_1\in W_1$, $a\in A$, and $w_2\in W_2$. We also consider the diagram:
	\begin{align}
	\xymatrixcolsep{12pc}
	\xymatrix{
		(W_1\fus{P(r_2-r_1)} A)\fus{P(r_1)}W_2 
		\ar[d]_{(\cR^-_{P(r_2-r_1)}\fus{P(r_2)}1_{W_2}) \circ T_{r_1\to r_2}}
		\ar[r]^(0.54){T_{r_1\to 1}\circ
			(T_{r_2-r_1\to 1}\fus{P(r_1)}1_{W_2}) }
		& (W_1\fus{} A)\fus{}W_2 \ar[d]^{\cR_{A,W_1}^{-1}\boxtimes 1_{W_2}} \\
		(A\fus{P(r_1-r_2)}W_1)\fus{P(r_2)}W_2 \ar[d]_{\mu_{W_1;\,r_1-r_2}\fus{P(r_2)}1_{W_2}}
		\ar[r]^(0.54){T_{r_2\to 1}\circ
			(T_{r_1-r_2\to 1}\fus{P(r_2)}1_{W_2})}
		& (A\fus{}W_1)\fus{}W_2 \ar[d]^{\mu_{W_1}\fus{}1_{W_2}} \\
		W_1\fus{P(r_2)}W_2 \ar[r]^(0.55){T_{r_2\rightarrow 1}} \ar[rd]_{\eta_{r_2}}
		& W_1\fus{}W_2 \ar[d]^{\eta}
		\\
		& W_3
	}
	\label{eqn:commM1}
	\end{align}
	The top square commutes by Proposition \ref{prop:relntoR+R-}, the naturality of the parallel transport isomorphisms, and Proposition \ref{compofgamma}. The middle square commutes by naturality of
	parallel transports and the definition of $\mu_{W_1; r_1-r_2}$, and the triangle commutes by definition of $\eta_{r_2}$. 
	Using Proposition \ref{braidinganalogue} below, the composition of the left arrows is the morphism determined by:
	\begin{align*}
	(w_1 & \fus{P(r_2-r_1)}a)\fus{P(r_1)}w_2
	\mapsto (-1)^{\vert a\vert\vert w_1\vert}\left(\cY_{\fus{P(r_1-r_2)}, 0}(a,e^{l_{-1/2}(r_1-r_2)})w_1\right)\fus{P(r_2)}w_2
	\\
	&=(-1)^{\vert a\vert\vert w_1\vert}\left( e^{(l_{-1/2}-\log)(r_1-r_2) L(0)}\cdot\right.\nonumber\\
	&\hspace{5em}\left.\cdot\left(e^{-(l_{-1/2}-\log)(r_1-r_2) L(0)}a\fus{P(r_1-r_2)}e^{-(l_{-1/2}-\log)(r_1-r_2) L(0)}w_1   \right)\right)\fus{P(r_2)}w_2\\
	&\mapsto (-1)^{\vert a\vert\vert w_1\vert}\left(e^{(l_{-1/2}-\log)(r_1-r_2) L(0)}\cdot\right.\nonumber\\
	&\hspace{5em}\left.\cdot Y_{W_1}(e^{-(l_{-1/2}-\log)(r_1-r_2) L(0)}a,e^{\log(r_1-r_2)} )e^{-(l_{-1/2}-\log)(r_1-r_2) L(0)}w_1\right)\fus{P(r_2)}w_2\\
	&=(-1)^{\vert a\vert\vert w_1\vert}Y_{W_1}(a,e^{l_{-1/2}(r_1-r_2)} )w_1\fus{P(r_2)}w_2\\
	&\mapsto (-1)^{\vert a\vert\vert w_1\vert}\cY_\eta(Y_{W_1}(a,e^{l_{-1/2}(r_1-r_2)} )w_1,e^{\log r_2})w_2
	\end{align*}
	for all $w_2\in W_2$ and parity-homogeneous $w_1\in W_1$, $a\in A$.
	
	Now, the equality of the morphisms in \eqref{eqn:int1} translates to the equality of the right arrows in the two diagrams above,
	which implies the equality of the left arrows.
	Thus \eqref{eqn:int1} implies
	\begin{align}\label{realskewassoc}
	(-1)^{\vert a\vert\vert w_1\vert}\cY_\eta(w_1,e^{\log r_2})Y_{W_2}(a,e^{\log r_1})w_2=
	\cY_\eta(Y_{W_1}(a,e^{l_{-1/2}(r_1-r_2)} )w_1,e^{\log r_2})w_2
	\end{align}
	as elements of $\overline{W_3}$. Then Proposition \ref{extskewassoc} implies that \eqref{skewassoc} holds for all $(z_1,z_2)\in S_2$.
	
	Conversely, if \eqref{skewassoc} holds for all $(z_1,z_2)\in S_2$, then \eqref{realskewassoc} follows because $(r_1, r_2)$ is in the boundary of $S_2$. This means that the compositions of morphisms in the left columns of \eqref{skewassocdiag1} and \eqref{eqn:commM1} are equal, so that the right columns are equal as well. Thus the equality of the morphisms in \eqref{eqn:int1} follows.
\end{proof}

\begin{rema}
	For a parity-homogeneous $\repA$-intertwining operator $\eta$ of type $\binom{W_3}{W_1\,W_2}$, we can combine the associativity and skew-associativity properties of the corresponding intertwining operator $\cY_\eta$ to obtain the following commutativity property:
	For any $w_2\in W_2$, $w_3'\in W_3'$, and parity-homogeneous $a\in A$, $w_1\in W_1$, the multivalued analytic functions
	\begin{equation*}
	(-1)^{\vert\eta\vert\vert a\vert}\langle w_3', Y_{W_3}(a, z_1)\mathcal{Y}_\eta(w_1, z_2)w_2\rangle
	\end{equation*}
	on the region $\vert z_1\vert>\vert z_2\vert>0$ and
	\begin{equation*}
	(-1)^{\vert a\vert\vert w_1\vert}\langle w_3', \mathcal{Y}_\eta(w_1, z_2)Y_{W_2}(a, z_1)w_2\rangle
	\end{equation*}
	on the region $\vert z_2\vert>\vert z_1\vert>0$ are analytic extensions of each other. Specifically, the branch
	\begin{equation*}
	(-1)^{\vert a\vert\vert w_1\vert}\langle w_3', \mathcal{Y}_\eta(w_1, e^{\mathrm{log}\,z_2})Y_{W_2}(a, e^{\mathrm{log}\,z_1})w_2\rangle
	\end{equation*}
	of $(-1)^{\vert a\vert\vert w_1\vert}\langle w_3', \mathcal{Y}_\eta(w_1, z_2)Y_{W_2}(a, z_1)w_2\rangle$ on $S_2$ is determined by the branch
	\begin{equation*}
	(-1)^{\vert\eta\vert\vert a\vert}\langle w_3', Y_{W_3}(a, e^{\mathrm{log}\,z_1})\mathcal{Y}_\eta(w_1, e^{\mathrm{log}\,z_2})w_2\rangle
	\end{equation*}
	of $(-1)^{\vert\eta\vert\vert a\vert}\langle w_3', Y_{W_3}(a, z_1)\mathcal{Y}_\eta(w_1, z_2)w_2\rangle$ on $S_1$ and a certain path $\gamma$ contained in the region $\vert z_2\vert>\vert z_1-z_2\vert>0$. We can choose the path $\gamma$ as follows. Choose $r_1,r_2\in\RR$ such that $r_1>r_2>r_1-r_2>0$ (so that $(r_1,r_2)$ is on the border of $S_1$), and let $\gamma$ be the path from $(r_1,r_2)$ to $(r_1, 2r_1-r_2)$ obtained by holding $r_1$ fixed and moving $r_2$ to $2r_1-r_2$ clockwise along the semicircle in the upper half plane whose center is $r_1$ and radius $r_1-r_2$. Notice that $(r_1, 2r_1-r_2)$ is on the border of $S_2$, and that along this path, $\mathrm{log}(r_1-r_2)$ changes continuously to $l_{-1/2}(r_1-(2r_1-r_2))$.
\end{rema}

\subsection{Vertex tensor category structure on \texorpdfstring{$\repzA$}{Rep0 A}}\label{subsec:VTConRepARep0A}

In this subsection we interpret the monoidal supercategory structure on $\repA$ and the braided monoidal supercategory structure on $\repzA$ using vertex-algebraic intertwining operators. This will allow us to show that the braided monoidal supercategory structure on $\mathrm{Rep}^0\,A$ constructed in \cite{KO} and reviewed in Section \ref{sec:TensCats} agrees with the vertex-algebraic braided monoidal supercategory structure on the category of grading-restricted generalized $A$-modules in $\cC$ constructed in \cite{HLZ8} and reviewed in Section \ref{subsec:VTC}.

\subsubsection{Objects in $\repA$ and $\repzA$}

First we describe objects and morphisms in $\repA$ in terms of intertwining operators:
\begin{propo}\label{propo:YWcomplex}
 An object of $\repA$ amounts to a $V$-module $W$ equipped with an even $V$-intertwining operator $Y_W$ of type $\binom{W}{A\,W}$ that satisfies the following properties:
 \begin{enumerate}
  \item (Associativity) For $a_1, a_2\in A$, $w\in W$, $w'\in W'$, and $(z_1,z_2)\in S_1$,
  \begin{equation*}
		\langle w', Y_{W}(a_1, e^{\mathrm{log}\,z_1})Y_W(a_2, e^{\mathrm{log}\,z_2})w\rangle=\langle w', Y_W(Y(a_1, e^{\mathrm{log}(z_1-z_2)})a_2, e^{\mathrm{log}\,z_2})w\rangle.
		\end{equation*}
In particular,  the multivalued analytic functions 
		\begin{equation*}
		\langle w', Y_{W}(a_1, z_1)Y_W(a_2, z_2)w\rangle
		\end{equation*}
		on the region $\vert z_1\vert>\vert z_2\vert>0$ and
		\begin{equation*}
		\langle w', Y_W(Y(a_1, z_1-z_2)a_2, z_2)w\rangle
		\end{equation*}
		on the region $\vert z_2\vert>\vert z_1-z_2\vert>0$ have equal restrictions to their common domain. 
\item (Unit) $Y_W(\mathbf{1}, x)=1_W$, or equivalently, $Y_W\vert_{V\otimes W}$ is the vertex operator of $V$ acting on $W$.
 \end{enumerate}
Parity-homogeneous morphisms in $\repA$ amount to parity-homogeneous $V$-module homomorphisms $f: W_1\rightarrow W_2$ such that
\begin{equation*}
 f(Y_{W_1}(a,x)w_1)=(-1)^{\vert f\vert\vert a\vert}Y_{W_2}(a,x)f(w_1)
\end{equation*}
for $w_1\in W_1$ and parity-homogeneous $a\in A$.
\end{propo}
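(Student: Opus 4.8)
The plan is to apply Theorem \ref{thm:repAtocomplex} to the particular $\repA$-intertwining operators $\mu_W$ (of type $\binom{W}{A\,W}$) and to translate the resulting associativity condition on $\cY_{\mu_W} = Y_W$ into the form stated here, together with unwinding the definitions of objects and morphisms of $\repA$ in vertex-algebraic terms. First I would recall that, by the discussion preceding Remark \ref{rema:YW}, any morphism $\eta\colon W_1\boxtimes W_2\to W_3$ in $\cC$ comes with an associated $V$-intertwining operator $\cY_\eta$ of type $\binom{W_3}{W_1\,W_2}$ determined by $\overline\eta(w_1\fus{} w_2) = \cY_\eta(w_1, e^{\log 1})w_2$, and this assignment $\eta\mapsto\cY_\eta$ is a linear isomorphism (even, and parity-preserving) between $\hom_\sC(W_1\boxtimes W_2, W_3)$ and the space of $V$-intertwining operators of type $\binom{W_3}{W_1\,W_2}$, using the universal property of $\boxtimes = \boxtimes_{P(1)}$ and Proposition \ref{opmapiso}. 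Thus giving a module structure $\mu_W\colon A\boxtimes W\to W$ on $W$ is equivalent to giving a $V$-intertwining operator $Y_W := \cY_{\mu_W}$ of type $\binom{W}{A\,W}$, with the correspondence being a bijection; the evenness of $\mu_W$ is equivalent to the evenness of $Y_W$.

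Next I would match up the axioms. The unit axiom in Definition \ref{repSAdef}, namely $\mu_W\circ(\iota_A\boxtimes 1_W)\circ\sleft_W^{-1} = 1_W$, should translate (since $\iota_A$ corresponds to the inclusion $V\subseteq A$ and $\sleft_W$ is the left unit isomorphism, which in vertex-algebraic terms encodes $Y_W(\unit,x) = 1_W$ for the $V$-action) to exactly the condition $Y_W(\mathbf 1,x) = 1_W$, equivalently that $Y_W|_{V\otimes W}$ is the given $V$-module vertex operator on $W$. This is essentially a computation with the characterization of the left unit isomorphism from Subsection \ref{subsubsec:unit} together with $\overline{\mu_W}(a\fus{} w) = Y_W(a,e^{\log 1})w$. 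The associativity axiom, $\mu_W\circ(1_A\boxtimes\mu_W) = \mu_W\circ(\mu\boxtimes 1_W)\circ\sA_{A,A,W}$, is precisely the statement that $\mu_W$ is a $\repA$-intertwining operator of type $\binom{W}{A\,W}$ (this is exactly the defining equality $\eta\circ\mu^{(1)} = \mu_{W_3}\circ(1_A\boxtimes\eta)$ specialized to $W_1 = A$, $W_2 = W_3 = W$, $\eta = \mu_W$; the other half, $\eta\circ\mu^{(1)}=\eta\circ\mu^{(2)}$, holds automatically because $V$ acts evenly, as noted just before Remark \ref{rema:YW} and in the proof of Proposition \ref{repAleftunit}). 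Therefore Theorem \ref{thm:repAtocomplex} applies, and its associativity condition \eqref{assoc}, with $W_1 = A$, $W_2 = W_3 = W$, $\eta = \mu_W$, $\cY_\eta = Y_W$, $Y_{W_1} = Y$ (the vertex operator of $A$), and all signs $(-1)^{\vert\eta\vert\vert a\vert}$ trivial since $\mu_W$ is even, becomes exactly the stated associativity property for $Y_W$. I should also observe that the skew-associativity condition of Theorem \ref{thm:repAtocomplex} is, in this case, not an independent requirement: with $W_2 = W_3 = W$ and $W_1 = A$ it follows from associativity because $\mu$ on $A$ is supercommutative and the argument mirrors the computation $\mu_W\circ\mu^{(1)} = \mu_W\circ\mu^{(2)}$ from the proof of Proposition \ref{repAleftunit}.

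Finally I would handle the morphisms. By Definition \ref{repSAdef}, a morphism $f\colon W_1\to W_2$ in $\repA$ is an $\sC$-morphism with $f\circ\mu_{W_1} = \mu_{W_2}\circ(1_A\boxtimes f)$. Recall that an $\sC$-morphism is just a $V$-module homomorphism, which (Definition \ref{defi:genlmodule}) for a parity-homogeneous $f$ means $f(Y_{W_1}(v,x)w) = (-1)^{\vert f\vert\vert v\vert}Y_{W_2}(v,x)f(w)$ for $v\in V$. Writing out $f\circ\mu_{W_1} = \mu_{W_2}\circ(1_A\boxtimes f)$ on algebraic completions using $\overline{\mu_W}(a\fus{} w) = Y_W(a,e^{\log 1})w$ and the sign factor in the tensor product of morphisms in $\sC$ (Subsection \ref{subsec:superalgebra}, recalling $1_A\boxtimes_\sC f = F\circ((1_A\circ P_A^{\vert f\vert})\boxtimes_\cC f)\circ F^{-1}$ and that $P_A$ acts as $1$ on the even part of $A$, which contains the relevant elements after expansion), one gets $f(Y_{W_1}(a,x)w_1) = (-1)^{\vert f\vert\vert a\vert}Y_{W_2}(a,x)f(w_1)$ for parity-homogeneous $a\in A$; conversely this identity for all $a\in A$ forces the $\sC$-morphism equation by surjectivity considerations on $A\boxtimes W_1$. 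The main obstacle, and the only place needing genuine care, is bookkeeping the sign $(-1)^{\vert f\vert\vert a\vert}$ correctly through the definition of $\boxtimes$ on morphisms in $\sC$ and through the pairing $\overline\eta(w_1\fus{} w_2) = \cY_\eta(w_1,e^{\log 1})w_2$; everything else is a direct translation via results already established (Theorem \ref{thm:repAtocomplex}, Proposition \ref{opmapiso}, and the explicit characterization of the unit isomorphism). I would present the proof as: (i) module structures $\leftrightarrow$ intertwining operators $Y_W$; (ii) the unit axiom $\leftrightarrow$ $Y_W(\mathbf 1,x) = 1_W$; (iii) the associativity axiom $\leftrightarrow$ $\mu_W$ is a $\repA$-intertwining operator $\leftrightarrow$ (by Theorem \ref{thm:repAtocomplex}) the complex-analytic associativity of $Y_W$; (iv) morphisms, with the sign factor.
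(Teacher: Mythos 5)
Your overall strategy — translate $\mu_W$ into $Y_W$ via the universal property and \cref{opmapiso}, match the unit axiom with $Y_W(\unit,x)=1_W$ using the characterization of $l_W$, match the associativity axiom via \cref{thm:repAtocomplex}, and unwind the $\repA$-morphism condition with the $\sC$-tensor-product sign — is indeed the approach the paper takes. But there is one point where your argument, as written, has a logical gap and a factual slip.

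The gap is in the direction ``$(W,Y_W)$ satisfying conditions (1)-(2) $\Rightarrow$ $(W,\mu_W)$ is an object of $\repA$.'' You route through the \emph{statement} of \cref{thm:repAtocomplex}: ``associativity axiom of $\repA$ $\iff$ $\mu_W$ is a $\repA$-intertwining operator $\iff$ complex associativity and complex skew-associativity.'' But \cref{propo:YWcomplex} hands you only complex associativity (and the unit), so to use the theorem's statement you must first argue that complex skew-associativity is automatic. Your justification — that it ``mirrors the computation $\mu_W\circ\mu^{(1)}=\mu_W\circ\mu^{(2)}$ from the proof of Proposition \ref{repAleftunit}'' — is circular here: that computation takes the \emph{categorical} associativity $\mu_W\circ(1_A\boxtimes\mu_W)=\mu_W\circ(\mu\boxtimes 1_W)\circ\sA_{A,A,W}$ as an input, which is exactly what you are trying to establish from the complex data. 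The paper avoids this by invoking the \emph{proof} of \cref{thm:repAtocomplex}, not merely its statement: the proof establishes the finer, one-sided equivalence that the single categorical condition \eqref{eqn:int2} is equivalent to complex associativity alone. This gives ``complex associativity $\iff$ categorical associativity'' directly, after which $\mu_W\circ\mu^{(1)}=\mu_W\circ\mu^{(2)}$ does follow (from \cref{repAleftunit}), then so does complex skew-associativity (via \cref{thm:repAtocomplex} — which is exactly the content of the remark after the proposition). You need to cite the finer result.

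A smaller slip: you attribute $\mu_W\circ\mu^{(1)}=\mu_W\circ\mu^{(2)}$ to ``$V$ acting evenly,'' and point to text ``just before Remark \ref{rema:YW}.'' Neither is right. The relevant computation (at the start of the proof of \cref{repAleftunit}) uses the supercommutativity of the multiplication $\mu$ on $A$ together with the categorical associativity of $\mu_W$; evenness of the $V$-action plays no role, and there is no such discussion before \cref{rema:YW}.

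The rest — the translation of the unit axiom through $l_{P(z)}$ and $T_{z\to 1}$, and the morphism computation with the sign $(-1)^{\vert f\vert\vert a\vert}$ coming from the tensor product in $\sC$ — matches the paper's proof closely.
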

\begin{proof}
 Given an object $(W,\mu_W)$ of $\repA$ as defined in Definition \ref{repSAdef}, we recall from Remark \ref{rema:YW} the $V$-intertwining operator $Y_W$ of type $\binom{W}{A\,W}$ characterized by
 \begin{equation}\label{YWchar}
  \overline{\mu_W}(a\boxtimes w)=Y_W(a, e^{\mathrm{log}\,1})w
 \end{equation}
for $a\in A$ and $w\in W$. Conversely, given a $V$-intertwining operator $Y_W$ of type $\binom{W}{A\,W}$, the universal property of $A\boxtimes W$ implies there is a unique $V$-homomorphism $\mu_W: A\boxtimes W\rightarrow W$ such that \eqref{YWchar} holds. 
By \eqref{YWchar}, the intertwining operator $Y_W$ is even if and only if $\mu_W$ is even. So we need to show that the associativity and unit properties for $Y_W$ in the proposition are equivalent to the associativity and unit properties of $\mu_W$ in Definition \ref{repSAdef}.

For the associativity, Theorem \ref{thm:repAtocomplex} and its proof imply that the associativity for $Y_W$ in the statement of the proposition is equivalent to the equality
\begin{equation*}
 \mu_W\circ(1_A\boxtimes \mu_W)=\mu_W\circ(\mu\boxtimes 1_W)\circ\sA_{A,A,W},
\end{equation*}
which is the associativity property for $\mu_W$ in Definition \ref{repSAdef}. 

For the unit property, consider the diagram
\begin{equation*}
 \xymatrixcolsep{4pc}
 \xymatrix{
 W \ar[r]^{l_{P(z)}^{-1}} \ar[rd]_{l_W^{-1}} & V\boxtimes_{P(z)} W \ar[r]^{\iota_A\boxtimes_{P(z)} 1_W} \ar[d]^{T_{z\rightarrow 1}} & A\boxtimes_{P(z)} W \ar[r]^{\mu_{W; z}} \ar[d]^{T_{z\rightarrow 1}} & W\\
  & V\boxtimes W \ar[r]^{\iota_A\boxtimes 1_W} & A\boxtimes W \ar[ru]_{\mu_W} & \\
 }
\end{equation*}
for $z\in\CC^\times$. The square commutes by naturality of the parallel transport isomorphisms, and the right triangle commutes by the definition of $\mu_{W; z}$ from the previous subsection. Recall also from Remark \ref{rema:YW} that
\begin{equation*}
 \overline{\mu_{W; z}}(a\boxtimes_{P(z)} w)=Y_W(a, e^{\mathrm{log}\,z})w
\end{equation*}
for $a\in A$ and $w\in W$. To see that the left triangle in the diagram commutes, we calculate, recalling the definition of $l_{P(z)}$ and temporarily using $\widetilde{Y}_W$ to denote the vertex operator of $V$ acting on $W$:
\begin{align*}
 \overline{l_{P(z)}\circ T_{1\rightarrow z}}(v\boxtimes w) & =\overline{l_{P(z)}}(\cY_{\boxtimes_{P(z)}, 0}(v, e^{\mathrm{log}\,1})w)\nonumber\\
 & =\overline{l_{P(z)}}\left( e^{-\mathrm{log}\,z\,L(0)}\cY_{\boxtimes_{P(z)},0}(e^{\mathrm{log}\,z\,L(0)} v, e^{\mathrm{log}\,z})e^{\mathrm{log}\,z\,L(0)} w\right)\nonumber\\
 & =e^{-\mathrm{log}\,z\,L(0)}\overline{l_{P(z)}}\left(e^{\mathrm{log}\,z\,L(0)} v\boxtimes_{P(z)} e^{\mathrm{log}\,z\,L(0)} w\right)\nonumber\\
 & =e^{-\mathrm{log}\,z\,L(0)}\widetilde{Y}_W(e^{\mathrm{log}\,z\,L(0)}, z)e^{\mathrm{log}\,z\,L(0)}w\nonumber\\
 & =\widetilde{Y}_W(v, 1)w=l_W(v\boxtimes w)
\end{align*}
for any $v\in V$ and $w\in W$, so indeed $l_W=l_{P(z)}\circ T_{1\rightarrow z}$. 

Now, the unit property for $\mu_W$ holds if and only if the bottom (or equivalently, the top) row of the diagram equals the identity on $W$. The top row is given by the mapping
\begin{align*}
 \widetilde{Y}_W(v, z)w\mapsto v\boxtimes_{P(z)} w\mapsto v\boxtimes_{P(z)} w\mapsto Y_W(v, e^{\mathrm{log}\,z})w,
\end{align*}
since $\iota_A$ is simply the inclusion of $V$ into $A$. This is the identity if and only if
\begin{equation*}
 \widetilde{Y}_W(v,z)=Y_W(v, e^{\mathrm{log}\,z})w
\end{equation*}
for any $v\in V$ and $w\in W$, or equivalently by Proposition \ref{opmapiso}, 
\begin{equation*}
 \widetilde{Y}_W(v, x)w=Y_W(v, x)w
\end{equation*}
for $v\in V$ and $w\in W$. Thus the unit property for $\mu_W$ holds if and only if $\widetilde{Y}_W=Y_W\vert_{V\otimes W}$. In particular, taking $v=\mathbf{1}$ yields $Y_W(\mathbf{1}, x)=1_W$ for an object of $\repA$. But in fact $Y_W(\mathbf{1}, x)=1_W$ is also equivalent to the unit property for $\mu_W$ because the composition in the top row in the diagram is also given by
\begin{equation*}
 w\mapsto\mathbf{1}\boxtimes_{P(z)} w\mapsto Y_W(\mathbf{1}, e^{\mathrm{log}\,z})w,
\end{equation*}
which equals the identity on $W$ if and only if $Y_W(\mathbf{1},x)=1_W$.

Now we show that a parity-homogeneous $V$-module homomorphism $f: W_1\rightarrow W_2$ between two modules in $\repA$ is a parity-homogeneous $\repA$-morphism in the sense of Definition \ref{repSAdef} if and only if
\begin{equation*}
 f(Y_{W_1}(a, x)w_1)=(-1)^{\vert f\vert\vert a\vert}Y_{W_2}(a, x)f(w_1)
\end{equation*}
for any $w_1\in W_1$ and parity-homogeneous $a\in A$. The diagrams
\begin{equation*}
 \xymatrixcolsep{4pc}
 \xymatrix{
 A\boxtimes_{P(z)} W_1 \ar[r]^{\mu_{W_1; z}} \ar[d]_{T_{z\rightarrow 1}} & W_1 \ar[r]^{f} & W_2\\
 A\boxtimes W_1 \ar[ur]_{\mu_{W_1}} & &\\
 }
\end{equation*}
and
\begin{equation*}
 \xymatrixcolsep{4pc}
 \xymatrix{
 A\boxtimes_{P(z)} W_1 \ar[r]^{1_A\boxtimes_{P(z)} f} \ar[d]_{T_{z\rightarrow 1}} & A\boxtimes_{P(z)} W_2 \ar[r]^{\mu_{W_2; z}} \ar[d]_{T_{z\rightarrow 1}} & W_2\\
 A\boxtimes W_1 \ar[r]^{1_A\boxtimes f} & A\boxtimes W_2 \ar[ur]_{\mu_{W_2}} & \\
 }
\end{equation*}
commute for any $z\in\CC^\times$ by the naturality of the parallel transport isomorphisms and the definitions of $\mu_{W_1; z}$ and $\mu_{W_2; z}$. Thus using \eqref{homtensprod},
\begin{equation*}
 f\circ\mu_{W_1}=\mu_{W_2}\circ(1_A\boxtimes f)
\end{equation*}
if and only if
\begin{equation*}
 \overline{f}(Y_{W_1}(a, e^{\mathrm{log}\,z})w_1)=(-1)^{\vert f\vert\vert a\vert}Y_{W_2}(a, e^{\mathrm{log}\,z})f(w_1)
\end{equation*}
for $w_1\in W_1$ and parity-homogeneous $a\in A$, or equivalently by Proposition \ref{opmapiso}, if and only if
\begin{equation*}
 f(Y_{W_1}(a, x)w_1)=(-1)^{\vert f\vert\vert a\vert}Y_{W_2}(a, x)f(w_1)
\end{equation*}
for $w_1\in W_1$ and parity-homogeneous $a\in A$.
\end{proof}

\begin{rema}
Suppose we are given a $V$-module $W$ with a $V$-intertwining operator of type
$\binom{W}{A\, W}$ that satisfies associativity and unit property as
in \cref{propo:YWcomplex}. Then, since $(W,\mu_W)$ is an object of $\repA$, $\mu_W$ is a $\repA$-intertwining operator
and  \cref{thm:repAtocomplex} shows that 
$Y_W$ also satisfies skew-associativity:
For any $w\in W$, $w'\in W'$, parity-homogeneous $a_1, a_2\in A$, and $(z_1,z_2)\in S_2$,
\begin{equation*}
(-1)^{\vert a_1\vert\vert a_2\vert}\langle w', Y_W(a_2, e^{\mathrm{log}\,z_2})Y_{W}(a_1, e^{\mathrm{log}\,z_1})w\rangle=\langle w_3', Y_W(Y_{W}(a_1, e^{l_{-1/2}(z_1-z_2)})a_2, e^{\mathrm{log}\,z_2})w\rangle.
\end{equation*}
In particular, the multivalued analytic functions 
\begin{equation*}
(-1)^{\vert a_1\vert\vert a_2\vert}\langle w', Y_W(a_2, z_2)Y_{W}(a_1, z_1)w\rangle
\end{equation*}
on the region $\vert z_2\vert>\vert z_1\vert>0$ and
\begin{equation*}
\langle w', Y_W(Y_{W}(a_1, z_1-z_2)a_2, z_2)w\rangle
\end{equation*}
on the region $\vert z_2\vert>\vert z_1-z_2\vert>0$ have equal restrictions to their common domain.
\end{rema}

The vertex-algebraic characterization of the category $\repzA$ was given in \cite[Theorem 3.4]{HKL} and in  \cite[Theorem 3.14]{CKL} for the superalgebra setting:
\begin{propo}
 An object $(W, Y_W)$ of $\repA$ is an object of $\repzA$ if and only if $(W, Y_W)$ is a grading-restricted generalized $A$-module in the category $\cC$.
\end{propo}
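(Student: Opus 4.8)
The plan is to show that the two conditions---$(W,Y_W)$ defines an object of $\repzA$ and $(W,Y_W)$ is a grading-restricted generalized $A$-module in $\cC$---are equivalent by matching up their defining properties term by term, using the complex-analytic characterization of $\repA$-objects from Proposition \ref{propo:YWcomplex} and the remark following it. By Proposition \ref{propo:YWcomplex}, an object of $\repA$ is a $V$-module $W$ with an even $V$-intertwining operator $Y_W$ of type $\binom{W}{A\,W}$ satisfying the unit property $Y_W(\unit,x)=1_W$ (equivalently $Y_W|_{V\otimes W}$ is the $V$-action) and the complex-analytic associativity property; moreover, as the remark there records, any such object automatically satisfies the complex-analytic skew-associativity of $Y_W$ because $\mu_W$ is itself a $\repA$-intertwining operator and Theorem \ref{thm:repAtocomplex} applies. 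So the given data on either side of the proposition is a $V$-module $W$ equipped with an even operator $Y_W(\cdot,x)\cdot$ of the right type restricting to the $V$-action on $W$.

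First I would translate the local condition $\mu_W\circ\cM_{A,W}=\mu_W$ into a statement about $Y_W$. Recall from \eqref{vrtxmonodromy} that the monodromy isomorphism in $\cC$ acts on tensor-product intertwining operators by $\cM_{W_1,W_2}(\cY_{\fus{P(1)},0}(w_1,x)w_2)=\cY_{\fus{P(1)},0}(w_1,e^{2\pi i}x)w_2$; combining this with the characterization $\overline{\mu_W}(a\fus{} w)=Y_W(a,e^{\log 1})w$ from Remark \ref{rema:YW} (and the identification $\eta_z=\eta\circ T_{z\to 1}$), the equation $\mu_W\circ\cM_{A,W}=\mu_W$ becomes the single-valuedness statement that $\langle w',Y_W(a,z)w\rangle$ is single-valued in $z$, i.e. involves only integer powers of $z$ and no logarithms. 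This is precisely the statement that $Y_W(\cdot,x)\cdot$ restricts to a genuine vertex operator $Y_W\colon A\otimes W\to W[[x,x^{-1}]]$ with $Y_W(a,x)w\in W((x))$ (lower truncation is built into the definition of intertwining operator, Definition \ref{def:intwop}, item (1)). So the local condition is exactly equivalent to the absence of non-integral powers and logarithms in $Y_W$, which is half of what it means for $(W,Y_W)$ to be an $A$-module.

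Next I would verify that the remaining $A$-module axioms of Definition \ref{defi:genlmodule} are all automatically satisfied once we have an $\repA$-object with this single-valuedness. The grading-restriction and Virasoro conditions and the grading-compatibility conditions come from the fact that $W$ is already an object of $\cC$, hence a grading-restricted generalized $V$-module, and the conformal vector of $A$ lies in $V$ so $Y_W(\omega,x)$ is unchanged; evenness of $Y_W$ is part of the $\repA$-object data; the vacuum property $Y_W(\unit,x)=1_W$ is the unit property from Proposition \ref{propo:YWcomplex}; the $L(-1)$-derivative property is part of the definition of intertwining operator (Definition \ref{def:intwop}, item (3)); lower truncation is likewise built in. The one genuine axiom to check is the Jacobi identity for $Y_W$, and here I would use the standard equivalence (recorded in the remark after Definition \ref{defi:genlmodule}) between the Jacobi identity and the combined associativity/commutativity of $Y_W$ as rational functions --- which, for single-valued $Y_W$, is exactly the content of the complex-analytic associativity property (Proposition \ref{propo:YWcomplex}, item (1)) together with the complex-analytic skew-associativity property (the remark after that proposition), once the multivalued functions collapse to rational ones because all branches of logarithm in the statements become trivial. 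Conversely, given an $A$-module $W$ in $\cC$, the module vertex operator $Y_W$ is single-valued and satisfies the Jacobi identity, hence associativity and skew-associativity, so by Proposition \ref{propo:YWcomplex} and Theorem \ref{thm:repAtocomplex} it defines an object of $\repA$; single-valuedness then gives the local condition via the monodromy computation above.

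The main obstacle---though it is more bookkeeping than difficulty---is carefully handling the passage between the formal-variable Jacobi identity and the complex-analytic associativity/commutativity statements when the relevant series are single-valued: one must check that the ``multivalued functions have equal restrictions'' conditions of Proposition \ref{propo:YWcomplex}, interpreted for a single-valued $Y_W$, really do reduce to the equalities of rational functions appearing in the remark after Definition \ref{defi:genlmodule}, and that these in turn are equivalent (in the presence of the other axioms, via the delta-function calculus of \cite{FLM,FHL,LL}) to the Jacobi identity. I expect this to go through exactly as in \cite[Theorem 3.4]{HKL} and \cite[Theorem 3.14]{CKL}, with the only new ingredient being the bookkeeping of parity signs, which is already subsumed in our setup since $Y_W$ is even and the sign conventions of Section \ref{sec:TensCats} and Subsection \ref{subsec:intw} have been arranged to be compatible. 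I would therefore present this proposition essentially as a corollary of Proposition \ref{propo:YWcomplex}, the monodromy computation identifying the local condition with single-valuedness of $Y_W$, and the cited results of \cite{HKL,CKL}.
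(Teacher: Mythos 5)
Your proposal is correct and follows essentially the same route as the paper: identify the local condition with single-valuedness of $Y_W$ via the monodromy formula \eqref{vrtxmonodromy}, observe that the remaining module axioms are inherited from the $\cC$-module and $\repA$-object data, and reduce the Jacobi identity to the complex-analytic associativity together with a form of commutativity, deferring the delta-function bookkeeping to \cite{HKL} and \cite{CKL}. The one cosmetic difference is that you supply the commutativity via the automatic skew-associativity of $Y_W$ (the remark after Proposition \ref{propo:YWcomplex}), whereas the paper phrases the same step via skew-symmetry of $Y$ on $A$ as in \cite[Theorem 3.2]{HKL}; these are logically equivalent in the presence of the other axioms and the single-valuedness, so the two accounts agree in substance.
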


Since this proposition is in \cite{HKL, CKL} we only briefly discuss its proof. From \eqref{vrtxmonodromy}, a $\repA$-module $(W,Y_W)$ is in $\repzA$ if and only if
\begin{equation*}
 Y_W(a,x)w=Y_W(a, e^{2\pi i} x)w
\end{equation*}
for $a\in A$ and $w\in W$. This condition will hold if and only if $Y_W$ involves only integral powers of the formal variable $x$, so that $W$ is in $\repzA$ if and only if $Y_W(a,x)w\in W[[x,x^{-1}]]$ for $a\in A$ and $w\in W$, as is the case with grading-restricted generalized $A$-modules. The grading restriction conditions, vacuum property, and Virasoro algebra properties for a grading-restricted generalized $A$-module follow because $W$ is a grading-restricted generalized $V$-module and from the unit property of an object in $\repA$. Lower truncation and the $L(-1)$-derivative property follow because $Y_W$ is a $V$-intertwining operator. Finally, the Jacobi identity for $Y_W$ is equivalent to the associativity of $Y_W$ and the skew-symmetry of the vertex operator $Y$ on $A$, as in the proof of \cite[Theorem 3.2]{HKL}.

\subsubsection{Intertwining operators and tensor products in $\repA$ and $\repzA$}

Now we  relate the tensor product $\boxtimes_A$ on $\repA$ and $\repzA$ to intertwining operators. Suppose $W_1$, $W_2$, and $W_3$ are three modules in $\repA$. 
\begin{defi}\label{def:Aintwop}
 An \textit{$A$-intertwining operator} of type $\binom{W_3}{W_1\,W_2}$ is any sum of even and odd $V$-intertwining operators which satisfy the associativity and skew-associativity conditions of Theorem \ref{thm:repAtocomplex}
\end{defi}
Equivalently by Theorem \ref{thm:repAtocomplex}, a $V$-intertwining operator $\cY$ of type $\binom{W_3}{W_1\,W_2}$ is an $A$-intertwining operator if and only if the unique $V$-module homomorphism $\eta_\cY: W_1\boxtimes W_2\rightarrow W_3$ induced by $\cY(\cdot,e^{\log 1})\cdot$ and the universal property of $W_1\boxtimes W_2$ is a categorical $\repA$-intertwining operator.

For a pair of modules $W_1$ and $W_2$ in $\repA$, we denote by $\cY_{W_1,W_2}$ the $A$-intertwining operator $\cY_{\eta_{W_1,W_2}}$ induced by the $\repA$-intertwining operator $\eta_{W_1,W_2}: W_1\boxtimes W_2\rightarrow W_1\boxtimes_A W_2$. Then $W_1\boxtimes_A W_2$ satisfies the following universal property:
\begin{propo}\label{tensAunivprop}
 For any module $W_3$ in $\repA$ and $A$-intertwining operator $\cY$ of type $\binom{W_3}{W_1\,W_2}$, there is a unique $\repA$-morphism $f: W_1\boxtimes_A W_2\rightarrow W_3$ such that $f\circ\cY_{W_1,W_2}=\cY$.
\end{propo}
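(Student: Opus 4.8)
The plan is to transfer the universal property of the categorical tensor product $\boxtimes_A$ (Proposition \ref{prop:repAuniv}) along the bijective correspondence between $A$-intertwining operators and categorical $\repA$-intertwining operators established in Theorem \ref{thm:repAtocomplex}. The key observation is that the map $\cY\mapsto\eta_{\cY}$, sending a $V$-intertwining operator $\cY$ of type $\binom{W_3}{W_1\,W_2}$ to the unique $V$-module homomorphism $\eta_{\cY}\colon W_1\boxtimes W_2\to W_3$ with $\overline{\eta_{\cY}}(w_1\boxtimes w_2)=\cY(w_1,e^{\log 1})w_2$, is a bijection from the space of $V$-intertwining operators of this type to $\hom_{\sC}(W_1\boxtimes W_2, W_3)$; this is essentially Proposition \ref{opmapiso} (taking $p=0$, $z=1$) combined with the universal property of $W_1\boxtimes W_2=W_1\boxtimes_{P(1)}W_2$ in $\cC$. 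Moreover, by Theorem \ref{thm:repAtocomplex}, $\cY$ is an $A$-intertwining operator (i.e. satisfies associativity and skew-associativity) if and only if $\eta_{\cY}$ is a categorical $\repA$-intertwining operator. In particular, by construction $\cY_{W_1,W_2}=\cY_{\eta_{W_1,W_2}}$ corresponds to $\eta_{W_1,W_2}$.

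First I would fix an $A$-intertwining operator $\cY$ of type $\binom{W_3}{W_1\,W_2}$ and form the associated categorical $\repA$-intertwining operator $\eta=\eta_{\cY}$. By Proposition \ref{prop:repAuniv}, there is a unique $\repA$-morphism $f=f_{\eta}\colon W_1\boxtimes_A W_2\to W_3$ with $f\circ\eta_{W_1,W_2}=\eta$. I would then show that this identity of $\sC$-morphisms is equivalent to the identity $f\circ\cY_{W_1,W_2}=\cY$ of intertwining operators. This amounts to the following: composing a $V$-module homomorphism $f$ with an intertwining operator on the left corresponds, under $\cY\mapsto\eta_{\cY}$, to composing $f$ with the associated morphism, i.e. $\eta_{f\circ\cY}=\overline{f}\circ\eta_{\cY}$ — indeed, $\overline{f\circ\eta_{\cY}}(w_1\boxtimes w_2)=\overline{f}(\cY(w_1,e^{\log 1})w_2)=(f\circ\cY)(w_1,e^{\log 1})w_2$, and one uses the injectivity half of the bijection above (plus the fact that $f\circ\cY_{W_1,W_2}$ is automatically again an $A$-intertwining operator, which follows from Proposition \ref{compintwopmorph} applied to $\eta_{W_1,W_2}$ and $f$, or directly from the associativity/skew-associativity conditions). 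Hence $f\circ\eta_{W_1,W_2}=\eta=\eta_{\cY}$ is equivalent to $f\circ\cY_{W_1,W_2}=\cY$.

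For uniqueness, I would run the same equivalence backwards: if $f'\colon W_1\boxtimes_A W_2\to W_3$ is any $\repA$-morphism with $f'\circ\cY_{W_1,W_2}=\cY$, then applying $\cY\mapsto\eta_{\cY}$ gives $f'\circ\eta_{W_1,W_2}=\eta$, so by the uniqueness clause of Proposition \ref{prop:repAuniv}, $f'=f$. This completes the argument. I do not anticipate a serious obstacle here: the statement is, as the excerpt itself notes, an immediate translation of Proposition \ref{prop:repAuniv} through Theorem \ref{thm:repAtocomplex} and Proposition \ref{opmapiso}. The only point requiring a small amount of care is bookkeeping the passage between ``$\overline{\eta}(w_1\boxtimes w_2)=\cY(w_1,e^{\log 1})w_2$'' as an equality in $\overline{W_3}$ and the corresponding equality of formal intertwining operators, which is handled uniformly by Proposition \ref{opmapiso} (the $p=0$ case); parity signs play no special role since $f$, $\eta_{W_1,W_2}$, $\cY_{W_1,W_2}$ and the correspondence $\cY\mapsto\eta_{\cY}$ are all compatible with the $\ztwo$-grading.
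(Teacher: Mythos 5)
Your proposal is correct and matches the paper's proof: both transfer the universal property of $\boxtimes_A$ (Proposition~\ref{prop:repAuniv}) across the bijection between $A$-intertwining operators and categorical $\repA$-intertwining operators furnished by Theorem~\ref{thm:repAtocomplex} and Proposition~\ref{opmapiso}. You spell out the uniqueness clause slightly more explicitly than the paper, but the argument is the same.
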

\begin{proof}
 If $\cY$ is an $A$-intertwining operator of type $\binom{W_3}{W_1\,W_2}$, let $\eta_\cY: W_1\boxtimes W_2\rightarrow W_3$ be the unique $\repA$-intertwining operator such that $\eta_\cY\circ\boxtimes=\cY(\cdot, e^{\mathrm{log}\,1})\cdot$. Then by Proposition \ref{prop:repAuniv}, there is a unique $\repA$-morphism $f: W_1\boxtimes_A W_2\rightarrow W_3$ such that $f\circ\eta_{W_1,W_2}=\eta_\cY$. Thus for $w_1\in W_1$ and $w_2\in W_2$,
 \begin{align*}
  \cY(w_1, e^{\mathrm{log}\,1})w_2=\overline{\eta_\cY}(w_1\boxtimes w_2) = \overline{f\circ\eta_{W_1,W_2}}(w_1\boxtimes w_2)=\overline{f}(\cY_{W_1,W_2}(w_1, e^{\mathrm{log}\,1})w_2).
 \end{align*}
By Proposition \ref{opmapiso}, it follows that $\cY(w_1, x)w_2=f(\cY_{W_1,W_2}(w_1, x)w_2)$, as desired.
\end{proof}

\begin{rema}
 For any $z\in\CC^\times$ and modules $W_1$, $W_2$ in $\repA$, we can define $W_1\boxtimes^A_{P(z)} W_2$ to be the object $W_1\boxtimes_A W_2$ of $\repA$ equipped with the $P(z)$-intertwining map $\boxtimes^A_{P(z)}=\cY_{W_1,W_2}(\cdot, e^{\mathrm{log}\,z})\cdot$. Then the same kind of argument as in the preceding proposition shows that this module and $P(z)$-intertwining map satisfy a universal property for an appropriate notion of $P(z)$-intertwining map in $\repA$. We will use this $P(z)$-tensor product in $\repA$ to obtain some elements of vertex tensor category structure on $\repA$ below, although $\repA$ will not have parallel transport or braiding isomorphisms.
\end{rema}

It is clear from the characterization \eqref{repAtensprodmorphdef} of tensor products of morphisms in $\repA$, and the characterization \eqref{homtensprod} of tensor products of morphisms in $\cC$, that for any morphisms $f_1: W_1\rightarrow \widetilde{W}_1$ and $f_2: W_2\rightarrow\widetilde{W}_2$ in $\repA$, with $f_2$ parity-homogeneous, we have
\begin{equation*}
 \overline{f_1\boxtimes_A f_2}(\cY_{W_1,W_2}(w_1, e^{\mathrm{log}\,1})w_2)=(-1)^{\vert f_2\vert\vert w_1\vert}\cY_{\widetilde{W}_1,\widetilde{W}_2}(f_1(w_1), e^{\mathrm{log}\,1})f_2(w_2)
\end{equation*}
for $w_2\in W_2$ and parity-homogeneous $w_1\in W_1$. Equivalently,
\begin{equation*}
 (f_1\boxtimes_A f_2)(\cY_{W_1,W_2}(w_1, x)w_2)=(-1)^{\vert f_2\vert\vert w_1\vert}\cY_{\widetilde{W}_1,\widetilde{W}_2}(f_1(w_1), x)f_2(w_2)
\end{equation*}
for $w_2\in W_2$ and parity-homogeneous $w_1\in W_1$. 
\begin{rema}
For any $z\in\CC^\times$, if we use the notation $f_1\boxtimes_A f_2=f_1\boxtimes^A_{P(z)} f_2$ when we view $f_1\boxtimes_A f_2$ as a morphism between $P(z)$-tensor products, we have
\begin{equation*}
 \overline{f_1\boxtimes^A_{P(z)} f_2}(w_1\boxtimes^A_{P(z)} w_2)=(-1)^{\vert f_2\vert\vert w_1\vert}f_1(w_1)\boxtimes^A_{P(z)} f_2(w_2)
\end{equation*}
for $w_2\in W_2$ and parity-homogeneous $w_1\in W_1$. 
\end{rema}

Now we consider the tensor product on $\repzA$. Since $\repzA$ is the category of grading-restricted generalized $A$-modules in $\cC$, we have two tensor products on $\repzA$: the tensor product $\boxtimes_A$ on $\repA$ and the vertex-algebraic tensor product of $A$-modules from \cite{HLZ3}. Thus to apply all results regarding $\boxtimes_A$ to the vertex-algebraic category of $A$-modules, it is critical to show these tensor products are the same. To do so, we must show that the notion of $A$-intertwining operator defined above among objects of $\repzA$ agrees with the notion of (logarithmic) intertwining operator among grading-restricted generalized modules for $A$ as a vertex operator superalgebra. To prove this, we need the following theorem:
\begin{theo}
	\label{thm:complextoformal}
	Given a vertex operator superalgebra $A$ and grading-restricted generalized $A$-modules $W_1$, $W_2$, and $W_3$,
	consider a parity-homogeneous map
	\begin{align*}
	\cY:W_1\otimes W_2&\longrightarrow W_3[\log x]\{x\}\\
	w_1\otimes w_2&\longmapsto \cY(w_1,x)w_2 = \sum_{n\in\mathbb{C}}\sum_{k\in\mathbb{Z}}(w_1)^\cY_{n;k}w_2\,x^{-n-1}(\log x)^k
	\end{align*}
	that satisfies the following properties:
	\begin{enumerate}
		\item \emph{Lower truncation:} For any $w_1\in W_1$, $w_2\in W_2$ and $n\in\CC$, $(w_1)^\cY_{n+m;k}w_2=0$ for sufficiently large $m\in\mathbb{N}$
		independently of $k$. \label{item:complexLower}
		\item The \emph{$L(-1)$-derivative formula}: $[L(-1),\cY(w_1,x)] = \dfrac{d}{dx}\cY(w_1,x) = \cY(L(-1)w_1,x)$ for $w_1\in W_1$.
		\label{item:complexL0}
		\item The \emph{$L(0)$-bracket formula:} $[L(0),\cY(w_1,x)] = x\dfrac{d}{dx}\cY(w_1,x) + \cY(L(0)w_1,x)$ for $w_1\in W_1$.
		\label{item:complexL-1}
		\item Given $w_3'\in W_3'$, $w_2\in W_2$, and parity-homogeneous $a\in A$, $w_1\in W_1$, the following series define multivalued analytic functions on the indicated
		domains that have equal restrictions to their respective intersections:
		\label{item:complexMain}
		\begin{align}
		(-1)^{\vert\cY\vert\vert a\vert}\langle w_3',Y_{W_3}(a,z_1)\cY(w_1,z_2)w_2 \rangle &\quad\quad |z_1|>|z_2|>0 \label{eqn:prodYI}\\
		\langle w_3',\cY(Y_{W_1}(a,z_1-z_2)w_1,z_2)w_2 \rangle &\quad\quad |z_2|>|z_1-z_2|>0 \label{eqn:assocIY} \\
		(-1)^{\vert a\vert\vert w_1\vert}\langle w_3',\cY(w_1,z_2)Y_{W_2}(a,z_1)w_2 \rangle &\quad\quad |z_2|>|z_1|>0. \label{eqn:prodIY}
		\end{align}
		Moreover, the principal branches (that is, $z_2=e^{\log z_2}$) of the first and second multivalued functions yield
		single-valued functions that are equal on the simply-connected domain $S_1$,
		and
		the principal branches  of the second and third multivalued functions yield
		single-valued functions that are equal on the simply connected domain $S_2$.
		\item There exists a multivalued analytic function $f$ defined on $ \{(z_1,z_2)\in\mathbb{C}^2\,|\,z_1,z_2,z_1-z_2\neq 0 \}$
		that restricts to the three multivalued functions above on their respective domains.
		\label{item:complexMainFunction}
	\end{enumerate}
	Then $\cY$ is a logarithmic intertwining operator of type $\binom{W_3}{W_1\,W_2}$ in the sense of Definition \ref{def:intwop}. In particular, $\cY$ satisfies the Jacobi identity.
\end{theo}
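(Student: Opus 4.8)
The plan is to reduce the complex-analytic hypotheses \eqref{item:complexMain} and \eqref{item:complexMainFunction} to the formal Jacobi identity of Definition \ref{def:intwop}, following the same strategy used to pass from complex-variable commutativity and associativity to the formal Jacobi identity for vertex operators (as in \cite{FHL}, \cite{FLM}, \cite{HLZ2}) and already deployed in \cite{HKL} for the algebra case. The key observation is that \eqref{item:complexMain} together with \eqref{item:complexMainFunction} is precisely a ``duality'' statement: the three series \eqref{eqn:prodYI}, \eqref{eqn:assocIY}, \eqref{eqn:prodIY} are expansions, in the three standard domains, of matrix coefficients of a single rational-like object whose only singularities are at $z_1=0$, $z_2=0$, and $z_1=z_2$. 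The lower-truncation condition \eqref{item:complexLower} plus the uniform bound on powers of $\log x$ (Assumption \ref{assum:grading}, applied via Proposition 3.20 in \cite{HLZ2}) guarantees that $\cY(w_1,x)w_2\in W_3[\log x]\{x\}$ with finitely many log powers, so the formal Jacobi identity makes sense; what must be verified is the coefficient-by-coefficient identity among the three iterated operators.

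First I would use the $L(-1)$-derivative formula \eqref{item:complexL0} and the $L(0)$-bracket formula \eqref{item:complexL-1} to pin down the dependence of $\cY(w_1,x)w_2$ on $x$: these two formulas are exactly what is needed so that the multivalued function $f$ of \eqref{item:complexMainFunction}, when expanded around $z_2=0$ with $w_1$ varying, reproduces the formal series $\cY(w_1,x)w_2$ under the substitution $x^h\mapsto e^{h\log z_2}$, $(\log x)^k\mapsto(\log z_2)^k$ — that is, the complex-variable data genuinely comes from a formal series of the stated shape, and substituting $e^{\log z_2}$ for $x$ and then re-expanding is an invertible operation (this is the content of Proposition \ref{opmapiso}, adapted). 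Concretely I would invoke Proposition \ref{opmapiso} to identify $\cY$ with a $P(z)$-intertwining-map-like object for each relevant $z$, reducing the problem to recognizing that object as coming from an honest intertwining operator. Then the standard argument applies: the equality of the principal branches of \eqref{eqn:prodYI} and \eqref{eqn:assocIY} on $S_1$, together with analytic continuation (Proposition \ref{equalmulti}, Proposition \ref{extassoc}), yields the complex-analytic associativity between $Y_{W_3}$, $\cY$, and $Y_{W_1}$; the equality of \eqref{eqn:assocIY} and \eqref{eqn:prodIY} on $S_2$ (Proposition \ref{extskewassoc}) yields skew-associativity; and existence of the global $f$ with prescribed singularities upgrades these to a genuine commutativity/associativity triple. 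Extracting formal power-series coefficients from the rational-type function $f$ via the $\delta$-function expansion calculus (as in the proof that commutativity plus associativity implies the Jacobi identity, e.g.\ \cite[Section 3.5]{HLZ2} or Section 2 of \cite{FLM}) then produces the Jacobi identity \eqref{intwopjac} for $\cY$. The sign factors $(-1)^{\vert\cY\vert\vert a\vert}$, $(-1)^{\vert a\vert\vert w_1\vert}$ in \eqref{eqn:prodYI} and \eqref{eqn:prodIY} are exactly the ones appearing in \eqref{intwopjac}, so they match up automatically; one only needs to track them consistently, which is routine.

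The main obstacle I anticipate is the passage from ``equality of principal branches on the simply-connected sets $S_1$, $S_2$, plus a global multivalued $f$ on $\{z_1,z_2,z_1-z_2\neq0\}$'' to ``equality of formal series coefficients.'' The subtlety is that $f$ is multivalued and one must argue that its expansions in the three domains, performed with the specified branches of $\log z_1$, $\log z_2$, $\log(z_1-z_2)$, are the ones compatible with the formal variable substitutions built into \eqref{eqn:prodYI}--\eqref{eqn:prodIY}; this is where the careful branch bookkeeping of Subsection \ref{sec:mult} (the sets $S_1$, $S_2$, the branches $l_{-1/2}$, etc.) is essential, and where an error would invalidate the conclusion. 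A secondary technical point is justifying that the singularity structure of $f$ is at worst of regular-singular type in each variable (needed to expand $f$ as a finite sum of products of powers and logarithms) — but this is forced by hypotheses \eqref{item:complexLower}, \eqref{item:complexL0}, \eqref{item:complexL-1}, which constrain $f$ to satisfy systems of first-order ODEs in $z_1$ and $z_2$ with the indicated singular points, so the local expansions have exactly the form $z^h(\log z)^k$ with bounded $k$. Once that is in hand, the $\delta$-function manipulation is mechanical and I would not spell it out in full.
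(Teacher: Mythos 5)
Your overall strategy matches the paper's: fix $z_2$ to a specific complex number, show the one-variable slice of $f$ is rational in $z_1$, extract the Jacobi identity at that $z_2$ by the $\delta$-function calculus, and then recover $\cY$ as a genuine intertwining operator via the $L(0)$-conjugation formula. However, two steps as you describe them do not hold up.

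First, the claim that the lower-truncation, $L(-1)$-derivative and $L(0)$-bracket hypotheses force $f$ to satisfy first-order ODEs with regular singular points, and that this gives local expansions of the form $z^h(\log z)^k$, is incorrect. The $L(-1)$-derivative formula gives $\partial_{z_2}\langle w_3', Y_{W_3}(a,z_1)\cY(w_1,z_2)w_2\rangle=\langle w_3', Y_{W_3}(a,z_1)\cY(L(-1)w_1,z_2)w_2\rangle$, which replaces $w_1$ by $L(-1)w_1$ rather than reproducing the original matrix coefficient; no closed ODE for $f$ with $w_1$ fixed follows from these hypotheses. The paper needs no such ODE: once $z_2$ is fixed, lower truncation of the module vertex operators $Y_{W_3}$, $Y_{W_1}$, $Y_{W_2}$ directly caps the pole orders of $f_{z_2}(z_1)=f(z_1,e^{\log z_2})$ at $z_1=\infty$, $z_1=z_2$, $z_1=0$ respectively. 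One then shows a suitable restriction $\widetilde{f}_{z_2}$ of the multivalued $f_{z_2}$ to $\CC^\times\setminus\{z_2\}$ is single-valued (this uses the global multivalued $f$ of the last hypothesis, plus a homotopy argument deforming loops around $0$ and $z_2$ into the regions where the three series give single-valued branches), and a single-valued function with only pole singularities at $0,z_2,\infty$ is rational. That single-valuedness-and-rationality argument is exactly the branch bookkeeping you correctly identify as the main obstacle, but your proposal does not carry it out.

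Second, invoking Proposition \ref{opmapiso} at the outset to identify $\cY$ with a $P(z)$-intertwining map is circular: that proposition is an isomorphism between spaces of intertwining operators and spaces of $P(z)$-intertwining maps, and at the start of the proof $\cY$ is not yet known to be either. The correct order is the reverse. The rationality argument produces the Jacobi identity at the fixed $z_2$, so $I(w_1\otimes w_2):=\cY(w_1,e^{\log z_2})w_2$ is a $P(z_2)$-intertwining map; only then does one apply the $L(0)$-conjugation formula, a consequence of the $L(0)$-bracket hypothesis, to conclude $\cY(w_1,x)w_2=\cY_{I,0}(w_1,x)w_2$, identifying $\cY$ as a logarithmic intertwining operator. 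The $L(-1)$-derivative and $L(0)$-bracket hypotheses are used at this final step and in verifying the axioms of a $P(z_2)$-intertwining map, not in establishing the analytic structure of $f$.
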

\begin{proof}
	First fix $z_2\in\mathbb{C}^\times$ such that $\mathrm{Re}\,z_2, \mathrm{Im}\,z_2>0$. Then $f_{z_2}(z_1)=f(z_1,e^{\log z_2})$ yields a possibly multivalued
	analytic function of $z_1$ defined on $\{z_1\in\mathbb{C}^\times\,|\, z_1\neq z_2 \}$.
	However, $f_{z_2}(z_1)$ has single-valued restrictions to each of $\{z_1\in\mathbb{C}^\times\,|\,|z_1|>|z_2|\}$,
	$\{z_1\in\mathbb{C}^\times\,|\,|z_2|>|z_1-z_2|>0\}$ and $\{z_1\in\mathbb{C}^\times\,|\,|z_2|>|z_1|\}$,
	and these restrictions are equal on certain simply-connected subsets of the intersections of these domains. Thus these restrictions define a single-valued analytic function on the union of these domains. This implies that $f_{z_2}$ has a single-valued restriction $\widetilde{f}_{z_2}$ to $\CC^\times\setminus\lbrace z_2\rbrace$ as follows:
	
	Fix $z_1'$ such that, say, $(z_1', z_2)\in S_2$, so that $z_1'$ is in the domain of both the iterate \eqref{eqn:assocIY} and the product \eqref{eqn:prodIY}. Then define $\widetilde{f}_{z_2}$ by restricting the values of $f_{z_2}$ at $z_1\in\CC^\times\setminus\lbrace z_2\rbrace$ to be only those obtainable by analytic continuation along paths in $\CC^\times\setminus\lbrace z_2\rbrace$ from $z_1'$ to $z_1$, starting with the value
	$\langle w_3',\cY(Y_{W_1}(a,z_1'-z_2)w_1,e^{\log z_2})w_2 \rangle$ (or equivalently, $(-1)^{\vert a\vert\vert w_1\vert}\langle w_3',\cY(w_1,e^{\log z_2})Y_{W_2}(a,z_1')w_2 \rangle$) of $f_{z_2}(z_1')$.
	
	To show that $\widetilde{f}_{z_2}$ is actually single-valued, we need to show that for any $z_1\in\CC^\times\setminus\lbrace z_2\rbrace$ and any two paths $\gamma_1$ and $\gamma_2$ in $\CC^\times\setminus\lbrace z_2\rbrace$ from $z_1'$ to $z_1$, the values of $f_{z_2}(z_1)$ determined by $\gamma_1$ and $\gamma_2$ are the same; equivalently, we need to show that the value of $f_{z_2}(z_1')$ is unchanged after analytic continuation along the path $\widetilde{\gamma}$ obtained by following $\gamma_1$ and then following the reversal of $\gamma_2$. In fact, $\widetilde{\gamma}$ is homotopic to some path consisting of a sequence of loops around $z_2$ and $0$ that stay in the domains of \eqref{eqn:assocIY} and \eqref{eqn:prodIY}, respectively. Since the union of these domains is contained in the domain of a single-valued branch of $f_{z_2}$, and since the value of $f_{z_2}(z_1')$ that we start with is the value of this single-valued branch, the value of $f_{z_2}(z_1')$ is unchanged after analytic continuation along $\widetilde{\gamma}$. Thus $\gamma_1$ and $\gamma_2$ determine the same value of $f_{z_2}$ at $z_1$, and $\widetilde{f}_{z_2}$ is a single-valued analytic restriction of $f_{z_2}$.

	Now, \eqref{eqn:prodYI} along with the lower truncation property for $Y_{W_3}$ shows that the restriction $\widetilde{f}_{z_2}$ has
	at worst a pole at $z_1=\infty$. Similarly, \eqref{eqn:assocIY} shows that $\widetilde{f}_{z_2}$ has
	at worst a pole at $z_1=z_2$ and \eqref{eqn:prodIY} shows that $\widetilde{f}_{z_2}$ has
	at worst a pole at $z_1=0$. A complex-analytic function with only singularities at $0,z_2,\infty$ that are at worst poles has to be a rational function. Now standard arguments using properties of formal delta-functions (see for instance Proposition 8.10.7 in \cite{FLM}, Propositions 3.4.1 and 4.5.1 in \cite{FHL}, or Propositions 3.4.1 and 4.4.2 in \cite{LL}) show that $I(w_1\otimes w_2):=\cY(w_1,e^{\log z_2})w_2$ satisfies the Jacobi identity and is a $P(z_2)$-intertwining map.
	
	Now we apply the $L(0)$-conjugation formula 
	\begin{equation*}
	 \cY(w_1, x)w_2=y^{-L(0)} \cY(y^{L(0)} w_1, xy)y^{L(0)} w_2,
	\end{equation*}
which follows from the $L(0)$-bracket formula, in the case $y=\frac{e^{\mathrm{log}\,z}}{x}$. This shows that $\cY(w_1,x)w_2=\cY_{I,0}(w_1,x)w_2$, for $w_1\in W_1$ and $w_2\in W_2$. Hence $\cY$ is an intertwining operator of type $\binom{W_3}{W_1\,W_2}$.
\end{proof}

This result allows us to obtain the following fundamental theorem:
\begin{theo}\label{repzAintwopcorrect}
 Suppose $W_1$, $W_2$, and $W_3$ are three modules in $\repzA$. Then $\cY$ is an $A$-intertwining of type $\binom{W_3}{W_1\,W_2}$ in the sense of Definition \ref{def:Aintwop} if and only if $\cY$ is an intertwining operator among $A$-modules in the sense of Definition \ref{def:intwop}.
\end{theo}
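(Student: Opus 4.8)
The plan is to prove both implications by carefully matching the two notions of intertwining operator. The essential tool is the complex-analytic characterization of $\repA$-intertwining operators established in \cref{thm:repAtocomplex}: an (even or odd, hence arbitrary by taking sums) $V$-intertwining operator $\cY$ of type $\binom{W_3}{W_1\,W_2}$ among objects of $\repzA$ is an $A$-intertwining operator in the sense of \cref{def:Aintwop} precisely when it satisfies the associativity \eqref{assoc} and skew-associativity \eqref{skewassoc} conditions with the module maps $Y_{W_i}$. On the other side, an intertwining operator among $A$-modules in the sense of \cref{def:intwop} is, by definition, a map satisfying lower truncation, the Jacobi identity \eqref{intwopjac}, and the $L(-1)$-derivative property. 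So the content of the theorem is that, \emph{given} that $\cY$ is already a $V$-intertwining operator, the $A$-Jacobi identity is equivalent to the pair of conditions \eqref{assoc}, \eqref{skewassoc}.

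For the ``only if'' direction, suppose $\cY$ is an $A$-intertwining operator in the sense of \cref{def:Aintwop}. Since $W_1, W_2, W_3$ lie in $\repzA$, their module vertex operators $Y_{W_i}(a,x)$ involve only integral powers of $x$, so products, iterates, and the relevant compositions with $\cY$ all make sense as formal series and, after specializing via branches of logarithm, as multivalued analytic functions (using the convergence Assumptions \ref{assum:conv}, and the fact, recorded after \cref{thm:repAtocomplex}, that an object of $\repA$ automatically satisfies the skew-associativity for $Y_W$ as well). I would verify that the hypotheses \eqref{item:complexLower}--\eqref{item:complexMainFunction} of \cref{thm:complextoformal} hold: lower truncation and the $L(-1)$-derivative (equivalently $L(-1)$-bracket) formula hold because $\cY$ is a $V$-intertwining operator; the $L(0)$-bracket formula likewise follows from the $V$-intertwining-operator axioms via the standard $L(0)$-conjugation argument (cf.\ Proposition 3.36 in \cite{HLZ2}); and conditions \eqref{item:complexMain}, \eqref{item:complexMainFunction} are exactly the associativity and skew-associativity of \cref{thm:repAtocomplex} together with the observation (already used in the proof of that theorem, via Propositions \ref{extassoc}, \ref{extskewassoc}, and Proposition \ref{equalmulti}) that the product $\langle w_3', Y_{W_3}(a,z_1)\cY(w_1,z_2)w_2\rangle$, the iterate $\langle w_3',\cY(Y_{W_1}(a,z_1-z_2)w_1,z_2)w_2\rangle$, and the other product $\langle w_3',\cY(w_1,z_2)Y_{W_2}(a,z_1)w_2\rangle$ are all restrictions of one common multivalued analytic function $f$ on $\{z_1,z_2,z_1-z_2\neq0\}$. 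Then \cref{thm:complextoformal} yields that $\cY$ satisfies the $A$-Jacobi identity, so $\cY$ is an intertwining operator among $A$-modules.

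For the ``if'' direction, suppose $\cY$ is an intertwining operator among $A$-modules in the sense of \cref{def:intwop}. Restricting the $A$-Jacobi identity \eqref{intwopjac} to $v\in V\subseteq A^\even$ shows that $\cY$ is in particular a $V$-intertwining operator. From the full $A$-Jacobi identity one extracts, by the standard delta-function manipulations (taking $\mathrm{Res}_{x_1}$ after multiplying by suitable delta functions, exactly as in the commutativity/associativity reformulation recalled in \cref{subsec:intw} for module vertex operators, e.g.\ via Proposition 8.10.7 of \cite{FLM} or Propositions 3.4.1, 4.4.2 of \cite{LL}), the complex-analytic commutativity and associativity statements: the three series \eqref{eqn:prodYI}, \eqref{eqn:assocIY}, \eqref{eqn:prodIY} converge to restrictions of a common multivalued analytic function. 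This gives precisely the associativity \eqref{assoc} on $S_1$ and skew-associativity \eqref{skewassoc} on $S_2$ of \cref{thm:repAtocomplex}. Hence by \cref{thm:repAtocomplex} the induced $V$-module map $\eta_\cY: W_1\boxtimes W_2\to W_3$ is a categorical $\repA$-intertwining operator, i.e.\ $\cY$ is an $A$-intertwining operator in the sense of \cref{def:Aintwop}.

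The main obstacle I anticipate is the careful bookkeeping of branches of logarithm and the verification that the various multivalued functions arising from products and iterates genuinely fit together as restrictions of a single multivalued analytic function on $\{z_1,z_2,z_1-z_2\neq 0\}$ --- this is the hypothesis \eqref{item:complexMainFunction} of \cref{thm:complextoformal}, and it is the place where one must invoke the convergence-and-extension machinery of \cite{HLZ5}--\cite{HLZ7} (Assumptions \ref{assum:conv}, \ref{assum:convext}) rather than a purely formal argument. The sign factors $(-1)^{\vert\cY\vert\vert a\vert}$, $(-1)^{\vert a\vert\vert w_1\vert}$, $(-1)^{\vert a_1\vert\vert a_2\vert}$ must also be tracked consistently through the superalgebra version of the Jacobi identity, but these are routine once one adopts the conventions of \cref{subsec:intw}. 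Since both \cref{thm:repAtocomplex} and \cref{thm:complextoformal} are already available, the proof is essentially an assembly of these two results plus the classical passage between the formal Jacobi identity and complex-analytic duality, and should be short.
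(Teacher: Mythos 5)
Your overall strategy matches the paper's: for the direction (Definition~\ref{def:intwop} $\Rightarrow$ Definition~\ref{def:Aintwop}) you extract the commutativity/associativity form from the $A$-Jacobi identity by standard delta-function manipulations (the paper cites Propositions 4.2.4 and 4.3.4 of \cite{LL}); for (Definition~\ref{def:Aintwop} $\Rightarrow$ Definition~\ref{def:intwop}) you apply Theorem~\ref{thm:complextoformal} after checking its five hypotheses. So the architecture is identical.

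The one place you are imprecise, and where your attribution is wrong, is condition~\eqref{item:complexMainFunction}. You assert this ``is ... the observation (already used in the proof of that theorem, via Propositions~\ref{extassoc}, \ref{extskewassoc}, and Proposition~\ref{equalmulti})'' that the three series are restrictions of one multivalued analytic $f$ on $\{z_1,z_2,z_1-z_2\neq 0\}$. That is not what those propositions give. Propositions~\ref{extassoc} and \ref{extskewassoc} establish only that the product/iterate/product agree pairwise on the \emph{overlaps} of their convergence domains ($|z_1|>|z_2|>|z_1-z_2|>0$ and $|z_2|>|z_1|,|z_1-z_2|>0$). The union of the three convergence domains does not cover $\{z_1,z_2,z_1-z_2\neq 0\}$: for instance $(z_1,z_2)=(1,-1)$ has $|z_1|=|z_2|$ and $|z_1-z_2|=2>|z_2|$, so it lies in none of them. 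Passing from pairwise agreement on overlaps to the existence of a single multivalued $f$ on the whole punctured domain is exactly the global analytic-extension step; the paper obtains it from Lemma~4.1 of \cite{H-genlratl} (together with the convergence and extension property, via e.g.\ Corollary~9.30 of \cite{HLZ6}), not from the propositions you name, and nothing in the proof of Theorem~\ref{thm:repAtocomplex} establishes it. You do correctly flag this as ``the main obstacle'' at the end, but the earlier sentence should be replaced by the actual citation; as written it claims as already-established something that is not.

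Everything else — lower truncation and the $L(\pm1)$, $L(0)$ formulas from the $V$-intertwining-operator axioms, condition~\eqref{item:complexMain} from Theorem~\ref{thm:repAtocomplex}, and the parity/sign bookkeeping — is handled as in the paper.
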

\begin{proof}
 An intertwining operator in the sense of Definition \ref{def:intwop} is an intertwining operator in the sense of Definition \ref{def:Aintwop} by standard commutativity and associativity results for intertwining operators (see for instance Propositions 4.2.4 and 4.3.4 in \cite{LL} for the case that $\cY$ is a module vertex operator; the proof for general $\cY$ is the same).
 
 Conversely, suppose $\cY$ satisfies the associativity and skew-associativity conditions of Theorem \ref{thm:complextoformal}.    Since $\cY$ is an intertwining operator among $V$-modules, it satisfies
	conditions \eqref{item:complexLower}, \eqref{item:complexL0} and \eqref{item:complexL-1}
	from Theorem \ref{thm:complextoformal}. It satisfies condition \eqref{item:complexMain} due
	to Theorem \ref{thm:repAtocomplex}. Condition \eqref{item:complexMainFunction} follows from Lemma 4.1 in \cite{H-genlratl} and our assumptions on the category $\mathcal{C}$ (in particular the associativity of intertwining operators among $V$-modules in $\cC$; see for instance Corollary 9.30 in \cite{HLZ6}). Therefore, by the conclusion of Theorem \ref{thm:complextoformal}, $\cY$ is an intertwining operator among $A$-modules in the sense of Definition \ref{def:intwop}.
\end{proof}

As a consequence of this result, we see that $\boxtimes_A$ in $\repzA$ agrees with the vertex-algebraic notion of tensor product of modules:
\begin{corol}\label{PztensprodsinrepzA}
 Suppose $W_1$ and $W_2$ are two modules in $\repzA$. Then for any $z\in\CC^\times$, $(W_1\boxtimes_A W_2, \cY_{W_1,W_2}(\cdot, e^{\mathrm{log}\,z})\cdot)$ is a $P(z)$-tensor product of $W_1$ and $W_2$.
\end{corol}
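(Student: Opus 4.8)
The plan is to deduce this corollary directly from the universal property of $\boxtimes_A$ in $\repA$ established earlier (Proposition \ref{tensAunivprop}), combined with the identification of $A$-intertwining operators with vertex algebraic intertwining operators among $A$-modules (Theorem \ref{repzAintwopcorrect}) and the relation between $P(z)$-intertwining maps and intertwining operators among $A$-modules (the superalgebra analogue of Proposition \ref{opmapiso}, applied in the vertex operator superalgebra $A$ rather than in $V$). First I would recall that since $W_1$ and $W_2$ lie in $\repzA$, so does $W_1\boxtimes_A W_2$ by Theorem \ref{thm:rep0}, so all three are genuine grading-restricted generalized $A$-modules in $\cC$. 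Next I would observe that the intertwining operator $\cY_{W_1,W_2}$ associated to the cokernel morphism $\eta_{W_1,W_2}$ is, by Theorem \ref{repzAintwopcorrect}, a logarithmic intertwining operator among $A$-modules of type $\binom{W_1\boxtimes_A W_2}{W_1\,W_2}$; hence $I := \cY_{W_1,W_2}(\cdot, e^{\log z})\cdot$ is a $P(z)$-intertwining map among $A$-modules of that type for each $z\in\CC^\times$. It remains to verify the universal property of Definition \ref{def:Pztensprod} (now interpreted in the category of $A$-modules).

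For the universal property, I would take an arbitrary $A$-module $W_3$ in $\cC$ and a $P(z)$-intertwining map $I'$ of type $\binom{W_3}{W_1\,W_2}$ among $A$-modules, and use the superalgebra version of Proposition \ref{opmapiso} (for $A$) to produce the corresponding logarithmic intertwining operator $\cY' = \cY_{I',0}$, which by Theorem \ref{repzAintwopcorrect} is an $A$-intertwining operator in the sense of Definition \ref{def:Aintwop}. Proposition \ref{tensAunivprop} then yields a unique $\repA$-morphism $f\colon W_1\boxtimes_A W_2\rightarrow W_3$ with $f\circ\cY_{W_1,W_2}=\cY'$ as intertwining operators (equivalently $\overline{f}\circ\cY_{W_1,W_2}(\cdot,e^{\log z})\cdot = \cY'(\cdot,e^{\log z})\cdot$, i.e. $\overline{f}\circ I = I'$). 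Since a $\repA$-morphism between objects of $\repzA$ is exactly an $A$-module homomorphism (by Proposition \ref{propo:YWcomplex} together with the characterization of $\repzA$ objects as $A$-modules), $f$ is the required morphism; uniqueness of $f$ among $A$-module homomorphisms follows because any such $g$ with $\overline{g}\circ I = I'$ gives $g\circ\cY_{W_1,W_2}=\cY'$, and $g$ is automatically a $\repA$-morphism, so uniqueness in Proposition \ref{tensAunivprop} forces $g=f$.

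The main point requiring care — and the only real obstacle — is the bookkeeping of branches of logarithm and the passage between the formal-variable and specialized-variable pictures: I need $\overline{\eta_I}\circ\boxtimes_{P(z)} = I$ in the sense of Definition \ref{def:Pztensprod} to match exactly the equality $\overline{f}\circ\cY_{W_1,W_2}(\cdot,e^{\log z})\cdot = \cY'(\cdot,e^{\log z})\cdot$ coming from Proposition \ref{tensAunivprop}. This is handled by noting that $\boxtimes_{P(z)}^A = \cY_{W_1,W_2}(\cdot, e^{\log z})\cdot$ by the definition given in the remark following Proposition \ref{tensAunivprop}, and that $\cY_{I',0}(\cdot, e^{\log z})\cdot = I'$ by the inversion formula in Proposition \ref{opmapiso}. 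So the argument is essentially a translation exercise: the substantive content is already contained in Proposition \ref{tensAunivprop} and Theorem \ref{repzAintwopcorrect}, and this corollary just repackages it as the standard vertex-algebraic universal property. I would also remark briefly that the $P(z)$-tensor product so obtained is, by the uniqueness clause of Definition \ref{def:Pztensprod}, naturally isomorphic to any construction of $W_1\boxtimes_{P(z)}W_2$ carried out directly in the $A$-module category (e.g. via \cite{HLZ4}), which is the point needed for the subsequent identification of vertex tensor category structures in Subsection \ref{subsec:VTConRepARep0A}.
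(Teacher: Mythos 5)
Your proof is correct and follows essentially the same route as the paper: both deduce the universal property of Definition \ref{def:Pztensprod} directly from Proposition \ref{tensAunivprop} combined with Theorem \ref{repzAintwopcorrect}, translating between $P(z)$-intertwining maps and logarithmic intertwining operators via (the $A$-module version of) Proposition \ref{opmapiso}. The paper's version is considerably terser, leaving the branch-of-logarithm bookkeeping and the identification of $\repA$-morphisms with $A$-module homomorphisms implicit, but you have filled in exactly those steps.
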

\begin{proof}
 Suppose $W_3$ is another module in $\repzA$ and $I$ is a $P(z)$-intertwining map of type $\binom{W_3}{W_1\,W_2}$. Then by Proposition \ref{tensAunivprop} and Theorem \ref{repzAintwopcorrect} there is a unique $A$-module homomorphism $f: W_1\boxtimes_A W_2\rightarrow W_3$ such that $f\circ\cY_{W_1,W_2}=\cY_{I,0}$. In particular, this means
 \begin{equation*}
  \overline{f}(\cY_{W_1,W_2}(w_1, e^{\mathrm{log}\,z})w_2) =\cY_{I,0}(w_1, e^{\mathrm{log}\,z})w_2= I(w_1\otimes w_2)
 \end{equation*}
for $w_1\in W_1$, $w_2\in W_2$, showing that $(W_1\boxtimes_A W_2, \cY_{W_1,W_2}(\cdot, e^{\mathrm{log}\,z})\cdot)$ satisfies the universal property of a $P(z)$-tensor product as given in Definition \ref{def:Pztensprod}.
\end{proof}

\begin{rema}
 In particular, the preceding corollary shows that $(W_1\boxtimes_A W_2, \boxtimes_A=\cY_{W_1,W_2}(\cdot, e^{\mathrm{log}\,1})\cdot)$ is a $P(1)$-tensor product of $W_1$ and $W_2$ in the category of $A$-modules in $\cC$. In the remainder of this subsection, we will show that the rest of the braided monoidal supercategory structure on $\repzA$ agrees with the braided monoidal supercategory structure on a category of $A$-modules constructed in \cite{HLZ8}.
\end{rema}

\subsubsection{Parallel transport isomorphisms in $\repzA$}

We have shown in Corollary \ref{PztensprodsinrepzA} that $P(z)$-tensor products in $\repzA$ exist for any $z\in\CC^\times$. Since the existence of natural parallel transport isomorphisms associated to continuous curves in $\CC^\times$ only depends on the existence of $P(z)$-tensor products and properties of intertwining operators, we have natural parallel transport isomorphisms in $\repzA$.

Suppose $W_1$ and $W_2$ are two modules in $\repzA$, and $\gamma$ is a continous path in $\CC^\times$ from $z_1$ to $z_2$. Then by \eqref{tgammachar}, the parallel transport isomorphism
\begin{equation*}
 T^A_\gamma: W_1\boxtimes_A W_2\rightarrow W_1\boxtimes_A W_2
\end{equation*}
is determined by the condition 
\begin{equation*}
 \overline{T^A_\gamma}(w_1\boxtimes^A_{P(z_1)} w_2)=\cY_{W_1,W_2}(w_1, e^{l(z_1)})w_2
\end{equation*}
for $w_1\in W_1$, $w_2\in W_2$, where $l(z_1)$ is the branch of logarithm determined by $\mathrm{log}\,z_2$ and the path $\gamma$. Now, if $l(z_1)=\mathrm{log}\,z_1+2\pi i p$ for some $p\in\ZZ$, Proposition \ref{opmapiso} implies that also
\begin{equation}\label{RepzAparatrans}
 T^A_\gamma(\cY_{W_1,W_2}(w_1,x)w_2)=\cY_{W_1,W_2}(w_1, e^{2\pi i p} x)w_2
\end{equation}
for $w_1\in W_1$, $w_2\in W_2$.

Similar to parallel transport isomorphisms among $V$-modules in $\cC$, we use the notation $T^A_{z_1\rightarrow z_2}$ when $\gamma$ is a path in $\CC^\times$ with a branch cut along the positive real axis. But note that since $l(z_1)=\mathrm{log}\,z_1$ for such a path, these isomorphisms are the identity given our realization of $P(z)$-tensor products in Corollary \ref{PztensprodsinrepzA}.

\begin{rema}
 If $W_1$ and $W_2$ are modules in $\repA$ but are not in $\repzA$, we cannot obtain parallel transport isomorphisms unless $\gamma$ has trivial monodromy. This is because vertex operators for modules in $\repA$ may have non-trivial monodromy (non-integral powers of the formal variable $x$), so that we cannot guarantee $\cY_{W_1,W_2}(\cdot, e^{2\pi i p} x)\cdot$ is an $A$-intertwining operator for $p\neq 0$ when $W_1$ and $W_2$ are not objects of $\repzA$.
\end{rema}

\subsubsection{Unit isomorphisms in $\repA$ and $\repzA$}

Here we describe the unit isomorphisms in $\repA$ in terms of $A$-intertwining operators. For the left unit isomorphisms, the definitions show that the diagram
\begin{equation*}
 \xymatrixcolsep{4pc}
 \xymatrix{
 A\otimes W \ar[r]^{\boxtimes} \ar[rd]_{Y_W(\cdot, e^{\mathrm{log}\,1})\cdot} & \overline{A\boxtimes W} \ar[r]^{\overline{\eta_{A,W}}} \ar[d]^{\overline{\mu_W}} & \overline{A\boxtimes_A W} \ar[dl]^{\overline{l^A_W}}\\
  & \overline{W} & \\
 }
\end{equation*}
commutes, from which we obtain
\begin{equation*}
 l^A_W(\cY_{A, W}(a, x)w)=Y_W(a,x)w
\end{equation*}
for $a\in A$ and $w\in W$ since $\overline{\eta_{A,W}}\circ\boxtimes =\cY_{A,W}(\cdot, e^{\mathrm{log}\,1})\cdot$. 
\begin{rema}
For $z\in\CC^\times$, if we use the notation $l^A_{P(z)}$ for $l^A_W$ when we view $l^A_W$ as a map on a $P(z)$-tensor product, we have
\begin{equation*}
 \overline{l^A_{P(z)}}(a\boxtimes^A_{P(z)} w)= Y_W(a, e^{\mathrm{log}\,z})w
\end{equation*}
for $a\in A$ and $w\in W$.
\end{rema}

For the right unit isomorphisms in $\repA$, we use Proposition \ref{prop:relntoR+R-}, the definition of $\mu_{W; -1}$, and the definition of $r^A_W$ to obtain the commutative diagram
\begin{equation*}
 \xymatrixcolsep{4pc}
 \xymatrix{
  & A\boxtimes_{P(-1)} W \ar[d]^{T_{-1\rightarrow 1}} \ar[rd]^{\mu_{W; -1}} & \\
  W\boxtimes A \ar[ru]^{\cR_{P(1)}^-} \ar[r]^{\sR_{A,W}^{-1}} \ar[rd]_{\eta_{W,A}} & A\boxtimes W \ar[r]^{\mu_W} & W\\
   & W\boxtimes_A A \ar[ru]_{r^A_W} & \\
 }
\end{equation*}
Then we use $\overline{\eta_{W,A}}\circ\boxtimes=\cY_{W,A}(\cdot,e^{\mathrm{log}\,1})\cdot$ and \eqref{Rminusdef} to calculate
\begin{align*}
 \overline{r^A_W}(\cY_{W,A}(w, e^{\mathrm{log}\,1})a) & =\overline{\mu_{W; -1}\circ \cR_{P(1)}^-}(w\boxtimes a)\nonumber\\
 & =\overline{\mu_{W; -1}}\left((-1)^{\vert w\vert\vert a\vert} e^{L(-1)}\cY_{\boxtimes_{P(-1)}, 0}(a, e^{-\pi i})w\right)\nonumber\\
 & =(-1)^{\vert w\vert\vert a\vert}\overline{\mu_{W; -1}}\left( e^{L(-1)} e^{-2\pi i L(0)}\cY_{\boxtimes_{P(-1)}, 0}(e^{2\pi i L(0)} a, e^{\pi i})e^{2\pi i L(0)} w\right)\nonumber\\
  & = (-1)^{\vert w\vert\vert a\vert}e^{L(-1)} e^{-2\pi i L(0)}\overline{\mu_{W; -1}}\left( e^{2\pi i L(0)} a\boxtimes_{P(-1)} e^{2\pi i L(0)} w\right)\nonumber\\
  & = (-1)^{\vert w\vert\vert a\vert}e^{L(-1)} e^{-2\pi i L(0)} Y_W(e^{2\pi i L(0)} a, e^{\pi i}) e^{2\pi i L(0)} w\nonumber\\
  & =(-1)^{\vert w\vert\vert a\vert}e^{L(-1)} Y_W(a, e^{-\pi i})w\nonumber\\
  & = \Omega_{-1}(Y_W)(w, e^{\mathrm{log}\,1})a
\end{align*}
for parity-homogeneous $w\in W$, $a\in A$. Thus by Proposition \ref{opmapiso}, we have
\begin{equation*}
 r^A_W(\cY_{W,A}(w,x)a)=\Omega_{-1}(Y_W)(w, x)a=(-1)^{\vert w\vert\vert a\vert}e^{xL(-1)} Y_W(a, e^{-\pi i} x)w
\end{equation*}
for parity-homogeneous $w\in W$, $a\in A$.
\begin{rema}
For $z\in\CC^\times$, if we use the notation $r^A_{P(z)}$ to denote $r^A_W$ when we view $r^A_W$ as a map from the $P(z)$-tensor product, we have
\begin{equation*}
 \overline{r^A_{P(z)}}(w\boxtimes^A_{P(z)} a)=(-1)^{\vert w\vert\vert a\vert} e^{zL(-1)} Y_W(a, e^{\mathrm{log}\,z-\pi i})w= (-1)^{\vert w\vert\vert a\vert}e^{zL(-1)} Y_W(a, e^{l_{-1/2}(-z)})w
\end{equation*}
for parity-homogeneous $w\in W$, $a\in A$.
\end{rema}

\begin{rema}
 If $W$ is a module in $\repzA$, the unit isomorphisms $l^A_W$ and $r^A_W$ have the same description. However, since in this case $Y_W$ has only integral powers of the formal variable $x$, we can simplify the notation as follows:
 \begin{align*}
  l^A_W(\cY_{A,W}(a,x)w) & =Y_W(a,x)w\nonumber\\
  \overline{l^A_{P(z)}}(a\boxtimes^A_{P(z)} w) & = Y_W(a, z)w
 \end{align*}
 for $a\in A$, $w\in W$, $z\in\CC^\times$, and
 \begin{align*}
  r^A_W(\cY_{W,A}(w, x)a) & =(-1)^{\vert w\vert\vert a\vert}e^{xL(-1)} Y_W(a, -x)w\nonumber\\
  \overline{r^A_{P(z)}}(w\boxtimes^A_{P(z)} a) & =(-1)^{\vert w\vert\vert a\vert}e^{zL(-1)} Y_W(a, -z)w
 \end{align*}
for parity-homogeneous $a\in A$, $w\in W$, and $z\in\CC^\times$. This is in agreement with the construction of unit isomorphisms in \cite{HLZ8}.
\end{rema}

\subsubsection{Braiding isomorphisms in $\repzA$}

Here we discuss the braiding isomorphisms in $\repzA$ in terms of intertwining operators. For modules $W_1$ and $W_2$ in $\repzA$, it is easy to see from \eqref{rep0Abraidingdef}, \eqref{formalbraidchar}, and the definition of $\cY_{W_1,W_2}$ that
\begin{equation*}
 \cR_{W_1,W_2}^A(\cY_{W_1,W_2}(w_1,x)w_2)=\Omega_0(\cY_{W_2,W_1})(w_1,x)w_2=(-1)^{\vert w_1\vert\vert w_2\vert}e^{xL(-1)}\cY_{W_2,W_1}(w_2, e^{\pi i} x)w_1
\end{equation*}
and
\begin{equation*}
 (\cR_{W_1,W_2}^A)^{-1}(\cY_{W_2,W_1}(w_2,x)w_1)=\Omega_{-1}(\cY_{W_1,W_2})(w_2,x)w_1=(-1)^{\vert w_1\vert\vert w_2\vert}e^{xL(-1)}\cY_{W_1,W_2}(w_1, e^{-\pi i} x)w_2
\end{equation*}
for parity-homogeneous $w_1\in W_1$, $w_2\in W_2$. Moreover, using Remark \ref{vrtxmonodromy}, the monodromy isomorphisms in $\repzA$ are characterized by
\begin{equation*}
 \cM_{W_1,W_2}^A(\cY_{W_1,W_2}(w_1,x)w_2)=\cY_{W_1,W_2}(w_1, e^{2\pi i} x)w_2
\end{equation*}
for $w_1\in W_1$ and $w_2\in W_2$.

\begin{rema}\label{vrtxAmonodromy}
By \eqref{RepzAparatrans}, the parallel transport isomorphism $T^A_\gamma$ on $W_1\boxtimes_A W_2$ is $(\cM^A_{W_1,W_2})^p$ where $p\in\ZZ$ corresponds to the monodromy of the closed curve $\gamma$ in $\CC^\times$.
\end{rema}

When we view $W_1\boxtimes_A W_2$ as the $P(z)$-tensor product in the category of $A$-modules, using $\boxtimes^A_{P(z)}=\cY_{W_1,W_2}(\cdot,e^{\mathrm{log}\,z})\cdot$, we can use the notation $\cR_{P(z); W_1, W_2}^{A; +}=\cR_{P(z)}^{A; +}$ for $\cR^A_{W_1,W_2}$ and $\cR_{P(z); W_1, W_2}^{A; -}=\cR_{P(z)}^{A; -}$ for $(\cR^A_{W_2, W_1})^{-1}$. With this notation we have
\begin{align*}
 \cR_{P(z)}^{A; +}( w_1\boxtimes^A_{P(z)} w_2) & =(-1)^{\vert w_1\vert\vert w_2\vert}e^{z L(-1)}\cY_{W_2,W_1}(w_2, e^{\mathrm{log}\,z+\pi i})w_1\nonumber\\
 &=(-1)^{\vert w_1\vert\vert w_2\vert}e^{z L(-1)}\cY_{W_2,W_1}(w_2, e^{l_{1/2}(-z)})w_1
\end{align*}
and
\begin{align*}
 \cR_{P(z)}^{A; -}( w_1\boxtimes^A_{P(z)} w_2) & =(-1)^{\vert w_1\vert\vert w_2\vert}e^{z L(-1)}\cY_{W_2,W_1}(w_2, e^{\mathrm{log}\,z-\pi i})w_1\nonumber\\
 &=(-1)^{\vert w_1\vert\vert w_2\vert}e^{z L(-1)}\cY_{W_2,W_1}(w_2, e^{l_{-1/2}(-z)})w_1
\end{align*}
for parity-homogeneous $w_1\in W_1$, $w_2\in W_2$. Also, the results of Propositions \ref{R+R-} and \ref{prop:relntoR+R-} hold with morphisms in $\cC$ replaced by the corresponding morphisms in $\repzA$ (recall that the parallel transport isomorphisms appearing in these propositions are trivial in $\repzA$ due to our particular realization of $P(z)$-tensor products in $\repzA$). Moreover, $\cR^A_{W_1,W_2}=T^A_{-1\rightarrow 1}\circ \cR^{A; +}_{P(1)}$ since $T^A_{-1\to 1}$ is trivial.

These results show that the braiding isomorphisms in $\repzA$ agree with the vertex-algebraic braiding isomorphisms between $A$-modules constructed in \cite{HLZ8} and reviewed in Subsection \ref{subsec:VTC}.

\subsubsection{Associativity isomorphisms in $\repA$ and $\repzA$}

Finally, we describe the associativity isomorphisms in $\repA$ (and also $\repzA$) in terms of intertwining operators. First, we have:
\begin{propo}
 Choose $r_1, r_2\in\RR_+$ such that $r_1>r_2>r_1-r_2>0$. Then for modules $W_1$, $W_2$, and $W_3$ in $\repA$, the associativity isomorphism $\cA^A_{W_1,W_2,W_3}$ is characterized by the equality
 \begin{align}\label{repAassocchar}
  \overline{\cA^A_{W_1,W_2,W_3}} & \left(\cY_{W_1, W_2\boxtimes_A W_3}(w_1, e^{\mathrm{log}\,r_1})\cY_{W_2,W_3}(w_2, e^{\mathrm{log}\,r_2})w_3\right)\nonumber\\
  &=\cY_{W_1\boxtimes_A W_2, W_3}(\cY_{W_1,W_2}(w_1, e^{\mathrm{log}(r_1-r_2)})w_2, e^{\mathrm{log}\,r_2})w_3
 \end{align}
for all $w_1\in W_1$, $w_2\in W_2$, and $w_3\in W_3$.
\end{propo}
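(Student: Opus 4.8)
The plan is to prove this by carefully tracking the definition of $\cA^A_{W_1,W_2,W_3}$ through the parallel transport isomorphisms that express it in terms of the $z$-dependent associativity isomorphism $\cA_{r_1,r_2}$, using the dictionary established earlier in this subsection between categorical tensor-product data and intertwining operators. Recall from Subsection \ref{subsubsec:assoc} that, for $r_1 > r_2 > r_1-r_2 > 0$,
\[
\cA^A_{W_1,W_2,W_3}=T^A_{r_2\to 1}\circ(T^A_{r_1-r_2\to 1}\boxtimes^A_{P(r_2)} 1_{W_3})\circ\cA^A_{r_1,r_2}\circ(1_{W_1}\boxtimes^A_{P(r_1)} T^A_{1\to r_2})\circ T^A_{1\to r_1},
\]
where $\cA^A_{r_1,r_2}$ is the $z$-associativity isomorphism in $\repA$ built from the universal property of $P(z)$-tensor products in $\repA$ (which exists by the remark following Proposition \ref{tensAunivprop}), and where all the parallel transports are the analogues in $\repA$ of those in $\cC$. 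The key observation, which I would establish first, is that all the parallel transports $T^A_{r\to 1}$ appearing here are \emph{trivial} because the paths have a branch cut along the positive real axis, exactly as noted in the discussion of parallel transports in $\repzA$ (and this argument uses only the existence of $P(z)$-tensor products and properties of intertwining operators, so it applies in $\repA$ as well). Thus $\cA^A_{W_1,W_2,W_3}$ reduces to $\cA^A_{r_1,r_2}$ under the identification of $W_1\boxtimes^A_{P(z)} W_2$ with $W_1\boxtimes_A W_2$.

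Next I would spell out how $\cA^A_{r_1,r_2}$ acts. By the definition of the $P(z)$-tensor product in $\repA$ via the universal property with respect to $A$-intertwining operators, and by the analogue of Theorem \ref{repAintwopassoc} (associativity of categorical $\repA$-intertwining operators) together with the translation of those results into the complex-analytic language of Theorem \ref{thm:repAtocomplex}, the iterated $A$-intertwining operator
\[
(w_1, w_2, w_3)\mapsto \cY_{W_1\boxtimes_A W_2, W_3}\bigl(\cY_{W_1,W_2}(w_1, e^{\mathrm{log}(r_1-r_2)})w_2, e^{\mathrm{log}\,r_2}\bigr)w_3
\]
and the product $A$-intertwining operator $\cY_{W_1, W_2\boxtimes_A W_3}\bigl(w_1, e^{\mathrm{log}\,r_1}\bigr)\cY_{W_2,W_3}\bigl(w_2, e^{\mathrm{log}\,r_2}\bigr)w_3$ are related by precisely the morphism $\cA^A_{r_1,r_2}$ on the relevant triple tensor products. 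This is the $\repA$-version of condition \eqref{zassocchar}: the map $\cA_{z_1,z_2}$ in $\cC$ is characterized by $\overline{\cA_{z_1,z_2}}(w_1\boxtimes_{P(z_1)}(w_2\boxtimes_{P(z_2)} w_3)) = (w_1\boxtimes_{P(z_1-z_2)} w_2)\boxtimes_{P(z_2)} w_3$, and here the tensor-product intertwining maps are the $\cY_{-,-}$, so this characterization becomes exactly \eqref{repAassocchar}. Concretely, I would start from the defining commutative diagram for $\cA^A$ in $\repA$ (the analogue of \eqref{repAassocdef}), which relates $\cA^A_{W_1,W_2,W_3}$ to the $\sC$-associativity isomorphism $\sA_{W_1,W_2,W_3}$ composed with the cokernel maps $\eta_{W_i,W_j}$, and then rewrite each $\eta$ and each $\sA$ in terms of the intertwining operators $\cY_{W_i,W_j}$ (using $\overline{\eta_{W_i,W_j}}\circ\boxtimes = \cY_{W_i,W_j}(\cdot, e^{\mathrm{log}\,1})\cdot$ and the characterization of $\sA = \cA$ in $\cC$ via triple tensor product elements), together with the $L(0)$-conjugation property of intertwining operators to move the base point from $z=1$ to $z = r_i$ as was done in the computation defining $\eta_z$ and $\mu_{W;z}$.

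The main obstacle, I expect, will be bookkeeping the branches of logarithm and the base points correctly: one must insert and cancel the parallel transports $T^A_{1\to r_i}$ and $T^A_{r_i\to 1}$, and verify that the net effect of the whole composition in the definition of $\cA^A_{W_1,W_2,W_3}$ is to send the $P(r_1)$-over-$P(r_2)$ iterated tensor element $w_1\boxtimes^A_{P(r_1)}(w_2\boxtimes^A_{P(r_2)} w_3)$ — which corresponds to $\cY_{W_1, W_2\boxtimes_A W_3}(w_1, e^{\mathrm{log}\,r_1})\cY_{W_2,W_3}(w_2, e^{\mathrm{log}\,r_2})w_3$ — to $(w_1\boxtimes^A_{P(r_1-r_2)} w_2)\boxtimes^A_{P(r_2)} w_3$, which corresponds to $\cY_{W_1\boxtimes_A W_2, W_3}(\cY_{W_1,W_2}(w_1, e^{\mathrm{log}(r_1-r_2)})w_2, e^{\mathrm{log}\,r_2})w_3$. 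The restriction $r_1 > r_2 > r_1 - r_2 > 0$ guarantees all the series in question converge absolutely and that $\log r_i = l_{-1/2}(r_i)$, so no anomalous branch factors appear; the independence of \eqref{repAassocchar} from the particular choice of such $r_1, r_2$ then follows from Proposition \ref{associndofr} (or more precisely its proof, via Remark \ref{extassocreal} and Proposition \ref{extassoc}) applied in $\repA$. Finally, since morphisms in $\repA$ are determined by their action on the images of the tensor-product intertwining maps (because $\eta_{W_1, W_2\boxtimes_A W_3}$ and the relevant maps are epimorphisms, as in the proofs above), the equality \eqref{repAassocchar} uniquely characterizes $\cA^A_{W_1,W_2,W_3}$, completing the proof; the same statement restricts verbatim to $\repzA$.
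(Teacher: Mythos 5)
Your proposed argument, as framed, is circular. You open by recalling the formula
\[
\cA^A_{W_1,W_2,W_3}=T^A_{r_2\to 1}\circ(T^A_{r_1-r_2\to 1}\boxtimes^A_{P(r_2)} 1_{W_3})\circ\cA^A_{r_1,r_2}\circ(1_{W_1}\boxtimes^A_{P(r_1)} T^A_{1\to r_2})\circ T^A_{1\to r_1}
\]
and then lean on the $\repA$-parallel transports being identities. But that formula for $\repA$ (or $\repzA$) is not established prior to this Proposition: what Subsection 3.3.4 gives is the \emph{$\cC$-version}, with $\cA$ in place of $\cA^A$. The $\repA$-version (together with the existence and characterization of $\cA^A_{z_1,z_2}$) is precisely the content of Theorem \ref{thm:associnrepzA}, which is stated after this Proposition and whose proof relies on it. The morphism $\cA^A_{W_1,W_2,W_3}$ is defined by the abstract cokernel construction in Section 2 (diagram \eqref{repAassocdef}), not by the HLZ-style formula in terms of $\cA^A_{r_1,r_2}$ and parallel transports; proving that these two constructions agree is exactly what this Proposition, its Corollary, and Theorem \ref{thm:associnrepzA} accomplish. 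Invoking that agreement to prove the Proposition begs the question, and the ``obstacle'' you anticipate (branch bookkeeping through $\repA$-parallel transports) is not the hard part of a correct proof, since $\repA$-parallel transports never enter.

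Your final paragraph does point at the right strategy: start from the defining diagram for $\cA^A$ in $\repA$ in terms of the $\cC$-associativity $\sA_{W_1,W_2,W_3}$ and the cokernel morphisms $\eta$, then translate everything to intertwining operators. That is essentially what the paper does, and it is the crucial point: one stacks the defining diagram for $\sA_{W_1,W_2,W_3}$ in terms of $\cA_{r_1,r_2}$ and $\cC$-parallel transports (\emph{in $\cC$}, which is available) on top of the Section~2 defining diagram for $\cA^A_{W_1,W_2,W_3}$ in terms of $\eta$, forming a single commutative diagram whose outer rectangle has $P(r_1)$- and $P(r_2)$-tensor products in the top row and $\boxtimes_A$ in the bottom row. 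Tracing the two outer compositions and applying the $L(0)$-conjugation identities (as in the calculations defining $\eta_z$ and $\mu_{W;z}$, and the identity $\overline{\eta_{W_i,W_j}}\circ\boxtimes=\cY_{W_i,W_j}(\cdot,e^{\log 1})\cdot$) yields \eqref{repAassocchar} directly. To fix your proposal you should delete the first two paragraphs, remove all reference to $\cA^A_{r_1,r_2}$ and $T^A$, and develop the diagram-chase in your final paragraph into a complete computation entirely inside the $\cC$-vertex tensor category; no independent construction of $P(z)$-associativity in $\repA$ is needed, and attempting one before this Proposition is premature.
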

\begin{proof}
 The definitions of $\sA_{W_1,W_2,W_3}$ and $\cA^A_{W_1,W_2,W_3}$ imply that $\cA^A_{W_1,W_2,W_3}$ is characterized by the commutativity of the diagram
 \begin{equation*}
  \xymatrixcolsep{6pc}
  \xymatrix{
  W_1\boxtimes_{P(r_1)}(W_2\boxtimes_{P(r_2)} W_3) \ar[r]^{\cA_{r_1,r_2}} \ar[d]_{T_{r_1\rightarrow 1}\circ(1_{W_1}\boxtimes_{P(r_1)} T_{r_2\rightarrow 1})} & (W_1\boxtimes_{P(r_1-r_2)} W_2)\boxtimes_{P(r_2)} W_3 \ar[d]^{T_{r_2\rightarrow 1}\circ(T_{r_1-r_2\rightarrow 1}\boxtimes_{P(r_2)} 1_{W_3})} \\
  W_1\boxtimes(W_2\boxtimes W_3) \ar[r]^{\sA_{W_1,W_2,W_3}} \ar[d]_{\eta_{W_1, W_2\boxtimes_A W_3}\circ(1_{W_1}\boxtimes\eta_{W_2,W_3})} & (W_1\boxtimes W_2)\boxtimes W_3 \ar[d]^{\eta_{W_1\boxtimes_A W_2, W_3}\circ(\eta_{W_1,W_2}\boxtimes 1_{W_3})} \\
  W_1\boxtimes_A(W_2\boxtimes_A W_3) \ar[r]^{\cA^A_{W_1,W_2,W_3}} & (W_1\boxtimes_A W_2)\boxtimes_A W_3 \\
  }
 \end{equation*}
Now, the composition of the top and right arrows in this diagram is given by
\begin{align*}
 w_1 & \boxtimes_{P(r_1)} (w_2\boxtimes_{P(r_2)} w_3) \mapsto (w_1\boxtimes_{P(r_1-r_2)} w_2)\boxtimes_{P(r_2)} w_3\mapsto (\cY_{\boxtimes_{P(1)},0}(w_1, e^{\mathrm{log}\,(r_1-r_2)})w_2)\boxtimes_{P(r_2)} w_3\nonumber\\
 & \mapsto \cY_{\boxtimes_{P(1)},0}(\cY_{\boxtimes_{P(1)},0}(w_1, e^{\mathrm{log}\,(r_1-r_2)})w_2, e^{\mathrm{log}\,r_2})w_3\mapsto \cY_{\boxtimes_{P(1)},0}(\cY_{W_1,W_2}(w_1, e^{\mathrm{log}\,(r_1-r_2)})w_2, e^{\mathrm{log}\,r_2})w_3\nonumber\\
  & \mapsto\cY_{W_1\boxtimes_A W_2, W_3}(\cY_{W_1,W_2}(w_1, e^{\mathrm{log}\,(r_1-r_2)})w_2, e^{\mathrm{log}\,r_2})w_3
\end{align*}
for $w_1\in W_1$, $w_2\in W_2$, and $w_3\in W_3$. Similarly, the composition of the left and bottom arrows in the diagram is given by
\begin{equation*}
 w_1\boxtimes_{P(r_1)}(w_2\boxtimes_{P(r_2)} w_3)\mapsto \overline{\cA^A_{W_1,W_2,W_3}}\left(\cY_{W_1, W_2\boxtimes_A W_3}(w_1, e^{\mathrm{log}\,r_1})\cY_{W_2,W_3}(w_2, e^{\mathrm{log}\,r_2})w_3\right)
\end{equation*}
for $w_1\in W_1$, $w_2\in W_2$, and $w_3\in W_3$. Thus $\cA^A_{W_1,W_2,W_3}$ is indeed characterized by \eqref{repAassocchar}. 
\end{proof}

As an immediate consequence of Proposition \ref{extassoc}, we then have:
\begin{corol}
 For $(z_1,z_2)\in S_1$, the associativity isomorphism $\cA^A_{W_1,W_2,W_3}$ satisfies
 \begin{align*}
  \overline{\cA^A_{W_1,W_2,W_3}} & \left(\cY_{W_1,W_2\boxtimes_A W_3}(w_1, e^{\mathrm{log}\,z_1})\cY_{W_2,W_3}(w_2, e^{\mathrm{log}\,z_2})w_3\right)\nonumber\\
  & =\cY_{W_1\boxtimes_A W_2, W_3}(\cY_{W_1,W_2}(w_1, e^{\mathrm{log}(z_1-z_2)})w_2, e^{\mathrm{log}\,z_2})w_3
 \end{align*}
for $w_1\in W_1$, $w_2\in W_2$, and $w_3\in W_3$. In particular, for $w_1\in W_1$, $w_2\in W_2$, $w_3\in W_3$, and $w'\in((W_1\boxtimes_A W_2)\boxtimes_A W_3)'$, the multivalued functions
\begin{align*}
 \langle w', \overline{\cA^A_{W_1,W_2,W_3}}  \left(\cY_{W_1,W_2\boxtimes_A W_3}(w_1, z_1)\cY_{W_2,W_3}(w_2, z_2)w_3\right)\rangle
\end{align*}
on $\vert z_1\vert>\vert z_2\vert>0$ and 
\begin{align*}
 \langle w',  \cY_{W_1\boxtimes_A W_2, W_3}(\cY_{W_1,W_2}(w_1, z_1-z_2)w_2, z_2)w_3\rangle
\end{align*}
on $\vert z_2\vert>\vert z_1-z_2\vert>0$ have equal restrictions to the region $\vert z_1\vert>\vert z_2\vert>\vert z_1-z_2\vert>0$.
\end{corol}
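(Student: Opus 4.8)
The plan is to reduce the corollary to Proposition \ref{extassoc} by repackaging $\cA^A_{W_1,W_2,W_3}$ as part of an intertwining operator. First I would set $\widetilde{\cY}_1:=\cA^A_{W_1,W_2,W_3}\circ\cY_{W_1,W_2\boxtimes_A W_3}$. Since $\cA^A_{W_1,W_2,W_3}$ is a $V$-module homomorphism (being a $\repA$-, hence $\sC$-, hence $\cC$-morphism) and $\cY_{W_1,W_2\boxtimes_A W_3}$ is a $V$-intertwining operator of type $\binom{W_1\boxtimes_A(W_2\boxtimes_A W_3)}{W_1\,W_2\boxtimes_A W_3}$, the composite $\widetilde{\cY}_1$ is again a $V$-intertwining operator, now of type $\binom{(W_1\boxtimes_A W_2)\boxtimes_A W_3}{W_1\,W_2\boxtimes_A W_3}$. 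Together with $\cY_{W_2,W_3}$, $\cY_{W_1\boxtimes_A W_2,W_3}$ and $\cY_{W_1,W_2}$, these four $V$-intertwining operators among modules in $\cC$ have exactly the configuration of types demanded in Proposition \ref{extassoc}, with $M_1=W_2\boxtimes_A W_3$, $M_2=W_1\boxtimes_A W_2$ and $W_4=(W_1\boxtimes_A W_2)\boxtimes_A W_3$; moreover $\cY_{W_2,W_3}$ is even, because $\eta_{W_2,W_3}$ is even, so the sign factor $(-1)^{\vert\cY_2\vert\vert w_1\vert}$ appearing in that proposition is trivial here.

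Next I would verify the base-point hypothesis of Proposition \ref{extassoc}. Fixing $r_1,r_2\in\RR_+$ with $r_1>r_2>r_1-r_2>0$, the preceding proposition, in the form \eqref{repAassocchar}, gives
\[
\overline{\cA^A_{W_1,W_2,W_3}}\!\left(\cY_{W_1,W_2\boxtimes_A W_3}(w_1,e^{\mathrm{log}\,r_1})\cY_{W_2,W_3}(w_2,e^{\mathrm{log}\,r_2})w_3\right)=\cY_{W_1\boxtimes_A W_2,W_3}\!\left(\cY_{W_1,W_2}(w_1,e^{\mathrm{log}(r_1-r_2)})w_2,e^{\mathrm{log}\,r_2}\right)w_3
\]
for all $w_1,w_2,w_3$. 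Pairing both sides with an arbitrary $w'\in((W_1\boxtimes_A W_2)\boxtimes_A W_3)'$, and using that $\widetilde{\cY}_1(w_1,e^{\mathrm{log}\,r_1})u=\overline{\cA^A_{W_1,W_2,W_3}}\!\left(\cY_{W_1,W_2\boxtimes_A W_3}(w_1,e^{\mathrm{log}\,r_1})u\right)$ with $u=\cY_{W_2,W_3}(w_2,e^{\mathrm{log}\,r_2})w_3$ (this follows by expanding $u$ by conformal weight, applying $\widetilde{\cY}_1(w_1,x)\cdot$ termwise, and using continuity of the extension $\overline{\cA^A_{W_1,W_2,W_3}}$, the product converging in the first place by Assumption \ref{assum:conv}), turns this equality into precisely the base-point hypothesis of Proposition \ref{extassoc}. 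Invoking that proposition then yields the asserted identity for every $(z_1,z_2)\in S_1$, which is the first displayed formula of the corollary, and it also yields the equality of the restrictions of the two multivalued analytic functions — the product of $\widetilde{\cY}_1$ and $\cY_{W_2,W_3}$ on $\vert z_1\vert>\vert z_2\vert>0$, namely $\langle w',\widetilde{\cY}_1(w_1,z_1)\cY_{W_2,W_3}(w_2,z_2)w_3\rangle=\langle w',\overline{\cA^A_{W_1,W_2,W_3}}(\cY_{W_1,W_2\boxtimes_A W_3}(w_1,z_1)\cY_{W_2,W_3}(w_2,z_2)w_3)\rangle$, and the iterate of $\cY_{W_1\boxtimes_A W_2,W_3}$ and $\cY_{W_1,W_2}$ on $\vert z_2\vert>\vert z_1-z_2\vert>0$ — to the region $\vert z_1\vert>\vert z_2\vert>\vert z_1-z_2\vert>0$, which is the remaining assertion.

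I do not expect a real obstacle here: the analytic content is entirely contained in the preceding proposition and in Proposition \ref{extassoc}, and the work consists of recognizing $\widetilde{\cY}_1=\cA^A_{W_1,W_2,W_3}\circ\cY_{W_1,W_2\boxtimes_A W_3}$ as a legitimate $V$-intertwining operator and checking that the bookkeeping of types and sign factors matches. The only mildly delicate point is the interchange of $\overline{\cA^A_{W_1,W_2,W_3}}$ with the (convergent) sum over conformal weights defining the product $\cY_{W_1,W_2\boxtimes_A W_3}(w_1,z_1)\cY_{W_2,W_3}(w_2,z_2)w_3$, which is harmless since $\cA^A_{W_1,W_2,W_3}$ is weight-preserving.
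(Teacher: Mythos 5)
Your proof is correct and follows essentially the same route as the paper, which presents this corollary as an immediate consequence of Proposition \ref{extassoc} applied to the base-point characterization \eqref{repAassocchar}; you have simply made explicit the standard bookkeeping (viewing $\cA^A_{W_1,W_2,W_3}\circ\cY_{W_1,W_2\boxtimes_A W_3}$ as the intertwining operator $\cY_1$, the evenness of $\cY_2=\cY_{W_2,W_3}$, and the harmless interchange of $\overline{\cA^A_{W_1,W_2,W_3}}$ with the weight-graded sum) that the paper leaves implicit.
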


To obtain full vertex tensor category structure, we need, as in $\cC$, isomorphisms
\begin{equation*}
 \cA^A_{z_1,z_2}: W_1\boxtimes^A_{P(z_1)}(W_2\boxtimes^A_{P(z_2)} W_3)\rightarrow (W_1\boxtimes^A_{P(z_1-z_2)} W_2)\boxtimes^A_{P(z_2)} W_3
\end{equation*}
for any $\vert z_1\vert>\vert z_2\vert>\vert z_1-z_2\vert>0$ which satisfy
\begin{equation}\label{repzAassocz_1z_2}
 \overline{\cA^A_{z_1,z_2}}\left(w_1\boxtimes^A_{P(z_1)} (w_2\boxtimes^A_{P(z_2)} w_3)\right)=(w_1\boxtimes^A_{P(z_1-z_2)} w_2)\boxtimes^A_{P(z_2)} w_3
\end{equation}
for $w_1\in W_1$, $w_2\in W_2$, and $w_3\in W_3$. The preceding corollary shows that when $(z_1,z_2)\in S_1$, we can take $\cA^A_{z_1,z_2}=\cA^A_{W_1,W_2,W_3}$, but this will not work in general. Instead, assuming $W_1$, $W_2$, and $W_3$ are modules in $\repzA$, we can take
\begin{equation}\label{repzaAz_1z_2}
 \cA^A_{z_1,z_2}=(T^A_{\widetilde{\gamma}}\boxtimes^A_{P(z_2)} 1_{W_3})\circ\cA^A_{W_1,W_2,W_3}\circ (T^A_{\gamma})^{-1}
\end{equation}
where $\gamma$ and $\widetilde{\gamma}$ are paths from $z_1$ to itself and $z_1-z_2$ to itself, respectively, as in \eqref{Az_1z_2general}. Then reversing the steps used to derive \eqref{Az_1z_2general} shows that \eqref{repzAassocz_1z_2} holds. Thus we can conclude the following theorem:
\begin{theo}\label{thm:associnrepzA}
 For modules $W_1$, $W_2$, and $W_3$ in $\repzA$ and $z_1,z_2\in\CC^\times$ satisfying $\vert z_1\vert>\vert z_2\vert>\vert z_1-z_2\vert>0$, there are natural isomorphisms
 \begin{equation*}
 \cA^A_{z_1,z_2}: W_1\boxtimes^A_{P(z_1)}(W_2\boxtimes^A_{P(z_2)} W_3)\rightarrow (W_1\boxtimes^A_{P(z_1-z_2)} W_2)\boxtimes^A_{P(z_2)} W_3
\end{equation*}
such that
\begin{equation*}
 \overline{\cA^A_{z_1,z_2}}\left(w_1\boxtimes^A_{P(z_1)} (w_2\boxtimes^A_{P(z_2)} w_3)\right)=(w_1\boxtimes^A_{P(z_1-z_2)} w_2)\boxtimes^A_{P(z_2)} w_3
\end{equation*}
for $w_1\in W_1$, $w_2\in W_2$, and $w_3\in W_3$. Moreover, for $(z_1,z_2)\in S_1$,
\begin{equation*}
 \cA^A_{W_1,W_2,W_3}=T^A_{z_2\rightarrow 1}\circ(T_{z_1-z_2\rightarrow 1}\boxtimes^A_{P(z_2)} 1_{W_3})\circ\cA^A_{z_1,z_2}\circ(1_{W_1}\boxtimes^A_{P(z_1)} T^A_{1\rightarrow z_2})\circ T^A_{1\rightarrow z_1};
\end{equation*}
the same relation holds with $(z_1,z_2)\in S_1$ replaced by $(r_1,r_2)\in\RR^2$ such that $r_1>r_2>r_1-r_2>0$.
\end{theo}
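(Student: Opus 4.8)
The plan is to construct the isomorphisms $\cA^A_{z_1,z_2}$ by the formula \eqref{repzaAz_1z_2} and then check the two asserted properties: the characterizing action \eqref{repzAassocz_1z_2} on triple tensor product elements, and the relation to $\cA^A_{W_1,W_2,W_3}$ on $S_1$ and on positive reals. First I would observe that all the ingredients needed to imitate the corresponding discussion for $\cC$ in Subsection \ref{subsec:VTC} are now in place for $\repzA$: $P(z)$-tensor products exist by \cref{PztensprodsinrepzA}, parallel transport isomorphisms $T^A_\gamma$ exist and satisfy \eqref{RepzAparatrans} and \cref{vrtxAmonodromy}, and the braiding-style building blocks $\cR^{A;\pm}_{P(z)}$ behave exactly as $\cR^\pm_{P(z)}$ do in $\cC$. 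Since $\repzA$ is the vertex algebraic category of $A$-modules in $\cC$ (once we know $A$ satisfies the hypotheses needed for \cite{HLZ8}, which follows from \cref{repzAintwopcorrect} and the assumptions on $V$), the existence of the ``unparametrized'' associativity $\cA^A_{W_1,W_2,W_3}$ together with the preceding corollary (giving its action in terms of iterates of $A$-intertwining operators on $S_1$) is exactly the analogue of what we have in $\cC$.

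The key steps, in order, are: (1) Recall from the proof of \cref{thm:repzABTCvrtxSalg} — more precisely from the preceding corollary — that $\cA^A_{W_1,W_2,W_3}$ satisfies $\overline{\cA^A_{W_1,W_2,W_3}}\bigl(\cY_{W_1,W_2\boxtimes_A W_3}(w_1,e^{\log z_1})\cY_{W_2,W_3}(w_2,e^{\log z_2})w_3\bigr)=\cY_{W_1\boxtimes_A W_2,W_3}(\cY_{W_1,W_2}(w_1,e^{\log(z_1-z_2)})w_2,e^{\log z_2})w_3$ for $(z_1,z_2)\in S_1$. (2) For general $(z_1,z_2)$ with $\lvert z_1\rvert>\lvert z_2\rvert>\lvert z_1-z_2\rvert>0$, analytically continue this equality of single-valued branches along the explicit path $\Gamma$ used in the derivation of \eqref{Az_1z_2general} — holding $z_2$ fixed while rotating $z_1$ to $\tfrac{\lvert z_1\rvert}{\lvert z_2\rvert}z_2$, then rotating both down to $(\lvert z_1\rvert,\lvert z_2\rvert)$ — tracking how the branches of $\log z_1$ and $\log(z_1-z_2)$ change into $l(z_1)$ and $\widetilde l(z_1-z_2)$ for paths $\gamma$, $\widetilde\gamma$ depending only on $(z_1,z_2)$. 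This is precisely the reasoning already carried out for $\cC$ to obtain \eqref{Az_1z_2general}, so it transfers verbatim with $T$ replaced by $T^A$, $\cA_{z_1,z_2}$ by $\cA^A_{z_1,z_2}$, etc. (3) Conclude that defining $\cA^A_{z_1,z_2}$ by \eqref{repzaAz_1z_2} makes \eqref{repzAassocz_1z_2} hold, using \eqref{RepzAparatrans} to compute the action of $T^A_\gamma$ and $T^A_{\widetilde\gamma}$ on the relevant tensor product elements, together with the $L(0)$-conjugation property of intertwining operators as in the proof of \cref{associndofr}. (4) Naturality of $\cA^A_{z_1,z_2}$ follows from naturality of $\cA^A_{W_1,W_2,W_3}$ (established in \cref{thm:repAmonoidal}) and of the parallel transports, exactly as in $\cC$. (5) For the final relation, restrict to $(z_1,z_2)\in S_1$, where the paths $\gamma$, $\widetilde\gamma$ are homotopically trivial in $\CC^\times$ so that $T^A_\gamma=T^A_{\widetilde\gamma}=\mathrm{id}$ and $\cA^A_{z_1,z_2}=\cA^A_{W_1,W_2,W_3}$; then the displayed identity is the analogue of \cref{rem:assocparallel} for $\repzA$, proved by the same argument used there (i.e.\ comparing actions on triple tensor product elements via \cref{PztensprodsinrepzA}, \eqref{RepzAparatrans}, and \cref{extassoc}), and the extension to $(r_1,r_2)\in\RR^2$ with $r_1>r_2>r_1-r_2>0$ uses \cref{extassocreal} just as in the proof of \cref{associndofr}.

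I expect the main obstacle to be purely bookkeeping rather than conceptual: correctly identifying which branch of logarithm $T^A_\gamma$ implements (via the monodromy count $p\in\ZZ$ in \eqref{RepzAparatrans} and \cref{vrtxAmonodromy}) at each stage of the continuation path $\Gamma$, and verifying that the paths $\gamma$, $\widetilde\gamma$ one reads off genuinely lie in the domains $\lvert z_2\rvert>\lvert z_1-z_2\rvert>0$ and do not depend on $W_1,W_2,W_3$. This is exactly the same delicate point that arises in deriving \eqref{Az_1z_2general} for $\cC$, so the work is to confirm that nothing about the $A$-module setting — in particular the fact that $\cY_{W_1,W_2}$ has only integral powers of $x$ when $W_1,W_2\in\repzA$, which is what makes $T^A_\gamma$ well-defined and monodromy-labeled — obstructs the transfer. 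Since \cref{RepzAparatrans} already packages this integrality, and since all the complex-analytic lemmas (\cref{extassoc}, \cref{extskewassoc}, \cref{extassocreal}, \cref{equalmulti}) are stated for arbitrary intertwining operators among modules in $\cC$ and hence apply to $A$-intertwining operators viewed as $V$-intertwining operators, I anticipate no genuine difficulty, only careful transcription of the $\cC$-level argument into the $\repzA$-level notation.
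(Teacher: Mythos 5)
Your proposal is correct and takes essentially the same approach as the paper: the paper also constructs $\cA^A_{z_1,z_2}$ via \eqref{repzaAz_1z_2}, verifies \eqref{repzAassocz_1z_2} by ``reversing the steps used to derive \eqref{Az_1z_2general},'' and obtains the final relation from the observation that $\cA^A_{z_1,z_2}=\cA^A_{W_1,W_2,W_3}$ on $S_1$ (the preceding corollary) together with the fact that the parallel transports $T^A_{a\to b}$ along cut-avoiding paths are identities in $\repzA$. Your extra remark that the paths $\gamma,\widetilde{\gamma}$ are homotopically trivial on $S_1$ is a slight detour --- one can more simply note that the preceding corollary directly exhibits $\cA^A_{W_1,W_2,W_3}$ as satisfying \eqref{repzAassocz_1z_2} on $S_1$, and then the final relation reduces to a tautology since all the $T^A_{a\to b}$ are identities --- but your reasoning is not wrong.
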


By Theorem \ref{thm:associnrepzA}, the associativity isomorphisms $\cA^A$ in $\repzA$ agree with the associativity isomorphisms obtained in \cite{HLZ8} from vertex tensor category structure on the category of $A$-modules. Combining this fact with all the other results and discussion in this subsection, we have the following fundamental theorem on $\repzA$ and its braided monoidal category structure:
\begin{theo}\label{thm:repzABTCvrtxSalg}
 Suppose $A$ is a vertex operator superalgebra extension of a vertex operator algebra $V$ such that $V\subseteq A^\even$, and suppose $\cC$ is a category of grading-restricted generalized $V$-modules that contains $A$ and satisfies conditions necessary so that $\cC$ has the vertex tensor category structure (and thus braided tensor category structure) given in \cite{HLZ1}-\cite{HLZ8}. Then:
  \begin{enumerate}
  \item $\repzA$ is the category of grading-restricted generalized $A$-modules in $\cC$  (\cite{HKL}, \cite{CKL}).
  \item $\repzA$ has vertex tensor category structure (and thus braided monoidal supercategory structure) as given in \cite{HLZ1}-\cite{HLZ8}. 
  \item The Huang-Lepowsky-Zhang braided monoidal supercategory structure on $\repzA$ given in \cite{HLZ8} and Section \ref{subsec:VTC} is isomorphic to the Kirillov-Ostrik braided monoidal supercategory structure on $\repzA$ given in \cite{KO} and Section \ref{sec:TensCats}.
 \end{enumerate}
\end{theo}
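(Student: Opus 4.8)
The plan is to collect together the three numbered assertions, noting that the first is already a theorem of \cite{HKL}, \cite{CKL} and the second is an immediate consequence of the third, so that the substance of the proof lies entirely in establishing the isomorphism of braided monoidal supercategories claimed in part (3). I would organize the argument around a single functor, namely the identity on objects and morphisms, which is already known by part (1) to be an equivalence (in fact isomorphism) of categories between the Kirillov-Ostrik category $\repzA$ of Section \ref{sec:TensCats} and the Huang-Lepowsky-Zhang category of grading-restricted generalized $A$-modules in $\cC$. The task is then to verify that this functor intertwines all the structure isomorphisms: the tensor products, the unit, the associativity, and the braiding, together with the underlying bifunctor on morphisms.

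\textbf{Key steps, in order.} First I would invoke Corollary \ref{PztensprodsinrepzA}: it shows that $(W_1\boxtimes_A W_2, \cY_{W_1,W_2}(\cdot, e^{\log z})\cdot)$ is a $P(z)$-tensor product in the vertex-algebraic sense for every $z\in\CC^\times$, and in particular for $z=1$ this identifies the Kirillov-Ostrik tensor bifunctor $\boxtimes_A$ with the Huang-Lepowsky-Zhang $P(1)$-tensor product $\boxtimes$. This rests on Theorem \ref{repzAintwopcorrect}, the identification of categorical $\repA$-intertwining operators (restricted to $\repzA$) with genuine logarithmic intertwining operators among $A$-modules, which in turn rests on Theorem \ref{thm:complextoformal} and Theorem \ref{thm:repAtocomplex}. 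Second, I would observe that under this identification the tensor product of morphisms agrees on both sides: this is the content of the sign-factor bookkeeping already carried out, where \eqref{repAtensprodmorphdef} and \eqref{homtensprod} are shown to produce the same formula $(f_1\boxtimes_A f_2)(\cY_{W_1,W_2}(w_1,x)w_2)=(-1)^{|f_2||w_1|}\cY_{\widetilde W_1,\widetilde W_2}(f_1(w_1),x)f_2(w_2)$. Third, I would compare the unit isomorphisms: the computations in Subsection \ref{subsec:VTConRepARep0A} show that $l^A_W(\cY_{A,W}(a,x)w)=Y_W(a,x)w$ and $r^A_W(\cY_{W,A}(w,x)a)=(-1)^{|w||a|}e^{xL(-1)}Y_W(a,-x)w$, which are precisely the formulas defining the Huang-Lepowsky-Zhang unit isomorphisms recalled in Subsection \ref{subsec:VTC}. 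Fourth, for the braiding I would use the formulas $\cR^A_{W_1,W_2}(\cY_{W_1,W_2}(w_1,x)w_2)=(-1)^{|w_1||w_2|}e^{xL(-1)}\cY_{W_2,W_1}(w_2,e^{\pi i}x)w_1$ derived from \eqref{rep0Abraidingdef} and \eqref{formalbraidchar}, matching the vertex-algebraic braiding of \cite{HLZ8}. Finally, for associativity I would appeal to Theorem \ref{thm:associnrepzA}, which produces the $P(z_1),P(z_2)$-associativity isomorphisms $\cA^A_{z_1,z_2}$ satisfying the characterizing condition \eqref{repzAassocz_1z_2} and relates them to $\cA^A_{W_1,W_2,W_3}$ by the same parallel-transport formula used in $\cC$; since vertex tensor category associativity isomorphisms are uniquely determined by such characterizing conditions, this forces agreement with the construction of \cite{HLZ8}. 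Putting these identifications together, the identity functor becomes an isomorphism of braided monoidal supercategories, which proves (3) and hence, by the preceding remarks, also (1) and (2).

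\textbf{Main obstacle.} The genuinely hard part is everything upstream of Corollary \ref{PztensprodsinrepzA}: establishing Theorem \ref{thm:repAtocomplex} and Theorem \ref{thm:complextoformal}, which together translate the purely categorical associativity and skew-associativity conditions defining $\repA$-intertwining operators into the complex-analytic associativity, skew-associativity, and (crucially) the existence of a common multivalued analytic function on $\{(z_1,z_2)\mid z_1,z_2,z_1-z_2\neq 0\}$ that are needed to invoke the formal-delta-function arguments yielding the Jacobi identity. The subtlety here is analytic rather than algebraic: one must carefully track branches of logarithm across the regions $S_1$, $S_2$, and their analytic continuations, use the convergence hypotheses (Assumptions \ref{assum:conv}, \ref{assum:convext}, \ref{assum:threeconv}) to guarantee the relevant series converge and extend with at worst regular singularities, and handle the parity sign factors consistently throughout. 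Once those bridge theorems are in hand, the remaining comparisons of unit, braiding, and associativity isomorphisms are essentially a matter of matching explicit formulas, since in each case the vertex-algebraic structure isomorphism is characterized by a universal property and the Kirillov-Ostrik isomorphism is shown to satisfy the same characterization. A secondary, more routine obstacle is keeping the superalgebra sign conventions aligned between the two frameworks — in particular ensuring that the sign factor in the definition of tensor product of morphisms in $\sC$ matches the sign factor in the vertex-algebraic tensor product of module homomorphisms \eqref{homtensprod} — but this has largely been dealt with in the preceding sections and requires only assembling the pieces.
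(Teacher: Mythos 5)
Your proposal is correct and follows essentially the same approach as the paper: Theorem \ref{thm:repzABTCvrtxSalg} is presented as the culmination of Sections \ref{subsec:complexIntw}--\ref{subsec:VTConRepARep0A}, with Corollary \ref{PztensprodsinrepzA} (resting on Theorems \ref{repzAintwopcorrect}, \ref{thm:complextoformal}, and \ref{thm:repAtocomplex}) identifying the tensor products, and then the explicit formula comparisons in Section \ref{subsec:VTConRepARep0A} matching the unit, braiding, and associativity isomorphisms. Your assessment of where the real difficulty lies — the analytic bridge theorems translating categorical $\repA$-intertwining operators into vertex-algebraic ones — also matches the paper's own account in the Introduction.
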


\begin{rema}\label{rema:strongHKL}
 As mentioned above, the isomorphism between $\repzA$ and the category of grading-restricted generalized $A$-modules in $\cC$ was obtained in Theorem 3.4 of \cite{HKL}, and in Theorem 3.14 of \cite{CKL} for the superalgebra generality. In Theorem \ref{thm:repzABTCvrtxSalg}, we have considerably strengthened these results by showing that the isomorphism is actually an isomorphism of braided monoidal supercategories.
\end{rema}

\begin{rema}
 An entirely analogous theorem to Theorem \ref{thm:repzABTCvrtxSalg} shows that the braided tensor category structure on the subcategory $\underline{\repzA}$ of $\repzA$ consisting only of even morphisms is equivalent to the braided tensor category structure induced from vertex tensor category structure on the category of $A$-modules in $\cC$ with only even morphisms. The proof of such a theorem is exactly the same as the proof of Theorem \ref{thm:repzABTCvrtxSalg} (except that all sign factors involving parity of morphisms may be deleted) since all structure morphisms discussed in the previous subsections are even. 
%
\end{rema}

\subsection{Induction as a vertex tensor functor}\label{subsec:vrtxtensfunctor}

In Theorems \ref{thm:inductionfucntor} and \ref{thm:Fisbraidedtensor}, we showed that induction is a tensor functor on $\sC$ and a braided tensor functor on the subcategory $\sC^0$ of objects that induce to $\repzA$. It is easy to see that $\sC^0$ is a vertex tensor category, since it inherits parallel transport, unit, associativity, and braiding isomorphisms from $\sC$; moreover, $\sC^0$ is closed under the $P(1)$-tensor product (and thus all $P(z)$-tensor products) because induction is a tensor functor. In this subsection, we show that induction $\cF: \sC^0\rightarrow\repzA$ is compatible with all the vertex tensor category structures on $\sC^0$ and $\repzA$. That is, induction is a \textit{vertex tensor functor} in the following sense:
\begin{enumerate}
	\item There is an even isomorphism $\varphi: \cF(V)\rightarrow A$.
	
	\item For every $z\in\CC^\times$, there is an even natural isomorphism $f_{P(z)}: \cF\circ\boxtimes_{P(z)}\rightarrow\boxtimes^A_{P(z)}\circ(\cF\times\cF)$.
	
	\item The natural isomorphisms $f_{P(z)}$ are compatible with parallel transport isomorphisms in $\sC^0$ and $\repzA$ in the sense that the diagram
	\begin{equation*}
	\xymatrixcolsep{5pc}
	\xymatrix{
	 \cF(W_1\boxtimes_{P(z_1)} W_2) \ar[r]^{\cF(T_{\gamma; W_1, W_2})} \ar[d]_{f_{P(z_1); W_1, W_2}} & \cF(W_1\boxtimes_{P(z_2)} W_2) \ar[d]^{f_{P(z_2); W_1,W_2}} \\
	 \cF(W_1)\boxtimes^A_{P(z_1)} \cF(W_2) \ar[r]^{T^A_{\gamma; \cF(W_1), \cF(W_2)}} & \cF(W_1)\boxtimes^A_{P(z_2)} \cF(W_2) \\
	 }
	\end{equation*}
commutes for any modules $W_1$, $W_2$ in $\sC^0$ and any continuous path $\gamma$ from $z_1$ to $z_2$ in $\CC^\times$.
	
	\item The natural isomorphisms $f_{P(z)}$ and the isomorphism $\varphi$ are compatible with the unit isomorphisms in $\sC^0$ and $\repzA$ in the sense that the diagrams
	\begin{align*}
	&\xymatrixcolsep{4pc}
	\xymatrix{
		\cF(V\boxtimes_{P(z)} W) \ar[r]^{f_{P(z); V, W}} \ar[d]_{\cF(l_{P(z); W})} & 
		\cF(V)\boxtimes^A_{P(z)} \cF(W) \ar[d]^{\varphi\boxtimes^A_{P(z)} 1_{\cF(W)}} \\
		\cF(W) \ar[r]^{(l^A_{P(z); \cF(W)})^{-1}} & A\boxtimes^A_{P(z)} \cF(W)\\
	}
	\end{align*}
	and
	\begin{align*}
	&\xymatrixcolsep{4pc}
	\xymatrix{
		\cF(W\boxtimes_{P(z)} V) \ar[r]^{f_{P(z); W, V}} \ar[d]_{\cF(r_{P(z); W})} & 
		\cF(W)\boxtimes^A_{P(z)} \cF(V) \ar[d]^{1_{\cF(W)}\boxtimes^A_{P(z)} \varphi} \\
		\cF(W) \ar[r]^{(r^A_{P(z); \cF(W)})^{-1}} & \cF(W)\boxtimes^A_{P(z)} A\\
	}\nonumber
	\end{align*}
	commute for any object $W$ in $\sC^0$, $z\in\CC^\times$.
	
	\item The natural isomorphisms $f_{P(z)}$ are compatible with the associativity isomorphisms in $\sC^0$ and $\repzA$ in the sense that the diagram
	\begin{equation}\label{zFassoc}
	\xymatrixcolsep{6pc}
	\xymatrix{
		\cF(W_1\boxtimes_{P(z_1)}(W_2\boxtimes_{P(z_2)} W_3)) 
		\ar[r]^{\cF(\cA_{z_1,z_2})} \ar[d]^{ f_{P(z_1); W_1,W_2\boxtimes_{P(z_2)} W_3}} & 
		\cF((W_1\boxtimes_{P(z_1-z_2)} W_2)\boxtimes_{P(z_2)} W_3) \ar[d]_{f_{P(z_2); W_1\boxtimes_{P(z_1-z_2)} W_2,W_3}}\\
		\cF(W_1)\boxtimes^A_{P(z_1)} \cF(W_2\boxtimes_{P(z_2)} W_3) 
		\ar[d]^{1_{\cF(W_1)}\boxtimes^A_{P(z_1)} f_{P(z_2); W_2,W_3}} & 
		\cF(W_1\boxtimes_{P(z_1-z_2)} W_2)\boxtimes^A_{P(z_2)} \cF(W_3) \ar[d]_{f_{P(z_1-z_2); W_1,W_2}\boxtimes^A_{P(z_2)} 1_{\cF(W_3)}}\\
		\cF(W_1)\boxtimes^A_{P(z_1)} (\cF(W_2)\boxtimes^A_{P(z_2)} \cF(W_3)) \ar[r]^-{\mathcal{A}^A_{z_1,z_2}} & 
		(\cF(W_1)\boxtimes^A_{P(z_1-z_2)} \cF(W_2))\boxtimes^A_{P(z_2)} \cF(W_3)\\
	}
	\end{equation}
	commutes for all modules $W_1$, $W_2$, $W_3$ in $\sC^0$ and $z_1, z_2\in\CC^\times$ such that $\vert z_1\vert>\vert z_2\vert>\vert z_1-z_2\vert>0$.
	\item 
	The natural isomorphisms $f_{P(z)}$ are compatible with the braidings in $\sC^0$ and $\repzA$ in the sense that the diagram
	\begin{equation}\label{zBr}
	\xymatrixcolsep{4pc}
	\xymatrix{
	\cF(W_1\boxtimes_{P(\pm z)}W_2)\ar[r]^-{\cF(\cR^{\pm}_{P(\pm z)})}
	\ar[d]^-{f_{P(\pm z); W_1,W_2}} &
	\cF(W_2\boxtimes_{P(\mp z)}W_1)
	\ar[d]^-{f_{P(\mp z); W_2,W_1}}\\
	\cF(W_1)\boxtimes^A_{P(\pm z)}\cF(W_2)\ar[r]^-{\cR^{A;\pm}_{P(\pm z)}}&
	\cF(W_2)\boxtimes^A_{P(\mp z)}\cF(W_1)
	}
	\end{equation}
	commutes for all modules $W_1,W_2$ in $\sC^0$ and $z\in\CC^\times$.
\end{enumerate}

\begin{theo}\label{thm:Fisvertextensor}
The induction functor $\cF:\sC^0\rightarrow\repzA$  is a vertex tensor functor with respect to the vertex tensor category structure on $\sC^0$ inherited from $\cC$ and the vertex tensor category structure on $\repzA$ given in Theorem \ref{thm:repzABTCvrtxSalg}.
\end{theo}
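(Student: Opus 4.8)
The plan is to verify the six compatibility conditions in the definition of vertex tensor functor one at a time, leaning heavily on results already established in Sections \ref{sec:TensCats} and \ref{sec:vtc}. The key observation to exploit throughout is that, for modules $W_1$, $W_2$ in $\sC^0$, the vertex algebraic $P(z)$-tensor product $\boxtimes^A_{P(z)}$ on $\repzA$ (Corollary \ref{PztensprodsinrepzA}) uses the $P(z)$-intertwining map $\cY_{W_1,W_2}(\cdot,e^{\log z})\cdot$, while for $z=1$ this is exactly the categorical tensor product $\boxtimes_A$ of Section \ref{sec:TensCats}. Moreover, $P(z)$-tensor products in $\cC$ (hence in $\sC^0$) are related to the $P(1)$-tensor product by parallel transport isomorphisms (Subsection \ref{subsubsec:paralleltransp}), and the same is true in $\repzA$ by the discussion following Corollary \ref{PztensprodsinrepzA} and \eqref{RepzAparatrans}. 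So the strategy is: take the isomorphism $\varphi:\cF(V)\to A$ and the natural isomorphism $f=f_{P(1)}:\cF\circ\boxtimes\to\boxtimes^A_{P(1)}\circ(\cF\times\cF)$ from Theorem \ref{thm:inductionfucntor}, and \emph{define} $f_{P(z)}$ for general $z$ by conjugating $f$ with the appropriate parallel transport isomorphisms on both sides, i.e.\ $f_{P(z);W_1,W_2}=(T^A_{1\to z})^{-1}\circ f_{W_1,W_2}\circ\cF(T_{1\to z})$ (noting that the $\repzA$-parallel transports $T^A_{1\to z}$ are in fact trivial under our realization of $P(z)$-tensor products, which will simplify matters considerably). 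Then condition (3), compatibility with parallel transports, becomes essentially automatic from Proposition \ref{compofgamma} and the naturality of the parallel transport isomorphisms; it also handles arbitrary monodromy paths $\gamma$ via \eqref{RepzAparatrans} and Corollary \ref{Fmonodromy}.

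Next I would handle conditions (1), (4), and (6). Condition (1) is just the statement that $\varphi=\sright_A$ is an isomorphism in $\repA$, already proved in Theorem \ref{thm:inductionfucntor}. For the unit compatibility (4), at $z=1$ the two diagrams are exactly \eqref{Funitcompat}, already established; for general $z$, transport the diagrams along $T_{1\to z}$ and $T^A_{1\to z}$ and use the naturality of $l_{P(z)}$, $r_{P(z)}$, $l^A_{P(z)}$, $r^A_{P(z)}$ together with the relations $l_W=l_{P(z)}\circ T_{1\to z}$ etc.\ recorded in Subsections \ref{subsubsec:unit} and the analogous statements in Subsection \ref{subsec:VTConRepARep0A}. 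For the braiding compatibility (6), the key input is \eqref{eqn:inducedbraiding} from Theorem \ref{thm:Fisbraidedtensor}, which says $\cF(\sR_{W_1,W_2})=g_{W_2,W_1}\circ\cR^A_{\cF(W_1),\cF(W_2)}\circ f_{W_1,W_2}$, i.e.\ exactly the $z=1$ version of \eqref{zBr} (with $\cR_{P(1)}=T_{-1\to1}\circ\cR^+_{P(1)}$ and $\cR^A_{P(1)}=T^A_{-1\to1}\circ\cR^{A;+}_{P(1)}$, and both $T_{-1\to 1}$, $T^A_{-1\to 1}$ handled by parallel transport compatibility). The general-$z$ statement then follows by conjugating with parallel transports, using Proposition \ref{prop:relntoR+R-} which expresses $\cR_{P(z)}^{\pm}$ in terms of $\cR_{P(1)}^{\pm}$ and parallel transports, together with the corresponding statements in $\repzA$ after Corollary \ref{PztensprodsinrepzA}.

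The main obstacle will be condition (5), the compatibility of $f_{P(z)}$ with the $P(z_1),P(z_2)$-associativity isomorphisms $\cA_{z_1,z_2}$ and $\cA^A_{z_1,z_2}$. Here one cannot simply invoke Theorem \ref{thm:inductionfucntor}'s diagram \eqref{Fassoc} directly, because that concerns the single associativity isomorphism $\cA^A_{\cF(W_1),\cF(W_2),\cF(W_3)}$ for $z$-values near $(r_1,r_2)\in S_1$, not the analytically-continued $\cA_{z_1,z_2}$ for arbitrary $z_1,z_2$ with $|z_1|>|z_2|>|z_1-z_2|>0$. The plan is: first establish \eqref{zFassoc} for $(z_1,z_2)\in S_1$ (and for real $(r_1,r_2)$ with $r_1>r_2>r_1-r_2>0$) by combining \eqref{Fassoc} with Remark \ref{rem:assocparallel}, Theorem \ref{thm:associnrepzA}, and parallel transport compatibility; then extend to arbitrary admissible $(z_1,z_2)$ using the analytic continuation formula \eqref{Az_1z_2general} (which expresses $\cA_{z_1,z_2}$ via $\cA_{W_1,W_2,W_3}$ and certain monodromy paths $\gamma$, $\tilde\gamma$ depending only on $(z_1,z_2)$), together with the analogous formula \eqref{repzaAz_1z_2} in $\repzA$ and the fact that $\cF$ commutes with the relevant parallel transports $T_\gamma$, $T_{\tilde\gamma}$ by condition (3). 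Since the paths $\gamma$, $\tilde\gamma$ are the same on both sides, the naturality of $f$ with respect to these parallel transports closes the diagram. The bookkeeping of which associativity and parallel transport isomorphisms appear, and checking that $f_{P(z)}$ conjugates correctly through all of them, is the genuinely delicate part; everything else reduces to applying results already in hand.
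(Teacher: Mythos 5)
Your proposal follows essentially the same route as the paper's proof: define $\varphi=\sright_A$ and $f_{P(z)}$ by conjugating $f=f_{P(1)}$ with parallel transports, then reduce each of the six compatibility conditions to the $P(1)$ statements from Theorems \ref{thm:inductionfucntor} and \ref{thm:Fisbraidedtensor} using the parallel-transport relations (in particular \eqref{Az_1z_2general} and \eqref{repzaAz_1z_2} for associativity, and Proposition \ref{prop:relntoR+R-} for braiding). One minor slip: your displayed formula $f_{P(z);W_1,W_2}=(T^A_{1\to z})^{-1}\circ f_{W_1,W_2}\circ\cF(T_{1\to z})$ has the transports going the wrong way and does not typecheck as a map $\cF(W_1\boxtimes_{P(z)}W_2)\to\cF(W_1)\boxtimes^A_{P(z)}\cF(W_2)$; it should read $f_{P(z);W_1,W_2}=T^A_{1\to z}\circ f_{W_1,W_2}\circ\cF(T_{z\to 1})$, as in the paper.
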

\begin{proof}
Throughout this proof, $W$ and $W_i$ for $i=1,2,3$ will denote objects of $\sC^0$. We will derive the required properties using $\varphi=r_A: \cF(V)\rightarrow A$ and
\begin{align*}
f_{P(z); W_1, W_2} = T^A_{1\to z}\circ f_{W_1,W_2}\circ\cF(T_{z\to 1}): \cF(W_1\boxtimes_{P(z)} W_2)\rightarrow\cF(W_1)\boxtimes^A_{P(z)} \cF(W_2)
\end{align*}
for $z\in\CC^\times$ and modules $W_1$, $W_2$ in $\sC^0$,
using properties of parallel transports and the fact that $\cF$ is a braided monoidal functor. First note that for each $z\in\CC^\times$, $f_{P(z)}$ is an even natural isomorphism since it is a composition of even natural isomorphisms.

To show that the $f_{P(z)}$ are compatible with parallel transports, suppose $\gamma$ is a continuous path from $z_1$ to $z_2$ in $\CC^\times$ and let $\widetilde{\gamma}$ denote the path from $1$ to itself obtained by first following a path from $1$ to $z_1$ in $\CC^\times$ with a cut along the positive real axis, then following $\gamma$ from $z_1$ to $z_2$, and then following a path from $z_2$ to $1$ in $\CC^\times$ with a cut along the positive real axis. Thus $T^{(A)}_{\widetilde{\gamma}}=T^{(A)}_{z_2\to 1}\circ T^{(A)}_{\gamma}\circ T^{(A)}_{1\to z_1}$. By Proposition \ref{prop:monoandtrans} and Remark \ref{vrtxAmonodromy}, $T_{\widetilde{\gamma}; W_1, W_2}=\cM_{W_1,W_2}^p$ and $T^A_{\widetilde{\gamma}; \cF(W_1), \cF(W_2)}=(\cM^A_{\cF(W_1),\cF(W_2)})^p$, where $p\in\ZZ$ gives the monodromy of $\widetilde{\gamma}$. Using these facts together with
Corollary \ref{Fmonodromy}, we calculate
\begin{align*}
 f_{P(z_2); W_1, W_2}\circ  \cF(T_{\gamma; W_1, W_2}) & = T^A_{1\to z_2}\circ f_{W_1,W_2}\circ\cF(T_{z_2\to 1})\circ\cF(T_{\gamma})\nonumber\\
 &= T^A_{1\to z_2}\circ f_{W_1,W_2}\circ\cF(T_{\widetilde{\gamma}})\circ\cF(T_{z_1\to 1})\nonumber\\
 & = T^A_{1\to z_2}\circ f_{W_1,W_2}\circ\cF(\cM_{W_1,W_2}^p)\circ\cF(T_{z_2\to 1})\nonumber\\
 &= T^A_{1\to z_2}\circ(\cM^A_{\cF(W_1),\cF(W_2)})^p\circ f_{W_1,W_2}\circ\cF(T_{z_1\to 1})\nonumber\\
 & =T^A_{1\to z_2} \circ T^A_{\widetilde{\gamma}}\circ f_{W_1,W_2}\circ\cF(T_{z_1\to 1})\nonumber\\
 & = T^A_\gamma\circ T^A_{1\to z_1}\circ f_{W_1,W_2}\circ\cF(T_{z_1\to 1} = T^A_{\gamma; \cF(W_1),\cF(W_2)}\circ f_{P(z_1); W_1, W_2},
\end{align*}
as desired.

We prove that $f_{P(z)}$ and $\varphi$ are compatible with the left unit isomorphisms.
Every morphism in the following diagram is invertible.
The outer rectangle commutes since $\cF$ is a monoidal functor,
the top by definition of $f_{P(z)}$, the left by properties of unit morphisms in $\sC$, 
the bottom by properties of unit morphisms in $\repA$, and the right by
naturality of parallel transports. Hence, the square in the middle commutes.
\begin{align*}
\xymatrixcolsep{2.5pc}
\xymatrix@R-0.5pc{
\cF(V\boxtimes W) \ar[rrr]^{f_{V, W}} \ar[ddd]_-{\cF(\sleft_{W})} 
\ar[dr]^-{\cF(T_{1\rightarrow z})}
& &  &  
\cF(V)\boxtimes_A \cF(W) \ar[ddd]^{\varphi\boxtimes_A 1_{\cF(W)}}
\ar[dl]_-{T^A_{1\rightarrow z}} \\
& \cF(V\boxtimes_{P(z)} W) \ar[r]^-{f_{P(z); V, W}} \ar[d]_{\cF(l_{P(z); W})} & 
\cF(V)\boxtimes^A_{P(z)} \cF(W) \ar[d]^{\varphi\boxtimes^A_{P(z)} 1_{\cF(W)}} & \\
&	\cF(W) \ar[r]^{(l^A_{P(z); \cF(W)})^{-1}} & A\boxtimes^A_{P(z)} \cF(W) & \\
\cF(W) \ar[ru]^{1_{\cF(W)}=\cF(1_W)} \ar[rrr]^-{(l^A_{\cF(W)})^{-1}} & & & A\boxtimes_A \cF(W)
\ar[lu]^-{T^A_{1\rightarrow z}}.}
\end{align*}
The proof of the right unit property is exactly similar.

To prove that \eqref{zFassoc} commutes, first recall from \eqref{Az_1z_2general} and \eqref{repzaAz_1z_2} that in both $\sC^0$ and $\repzA$ (suppressing module subscripts from isomorphisms),
\begin{align*}
 ((T^{(A)}_{\widetilde{\gamma}})^{-1} & \boxtimes^{(A)}_{P(z_2)} 1)\circ  \cA^{(A)}_{z_1,z_2}\circ T^{(A)}_\gamma\nonumber\\
 &= (T^{(A)}_{1\to z_1-z_2}\boxtimes^{(A)}_{P(z_2)} 1)\circ T^{(A)}_{1\to z_2}\circ\cA^{(A)}\circ T^{(A)}_{z_1\to 1}\circ(1\boxtimes^{(A)}_{P(z_1)} T^{(A)}_{z_2\to 1}),
\end{align*}
where $\gamma$ and $\widetilde{\gamma}$ are certain paths from $z_1$ to itself and $z_1-z_2$ to itelf, respectively, that depend only on $(z_1, z_2)$. (Recall that with our realization of $P(z)$-tensor products in $\repzA$, parallel transports for paths where the branch of logarithm does not change are the identity.) Thus
\begin{align*}
 T^{(A)}_{z_2\to 1}\circ(T^{(A)}_{\widetilde{\gamma}'}\boxtimes^{(A)}_{P(z_2)} 1)\circ\cA^{(A)}_{z_1,z_2} = \cA^{(A)}\circ T^{(A)}_{\gamma'}\circ(1\boxtimes^{(A)}_{P(z_1)} T^{(A)}_{z_2\to 1}),
\end{align*}
where $\gamma'$ is a path from $z_1$ to $1$ obtained by first following the reversal of $\gamma$ and then following a path from $z_1$ to $1$ in $\CC^\times$ with a cut along the positive real axis, and where $\widetilde{\gamma}'$ is a path from $z_1-z_2$ to $1$ obtained by first following the reversal of $\widetilde{\gamma}$ and then following a path from $z_1-z_2$ to $1$ in $\CC^\times$ with a cut along the positive real axis. Using this relation, together with the definition of the $f_{P(z)}$, the naturality of the $f_{P(z)}$, the compatibility of the $f_{P(z)}$ with parallel transports, and the naturality of parallel transports, we obtain the following two commutative diagrams. Here, to save space, we use the notation $\widetilde{\cdot}$ for $\cF(\cdot)$, we use $z_0=z_1-z_2$, and we suppress module subscripts on most isomorphisms:
\begin{equation*}
  \xymatrixcolsep{4pc}
 \xymatrix{
 \widetilde{W_1\fus{P(z_1)}(W_2\fus{P(z_2)}W_3)} \ar[r]^{\widetilde{1\fus{P(z_1)} T_{z_2\to 1}}} \ar[d]^{f_{P(z_1)}} & \widetilde{W_1\fus{P(z_1)}(W_2\fus{}W_3)} \ar[r]^{\widetilde{T}_{\gamma'}} \ar[d]^{f_{P(z_1)}} & \widetilde{W_1\fus{}(W_2\fus{}W_3)} \ar[d]^{f_{W_1,W_2\fus{}W_3}} \\
 \widetilde{W}_1\boxtimes^A_{P(z_1)}(\widetilde{W_2\fus{P(z_2)}W_3}) \ar[r]^{1\boxtimes^A_{P(z_1)}\widetilde{T}_{z_2\to 1}} \ar[d]^{1\boxtimes^A_{P(z_1)} f_{P(z_2)}} & \widetilde{W}_1\boxtimes^A_{P(z_1)}(\widetilde{W_2\fus{}W_3}) \ar[r]^{T^A_{\gamma'}} \ar[d]^{1\boxtimes^A_{P(z_1)} f_{W_1,W_2}} & \widetilde{W}_1\fus{A}(\widetilde{W_2\fus{}W_3}) \ar[d]^{1\fus{A} f_{W_1,W_2}} \\
 \widetilde{W}_1\boxtimes^A_{P(z_1)}(\widetilde{W}_2\boxtimes^A_{P(z_2)}\widetilde{W}_3) \ar[r]^{1\boxtimes^A_{P(z_1)} T^A_{z_2\to 1}} \ar[d]^{\cA^A_{z_1,z_2}} & \widetilde{W}_1\boxtimes^A_{P(z_1)}(\widetilde{W}_2\fus{A}\widetilde{W}_3) \ar[r]^{T^A_{\gamma'}} & \widetilde{W}_1\fus{A}(\widetilde{W}_2\fus{A}\widetilde{W}_3) \ar[d]^{\cA^A_{\widetilde{W}_1,\widetilde{W}_2,\widetilde{W}_3}} \\
 (\widetilde{W}_1\boxtimes^A_{P(z_0)}\widetilde{W}_2)\boxtimes^A_{P(z_2)}\widetilde{W}_3 \ar[r]^{T^A_{\widetilde{\gamma}'}\boxtimes^A_{P(z_2)} 1} & (\widetilde{W}_1\fus{A}\widetilde{W}_2)\boxtimes^A_{P(z_2)}\widetilde{W}_3 \ar[r]^{T^A_{z_2\to 1}} & (\widetilde{W}_1\fus{A}\widetilde{W}_2)\fus{A}\widetilde{W}_3 \\
 }
\end{equation*}
and
\begin{equation*}
 \xymatrixcolsep{4pc}
 \xymatrix{
 \widetilde{W_1\fus{P(z_1)}(W_2\fus{P(z_2)}W_3)} \ar[r]^{\widetilde{1\fus{P(z_1)} T_{z_2\to 1}}} \ar[d]^{\widetilde{\cA}_{z_1,z_2}} & \widetilde{W_1\fus{P(z_1)}(W_2\fus{}W_3)} \ar[r]^{\widetilde{T}_{\gamma'}} & \widetilde{W_1\fus{}(W_2\fus{}W_3)} \ar[d]^{\widetilde{\cA}_{W_1,W_2,W_3}} \\
 \widetilde{(W_1\fus{P(z_0)} W_2)\fus{P(z_2)} W_3} \ar[r]^{\widetilde{T_{\widetilde{\gamma}'}\fus{P(z_2)} 1}} \ar[d]^{f_{P(z_2)}} & \widetilde{(W_1\fus{}W_2)\fus{P(z_2)} W_3} \ar[r]^{\widetilde{T}_{z_2\to 1}} \ar[d]^{f_{P(z_2)}} & \widetilde{(W_1\fus{}W_2)\fus{}W_3} \ar[d]^{f_{W_1\fus{}W_2,W_3}} \\
 (\widetilde{W_1\fus{P(z_0)} W_2})\boxtimes^A_{P(z_2)} \widetilde{W}_3 \ar[r]^{\widetilde{T}_{\widetilde{\gamma}'}\boxtimes^A_{P(z_2)} 1} \ar[d]^{f_{P(z_0)}\boxtimes^A_{P(z_2)} 1} & (\widetilde{W_1\fus{}W_2})\boxtimes^A_{P(z_2)} \widetilde{W}_3 \ar[r]^{T^A_{z_2\to 1}} \ar[d]^{f_{W_1,W_2}\boxtimes^A_{P(z_2)} 1} & (\widetilde{W_1\boxtimes W_2})\fus{A}\widetilde{W}_3 \ar[d]^{f_{W_1,W_2}\fus{A} 1} \\
 (\widetilde{W}_1\boxtimes_{P(z_0)}^A \widetilde{W}_2)\boxtimes_{P(z_2)}^A \widetilde{W}_3 \ar[r]^{T^A_{\widetilde{\gamma}'}\boxtimes_{P(z_2)}^A 1} & (\widetilde{W}_1\fus{A}\widetilde{W}_2)\boxtimes_{P(z_2)}^A\widetilde{W}_3 \ar[r]^{T^A_{z_2\to 1}} & (\widetilde{W}_1\fus{A}\widetilde{W}_2)\fus{A}\widetilde{W}_3 \\
 }
\end{equation*}
The right columns of these two diagrams are equal because $\cF$ is a tensor functor, so the left columns are also equal, proving that the $f_{P(z)}$ are compatible with the associativity isomorphisms in $\sC^0$ and $\repzA$.

For proving that \eqref{zBr} commutes, we use the following diagram:
\begin{equation}\label{zBrproof}
\xymatrix{
\cF(W_1\boxtimes W_2) \ar[rrr]^-{\cF(\sR_{W_1,W_2})\,\,or\,\,\cF(\sR^{-1}_{W_2,W_1})}
\ar[ddd]^-{f_{W_1,W_2}}
\ar[dr]^-{\cF(T_{1\rightarrow \pm z})}
& & & 
\cF(W_2\boxtimes W_1)
\ar[ddd]^-{f_{W_2,W_1}}
\ar[dl]_-{\cF(T_{1\rightarrow \mp z})}
\\
& 	\cF(W_1\boxtimes_{P(\pm z)}W_2)\ar[r]^-{\cF(\cR^{\pm}_{P(\pm z)})}
	\ar[d]^-{f_{P(\pm z); W_1, W_2}} &
	\cF(W_2\boxtimes_{P(\mp z)}W_1)
	\ar[d]^-{f_{P(\mp z); W_2, W_1}} & \\
&	\cF(W_1)\boxtimes^A_{P(\pm z)}\cF(W_2)\ar[r]^-{\cR^{A;\pm}_{P(\pm z)}}&
	\cF(W_2)\boxtimes^A_{P(\mp z)}\cF(W_1) &\\
\cF(W_1)\boxtimes_A \cF(W_2) 
\ar[rrr]^-{\cR^{A}_{W_1,W_2}\,\,or\,\,(\cR^A_{W_2,W_1})^{-1}}
\ar[ur]^-{T^A_{1\rightarrow \pm z}}
& & & \cF(W_2)\boxtimes_A \cF(W_1)
\ar[ul]^-{T^A_{1\rightarrow \mp z}}	
}
\end{equation}
where every arrow is invertible, the outer square commutes by Theorem \ref{thm:Fisbraidedtensor},
the top and bottom squares commute by \cref{prop:relntoR+R-},
and the left and right squares commute by definition of $f_{P(z)}$.
\end{proof}

\subsection{Some analytic lemmas}
\label{subsec:technical}

Here we state and prove a proposition that was used in the proof of Theorem \ref{thm:repAtocomplex}. Also, for use in the next section, we generalize to $\repA$ some results on ideals and submodules which are well known for ordinary $A$-modules, that is, objects of $\repzA$ (see for instance \cite{LL}).

For now, the setting is the same as in Section \ref{subsec:VTC}; in particular, $\cC$ is a vertex tensor category of modules for a vertex operator superalgebra. The following result is similar to \cite[Proposition 12.13]{HLZ8}:
\begin{propo}\label{braidinganalogue}
	Let $W_1$, $W_2$, and $W_3$ be modules in $\mathcal{C}$ and suppose $r_1, r_2\in\RR$ satisfy $r_2>r_1>2(r_2-r_1)>0$. Then
	\begin{align*}
	&\overline{(\cR_{P(r_2-r_1)}^-\boxtimes_{P(r_2)} 1_{W_3})\circ T_{r_1\rightarrow r_2}}\left((w_1\boxtimes_{P(r_2-r_1)} w_2)\boxtimes_{P(r_1)} w_3\right)\nonumber\\
	&\hspace{10em}=(-1)^{\vert w_1\vert\vert w_2\vert}\left(\mathcal{Y}_{\boxtimes_{P(r_1-r_2)},0}(w_2, e^{l_{-1/2}(r_1-r_2)})w_1\right)\boxtimes_{P(r_2)}w_3
	\end{align*}
	for all $w_3\in W_3$ and parity-homogeneous $w_1\in W_1$, $w_2\in W_2$.
\end{propo}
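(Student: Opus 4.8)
The plan is to reduce the claimed identity to the defining characterizations of the morphisms involved, namely \eqref{tgammachar}/\eqref{zonetoztwochar} for the parallel transport $T_{r_1\to r_2}$, \eqref{Rminusdef} for $\cR^-_{P(r_2-r_1)}$, \eqref{homtensprod} for the tensor product of morphisms, and the $L(0)$-conjugation property of intertwining operators (Proposition 3.36(b) of \cite{HLZ2}, used extensively above). First I would apply $\overline{T_{r_1\to r_2}}$ to the element $(w_1\boxtimes_{P(r_2-r_1)} w_2)\boxtimes_{P(r_1)} w_3$; by \eqref{zonetoztwochar} this produces $\cY_{\boxtimes_{P(r_2)},0}\!\big((w_1\boxtimes_{P(r_2-r_1)} w_2), e^{\log r_1}\big)w_3$, an element of $\overline{(W_1\boxtimes_{P(r_2-r_1)} W_2)\boxtimes_{P(r_2)} W_3}$ living in the iterate region (which is why the hypothesis $r_2>r_1>2(r_2-r_1)>0$ is imposed: it guarantees all the nested tensor-product elements converge and lie in the appropriate domains). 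Then I would apply $\overline{\cR^-_{P(r_2-r_1)}\boxtimes_{P(r_2)} 1_{W_3}}$, using \eqref{homtensprod} to pull the morphism inside the outer intertwining operator and pick up the sign $(-1)^{\vert\cR^-_{P(r_2-r_1)}\vert\,\vert w_1\boxtimes_{P(r_2-r_1)}w_2\vert}$, which is trivial since $\cR^-$ is even; the remaining sign $(-1)^{\vert w_1\vert\vert w_2\vert}$ comes entirely from \eqref{Rminusdef} applied to $\overline{\cR^-_{P(r_2-r_1)}}(w_1\boxtimes_{P(r_2-r_1)} w_2)$.

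The one subtlety, and the step I expect to be the main obstacle, is the bookkeeping of branches of logarithm. Formula \eqref{Rminusdef} gives $\overline{\cR^-_{P(z)}}(w_1\boxtimes_{P(z)} w_2) = (-1)^{\vert w_1\vert\vert w_2\vert} e^{zL(-1)}\cY_{\boxtimes_{P(-z)},0}(w_2, e^{l_{-1/2}(-z)})w_1$ with $z = r_2-r_1$, so $-z = r_1-r_2 < 0$ and the relevant branch is $l_{-1/2}(r_1-r_2)$, which is exactly the branch appearing in the desired conclusion. The $e^{zL(-1)}$ factor must be absorbed: since $\cR^-_{P(z)}$ is a $V$-module homomorphism, it commutes with the $e^{r_1 L(-1)}$-type translations implicit in the other intertwining operators, and one uses the commutation relations for $L(-1)$ with the tensor-product intertwining maps (as in the analogous computations in Propositions \ref{R+R-} and \ref{prop:relntoR+R-}) together with the $L(0)$-conjugation property to move everything into the standard form $\big(\cY_{\boxtimes_{P(r_1-r_2)},0}(w_2, e^{l_{-1/2}(r_1-r_2)})w_1\big)\boxtimes_{P(r_2)} w_3$. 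The computation is entirely parallel to the one carried out in the proof of Proposition \ref{prop:relntoR+R-}, just with the outer $W_3$-slot carried along passively via \eqref{homtensprod}.

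Concretely, after these substitutions one arrives at an expression of the form
\begin{equation*}
(-1)^{\vert w_1\vert\vert w_2\vert}\, e^{?L(0)}\Big(\cY_{\boxtimes_{P(r_1-r_2)},0}(w_2, e^{l_{-1/2}(r_1-r_2)})w_1\Big)\boxtimes_{P(r_2)} w_3,
\end{equation*}
where the $L(0)$-exponent collects the discrepancies between $l_{-1/2}(r_1-r_2)$, $\log(r_1-r_2)$, and the branch introduced by the parallel transport; by the hypotheses on $r_1,r_2$ (which force $l_{-1/2}(r_1-r_2)$ to be the branch continuously connected to $\log(r_2-r_1)$ along the relevant path) this exponent is zero, and the $L(0)$-conjugation property collapses the whole expression to the right-hand side. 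I would then invoke Proposition \ref{opmapiso} (or simply Corollary 7.17 in \cite{HLZ5}, as is done elsewhere in this section) to conclude that the two morphisms agree, since they agree on a spanning set of the form $(w_1\boxtimes_{P(r_2-r_1)} w_2)\boxtimes_{P(r_1)} w_3$. No genuinely new ideas are needed beyond those already deployed in Propositions \ref{R+R-}, \ref{prop:relntoR+R-}, and \ref{associndofr}; the work is purely in tracking the branch cuts correctly.
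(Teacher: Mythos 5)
Your proposal correctly identifies the starting point (unwind the definitions of $T_{r_1\to r_2}$, $\cR^-_{P(r_2-r_1)}$, and the tensor product of morphisms, and track the branch of logarithm coming from $l_{-1/2}$) but it misses the central difficulty of this proposition, which is analytic rather than combinatorial. After applying \eqref{zonetoztwochar} and \eqref{Rminusdef} one is left with a pairing of the form
\begin{equation*}
\sum_{n\in\RR}\Big\langle w',\, \mathcal{Y}_{\boxtimes_{P(r_2)},0}\Big(\pi_n\big(e^{(r_2-r_1)L(-1)}\,\mathcal{Y}_{\boxtimes_{P(r_1-r_2)},0}(w_2,e^{l_{-1/2}(r_1-r_2)})w_1\big),\,e^{\log r_1}\Big)w_3\Big\rangle,
\end{equation*}
and to ``absorb'' $e^{(r_2-r_1)L(-1)}$ you need to expand it as $\sum_{j\ge 0}\tfrac{(r_2-r_1)^j}{j!}L(-1)^j$, commute $L(-1)^j$ past $\pi_n$ (turning it into $\pi_{n-j}$), and then reindex and resum so the $L(-1)$-derivative property can be applied to $\mathcal{Y}_{\boxtimes_{P(r_2)},0}$. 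This reindexing requires absolute convergence of the resulting double sum over $n$ and $j$, which is not available a priori — and this is exactly the reason the $W_3$-slot cannot be ``carried along passively,'' contrary to what you assert: the nested tensor product forces a projection $\pi_n$ between the two intertwining operators, and $e^{(r_2-r_1)L(-1)}$ is trapped inside it. This is not a branch-bookkeeping issue.

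The computations in Propositions \ref{R+R-} and \ref{prop:relntoR+R-}, which you cite as the model, are genuinely simpler: there the $e^{zL(-1)}$ factor sits outside the whole intertwining operator, so $L(0)$-conjugation can be applied once, globally, with no interchange of infinite sums. Here that fails, and the actual proof is substantially harder: it introduces a deformation parameter $\varepsilon$, proves absolute convergence of the relevant double sum on a domain $U$ with $\varepsilon=0$ only on its boundary (using Propositions 12.7 of \cite{HLZ8}), establishes — via a self-contained lemma about exchanging $e^{\varepsilon L(-1)}$ with an intertwining operator — that both sides define analytic functions of $\varepsilon$ in a neighborhood of $0$, and only then concludes the identity at $\varepsilon=0$ by analytic continuation. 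Your outline, as written, would not survive the step where you assert ``one arrives at an expression of the form $(-1)^{|w_1||w_2|}e^{?L(0)}(\cdots)\boxtimes_{P(r_2)}w_3$'': the rearrangement needed to get there is precisely what has to be justified.
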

\begin{rema}
	In the statement of Proposition \ref{braidinganalogue}, we could replace $(r_1, r_2)$ with $(z_1, z_2)$ coming from a larger set (such as an appropriate connected set related to $S_2$). But here we choose to restrict $r_1, r_2\in\RR$  for simplicity and with an eye towards applying Proposition \ref{extskewassoc}.
\end{rema}

\begin{proof}
	By definition (see \eqref{zonetoztwochar} and \eqref{Rminusdef}) we have
	\begin{align*}
	& \langle w',\overline{(\cR_{P(r_2-r_1)}^-\boxtimes_{P(r_2)} 1_{W_3})\circ T_{r_1\rightarrow r_2}}((w_1\boxtimes_{P(r_2-r_1)} w_2)\boxtimes_{P(r_1)} w_3)\rangle\nonumber\\
	& =\sum_{n\in\RR}\left\langle w', \overline{(\cR_{P(r_2-r_1)}^-\boxtimes_{P(r_2)} 1_{W_3})\circ T_{r_1\rightarrow r_2}}\left(\pi_n(w_1\boxtimes_{P(r_2-r_1)} w_2)\boxtimes_{P(r_1)} w_3\right)\right\rangle\nonumber\\
	& =\sum_{n\in\RR} \left\langle w', \mathcal{Y}_{\boxtimes_{P(r_2)},0}\left(\pi_n((-1)^{\vert w_1\vert\vert w_2\vert}e^{(r_2-r_1)L(-1)}\mathcal{Y}_{\boxtimes_{P(r_1-r_2)},0}(w_2,e^{l_{-1/2}(r_1-r_2)})w_1), e^{\mathrm{log}\,r_1}\right)w_3\right\rangle\nonumber\\
	& =(-1)^{\vert w_1\vert\vert w_2\vert}\sum_{n\in\RR}\sum_{j\geq 0}\dfrac{(r_2-r_1)^j}{j!}\cdot\nonumber\\
	&\hspace{5em}\cdot\left\langle w', \mathcal{Y}_{\boxtimes_{P(r_2)},0}\left( L(-1)^j\pi_{n-j}(\mathcal{Y}_{\boxtimes_{P(r_1-r_2)},0}(w_2, e^{l_{-1/2}(r_1-r_2)})w_1), e^{\mathrm{log}\,r_1}\right)w_3\right\rangle
	\end{align*}
	for $w'\in((W_1\boxtimes_{P(r_1-r_2)} W_2)\boxtimes_{P(r_2)} W_3)'$.
	Now we would like to use the $L(-1)$-derivative property and the Taylor theorem, but we cannot do this directly because we would need to rearrange the iterated sum over $n$ and $j$ by replacing $\pi_{n-j}$ with $\pi_n$, but we cannot assume the double sum over $n$ and $j$ converges absolutely. To deal with this problem, we consider the open set $U\subseteq\CC$ consisting of $\varepsilon\in\CC$ such that 
	\begin{equation*}
	r_1-2(r_2-r_1)>\vert\varepsilon\vert,\,\,\vert r_2-r_1+\varepsilon\vert>r_2-r_1,\,\,\mathrm{and}\,\,\vert r_2+\varepsilon\vert>r_1.
	\end{equation*}
	From the assumptions on $r_1$ and $r_2$, $U$ is non-empty and $\varepsilon=0$ is on the boundary of $U$. For notational convenience, we also set $\mathcal{Y}_{\boxtimes_{P(r_2)},0}=\mathcal{Y}^1$, $\mathcal{Y}_{\boxtimes_{P(r_1-r_2)},0}=\mathcal{Y}^2$, and $r_1-r_2=r_0$. 
	
	Now from Proposition 12.7, part 4 in \cite{HLZ8}, the double sum
	\begin{align*}
	\sum_{m,n\in\RR} & \left\langle w', \mathcal{Y}^1(\pi_m(\Omega(Y_{W_2\boxtimes_{P(r_0)} W_1})(\pi_n(\mathcal{Y}^2(w_2, e^{l_{-1/2}(r_0)})w_1), -r_0+\varepsilon)\mathbf{1}), e^{\mathrm{log}\,r_1})w_3\right\rangle\nonumber\\
	& =\sum_{m,n\in\RR}\left\langle w',\mathcal{Y}^1(\pi_m(e^{(-r_0+\varepsilon)L(-1)}\pi_n(\mathcal{Y}^2(w_2, e^{l_{-1/2}(r_0)})w_1)), e^{\mathrm{log}\,r_1})w_3\right\rangle
	\end{align*}
	for $w'\in((W_1\boxtimes_{P(r_1-r_2)} W_2)\boxtimes_{P(r_2)} W_3)'$ is absolutely convergent when 
	\begin{equation*}
	\vert -r_0+\varepsilon\vert>\vert r_0\vert >0\,\,\mathrm{and}\,r_1>\vert r_0\vert+\vert -r_0+\varepsilon\vert>0.
	\end{equation*}
	In particular, the double sum is absolutely convergent when $\varepsilon\in U$, so from now on we assume $\varepsilon\in U$. Since the double sum converges absolutely, we can write it as an iterated sum as follows:
	\begin{align*}
	\sum_{m,n\in\RR} & \left\langle w',\mathcal{Y}^1(\pi_m(e^{(-r_0+\varepsilon)L(-1)}\pi_n(\mathcal{Y}^2(w_2, e^{l_{-1/2}(r_0)})w_1)), e^{\mathrm{log}\,r_1})w_3\right\rangle\nonumber\\
	& =\sum_{m\in\RR}\sum_{n\in\RR} \left\langle w',\mathcal{Y}^1(\pi_m(e^{(-r_0+\varepsilon)L(-1)}\pi_n(\mathcal{Y}^2(w_2, e^{l_{-1/2}(r_0)})w_1)), e^{\mathrm{log}\,r_1})w_3\right\rangle\nonumber\\
	& =\sum_{m\in\RR}\sum_{j\geq 0}\dfrac{(-r_0+\varepsilon)^j}{j!}\left\langle w',\mathcal{Y}^1(L(-1)^j\pi_{m-j}(\mathcal{Y}^2(w_2, e^{l_{-1/2}(r_0)})w_1), e^{\mathrm{log}\,r_1})w_3\right\rangle\nonumber\\
	& =\sum_{m\in\RR}\sum_{j\geq 0}\dfrac{(-r_0+\varepsilon)^j}{j!}\left\langle w',\mathcal{Y}^1(\pi_{m}(L(-1)^j\mathcal{Y}^2(w_2, e^{l_{-1/2}(r_0)})w_1), e^{\mathrm{log}\,r_1})w_3\right\rangle\nonumber\\
	& =\sum_{m\in\RR} \left\langle w',\mathcal{Y}^1(\pi_{m}(e^{(-r_0+\varepsilon)L(-1)}\mathcal{Y}^2(w_2, e^{l_{-1/2}(r_0)})w_1), e^{\mathrm{log}\,r_1})w_3\right\rangle.
	\end{align*}
	
	On the other hand, because of the absolute convergence of the double sum, we can reindex the third sum above and use the $L(-1)$-derivative property for intertwining operators to obtain
	\begin{align*}
	\sum_{m\in\RR}\sum_{j\geq 0} & \dfrac{(-r_0+\varepsilon)^j}{j!}\left\langle w',\mathcal{Y}^1(L(-1)^j\pi_{m-j}(\mathcal{Y}^2(w_2, e^{l_{-1/2}(r_0)})w_1), e^{\mathrm{log}\,r_1})w_3\right\rangle\nonumber\\
	& =\sum_{m\in\RR}\sum_{j\geq 0}\left.\dfrac{(-r_0+\varepsilon)^j}{j!}\left(\dfrac{d}{d x}\right)^j\left\langle w',\mathcal{Y}^1(\pi_{m}(\mathcal{Y}^2(w_2, e^{l_{-1/2}(r_0)})w_1), x)w_3\right\rangle\right\vert_{x=e^{\mathrm{log}\,r_1}}\nonumber\\
	& =\sum_{m\in\RR}\left.\left\langle w',\mathcal{Y}^1(\pi_{m}(\mathcal{Y}^2(w_2, e^{l_{-1/2}(r_0)})w_1), x-r_0+\varepsilon)w_3\right\rangle\right\vert_{x=e^{\mathrm{log}\,r_1}}.
	\end{align*}
	Since $r_1-r_0+\varepsilon=r_2+\varepsilon$, we need to determine the correct branch of $\mathrm{log}(r_2+\varepsilon)$ to use when we substitute $x=e^{\mathrm{log}\,r_1}$. Now,
	\begin{equation*}
	\mathrm{log}(x-r_0+\varepsilon)\vert_{x=e^{\mathrm{log}\,r_1}}=\left.\left(\mathrm{log}\,x+\mathrm{log}\left(1+\dfrac{-r_0+\varepsilon}{x}\right)\right)\right\vert_{x=e^{\mathrm{log}\,r_1}}=\mathrm{log}\,r_1+\mathrm{log}\left(1+\dfrac{-r_0+\varepsilon}{r_1}\right),
	\end{equation*}
	where $\mathrm{log}\,\left(1+\frac{-r_0+\varepsilon}{r_1}\right)$ represents the standard power series for $\mathrm{log}(1+x)$ evaluated at $x=\frac{-r_0+\varepsilon}{r_1}$. Since $\vert\varepsilon\vert<r_1-2 \vert r_0\vert$ for $\varepsilon\in U$, we have $\vert -r_0+\varepsilon\vert<\vert r_1\vert$, as is necessary for the power series to converge. Moreover, $1+(-r_0+\varepsilon)/r_1=(r_2+\varepsilon)/r_1$ is contained in the disk of radius $1$ centered at $1$. Since the power series $\mathrm{log}(1+x)$ converges to the branch of logarithm which equals $0$ at $1$ and is holomorphic on the disk of radius $1$ centered at $1$, we see that we must have
	\begin{equation*}
	\mathrm{log}\left(1+\dfrac{-r_0+\varepsilon}{r_1}\right)=l_{-1/2}\left(\dfrac{r_2+\varepsilon}{r_1}\right)=l_{-1/2}(r_2+\varepsilon)-\mathrm{log}\,r_1,
	\end{equation*}
	where the second equality holds because $r_1$ is a positive real number. Thus we conclude that $\mathrm{log}(x-r_0+\varepsilon)\vert_{x=e^{\mathrm{log}\,r_1}}=l_{-1/2}(r_2+\varepsilon)$ for $\varepsilon\in U$.

	To summarize, we have now shown that
	\begin{align}
	\sum_{m\in\RR} & \left\langle w',\mathcal{Y}^1(\pi_{m}(e^{(-r_0+\varepsilon)L(-1)}\mathcal{Y}^2(w_2, e^{l_{-1/2}(r_0)})w_1), e^{\mathrm{log}\,r_1})w_3\right\rangle\label{firstseries}\\
	& = \sum_{m\in\RR}\left\langle w',\mathcal{Y}^1(\pi_{m}(\mathcal{Y}^2(w_2, e^{l_{-1/2}(r_0)})w_1), e^{l_{-1/2}(r_2+\varepsilon)})w_3\right\rangle\label{secondseries}
	\end{align}
	for all $\varepsilon\in U$. Our goal is to show that they are also equal for $\varepsilon=0$, in which case \eqref{firstseries} is
	\begin{equation*}
	(-1)^{\vert w_1\vert\vert w_2\vert}\left\langle w', \overline{(\cR_{P(r_2-r_1)}^-\boxtimes_{P(r_2)} 1_{W_3})\circ T_{r_1\rightarrow r_2}}((w_1\boxtimes_{P(r_2-r_1)} w_2)\boxtimes_{P(r_1)} w_3)\right\rangle
	\end{equation*}
	and \eqref{secondseries} is
	\begin{align*}
	& \left\langle w', \mathcal{Y}_{\boxtimes_{P(r_2)},0}\left(\mathcal{Y}_{\boxtimes_{P(r_1-r_2)},0}(w_2, e^{l_{-1/2}(r_1-r_2)})w_1, e^{l_{-1/2}(r_2)}\right) w_3\right\rangle \nonumber\\
	& \hspace{6em}=\left\langle w', \mathcal{Y}_{\boxtimes_{P(r_2)},0}\left(\mathcal{Y}_{\boxtimes_{P(r_1-r_2)},0}(w_2, e^{l_{-1/2}(r_1-r_2)})w_1, e^{\mathrm{log}\,r_2}\right) w_3\right\rangle \nonumber\\
	&\hspace{6em} =\left\langle w', \left(\mathcal{Y}_{\boxtimes_{P(r_1-r_2)},0}(w_2, e^{l_{-1/2}(r_1-r_2)})w_1\right)\boxtimes_{P(r_2)} w_3\right\rangle.
	\end{align*}
	It is enough to show that both series converge to analytic functions of $\varepsilon$ in a neighborhood of $0$, since their equality on $U$ will then show they are equal everywhere on their common domain, including $\varepsilon=0$.
	
	First, \eqref{secondseries}
	converges absolutely to an analytic function in a neighborhood of $\varepsilon=0$ since the iterate of intertwining operators converges to an analytic function of $\varepsilon$ in the region given by the conditions
	\begin{equation*}
	\vert r_2+\varepsilon\vert>\vert r_0\vert\,\,\,\mathrm{and}\,\,\,\mathrm{arg}(r_2+\varepsilon)\neq\pi
	\end{equation*}
	(\cite[Proposition 7.14]{HLZ5}). This region includes $\varepsilon=0$ because $r_2>\vert r_0\vert$. To analyze \eqref{firstseries},
	first note that $l_{-1/2}(r_0)=\mathrm{log}(-r_0)-\pi i$, so that
	\begin{equation*}
	e^{(-r_0+\varepsilon)L(-1)}\mathcal{Y}^2(w_2, e^{l_{-1/2}(r_0)})w_1=(-1)^{\vert w_1\vert\vert w_2\vert} e^{\varepsilon L(-1)}\Omega_{-1}(\mathcal{Y}^2)(w_1, e^{\mathrm{log}(-r_0)})w_2.
	\end{equation*}
	Now we need the following lemma:
	\begin{lemma}
		Suppose $\mathcal{Y}$ is an intertwining operator of type $\binom{W}{W_1\,W_2}$ where $W$ is a $V$-module. For all $m\in\RR$  and for $r$ a positive real number, there is a neighborhood $\widetilde{U}$ of $0$ in $\CC$ such that for $\varepsilon\in\widetilde{U}$,
		\begin{equation*}
		\pi_m(e^{\varepsilon L(-1)}\mathcal{Y}(w_1, e^{\mathrm{log}\,r})w_2)=\sum_{j\geq 0} \dfrac{\varepsilon^j}{j!} \pi_m(\mathcal{Y}(w_1, e^{l_{-1/2}(r+\varepsilon)}) L(-1)^j w_2)
		\end{equation*}
		for all $w_1\in W_1$ and $w_2\in W_2$.
	\end{lemma}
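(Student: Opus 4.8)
The plan is to obtain the identity from the standard translation (or $L(-1)$-conjugation) formula for intertwining operators, combined with a convergence argument supplied by our standing hypotheses on $\mathcal{C}$. First I would recall the formal identity
\[
e^{\varepsilon L(-1)}\cY(w_1,x)e^{-\varepsilon L(-1)}=\cY(w_1,x+\varepsilon),
\]
valid in the usual formal sense (the right-hand side meaning that each $(x+\varepsilon)^n$ and $\log(x+\varepsilon)=\log x+\log(1+\varepsilon/x)$ is expanded in nonnegative integral powers of $\varepsilon$); this follows from the $L(-1)$-derivative and $L(-1)$-bracket properties together with Taylor's theorem, as in \cite{HLZ2}. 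Applying it to $w_2$ and expanding $e^{\varepsilon L(-1)}w_2=\sum_{j\ge 0}\frac{\varepsilon^j}{j!}L(-1)^jw_2$ gives, as an element of $W[\log x]\{x\}[[\varepsilon]]$,
\[
e^{\varepsilon L(-1)}\cY(w_1,x)w_2=\sum_{j\ge 0}\frac{\varepsilon^j}{j!}\,\cY(w_1,x+\varepsilon)L(-1)^jw_2 .
\]
I would then apply $\pi_m$ to both sides and (after reducing to parity- and weight-homogeneous $w_1,w_2$ by linearity) substitute $x=e^{\log r}$.

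On the left this gives $\pi_m\bigl(e^{\varepsilon L(-1)}\cY(w_1,e^{\log r})w_2\bigr)=\sum_{j\ge 0}\frac{\varepsilon^j}{j!}L(-1)^j\pi_{m-j}\bigl(\cY(w_1,e^{\log r})w_2\bigr)$, which by lower truncation of $\cY$ is a \emph{finite} sum in $j$, hence a polynomial $P(\varepsilon)$ valued in $W_{[m]}$. On the right, for each fixed $j$ the coefficient $\pi_m\bigl(\cY(w_1,x+\varepsilon)L(-1)^jw_2\bigr)$ is a single power of $x+\varepsilon$ times finitely many powers of $\log(x+\varepsilon)$; expanding binomially in $\varepsilon$ and setting $x=e^{\log r}$, the resulting series converges for $|\varepsilon|<r$, and the branch $\log r+\log(1+\varepsilon/r)$ of $\log(r+\varepsilon)$ it produces is continuous on the half-plane $\mathrm{Re}(r+\varepsilon)>0$ and equals $\log r$ at $\varepsilon=0$, hence equals $l_{-1/2}(r+\varepsilon)$. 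Thus each summand equals $\pi_m\bigl(\cY(w_1,e^{l_{-1/2}(r+\varepsilon)})L(-1)^jw_2\bigr)$ as an analytic function of $\varepsilon$ near $0$, whose Taylor expansion at $0$ is exactly the formal $\varepsilon$-series on the right above.

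It remains to show the $j$-series $\sum_j\frac{\varepsilon^j}{j!}\pi_m\bigl(\cY(w_1,e^{l_{-1/2}(r+\varepsilon)})L(-1)^jw_2\bigr)$ converges on a neighborhood of $0$. Here I would use the creation formula $e^{\varepsilon L(-1)}w_2=\Omega(Y_{W_2})(w_2,\varepsilon)\unit$, where $\Omega(Y_{W_2})$ is the intertwining operator of type $\binom{W_2}{W_2\,V}$ obtained from the module vertex operator $Y_{W_2}$; since $\pi_n\bigl(\Omega(Y_{W_2})(w_2,\varepsilon)\unit\bigr)$ picks out $\tfrac{\varepsilon^{\,n-h_2}}{(n-h_2)!}L(-1)^{\,n-h_2}w_2$ (for $w_2$ of weight $h_2$), the $j$-series equals $\sum_n\pi_m\bigl(\cY(w_1,e^{l_{-1/2}(r+\varepsilon)})\pi_n(\Omega(Y_{W_2})(w_2,\varepsilon)\unit)\bigr)$, i.e.\ a product of the intertwining operator $\cY$ with $\Omega(Y_{W_2})$ evaluated on $\unit$. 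By the convergence hypothesis on products of intertwining maps (Assumption \ref{assum:conv}; cf.\ \cite{HLZ5}), this converges absolutely whenever $|e^{l_{-1/2}(r+\varepsilon)}|=|r+\varepsilon|>|\varepsilon|>0$, which holds for $0<|\varepsilon|<r/2$ and trivially at $\varepsilon=0$; I would take $\widetilde U=\{\,|\varepsilon|<r/2\,\}$. Both sides of the asserted identity are then analytic on $\widetilde U$ — the left a polynomial, the right a (locally uniformly) convergent power series — and by the displayed formal identity their Taylor expansions at $\varepsilon=0$ coincide, so they agree on $\widetilde U$.

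The step I expect to be the main obstacle is the convergence argument of the third paragraph: recognizing the right-hand $j$-series as a genuine product of intertwining operators via the creation formula so that Assumption \ref{assum:conv} applies, and keeping track of the fact that \emph{both} insertion points of this product depend analytically on $\varepsilon$. The remaining points — the branch identification $\log r+\log(1+\varepsilon/r)=l_{-1/2}(r+\varepsilon)$, the finiteness of the left-hand sum via lower truncation, and the reduction to homogeneous vectors — are routine.
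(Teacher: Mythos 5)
Your overall skeleton matches the paper's: show the projections (the paper works with pairings against all $w'\in W'$, which is equivalent) of the two sides are analytic near $\varepsilon=0$, observe that the left side is a polynomial via grading restriction, interpret the $j$-series on the right as a product of intertwining operators through the creation formula $e^{\varepsilon L(-1)}w_2=\Omega(Y_{W_2})(w_2,\varepsilon)\mathbf{1}$, invoke convergence of such products, and match Taylor coefficients at $\varepsilon=0$. The branch identification $\log r+\log(1+\varepsilon/r)=l_{-1/2}(r+\varepsilon)$ and the finiteness of the left side are handled correctly.

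The genuine gap is precisely where you anticipated trouble, namely the analyticity of the right-hand side through $\varepsilon=0$. Calling the $j$-series ``a (locally uniformly) convergent power series'' is incorrect: the insertion point $e^{l_{-1/2}(r+\varepsilon)}$ itself moves with $\varepsilon$, so each summand $\pi_m\bigl(\cY(w_1,e^{l_{-1/2}(r+\varepsilon)})L(-1)^jw_2\bigr)$ is a transcendental function of $\varepsilon$, not a monomial, and the series is an infinite sum of such functions. The convergence hypothesis gives absolute convergence of the product only for $|r+\varepsilon|>|\varepsilon|>0$; it says nothing at $\varepsilon=0$, and pointwise (``trivial'') convergence at $\varepsilon=0$ does not give analyticity through $\varepsilon=0$. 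The paper avoids this by introducing the two-variable function $p(z,\varepsilon)=\sum_j\frac{\varepsilon^j}{j!}\langle w',\cY(w_1,e^{l_{-1/2}(z)})L(-1)^jw_2\rangle$: Proposition~7.14 of \cite{HLZ5} gives joint analyticity on $|z|>|\varepsilon|>0$, $\arg z\neq\pi$, and then, \emph{with $z$ frozen}, the $j$-sum \emph{is} a genuine power series in $\varepsilon$ with positive radius of convergence, so $p$ extends analytically to $\varepsilon=0$; one finally restricts to the analytic curve $\varepsilon\mapsto(r+\varepsilon,\varepsilon)$. Without separating the variables, the power-series argument you need is unavailable. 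Once analyticity is in hand, the comparison of Taylor coefficients is legitimate, but it would be cleaner to compute the derivatives at $\varepsilon=0$ explicitly using $[L(-1),\cY(w_1,x)]=\cY(L(-1)w_1,x)=\frac{d}{dx}\cY(w_1,x)$, as the paper does, rather than relying on the phrase ``the displayed formal identity'' to do the rearrangement.
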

	\begin{proof}
		It is enough to show that
		\begin{equation*}
		\langle w', e^{\varepsilon L(-1)}\mathcal{Y}(w_1, e^{\mathrm{log}\,r})w_2\rangle=\sum_{j\geq 0} \dfrac{\varepsilon^j}{j!}\langle w', \mathcal{Y}(w_1, e^{l_{-1/2}(r+\varepsilon)}) L(-1)^j w_2\rangle
		\end{equation*}
		for any $w'\in W'$ and $\varepsilon$ in some neighborhood of $0$. We first observe that
		\begin{equation*}
		I(\varepsilon)= \langle w', e^{\varepsilon L(-1)}\mathcal{Y}(w_1, e^{\mathrm{log}\,r})w_2\rangle=\langle e^{\varepsilon L(1)} w', \mathcal{Y}(w_1, e^{\mathrm{log}\,r})w_2\rangle
		\end{equation*}
		is a polynomial in $\varepsilon$, and so converges for all $\varepsilon\in\CC$. On the other hand,
		\begin{equation*}
		p(z, \varepsilon)=\sum_{j\geq 0} \dfrac{\varepsilon^j}{j!}\langle w', \mathcal{Y}(w_1, e^{l_{-1/2}(z)}) L(-1)^j w_2\rangle=\langle w', \mathcal{Y}(w_1, e^{l_{-1/2}(z)})\Omega(Y_{W_2})(w_2,\varepsilon)\mathbf{1}\rangle
		\end{equation*}
		converges absolutely to a single-valued analytic function of $z$ and $\varepsilon$ in the region given by the conditions
		\begin{equation*}
		\vert z\vert>\vert\varepsilon\vert >0\,\,\,\mathrm{and}\,\,\,\mathrm{arg}\,z\neq\pi
		\end{equation*}
		(recall \cite[Proposition 7.14]{HLZ5}; we do not need to specify a branch of $\mathrm{log}\,\varepsilon$ or impose a condition on $\mathrm{arg}\,\varepsilon$ since $\Omega(Y_{W_2})(\cdot,x)\cdot$ has only integral powers of $x$). But in fact, $p(z,\varepsilon)$ remains analytic in $\varepsilon$ at $\varepsilon=0$ because
		\begin{equation*}
		\sum_{j\geq 0} \dfrac{\varepsilon^j}{j!}\langle w', \mathcal{Y}(w_1, e^{l_{-1/2}(z)}) L(-1)^j w_2\rangle
		\end{equation*}
		for $z$ fixed is a power series in $\varepsilon$ with positive radius of convergence at least $\vert z\vert$. Also, $p(z,0)$ is analytic when $\vert z\vert>0$ and $\mathrm{arg}\,z\neq\pi$. Now, since $r$ is a positive real number, there is a neighborhood $\widetilde{U}$ of $0$ such that $\vert r+\varepsilon\vert>\vert\varepsilon\vert$ and $\mathrm{arg}(r+\varepsilon)\neq\pi$ for $\varepsilon\in\widetilde{U}$. Then $P(\varepsilon)=p(r+\varepsilon,\varepsilon)$ is analytic in $\varepsilon$ on $\widetilde{U}$.
		
		Now, $I(\varepsilon)$ and $P(\varepsilon)$ are both analytic functions on $\widetilde{U}$. To show that they are equal on $\widetilde{U}$, it is sufficient to show that all derivatives of $I$ and $P$ agree at $\varepsilon=0$, because this will show that they have the same power series expansion at $0$ and thus are equal in a neighborhood of $0$. In fact, for $K\geq 0$,
		\begin{align}\label{Ideriv}
		\left.\left(\dfrac{d}{d\varepsilon}\right)^K I(\varepsilon)\right\vert_{\varepsilon=0} & =\left.\langle w', e^{\varepsilon L(-1)} L(-1)^K\mathcal{Y}(w_1, e^{\mathrm{log}\,r})w_2\rangle\right\vert_{\varepsilon=0}\nonumber\\
		&=\sum_{k=0}^K \binom{K}{k}\langle w', \mathcal{Y}(L(-1)^k w_1, e^{\mathrm{log}\,r})L(-1)^{K-k} w_2\rangle,
		\end{align}
		where for the second equality we use the $L(-1)$-commutator formula
		\begin{equation*}
		L(-1)\mathcal{Y}(w_1,x)w_2=\mathcal{Y}(L(-1)w_1,x)w_2+\mathcal{Y}(w_1,x)L(-1)w_2
		\end{equation*}
		(see for instance  \cite[Equation 3.28]{HLZ2}). On the other hand, using Equation 7.37 in the statement of \cite[Proposition 7.14]{HLZ5},
		\begin{align*}
		\left(\dfrac{d}{d\varepsilon}\right)^K  P(\varepsilon)& =\sum_{k=0}^K \binom{K}{k} \left.\dfrac{\partial^K}{\partial z^k\,\partial \varepsilon^{K-k}} p(z,\varepsilon)\right\vert_{z=r+\varepsilon}\nonumber\\
		&=\sum_{k=0}^{K}\binom{K}{k}\left.\left(\dfrac{d}{dx}\right)^k\langle w',\mathcal{Y}(w_1,x)e^{\varepsilon L(-1)}L(-1)^{K-k}w_2\rangle\right\vert_{x=e^{l_{-1/2}(r+\varepsilon)}}\nonumber\\
		& =\sum_{k=0}^K\binom{K}{k}\langle w',\mathcal{Y}(L(-1)^k w_1, e^{l_{-1/2}(r+\varepsilon)})e^{\varepsilon L(-1)}L(-1)^{K-k}w_2\rangle,
		\end{align*}
		which agrees with \eqref{Ideriv} at $\varepsilon=0$ because $\mathrm{log}\,r=l_{-1/2}(r)$. This completes the proof of the lemma.
	\end{proof}
	
	Continuing with the proof of the proposition, we can use the lemma to see that for a neighborhood of $\varepsilon=0$, we have the equality as series over $m\in\RR$:
	\begin{align*}
	\sum_{m\in\RR} & \left\langle w',\mathcal{Y}^1(\pi_{m}(e^{(-r_0+\varepsilon)L(-1)}\mathcal{Y}^2(w_2, e^{l_{-1/2}(r_0)})w_1), e^{\mathrm{log}\,r_1})w_3\right\rangle\nonumber\\
	& =(-1)^{\vert w_1\vert\vert w_2\vert}\sum_{m\in\RR} \left\langle w',\mathcal{Y}^1(\pi_{m}(e^{\varepsilon L(-1)}\Omega_{-1}(\mathcal{Y}^2)(w_1, e^{\mathrm{log}(-r_0)})w_2), e^{\mathrm{log}\,r_1})w_3\right\rangle\nonumber\\
	& =(-1)^{\vert w_1\vert\vert w_2\vert}\sum_{m\in\RR}\sum_{j\geq 0} \dfrac{\varepsilon^j}{j!} \left\langle w',\mathcal{Y}^1(\pi_{m}(\Omega_{-1}(\mathcal{Y}^2)(w_1, e^{l_{-1/2}(-r_0+\varepsilon)})L(-1)^j w_2), e^{\mathrm{log}\,r_1})w_3\right\rangle\nonumber\\
	& =(-1)^{\vert w_1\vert\vert w_2\vert}\sum_{m\in\RR}\sum_{n\in\RR} \left\langle w',\mathcal{Y}^1(\pi_{m}(\Omega_{-1}(\mathcal{Y}^2)(w_1, e^{l_{-1/2}(-r_0+\varepsilon)})\pi_n(\Omega(Y_{W_2})(w_2,\varepsilon)\mathbf{1})), e^{\mathrm{log}\,r_1})w_3\right\rangle.
	\end{align*}
	By Proposition 12.7, part 3, of \cite{HLZ8} and its proof, the last of the above series, considered as a double sum over $m,n\in\RR$, converges absolutely, when
	\begin{equation*}
	r_1>\vert -r_0+\varepsilon\vert>\vert\varepsilon\vert>0,
	\end{equation*}
	to an analytic function of $\varepsilon$ defined in some open neighborhood surrounding $0$ which might not include $0$ itself. However, we show that this double series remains analytic at $0$. Note that
	\begin{align}\label{epsilonpowerseries}
	\sum_{m,n\in\RR} & \left\langle w',\mathcal{Y}^1(\pi_{m}(\Omega_{-1}(\mathcal{Y}^2)(w_1, e^{l_{-1/2}(z_0)})\pi_n(\Omega(Y_{W_2})(w_2,\varepsilon)\mathbf{1})), e^{\mathrm{log}\,r_1})w_3\right\rangle\nonumber\\
	& =\sum_{n\in\RR}\sum_{m\in\RR} \left\langle w',\mathcal{Y}^1(\pi_{m}(\Omega_{-1}(\mathcal{Y}^2)(w_1, e^{l_{-1/2}(z_0)})\pi_n(e^{\varepsilon L(-1)} w_2)), e^{\mathrm{log}\,r_1})w_3\right\rangle\nonumber\\
	& =\sum_{j\geq 0} \dfrac{\varepsilon^j}{j!}\left\langle w',\mathcal{Y}^1(\Omega_{-1}(\mathcal{Y}^2)(w_1, e^{l_{-1/2}(z_0)})L(-1)^j w_2, e^{\mathrm{log}\,r_1})w_2\right\rangle
	\end{align}
	defines an analytic function in $z_0$ and $\varepsilon$ when $z_0$ is contained in a sufficiently small open neighborhood of $-r_0$ and when $\varepsilon$ is contained in a sufficiently small open neighborhood (possibly not including $0$ itself) surrounding $0$. But for each such $z_0$, \eqref{epsilonpowerseries} is a power series in $\varepsilon$ with positive radius of convergence, so the analytic function it defines is still analytic at $\varepsilon=0$. Then replacing $z_0$ with $-r_0+\varepsilon$ for $\varepsilon$ sufficiently small, we see that 
	\begin{equation*}
	\sum_{m,n\in\RR} \left\langle w',\mathcal{Y}^1(\pi_{m}(\Omega_{-1}(\mathcal{Y}^2)(w_1, e^{l_{-1/2}(-r_0+\varepsilon)})\pi_n(\Omega(Y_{W_2})(w_2,\varepsilon)\mathbf{1})), e^{\mathrm{log}\,r_1})w_3\right\rangle
	\end{equation*}
	is an analytic function of $\varepsilon$ at $0$, and this means that
	\eqref{firstseries} converges absolutely to an analytic function of $\varepsilon$ in an open set containing $0$.
	
	Finally, since \eqref{firstseries} and \eqref{secondseries} converge absolutely to analytic functions on some common open set containing $0$, since this common open set has a non-empty intersection with $U$ (since $0$ is contained in the boundary of $U$), and since the two analytic functions agree on $U$, we can conclude that \eqref{firstseries} and \eqref{secondseries} are equal when $\varepsilon=0$. This completes the proof of the proposition.
\end{proof}

Now we take as our setting an extension $V\subseteq A$ where $A$ is a vertex operator superalgebra containing $V$ in its even part. It is not necessary to assume $A$ is an object in a vertex tensor category of $V$-modules since modules in $\repA$ can be defined as in Proposition \ref{propo:YWcomplex}; we just need the relevant compositions of vertex operators to converge in their respective domains. 

We recall the notion of left (respectively, right and two-sided) ideal in $A$ (see for instance \cite[Definition 3.9.7 and Remark 3.9.8]{LL} in the vertex operator algebra setting):
a (not necessarily parity-graded) subspace $I\subseteq A$ such that for all $a\in A$, $b\in I$,  $Y(a,x)b\in I((x))$
(respectively, $Y(b,x)a\in I((x))$ and both $Y(b,x)a, Y(a,x)b\in I((x))$). Equivalently,
for all $z\in\CC^\times$, $Y(a,z)b\in \overline{I}$
(respectively, $Y(b,z)a\in \overline{I}$ and both $Y(b,z)a, Y(a,z)b\in \overline{I}$).
For an object $(W, Y_W)$ in $\repA$ and a subset $T\subseteq W$, define (see\ \cite[Equation 4.5.33]{LL})
\[\mathrm{Ann}_A(T)=\{a\in A\,|\, Y_W(a,x)t=0\,\,\mathrm{for\,\,all}\,\, t\in T\}.\]
\begin{lemma}\label{lem:parityhomonesided}
	A parity-graded one-sided ideal $I$ of a \VOSA{} $A$ is two-sided.
\end{lemma}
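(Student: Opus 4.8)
Suppose $I$ is a parity-graded left ideal of the vertex operator superalgebra $A$; the right ideal case is entirely analogous (or follows by applying the result to the opposite algebra, but I would not bother invoking that). The plan is to show $Y(b,x)a \in I((x))$ for all $a\in A$ and parity-homogeneous $b\in I$, and then extend by linearity. The key tool is skew-symmetry for $A$: $Y(b,x)a = (-1)^{|a||b|}e^{xL(-1)}Y(a,-x)b$ (in the superalgebra setting; this is the formula $e^{zL(-1)}Y(v_2,-z)v_1 = Y(v_1,z)v_2$ recalled near the start of the excerpt, adapted with the appropriate sign). Since $I$ is a left ideal, $Y(a,-x)b \in I((x))$, i.e. each coefficient of each power of $x$ lies in $I$. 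Applying $e^{xL(-1)} = \sum_{n\geq 0}\frac{x^n}{n!}L(-1)^n$ term by term produces a series whose coefficients are finite linear combinations of elements $L(-1)^n c$ with $c\in I$; so the whole point reduces to showing $L(-1)I \subseteq I$.

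To see $L(-1)I\subseteq I$, recall $L(-1) = \omega_0$ where $\omega$ is the conformal vector, so $L(-1)b$ is (up to nothing) the $x^0$-coefficient — more precisely the appropriate coefficient — of $Y(\omega,x)b$. Wait: more carefully, $Y(L(-1)b,x) = \frac{d}{dx}Y(b,x)$, which tells us $L(-1)$ acts on vertex operators but not directly that $L(-1)b\in I$. The clean argument instead uses that $\omega\in A$ and $I$ is a left ideal: $Y(\omega,x)b \in I((x))$, hence every coefficient $\omega_n b$ lies in $I$ for every $n\in\ZZ$; in particular $\omega_0 b = L(-1)b \in I$. By induction $L(-1)^n b\in I$ for all $n\geq 0$. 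This is where the parity-gradedness of $I$ is used only insofar as it lets us take $b$ parity-homogeneous so that the sign in skew-symmetry is a well-defined scalar; the containment $L(-1)^n b\in I$ does not itself need the grading, but applying skew-symmetry coefficient-wise to a homogeneous $b$ is cleanest.

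Assembling: for parity-homogeneous $b\in I$ and arbitrary $a\in A$, write $Y(a,-x)b = \sum_{m} (a_{-m-1}b)\, x^m$ with all $a_{-m-1}b\in I$, lower-truncated in the $x^{-1}$ direction (i.e. finitely many negative powers of $x$). Then
\[
Y(b,x)a = (-1)^{|a||b|}e^{xL(-1)}Y(a,-x)b = (-1)^{|a||b|}\sum_{n\geq 0}\sum_m \frac{x^{n+m}}{n!}\,L(-1)^n(a_{-m-1}b),
\]
and since each $L(-1)^n(a_{-m-1}b)\in I$ and the double sum is a well-defined element of $I((x))$ (the $e^{xL(-1)}$ factor only raises powers of $x$, so lower truncation is preserved), we conclude $Y(b,x)a\in I((x))$. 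Linearity in $b$ (decomposing a general $b\in I$ into its even and odd parts, which lie in $I$ by parity-gradedness) finishes the proof that $I$ is a right ideal, hence two-sided. The main obstacle — really the only subtlety — is bookkeeping the sign factor correctly in the super skew-symmetry relation and confirming lower truncation survives the application of $e^{xL(-1)}$; both are routine once one commits to working with parity-homogeneous $b$.
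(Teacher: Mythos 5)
Your proof is correct and follows the same route as the paper's: apply super skew-symmetry $Y(b,x)a = (-1)^{|a||b|}e^{xL(-1)}Y(a,-x)b$ for parity-homogeneous $a,b$ and reduce to the fact that $L(-1)$ preserves one-sided ideals, checking that $e^{xL(-1)}$ only raises powers of $x$ so that lower truncation is preserved. The one small difference is that the paper simply cites \cite[Remark 3.9.8]{LL} for $L(-1)$-stability of one-sided ideals, whereas you supply a short direct argument (in the left-ideal case via $L(-1)b = \omega_0 b \in I$ since $\omega\in A$; the right-ideal case would instead use $L(-1)b = b_{-2}\unit$, so ``entirely analogous'' is a mild overstatement, though that case is just as easy).
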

\begin{proof}
The result follows by observing that given parity-homogeneous elements $a\in A$, $b\in I$, we have 
\begin{equation*}
Y(a, x)b=(-1)^{|a||b|}e^{x L(-1)} Y(b, -x)a
\end{equation*}
using skew-symmetry, and moreover $L(-1)$ preserves one-sided ideals (see \cite[Remark 3.9.8]{LL}). 
\end{proof}	
Now we have the following analogue of \cite[Theorem 4.5.11]{LL}:
\begin{lemma}\label{lem:repAannisideal}
The annihilator $\mathrm{Ann}_A(T)$ is a left ideal of $A$. 
If $T$ contains the (non-zero) parity-homogeneous components of its elements, then $\mathrm{Ann}_A(T)$ is parity-graded and hence a two-sided ideal.
\end{lemma}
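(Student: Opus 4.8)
The plan is to prove the two assertions of Lemma \ref{lem:repAannisideal} essentially by ``complexifying'' the classical argument from \cite[Theorem 4.5.11]{LL}, using the associativity and skew-associativity properties of the intertwining operator $Y_W$ established in Proposition \ref{propo:YWcomplex} and the remark following it. Throughout I will work with the complex-analytic formulation: for $a\in A$, $w\in W$, and $z\in\CC^\times$, $Y_W(a,z)w\in\overline{W}$, and the relevant compositions of vertex operators and $Y_W$ converge in their indicated domains.

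First I would show $\mathrm{Ann}_A(T)$ is a left ideal. Fix $a\in A$, $b\in\mathrm{Ann}_A(T)$, and $t\in T$; the goal is $Y_W(Y(a,x)b,x')t=0$, equivalently (by Proposition \ref{opmapiso}) that $Y_W(Y(a,e^{\log(z_1-z_2)})b, e^{\log z_2})t=0$ for suitable $(z_1,z_2)$. By the associativity property of $Y_W$ (the first item of Proposition \ref{propo:YWcomplex}, valid for $(z_1,z_2)\in S_1$),
\begin{equation*}
\langle t', Y_W(Y(a,e^{\log(z_1-z_2)})b, e^{\log z_2})t\rangle = \langle t', Y_W(a,e^{\log z_1})Y_W(b,e^{\log z_2})t\rangle
\end{equation*}
for all $t'\in W'$. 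The right-hand side vanishes because $Y_W(b,e^{\log z_2})t$ is obtained from $Y_W(b,x)t=0$ by substitution, hence is zero in $\overline{W}$. Therefore the left-hand side defines the zero multivalued analytic function on $S_1$, and since $Y_W(Y(a,z_1-z_2)b,z_2)t$ is a multivalued analytic function on the connected region $|z_2|>|z_1-z_2|>0$ agreeing with it on the overlap, it is identically zero there; extracting coefficients via Proposition \ref{opmapiso} gives $Y(a,x)b\in\mathrm{Ann}_A(T)((x))$. Thus $\mathrm{Ann}_A(T)$ is a left ideal.

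Next I would establish that, under the hypothesis that $T$ contains the parity-homogeneous components of each of its elements, $\mathrm{Ann}_A(T)$ is parity-graded. Take $a\in\mathrm{Ann}_A(T)$ and write $a=a^\even+a^\odd$; I must show $a^\even, a^\odd\in\mathrm{Ann}_A(T)$. Given $t\in T$, writing $t=t^\even+t^\odd$ with $t^\even,t^\odd\in T$ by hypothesis, the evenness of $Y_W$ as an intertwining operator gives that $Y_W(a^i,x)t^j\in W^{i+j}[[x,x^{-1}]]\{x\}$ for $i,j\in\ztwo$, and these four pieces lie in distinct $\ztwo$-graded subspaces (two in $W^\even$, two in $W^\odd$) once we also separate by which summand $t^j$ they came from; concretely, $0 = Y_W(a,x)t^j = Y_W(a^\even,x)t^j + Y_W(a^\odd,x)t^j$ with the two summands of opposite parity, forcing each to vanish. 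Summing over $j$ and using $t=t^\even+t^\odd$ yields $Y_W(a^\even,x)t=Y_W(a^\odd,x)t=0$, so $a^\even,a^\odd\in\mathrm{Ann}_A(T)$. Hence $\mathrm{Ann}_A(T)$ is a parity-graded left ideal, and Lemma \ref{lem:parityhomonesided} immediately upgrades it to a two-sided ideal.

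The main obstacle is simply making the analytic-continuation step in the first paragraph fully rigorous rather than hand-wavy: one must invoke Assumption \ref{assum:conv} (or the convergence built into the definition of objects of $\repA$) to know the product $\langle t',Y_W(a,z_1)Y_W(b,z_2)t\rangle$ converges on $|z_1|>|z_2|>0$ and the iterate $\langle t',Y_W(Y(a,z_1-z_2)b,z_2)t\rangle$ converges on $|z_2|>|z_1-z_2|>0$, and then use Proposition \ref{equalmulti} together with the associativity relation holding on the particular simply-connected set $S_1$ to conclude equality of the full multivalued functions — from which vanishing of the iterate everywhere follows since a multivalued analytic function that vanishes on a non-empty open subset of a connected domain vanishes identically. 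Everything else (the parity bookkeeping, the appeal to skew-symmetry via Lemma \ref{lem:parityhomonesided}) is routine.
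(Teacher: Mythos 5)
Your proof is correct and follows essentially the same approach as the paper's: both invoke the associativity of $Y_W$ from Proposition \ref{propo:YWcomplex} to rewrite the iterate $\langle w', Y_W(Y(a,z_1-z_2)b,z_2)t\rangle$ as a product that vanishes because $Y_W(b,z_2)t=0$, then argue by analytic continuation and coefficient extraction (the paper fixes $z_2$ and views the resulting single-variable Laurent series in $z_0=z_1-z_2$; you phrase it via the two-variable multivalued function and Proposition \ref{opmapiso}, which amounts to the same thing once one recalls that $Y(a,x_0)b$ involves only integral powers of $x_0$). Your parity argument in the second paragraph is also identical to the paper's, down to the final appeal to Lemma \ref{lem:parityhomonesided}.
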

\begin{proof}
 The annihilator $\mathrm{Ann}_A(T)$  is clearly a subspace of $A$. Now we need to show that for any $a\in A$, $b\in\mathrm{Ann}_A(T)$, $n\in\ZZ$, and $t\in T$,
 \begin{equation*}
  Y_W(a_n b, x) t= 0.
 \end{equation*}
This is equivalent to
\begin{equation*}
 \langle w', Y_W(a_n b, x)t\rangle=0
\end{equation*}
for any $w'\in W'$, which, using the theory of unique expansion sets (see \cite[Definition 7.5 and Proposition 7.8]{HLZ5}) and our assumptions on the conformal weights of $W$, is equivalent to
\begin{equation*}
 \langle w', Y_W(a_n b, e^{\log z_2})t\rangle=0
\end{equation*}
for all $z_2$ in some non-empty open subset of $\CC^\times$. To show this, fix $z_2\in\CC^\times$ such that $\mathrm{Re}\,z_2$, $\mathrm{Im}\,z_2>0$. Then applying the associativity property of Proposition \ref{propo:YWcomplex} to $Y_W$ and using $b\in\mathrm{Ann}_A(T)$, we get
\begin{align*}
 \sum_{n\in\ZZ} \langle w', Y_W(a_n b, e^{\log z_2})t\rangle z_0^{-n-1} & =\langle w', Y_W(Y(a, z_0)b, e^{\log z_2})t\rangle\nonumber\\
 &=\langle w', Y_W(a, e^{\log(z_0+z_2)})Y_W(b, e^{\log z_2})t\rangle =0,
\end{align*}
for any $z_0\in\CC^\times$ such that $(z_0+z_2, z_2)\in S_1$. Such $z_0$ form a non-empty open set, so the (single-valued) meromorphic function $\sum_{n\in\ZZ} \langle w', Y_W(a_n b, e^{\log z_2})t\rangle z_0^{-n-1}$ is identically zero on its entire domain (all $z_0$ such that $0<\vert z_0\vert<\vert z_2\vert$). In particular, all coefficients of its Laurent series expansion are zero. Thus
\begin{equation*}
 \langle w', Y_W(a_n b, e^{\log z_2})t\rangle=0
\end{equation*}
for all $n\in\ZZ$ and all $z_2\in\CC^\times$ such that $\mathrm{Re}\,z_2$, $\mathrm{Im}\,z_2>0$. Since such $z_2$ form a non-empty open set, it follows that $\langle w', Y_W(a_n b, x)t\rangle =0$, as desired.
Thus $\mathrm{Ann}_A(T)$ is a left ideal. 

For the second statement, if $b\in\mathrm{Ann}_A(T)$ has parity decomposition $b=b^\even+b^\odd$, then for each non-zero parity-homogeneous $t\in T$, 
the coefficients of $Y_W(b^\even,x)t$ and $Y_W(b^\odd,x)t$ 
are homogeneous of different parities. 
Thus because their sum is zero, they are individually zero and $b^\even, b^\odd\in\mathrm{Ann}_A(T)$.
Now, $\mathrm{Ann}_A(T)$ is two-sided by \cref{lem:parityhomonesided}.
\end{proof}

Now we prove for objects in $\repA$ the analogue of \cite[Proposition 4.5.6]{LL}. For a module $(W, Y_W)$ in $\repA$ and a subset $T\subseteq W$, recall that the $A$-submodule $\langle T\rangle$ generated by $T$ is the smallest
subspace of $W$ containing $T$ and closed under the vertex operator $Y_W$. The submodule $\langle T\rangle$ is graded by conformal weights since it is closed under $L(0)$, but it may not be parity-graded.
\begin{lemma}\label{lem:genmodspan}
 The submodule $\langle T\rangle$ is spanned by the vectors $a_{n; k} t$ for $a\in A$, $n\in\RR$, $k\in\NN$, and $t\in T$. The parity-graded submodule generated by $T$ is spanned by the parity-homogeneous components of $a_{n;k} t$ for $a\in A$, $n\in\RR$, $k\in\NN$, and $t\in T$.
\end{lemma}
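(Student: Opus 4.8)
The plan is to prove the spanning statement by the usual closure argument: let $M$ denote the span of the vectors $a_{n;k}t$ for $a \in A$, $n \in \RR$, $k \in \NN$, $t \in T$, and show that $M$ is a submodule containing $T$ and contained in every submodule containing $T$. Containment of $T$ is immediate, since $\unit_{-1;0} t = Y_W(\unit,x)t\big|_{\text{coeff}} = t$ by the unit property of Proposition \ref{propo:YWcomplex}. Containment of $M$ in $\langle T\rangle$ is also clear since any submodule containing $T$ is closed under all the modes $a_{n;k}$. So the real content is to verify that $M$ is itself closed under $Y_W$, i.e.\ that $Y_W(b,x)(a_{n;k}t) \in M((x))[\log x]$ for all $b, a \in A$, $n \in \RR$, $k \in \NN$, $t \in T$; equivalently, for each fixed $z_2 \in \CC^\times$ with $\mathrm{Re}\,z_2, \mathrm{Im}\,z_2 > 0$, that $Y_W(b, e^{\log z_2})(a_{n;k}t)$ lies in $\overline{M}$, using the theory of unique expansion sets (Definition 7.5 and Proposition 7.8 in \cite{HLZ5}) together with our grading-restriction assumptions on $W$, exactly as in the proof of Lemma \ref{lem:repAannisideal}.

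The mechanism for this is the associativity property of $Y_W$ from Proposition \ref{propo:YWcomplex}. The point is that $a_{n;k}t$ can be extracted as a coefficient of $Y_W(a, e^{\log z_0'})t$ for $z_0'$ in a suitable region (here one should note that because $Y_W$ is a genuine intertwining operator it may involve non-integral powers of the variable and powers of $\log$, so ``coefficient of $z_0^{-n-1}(\log z_0)^k$'' must be understood via the expansion in Definition \ref{def:intwop}; the finitely-many-powers-of-$\log$ remark after Assumption \ref{assum:grading} ensures the bookkeeping stays finite). Then for $(z_1, z_2) \in S_1$ with $z_0 = z_1 - z_2$, associativity gives
\begin{equation*}
\langle w', Y_W(b, e^{\log z_1}) Y_W(a, e^{\log z_2})t\rangle = \langle w', Y_W(Y(b, e^{\log z_0})a, e^{\log z_2})t\rangle = \sum_{m \in \RR} \langle w', Y_W(b_m a, e^{\log z_2})t\rangle z_0^{-m-1},
\end{equation*}
so that the product $Y_W(b, e^{\log z_1}) Y_W(a, e^{\log z_2})t$, expanded appropriately, is a (convergent) combination of vectors of the form $Y_W(b_m a, e^{\log z_2})t$, each of which has all its coefficients (in $z_2$ and $\log z_2$) in $M$ since $b_m a \in A$. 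Taking the coefficient of $z_0^{-n-1}(\log z_0)^k$ on the left-hand side — via the $L(0)$-conjugation relation to pass between the variable $z_1$ and the variable $z_0 = z_1 - z_2$, following the extraction in the proof of Lemma \ref{lem:repAannisideal} — then exhibits $Y_W(b, e^{\log z_2})(a_{n;k}t)$ as an element of $\overline{M}$. Running this over a non-empty open set of $z_2$ and invoking unique expansion sets yields $Y_W(b,x)(a_{n;k}t) \in M[\log x]\{x\}$, and lower truncation plus grading restriction upgrade this to $M((x))[\log x]$, so $M$ is a submodule.

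The parity-graded statement follows formally: the parity-graded submodule $\langle T\rangle_{\bar 0 \oplus \bar 1}$ generated by $T$ is the submodule generated by the set $T'$ of parity-homogeneous components of elements of $T$ together with (equivalently) is the smallest parity-graded subspace containing $\langle T\rangle$; applying the first part to $\langle T\rangle$ and decomposing each spanning vector $a_{n;k}t$ into its even and odd parts (and noting that since $A$ has even structure, $Y_W$ is even, so the parity-homogeneous components of $a_{n;k}t$ are again obtained by applying parity-homogeneous modes of $A$ to parity-homogeneous components of $t$) gives the claimed spanning set. The main obstacle I expect is the careful handling of the coefficient-extraction in the presence of logarithmic terms and non-integral powers: one must be sure that ``the coefficient of $z_0^{-n-1}(\log z_0)^k$'' in the associativity identity is well-defined on both sides and that the manipulations (rearranging the convergent double series, passing from the variable $z_1$ to $z_0$) are justified. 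This is handled exactly as in the analytic arguments already used in the proof of Lemma \ref{lem:repAannisideal} and in Section \ref{subsec:complexIntw}, so it is bookkeeping rather than a genuinely new difficulty, but it is where the care is needed.
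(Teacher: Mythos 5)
Your overall strategy matches the paper's --- establish that the span $M$ of the $a_{n;k}t$ is closed under $Y_W$ via the associativity property of Proposition \ref{propo:YWcomplex} and unique expansion sets (\cite[Proposition 7.8]{HLZ5}), as in Lemma \ref{lem:repAannisideal} --- and the parity-graded half of the claim is handled the same way the paper does. But the coefficient-extraction step of your second paragraph is the wrong move and does not produce what you assert. You propose to take the coefficient of $z_0^{-n-1}(\log z_0)^k$ (with $z_0 = z_1 - z_2$) and claim this exhibits $Y_W(b, e^{\log z_2})(a_{n;k}t)$ as an element of $\overline{M}$ (you presumably mean $Y_W(b, e^{\log z_1})$ here; notice that between your first and second paragraphs the roles of $z_1$ and $z_2$ silently swap). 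The element $a_{n;k}t$ is a $z_2$-coefficient of the inner operator $Y_W(a, e^{\log z_2})t$, so $Y_W(b, e^{\log z_1})(a_{n;k}t)$ is the coefficient of $e^{(-n-1)\log z_2}(\log z_2)^k$ in the product with $z_1$ held fixed. The coefficient of $e^{(-m-1)\log z_0}$ in the iterate is instead $Y_W(b_m a, e^{\log z_2})t$ --- a vector in $\overline{M}$, but not the one you want; these two expansions live in different variables, and there is no ``$L(0)$-conjugation relation'' that trades one for the other (nor does any such step appear in Lemma \ref{lem:repAannisideal}).

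The fix --- and the mechanism the paper's proof actually uses --- is to work dually. First establish that $M$ is a $V$-submodule (by the logarithmic analogue of \cite[Proposition 4.5.7]{LL}, as in \cite[Proposition 4.5.6]{LL}), hence weight-graded, hence $(M^\perp)^\perp = M$ with $M^\perp\subseteq W'$, since each weight space is finite-dimensional. It then suffices to show $\langle w', Y_W(b, e^{\log z_1})(a_{n;k}t)\rangle = 0$ for all $w'\in M^\perp$ and $z_1$ in a non-empty open set. Fixing $z_1$ with $\mathrm{Re}\,z_1, \mathrm{Im}\,z_1 > 0$ and summing over $n,k$ against $e^{(-n-1)\log z_2}(\log z_2)^k$ reconstructs $\langle w', Y_W(b, e^{\log z_1})Y_W(a, e^{\log z_2})t\rangle$; associativity rewrites this as the pairing of $w'$ with the iterate, which vanishes because each $Y_W(\pi_m(Y(b, e^{\log(z_1-z_2)})a), e^{\log z_2})t$ lies in $\overline{M}$. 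Unique expansion sets then kill each coefficient, and the double-perp returns the claim. You dismiss this as ``bookkeeping,'' but the duality step is precisely what converts ``the iterate is valued in $\overline{M}$'' into ``the coefficients of the product lie in $M$,'' and your proposal neither performs it nor supplies a substitute.
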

\begin{proof}
 Set $\widetilde{W}$ to be the span of the vectors $a_{n; k} t$ for $a\in A$, $n\in\RR$, $k\in\NN$, and $t\in T$. To show that $\widetilde{W}=\langle T\rangle$, it is sufficient to show that $\widetilde{W}$ is closed under $Y_W$. We claim that $\widetilde{W}$ is graded by conformal weights. Indeed, it follows from the straightforward generalization of \cite[Proposition 4.5.7]{LL} to logarithmic intertwining operators among $V$-modules, as in the proof of \cite[Proposition 4.5.6]{LL}, that $\widetilde{W}$ is a $V$-submodule of $W$. In particular, $\widetilde{W}$ is $L(0)$-stable and thus graded by conformal weights (see \cite[Remark 2.13]{HLZ1}).

 Now set 
 $$\widetilde{W}^\perp =\lbrace w'\in W'\,\vert\,\langle w', \widetilde{w}\rangle=0\,\,\mathrm{for\,\,all}\,\,\widetilde{w}\in\widetilde{W}\rbrace.$$
 Since $\widetilde{W}$ is graded by conformal weights, so is $\widetilde{W}^\perp$. Then it follows that $\widetilde{W}=(\widetilde{W}^\perp)^\perp$ since this relation holds when restricted to each finite-dimensional weight space. In other words, $\widetilde{w}\in\widetilde{W}$ if and only if $\langle w',\widetilde{w}\rangle=0$ for all $w'\in \widetilde{W}^\perp$. Thus $\widetilde{W}=\langle T\rangle$ if and only if for all $a_1\in A$,
 \begin{equation*}
  \langle w', Y_W(a_1,x_1)(a_2)_{n; k} t\rangle=0
 \end{equation*}
for all $w'\in\widetilde{W}^\perp$, $a_2\in A$, $n\in\RR$, $k\in\NN$, $t\in T$. Using \cite[Proposition 7.8]{HLZ5}, this is equivalent to
\begin{equation*}
 \langle w', Y_W(a_1, e^{\log z_1})(a_2)_{n; k} t\rangle =0
\end{equation*}
for all $z_1$ in some non-empty open subset of $\CC^\times$. We take $z_1$ such that $\mathrm{Re}\,z_1, \mathrm{Im}\,z_1>0$. Then by the associativity property of Proposition \ref{propo:YWcomplex},
\begin{align*}
 \sum_{n\in\RR} \sum_{k\in\NN} & \langle w', Y_W(a_1, e^{\log z_1})(a_2)_{n; k} t\rangle e^{(-n-1)\log z_2} (\log z_2)^k = \langle w', Y_W(a_1, e^{\log z_1})Y_W(a_2, e^{\log z_2})t\rangle\nonumber\\
& = \langle w', Y_W(Y(a_1, e^{\log(z_1-z_2)})a_2, e^{\log z_2})t\rangle =\sum_{n\in\RR} \langle w', Y_W(\pi_n(Y(a_1, e^{\log(z_1-z_2)})a_2), e^{\log z_2})t\rangle = 0
\end{align*}
for all $z_2$ such that $(z_1,z_2)\in S_1$. Since such $z_2$ form a non-empty open set, \cite[Proposition 7.8]{HLZ5} implies that each coefficient of $e^{(-n-1)\log z_2} (\log z_2)^k$ in the first series above equals $0$, as desired.

Now the subspace spanned by the parity-homogeneous components of $a_{n; k} t$ for $a\in A$, $n\in\RR$, $k\in\NN$, and $t\in T$ is certainly parity-graded and contained in any parity-graded submodule of $W$ containing $T$. Then the above argument with $T$ replaced by the parity-homogeneous components of elements of $T$ shows that this subspace is a submodule. Thus the second conclusion of the lemma follows.
\end{proof}

\section{Applications}\label{sec:applications}

In this section, we apply our results on tensor categories
for \VOSA{} extensions, especially the induction functor, to several explicit examples.

\subsection{Examples of vertex operator superalgebra extensions}

In general, it is difficult to prove that one
indeed has a \VOSA{} extension. Therefore, we start with some strategies.
\subsubsection{Simple current extensions}\label{sec:simplecur}

Certain types of simple current extensions were studied in \cite{CKL}.
Here we summarize a few important results.
Let $\cC$ be a ribbon vertex tensor category of generalized modules for 
 a simple \VOA{} $V$.
Let $J$ be a simple current, that is, $J$ is simple and 
there exists a simple object $J^{-1}$ such that $J^{-1}\fus{} J\cong V$.
Let $c_{J,J}$ be the scalar such that
$\cR_{J,J}=c_{J,J} 1_{J\fus{}J}$ and let $\theta_J$ be the scalar by which the twist $\theta$ acts on $J$.
\begin{theo}\label{thm:spinstats}\textup{\cite[Corollary\ 2.8]{CKL}}
	The following relation (spin statistics) holds:
	\[ c_{J,J} = \theta_J \qdim(J).\]
\end{theo}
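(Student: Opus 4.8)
The plan is to unwind both sides of the claimed identity $c_{J,J} = \theta_J\,\qdim(J)$ in terms of the vertex tensor category structure on $\cC$, using the fact that $J$ is a simple current so that $J^{-1}\fus{} J\cong V$ and all the relevant Hom-spaces $\hom_\cC(J^{-1}\fus{}J, V)$, $\hom_\cC(J,J)$, etc., are one-dimensional. The quantity $\qdim(J)$ is, by Definition \ref{def:tr} applied to $f=1_J$, the trace $\tr(1_J)\in\Endo(\unit)=\CC$, built from the evaluation $e_J$, coevaluation $i_J$, braiding $\cR_{J,J^*}$, and twist $\theta_J$. The right-hand side thus involves $\theta_J$ twice (once explicitly, once inside $\tr$), while the left-hand side $c_{J,J}$ is defined by $\cR_{J,J}=c_{J,J}1_{J\fus{}J}$. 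So the core of the argument is a compatibility between the self-braiding $\cR_{J,J}$ and the braiding $\cR_{J,J^*}$ appearing in the trace, mediated by the balancing equation $\theta_{W_1\boxtimes W_2}=\cM_{W_1,W_2}\circ(\theta_{W_1}\boxtimes\theta_{W_2})$ and the ribbon compatibility $(\theta_W)^*=\theta_{W^*}$.

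First I would fix a left dual $(J^*,e_J,i_J)$ of $J$; since $J$ is a simple current, $J^*\cong J^{-1}$, and the twist scalars satisfy $\theta_{J^*}=\theta_{J^{-1}}=\theta_J$ because $\theta_V=1_V$ forces $\theta_{J^{-1}}\theta_J c_{J^{-1},J}$-type relations to collapse — more precisely, applying the balancing equation to $J^{-1}\fus{}J\cong V$ gives $1_V=\theta_V=\cM_{J^{-1},J}\circ(\theta_{J^{-1}}\boxtimes\theta_J)$, and I would want to extract from this (using that $\hom_\cC(J^{-1}\fus{}J,V)$ is one-dimensional, so $\cM_{J^{-1},J}$ acts by a scalar) a relation tying $\theta_{J^{-1}}\theta_J$ to the monodromy scalar. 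Next I would evaluate $\qdim(J)=\tr(1_J)$ by writing out the composition $\unit\xrightarrow{i_J}J\fus{}J^*\xrightarrow{\theta_J\boxtimes 1}J\fus{}J^*\xrightarrow{\cR_{J,J^*}}J^*\fus{}J\xrightarrow{e_J}\unit$, and then I would use the rigidity axioms together with the balancing equation applied to $J\fus{}J^*$ to re-express $\cR_{J,J^*}$ in terms of $\cR_{J,J}$ (conjugated by coevaluation/evaluation). The key identity here is the standard one: in a ribbon category, $\theta_J$ equals (a suitable composition built from) $\cM_{J,J}$ sandwiched by $i_J$ and $e_J$, or equivalently the Hopf-link-type manipulation that turns a self-crossing into a closed loop. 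Since everything is a scalar on the one-dimensional Hom-spaces involved, all these compositions reduce to products of the scalars $c_{J,J}$, $\theta_J$, and the monodromy $c_{J,J^*}c_{J^*,J}$.

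The precise execution would run: (i) show $\cM_{J,J}=c_{J,J}^2\,1_{J\fus{}J}$ from $\cR_{J,J}=c_{J,J}1$; (ii) apply the balancing equation $\theta_{J\fus{}J}=\cM_{J,J}\circ(\theta_J\boxtimes\theta_J)$, so $\theta_{J\fus{}J}=c_{J,J}^2\theta_J^2\,1_{J\fus{}J}$; (iii) separately, apply the balancing equation to $J^{-1}\fus{}J\cong V$ with $\theta_V=1$ to get $\cM_{J^{-1},J}=\theta_{J^{-1}}^{-1}\theta_J^{-1}\,1$, hence the monodromy scalar $c_{J^{-1},J}c_{J,J^{-1}}=\theta_J^{-2}$ (using $\theta_{J^{-1}}=\theta_J$, which itself I would justify by a short argument from the $J\fus{}J^{-1}\cong V$ relation or from $(\theta_J)^*=\theta_{J^*}$); (iv) assemble $\tr(1_J)$: the trace loop for $1_J$ with the $\theta_J$ insertion, after using rigidity to slide the $J^*$-strand around, produces exactly $\theta_J\cdot(\text{monodromy-type scalar})\cdot(\text{a factor from }i_J,e_J)$, and the normalization of $i_J,e_J$ via the rigidity axioms \eqref{eqn:rigid1}, \eqref{eqn:rigid2} makes the $i_J,e_J$ contribution trivial; combining with (iii) this yields $\qdim(J)=\theta_J\cdot\theta_J^{-1}\cdot c_{J,J}$-type cancellation leaving $\qdim(J)=\theta_J^{-1}c_{J,J}$, equivalently $c_{J,J}=\theta_J\qdim(J)$.

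The main obstacle I anticipate is step (iv): carefully tracking which braiding ($\cR_{J,J}$ vs. $\cR_{J,J^*}$ vs. their inverses) and which twist (on $J$ vs. on $J^*$) appears at each stage, since the relation $J^*\cong J^{-1}$ is only an isomorphism and the identification $\cR_{J,J^*}\leftrightarrow\cR_{J,J}$ requires transporting along the coevaluation $i_J:\unit\to J\fus{}J^*$ and using that $J$ is a simple current so that $\hom_\cC(J\fus{}J,\,J\fus{}J)$, $\hom_\cC(J\fus{}J^*,\,J^*\fus{}J)$ are one-dimensional. A secondary subtlety is verifying $\theta_{J^{-1}}=\theta_J$ cleanly — this should follow from the ribbon condition $(\theta_J)^*=\theta_{J^*}$ combined with the fact that dual morphisms of scalars are scalars, but I would want to double-check the sign/parity conventions since we are potentially in a ribbon \emph{super}category (though for this statement $J$ is simple so it is parity-homogeneous and the sign factors in Definition \ref{def:tr} and Corollary \ref{conjtr} are trivial on $\Endo(\unit)$). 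Apart from these bookkeeping points, the argument is the standard ribbon-category spin-statistics computation specialized to a simple current, and no new analytic input beyond what is already assembled in Section \ref{sec:TensCats} is needed.
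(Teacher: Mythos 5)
The paper contains no proof of this statement: it is cited directly to \cite[Corollary 2.8]{CKL} (and elsewhere to \cite[Exercise 8.10.15]{EGNO}), so I judge your outline on its own. Steps (i)--(iii) are sound: $\theta_{J^{-1}}=\theta_J$ follows from the ribbon condition $(\theta_J)^*=\theta_{J^*}$, and the balancing equation on $J^{-1}\boxtimes J\cong V$ gives the monodromy scalar $\theta_J^{-2}$. But step (iv), which you yourself flag as the obstacle, has a genuine gap. As written, you decompose $\qdim(J)$ into $\theta_J$ times the monodromy scalar $\theta_J^{-2}$ times an $i_J,e_J$ factor that you declare trivial by rigidity; that gives $\qdim(J)=\theta_J^{-1}$, with $c_{J,J}$ nowhere in sight, and your printed cancellation ``$\theta_J\cdot\theta_J^{-1}\cdot c_{J,J}$'' is also inconsistent with your own step (iii), which supplies $\theta_J^{-2}$ rather than $\theta_J^{-1}$. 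What is actually doing the work, and what you never name, is the ribbon-category identity already used in this paper (quoted from \cite[Lemma 8.9.1]{EGNO} inside the proof of Lemma \ref{lem:dualinRep0}) expressing $\cR_{J^*,J}$ as a conjugate of $\cR_{J,J}^{-1}$ by $i_J,e_J$; that is where $c_{J,J}$ enters, not from the normalization of $i_J,e_J$ itself.

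Concretely, up to associativity and unit isomorphisms,
\[
\cR_{J^*,J}=(e_J\boxtimes 1_{J\boxtimes J^*})\circ\bigl(1_{J^*}\boxtimes\cR_{J,J}^{-1}\boxtimes 1_{J^*}\bigr)\circ(1_{J^*\boxtimes J}\boxtimes i_J).
\]
Substituting $\cR_{J,J}^{-1}=c_{J,J}^{-1}\,1_{J\boxtimes J}$ and collapsing the residual $e_J,i_J$ composition by the triangle axiom and naturality, one finds $e_J\circ\cR_{J^*,J}^{-1}\circ i_J=c_{J,J}$. Combined with $\cR_{J,J^*}=\theta_J^{-2}\,\cR_{J^*,J}^{-1}$ (your step (iii), rewritten via $\cM_{J,J^*}=\cR_{J^*,J}\circ\cR_{J,J^*}$), this gives
\[
\qdim(J)=e_J\circ\cR_{J,J^*}\circ(\theta_J\boxtimes 1_{J^*})\circ i_J=\theta_J\cdot\theta_J^{-2}\cdot c_{J,J}=\theta_J^{-1}c_{J,J},
\]
that is $c_{J,J}=\theta_J\qdim(J)$. (One can also skip the monodromy: the companion formula for $\cR_{J,J^*}^{-1}$, which carries $\cR_{J,J}$ rather than $\cR_{J,J}^{-1}$, gives $e_J\circ\cR_{J,J^*}\circ i_J=c_{J,J}^{-1}$ directly, hence $\qdim(J)=\theta_Jc_{J,J}^{-1}$; conclude using $\qdim(J)^{-1}=\qdim(J)$, which holds for any invertible $J$ in a ribbon category since $\qdim(J)\qdim(J^*)=1$ and $\qdim(J^*)=\qdim(J)$.)
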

\begin{theo}\label{thm:fourcases} \textup{\cite[Theorem\ 3.9]{CKL}}
	Let $J$ be a self-dual simple current ($J\fus{}J\cong V$)
	with $\theta_J=\pm 1$.
	This leads to the following four cases, with $A=V\oplus J$.
	\begin{enumerate}
		\item When $\theta_J = 1$ and $\qdim(J)=1$, $A$
		has a natural structure of a $\ZZ$-graded \VOA{}.
		\item When $\theta_J = 1$ and $\qdim(J)=-1$, $A$
		has a natural structure of a $\ZZ$-graded \VOSA{}
		with even part $V$ and odd part $J$.
		\item When $\theta_J = -1$ and $\qdim(J)=1$, $A$
		has a natural structure of a $\frac{1}{2}\ZZ$-graded \VOSA{},
		with even part $V$ and odd part $J$.
		\item When $\theta_J = -1$ and $\qdim(J)=-1$, $A$
		has a natural structure of a $\frac{1}{2}\ZZ$-graded \VOA{}.
	\end{enumerate}
\end{theo}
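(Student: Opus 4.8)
\textbf{Proof strategy for Theorem \ref{thm:fourcases}.}

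The plan is to reduce the four cases to a single unified construction using the abstract categorical machinery developed in this paper, together with the spin-statistics relation of \cref{thm:spinstats}. The starting observation is that $A=V\oplus J$ carries a canonical structure of an object in the category $\cC$, and since $J$ is a self-dual simple current, the space $\hom_\cC(J\fus{}J, V)$ is one-dimensional; choosing a nonzero element gives, together with the unit $\iota_A\colon V\hookrightarrow A$, a candidate multiplication $\mu\colon A\boxtimes A\to A$. The first step is to verify that $\mu$ can be normalized so that $(A,\mu,\iota_A)$ is associative and (super)commutative in the appropriate braided (super)category. For associativity, I would invoke the standard rigidity/simple-current argument (one-dimensionality of the relevant $\hom$-spaces forces the pentagon-type condition up to a scalar, and the scalar can be absorbed into the normalization); this is exactly the kind of computation carried out in \cite{CKL} and I would cite it rather than redo it.

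The heart of the matter is deciding \emph{in which} braided tensor (super)category $A$ is a commutative algebra, and this is where $\theta_J$ and $\qdim(J)$ enter. When $\theta_J=1$, the twist $\theta_A=e^{2\pi i L(0)}$ acts as the identity on both $V$ and $J$, so $A$ is $\ZZ$-graded by conformal weights; when $\theta_J=-1$, $\theta_A$ acts as $-1$ on $J$, so $A$ is genuinely $\tfrac12\ZZ$-graded. Independently, the sign $c_{J,J}=\theta_J\qdim(J)$ from \cref{thm:spinstats} governs whether the braiding $\cR_{J,J}$ is $+1$ or $-1$ on $J\fus{}J$, which is precisely the distinction between $A$ being commutative in $\cC$ itself (sign $+1$) versus being commutative only in the auxiliary braided tensor supercategory $\sC$ with the sign-twisted braiding $\sR$ (sign $-1$, so the parity factor $(-1)^{|i_1||i_2|}$ in the definition of $\sR$ compensates). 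Combining: $\qdim(J)=1$ forces the odd part to be trivial (i.e.\ $A$ is an ordinary algebra, possibly with half-integer grading — cases 1 and 4), while $\qdim(J)=-1$ forces $J$ to be the odd part of a superalgebra (cases 2 and 3). In each of the four sign combinations one then checks that the hypotheses of the relevant theorem in \cref{subsec:complexIntw} are met: $\iota_A$ is injective (clear, as $V$ is simple and nonzero), $\theta_A^2=1_A$ (immediate from $\theta_J=\pm1$), and $\mu$ has no monodromy, $\mu\circ(\theta_A\boxtimes\theta_A)=\theta_A\circ\mu$ (this follows from the balancing equation $\theta_{J\boxtimes J}=\cM_{J,J}\circ(\theta_J\boxtimes\theta_J)$ together with $\cM_{J,J}=c_{J,J}^2\,1=\theta_J^2\qdim(J)^2\,1=1$, so the monodromy is trivial).

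With these checks in place, the theorem of \cite{HKL} (in the $\ZZ$-graded algebra cases) and its superalgebra generalization \cite{CKL}, as recalled in \cref{subsec:complexIntw}, immediately upgrade the commutative (super)algebra object $A$ in $\cC$ (resp.\ $\sC$) to a genuine vertex operator (super)algebra structure on $A$, with $V$ contained in the even part and $J$ being the odd part exactly when $\qdim(J)=-1$. The grading by conformal weights is $\ZZ$ when $\theta_J=1$ and $\tfrac12\ZZ$ when $\theta_J=-1$, yielding precisely the four cases in the statement. The main obstacle I anticipate is not any single step but the bookkeeping of signs: one must be scrupulous about which braiding ($\cR$ in $\cC$ versus $\sR$ in $\sC$) is being used, since the parity factor in $\sR$ and the scalar $c_{J,J}$ both contribute $\pm1$ and it is easy to double-count or drop a sign. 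The cleanest way to manage this is to treat all four cases simultaneously as the single statement ``$A=V\oplus J$ is a commutative associative algebra in $\sC$ with even structure morphisms, trivial monodromy, and $\theta_A^2=1_A$,'' and only at the very end read off the grading and parity from the values of $\theta_J$ and $\qdim(J)$.
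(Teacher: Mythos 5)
This theorem is stated in the paper as a citation of \cite[Theorem~3.9]{CKL} and is not proved in the text, so there is no in-paper argument to compare against; you are effectively reconstructing the argument of \cite{CKL}. Your overall strategy is the right one: build the multiplication from the one-dimensional $\hom_\cC(J\boxtimes J, V)$, use $\theta_J$ to decide the grading, use the spin-statistics relation $c_{J,J}=\theta_J\qdim(J)$ (\cref{thm:spinstats}) to decide whether $\mu$ is commutative in $\cC$ or only supercommutative in $\sC$, and check injectivity of the unit, $\theta_A^2 = 1_A$, and trivial monodromy before invoking the HKL/CKL correspondence.

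However, your summary sentence contains a genuine error that contradicts the theorem you are trying to prove. You write: ``$\qdim(J)=1$ forces the odd part to be trivial \ldots cases 1 and 4, while $\qdim(J)=-1$ forces $J$ to be the odd part of a superalgebra \ldots cases 2 and 3.'' But case~4 has $\qdim(J)=-1$ and is still an ordinary (non-super) $\tfrac12\ZZ$-graded \VOA{}, while case~3 has $\qdim(J)=1$ and \emph{is} a \VOSA{}. The super/ordinary dichotomy is \emph{not} governed by the sign of $\qdim(J)$; it is governed by the sign of $c_{J,J}=\theta_J\qdim(J)$, exactly as you correctly state one sentence earlier: $c_{J,J}=1$ (cases~1 and~4) gives a \VOA{}, and $c_{J,J}=-1$ (cases~2 and~3) gives a \VOSA{} with $J$ odd. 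In the ``Combining'' step you appear to have dropped the factor of $\theta_J$. This also makes the word ``Independently'' misleading: $\theta_J$ enters \emph{both} determinations --- the integrality of the grading directly, and the super/ordinary distinction via the product $c_{J,J}=\theta_J\qdim(J)$ --- so the two axes of the classification are coupled through $\theta_J$, not independent. If you replace every occurrence of ``$\qdim(J)=\pm1$'' in that sentence by ``$c_{J,J}=\pm1$'' and recheck the case labels (cases~1,~4 have $c_{J,J}=1$; cases~2,~3 have $c_{J,J}=-1$), the argument goes through. The remaining technical checks you list (injectivity of $\iota_A$, $\theta_A^2=1_A$, and trivial monodromy via $\cM_{J,J}=c_{J,J}^2\,1=1$ together with the balancing equation) are correct and complete the reduction to the CKL version of the HKL correspondence.
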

In many cases, $\qdim(J)$ is easily deduced from modular $S$-matrices. 
Naturally, we refer to the cases with $\qdim(J)=1$ as producing
``correct statistics'' and $\qdim(J)=-1$ as ``wrong statistics.''  In the next subsection, we will derive Verlinde formulae corresponding to these cases. A natural generalization of the above theorem
when $J$ is not necessarily self-dual was also given in
\cite{CKL}, using \cite{Car}.

There is also a criterion based on conformal weights that determines
when objects in $\cC$ induce to objects of $\rep^0 A$.
Let $G$ be the group generated by a simple current $J$ and
let 
\[
A = \bigoplus_{g\in G}J^g.
\]
 Assume that $A$ is a \VOSA{}, which is the case if the twist satisfies
\[
\theta_{J^g} \in \pm 1\qquad \text{and} \qquad \theta_{J^g} = \theta_{J^{g+2}}
\]
for all $g$  \cite[Theorem\ 3.12]{CKL}.
Let $h_\bullet$ denote conformal dimensions, that is, lowest conformal weights.
\begin{theo}\label{thm:rep0conformal} \textup{\cite[Theorems\ 3.18 and 3.20]{CKL}}
	Let $P$ be an indecomposable
	generalized $V$-module in $\cC$.
	\begin{enumerate}
		\item If $P$ is a subquotient of a tensor product of several simple
		modules, then $\cF(P)$ is in $\rep^0 A$ if and only if
		$h_{J\fus{}P}- h_{J}- h_{P}\in\ZZ$.\label{it:inf}
		\item Suppose $J$ has finite order and $P$ satisfies both $\dim(\hom(P,P)) < \infty$ and
		$\dim(\hom(J\fus{}P,J\fus{}P)) < \infty$.  Assume also that $L(0)$ has
		Jordan blocks of bounded size on both $P$ and $J\fus{}P$.  Then
		$\cF(P)$ is an object of $\rep^0A$ if and only if
		$h_{J\fus{}P}- h_{J}- h_{P}\in\ZZ$.\label{it:fin}
	\end{enumerate}
\end{theo}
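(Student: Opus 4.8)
The plan is to translate the statement into a computation of monodromy and then evaluate that monodromy via the balancing equation. By \cref{propo:rep0inductioncriteria}, $\cF(P)$ is an object of $\repzA$ if and only if $\cM_{A,P}=1_{A\boxtimes P}$. Since $A=\bigoplus_{g\in G}J^g$, distributivity of $\boxtimes$ over biproducts (\cref{distribute}) together with the naturality of the monodromy isomorphisms reduces this to requiring $\cM_{J^g,P}=1_{J^g\boxtimes P}$ for every $g\in G$. Using the hexagon axioms to express $\cM_{J^{g_1}\boxtimes J^{g_2},P}$ as a conjugate (by associativity isomorphisms) of a composite built from $\cM_{J^{g_1},P}$ and $\cM_{J^{g_2},P}$, together with $\cM_{V,P}=1_P$, one sees that $\cM_{J,P}=1_{J\boxtimes P}$ already forces $\cM_{J^g,P}=1_{J^g\boxtimes P}$ for all $g$; so it suffices to analyze the single monodromy $\cM_{J,P}$. (Note also that $h_{J\boxtimes P}-h_J-h_P\in\ZZ$ is unchanged upon replacing $P$ by $J^g\boxtimes P$, which is consistent with this reduction.)

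Next I would invoke the balancing equation for the twist: $\theta_{J\boxtimes P}=\cM_{J,P}\circ(\theta_J\boxtimes\theta_P)$, hence $\cM_{J,P}=\theta_{J\boxtimes P}\circ(\theta_J\boxtimes\theta_P)^{-1}$. Because $J$ is simple, $\End_\cC(J)=\CC\,1_J$ by Schur's lemma, so $\theta_J$ is a scalar; as the lowest conformal weight of $J$ is $h_J$, in fact $\theta_J=e^{2\pi ih_J}1_J$ (and this forces $L(0)$ to act semisimply on $J$). Writing $\theta_P=e^{2\pi ih_P}u_P$ with $u_P=e^{2\pi i(L_P(0)-h_P)}$ the ``unipotent part'' — well defined since the generalized $L(0)$-eigenvalues of $P$ lie in $h_P+\NN$ and $L(0)$ has Jordan blocks of bounded size on $P$ — and similarly $\theta_{J\boxtimes P}=e^{2\pi ih_{J\boxtimes P}}u_{J\boxtimes P}$ (the module $J\boxtimes P$ is indecomposable since $J\boxtimes\cdot$ is an auto-equivalence of $\cC$), one obtains
\[
\cM_{J,P}=e^{2\pi i(h_{J\boxtimes P}-h_J-h_P)}\,u_{J\boxtimes P}\circ(1_J\boxtimes u_P)^{-1}.
\]
Thus $\cM_{J,P}=1_{J\boxtimes P}$ holds if and only if $e^{2\pi i(h_{J\boxtimes P}-h_J-h_P)}=1$, i.e. $h_{J\boxtimes P}-h_J-h_P\in\ZZ$, \emph{and} the unipotent factor $u_{J\boxtimes P}\circ(1_J\boxtimes u_P)^{-1}$ equals the identity. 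The whole statement therefore reduces to showing that, under the hypotheses of (1) or of (2), this unipotent factor is automatically trivial, equivalently $\cM_{J,P}$ is a scalar multiple of the identity.

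For (1) I would argue by reduction to simple modules. If $S$ is simple, then $J\boxtimes S$ is simple, so $\cM_{J,S}=c_{J,S}1_{J\boxtimes S}$ with $c_{J,S}=e^{2\pi i(h_{J\boxtimes S}-h_J-h_S)}$ by the displayed formula. Using the hexagon axioms, $\cM_{J,\,S_1\boxtimes\cdots\boxtimes S_n}$ is a conjugate by associativity isomorphisms of a composite of the $\cM_{J,S_i}$ (applied in the various tensor slots), hence is again a scalar multiple of the identity. Finally, if $P$ is a subquotient of $S_1\boxtimes\cdots\boxtimes S_n$, pick a submodule $K$ together with a surjection $K\twoheadrightarrow P$; exactness of $J\boxtimes\cdot$ makes $1_J\boxtimes(\cdot)$ send monomorphisms to monomorphisms and epimorphisms to epimorphisms, and naturality of $\cM_{J,\cdot}$ then propagates scalarity first from $S_1\boxtimes\cdots\boxtimes S_n$ to $K$ and then to $P$. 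So $\cM_{J,P}$ is scalar, equal to $e^{2\pi i(h_{J\boxtimes P}-h_J-h_P)}1_{J\boxtimes P}$, which proves (1). For (2), where $J$ is only assumed to have finite order $N$, one instead uses the finiteness hypotheses: $J\boxtimes P$ is indecomposable with $\dim\hom(J\boxtimes P,J\boxtimes P)<\infty$, so its endomorphism ring is local and $\cM_{J,P}=\lambda(1_{J\boxtimes P}+n)$ with $\lambda\in\CC^\times$ and $n$ nilpotent; the relation $J^{N}\cong V$ (so that $A$ is a $\tfrac{1}{2}N\ZZ$- or $N\ZZ$-graded extension and $\bigoplus_m J^m\boxtimes P$ is a finite sum of modules with bounded $L(0)$-Jordan blocks), combined with the balancing equation, forces $\cM_{J,P}$ to be of finite order, whence $\lambda$ is a root of unity and $(1_{J\boxtimes P}+n)$ is of finite order, so $n=0$; again $\cM_{J,P}$ is scalar and the criterion follows.

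The main obstacle is exactly this last step — showing the unipotent part of $\cM_{J,P}$ vanishes. In case (1) the delicate point is verifying that ``$\cM_{J,\cdot}$ is a scalar multiple of the identity'' is stable under tensoring with the simple current $J$ and under passing to subquotients, which requires careful bookkeeping with the hexagon axioms, the naturality of the braiding and of the monodromy, and the exactness/auto-equivalence properties of $J\boxtimes\cdot$; one also needs the fact that subquotients of tensor products of simple modules in $\cC$ have $L(0)$-Jordan blocks of bounded size, so that the unipotent decomposition above is legitimate. In case (2) the delicate point is extracting a genuine finite-order relation for $\cM_{J,P}$ from $J^N\cong V$ together with the finiteness of the endomorphism rings, after which ``unipotent of finite order implies trivial'' is an elementary fact in characteristic zero.
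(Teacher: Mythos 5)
Note that the present paper cites this statement from \cite{CKL} without reproducing a proof, so there is no in-paper argument to compare against; I can only assess your proposal directly.

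Your strategy is sound and I believe both parts go through along these lines. You correctly reduce via \cref{propo:rep0inductioncriteria} to showing $\cM_{J,P}=1_{J\boxtimes P}$, factor $\cM_{J,P}$ via the balancing equation into the scalar $e^{2\pi i(h_{J\boxtimes P}-h_J-h_P)}$ times a commuting unipotent, and observe that the whole theorem then comes down to showing the unipotent factor is automatically trivial. Case (1) is handled correctly: scalarity of $\cM_{J,S}$ for simple $S$ is Schur, the hexagon axioms propagate scalarity across tensor products, and exactness of $J\boxtimes\cdot$ (an autoequivalence since $J$ is a simple current) lets naturality of $\cM_{J,\cdot}$ carry scalarity to subquotients; the bounded-Jordan-block property you flag as needed is supplied by \cref{assum:grading}.

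For case (2) you flag ``extracting a finite-order relation for $\cM_{J,P}$'' as the delicate point. This is indeed the crux, and it resolves for a reason you leave implicit that is worth spelling out. In the hexagon formula for $\cM_{J,J\boxtimes W}$, the two conjugating braidings are both $\cR_{J,J}\boxtimes 1_W$, and $\cR_{J,J}$ is an \emph{endomorphism} of the simple object $J\boxtimes J$, hence a scalar $c_{J,J}$ with $c_{J,J}^2=\cM_{J,J}=1$. So the conjugations disappear entirely, giving the literal equality $\cM_{J,J\boxtimes W}=1_J\boxtimes\cM_{J,W}$ (after associativity), and inductively $\cM_{J,J^m\boxtimes P}=1_{J^m}\boxtimes\cM_{J,P}$. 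Iterating the balancing equation $\theta_{J^m\boxtimes P}=\theta_J\,\cM_{J,J^{m-1}\boxtimes P}\circ(1_J\boxtimes\theta_{J^{m-1}\boxtimes P})$ down to $m=N$ and using $J^N\cong V$ then yields $\cM_{J,P}^N=\theta_J^{-N}$, a genuine scalar. Writing $\cM_{J,P}=\lambda(1+n)$ with $n$ nilpotent (locality of $\mathrm{End}(J\boxtimes P)$), it follows that $(1+n)^N$ is simultaneously a scalar and unipotent, hence $1$, so $n=0$ in characteristic zero. Without noticing that $\cR_{J,J}$ is a scalar endomorphism, the hexagon iteration only produces a product of mutually commuting \emph{conjugates} of $1_{J^{N-1}}\boxtimes\cM_{J,P}$, whose product being scalar would not by itself force the unipotent part to vanish; that one observation is what turns it into a genuine $N$-th power and makes the finite-order mechanism you invoke actually work.
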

If $\cC$ is locally finite, the finiteness assumptions in part (\ref{it:fin}) are
satisfied automatically. Among other things, this criterion led
to a new proof of rationality of parafermions for affine Lie algebras
at positive integral levels in \cite{CKLR}. We shall use this and
similar criteria below.

We now present two propositions that we shall frequently use below. The first is stated for extensions that are not necessarily of 
simple current type:
\begin{propo}\label{prop:Fsimple}
Let $A$ be a vertex operator (super)algebra extension of $V$ with $V\subseteq A^\even$ and both $A$, $V$ simple, and suppose $A=\bigoplus_{i\in I}A_i$ as a $V$-module with each $A_i$ a simple $V$-module. If $W$ is a simple $V$-module such that each non-zero $A_i\fus{V}W$ is a simple
$V$-module and each non-zero $A_i\fus{V}W \not\cong A_j\fus{V}W$ unless $i=j$, then $\cF(W)$ is  a simple object of $\repA$. 
\end{propo}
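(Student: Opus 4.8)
The strategy is to use the induction functor $\cF$ as a left adjoint (Lemma \ref{lem:FGadjoint}) to compute $\hom_{\repA}(\cF(W), \cF(W))$ via $\hom_\cC(W, \cG(\cF(W))) = \hom_\cC(W, A\boxtimes W)$, and then to show this Hom space is one-dimensional using the decomposition hypotheses. First I would observe that since $A = \bigoplus_{i\in I} A_i$ as a $V$-module, distributivity of $\boxtimes$ over direct sums (Proposition \ref{distribute}, extended to possibly infinite direct sums as appropriate in the vertex algebraic setting) gives $A\boxtimes W \cong \bigoplus_{i\in I}(A_i\boxtimes_V W)$. Then $\hom_\cC(W, A\boxtimes W) \cong \bigoplus_{i\in I}\hom_\cC(W, A_i\boxtimes_V W)$. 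By hypothesis each nonzero $A_i\boxtimes_V W$ is simple, and $W$ is simple, so $\hom_\cC(W, A_i\boxtimes_V W)$ is either zero (if $A_i\boxtimes_V W = 0$ or $A_i\boxtimes_V W\not\cong W$) or one-dimensional (if $A_i\boxtimes_V W\cong W$). Exactly one index $i$ contributes the identity component: since $V$ is one of the $A_i$ (say $A_0 = V$, as $V\subseteq A^\even$ and $V$ simple), $A_0\boxtimes_V W = V\boxtimes_V W\cong W$ contributes a one-dimensional space. The hypothesis that distinct nonzero $A_i\boxtimes_V W$ are pairwise non-isomorphic then forces at most one $i$ with $A_i\boxtimes_V W\cong W$, namely $i=0$. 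Hence $\hom_\cC(W, A\boxtimes W)$ is one-dimensional, so $\hom_{\repA}(\cF(W),\cF(W))\cong\mathbb{F}$ (or $\CC$).

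Next I would promote ``$\Endo_{\repA}(\cF(W))$ is one-dimensional'' to ``$\cF(W)$ is simple.'' In an abelian-type category this is routine: any nonzero subobject $U\subseteq\cF(W)$ together with the nonzero projection $\cF(W)\twoheadrightarrow\cF(W)/U$ would — if $U$ were proper — produce either two linearly independent idempotents or, more carefully, one uses that a nonzero object $X$ with $\Endo(X)$ a division algebra and $X$ of finite length is simple. To make this rigorous I would either invoke that $\underline{\repA}$ is abelian (Theorem \ref{thm:repAabelian}, via $\sCeven$) and that $\cF(W)$ has finite length — which follows because $W$ is simple hence has finite length, $\cG$ preserves length-type finiteness, and $A$ decomposes into simple $V$-modules — or argue directly: any proper nonzero subobject would give a nonzero non-invertible even endomorphism (the composite of the inclusion with a splitting-type map), but the only even endomorphisms are scalars, and nonzero scalars are invertible, a contradiction. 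I expect the cleanest route is: (i) reduce to even morphisms, noting $\Endo_{\repA}^{\even}(\cF(W))$ is a subspace of the one-dimensional $\Endo_{\repA}(\cF(W))$, hence equals $\mathbb{F}\cdot 1$; (ii) apply the standard fact in an abelian category that an object of finite length whose endomorphism ring is a division ring (here a field) is simple.

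The main obstacle I anticipate is twofold and both parts concern making the finiteness and decomposition hypotheses rigorous in the vertex algebraic / possibly non-semisimple setting. First, Proposition \ref{distribute} as stated handles only \emph{finite} biproducts, whereas $A = \bigoplus_{i\in I} A_i$ may be an infinite direct sum; one must check that $\boxtimes$ commutes with this particular infinite direct sum (which it does, since $W\boxtimes\cdot$ is right exact and, for module categories of vertex operator algebras, tensoring with a fixed module commutes with arbitrary direct sums of modules — this needs a pointer to the relevant Huang-Lepowsky-Zhang result, e.g.\ that $\boxtimes$ is compatible with direct limits). Second, and more delicate, is establishing that $\cF(W)$ has finite length in $\underline{\repA}$: $W$ simple gives $\cF(W)=A\boxtimes W$ which as a $V$-module is $\bigoplus_i(A_i\boxtimes_V W)$, a sum of simple (or zero) $V$-modules, but this is a possibly infinite direct sum of simple $V$-modules, so finite length as a $V$-module is not automatic. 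The resolution is that what matters is finite length \emph{as an $A$-module}: the hypotheses guarantee that in $\hom_\cC(W, A\boxtimes W)$ only $i=0$ contributes, and more generally one should check that the $A$-submodules of $\cF(W)$ are controlled by $\cC$-morphisms from $W$, so that simplicity follows without needing finite length as a $V$-module — essentially, any nonzero $A$-submodule of $\cF(W)$ contains the image of $W$ under the unit $\eta$-type map, hence equals $\cF(W)$ since $W$ generates $\cF(W)=A\boxtimes W$ as an $A$-module. This ``$W$ generates $\cF(W)$'' observation, combined with simplicity of $W$, is likely the most efficient path and sidesteps the length issue entirely; I would organize the final proof around it, using Lemma \ref{lem:FGadjoint} only to pin down that the generating map is unique up to scalar.
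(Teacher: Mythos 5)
Your adjunction computation showing $\Endo_{\repA}(\cF(W))\cong\hom_\cC(W,A\boxtimes W)$ is one-dimensional is correct and clean, but you correctly flag the gap: passing from a one-dimensional endomorphism algebra to simplicity is not automatic. In fact the paper itself (Proposition~\ref{propo:twosimpledefequiv}) is careful that ``$\Endo = \CC\cdot 1$ implies simple'' requires semisimplicity of $\underline{\repA}$, which is not among the hypotheses here. Your proposed fix — ``a proper subobject gives a nonzero non-invertible even endomorphism via a splitting-type map'' — does not work, because no splitting need exist without semisimplicity. The finite-length route is also blocked for the reason you identify.

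Your pivot to ``any nonzero $A$-submodule contains the image of $W$, which generates'' has a different, uncorrected gap. What the hypotheses give is: $\cG(\cF(W))=\bigoplus_i A_i\boxtimes_V W$ with the nonzero summands simple and \emph{pairwise non-isomorphic} as $V$-modules, so a nonzero $A$-submodule $X$, viewed as a $V$-submodule, must contain some $W_j:=A_j\boxtimes_V W$ — but there is no reason that $j=0$, i.e.\ no reason that $X$ contains the copy of $W$ sitting at $\unit\boxtimes W$. You then need to argue that the $A$-submodule generated by $W_j$, for \emph{arbitrary} $j$ with $W_j\neq 0$, is all of $\cF(W)$. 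This is not a triviality and is where the hypothesis that $A$ is \emph{simple} enters — a hypothesis your proposal never uses. The paper's argument at this step takes a nonzero parity-homogeneous $a\in A_j$, observes that $\langle a_{n;k}w \mid n,k\rangle=\langle w\rangle$ because otherwise $a$ would lie in a nonzero proper two-sided ideal of $A$ (the $\ZZ/2\ZZ$-graded annihilator of the image of $w$ in the quotient, cf.\ Lemma~\ref{lem:repAannisideal}), contradicting simplicity of $A$; from this $\langle W_j\rangle \supseteq \langle w\rangle = \cF(W)$. Without this annihilator/simplicity argument, ``$W_j$ generates'' is an unproven claim and the proof does not close. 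So the proposal has the right shape (find a simple piece inside $X$, show it generates) but is missing both the correct identification of which piece is found (some $W_j$, not $W$ itself) and the essential use of $A$ simple to conclude generation.
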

\begin{proof}
Since $A_i\boxtimes_V W\not\cong A_j\fus{V}W$ unless $i=j$, the $A_i$ are non-isomorphic $V$-modules. Since $V\subseteq A^\even$, the parity-homogeneous components of $A$ are $V$-modules, and so because the $A_i$ are distinct and simple, the Density Theorem implies that $A^\even$ and $A^\odd$ are direct sums of certain $A_i$. That is, each $A_i$ has definite parity.

We have $\cF(W)=\bigoplus_{i\in I}A_i\fus{V} W$. We will identify the component isomorphic to $W$ in 
$\cF(W)$ with $W$ and let $W_j=A_j\fus{V} W$. Since $W$ is a simple $V$-module, it must have definite parity as an object of $\sC$; thus the parity-homogeneous components of $\cF(W)$ are $A^\even\boxtimes_V W$ and $A^\odd\boxtimes_V W$.
If $X$ is a non-zero $A$-submodule of $\cF(W)$, then because the $A_i\fus{V} W$ are simple and mutually inequivalent, the Density Theorem implies there is some $j\in I$ such that $W_j=A_j\fus{V}W\subseteq X$.
Let $\langle W_j \rangle$ be the $A$-submodule of $\cF(W)$ generated
by $W_j$.

Now, we prove that $\langle W_j\rangle=\cF(W)$, so that $\cF(W)=\langle W_j\rangle \subseteq X$, that is, $X=\cF(W)$. 
We use the observation that for any non-zero and parity-homogeneous $a\in A$, $w\in W$,
\[ \left\langle a_{n;k} w\,\vert \,n\in \RR,\,k\in\NN   \right\rangle
= \langle w\rangle, \]
 where angle braces denote the generated submodule
 and $a_{n;k}w$ is the coefficient of $x^{-n-1}(\log x)^k$ in $Y_W(a,x)w$.
Indeed, it is clear that $\left\langle a_{n;k} w\,\vert\,n\in \RR,\,k\in\NN  \right\rangle
\subseteq \langle w\rangle$. Then $a$ is in the $\ZZ/2\ZZ$-graded annihilator ideal (see Lemma \ref{lem:repAannisideal}) of the image of $w$ in the $\ZZ/2\ZZ$-graded quotient module $\langle w \rangle / \left\langle a_{n;k} w\,\vert\,n\in \RR,\,k\in\NN    \right\rangle$. Since $a\neq 0$ and $A$ is simple, this means $A$ annihilates the image of $w$ in the quotient. Since the image of $w$ also generates the quotient, the quotient is zero and $\langle a_{n;k} w\,\vert\, n\in\RR,\,k\in\NN  \rangle=\langle w\rangle$, justifying the observation. Now  take a non-zero $a\in A_j$ and non-zero $w\in W$; both of these vectors are parity-homogeneous. Then the observation implies
\begin{equation*}
 \langle W_j\rangle\supseteq\langle a_{n;k}w\,\vert\,n\in\RR,\,k\in\NN\rangle=\langle w\rangle=\cF(W)
\end{equation*}
as desired; here we are identifying $w\in W$ with $\unit\boxtimes_{V} w\in V\boxtimes_{V} W$. 
\end{proof}

\begin{propo}\label{prop:SimpleCurrentSmatrix}
	Let $\mathcal C$ be a semisimple vertex tensor category of modules for a simple \VOA{} $A_0$, and let $A$ be a simple (super)algebra in $\cC$ such that 
	$A=\bigoplus_{g\in G} A_g$ for $G$ a finite abelian group and $A_{g}\boxtimes A_h\cong A_{gh}$ for $g,h\in G$. 
	Assume fixed-point freeness: for every simple module $W$ in $\cC$, $A_g\boxtimes W\cong W$ implies $g=1$. 
	Then:
	\begin{enumerate}
	\item\label{it:SimpleCurrentSmatrix1} If $W$ is simple in $\sC$, then $\cF(W)$ is simple in $\repA$.
	\item\label{it:SimpleCurrentSmatrix2} Every simple object in $\repA$ is isomorphic to an induced object.
	\item\label{it:SimpleCurrentSmatrix3} If $W$ is simple in $\sC$, then $\cF(W)$ is isomorphic to $\cF(A_g\boxtimes W)$ in $\repA$ for any $g\in G$.
	\item Suppose $\cC$ is a ribbon category and $\theta_A=1_A$. If $W_1, W_2$ are two simple objects of $\sC$ such that $\cF(W_1),\cF(W_2)$ are objects of $\rep^0A$, then for every $g, h$ in $G$,
	\[
	S^\hopflink_{\cF(A_g\boxtimes W_1), \cF(A_h\boxtimes W_2)} = S^\hopflink_{\cF(W_1), \cF(W_2)}.
	\]
	\end{enumerate}
\end{propo}
\begin{proof}
	Since $A_g\boxtimes A_{-g}\cong A_0$, each $A_g$ is a simple current, and hence $A_g\boxtimes W$ is simple for any simple object $W$  of $\cC$ (see \cite{CKLR}). Now the first assertion is immediate from \cref{prop:Fsimple}.
	
	Now let $X$ be a simple object in $\repA$. By the  semisimplicity of $\cC$ (and thus also of $\sC$) we have $X \cong_{\sC}\bigoplus_{i\in I} X_i$, where the $X_i$ are simple objects in $\sC$. 
	For any such $X_i$,
	we have the (non-zero, even) $\cC$-inclusion $X_i \hookrightarrow \cG(X)$ where $\cG$ is the restriction functor, so 
	by \cref{lem:FGadjoint}, there is a non-zero, even $\repA$-morphism $f:\cF(X_i) \rightarrow X$. Since $A_g\boxtimes\cdot$ is fixed-point free, \cref{prop:Fsimple} shows that $\cF(X_i)$ is simple. Then by Schur's Lemma,  $f: \cF(X_i)\rightarrow X$ is an isomorphism.

	This argument also implies that $\cF(X_i)\cong \cF(X_j)\cong X$ whenever $X_i$, $X_j$ are two summands appearing in the direct sum decomposition of $\cG(X)$.
	Now, if $W$ is simple in $\cC$, then $A_g\boxtimes W$ and $W\cong A_0\boxtimes W$ are summands in $\cG(\cF(W))$. Hence, the previous argument shows that $\cF(W)\cong \cF(A_g\boxtimes W)$ for any $g\in G$.

	From Proposition \ref{prop:inducedrepzAribbon}, induced objects in $\repzA$ form a ribbon category since $\theta_A=1_A$.    Then the conclusion about the $S^\hopflink$-matrix is easy by properties of traces and monodromies:
	if $f_1:X_1\rightarrow \widetilde{X}_1$ and  $f_2:X_2\rightarrow \widetilde{X}_2$ are any even isomorphisms in a ribbon supercategory, then 
	\begin{align*}	
	S^{\hopflink}_{\widetilde{X}_1,\widetilde{X}_2}=\tr(\cM_{\widetilde{X}_1,\widetilde{X}_2})=\tr((f_1\boxtimes f_2)\circ \cM_{X_1,X_2} \circ (f_1^{-1}\boxtimes f_2^{-1}))
	=\tr(\cM_{X_1,X_2}) = S^{\hopflink}_{X_1,X_2},
	\end{align*}
	using Corollary \ref{conjtr}.
\end{proof}

\begin{rema}
 In Propositions \ref{prop:Fsimple} and \ref{prop:SimpleCurrentSmatrix}, there is some ambiguity about what is a simple object in $\repA$: should it have no proper non-zero $A$-submodules, or should it have no proper non-zero $\ZZ/2\ZZ$-graded $A$-submodules? We will discuss this issue in more detail in Section \ref{sec:Verlinde}, but for now we note that it does not affect the results here. In the proof of Proposition \ref{prop:Fsimple}, no $\ZZ/2\ZZ$-grading assumption was needed for the submodule $X$, because the conclusion that $\langle W_j\rangle\subseteq X$ for some $j$ followed from properties of modules for the vertex operator algebra $V$. Thus under the assumptions of Proposition \ref{prop:Fsimple}, simple $V$-modules induce to modules in $\repA$ which are simple in both possible senses. The distinction between the two possible meanings of simplicity in $\repA$ also does not affect Proposition \ref{prop:SimpleCurrentSmatrix} because the evenness of the morphism $f: \cF(X_i)\rightarrow X$ in the proof of part 2 implies that its kernel and image are $\ZZ/2\ZZ$-graded; hence Schur's Lemma applies regardless of what is meant by the simplicity of $X$.
\end{rema}

\begin{exam}
Some of the best-known rational and $C_2$-cofinite vertex algebras are those associated to even lattices $L$, denoted $V_L$ \cite{FLM, D}. 
The inequivalent simple modules $M_\lambda$ are parameterized by $\lambda \in L'/L$ with $L'$ the lattice dual to $L$. Moreover, the fusion rules are given by addition in $L'/L$, that is,
\[
M_\lambda  \boxtimes M_\mu \cong M_{\lambda + \mu}.
\]
Especially, every simple object is a simple current. Their dimensions $\qdim(M_\lambda) =1$  follow immediately from the modular transformations of characters. 
 Let $L \subseteq N \subseteq L'$. If $N$ is an even lattice, then the $M_\lambda$ for $\lambda \in N/L$ are $\ZZ$-graded, so
\[
A= \bigoplus_{\lambda \in N/L} M_\lambda
\]
is a vertex algebra extension of $V$. If $N$ is an integral lattice, then $A$ is a vertex superalgebra.
\end{exam}
More interesting vertex operator (super)algebras can be constructed from known ones via standard constructions such as kernel of screenings, cohomologies, and cosets. 
We illustrate this next in a few examples involving well-known \VOAs{} such as lattice and Virasoro \VOAs{}, and affine \VOAs{} associated to $\mathfrak{sl}_2$. We start with extensions of tensor products of triplet algebras as subalgebras of lattice vertex superalgebras.

\subsubsection{Extensions associated to $\cW(p)$-algebras via kernels of screening charges}

The general idea of this section is as follows: Let $V$ be a \VOA, $W$ a $V$-module, and $Q$ a screening operator, that is, the zero-mode of an intertwiner from $V$ to $W$, such that its kernel is a vertex operator subalgebra $U\subseteq V$. Then one uses extensions of $V$ to construct extensions of the subalgebra $U$. In our example here, $V$ will be a lattice \VOA{}, 
and 
we will construct some new \VOAs{} as extensions of tensor
products of $\cW(p)$-algebras. The $\cW(p)$-algebras are non-rational \VOAs{}, but they are $C_2$-cofinite, simple, and self-contragredient. They are by far the best understood class of \VOAs{} of this type: see for example \cite{Kau, FGST, AM4, AM1, AM2, TW1}. We use \cite{CRW} as reference.

Define
$\alpha_+=\sqrt{2p}$ and $\alpha_-=-\sqrt{2/p}$.  Consider the \VOA{}
$V_{\sqrt{2p}\ZZ}$ associated to the lattice $\sqrt{2p}\ZZ$. The vertex subalgebra of $V_{\sqrt{2p}\ZZ}$ given
by
\[
X_1^+(p)=\cW(p) = \mathrm{Ker}\left(\int Q_- : V_{\sqrt{2p}\ZZ} \rightarrow
  V_{\sqrt{2p}\ZZ + \alpha_-} \right)
 \]
   is the triplet algera $\cW(p)$, where $Q_-$ is the
vertex operator corresponding to the dual lattice element
$e^{\alpha_-}$, and $\int$ stands for its zero mode. The subalgebra $\cW(p)$ has a
different conformal vector than $V_{\alpha_+\ZZ}$, given by
$$\frac{1}{2}a(-1)^2+\frac{p-1}{\sqrt{2p}}a(-2)$$
with $\langle a,a\rangle=1$. The kernel
\[
X_1^-(p)= \mathrm{Ker}\left(\int Q_- : V_{\sqrt{2p}\ZZ+p} \rightarrow
  V_{\sqrt{2p}\ZZ + p+\alpha_-} \right)
 \]
is a simple $\cW(p)$-module denoted by $X_1^-(p)$.
From \cite[Equation 4.5]{CKL}, the dimension of this module is
\[
\text{qdim}(X_1^-(p))= - (-1)^p.
\]
Let
\[
\cW:= \cW(p_1) \otimes \dots \otimes \cW(p_n)
\]
for a choice of $n$ triplet algebras $\cW(p_1), \dots, \cW(p_n)$. 
Define two lattices
\[
L := \sqrt{2p_1} \ZZ \oplus  \sqrt{2p_2} \ZZ \oplus \dots \oplus \sqrt{2p_{n-1}} \ZZ \oplus  \sqrt{2p_n}\ZZ \qquad \text{and}\qquad \] \[
N := \sqrt{\frac{p_1}{2}} \ZZ \oplus  \sqrt{\frac{p_2}{2}} \ZZ \oplus \dots \oplus \sqrt{\frac{p_{n-1}}{2}} \ZZ \oplus \sqrt{\frac{p_n}{2}}\ZZ
\]
so that $N/L\cong (\ZZ/2\ZZ)^n$. Let $L\subseteq R\subseteq N$ be another lattice, so that $R/L$ can be identified with a binary code $C_R\subseteq (\ZZ/2\ZZ)^n$.
We require $R$ to be an integral lattice so that one has the lattice \VOA{} $V_R$ if $R$ is even and the lattice \VOSA{} otherwise. 
Let 
\[
\cW_R :=  \bigcap_{i=1}^n  \mathrm{Ker}\left(\int Q^{(i)}_- : V_R \rightarrow V_{R+\alpha_-^{(i)}}\right) 
\]
where $Q^{(i)}_-$ is the screening charge of $\cW(p_i)$ and $\alpha_-^{(i)}$ is the dual lattice vector with all entries equal to zero except the $i$-th one being $-\sqrt{2/p_i}$.
Then $\cW_R$ is a vertex algebra as it is the joint kernel of screenings of a \VOA. 
Defining $\pi: \ZZ/2\ZZ \rightarrow \{\pm \}$ by $\pi(0)=+$ and $\pi(1)=-$, 
we have
\[
\cW_R \cong \bigoplus_{x\in C_R} M(x), \qquad\qquad M(x) = X_1^{\pi(x_1)}(p_1) \otimes \dots \otimes  X_1^{\pi(x_n)}(p_n)
\]
as a module for $\cW$. 
Note the conformal dimension
\[
\Delta(X^-_1(p_i)) = \frac{3p_i-2}{4}.
\]
We have several interesting cases:
\begin{enumerate}
\item If $p_i\equiv0\mod 4$ for all $i=1, \dots, n$ , then $N$ is an even lattice and hence so is $R$. It follows that $V_R$ is a \VOA{} and hence so is $\cW_R$. Since $\Delta(X^-_1(p_i)) = \frac{3p_i-2}{4}\equiv\frac{1}{2} \mod\ZZ$, even code words correspond to $\ZZ$-graded modules and odd code words to $(\ZZ+\frac{1}{2})$-graded modules. This means that if $C_R$ is an even code, then $\cW_R$ is a $\ZZ$-graded \VOA{}, and otherwise  $\cW_R$ is a $\frac{1}{2}\ZZ$-graded \VOA. 
\item If $p_i\equiv2\mod 4$ for all $i=1, \dots, n$, then $N$ is an odd lattice. It follows that $R$ is an even lattice if $C_R$ is an even code and an odd lattice otherwise. Hence $V_R$ is a \VOA{} if $C_R$ is even and a \VOSA{} otherwise. 
 Since $\Delta(X^-_1(p_i)) = \frac{3p_i-2}{4}\equiv 0  \mod\ZZ$, all modules are $\ZZ$-graded. This means that if $C_R$ is an even code, then $\cW_R$ is a $\ZZ$-graded \VOA{}, and otherwise $\cW_R$ is a $\ZZ$-graded \VOSA{}. 
\item If $p_i\equiv1 \mod 4$ for all $i=1, \dots, n$ and $C_R$ is a doubly even code, then $R$ is an even lattice. Hence $V_R$ is a \VOA{}. 
 Since $\Delta(X^-_1(p_i)) = \frac{3p_i-2}{4}\equiv \frac{1}{4}  \mod\ZZ$, $\cW_R$ is $\ZZ$-graded, that is, $\cW_R$ is a $\ZZ$-graded \VOA. 
\end{enumerate}

\subsubsection{Extensions associated to Virasoro algebras via screenings and cosets}

Now we will construct extensions of tensor products of Virasoro algebras. We present two methods, using screening charges and cosets. 
While the first method is more general (and similar to the previous section), the second has the advantage that it allows one to prove that the extension algebra is a simple \VOA{}. The general idea is as follows: 

Let $V$ be a \VOA{} and $U\subseteq V$ a vertex operator subalgebra with a different conformal vector. Then the subalgebra 
$$C = \text{Com}(U, V) =\lbrace c\in V\,\,\vert\,\, [Y(c, x_1), Y(u,x_2)]=0\,\,\text{for all}\,\,u\in U\rbrace$$
 is called the commutant or coset subalgebra of $V$. Often, $U$ is an affine vertex subalgebra of $V$, and it turns out that many important \VOAs{} appear as cosets of affine vertex subalgebras inside larger structures. Probably the best-known family is $V = V^{k-1}(\mathfrak g) \otimes L_1(\mathfrak g)$ and $U = V^{k}(\mathfrak g)$ for $\mathfrak g$ a simply-laced Lie algebra and $V^{k}(\mathfrak g)$ the universal affine \VOA{} of $\mathfrak g$ at level $k$ and $L_{k}(\mathfrak g)$ its simple quotient. In this case the coset is the principal $W$-algebra of $\mathfrak g$ at a certain level $\ell(k)$ depending on $k$ \cite{ACL2}. This means that the coset of $U = V^{k}(\mathfrak g)$ in $V = V^{k-2}(\mathfrak g) \otimes L_1(\mathfrak g)\otimes L_1(\mathfrak g)$ is automatically an extension of the tensor product of the principal $W$-algebras of $\mathfrak g$ at levels  $\ell(k-1)$ and $\ell(k)$. The principal $W$-algebra of $\mathfrak{sl}_2$ at level $k$ is the Virasoro algebra at central charge $13-6(k+2)-6(k+2)^{-1}$.
It turns out that more general coset theorems can be very effectively used to construct many families of extensions of tensor products of $W$-algebras; see \cite[Theorem 9.1]{ACF}.

We turn to the example of the Virasoro algebra. 
As reference on screening charges and the subtleties on choosing contours we refer to \cite{KT1, KT2, TW2}; we also use \cite{IK, FMS}.  Denote by $\vir(u, v)$ the simple rational Virasoro \VOA{}
of central charge
\[
c=1-6\frac{(u-v)^2}{uv}.
\]
Then let $\alpha_+=\sqrt{2v/u}$ and $\alpha_-=-\sqrt{2u/v}$ so that
$\alpha_+\alpha_-=-2$. Further set $\alpha_0=\alpha_++\alpha_-$.  We
set
\[
\alpha_{r,s} = \frac{1-r}{2}\alpha_++\frac{1-s}{2}\alpha_-
\]
Modules of $\vir(u, v)$ are then constructible as cohomology of an $s$-fold contour, a
screening charge $Q_{[s]}$, acting on Fock modules of a rank one Heisenberg
\VOA. The result is
\begin{equation} \label{eq:cohomfock}
  H^{Q_{[s]}}(\mathcal F_\lambda)=
  \begin{cases}
    W_{r, s} & \text{if} \
    \lambda= \alpha_{r,s } \ \text{for} \
    1\leq r \leq u-1 \ \text{and} \ 1\leq s \leq v-1 \\
    0 & \text{otherwise} 
\end{cases} .
\end{equation}
The $W_{r, s}$ are simple, every simple $\vir(u, v)$-module is
isomorphic to at least one of them, and further
$W_{r, s} \cong W_{u-r, v-s}$.  The conformal dimension of $W_{r, s}$
is
\[
  h_{r, s} = \frac{1}{2} \alpha_{r, s} (\alpha_{r, s} -\alpha_0)
  = \frac{(rv-us)^2-(u-v)^2}{4uv}
\]
with $\alpha_0=\alpha_++\alpha_-$. The Virasoro \VOA{} itself is $W_{1, 1}$. We next want to extend
\[
V=\vir(u_1, v_1) \otimes \dots \otimes \vir(u_n,v_n) 
\]
for some choice of $n$ Virasoro \VOAs.
Let 
\[
\alpha_\pm^{(i)} :=\pm \sqrt{2 \left(\frac{v_{i}}{u_i}\right)^{\pm 1}} \qquad\text{and}\qquad \alpha_{r, s}^{(i)} = \frac{1-r}{2}\alpha_+^{(i)}+\frac{1-s}{2}\alpha_-^{(i)}.
\]
Let 
\[
L= \alpha^{(1)}_{2,1}\ZZ \oplus  \alpha^{(2)}_{2,1}\ZZ \oplus  \dots \oplus \alpha^{(n-1)}_{2,1}\ZZ \oplus \alpha^{(n)}_{2,1}\ZZ  
\]
and $N$ an even sublattice. Let $Q^{(i)}=Q^{(i)}_{[1]}$. The cohomology $H^Q(V_N)$ of
\[
Q = Q^{(1)} +  \dots + Q^{(n)}
\]
associated to the $n$ screening charges $Q^{(1)}, \dots, Q^{(n)}$
corresponding to the choice of $n$ Virasoro \VOAs{} is then a module for
$V$. Since $V_N$ a \VOA, the same is true for $H^Q(V_N)$
provided the latter is $\ZZ$-graded by $L(0)$. If we replace $N$ by an integral lattice we might end up with a
\VOSA{}. This grading question is 
described by the inner product with a vector
\[
\rho = \alpha_0^{(1)} + \dots +  \alpha_0^{(n)}, \qquad  \alpha_0^{(i)}=\alpha_+^{(i)}+\alpha_-^{(i)},
\]
namely:
\begin{theo}
  If $N$ is an even lattice extending the rank $n$ Heisenberg \VOA,
  then $H^Q(V_N)$ is a \VOA{} if and only if $(\rho, \lambda) \in
  2\mathbb Z$ for all $\lambda$ in $N$.  Let $L\supseteq N$ be another
  lattice such that $L/N=\mathbb Z/2\mathbb Z$ and with glue vector
  $\gamma$, that is $L=N\cup (N+\gamma)$.  Then we have the following
  cases:
\begin{enumerate}
\item $H^Q(V_L)$ is a $\ZZ$-graded \VOA{} if $\gamma^2$ and $(\rho, \gamma)$ are even.
\item $H^Q(V_L)$ is a $\frac{1}{2}\ZZ$-graded \VOA{} if $\gamma^2$ is even
  and $(\rho, \gamma)$ is odd.
\item $H^Q(V_L)$ is a $\frac{1}{2}\ZZ$-graded \VOSA{} if $\gamma^2$ is odd and
  $(\rho, \gamma)$ is even.
\item $H^Q(V_L)$ is a $\ZZ$-graded \VOSA{} if $\gamma^2$ and $(\rho, \gamma)$ are odd.
\end{enumerate}
\end{theo}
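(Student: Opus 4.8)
The plan is to reduce the theorem entirely to the preceding results on simple current extensions, specifically Theorems \ref{thm:spinstats}, \ref{thm:fourcases}, and \ref{thm:rep0conformal}, together with the grading criterion already stated in the theorem (the ``$\ZZ$-graded \VOA{} if $(\rho,\lambda)\in 2\ZZ$'' part, which I would treat as established since its proof is the cohomological computation of $L(0)$-eigenvalues via the standard shift by $\rho$). First I would set $A_0 = H^Q(V_N)$, which by hypothesis is a $\ZZ$-graded \VOA, and observe that $H^Q(V_L)$ decomposes as $A_0\oplus A_0^\gamma$ where $A_0^\gamma = H^Q(V_{N+\gamma})$ is an $A_0$-module arising as the cohomology on the coset $N+\gamma$. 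The key structural point is that $A_0^\gamma$ is a self-dual simple current for $A_0$: this should follow from the fact that $V_{N+\gamma}$ is a self-dual simple current for $V_N$ (a standard lattice \VOA{} fact, since $(N+\gamma)+(N+\gamma)=N$ as cosets because $2\gamma\in N$), and exactness-type properties of the screening cohomology functor, which intertwines the $V_N$-module tensor structure with the $A_0$-module tensor structure. So $A_0^\gamma\boxtimes_{A_0}A_0^\gamma\cong A_0$.

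Next I would compute the two invariants that Theorem \ref{thm:fourcases} requires: the twist $\theta_{A_0^\gamma}$ and the quantum dimension $\qdim(A_0^\gamma)$. For the twist, the conformal weight of $A_0^\gamma$ is the lowest $L(0)$-eigenvalue on $H^Q(V_{N+\gamma})$, which after the $\rho$-shift is $\tfrac12\gamma^2 - \tfrac12(\rho,\gamma) + (\text{integer})$, so $\theta_{A_0^\gamma} = e^{2\pi i(\frac12\gamma^2 - \frac12(\rho,\gamma))} = (-1)^{\gamma^2 - (\rho,\gamma)} = (-1)^{\gamma^2}(-1)^{(\rho,\gamma)}$. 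Then I would verify $\theta_{A_0^\gamma}^2 = 1$ (so that Theorem \ref{thm:fourcases} applies) and note that $\theta_{A_0^\gamma} = 1$ iff $\gamma^2 \equiv (\rho,\gamma) \pmod 2$, and $\theta_{A_0^\gamma}=-1$ otherwise. For the quantum dimension, I would invoke the spin-statistics relation of Theorem \ref{thm:spinstats}: $c_{A_0^\gamma,A_0^\gamma} = \theta_{A_0^\gamma}\qdim(A_0^\gamma)$. The self-braiding scalar $c_{A_0^\gamma,A_0^\gamma}$ can be read off from the monodromy of the lattice part, i.e. from $e^{\pi i \gamma^2}$, giving $c_{A_0^\gamma,A_0^\gamma} = (-1)^{\gamma^2}$; hence $\qdim(A_0^\gamma) = (-1)^{\gamma^2}\theta_{A_0^\gamma}^{-1} = (-1)^{\gamma^2}\theta_{A_0^\gamma} = (-1)^{(\rho,\gamma)}$. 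Then the four cases of Theorem \ref{thm:fourcases} applied to $J = A_0^\gamma$ read off exactly the four cases in the statement: $\theta_J=1,\qdim=1$ (i.e. $\gamma^2$ even, $(\rho,\gamma)$ even) gives a $\ZZ$-graded \VOA; $\theta_J=1,\qdim=-1$ ($\gamma^2$ even, $(\rho,\gamma)$ odd — wait, this forces $\theta_J=-1$, so I need to recheck signs) — this is the delicate bookkeeping I will need to get right.

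The main obstacle is precisely this sign bookkeeping and, underneath it, justifying that the screening cohomology functor is compatible enough with tensor structure and braiding that the lattice-\VOA{} computations of monodromy, twist, and quantum dimension descend correctly to $A_0$-modules. In more detail: I must confirm that $c_{A_0^\gamma,A_0^\gamma}$ really equals the lattice monodromy scalar $e^{\pi i\gamma^2}$ (the Heisenberg/Fock contribution and the screening contours should not introduce extra phases, but this needs a clean argument, perhaps by citing that $A_0^\gamma$ embeds compatibly or by a direct computation with the intertwining operator $\Omega_0$ as in Section \ref{subsubsec:braiding}), and similarly that $h_{A_0^\gamma}\bmod\ZZ$ is governed only by $\tfrac12\gamma^2 - \tfrac12(\rho,\gamma)$. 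Once these two facts are in hand, everything else is a direct substitution into Theorems \ref{thm:spinstats} and \ref{thm:fourcases}, and a careful pairing-up of the four sign combinations $(\gamma^2\bmod 2,\ (\rho,\gamma)\bmod 2)$ with the four conclusions. I would present the proof as: (1) identify $H^Q(V_L) = A_0\oplus J$ with $J$ a self-dual simple current; (2) compute $\theta_J = (-1)^{\gamma^2+(\rho,\gamma)}$ from the conformal weight and check $\theta_J^2=1$; (3) compute $\qdim(J) = (-1)^{(\rho,\gamma)}$ via spin-statistics and the lattice monodromy; (4) apply Theorem \ref{thm:fourcases} and match cases.
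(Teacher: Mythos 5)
Your plan is internally consistent — in fact the case-matching at the end does close up correctly (after you re-examine the signs: $\theta_J = (-1)^{\gamma^2+(\rho,\gamma)}$, $\qdim(J) = (-1)^{(\rho,\gamma)}$, and cases 1--4 of Theorem~\ref{thm:fourcases} map to cases 1, 4, 3, 2 of the theorem, respectively). But the route is substantially heavier than what the theorem actually needs, and the gap you flag is a real gap, not just a bookkeeping nuisance.

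The direct argument, which is what the surrounding text points at, is as follows. The screening cohomology $H^Q(V_L)$ inherits its vertex (super)algebra structure from $V_L$, so whether it is a VOA or a VOSA is determined entirely by the parity of $L$: since $N$ is even, $L$ is even iff $\gamma^2$ is even, giving a VOA, and $L$ is odd integral iff $\gamma^2$ is odd, giving a VOSA. The $\ZZ$- versus $\tfrac12\ZZ$-grading is then read off from the conformal weight modulo $\ZZ$ of the cohomology over the coset $N+\gamma$, which (with the $\rho$-shifted Virasoro) is $\tfrac12\gamma^2-\tfrac12(\rho,\gamma)\bmod\ZZ$. So the grading is integral iff $\gamma^2\equiv(\rho,\gamma)\pmod{2}$. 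Intersecting the two dichotomies gives the four cases with no categorical input at all.

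Your route instead passes through Theorem~\ref{thm:fourcases}, which forces you to produce $\qdim(J)$ as an invariant of the braided tensor category of $A_0$-modules. You propose to get it from spin-statistics plus the claim $c_{J,J}=e^{\pi i\gamma^2}$. That claim is the genuine gap: the self-braiding of $J=H^Q(V_{N+\gamma})$ is a scalar in $\repzA$ for $A_0=H^Q(V_N)$, and nothing in the paper establishes that the screening cohomology functor transports the $V_N$-module monodromy to the $A_0$-module braiding without extra phases. The cleanest way to justify $c_{J,J}=(-1)^{\gamma^2}$ is to observe that $H^Q(V_L)$ sits inside $V_L$ and inherits its (super)skew-symmetry sign — but that observation already hands you the VOA-versus-VOSA dichotomy directly, so the categorical detour becomes redundant the moment you fill the gap. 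There is also an implicit appeal to uniqueness of simple current extensions (to identify the $A$ produced by Theorem~\ref{thm:fourcases} with the given $H^Q(V_L)$), and to the hypotheses of Theorem~\ref{thm:fourcases} (a suitable ribbon/vertex tensor category of $A_0$-modules), neither of which is needed by the direct proof and neither of which is automatic for arbitrary $N$.

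Concretely: keep the twist computation (it is correct and is exactly the conformal-weight parity the direct proof uses), drop the spin-statistics and quantum-dimension step, and replace it with the observation that $H^Q(V_L)$ is a vertex sub(super)algebra of $V_L$ and therefore inherits its parity from $L$.
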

The following illustrates the theorem:
\begin{corol}
  Let $u, v, w$ be positive integers such that both $v$ and $w$ are
  coprime to $u$. Then
\[
\mathcal A_{u; v, w} := \bigoplus_{i=1}^{u-1} W^{(u, v)}_{i, 1} \otimes W^{(u, w)}_{i, 1}
\]
for $v+w=2du$ has a \VOSA{} structure extending
$\vir(u, v)\otimes \vir(u, w)$ if $d$ is odd and a \VOA{} structure
extending $\vir(u, v)\otimes \vir(u, w)$ if $d$ is even.
\end{corol}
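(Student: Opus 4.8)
The plan is to realize $\mathcal A_{u;v,w}$ as a simple current extension of $V = \vir(u,v)\otimes\vir(u,w)$ and apply \cref{thm:fourcases} together with the conformal weight criterion of \cref{thm:rep0conformal}. First I would set $J = W^{(u,v)}_{2,1}\otimes W^{(u,w)}_{2,1}$ and argue that $J$ is a self-dual simple current for $V$: each $W^{(u,v)}_{r,1}$ is a simple current of $\vir(u,v)$ since the fusion rules of rational Virasoro minimal models give $W^{(u,v)}_{2,1}\boxtimes W^{(u,v)}_{r,1}\cong W^{(u,v)}_{r',1}$ for an appropriate $r'$, and $W^{(u,v)}_{2,1}\boxtimes W^{(u,v)}_{2,1}\cong W^{(u,v)}_{1,1}=\vir(u,v)$ (using the identification $W_{r,s}\cong W_{u-r,v-s}$, the $(2,1)\times(2,1)$ fusion product contains only the vacuum since $1\le r\le u-1$ forces $r=1$). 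Then $J\boxtimes J\cong V$, so $J$ is self-dual, and the powers $J^i$ give $W^{(u,v)}_{i,1}\otimes W^{(u,w)}_{i,1}$ up to the relabeling $i\mapsto u-i$, which is exactly the list of summands of $\mathcal A_{u;v,w}$ (the group generated by $J$ is $\ZZ/2\ZZ$ when $u-i \equiv \pm i$ identifications are taken into account, so the sum $\bigoplus_{i=1}^{u-1}$ is the orbit sum, possibly with multiplicity considerations I would need to check — see the obstacle below).

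Next I would compute the twist $\theta_J$. The twist acts on $J$ by $e^{2\pi i h_J}$ where $h_J = h^{(u,v)}_{2,1}+h^{(u,w)}_{2,1}$. Using $h_{r,s} = \frac{(rv-us)^2-(u-v)^2}{4uv}$, one gets $h^{(u,v)}_{2,1} = \frac{(2v-u)^2-(u-v)^2}{4uv} = \frac{3v-2u}{4u}$ and similarly $h^{(u,w)}_{2,1} = \frac{3w-2u}{4u}$, so $h_J = \frac{3(v+w)-4u}{4u} = \frac{3\cdot 2du - 4u}{4u} = \frac{6d-4}{4} = \frac{3d-2}{2}$. Hence $\theta_J = e^{2\pi i(3d-2)/2} = e^{\pi i(3d-2)} = (-1)^{3d-2} = (-1)^d$. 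So $\theta_J = 1$ when $d$ is even and $\theta_J = -1$ when $d$ is odd. Then I would determine $\qdim(J)$: since each $W^{(u,v)}_{2,1}$ is a self-dual simple current in a \emph{unitary-like} rational minimal model with positive quantum dimensions (the categorical dimension of a simple current in a pseudo-unitary modular tensor category is $\pm1$, and for Virasoro minimal models the dimensions are all positive, hence $\qdim = 1$), we get $\qdim(J) = \qdim(W^{(u,v)}_{2,1})\cdot\qdim(W^{(u,w)}_{2,1}) = 1$. Therefore by \cref{thm:fourcases}: $d$ even gives $\theta_J=1,\qdim(J)=1$, so $\mathcal A_{u;v,w}$ is a $\ZZ$-graded \VOA; $d$ odd gives $\theta_J=-1,\qdim(J)=1$, so $\mathcal A_{u;v,w}$ is a $\frac12\ZZ$-graded \VOSA. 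This is exactly the claimed statement.

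The main obstacle I anticipate is the careful bookkeeping of the orbit of $J$ and the identification $W_{r,s}\cong W_{u-r,v-s}$: I need to verify that $J^i \cong W^{(u,v)}_{i,1}\otimes W^{(u,w)}_{i,1}$ for $i = 1,\dots, u-1$ really does enumerate the summands of $\mathcal A_{u;v,w}$ exactly once each (or at least that the algebra structure on $V\oplus J$ from \cref{thm:fourcases} has underlying $V$-module the full direct sum $\bigoplus_{i=1}^{u-1} W^{(u,v)}_{i,1}\otimes W^{(u,w)}_{i,1}$ after passing to the group algebra $\bigoplus_{g\in G} J^g$ with $G = \langle J\rangle$). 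Concretely, $J^i = W^{(u,v)}_{2i-1,1}\otimes W^{(u,w)}_{2i-1,1}$ with the odd-index labels wrapping around via $W_{r,1}\cong W_{u-r,v-1}$, and I would check that as $i$ ranges over $\ZZ/\mathrm{ord}(J)$ these hit precisely $\{W^{(u,v)}_{j,1}\otimes W^{(u,w)}_{j,1} : 1\le j\le u-1\}$ bijectively, which comes down to an elementary argument that $i\mapsto 2i-1 \bmod{2u}$ together with the reflection $r\mapsto 2u - r$ generates all residues — this uses that $\gcd(v,u)=\gcd(w,u)=1$ and $v+w = 2du$. A secondary technical point is justifying $\qdim(W^{(u,v)}_{2,1})=1$ rather than $-1$; if one does not want to invoke pseudo-unitarity, one can instead read it off the Virasoro $S$-matrix $S_{(r,s),(1,1)}$, whose entries for minimal models are known to be strictly positive, giving $\qdim = S_{(2,1),(1,1)}/S_{(1,1),(1,1)} > 0$, hence $=1$ for a simple current. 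Once these combinatorial and positivity points are settled, the corollary follows immediately from \cref{thm:fourcases}.
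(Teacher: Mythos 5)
Your approach does not work, for a reason that invalidates the argument at its very first step: $J = W^{(u,v)}_{2,1}\otimes W^{(u,w)}_{2,1}$ is \emph{not} a simple current for $V = \vir(u,v)\otimes\vir(u,w)$ when $u\ge 4$. The Virasoro minimal model fusion rules give
$$W^{(u,v)}_{2,1}\boxtimes W^{(u,v)}_{2,1} \;\cong\; W^{(u,v)}_{1,1}\oplus W^{(u,v)}_{3,1},$$
where the label $r_3$ ranges up to $\min(3,\,2u-5)$; only for $u=3$ does the second summand $W_{3,1}$ get truncated. Your parenthetical ``$1\le r\le u-1$ forces $r=1$'' is an error: for $u\ge 4$ the label $r=3$ is perfectly legal. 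Since $J\boxtimes J$ has two summands, $J$ is not invertible, so \cref{thm:fourcases} simply does not apply, and neither does any simple-current extension machinery. (The actual simple currents of $\vir(u,v)$ are $W_{1,1}$ and $W_{u-1,1}\cong W_{1,v-1}$, giving a $\ZZ/2\ZZ$ group — far too small to account for the $u-1$ summands of $\mathcal A_{u;v,w}$.) Your anticipated ``obstacle'' about bookkeeping of the orbit of $J$ is a symptom of this: the formula $J^i \cong W^{(u,v)}_{2i-1,1}\otimes W^{(u,w)}_{2i-1,1}$ is meaningless because $J^{\boxtimes i}$ is not even a simple $V$-module. Moreover, fusing two diagonal summands $W^{(u,v)}_{i,1}\otimes W^{(u,w)}_{i,1}$ and $W^{(u,v)}_{j,1}\otimes W^{(u,w)}_{j,1}$ produces off-diagonal terms $W^{(u,v)}_{k,1}\otimes W^{(u,w)}_{k',1}$ with $k\ne k'$, so $\mathcal A_{u;v,w}$ is not closed under fusion as a naive $V$-submodule; establishing that it nevertheless carries a (super)algebra structure is precisely the hard content of the corollary.

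The paper's proof takes a completely different and more robust route: it realizes $\mathcal A_{u;v,w}$ as the cohomology $H^Q(V_L)$ of a screening charge acting on a rank-one lattice vertex operator (super)algebra $V_L$ with $L=\lambda\ZZ$, $\lambda = \tfrac12(\alpha_+^{(1)},\alpha_+^{(2)})$, so that the (super)algebra structure is inherited from $V_L$. The VOA vs.\ VOSA distinction then reduces to computing $\Delta(\mathcal F_\lambda)=\lambda^2/2 = \tfrac14\tfrac{v+w}{u} = \tfrac{d}{2}$ and checking parity, together with a verification that all the cohomology pieces $H^Q(\mathcal F_{(1-i)\lambda})$ have conformal weights congruent to $\Delta(\mathcal F_\lambda)$ modulo $\ZZ$. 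Your computation $h_J = (3d-2)/2$, $\theta_J = (-1)^d$, is numerically correct and is consistent with the paper's parity criterion — but in your framework it is only a necessary-condition observation; it does not by itself construct the vertex (super)algebra structure whose existence is the actual claim.
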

\begin{proof}
  We have that $\alpha_+^{(1)}=\sqrt{2v/u}$ and
  $\alpha_+^{(2)}=\sqrt{2w/u}$. Define
  $\lambda = \frac{1}{2} \left( \alpha_+^{(1)}, \alpha_+^{(2)}\right)
  \in \mathbb R^2$ and the rank one lattice $L=\lambda \mathbb
  Z$. Then from \eqref{eq:cohomfock} it is clear that
\[
  H^Q(\mathcal F_{(1-i)\lambda})=
  \begin{cases}
    W^{(u, v)}_{i, 1} \otimes W^{(u, w)}_{i, 1}
    & \text{if} \  1\leq i \leq u-1
    \\ 0 & \text{otherwise} 
\end{cases} 
\]
and hence 
\[
H^Q(V_L) = \bigoplus_{i=1}^{u-1} W^{(u, v)}_{i, 1} \otimes W^{(u, w)}_{i, 1}.
\]
It remains to verify conformal dimensions. We have 
\[
\Delta(\cF_\lambda)=\frac{\lambda^2}{2}=\frac{1}{4} \frac{v+w}{u}=\frac{d}{2}
\]
so that $V_L$ is a \VOA{} if $d$ is even and a \VOSA{} if $d$ is odd.
Moreover the conformal dimension of $H^Q(\mathcal F_{(1-i)\lambda})$
is
\begin{align*}
  \Delta\left(H^Q(\mathcal F_{(1-i)\lambda})\right) & =
  \frac{(vi-u)^2-(v-u)^2}{4uv}+\frac{(wi-u)^2-(w-u)^2}{4uw}\nonumber\\
  & = 1-i
  +\frac{(i^2-1)(v+w)}{4u}= 1-i+\frac{d(i^2-1)}{2}
\end{align*}
so that we have
\[
  \Delta(F_\lambda)\equiv\Delta\left(H^Q(\mathcal F_{(1-i)\lambda})\right)
  \mod\ \mathbb Z,
\]
that is, $H^Q(V_L)$ is also a \VOSA{} if $d$ is odd and a \VOA{} if $d$
is even.
\end{proof}

It is not clear from the construction whether the vertex operator (super)algebra structure on $\mathcal A_{u; v, w}$ is simple. When $d=1$, another construction shows that $\mathcal A_{u; v, w}$ can be given a simple vertex operator (super)algebra structure:
\begin{theo}Let
\[
k= \frac{v}{u-v}-2=\frac{u}{w-u}-3,\qquad \qquad v+w=2u\qquad\text{and} \qquad v<u.
\]  Then
\[
\mathcal A_{u; v, w} \cong C_k := \com\left(L_{k+2}(\mathfrak{sl}_2), \mathcal F(4)\otimes L_{k}(\mathfrak{sl}_2) \right)
\]
as $\vir(u, w)\otimes \vir(u, v)$-modules. In particular, $C_k$ is a simple \VOA{} extension of a rational \VOA.
\end{theo}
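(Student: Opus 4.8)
The plan is to identify the coset $C_k = \com\left(L_{k+2}(\mathfrak{sl}_2), \mathcal{F}(4)\otimes L_k(\mathfrak{sl}_2)\right)$ with $\mathcal{A}_{u;v,w}$ by decomposing $\mathcal{F}(4)\otimes L_k(\mathfrak{sl}_2)$ as a module for the triple tensor product $L_{k+2}(\mathfrak{sl}_2)\otimes \vir(u,w)\otimes \vir(u,v)$, where $\mathcal{F}(4)$ is the rank-one lattice \VOA{} $V_{\sqrt{2}\ZZ}$ (equivalently four free fermions, or the $\mathfrak{sl}_2$ level-$1$ \VOA). The key structural input is the well-known fact that $\vir(u,v)\otimes \vir(u,w)$ embeds as a commuting pair inside $\mathcal{F}(4)\otimes L_k(\mathfrak{sl}_2)$ via the GKO-type construction: the first Virasoro factor is the coset Virasoro inside $L_k(\mathfrak{sl}_2)\otimes L_1(\mathfrak{sl}_2)$ realized on $\mathcal{F}(4)$, and the second Virasoro is the coset Virasoro inside $L_k(\mathfrak{sl}_2)\otimes L_{k+2}(\mathfrak{sl}_2)$; the specific values $k=\frac{v}{u-v}-2=\frac{u}{w-u}-3$ with $v+w=2u$ are exactly what makes the central charges match those of $\vir(u,w)$ and $\vir(u,v)$.

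First I would write down the known branching rules. The tensor product $L_k(\mathfrak{sl}_2)\otimes L_1(\mathfrak{sl}_2)$ decomposes over $L_{k+1}(\mathfrak{sl}_2)\otimes\vir$; iterating and using that $\mathcal{F}(4)\cong L_1(\mathfrak{sl}_2)$ together with $L_{k}(\mathfrak{sl}_2)\otimes L_1(\mathfrak{sl}_2)\supseteq L_{k+2}(\mathfrak{sl}_2)\otimes(\text{something})$ is not literally available, so instead I would use the diamond/GKO cosets directly: realize $\mathcal{F}(4)\otimes L_k(\mathfrak{sl}_2)$, extract $L_{k+2}(\mathfrak{sl}_2)$ as the diagonal-type subalgebra (this is the standard conformal embedding $L_{k+2}(\mathfrak{sl}_2)\hookrightarrow L_1(\mathfrak{sl}_2)\otimes L_k(\mathfrak{sl}_2)$ only at special $k$ — in general one has $L_k(\mathfrak{sl}_2)\otimes L_1(\mathfrak{sl}_2) \supseteq L_{k+1}(\mathfrak{sl}_2)\otimes\vir(k+2,k+3)$, so more care is needed). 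The cleaner route, which I would actually take, is: the coset $\com(L_{k+2}(\mathfrak{sl}_2),\mathcal{F}(4)\otimes L_k(\mathfrak{sl}_2))$ contains the two commuting Virasoro \VOAs{} $\vir(u,w)$ (the coset of $L_{k+2}$ inside $L_1\otimes L_k$ composed appropriately) and $\vir(u,v)$ by iterated GKO, and one checks via character/asymptotic-dimension arguments (or via the semisimplicity and explicit branching functions) that $C_k$ decomposes over $\vir(u,w)\otimes\vir(u,v)$ precisely as $\bigoplus_{i=1}^{u-1} W^{(u,w)}_{i,1}\otimes W^{(u,v)}_{i,1}$. This identifies $C_k$ with $\mathcal{A}_{u;v,w}$ as a module, and hence as a \VOA{} since both inherit their \VOA{} structure from the same ambient vertex algebra $\mathcal{F}(4)\otimes L_k(\mathfrak{sl}_2)$ (the coset is automatically a \VOA, and its module decomposition over the subalgebra is the one listed). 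Simplicity of $C_k$ follows because $\mathcal{F}(4)\otimes L_k(\mathfrak{sl}_2)$ is rational (hence regular) and $L_{k+2}(\mathfrak{sl}_2)$ is a regular subalgebra, so by the general coset theory (e.g.\ the results used in \cite{CKLR, Lin}) the coset is rational, in particular simple; regularity of the ambient algebra requires $k$ to be a positive integer, which should be checked from the arithmetic constraints on $u,v,w$.

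The key steps in order: (1) verify from $k=\frac{v}{u-v}-2=\frac{u}{w-u}-3$ and $v+w=2u$, $v<u$, that $k$ is a non-negative (in fact positive) integer and that $L_{k+2}(\mathfrak{sl}_2)$, $L_k(\mathfrak{sl}_2)$, $\mathcal{F}(4)$ are all rational; (2) identify the two GKO coset Virasoro subalgebras of $\mathcal{F}(4)\otimes L_k(\mathfrak{sl}_2)$ commuting with $L_{k+2}(\mathfrak{sl}_2)$ and check their central charges equal $c(u,w)$ and $c(u,v)$; (3) compute the branching of $\mathcal{F}(4)\otimes L_k(\mathfrak{sl}_2)$ over $L_{k+2}(\mathfrak{sl}_2)\otimes\vir(u,w)\otimes\vir(u,v)$, isolating the multiplicity space of the vacuum of $L_{k+2}(\mathfrak{sl}_2)$, and match it term-by-term with $\bigoplus_{i=1}^{u-1} W^{(u,w)}_{i,1}\otimes W^{(u,v)}_{i,1}$ using conformal weights and characters; (4) conclude $C_k\cong\mathcal{A}_{u;v,w}$ as \VOAs{} and invoke rationality/simplicity of regular cosets. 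The main obstacle will be step (3): carrying out the iterated GKO branching carefully enough to pin down \emph{exactly} which Virasoro modules $W^{(u,w)}_{i,1}\otimes W^{(u,v)}_{i,1}$ appear with multiplicity one in the coset, and no others. This is a bookkeeping problem with fusion rules and branching functions for $\widehat{\mathfrak{sl}}_2$ at levels $k$, $1$, $k+1$, $k+2$; the cleanest way to handle it rigorously is to compare graded characters (using modular invariance and the known character formulae for minimal models and for $L_j(\mathfrak{sl}_2)$) rather than attempting a direct combinatorial decomposition, and then upgrade the character identity to an isomorphism of \VOA{} modules using semisimplicity and the fact that a \VOA{} is determined by its vacuum module structure.
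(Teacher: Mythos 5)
The central issue is that you have misidentified $\mathcal{F}(4)$. Four free fermions has central charge $4\cdot\tfrac12 = 2$, not $1$, so $\mathcal{F}(4)$ is not $V_{\sqrt{2}\ZZ}\cong L_1(\mathfrak{sl}_2)$; rather it is a rank-two object, an order-two simple current extension of $L_1(\mathfrak{sl}_2)\otimes L_1(\mathfrak{sl}_2)$ (the $\mathfrak{so}(4)\cong\mathfrak{sl}_2\times\mathfrak{sl}_2$ identification at level $1$), explicitly
\[
\mathcal F(4) \cong \bigl(L_1(\mathfrak{sl}_2) \otimes L_1(\mathfrak{sl}_2)\bigr) \oplus \bigl(L(\omega_1)\otimes L(\omega_1)\bigr).
\]
This matters because your plan to extract \emph{two} commuting Virasoro factors by iterated GKO requires \emph{two} copies of $L_1(\mathfrak{sl}_2)$ in the ambient algebra: one GKO step $L_1\otimes L_k \supset L_{k+1}\otimes\vir(u,v)$ and then a second $L_1\otimes L_{k+1}\supset L_{k+2}\otimes\vir(u,w)$. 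With $\mathcal{F}(4)=L_1(\mathfrak{sl}_2)$ (one factor) you cannot even embed $L_{k+2}(\mathfrak{sl}_2)$ in $\mathcal{F}(4)\otimes L_k(\mathfrak{sl}_2)$ in the required way, and you correctly flag this worry (``more care is needed'') but never resolve it — the fix is the correct decomposition of $\mathcal{F}(4)$, not a conformal embedding or a character argument. Once $\mathcal{F}(4)$ is recognized as an $L_1^{\otimes 2}$-extension, the paper's proof is a direct calculation: tensor the above decomposition with $L(k\omega_0)$, apply the GKO branching rule from Remark~10.3 of~[IK] twice (first to the $L_1\otimes L_k$ pair, then to the remaining $L_1\otimes L_{k+1}$), and read off
\[
\mathcal F(4) \otimes L(k\omega_0) \cong \bigoplus_{m=1}^{u-1}\bigoplus_{\substack{m'=1\\ m'\,\text{odd}}}^{w-1}
L\bigl((k+3-m')\omega_0 + (m'-1)\omega_1\bigr)\otimes W^{(u,w)}_{m,m'}\otimes W^{(u,v)}_{m,1},
\]
whose $L_{k+2}(\mathfrak{sl}_2)$-vacuum multiplicity space is exactly $\bigoplus_{m=1}^{u-1} W^{(u,w)}_{m,1}\otimes W^{(u,v)}_{m,1}=\mathcal A_{u;v,w}$. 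No character or asymptotic-dimension comparison is needed. For simplicity, the paper cites [ACKL, Cor.~2.2] (simplicity of the commutant follows from rationality of the \emph{subalgebra} $L_{k+2}(\mathfrak{sl}_2)$, not regularity of the ambient algebra), with an alternative argument via nondegeneracy of the invariant bilinear form using [Li, Car]; your invocation of general coset-rationality results is in the right spirit but points to a different mechanism.
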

\begin{proof}
Let $\omega_0, \omega_1$ be the fundamental weights of $\widehat{\mathfrak{sl}}_2$, so that $L_k(\mathfrak{sl}_2)=L(k\omega_0)$.
Recall that the vertex operator superalgebra $\mathcal F(4)$ of four free fermions is isomorphic to 
\[
\mathcal F(4) \cong \left(L_1(\mathfrak{sl}_2) \otimes L_1(\mathfrak{sl}_2)\right) \oplus \left(L(\omega_1)\otimes L(\omega_1)\right).
\]
Using Remark $10.3$ of \cite{IK} it is a computation to verify that
\begin{equation}\nonumber
\begin{split}
\mathcal F(4) \otimes L(k\omega_0) &\cong  \left(L(\omega_0)\otimes L(\omega_0) \otimes L(k\omega_0)\right)\oplus \left(L(\omega_1)\otimes L(\omega_1) \otimes L(k\omega_0)\right) \\
&\cong \left( \bigoplus_{\substack {m=1\\ m \ \text{odd} }}^{u-1}  L(\omega_0) \otimes L\left((k+2-m) \omega_0+(m-1)\omega_1\right)\otimes W_{m, 1}^{(u, v)}\right) \oplus \\
&\qquad \oplus  \left( \bigoplus_{\substack {m=2\\ m \ \text{even} }}^{u-1}  L(\omega_1) \otimes L\left((k+2-m) \omega_0+(m-1)\omega_1\right)\otimes W_{m, 1}^{(u, v)}\right)\\
&\cong \bigoplus_{ {m=1 }}^{u-1}  \bigoplus_{\substack {m'=1 \\ m'\ \text{odd}}}^{w-1}  L\left((k+3-m') \omega_0+(m'-1)\omega_1\right)\otimes W_{m, m'}^{(u, w)}\otimes W_{m, 1}^{(u, v)}
\end{split}
\end{equation}
as $L_{k+2}(\mathfrak{sl}_2) \otimes \vir(u, w)\otimes \vir(u, v)$-module. 
In other words
\[
\mathcal A_{u; v, w} \cong \com\left(L_{k+2}(\mathfrak{sl}_2), \mathcal F(4)\otimes L_{k}(\mathfrak{sl}_2) \right)=: C_k
\]
as $\vir(u, w)\otimes \vir(u, v)$-modules. Cosets of this type are simple \cite[Corollary 2.2]{ACKL} since $L_{k+2}(\mathfrak{sl}_2)$ is rational. Simplicity also follows without using rationality from  \cite{Li, Car} since all three \VOSAs{} involved have positive energy and unique vacuum, since the weight-$1$ space of each is in the kernel of the Virasoro mode $L(1)$, and since 
  the unique non-degenerate supersymmetric invariant bilinear form on $\mathcal F(4) \otimes L(k\omega_0)$ restricts to  a non-degenerate supersymmetric invariant bilinear form on $L_{k+2}(\mathfrak{sl}_2) \otimes C_k$. See the following remark for a discussion of this issue.
\end{proof}

\begin{rema}
In many cases, simplicity of a \VOA{} is equivalent to existence of a non-degenerate symmetric invariant bilinear form. A precise statement is:

Let $V=\bigoplus_{n\in\ZZ}V_n$ be a \VOA{} such that $V_n=0$ for $n<0$ (positive energy) and $V_0=\CC{\bf 1}$ (unique vacuum) and $L(1)V_1=0$. Then 
there is a unique (up to scale) symmetric invariant bilinear form $B_V$ on $V$ \cite[Theorem 3.1]{Li}. 
Let $I\subsetneq V$ be an ideal; then $I\cap V_0=\{0\}$ and thus by \cite[Equation 3.1]{Li},
\[
B_V(u, v) = 0 \quad \textrm{for all}\ u\in I, \ v\in V.
\]
In other words, this bilinear form is non-degenerate if and only if $V$ is simple.

Let $V$ now be a \VOSA{} which is an order-two simple current extension of its even vertex operator subalgebra $V^\even$.
 That is, the odd part $V^\odd$ of $V$ is a self-dual simple current.  
Assume that $V^\even$ satisfies the same conditions as $V$ above, so that $V^\even$ is simple if and only if the unique invariant symmetric bilinear form on $V^\even$ is non-degenerate. 
This induces a supersymmetric invariant bilinear form on $V=V^\even\oplus V^\odd$ by \cite[Lemma 3.2.5]{Car} and the same argument as above carries over so that this induced bilinear form is non-degenerate if and only if $V$ is simple.
\end{rema}

The well-known fusion rules of the Virasoro minimal models
allow us to induce $W^{(u, v)}_{1, s} \otimes W^{(u, w)}_{1, t}$. Call $A$ the superalgebra object corresponding to either $C_k$ or $\mathcal A_{u; v, w}$.
\begin{corol} 
  $\cF(W^{(u, v)}_{1, s} \otimes W^{(u, w)}_{1, t})$ is in $\rep^0A$ if and only
  if $s+t\in 2\ZZ$.
\end{corol}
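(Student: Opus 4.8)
The statement is a direct application of \cref{thm:rep0conformal} (specifically, part \ref{it:fin}), combined with the fusion rules of the rational Virasoro minimal models. The plan is to reduce the question of when $\cF(W^{(u,v)}_{1,s}\otimes W^{(u,w)}_{1,t})$ lands in $\rep^0A$ to a congruence condition on conformal weights, and then to compute those weights explicitly. Throughout, write $A = C_k$ (or $\mathcal A_{u;v,w}$), recall from the preceding theorems that $A = \bigoplus_{i=1}^{u-1} W^{(u,v)}_{i,1}\otimes W^{(u,w)}_{i,1}$ as a module over $V := \vir(u,v)\otimes\vir(u,w)$, and note that $A$ is an extension of $V$ of simple-current type: the odd/parity-reversed structure aside, the summands $W^{(u,v)}_{i,1}\otimes W^{(u,w)}_{i,1}$ form a $\ZZ/2\ZZ$-like (in fact $\ZZ/u\ZZ$ via $W_{i,1}\cong W_{u-i,v-1}$, but effectively generated by $J = W^{(u,v)}_{2,1}\otimes W^{(u,w)}_{2,1}$) group of simple currents, since the fusion $W_{2,1}\boxtimes W_{i,1} = W_{i-1,1}\oplus W_{i+1,1}$ shows the full subgroup generated has the right structure after accounting for the reflection identification.

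First I would identify the generating simple current $J$ and check that $W^{(u,v)}_{1,s}\otimes W^{(u,w)}_{1,t}$ is (a subquotient of) a fusion product of simple modules appearing in $A$ and simple $V$-modules, so that part \ref{it:inf} of \cref{thm:rep0conformal} applies; alternatively, since the category of $V$-modules here is semisimple, rational, and locally finite (it is a finite tensor product of Virasoro minimal model categories), the finiteness hypotheses of part \ref{it:fin} are automatic, so \cref{thm:rep0conformal} applies unconditionally. The criterion then says: $\cF(P)\in\rep^0A$ if and only if $h_{J\boxtimes P} - h_J - h_P \in \ZZ$, where $J$ is the chosen generating simple current of the extension. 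Taking $P = W^{(u,v)}_{1,s}\otimes W^{(u,w)}_{1,t}$ and $J = W^{(u,v)}_{2,1}\otimes W^{(u,w)}_{2,1}$, I would compute $J\boxtimes P = (W^{(u,v)}_{2,1}\boxtimes W^{(u,v)}_{1,s})\otimes(W^{(u,w)}_{2,1}\boxtimes W^{(u,w)}_{1,t})$; by the Virasoro fusion rules $W_{2,1}\boxtimes W_{1,s} = W_{2,s}$ (a single simple summand, since the $1$ in the second slot forces $s'=s$), so $J\boxtimes P = W^{(u,v)}_{2,s}\otimes W^{(u,w)}_{2,t}$, which is again simple, so the monodromy charge $h_{J\boxtimes P} - h_J - h_P$ is a single well-defined number.

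The core computation is then to plug into $h_{r,s} = \dfrac{(rv-us)^2 - (u-v)^2}{4uv}$ (and the analogue with $w$), and evaluate
\begin{align*}
  \big(h^{(u,v)}_{2,s} - h^{(u,v)}_{2,1} - h^{(u,v)}_{1,s}\big) + \big(h^{(u,w)}_{2,t} - h^{(u,w)}_{2,1} - h^{(u,w)}_{1,t}\big).
\end{align*}
A short calculation gives $h^{(u,v)}_{2,s} - h^{(u,v)}_{2,1} - h^{(u,v)}_{1,s} = \dfrac{(2v-us)^2 - (2v-u)^2 - (v-us)^2 + (v-u)^2}{4uv}$, which simplifies to a half-integer of the form $\dfrac{(1-s)v + \text{(integer)}\cdot u}{2u}$; more cleanly, one finds this equals $\tfrac{1-s}{2} + (\text{integer})$ using $v<u$ and the coprimality, and similarly the $w$-piece equals $\tfrac{1-t}{2} + (\text{integer})$. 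Hence the total monodromy charge is $\tfrac{1-s}{2} + \tfrac{1-t}{2} + \ZZ = 1 - \tfrac{s+t}{2} + \ZZ$, which lies in $\ZZ$ if and only if $s+t$ is even. Combining with \cref{thm:rep0conformal} yields exactly the claimed equivalence: $\cF(W^{(u,v)}_{1,s}\otimes W^{(u,w)}_{1,t}) \in \rep^0 A$ iff $s+t\in 2\ZZ$.

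The main obstacle I anticipate is purely bookkeeping: being careful about the identification $W_{r,s}\cong W_{u-r,v-s}$ so that the correct representative is used when computing $h_{J\boxtimes P}$, and making sure the "integer part" in the two conformal-weight differences really is an integer (this uses that $v$ and $w$ are each coprime to $u$, and the specific relation $v+w=2u$ or $v+w=2du$ from the hypotheses of the corollaries, which controls the parity of the cross-terms). One should also double-check that $J$ as chosen actually generates the full simple-current group realizing the extension $A$ — i.e. that every summand $W^{(u,v)}_{i,1}\otimes W^{(u,w)}_{i,1}$ is obtained from iterated fusion with $J$ — which follows from $W_{2,1}\boxtimes W_{i,1} = W_{i-1,1}\oplus W_{i+1,1}$ and induction on $i$. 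None of this is deep, but it must be done cleanly; the conformal-weight congruence is the whole content.
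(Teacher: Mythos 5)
The weight-congruence arithmetic you carry out is correct and is the same calculation the paper does, but the justification for reducing to a single monodromy check has a genuine gap. You invoke \cref{thm:rep0conformal} with the ``generating simple current'' $J = W^{(u,v)}_{2,1}\otimes W^{(u,w)}_{2,1}$, but $W_{2,1}$ is \emph{not} a simple current in $\vir(u,v)$ when $u>3$: your own displayed fusion rule $W_{2,1}\boxtimes W_{i,1}=W_{i-1,1}\oplus W_{i+1,1}$ already shows $W_{2,1}\boxtimes W_{2,1}=W_{1,1}\oplus W_{3,1}$, so $W_{2,1}$ is not invertible. Consequently $A=\bigoplus_{i=1}^{u-1}W^{(u,v)}_{i,1}\otimes W^{(u,w)}_{i,1}$ is not of the form $\bigoplus_{g\in G}J^{g}$ for a cyclic group $G$ of simple currents, and \cref{thm:rep0conformal} (which is stated for exactly that situation) does not apply.

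The paper instead applies \cref{propo:rep0inductioncriteria} directly and verifies $\cM_{A,W}=1_{A\boxtimes W}$ summand by summand: for each $i$ the balancing equation reduces triviality of $\cM_{A_i,W}$ to $h^{(u,v)}_{i,1}+h^{(u,w)}_{i,1}+h^{(u,v)}_{1,s}+h^{(u,w)}_{1,t}-h^{(u,v)}_{i,s}-h^{(u,w)}_{i,t}\in\ZZ$, which simplifies to $(i-1)(s+t)/2\in\ZZ$ for all $i$, and taking $i=2$ yields $s+t\in 2\ZZ$. Your computation is precisely the $i=2$ case of this, and it happens to already imply the condition for all $i$; but to conclude from the $i=2$ check alone you would need a separate argument — for instance that the objects having trivial monodromy with a fixed $P$ form a tensor subcategory closed under subquotients, so $\cM_{A_2,P}=1$ propagates to every $A_i$ appearing in a tensor power of $A_2$ — rather than a citation of a theorem about simple current extensions of an extension that is not one. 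Either supply that M\"uger-centralizer argument or, more simply, just verify the congruence for all $i$ as the paper does.
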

\begin{proof}
  We know from Lemma \ref{propo:rep0inductioncriteria} that a module $W$
  induces to a $\rep^0A$-object if and only if the monodromy
  $\cM_{A,W}=1_{A\fus{} W}$.  Now, using Virasoro fusion rules,
  \begin{align*}
    (W^{(u, v)}_{i, 1} \otimes W^{(u, w)}_{i, 1})
    \fus{}(W^{(u, v)}_{1, s} \otimes W^{(u, w)}_{1, t})
    &=(W^{(u, v)}_{i, 1}\fus{}W^{(u, v)}_{1,s}) \otimes (W^{(u, w)}_{i, 1}\fus{}
      W^{(u, w)}_{1, t})=W^{(u,v)}_{i,s}\otimes W^{(u,v)}_{i,t}.
  \end{align*}
 Using balancing and conformal dimensions,
  $\cM_{A,W}=1_{A\fus{}W}$ if and only if
    $h^{(u,v)}_{i,1}+h^{(u,w)}_{i,1}+h^{(u,v)}_{1,s}+h^{(u,w)}_{1,t}-
    h^{(u,v)}_{i,s}-h^{(u,w)}_{i,t}\in\ZZ$, for all $i$.
    This in turn happens if and only if $(i-1)(s+t)/2\in\ZZ$ for all $i$.
\end{proof} 
There are also extensions of $E_6$ and $E_8$ type:
\begin{corol}\label{cor:Etype}
Let $u=11$, $v=10$, and $w=12$, that is, $k+2=10$; 
then 
\[
C_k \oplus \cF(W^{(u, v)}_{1, 1} \otimes W^{(u, w)}_{1, 7})
\]
is a \VOSA{} extension of $C_k$. 

Let $u=29$, $v=28$, and $w=30$, that is, $k+2=28$; 
then 
\[
C_k \oplus \cF(W^{(u, v)}_{1, 1} \otimes W^{(u, w)}_{1, 11}) \oplus \cF(W^{(u, v)}_{1, 1} \otimes W^{(u, w)}_{1, 19}) \oplus \cF(W^{(u, v)}_{1, 1} \otimes W^{(u, w)}_{1, 29})
\]
is a \VOSA{} extension of $C_k$. 
  \end{corol}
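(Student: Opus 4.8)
\textbf{Proof proposal for Corollary \ref{cor:Etype}.}

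The plan is to recognize these two conjectural extensions as instances of the general machinery already developed: they are conformal extensions of the simple rational vertex operator algebra $C_k$ (equivalently $\mathcal{A}_{u;v,w}$ in the case $d=1$), built by adjoining a finite family of simple modules $\cF(W^{(u,v)}_{1,1}\otimes W^{(u,w)}_{1,s})$ that form a group under fusion. Concretely, for $(u,v,w)=(11,10,12)$ we have $v+w=22=2u$, so $d=1$ and $C_k$ is a \VOSA{} extension of $\vir(11,10)\otimes\vir(11,12)$ with $k+2=10$; the modules we adjoin are the $\cF(W^{(11,10)}_{1,1}\otimes W^{(11,12)}_{1,7})$, and we should note that $W^{(11,12)}_{1,1}$ together with $W^{(11,12)}_{1,7}$ and (via the symmetry $W_{r,s}\cong W_{u-r,v-s}$) the relevant identifications generate a copy of $\ZZ/3\ZZ$ under the Virasoro fusion product of the $\vir(11,12)$ minimal model (this reflects the $E_6$ modular invariant of the level-$10$ $\mathfrak{sl}_2$ theory). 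For $(u,v,w)=(29,28,30)$ we likewise have $v+w=58=2u$, $d=1$, $k+2=28$, and the family $\{W^{(29,30)}_{1,1}, W^{(29,30)}_{1,11}, W^{(29,30)}_{1,19}, W^{(29,30)}_{1,29}\}$ generates a copy of $\ZZ/2\ZZ\times\ZZ/2\ZZ$ (the $E_8$ modular invariant at level $28$). First I would make these group structures precise using the explicit Virasoro fusion rules and the identification $W_{r,s}\cong W_{u-r,v-s}$, verifying closure and the inverse property for the relevant set of labels.

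Next, having the group $G$ (either $\ZZ/3\ZZ$ or $(\ZZ/2\ZZ)^2$), I would check that the candidate object $B=\bigoplus_{g\in G}\cF(W_g)$ actually carries a (super)algebra structure in $\cC=\repzA$ with $A=C_k$ (or $\mathcal{A}_{u;v,w}$). By Theorem \ref{thm:repzABTCvrtxSalg} together with the Huang--Kirillov--Lepowsky/Creutzig--Kindermann--Linshaw criterion, it suffices to exhibit $B$ as a simple current extension of $A$ with the right twist eigenvalues: by Theorem \ref{thm:fourcases} and Theorem \ref{thm:rep0conformal}, once we know each $\cF(W_g)$ is a simple current for $\repzA$ (which follows from Proposition \ref{prop:SimpleCurrentSmatrix}(1), since the $W_g$ are simple currents for the Virasoro tensor product and hence for $\sC$, and they induce to $\repzA$ by the preceding corollary because the relevant second labels are odd), the existence of the extension reduces to computing the conformal weights $h_{\cF(W_g)}$ and checking the cocycle/twist conditions $\theta_{\cF(W_g)}\in\{\pm1\}$ and $\theta_{\cF(W_g)}=\theta_{\cF(W_{g^2})}$ from Theorem \ref{thm:fourcases} and the hypothesis preceding Theorem \ref{thm:rep0conformal}. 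The conformal weights are computed from $h^{(u,v)}_{1,1}+h^{(u,w)}_{1,s}$ (the inducing module sits in $C_k$ with the coset conformal vector, so one uses the explicit $h_{r,s}$ formula given in the excerpt), and the claim ``\VOSA'' rather than ``\VOA'' amounts to checking that at least one $h_{\cF(W_g)}$ is a half-integer while the others are integers or half-integers compatibly — this is the content of whether $\theta_{J^g}=\pm1$ with signs matching case 2 of Theorem \ref{thm:fourcases}.

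The main obstacle I expect is the bookkeeping around the coset conformal vector and the identification of the $\vir(u,w)$ sub-fusion-group: one must be careful that $\cF(W^{(u,v)}_{1,1}\otimes W^{(u,w)}_{1,s})$ is genuinely a simple current of $\repzA$ (not just of $\sC$) and that the set $\{1,s_1,s_2,\dots\}$ is closed under the fusion rules of $\vir(u,w)$ modulo the reflection symmetry, and then that the induced twist values produce exactly a half-integer grading so that $B$ is a \VOSA{} and not a \VOA. The rest — that $B$ is simple, that $B\supseteq C_k$ as a \VOSA{} extension, and that its representation category is again of the form $\repzA$ for the new algebra — then follows formally from Proposition \ref{prop:Fsimple}, Proposition \ref{prop:SimpleCurrentSmatrix}, and Theorem \ref{thm:repzABTCvrtxSalg}. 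I would close by remarking that the $E_6$ and $E_8$ labels are justified a posteriori: the fusion subgroup generated by the adjoined modules matches the dual group appearing in the corresponding ADE modular invariant of $L_{k+2}(\mathfrak{sl}_2)$.
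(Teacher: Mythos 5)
Your proposal rests on a false premise that makes the whole argument collapse: the modules $\cF(W^{(u,v)}_{1,1}\otimes W^{(u,w)}_{1,s})$ being adjoined are \emph{not} simple currents. The $E_6$ and $E_8$ labels in the statement refer to the \emph{exceptional} modular invariants of $\widehat{\mathfrak{sl}}_2$ at levels $10$ and $28$, and exceptional precisely means non-simple-current. For instance, in the $E_6$ case the known extension of $L_{10}(\mathfrak{sl}_2)$ from \cite[Example 5.3]{KO} is $L_{10}(\mathfrak{sl}_2)\oplus L(4\omega_0+6\omega_1)$, and $\qdim\big(L(4\omega_0+6\omega_1)\big)=\sin(7\pi/12)/\sin(\pi/12)=\cot(\pi/12)=2+\sqrt{3}\neq 1$, so this is not a simple current; similarly $L(18\omega_0+10\omega_1)$ at level $28$ has quantum dimension far from $1$. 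Consequently none of Theorem \ref{thm:fourcases}, Theorem \ref{thm:rep0conformal}, or Proposition \ref{prop:SimpleCurrentSmatrix} — which all have simple-current hypotheses — can be applied to build or analyze these extensions, and the claimed fusion subgroups ($\ZZ/3\ZZ$ for $E_6$, $(\ZZ/2\ZZ)^2$ for $E_8$) do not exist. (In the $E_6$ case you also have a counting issue: there are only two summands, so even if it were a simple current extension it would be order $2$, not $3$.)

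The paper's actual proof takes a different and essential route. It starts from the \emph{known} exceptional commutative algebra objects in $\rep L_{10}(\mathfrak{sl}_2)$ and $\rep L_{28}(\mathfrak{sl}_2)$, transports them across the coset using Lin's mirror extension theorem to get the corresponding extensions of the even vertex operator subalgebra $C_k^{\even}$, and only at the very last step uses simple current technology — and there it is only the order-$2$ current given by the odd part of $C_k$ over $C_k^{\even}$, not the $E$-type modules. The mirror extension step is precisely what handles the non-simple-current character of the exceptional algebras, and it has no analogue in your sketch. If you want to salvage something from your idea, you would need to verify directly that $\bigoplus\cF(W_g)$ is a commutative algebra object with trivial twist (e.g.\ by exhibiting a multiplication map and checking associativity/commutativity), but that is a substantially harder computation than the simple current criterion and is exactly the work that mirror extension saves.
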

\begin{proof}
The well-known type $E_6$, respectively $E_8$, extensions of $L_{10}\left(\mathfrak{sl}_2\right)$ and $L_{28}\left(\mathfrak{sl}_2\right)$ are given by \cite[Example 5.3]{KO}:
\[
L_{10}\left(\mathfrak{sl}_2\right) \oplus L\left( 4 \omega_0+6\omega_1\right), \qquad
\text{respectively} \qquad
L_{28}\left(\mathfrak{sl}_2\right) \oplus L\left( 18 \omega_0+10\omega_1\right) \oplus L\left( 10 \omega_0+18\omega_1\right) \oplus L\left( 28 \omega_1\right).
\]
The corresponding statement for the even vertex  operator subalgebra of $C_k$ then follows by mirror extension for rational \VOAs{} \cite{Lin, CKM}. The odd part of $C_k$ is a simple current of order two for the even vertex operator subalgebra of $C_k$. It induces to a simple current of the extension of the even vertex operator subalgebra of $C_k$ and the corresponding simple current exensions are the claimed \VOSA{} extensions of $C_k$.\end{proof}

Another extension of $L_k(\mathfrak{sl}_2)$ is the following. 
Let $k=-2+\frac{1}{p}$ for a positive integer $p$. This is not an admissible level for $\mathfrak{sl}_2$ and in that instance $L_k(\mathfrak{sl}_2)$ has a simple \VOA{} extension \cite{Cr}
\[
\mathcal Y_p  = \bigoplus_{m =0}^\infty  (2m+1)L\left( (k-2m) \omega_0+2m\omega_1\right). 
\]
In fact the group $SU(2)$ acts via automorphisms and 
\[
\mathcal Y_p  \cong \bigoplus_{m =0}^\infty  \rho_{2m\omega_1} \otimes L\left( (k-2m) \omega_0+2m\omega_1\right). 
\]
as $SU(2) \otimes L_k(\mathfrak{sl}_2)$-module \cite{ACGY}. 
It is an interesting question to identify and understand the Virasoro \VOA{} extension Com$\left( V_{k+1}(\mathfrak{sl}_2), L_1(\mathfrak{sl}_2) \otimes \mathcal Y_p \right)$. A natural expectation is that this coset is closely related to the triplet algebra $\cW(p+1)$.

\begin{rema}
Another interesting observation is that $L_{k+2}(\mathfrak{sl}_2) \subseteq L_{k}(\mathfrak{sl}_2)\otimes \mathcal F(3)$ so that $\mathcal F(1)\subseteq C_k$. Let $D_k=\com\left(\mathcal F(1), C_k\right)$. 
Then \cite[Example 7.8]{CL1} suggests that the even vertex operator subalgebra of $D_k$ is strongly generated by three fields of conformal dimension $2$, $4$, and $6$. There are two known families of vertex algebras of this type, namely 
the principal $W$-algebra of type $B_3$ and the $\mathbb Z_2$-orbifold of the $N=1$ super Virasoro algebra and indeed this coset coincides with the simple 
 $\mathbb Z_2$-orbifold of the $N=1$ super Virasoro algebra at corresponding central charge \cite[Remark 5.3]{CFL}.
\end{rema}

\subsection{Verlinde formulae}\label{sec:Verlinde}

One of the high points of the theory  of regular \VOA s is the Verlinde formula, conjectured by Verlinde \cite{V} and proven by Huang \cite{H-verlinde}, for the fusion rules of the \VOA{} in terms of the modular $S$-matrix. We start by recalling this formula:

Let $V$ be a regular \VOA{}, $X$ a $V$-module, and $[\rep V]$ the set of equivalence classes of simple $V$-modules. The modular $S$-matrix is defined by the modular $S$-transformation of torus one-point functions. For this
let $L(0)$ be the Virasoro
zero-mode of $V$ and $c$ the central charge; then the character of $X$ is defined to be the graded trace
\[
  \ch[X](\tau, v) := \text{tr}_X\left(q^{L(0)-\frac{c}{24}} o(v)\right), \qquad q=e^{2\pi i\tau},
\]
for $v\in V$ and $o(v)$ the zero-mode of the
corresponding vertex operator. Including zero-modes $o(v)$ ensures that characters of inequivalent simple $V$-modules are linearly independent. Thus the $\CC$-linear span of characters  is isomorphic to the vector space $\CC[\rep V]$
whose basis elements are labeled by the elements of $[\rep V]$.  We
fix one representative of each equivalence class.  Abusing notation slightly, we denote this set of
representatives by $[\rep V]$ as well.  Then M\"{o}bius
transformations on the linear span of characters define an action of SL$(2,\ZZ)$. If $v\in V$ has
conformal weight $\mathrm{wt}[v]$ with respect to Zhu's modified
vertex operator algebra structure on $V$ \cite{Zhu}, the modular
$S$-transformation has the form
\begin{equation}
\begin{split}
  \ch[X]\left(-\frac{1}{\tau}, v\right)
  &= \tau^{\text{wt}[v]} \sum_{Y\in [\rep V]} S_{X, Y}  \ch[Y](\tau, v). \end{split}
\end{equation}
Recall that the fusion rules are defined as the 
dimensions  of spaces of intertwining
operators. Verlinde's formula for $\ZZ$-graded \VOAs{} is then
\begin{equation}
  \begin{split}
    {N}_{X,Y}^{W}&=  \sum\limits_{Z \in  [\rep V]}
    \dfrac{{S}_{X,Z}\cdot {S}_{Y,Z}\cdot ({S}^{-1})_{Z,W}}{{S}_{V, Z}},
\end{split}
\end{equation}
and the fusion product satisfies 
\[
  X \boxtimes_V Y \cong \bigoplus_{W\in[\rep V]} {N}_{X,Y}^{W}\cdot W.
\]

In this subsection, we will get Verlinde formulae for regular \VOSA s of correct and wrong boson-fermion statistics, as well as for regular \VOA s with wrong statistics. The main idea is to use the induction functor to show
and then exploit the correspondence between simple modules for $V$ and for
$A=V\oplus\sic$, where $A$ is a simple current extension of order 2.

\subsubsection{Preliminaries}
We begin by discussing the notions of simplicity of objects in $\repA$ and $\underline{\repA}$.
Recall from \cref{SCabelian} that $\sC$ is abelian and 
from \cref{rem:RepAnotab} that $\repA$ is not necessarily abelian (although its underlying
category $\underline{\repA}$ is abelian). For this reason, some care is needed in dealing with the notion of simple object in $\repA$. It is useful to define simplicity in $\repA$ and $\underline{\repA}$ in terms of submodules:
\begin{defi}
	An \textit{$A$-submodule} of an object $(X,\mu_X)$ in $\repA$ is an object $W$ of $\sC$ together with an injection $h:W\rightarrow X$ such that $\im(\mu_X\circ (\id_A\boxtimes h))\subseteq\im(h)$. We say that two $A$-submodules $(W,h)$ and $(\widetilde{W},\widetilde{h})$ are \textit{equivalent} if there is an $\sC$-isomorphism $k: W\rightarrow\widetilde{W}$ such that $\widetilde{h}\circ k =h$. 
\end{defi}

\begin{rema}
	For a vertex operator superalgebra extension $V\subseteq A$ with $V$ contained in the even part of $A$, an equivalence class of $A$-submodules of an object $(X,\mu_X)$ in $\repA$ amounts to a not-necessarily-$\ZZ/2\ZZ$-graded subspace of $X$ that is closed under the vertex operator $Y_X$ for $A$ acting on $X$.
\end{rema}

\begin{defi}\label{def:simple}
	Supppose $A$ is a vertex operator superalgebra extension of $V$ such that $V\subseteq A^\even$. An object $(X,\mu_X)$ is \textit{simple} as an object of $\repA$, respectively as an object of $\underline{\repA}$, if $0$ and $X$ are the only $A$-submodules, respectively the only $\ZZ/2\ZZ$-graded $A$-submodules, of $X$ up to equivalence.
\end{defi}

Recall that sometimes an object in a category is defined to be simple if all its endomorphisms are scalar multiples of the identity. This notion of simplicity is equivalent to Definition \ref{def:simple} if $\underline{\repA}$ is semisimple, that is, if every object of $\underline{\repA}$ is a direct sum of finitely many simple objects in the sense of Definition \ref{def:simple}:
\begin{propo}\label{propo:twosimpledefequiv}
	Suppose $A$ is a vertex operator superalgebra extension of $V$ such that $V\subseteq A^\even$. If $(X,\mu_X)$ is simple as an object of $\repA$, respectively as an object of $\underline{\repA}$, then $\mathrm{End}_{\repA}(X)=\CC 1_X$, respectively $\mathrm{End}_{\underline{\repA}}(X)=\CC 1_X$. The converse holds if $\underline{\repA}$ is semisimple.
\end{propo}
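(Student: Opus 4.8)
The plan is to prove the two directions separately, treating the $\repA$ case; the $\underline{\repA}$ case is identical with all parity qualifiers deleted. For the forward direction, suppose $(X,\mu_X)$ is simple as an object of $\repA$, and let $f\in\mathrm{End}_{\repA}(X)$. The idea is to use Theorem \ref{thm:repAabelian}: although $\repA$ itself need not be abelian, the kernel and image of $f$ in $\sC$ (equivalently, in $\cC$) carry $A$-module structures making them $A$-submodules of $X$ in the sense of the definition just given. More precisely, from the proof of Theorem \ref{thm:repAabelian}, the $\sC$-kernel $(K,k)$ of $f$ is an object of $\repA$ and $k: K\rightarrow X$ satisfies $\im(\mu_X\circ(1_A\boxtimes k))\subseteq\im(k)$ by \eqref{muKdef}, so $(K,k)$ is an $A$-submodule of $X$; similarly the $\sC$-image of $f$, obtained as the kernel of the cokernel morphism $c: X\to C$, is an $A$-submodule. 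First I would treat the case $f=0$ as trivial, and otherwise argue: simplicity forces $K=0$, so $f$ is injective, and it forces $\im(f)=X$, so $f$ is surjective; hence $f$ is an isomorphism. Since $X$ has finite-dimensional conformal weight spaces (it is an object of $\cC$), $f$ restricted to each weight space $X_{[h]}$ is a linear automorphism of a finite-dimensional space, so it has an eigenvalue $\lambda$; then $f-\lambda 1_X$ is an endomorphism in $\repA$ with nonzero kernel, so by simplicity $f-\lambda 1_X=0$, i.e.\ $f=\lambda 1_X$. This gives $\mathrm{End}_{\repA}(X)=\CC 1_X$.

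For the converse, assume $\underline{\repA}$ is semisimple and $\mathrm{End}_{\repA}(X)=\CC 1_X$. I would first observe that if $X$ decomposed nontrivially as $X\cong X_1\oplus X_2$ in $\underline{\repA}$ with both summands nonzero, then the projection onto $X_1$ would be a non-scalar idempotent endomorphism of $X$ (an $\underline{\repA}$-morphism, hence an $\repA$-morphism), contradicting $\mathrm{End}_{\repA}(X)=\CC 1_X$; so $X$ is indecomposable in $\underline{\repA}$, and by semisimplicity $X$ is simple as an object of $\underline{\repA}$. To upgrade this to simplicity in $\repA$, suppose $(W,h)$ is a nonzero $A$-submodule of $X$; its parity-homogeneous components $W^\even$ and $W^\odd$ (or rather, the images $h(W)^\even, h(W)^\odd$, which are the intersections with $X^\even, X^\odd$) are $\ZZ/2\ZZ$-graded $A$-submodules, since for parity-homogeneous $a\in A$ and $w\in h(W)$ of definite parity the vector $a_n w$ has parity $|a|+|w|$ and lies in $h(W)$ by closure. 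At least one of these is nonzero, so by simplicity of $X$ in $\underline{\repA}$ it equals $X^\even$ or $X^\odd$. If $h(W)^\even=X^\even$: then $X^\even\subseteq h(W)$, and I would show $h(W)=X$ by using that $X$ is simple in $\underline{\repA}$ together with the fact that $A^\odd$ acting on $X^\even$ generates a $\ZZ/2\ZZ$-graded submodule containing $X^\even$; if this submodule were not all of $X$ its odd part would be a proper $\ZZ/2\ZZ$-graded submodule, forcing $A^\odd\cdot X^\even=0$, whence $X$ would be an $A^\even$-module with no $A^\odd$-action extending through $h(W)$—this needs a small argument that $X^\odd\neq 0$ would then give a proper $\ZZ/2\ZZ$-graded $A$-submodule $X^\even$, contradicting simplicity. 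The symmetric case $h(W)^\odd=X^\odd$ is handled the same way. Either way $h(W)=X$, so $X$ is simple in $\repA$.

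\textbf{Main obstacle.} I expect the delicate point is the last step of the converse: passing from ``simple in $\underline{\repA}$'' to ``simple in $\repA$,'' i.e.\ ruling out a non-$\ZZ/2\ZZ$-graded $A$-submodule. The subtlety is that a non-graded submodule $h(W)$ need not contain $h(W)^\even\oplus h(W)^\odd$ as all of $W$—one must argue via the graded pieces and use that $A^\odd$ genuinely mixes the parities (which ultimately rests on $A$ being a vertex operator superalgebra with the given even/odd decomposition and the faithfulness of the $Y_X$-action coming from $X$ being a faithful object, or on unpacking what it means for $X^\even$ alone to be an $A$-submodule). The cleanest route is probably to show that any $A$-submodule $h(W)$ with $h(W)^\even = X^\even$ (the case $\neq 0$ being forced on some parity) must satisfy $h(W) = X^\even \oplus (A^\odd)_{\bullet} X^\even \supseteq X$, invoking Lemma \ref{lem:genmodspan} on generated submodules; if $A^\odd$ acted trivially on $X^\even$ then $X^\even$ would itself be a proper nonzero $\ZZ/2\ZZ$-graded $A$-submodule (assuming $X^\odd\neq 0$; if $X^\odd=0$ then $X=X^\even$ and everything is automatic). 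That chain of reductions is routine once set up but is where the only real care is needed.
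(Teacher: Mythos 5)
The forward direction of your proposal takes a genuinely different (Schur's-lemma-style) route from the paper, but it rests on a claim that the paper explicitly warns against. You assert that ``the $\sC$-kernel $(K,k)$ of $f$ is an object of $\repA$,'' invoking Theorem~\ref{thm:repAabelian}. However, Remark~\ref{rem:RepAnotab} points out precisely that, for $f\in\End_{\repA}(X)$ not parity-homogeneous, the induced $A$-action $\mu_K$ on the $\cC$-kernel need not be even, so $(K,\mu_K)$ need not be an object of $\repA$. And the naive subspace $\ker_{\cC} f$ is in fact \emph{not} closed under $Y_X$: writing $f=f^\even+f^\odd$, for $w\in\ker f$ (so $f^\odd(w)=-f^\even(w)$) and $a\in A^\odd$ one computes $f(Y_X(a,x)w)=Y_X(a,x)f^\even(w)+(-1)^{|a|}Y_X(a,x)f^\odd(w)=-2\,Y_X(a,x)f^\odd(w)$, which need not vanish. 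Whether the abstract condition $\im(\mu_X\circ(1_A\boxtimes k))\subseteq\im(k)$ of Definition~\ref{def:simple} still holds for $(\ker_\cC f,k)$ is a delicate bookkeeping question about the parity twist hidden in the $\sC$-tensor product of morphisms, which your argument does not address. The paper sidesteps all of this by decomposing $f=f^\even+f^\odd$ first: each parity-homogeneous piece $f^j$ intertwines with a sign, so its eigenspaces and kernel \emph{are} honest $A$-submodules (closed under $Y_X$). For $f^\even$ this gives the scalar immediately; for $f^\odd$ the paper uses the auxiliary submodule $W=\{w+iP_X(w)\,|\,w\in X_\lambda\}$ built from an eigenspace $X_\lambda$ of $f^\odd$, and the fact that $P_X(X_\lambda)=X_{-\lambda}$, to force $\lambda=0$ and then $f^\odd=0$ via the kernel.

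The converse direction has a genuine gap. You work with $h(W)\cap X^\even$ and $h(W)\cap X^\odd$, but neither is an $A$-submodule: for $a\in A^\odd$ and $w\in h(W)\cap X^\even$, the element $a_n w$ lands in $X^\odd$, hence never in $h(W)\cap X^\even$ unless it is zero. Worse, both intersections can be zero simultaneously (e.g.\ for a ``diagonal'' subspace of the form $\{w+iP_X(w)\,|\,w\in S\}$, which is exactly the kind of non-graded submodule that has to be ruled out), so the simplicity of $X$ in $\underline{\repA}$ cannot be applied to them. The paper's argument instead forms $W+P_X(W)$ and $W\cap P_X(W)$: these \emph{are} $P_X$-stable, hence $\ZZ/2\ZZ$-graded, and they are $A$-submodules since $P_X$ conjugates $Y_X(a,x)$ to $(-1)^{|a|}Y_X(a,x)$. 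Simplicity in $\underline{\repA}$ then forces each of them to be $0$ or $X$, and a short case check, using the hypothesis $\End_{\repA}(X)=\CC 1_X$ to forbid the splitting $X=W\oplus P_X(W)$, gives $W=0$ or $W=X$. Your reduction via Lemma~\ref{lem:genmodspan} (taking the generated submodule of $h(W)\cap X^\even$) works only once one knows this intersection is nonzero, which is exactly what fails in the diagonal case; so that lemma alone does not close the gap.
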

\begin{proof}
	Suppose $(X,\mu_X)$ is simple as an object of $\underline{\repA}$ and $f\in\mathrm{End}_{\underline{\repA}}(X)$. Since $f$ preserves the finite-dimensional weight spaces of $X$, $f$ has an eigenvalue $\lambda$. Since $f$ is even, the corresponding eigenspace $X_\lambda$ is a non-zero $\ZZ/2\ZZ$-graded $A$-submodule of $X$. Since $X$ is a simple object of $\underline{\repA}$, $X_\lambda=X$ and $f=\lambda 1_X$.
	
	Now suppose $(X,\mu_X)$ is simple as an object of $\repA$ and $f\in\mathrm{End}_{\repA}(X)$ with parity decomposition $f=f^\even+f^\odd$.  Both $f^\even$ and $f^\odd$ are $\repA$-endomorphisms of $X$; we need to show that $f^\even$ is a scalar multiple of $1_X$ and $f^\odd=0$. Now, $f^\even$ is a scalar multiple of $1_X$ by the argument of the previous paragraph. As for $f^\odd$, suppose $\lambda$ is an eigenvalue of $f^\odd$, which exists because $f^\odd$ preserves the finite-dimensional weight spaces of $X$ due to the evenness of $L(0)$, and let $X_\lambda$ be the corresponding eigenspace. We claim that 
	\begin{equation*}
		W=\lbrace w+iP_X(w)\,\vert\,w\in X_\lambda\rbrace
	\end{equation*}
	is an $A$-submodule of $X$, where $P_X$ is the parity involution of $X$. Indeed, for $a\in A$ with parity decomposition $a=a^\even+a^\odd$, we have
	\begin{align*}
		Y_X(a,x)(w+i P_X(w)) & =\left(Y_X(a^\even,x)w+iY_X(a^\odd,x)P_X(w)\right) + \left(iY_X(a^\even,x)P_X(w)+Y_X(a^\odd,x)w\right)\nonumber\\
		& =\left(Y_X(a^\even,x)w+iY_X(a^\odd,x)P_X(w)\right) +iP_X\left(Y_X(a^\even,x)w-iP_X(Y_X(a^\odd,x)w)\right)\nonumber\\
		& =\left(Y_X(a^\even,x)w+iY_X(a^\odd,x)P_X(w)\right) + iP_X\left(Y_X(a^\even,x)w+iY_X(a^\odd,x)P_X(w)\right)
	\end{align*}
	for $w\in X_\lambda$, and the coefficients of powers of $x$ and $\log x$ in $Y_X(a^\even,x)w+iY_X(a^\odd,x)P_X(w)$ lie in $X_\lambda$ because
	\begin{align*}
		f^\odd\left(Y_X(a^\even,x)w+iY_X(a^\odd,x)P_X(w)\right) & = Y_X(a^\even,x)f^\odd(w)-iY_X(a^\odd,x)(f^\odd\circ P_X)(w)\nonumber\\
		& = Y_X(a^\even,x)f^\odd(w)+i Y_X(a^\odd,x)(P_X\circ f^\odd)(w)\nonumber\\
		& =\lambda\left(Y_X(a^\even,x)w+iY_X(a^\odd,x)P_X(w)\right).
	\end{align*}
	Because $X$ is simple as an object of $\repA$, either $W=0$ or $W=X$. If $W=0$, then $w=-iP_X(w)$ for all $w\in X_\lambda$, so that $X_\lambda=P_X(X_\lambda)$. If on the other hand $W=X$, then for any non-zero $w\in X_\lambda$, there is some non-zero $w'\in X_\lambda$ such that $w=w'+iP_X(w')$; thus $P_X(w')=-i(w-w')$ and $X_\lambda\cap P_X(X_\lambda)\neq 0$. Either way, $X_\lambda\cap P_X(X_\lambda)\neq 0$; but it is easy to see that $P_X(X_\lambda)$ is the eigenspace of $f^\odd$ with eigenvalue $-\lambda$. Thus we must have $\lambda=0$. In particular, $\ker f^\odd$ is non-zero; $\ker f^\odd$ is also an $A$-submodule because $f^\odd$ is parity-homogeneous. Thus since $X$ is simple, $\ker f^\odd=X$ and $f^\odd=0$, as desired. 
	
	For the converse, assume $\underline{\repA}$ is semisimple. If $(X,\mu_X)$ is an object of $\repA$ such that $\mathrm{End}_{\underline{\repA}}=\CC 1_X$, suppose that $W$ is a $\ZZ/2\ZZ$-graded $A$-submodule of $X$. Then because $\underline{\repA}$ is semisimple (with simple objects defined as in Definition \ref{def:simple}), there is a $\ZZ/2\ZZ$-graded $A$-submodule $\widetilde{W}$ of $X$ such that $X=W\oplus\widetilde{W}$. Then the projection $q_W\circ p_W$ with respect to this direct sum decomposition is a $\underline{\repA}$-endomorphism of $X$, which must be a scalar multiple of $1_X$. This can only happen if $W=0$ or $W=X$, so $X$ is simple as an object of $\underline{\repA}$.
	
	Now suppose $\mathrm{End}_{\repA}=\CC 1_X$, and suppose $W\subseteq X$ is an $A$-submodule. It is easy to see that $P_X(W)$ is also an $A$-submodule. Now, the $A$-submodules $W+P_X(W)$ and $W\cap P_X(W)$ are $P_X$-stable, which means that they are $\ZZ/2\ZZ$-graded. Since by assumption $\mathrm{End}_{\repA}=\CC 1_X=\mathrm{End}_{\underline{\repA}}(X)$, the previous paragraph shows that $X$ is simple in $\underline{\repA}$. Thus $X+P_X(W)$ and $W\cap P_X(W)$ are both either $0$ or $X$. If $W+P_X(W)=0$, then $W=0$. If on the other hand $W\cap P_X(W)=X$, then $W=X$. The other possibility is $W+P_X(W)=X$ and $W\cap P_X(W)=0$. In this case, $X=W\oplus P_X(W)$ as an object of $\repA$ with neither $W$ nor $P_X(W)$ equal to $0$. But this is a contradiction since then projection onto either $W$ or $P_X(W)$ is a non-scalar $\repA$-endomorphism of $X$. Thus either $W=0$ or $W=X$, and $X$ is simple as an object of $\repA$.
\end{proof}

It is clear that simplicity in $\repA$ is a stronger condition than simplicity in $\underline{\repA}$. The next two propositions, which are {independent of the statistics of $A$}, characterize the discrepancy between these two conditions; they can be proved
similarly to \cite{Wak, Ya}, but we
reproduce the proofs for completeness.  

\begin{propo}\label{propo:fRepAsimplicity}
	Suppose $A$ is a vertex operator superalgebra extension of $V$ such that $V\subseteq A^\even$, and suppose $X=X^\even\oplus X^\odd$ is a simple object of $\underline{\repA}$.
	Then either
	$X$ is simple as an object of $\repA$ or there exist
	an $A^\zero$-isomorphism $\varphi:X^\zero\xrightarrow{\sim}X^\one$ and $A$-submodules $W_\pm\subseteq X$ such that
	\begin{align}
		X &= W_+ \oplus W_- \quad as\,\,A^\even\text{-modules},\,\,and\,\,hence\,\,as\,\,V\text{-modules},\nonumber\\
		W_\pm &=\{ x^\zero \pm \varphi(x^\zero)\,|\, x^\zero\in X^\zero\}\label{eqn:Ypm},\\
		X^\zero&\cong X^\one\cong W_+\cong W_- \quad as\,\,
		A^\zero\text{-modules},\,\,and\,\,hence\,\,as\,\,V\text{-modules}
		\nonumber.
	\end{align}
\end{propo}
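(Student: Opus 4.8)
The plan is to analyze the structure of $X$ as an $A^\even$-module (equivalently $V$-module) and exploit the fact that $X$ is simple over $\underline{\repA}$ but possibly not over $\repA$. First I would observe that $X^\even$ and $X^\odd$ are $A^\even$-modules: indeed $A^\even$ preserves parity, and moreover $X^\odd = A^\odd \cdot X^\even$ and $X^\even = A^\odd \cdot X^\odd$ since $A$ is simple and $X^\even, X^\odd$ cannot be proper $\ZZ/2\ZZ$-graded $A$-submodules unless they are zero (and neither is zero, else $X$ would be a module for $A^\even$ which cannot be simple over $\underline{\repA}$ as an $A$-module — this needs a small argument, or one can simply note that if $X^\odd=0$ then $X=X^\even$ and $Y_X(A^\odd,x)$ would act as zero, contradicting simplicity when $A^\odd\ne 0$). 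Then I would suppose $X$ is \emph{not} simple as an object of $\repA$ and produce a proper non-zero $A$-submodule $W\subsetneq X$ that is not $\ZZ/2\ZZ$-graded.

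Next, the key step is to show that such a $W$ forces $X^\even \cong X^\odd$ as $A^\even$-modules. Consider the $A$-submodules $W + P_X(W)$ and $W\cap P_X(W)$; these are $P_X$-stable, hence $\ZZ/2\ZZ$-graded $A$-submodules, hence $0$ or $X$ by simplicity in $\underline{\repA}$ (using Proposition \ref{propo:twosimpledefequiv}, $X$ is simple in $\underline{\repA}$). If $W$ is proper and non-zero and non-graded, the only consistent possibility is $W + P_X(W) = X$ and $W\cap P_X(W) = 0$, so $X = W\oplus P_X(W)$ as an $A$-module. Writing the even projection $p_X^\even = q_{X^\even}\circ p_{X^\even}$ with respect to the parity decomposition and restricting it to $W$, I would argue that $p_X^\even|_W : W \to X^\even$ is an $A^\even$-isomorphism: it is an $A^\even$-module map since $A^\even$ is even, it is injective because $W\cap X^\odd \subseteq W\cap P_X(W) = 0$, and surjective because otherwise its image would be a proper $\ZZ/2\ZZ$-graded $A^\even$-submodule complemented inside $X^\even$, but $X^\even$ is a simple $A^\even$-module (this simplicity of $X^\even$ itself should be extracted from the simplicity of $X$ in $\underline{\repA}$ via the fact that $X^\even$ and $X^\odd$ are simple currents of each other in the order-two simple current setup — alternatively one shows directly that any proper $A^\even$-submodule of $X^\even$ would generate a proper $\ZZ/2\ZZ$-graded $A$-submodule of $X$). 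Similarly $p_X^\odd|_W : W\to X^\odd$ is an $A^\even$-isomorphism. Composing, $\varphi := (p_X^\odd|_W)\circ(p_X^\even|_W)^{-1} : X^\even \xrightarrow{\sim} X^\odd$ is the desired $A^\even$-isomorphism, and then $W = \{x^\even + \varphi(x^\even) : x^\even\in X^\even\}$ up to rescaling $\varphi$. Setting $W_+ := W$ and $W_- := P_X(W) = \{x^\even - \varphi(x^\even)\}$ (after fixing the normalization of $\varphi$ so the signs work, which is where one may need to replace $\varphi$ by $c\varphi$ for a suitable scalar $c$), one gets $X = W_+\oplus W_-$ as $A^\even$-modules and the stated isomorphisms, since $X^\even \cong W_\pm$ via $x^\even\mapsto x^\even\pm\varphi(x^\even)$ and $X^\even\cong X^\odd$ via $\varphi$.

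The main obstacle I anticipate is the normalization: $W_\pm$ must \emph{both} be $A$-submodules (not just $W_+$), and for this the isomorphism $\varphi$ must satisfy a compatibility with the odd part of the action, something like $\varphi$ intertwining $Y_X(a^\odd,x)$ appropriately so that $\varphi^2$ acts as the identity on $X^\even$ in the relevant sense. Concretely, writing the $A^\odd$-action in block form with respect to $X = X^\even\oplus X^\odd$, the condition that $W = \{x^\even + \varphi(x^\even)\}$ is $A$-stable pins down $\varphi$ up to a scalar; requiring the scalar to be chosen so that $P_X(W)$ has the form $\{x^\even - \varphi(x^\even)\}$ with the \emph{same} $\varphi$ is then automatic once $\varphi$ is so normalized, but verifying that such a normalization exists (i.e.\ that the natural candidate satisfies $\varphi^2 = \mathrm{id}$ after rescaling) requires using the associativity of the $A$-action on $X$ and the fact that $A^\odd\cdot A^\odd \subseteq A^\even$ with the simple-current relation $A^\odd\boxtimes_{A^\even}A^\odd\cong A^\even$. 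I would handle this by noting that $(p_X^\even|_{P_X(W)})^{-1}\circ(p_X^\odd|_{P_X(W)})$ gives a second $A^\even$-isomorphism $X^\even\to X^\odd$ which must be a scalar multiple of $\varphi$ by Schur's lemma (using simplicity of $X^\even$), and then choosing the normalization of $\varphi$ to absorb that scalar; the remaining consistency is then forced. I would also record, separately, the case analysis confirming that when $X$ \emph{is} simple in $\repA$ the first alternative of the conclusion holds, which is immediate.
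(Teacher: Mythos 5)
Your overall skeleton is correct and matches the paper's argument: pick a proper nonzero $A$-submodule $W$, observe $W+P_X(W)$ and $W\cap P_X(W)$ are $\ZZ/2\ZZ$-graded $A$-submodules, conclude from simplicity in $\underline{\repA}$ that $X=W\oplus P_X(W)$, and define $\varphi$ via the projections of $W$ to the two parities. However, two of your auxiliary steps are weaker than you think. First, your surjectivity argument for $p_X^\even|_W:W\to X^\even$ appeals to simplicity of $X^\even$ as an $A^\even$-module, which is not established here and is not in general available at the level of generality of this proposition (only $V\subseteq A^\even$, so $A^\even$ need not equal $V$; the paper proves simplicity of $X^\even$ as a $V$-module only later, in Proposition~\ref{propo:X0simple}, and only in the $A=V\oplus J$ setting). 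You don't need it: from $X=W\oplus P_X(W)$, any $x^\even$ decomposes uniquely as $w+w'$ with $w\in W$, $w'\in P_X(W)$; applying $P_X$ and using uniqueness forces $w'=P_X(w)$, whence $x^\even=w+P_X(w)=2w^\even=2\,p_X^\even(w)$. That is the direct surjectivity proof the paper gives, bypassing any simplicity of $X^\even$.

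Second, the entire normalization discussion is a non-issue. Once $\varphi$ is defined by the condition $x^\even+\varphi(x^\even)\in W_+$, the set $W_-=P_X(W_+)$ automatically has the form $\{x^\even-\varphi(x^\even)\}$ with the \emph{same} $\varphi$: applying $P_X$ to $x^\even+\varphi(x^\even)$ gives $x^\even-\varphi(x^\even)$ because $\varphi(x^\even)$ is odd and $P_X$ fixes the even part and negates the odd part. No Schur-lemma rescaling, no $\varphi^2=\mathrm{id}$ condition, and no appeal to the simple-current relation $A^\odd\boxtimes_{A^\even}A^\odd\cong A^\even$ is needed. Trimming these two detours brings your proof into line with the paper's, which proves that $\varphi$ is an $A^\even$-module map by the parity bookkeeping you gesture at in the phrase "since $A^\even$ is even."
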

\begin{proof}
	We may assume $X$ is not simple as an object of $\repA$ so that there is a non-zero, proper $A$-submodule $W_+\subseteq X$. As in the proof of the preceding proposition, $W_-=P_X(W_+)$ is an $A$-submodule, and $W_++W_-$ and $W_+\cap W_-$ are $\ZZ/2\ZZ$-graded $A$-submodules. Since $X$ is simple as an object of $\underline{\repA}$, we have $W_++W_-=X$ and $W_+\cap W_-=0$, so that $X=W_+\oplus W_-$ as $V$-modules.
	
	Now any $x^\zero\in X^\zero$ can be written uniquely as
	$x^\zero = w_++w_-$ for some $w_\pm\in W_\pm$. Since $P_X(x^\even)=x^\even$, the uniqueness implies that $w_-=P_X (w_+)$ and so
	$x^\zero = w_++P_X (w_+) = 2w_+^\zero$. Similarly, any $x^\odd\in X^\odd$ can be written uniquely as $x^\odd=w_+-P_X(w_+)=2w_+^\odd$.  Therefore, there is a unique
	linear isomorphism $\varphi: X^\zero\xrightarrow{\sim} X^\one$ satisfying	\begin{align}\label{eqn:basicvarphi}x^\zero+\varphi(x^\zero)\in W_+\end{align}
	given by $\varphi(w_+^\zero) = w_+^\one$; that is, $\varphi$ maps $x^\even=w_++P_X(w_+)$ to $x^\odd=w_+-P_X(w_+)$.
	It is now clear that \eqref{eqn:Ypm} is satisfied.
	We just need to
	prove that $\varphi$ is a morphism of $A^\zero$-modules.  Since
	$x^\zero+\varphi(x^\zero)\in W_+$,
	\begin{align*}
		Y_X(a^\zero,x)(x^\zero+\varphi(x^\zero))&= Y_X(a^\zero,x)x^\zero + Y_X(a^\zero,x)\varphi(x^\zero)
		\in W_+[\log x]\{x\},\\
		Y_X(a^\zero,x)x^\zero &\in X^\zero[\log x]\{x\},\quad  Y_X(a^\zero,x)\varphi(x^\zero) \in X^\one[\log x]\{x\}.
	\end{align*}
	for any $a^\zero\in A^\zero$. These parity considerations together with \eqref{eqn:basicvarphi}
	now show that 
	\begin{align}
		\varphi(Y_X(a^\zero,x)x^\zero) = Y_X(a^\zero,x)\varphi(x^\zero).\label{eqn:phiY}
	\end{align}
	Now
	the last assertion of \eqref{eqn:Ypm} is clear.
\end{proof}

\begin{propo}\label{propo:supersimpleA}
	Suppose $A$ is a \VOSA{} extension of $V$ such that
	$V\subseteq A^\zero$. Then $A$ is simple as an object of $\repA$ if and
	only if it is simple as an object of $\underline{\repA}$.	
\end{propo}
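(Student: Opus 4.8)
The plan is to use \cref{propo:fRepAsimplicity} contrapositively. One direction is trivial: if $A$ is simple as an object of $\repA$, then since every $\ZZ/2\ZZ$-graded $A$-submodule is in particular an $A$-submodule, $A$ is simple as an object of $\underline{\repA}$. For the converse, suppose $A$ is simple as an object of $\underline{\repA}$ but \emph{not} as an object of $\repA$. Then \cref{propo:fRepAsimplicity} (applied with $X=A$, $X^\even=A^\even$, $X^\odd=A^\odd$) provides an $A^\even$-module isomorphism $\varphi: A^\even\xrightarrow{\sim} A^\odd$. The goal is to derive a contradiction from the existence of such a $\varphi$.

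The key step is to exploit the fact that $A^\even$ contains the vacuum $\unit$ and has a unique (up to scalar) vacuum vector, i.e. the conformal weight zero space of $A^\even$ is one-dimensional (since $A$ is simple of CFT-like type, or more precisely since we are working with a vertex operator superalgebra extension $V\subseteq A$ with $V$ a genuine vertex operator algebra; the relevant fact is simply that $A_{[0]}=\CC\unit\subseteq A^\even$). Because $\varphi$ is an $A^\even$-module homomorphism and, in particular, a $V$-module homomorphism, it preserves conformal weights (it commutes with $L(0)$ since $\varphi(Y_{A^\even}(\omega,x)a^\even)=Y_{A^\even}(\omega,x)\varphi(a^\even)$, recalling that the conformal vector $\omega$ lies in $A^\even$ and acts via the even vertex operator). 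Hence $\varphi$ restricts to an isomorphism of weight spaces $A^\even_{[n]}\xrightarrow{\sim} A^\odd_{[n]}$ for every $n\in\frac12\ZZ$. In particular $A^\odd_{[0]}=\varphi(A^\even_{[0]})=\varphi(\CC\unit)$ is one-dimensional, so there is a non-zero vector $\xi=\varphi(\unit)\in A^\odd_{[0]}$, i.e.\ a non-zero \emph{odd} vector of conformal weight $0$.

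The contradiction then comes from the structure of a vertex operator superalgebra: a conformal-weight-zero vector $\xi$ must be annihilated by $L(-1)$ (since $L(-1)$ raises conformal weight by one and $A_{[-1]}=0$), and by the creation property $Y(\xi,x)\unit=\xi+O(x)\in A[[x]]$; combining $L(-1)$-translation covariance $Y(\xi,x)\unit=e^{xL(-1)}\xi=\xi$ with skew-symmetry, one finds $Y(\xi,x)$ is a constant, and moreover $\xi_{-1}=\xi_0$ acts as a (nonzero) scalar operator-like derivation — more directly, $\xi\in A_{[0]}$ forces $\xi\in\CC\unit$ by the standard fact that the weight-zero subspace of a vertex operator (super)algebra of CFT type is spanned by $\unit$ alone (this uses $A_{[n]}=0$ for $n<0$, which holds since $V$ has this property and $A$ is $\frac12\ZZ$-graded with $A^\even$ a $\ZZ$-graded vertex operator algebra extending $V$; alternatively one invokes that $\unit$ spans $A_{[0]}$ whenever $A$ has a unique vacuum, which is part of the data of a simple extension). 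But $\xi$ is odd and $\unit$ is even, so $\xi\notin\CC\unit$ unless $\xi=0$, contradicting $\xi\neq 0$.

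The main obstacle is pinning down exactly which hypothesis guarantees $A_{[0]}=\CC\unit$ and $A_{[n]}=0$ for $n<0$; these are not literally in the displayed excerpt's definition of grading-restricted generalized module, but they are part of the standing setup for vertex operator superalgebra extensions $V\subseteq A$ with $V$ a vertex operator algebra of CFT type (the conformal weights of $A$ are bounded below, and the lowest weight space, being acted on by $V$ and containing $\unit$, reduces to $\CC\unit$ when $A$ is simple). Assuming this — which is implicit throughout Section~\ref{sec:applications} — the argument above is complete; the only real work is checking that $\varphi$ commutes with $L(0)$, which is immediate from $\varphi$ being an $A^\even$-module map and $\omega\in A^\even$. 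I would present the proof in the order: (i) trivial direction; (ii) invoke \cref{propo:fRepAsimplicity} to get $\varphi$; (iii) observe $\varphi$ preserves weights hence produces a nonzero odd weight-zero vector; (iv) conclude this contradicts $A_{[0]}=\CC\unit$.
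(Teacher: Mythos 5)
Your proof takes a genuinely different route from the paper's, but it has a real gap: you require $A_{[0]}=\CC\unit$ (CFT type, or at least that the weight-zero subspace is purely even), and this is \emph{not} among the hypotheses of \cref{propo:supersimpleA}. You flag this yourself, but try to argue it is ``part of the standing setup''; in fact the remark immediately following Propositions~\ref{propo:twosimpledefequiv}--\ref{propo:supersimpleA} says explicitly that these results need only convergence of the relevant compositions of $V$-intertwining operators, not a vertex tensor category or any positivity/CFT-type constraint on $A$. Without $A_{[0]}=\CC\unit$, step (iv) of your outline collapses: a nonzero odd vector $\varphi(\unit)\in A^\odd_{[0]}$ is not by itself a contradiction, because there is nothing in the hypotheses forcing the weight-zero space to be one-dimensional, or even to lie entirely in $A^\even$.

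The paper's proof avoids all of this by working purely algebraically from skew-symmetry. Using the two intertwining relations that $\varphi$ satisfies --- $\varphi(Y(a^\zero,x)b^\zero)=Y(a^\zero,x)\varphi(b^\zero)$ and $\varphi(Y(a^\one,x)\varphi(b^\zero))=Y(a^\one,x)b^\zero$ --- together with skew-symmetry, one first derives
\[
Y(\varphi(a^\zero),x)\varphi(b^\zero)=Y(a^\zero,x)b^\zero
\]
for all even $a^\zero,b^\zero$, and then, applying skew-symmetry again to the left side (where both entries are now \emph{odd}, so the sign is $-1$), concludes $Y(a^\zero,x)b^\zero=-Y(a^\zero,x)b^\zero$, i.e.\ $Y|_{A^\zero\otimes A^\zero}\equiv 0$, contradicting the vacuum axiom. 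That argument needs no grading restriction at all, only that $A$ is a vertex operator superalgebra and that $\varphi$ is an $A^\zero$-module map. Your approach, by contrast, buys a shorter argument but only in the narrower CFT-type setting; to match the generality of the statement you would need either to add $A_{[0]}=\CC\unit$ to the hypotheses or replace steps (iii)--(iv) with the skew-symmetry computation.

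One smaller point: your weight-zero observation, even granting CFT type, is slightly circuitous. You invoke $L(-1)\xi=0$, the creation property, and a ``derivation'' argument, but all you actually need is the single fact that $\varphi$ commutes with $L(0)$ (which, as you note, is immediate because $\omega\in A^\even$), so that $\varphi(\unit)\neq0$ lies in $A^\odd_{[0]}$. Everything after that in your third paragraph can be dropped once $A_{[0]}=\CC\unit$ is assumed.
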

\begin{proof}
	We only need to prove the ``if'' direction as the ``only if'' direction is clear. 
	Assume towards a contradiction that $A$ is simple in $\underline{\repA}$
	but not in $\repA$. Let $\varphi: A^\zero\rightarrow A^\one$ be the $A^\zero$-isomorphism of  Proposition \ref{propo:fRepAsimplicity}.
	Then for any $a^\zero, b^\zero\in A^\zero$, $a^\one\in A^\one$,
	the first equation below is \eqref{eqn:phiY} and the second follows
	similarly:
	\begin{align}
		\varphi(Y(a^\zero,x)b^\zero) = Y(a^\zero,x)\varphi(b^\zero),
		\label{eqn:phi00}\\
		\varphi(Y(a^\one,x)\varphi(b^\zero)) = Y(a^\one,x)b^\zero.
	\end{align}
	Now we get
	\begin{align*}
		\varphi(Y(\varphi(a^\zero),x)\varphi(b^\zero))
		&=Y(\varphi(a^\zero),x)b^\zero
		=e^{xL(-1)}Y(b^\zero,-x)\varphi(a^\zero)
		=e^{xL(-1)}\varphi(Y(b^\zero,-x)a^\zero)\\
		&=\varphi(e^{xL(-1)}Y(b^\zero,-x)a^\zero)
		=\varphi(Y(a^\zero,x)b^\zero),
	\end{align*}
	which implies
	\begin{align*}
		Y(\varphi(a^\zero),x)\varphi(b^\zero)=
		Y(a^\zero,x)b^\zero
	\end{align*}
	since $\varphi$ is an isomorphism.
	However, parity considerations show 
	\begin{align*}
		Y(\varphi(a^\zero),x)\varphi(b^\zero)
		=-e^{xL(-1)}Y(\varphi(b^\zero),-x)\varphi(a^\zero)
		=-e^{xL(-1)}Y(b^\zero,-x)a^\zero
		=-Y(a^\zero,x)b^\zero,
	\end{align*}
	implying that $Y(a^\zero,x)b^\zero$ is identically $0$,
	contradicting the properties of the vacuum.
\end{proof}

In  light of the previous proposition, we omit specifying the category
when we say that a \VOSA{} extension is simple.

\begin{rema}
	For Propositions \ref{propo:twosimpledefequiv}, \ref{propo:fRepAsimplicity}, and \ref{propo:supersimpleA}, we do not need to assume that $A$ is an object of a vertex tensor category of $V$-modules, since the categories $\repA$ and $\underline{\repA}$ can be defined using Proposition \ref{propo:YWcomplex}. We just need to assume the convergence of the relevant compositions of $V$-intertwining operators.
\end{rema}

\begin{propo}\label{propo:X0simple}
	Suppose $A=V\oplus J$ is a simple \VOSA{} and $A$ is an object in a vertex tensor category $\cC$ of $V$-modules.
	Then $V$ is simple and $\sic$ is a simple current
	for $V=A^\zero$.  Moreover, if $X=X^\zero\oplus X^\one$ is simple in $\underline{\repA}$ then
	$X^\zero$ is simple as a $V$-module and $X^\one\cong \sic\boxtimes X^\zero$.
\end{propo}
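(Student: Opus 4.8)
The plan is to establish the three assertions in turn, using the induction and restriction functors together with the structure results already available in the excerpt. First I would show that $V=A^\zero$ is simple: since $A$ is simple as an object of $\repA$ (equivalently, by Proposition \ref{propo:supersimpleA}, as an object of $\underline{\repA}$) and $V$ is contained in $A^\zero\subseteq A^\even$, a non-zero proper $V$-submodule of $V$ would have $\ZZ/2\ZZ$-graded annihilator ideal in $A$ which is a two-sided ideal of $A$ by Lemma \ref{lem:repAannisideal} and Lemma \ref{lem:parityhomonesided}; simplicity of $A$ forces this ideal to be all of $A$ or zero, and using the vacuum one sees it must be zero, whence $V$ has no proper non-zero submodule. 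Next, to see that $\sic$ is a simple current for $V$: from Theorem \ref{thm:fourcases} (and its refinement for non-self-dual simple currents mentioned in Section \ref{sec:simplecur}) the hypothesis that $A=V\oplus\sic$ is a \VOSA{} arising as an order-two extension already implies $\sic$ is self-dual, i.e.\ $\sic\boxtimes_V\sic\cong V$; alternatively one invokes the small generalization of \cite[Theorem 2]{CarM} (see \cite[Appendix A]{CKLR}) referenced in the introduction. Either way $\sic\boxtimes_V\cdot$ is invertible, so $\sic$ is a simple current.

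For the statement about a simple object $X=X^\zero\oplus X^\one$ of $\underline{\repA}$, the plan is to exploit that $\cG(X)=X^\zero\oplus X^\one$ is the restriction of $X$ to $A^\zero=V$ and that, as a $V$-module, the $A$-action on $X$ restricts to intertwining operators $Y_X|_{\sic\otimes X^\zero}$ of type $\binom{X^\one}{\sic\,X^\zero}$ and $Y_X|_{\sic\otimes X^\one}$ of type $\binom{X^\zero}{\sic\,X^\one}$ (these are the ``odd'' components of $Y_X$, using that $\sic\subseteq A^\odd$ and that $X^\zero$, $X^\one$ are the parity-homogeneous pieces). The key step is to show $X^\zero$ is a simple $V$-module. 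Suppose $U\subseteq X^\zero$ were a non-zero proper $V$-submodule; I would show that $\langle U\rangle$, the $A$-submodule of $X$ generated by $U$ (characterized in Lemma \ref{lem:genmodspan}), is $\ZZ/2\ZZ$-graded and proper, contradicting simplicity of $X$ in $\underline{\repA}$. Gradedness is clear from Lemma \ref{lem:genmodspan} since $U\subseteq X^\zero$ is parity-homogeneous. For properness one uses that $\langle U\rangle\cap X^\zero$ is a $V$-submodule of $X^\zero$ containing $U$ and that $\langle U\rangle\cap X^\one=\sic\boxtimes_V(\langle U\rangle\cap X^\zero)$ — here the simple current property of $\sic$ and the associativity of the $A$-action make the $V$-module generated by $Y_X(\sic,x)U$ exactly the image of $\sic\boxtimes_V U$. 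More carefully, I would argue directly: if $X^\zero$ is not simple, pick a simple $V$-submodule $T\subseteq X^\zero$; then $\langle T\rangle=T\oplus(\sic\boxtimes_V T)$ (the right-hand side being closed under $Y_X$ because $\sic\boxtimes_V(\sic\boxtimes_V T)\cong T$), so $\langle T\rangle$ is a proper non-zero graded $A$-submodule unless $T=X^\zero$. This forces $X^\zero$ simple. Then $X^\one$ is a non-zero $V$-submodule of $X$ (it is a submodule since $Y_X|_{V\otimes X^\one}$ preserves $X^\one$), and the odd component $Y_X|_{\sic\otimes X^\zero}$ gives a non-zero $V$-intertwining operator of type $\binom{X^\one}{\sic\,X^\zero}$; composing the induced map $\sic\boxtimes_V X^\zero\to X^\one$ with the fact that $\sic\boxtimes_V X^\zero$ is simple (a simple current tensored with a simple module is simple, by \cite{CKLR}) and that it is non-zero shows this map is an isomorphism, i.e.\ $X^\one\cong\sic\boxtimes_V X^\zero$.

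The main obstacle I anticipate is verifying that the $V$-subspace $T\oplus(\sic\boxtimes_V T)$ of $X$ (realized concretely inside $X$ as $T$ plus the $V$-span of $Y_X(\sic,x)T$) is genuinely closed under the full $A$-action $Y_X$, which requires using the complex-analytic associativity of $Y_X$ from Proposition \ref{propo:YWcomplex} together with the identification $\sic\boxtimes_V(\sic\boxtimes_V T)\cong V\boxtimes_V T\cong T$; one must check that the product $Y_X(\sic,z_1)Y_X(\sic,z_2)$ applied to $T$ lands, via associativity and the expansion $Y_X(Y(\sic,z_1-z_2)\sic,z_2)$, back in $T\oplus Y_X(\sic)T$ because $Y(\sic,z_1-z_2)\sic\in\overline{V}$. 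This is the analogue, in the $\repA$ setting, of the classical fact that the graded $A$-module generated by a simple $V$-submodule is the sum of its isotypic pieces under the grading group; the tools are all present (Lemmas \ref{lem:genmodspan} and \ref{lem:repAannisideal}, Proposition \ref{propo:YWcomplex}, and the simple current property), so the work is in assembling them carefully rather than in any new idea.
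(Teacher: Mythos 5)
Your overall strategy — use Lemma \ref{lem:genmodspan} to produce a proper graded $A$-submodule from a proper $V$-submodule of $X^\zero$, then apply Schur's lemma together with the simple-current property to get the isomorphism $X^\one\cong J\boxtimes_V X^\zero$ — matches the paper's route. However there are two genuine problems.

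First, your argument for simplicity of $V$ is not valid. You take a non-zero proper $V$-submodule $I\subsetneq V$, observe that $\mathrm{Ann}_A(I)$ is a two-sided ideal of $A$ by Lemmas \ref{lem:repAannisideal} and \ref{lem:parityhomonesided}, and conclude from simplicity of $A$ that $\mathrm{Ann}_A(I)=0$ (since $\mathrm{Ann}_A(I)=A$ would force $I=0$ via the vacuum). But $\mathrm{Ann}_A(I)=0$ gives no information whatsoever about whether $I$ is all of $V$, so the annihilator argument cannot establish simplicity of $V$. The paper either cites \cite{DM}, \cite{CarM} for this or, more to the point, notes it is the special case $X=A$ of the third assertion (which you should prove first, without assuming $V$ simple, as its proof via Lemma \ref{lem:genmodspan} does not require it).

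Second, and more seriously, the last step has a real gap: you assert that the odd component $Y_X\vert_{J\otimes X^\zero}$ is a \emph{non-zero} intertwining operator of type $\binom{X^\one}{J\,X^\zero}$, and you implicitly need $X^\one\neq 0$, but you justify neither. This is exactly where the annihilator argument (which you misplaced above) is actually required: if $Y_X\vert_{J\otimes X^\zero}=0$, then $J\subseteq\mathrm{Ann}_A(X^\zero)$; by Lemmas \ref{lem:parityhomonesided} and \ref{lem:repAannisideal} this annihilator is a graded two-sided ideal of the simple \VOSA{} $A$, hence all of $A$, forcing $X^\zero=0$; then the same reasoning applied to $X^\one$ (using that $J\cdot X^\one\subseteq X^\zero=0$) gives $X^\one=0$, contradicting $X\neq 0$. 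Only then may you compose with simplicity of $J\boxtimes_V X^\zero$ and $X^\one$ to invoke Schur's lemma. Finally, the worry you raise in your last paragraph about closure of $T\oplus (J\boxtimes_V T)$ under $Y_X$ is unnecessary: Lemma \ref{lem:genmodspan} already characterizes the generated graded $A$-submodule as the span of parity-homogeneous components of $a_{n;k}t$, so closure is automatic, and you need not take $T$ simple — the paper uses an arbitrary proper $V$-submodule $W^\zero\subsetneq X^\zero$.
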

\begin{proof}
	The conclusions that $V$ is simple and $J$ is a simple current follow from variants of \cite{DM} and \cite{Miy, CarM}. The simplicity of $V$ and $J$ is also a special case of the last conclusion of the proposition. For the simple current property of $J$, see also \cite[Theorem 3.1 and Appendix A]{CKLR}.

	Now, if $W^\zero$ is a proper $V$-submodule of $X^\zero$, then 
	\[W^\zero\oplus \mathrm{Span}\{j_{n;k} w\,|\, j\in A^\one=\sic, n\in\RR, k\in\NN, w\in W^\zero   \}  \]	
	is a proper $\ZZ/2\ZZ$-graded $A$-submodule of $X$ by Lemma \ref{lem:genmodspan}. Since $X$ is simple in $\underline{\repA}$, this means $W^\even=0$ and $X^\even$ is a simple $V$-module. Similarly, $X^\odd$ is a simple $V$-module. 
	
	Next, to show that $X^\odd\cong J\boxtimes X^\even$, we may assume that $X$ is non-zero. We claim that
	\begin{equation*}
		\textrm{Span}\lbrace j_{n; k} x^\even\,\vert\,j\in J, n\in\RR,  k\in\NN, x^\even\in X^\even\rbrace
	\end{equation*}
	is non-zero. Indeed, if not, then $J\subseteq\mathrm{Ann}_{A}(X^\even)$, and this ideal (recall Lemma \ref{lem:repAannisideal}) is then $A$ since $A$ is simple. In particular, $\unit\in\mathrm{Ann}_A(X^\even)$ implies $X^\even=0$. Then since $j_{n;k} x^\odd\in X^\even$ for $j\in J$, $n\in\RR$, $k\in\NN$, and $x^\odd\in X^\odd$, the same argument shows $\mathrm{Ann}_A(X^\odd)=A$ and $X^\odd=0$. This contradicts $X\neq 0$, proving the claim. Now the $V$-intertwining operator $Y_X\vert_{J\otimes X^\even}$ induces a non-zero homomorphism $J\boxtimes X^\even\rightarrow X^\odd$. Since $\sic$ is a simple current and $X^\even$, $X^\odd$ are simple,
	we have $\sic\fus{}X^\zero\cong X^\one$ by Schur's Lemma.
\end{proof}  

\begin{corol}\label{corol:Xsimpleisoinduced}
	Suppose $A=V\oplus J$ is a simple vertex operator superalgebra and $A$ is an object in a vertex tensor category $\cC$ of $V$-modules. If $X=X^\even\oplus X^\odd$ is simple in $\underline{\repA}$, then $X\cong\cF(X^\even)$ in $\repA$.
\end{corol}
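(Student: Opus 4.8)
The plan is to build a morphism $\cF(X^\even)\to X$ in $\repA$ using the adjunction of Lemma \ref{lem:FGadjoint}, and then argue it is an isomorphism. First I would recall from Proposition \ref{propo:X0simple} that $X^\even$ is a simple $V$-module and that $X^\odd\cong J\fus{}X^\even$ as $V$-modules. Viewing $X^\even$ as an object of $\sC$ (concentrated in a single parity, whichever makes it a parity-homogeneous simple object), the inclusion $X^\even\hookrightarrow X$ of the even part is an even $\sC$-morphism, hence a morphism $X^\even\to\cG(X)$. By Lemma \ref{lem:FGadjoint}, this corresponds to an even $\repA$-morphism $f\colon\cF(X^\even)\to X$. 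Concretely, $f=\mu_X\circ(1_A\boxtimes h)$ where $h\colon X^\even\to X$ is the inclusion; restricting to the component $V\boxtimes X^\even\hookrightarrow A\boxtimes X^\even$ and using the unit property shows that $f$ restricted to $V\fus{}X^\even$ is (up to the canonical identification) the inclusion of $X^\even$, so $f\ne 0$.

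Next I would analyze $\cF(X^\even)=A\fus{V}X^\even\cong(V\fus{V}X^\even)\oplus(J\fus{V}X^\even)\cong X^\even\oplus(J\fus{V}X^\even)$ as a $V$-module, with parity decomposition coming from the parities of $V$ (even) and $J$ (odd in $A$) together with the fixed parity of $X^\even$ in $\sC$. Thus the even and odd parts of $\cF(X^\even)$ are $V\fus{}X^\even\cong X^\even$ and $J\fus{}X^\even\cong X^\odd$ respectively (possibly after swapping, depending on the parity chosen for $X^\even$; I would choose it so that these match the parities of $X^\even$ and $X^\odd$). Now $f$ is a parity-homogeneous (even) morphism in $\repA$, so its kernel $K$ is an $A$-submodule of $\cF(X^\even)$ that is $\ZZ/2\ZZ$-graded, i.e.\ an object of $\underline{\repA}$. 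Since $f$ restricted to the even part $X^\even$ is injective (it is the inclusion $X^\even\hookrightarrow X$), we have $K^\even=0$; because $V$ is simple and $J$ is a self-dual simple current (Proposition \ref{propo:X0simple}), $K^\odd$ is either $0$ or all of $J\fus{}X^\even$. If $K^\odd=J\fus{}X^\even$, then the submodule $K$, being closed under $Y$ and containing the odd part, would generate under the action of $J$ a nonzero even subspace (since $J\fus{}J\cong V$ and $X^\even\ne0$), contradicting $K^\even=0$; this uses Lemma \ref{lem:genmodspan} exactly as in the proof of Proposition \ref{propo:X0simple}. Hence $K=0$ and $f$ is injective.

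For surjectivity, note that $\im(f)$ is an $A$-submodule of $X$; since $f$ is even, $\im(f)$ is $\ZZ/2\ZZ$-graded, so it is an $A$-submodule in the sense of an object of $\underline{\repA}$. As $X$ is simple in $\underline{\repA}$ and $\im(f)\supseteq X^\even\ne0$, we get $\im(f)=X$. Therefore $f\colon\cF(X^\even)\to X$ is an even isomorphism in $\repA$, as desired. The main obstacle I anticipate is the bookkeeping around parities: one must be careful that $X^\even$ is regarded in $\sC$ with the correct parity so that the induced isomorphism is genuinely parity-preserving and so that the decomposition of $\cF(X^\even)$ into even and odd parts matches $X^\even\oplus X^\odd$; the interplay between the parity of $J$ inside $A$ and the intrinsic parity of $X^\even$ in $\sC$ is the delicate point, but it is handled by choosing the parity of the $\sC$-object $X^\even$ appropriately (and, if necessary, composing with the parity isomorphism $X\cong\Pi(X)$). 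Everything else is a routine application of the adjunction together with the simplicity arguments already established for Proposition \ref{propo:X0simple}.
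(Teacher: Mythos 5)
Your proposal is correct, and the map you construct via the adjunction of Lemma~\ref{lem:FGadjoint} is precisely the map $\mu_X\circ(1_A\boxtimes q_{X^\even})$ the paper uses. The difference is in how injectivity is established. The paper simply observes that the proof of Proposition~\ref{propo:X0simple} already shows the restriction $J\boxtimes X^\even\to X^\odd$ of $\mu_X$ is an isomorphism (a nonzero map between simple $V$-modules, hence an iso by Schur), so $\mu_X\vert_{A\boxtimes X^\even}$ is a $V$-module isomorphism outright; it then only needs to check this map is a $\repA$-morphism via associativity of $\mu_X$. You instead re-derive the injectivity on the odd part by a kernel-parity argument (the kernel $K$ is $\ZZ/2\ZZ$-graded, $K^\even=0$ by the unit property, and $K^\odd=J\boxtimes X^\even$ is ruled out because the $J$-action would generate a nonzero even part, using that $\mu\vert_{J\boxtimes J}\neq 0$ by simplicity of $A$), and you add a separate surjectivity argument via simplicity of $X$ in $\underline{\repA}$. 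Both steps are valid, but they redundantly re-prove what Proposition~\ref{propo:X0simple}'s proof already establishes; citing that proof directly, as the paper does, eliminates the need for the kernel analysis and the surjectivity argument altogether. Your remark that the resulting isomorphism need not be even as written but can be corrected by composing with $P_X:X\cong\Pi(X)$ is sound, though since $\repA$-isomorphisms are allowed to be odd, no correction is strictly required for the statement as given.
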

\begin{proof}
	The preceding proposition and its proof show that
	\begin{equation*}
		\mu_X\vert_{A\boxtimes X^\even}=\mu_X\circ(1_A\boxtimes q_{X^\even}): \cF(X^\even)=(V\boxtimes X^\even)\oplus(J\boxtimes X^\even)\rightarrow X=X^\even\oplus X^\odd
	\end{equation*}
	is a $V$-module isomorphism. But it is easy to see from the associativity of $\mu_X$ that $\mu_X\vert_{A\boxtimes X^\even}$ is a morphism in $\repA$. Thus it is an isomorphism in $\repA$.
\end{proof}

In the other direction, we can exploit the $\ZZ/2\ZZ$ grading to conclude
that simple objects induce to simple objects, provided $\underline{\repA}$ is semisimple:
\begin{corol}\label{corol:inducedsimple}
	Suppose $A=V\oplus J$ is a simple \VOSA{} such that $A$ is an object in a vertex tensor category $\cC$ of $V$-modules and $\underline{\repA}$ is semisimple. Then simple modules $X^\zero$ in $\sC$
	induce to simple modules in $\underline{\repA}$.
\end{corol}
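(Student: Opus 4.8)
The plan is to prove directly that $\cF(X^\zero)=A\boxtimes X^\zero$ is simple as an object of $\underline{\repA}$, by a composition-length count over $V$ that uses the semisimplicity of $\underline{\repA}$ together with the description of simple $\underline{\repA}$-modules given in Proposition \ref{propo:X0simple}. (Here $X^\zero$, a simple object of $\sC$, is just a simple $V$-module.) Throughout, $\ell_V(\cdot)$ will denote composition length as a $V$-module; every module that appears below is a finite direct sum of simple $V$-modules, so its length is finite and length is additive over the direct sums in question.

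First I would record the structure of $\cF(X^\zero)$ as a $V$-module. Since $A\cong V\oplus J$ as a $V$-module (Proposition \ref{propo:X0simple}) and tensor products in $\cC$ distribute over biproducts (Proposition \ref{distribute}), the left unit isomorphism gives, after applying the restriction functor $\cG$,
\[
\cG(\cF(X^\zero))\;\cong\;X^\zero\,\oplus\,(J\boxtimes X^\zero)
\]
as $V$-modules. Both summands are simple: $X^\zero$ by hypothesis, and $J\boxtimes X^\zero$ because $J$ is a simple current for $V$ (Proposition \ref{propo:X0simple}) and tensoring with a simple current preserves simplicity (\cite{CKLR}). Hence $\ell_V(\cG(\cF(X^\zero)))=2$. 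Next I would record the analogous fact for the target category: if $Y=Y^\zero\oplus Y^\one$ is any \emph{nonzero} simple object of $\underline{\repA}$, then $Y^\zero\neq 0$ — otherwise $Y^\one\cong J\boxtimes Y^\zero=0$ as well, forcing $Y=0$ — so by Proposition \ref{propo:X0simple}, $Y^\zero$ is a simple $V$-module and $Y^\one\cong J\boxtimes Y^\zero$; therefore $\ell_V(\cG(Y))=2$, exactly.

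With these two facts in hand the conclusion is immediate. Since $\underline{\repA}$ is semisimple and $\cF(X^\zero)\neq 0$, we may write $\cF(X^\zero)\cong\bigoplus_{i\in I}Y_i$ with $I$ finite and each $Y_i$ a nonzero simple object of $\underline{\repA}$. Applying $\cG$ and using additivity of $\ell_V$ over direct sums together with the two length computations above,
\[
2=\ell_V(\cG(\cF(X^\zero)))=\sum_{i\in I}\ell_V(\cG(Y_i))=2\,|I|,
\]
so $|I|=1$ and $\cF(X^\zero)\cong Y_1$ is simple in $\underline{\repA}$.

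I do not anticipate a genuine obstacle; the argument is short and essentially bookkeeping. The one point that needs care is keeping track of which notion of simplicity is at stake: the corollary asserts simplicity in $\underline{\repA}$ (where only even morphisms are available), and this is as much as one can hope for in general, since the stronger statement — simplicity in $\repA$ — fails precisely when $J\boxtimes X^\zero\cong X^\zero$, by Proposition \ref{propo:fRepAsimplicity}. A second minor point, handled above, is verifying that a nonzero simple object of $\underline{\repA}$ has nonzero even part, which is what pins the length at exactly $2$ and makes the count close. Everything else — that $\cG$ commutes with finite direct sums and that the relevant $V$-modules have finite length — is routine.
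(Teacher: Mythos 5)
Your proof is correct, and it uses exactly the same inputs as the paper's: Proposition \ref{propo:X0simple} to get that every nonzero simple object of $\underline{\repA}$ has simple even part $Y^\zero$ and odd part $Y^\one\cong J\boxtimes Y^\zero$, the decomposition $\cG(\cF(X^\zero))\cong X^\zero\oplus(J\boxtimes X^\zero)$ into two simple $V$-modules, and semisimplicity of $\underline{\repA}$. The only difference is in how the argument closes. The paper takes an arbitrary nonzero simple $A$-submodule $W$ of $\cF(X^\zero)$ and shows directly that $W^\zero$ and $W^\odd$ are both nonzero (by Proposition \ref{propo:X0simple}), whence $W^\zero=X^\zero$ and $W^\odd=J\boxtimes X^\zero$, i.e.\ $W$ is everything. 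You instead run a $V$-composition-length count: both the whole object and any nonzero simple $\underline{\repA}$-object restrict to length-$2$ $V$-modules, so a semisimple decomposition can have only one summand. This is a clean repackaging that sidesteps having to track where inside $\cF(X^\zero)$ a given simple summand sits, but it is not a different idea so much as a different bookkeeping of the same one.

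One small wrinkle in your write-up: the justification ``otherwise $Y^\one\cong J\boxtimes Y^\zero=0$ as well, forcing $Y=0$'' is slightly circular, since the isomorphism $Y^\one\cong J\boxtimes Y^\zero$ is part of the \emph{conclusion} of Proposition \ref{propo:X0simple} for a (nonzero) simple object. The cleaner statement is simply that Proposition \ref{propo:X0simple} asserts $Y^\zero$ is a \emph{simple} $V$-module, hence nonzero, which already gives $\ell_V(\cG(Y))=2$. This does not affect the validity of the argument.
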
  
\begin{proof}
	Since $\underline{\repA}$ is semisimple, it is enough to show that if $W=W^\even\oplus W^\odd$ is a non-zero $\underline{\repA}$-simple $A$-submodule, then $W=\cF(X^\even)=X^\even\oplus(J\boxtimes X^\even)$. Since $X^\even$ is simple and $J$ is a simple current by Proposition \ref{propo:X0simple}, $J\boxtimes X^\even$ is simple as well. Thus it is enough to show that $W^\even$ and $W^\odd$ are both non-zero. Since $W$ is simple in $\underline{\repA}$, Proposition \ref{propo:X0simple} implies $W^\even=0$ if and only if $W^\odd\cong J\boxtimes W^\even=0$ as well. Therefore since $W$ is non-zero, the conclusion follows.
\end{proof}  

We shall show below that if $A$ has wrong statistics, that is, if $A$ is $\ZZ$-graded, then every simple object of $\underline{\repA}$ is also simple in $\repA$.
Thus to derive Verlinde formulae in these cases, we can enumerate simple objects using either definition of simplicity. For \VOSAs{} of correct statistics, however,
we shall need to work with simple objects in $\underline{\repA}$.

\begin{nota}
  Henceforth, in this subsection on Verlinde formulae, $\cC$ will be a
  modular tensor category of modules for a regular
  vertex operator algebra $V$.  In particular, we have the 
  modular $S$-matrix $S^\chi$ arising from characters of irreducible $V$-modules and the categorical $S$-matrix $S^\hopflink$ arising from traces of monodromies for irreducible $V$-modules. The two $S$-matrices are equal up
  to a non-zero constant depending solely on $\cC$.  Therefore, the
  lemmas in this section will be deduced for $S^\hopflink$ but will
  hold, with suitable modifications, for $S^\chi$ as well. Thus for this section, we drop the
  superscripts and use just $S$.

  Unlike in previous sections where we used $W_i$ to denote modules for various \VOAs{}, 
  here for simplicity of notation we shall use capital letters $U,W,X,\dots$ to denote modules.

  We shall fix a simple (super)algebra $A$ in $\cC$ that is an
  extension of  $V$ by a simple current $\sic$ of order $2$. We
  shall assume that $\cM_{J,J}=1_V$. We denote
  $\qdim(\sic)=d$; note that $d=\pm 1$.  We denote the set of isomorphism  classes of simple objects in $\cC$ by $\cS$.  For a simple object
  $X\in\cS$, we define $\epsilon_X$ via $\cM_{X,\sic}=\epsilon_X 1_{X\boxtimes J}$.  Since
  $J\boxtimes J\cong V$, $\epsilon_X=\pm 1$ for any simple object
  $X\in\cS$ \cite{CKL}.  Since $\cM_{J,J}=1_V$, it is clear that
  $\epsilon_{X\boxtimes\sic}=\epsilon_X$  \cite{CKL}.
\end{nota}

We note a general lemma which we shall use below.
\begin{lemma}\label{lem:Sinverse}
  Let $\cC$ be a modular tensor category with unit object $\unit$ and let $\sic$ be a simple
  current with $\qdim(\sic)=d$. For a simple object $X$, define
  $\epsilon_X$ by $\cM_{X,J}=\epsilon_X 1_{X\boxtimes J}$.
  Then for any simple objects $W$, $X$ and $i,j,t\in\NN$,
  \begin{align}
    (S^{\pm 1})_{W\boxtimes \sic^t,X} = (\epsilon_X d)^{\pm t}(S^{\pm 1})_{W,X}.
  \end{align}
  In particular, if $\sic$ has order $2$ and
  $\cM_{\sic,\sic}=1_{\unit}$ then $d,\epsilon_W, \epsilon_X =\pm 1$ and
  \begin{align}
    (S^{\pm 1})_{W\boxtimes \sic^i,X \boxtimes \sic^j}
    = d^{i+j}(\epsilon_W^j\epsilon_X^i)(S^{\pm 1})_{W,X}.
    \label{eqn:orbitfoldingSinv}
  \end{align}
\end{lemma}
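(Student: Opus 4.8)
\textbf{Proof plan for Lemma \ref{lem:Sinverse}.}

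The plan is to reduce everything to the behaviour of the $S^\hopflink$-matrix under tensoring one factor with a simple current, and then iterate. The first step is to establish the single-current identity $S^\hopflink_{W\boxtimes J, X} = \epsilon_X\, d\, S^\hopflink_{W,X}$ for $S^\hopflink$ (the statement for $S^\chi$ then follows since $S^\chi$ and $S^\hopflink$ agree up to the fixed scalar depending only on $\cC$, and that scalar cancels in this relation). Recall that $S^\hopflink_{W_1,W_2} = \tr(\cM_{W_1,W_2})$ by Definition \ref{def:tr} and the definition of the $S^\hopflink$-matrix, so I would compute $\tr(\cM_{W\boxtimes J, X})$. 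The monodromy $\cM_{W\boxtimes J, X}$ can be factored using the hexagon axioms: the double braiding $\cR_{X, W\boxtimes J}\circ\cR_{W\boxtimes J, X}$ decomposes (up to appropriate associativity isomorphisms) as the composition of a double braiding of $X$ past $W$ and a double braiding of $X$ past $J$, i.e. $\cM_{W\boxtimes J, X}$ is conjugate (via associativity isomorphisms) to something built from $\cM_{W,X}$ on the $W$-strand and $\cM_{X,J} = \epsilon_X 1_{X\boxtimes J}$ on the $J$-strand. Since $\epsilon_X 1_{X\boxtimes J}$ is a scalar, it pulls out of the trace. Taking the trace and using the standard trace identities (Corollary \ref{conjtr}, Corollary \ref{Fmonodromy}-style arguments, and multiplicativity of $\tr$ over $\boxtimes$ for the quantum-dimension factor $\qdim(J) = d$ coming from the closed $J$-loop) yields $S^\hopflink_{W\boxtimes J, X} = \epsilon_X d\, S^\hopflink_{W,X}$. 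The cleanest route is probably to draw the Hopf-link diagram for $\cM_{W\boxtimes J, X}$, slide the $X$-strand off the $J$-strand picking up the scalar $\epsilon_X$, and observe the $J$-strand now closes into a loop of quantum dimension $d$.

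Next I would iterate: applying the single-current identity $t$ times gives $S_{W\boxtimes J^t, X} = (\epsilon_X d)^t S_{W,X}$, using that $\epsilon_{X}$ depends only on $X$ (not on the number of copies of $J$ already attached to $W$), which is exactly the content of $\cM_{X,J}=\epsilon_X 1_{X\boxtimes J}$ together with $J^t$ being simple (a tensor power of a simple current). For the inverse $S$-matrix, I would use that $S^{-1}$ is, up to the known normalization and a charge-conjugation permutation, proportional to $S$ itself (a standard fact in modular tensor categories: $S^2$ is a permutation matrix implementing charge conjugation up to scalar). Concretely, $(S^{-1})_{W,X}$ is a scalar multiple of $S_{W^*, X}$ or $S_{W, X^*}$; since $\epsilon_{W^*} = \epsilon_W$ (duals preserve the monodromy scalar with $J$, because $\cM_{W^*,J}$ is obtained from $\cM_{W,J}$ by the duality functor, which is a braided functor — cf. Lemma \ref{lem:dualinRep0} and \cite[Lemma 8.9.1]{EGNO}) and $\qdim(J^*)=\qdim(J)=d$, the same current-shift identity holds for $S^{-1}$, giving $(S^{-1})_{W\boxtimes J^t, X} = (\epsilon_X d)^t (S^{-1})_{W,X}$.

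Finally, for the order-two case with $\cM_{J,J}=1_{\unit}$: here $d=\pm 1$ and $\epsilon_W,\epsilon_X=\pm 1$ as noted in \cite{CKL}, and I apply the shift identity in both arguments. Shifting the first argument by $J^i$ contributes $(\epsilon_X d)^i$, and shifting the second argument by $J^j$ contributes $(\epsilon_W d)^j$ — and here one must check that the $\epsilon$-exponents match \eqref{eqn:orbitfoldingSinv}, i.e. that shifting $X$ by $J^j$ in $S_{W, X}$ picks up $(\epsilon_W d)^j$; this follows from the symmetry $S_{W,X}=S_{X,W}$ of the $S$-matrix in a modular tensor category applied to the first-argument identity. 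Combining, $(S^{\pm 1})_{W\boxtimes J^i, X\boxtimes J^j} = d^{i+j}\epsilon_W^j \epsilon_X^i (S^{\pm 1})_{W,X}$, using $d^{\pm i}=d^i$ etc. since $d=\pm 1$. The main obstacle I anticipate is making the hexagon-based factorization of $\cM_{W\boxtimes J, X}$ and the trace computation fully rigorous — in particular carefully tracking the associativity isomorphisms so that the scalar $\epsilon_X$ and the quantum-dimension factor $d$ are extracted cleanly — but this is a standard manipulation in modular tensor categories and the diagrammatic calculus makes it routine.
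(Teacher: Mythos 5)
Your overall strategy matches the paper's: establish the single-shift identity $S_{J,X}/S_{\unit,X}=\epsilon_X d$, iterate on $W\boxtimes J^t$, pass to $S^{-1}$ via duality, and for the order-two result shift both arguments using the symmetry of $S$. Where you differ cosmetically is in the first step: the paper simply invokes the fact that for fixed $X$ the map $W\mapsto S_{W,X}/S_{\unit,X}$ is a ring homomorphism on the fusion ring (so that $S_{W\boxtimes J^t,X}/S_{\unit,X}=(S_{W,X}/S_{\unit,X})(S_{J,X}/S_{\unit,X})^t$), whereas you propose to extract $\epsilon_X$ and $\qdim(J)$ diagrammatically from $\tr(\cM_{W\boxtimes J,X})$; these are two presentations of the same computation (the diagrammatic version goes through cleanly once you use cyclicity of the categorical trace to collapse $(1\boxtimes\cR_{J,X})(1\boxtimes\cR_{X,J})=1_W\boxtimes\cM_{X,J}=\epsilon_X$ and then multiplicativity $\tr(\cM_{W,X}\boxtimes 1_J)=\qdim(J)\tr(\cM_{W,X})$).

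There is, however, a concrete error in your $S^{-1}$ step. You conclude
$(S^{-1})_{W\boxtimes J^t,X}=(\epsilon_X d)^{t}(S^{-1})_{W,X}$,
but the statement (and the paper's proof) gives
$(S^{-1})_{W\boxtimes J^t,X}=(\epsilon_X d)^{-t}(S^{-1})_{W,X}$,
with a \emph{minus} sign in the exponent. The flip arises because $(W\boxtimes J^t)^*\cong J^{-t}\boxtimes W^*$: after writing $(S^{-1})_{W\boxtimes J^t,X}=\alpha\,S_{X,J^{-t}\boxtimes W^*}$ and applying the $S$-shift, you must use $s_X(J^{-1})=s_X(J)^{-1}=(\epsilon_X d)^{-1}$, yielding $(\epsilon_X d)^{-t}$. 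Your invocation of ``$\epsilon_{W^*}=\epsilon_W$'' is both a red herring (the relevant $\epsilon$ is the one attached to the \emph{other} argument $X$) and, as stated, false in general: the balancing equation together with $s_W(J)s_W(J^{-1})=1$ gives $\epsilon_{W^*}=\epsilon_W^{-1}\,\qdim(J)^{-2}$, not $\epsilon_W$; the cited Lemma \ref{lem:dualinRep0} only addresses the case of \emph{trivial} monodromy. In the order-two application these discrepancies all become invisible because $\epsilon_X d=\pm 1$, so your \eqref{eqn:orbitfoldingSinv} comes out correct; but the first, general display of the lemma (arbitrary simple current order) is miss-derived as written.
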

\begin{proof}
First we have the identity
\[
S_{\sic, X} = \tr\left( \cM_{\sic, X}\right) = \epsilon_X S_{\unit, \sic\boxtimes X}=  
\epsilon_X \dfrac{S_{\unit, \sic} S_{\unit, X}}{S_{\unit, \unit}} =\epsilon_X d\, S_{\unit, X}.
\]
  Note from \cite{BK} that in ribbon categories,
  $(W\boxtimes \sic)^*\cong \sic^*\boxtimes X^*$, where $^*$ denotes the
  dual. Also, $(S^{-1})_{X,W} = \alpha S_{X,W^*}$, where
  $\alpha$ is a global non-zero constant depending only on $\cC$; this
  follows from formulas (3.2.d) and (3.2.h) in \cite{T}.  Therefore,
  for simple objects $W, X\in\cS$,
  \begin{equation} \nonumber
  \begin{split}
    (S^{-1})_{W\boxtimes \sic^t,X}
    &= (S^{-1})_{X,W\boxtimes \sic^t}=
       \alpha S_{X,(W\boxtimes \sic^t)^*} = \alpha S_{X,\sic^{-t}\boxtimes W^*}
       =\alpha \left(\frac{S_{X,\sic^{-1}}}{S_{X,\unit}}\right)^t S_{X, W^*} \\
     &= \alpha \left(\frac{S_{X,\sic}}{S_{X,\unit}}\right)^{-t} S_{X, W^*}
       =\alpha (\epsilon_X d)^{-t} S_{X,W^*}
       =(\epsilon_X d)^{-t}(S^{-1})_{X,W}
     =(\epsilon_X d)^{-t}(S^{-1})_{W,X}.
  \end{split}
  \end{equation}
  The other case is
  \[
  S_{W\boxtimes \sic^t,X} = S_{\unit,X} \frac{S_{W\boxtimes \sic^t,X}}{ S_{\unit,X}} = S_{\unit,X} \frac{S_{W,X}}{ S_{\unit,X}} \left(\frac{S_{\sic,X}}{ S_{\unit,X}}\right)^t = S_{W,X} \left(\epsilon_X d\right)^t.
  \]
\end{proof}

\subsubsection{Superalgebras with wrong statistics}
Here we assume that $A=V\oplus\sic$ is a simple, regular vertex operator
superalgebra such that $J\boxtimes J\cong V$,
$\theta_A=1_A$, $\qdim(\sic)=d=-1$ and
$\cM_{\sic,\sic}=1_{V}$. Examples include affine Lie superalgebra vertex operator superalgebras, viewed as superalgebras in the
representation categories of their even subalgebras, provided the even subalgebra is regular.  Since $A$ is a
superalgebra, every object in $\repA$ by definition has a $\ZZ/2\ZZ$-grading. In the following lemma, we collect basic facts regarding simple objects in $\repA$, most of which are specializations of previous results: 
\begin{lemma}\label{lem:superwrongstatsbasic}
We have the following:
\begin{enumerate}
\item For every simple object $X\in\cS$, $J\boxtimes X\not\cong X$. 
\label{it:superwrongstatsbasic1}
\item For every simple object $X=X^\zero\oplus X^\one$  in $\underline{\rep A}$, $X^\zero$ and
  $X^\one$ are simple in $\cC$, and moreover, $X\cong \cF(X^\zero)$, that is,
  $X^\one\cong \sic\boxtimes X^\zero$. 
\label{it:superwrongstatsbasic2}
\item If $X$ is simple in $\underline{\repA}$ then $X$ is simple in $\repA$.
\label{it:superwrongstatsbasic2.5}
\item Given a simple object $X^\zero\in \cS$, $\cF(X^\zero)$ is simple in $\repA$.
\label{it:superwrongstatsbasic3}
\item Given a simple object $X=X^\zero\oplus X^\one$ in $\rep A$, 
  $X$ is an object in $\repzA$ if and only if $\epsilon_{X^\zero}=1$.
  \label{it:superwrongstatsbasic4}
\item Given a simple object $X=X^\zero\oplus X^\one$ in $\rep A$, we have
  $S^\hopflink_{\sic,X^\zero}=-\epsilon_{X^\zero}\qdim(X^\zero)$.
  \label{it:superwrongstatsbasic5}
\item Given simple objects $X=X^\zero\oplus X^\one$, $W=W^\zero\oplus W^\one$ in $\repA$ and $i,j\in\ztwo$, we
  have
  $(S^{\pm
    1})_{X^i,W^j}=(-1)^{i+j}\epsilon_{X^\zero}^j\epsilon_{W^\zero}^i(S^{\pm
    1})_{X^\zero,W^\zero}$.
\label{it:superwrongstatsbasic6}
\end{enumerate}
\end{lemma}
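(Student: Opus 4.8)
\textbf{Proof proposal for Lemma \ref{lem:superwrongstatsbasic}.} The plan is to assemble the seven assertions from a combination of the general results about superalgebra extensions already proved in the excerpt (especially Propositions \ref{prop:Fsimple}, \ref{prop:SimpleCurrentSmatrix}, \ref{propo:X0simple}, \ref{propo:supersimpleA}, Corollaries \ref{corol:Xsimpleisoinduced} and \ref{corol:inducedsimple}, and Theorem \ref{thm:fourcases}), together with Lemma \ref{lem:Sinverse} specialized to $d = -1$, $\mathcal M_{J,J}=1_V$. The organizing idea is that $J$ is a self-dual simple current with $\theta_J = 1$ (since $\theta_A = 1_A$ forces $h_J \in \ZZ$) and $\qdim(J) = -1$, so by Theorem \ref{thm:fourcases} case (2) we are precisely in the $\ZZ$-graded superalgebra situation, and spin-statistics (Theorem \ref{thm:spinstats}) gives $c_{J,J} = \theta_J\qdim(J) = -1$, hence $\mathcal M_{J,J} = c_{J,J}^2\,1_V = 1_V$ is automatic and consistent.

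First I would prove (\ref{it:superwrongstatsbasic1}): fixed-point freeness. If $J \boxtimes X \cong X$ for a simple $X$, then from $S_{J,X} = \epsilon_X\qdim(J)S_{V,X} = -\epsilon_X S_{V,X}$ and the ribbon/Verlinde machinery (or directly: $\qdim(J\boxtimes X) = \qdim(J)\qdim(X) = -\qdim(X) \ne \qdim(X)$ since $\qdim(X)>0$ in a modular tensor category of a regular VOA), we get a contradiction; this is the cleanest route and avoids any subtlety. Then (\ref{it:superwrongstatsbasic2}) is exactly Proposition \ref{propo:X0simple} plus Corollary \ref{corol:Xsimpleisoinduced}. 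For (\ref{it:superwrongstatsbasic2.5}), I would invoke Proposition \ref{propo:fRepAsimplicity}: a simple object of $\underline{\repA}$ fails to be simple in $\repA$ only if there is an $A^{\bar 0} = V$-isomorphism $\varphi\colon X^{\bar 0}\to X^{\bar 1}$, i.e. $X^{\bar 0}\cong X^{\bar 1}$; but $X^{\bar 1}\cong J\boxtimes X^{\bar 0}$ by (\ref{it:superwrongstatsbasic2}), so this would force $J\boxtimes X^{\bar 0}\cong X^{\bar 0}$, contradicting (\ref{it:superwrongstatsbasic1}). Assertion (\ref{it:superwrongstatsbasic3}) then follows from (\ref{it:superwrongstatsbasic2.5}) once we know $\cF(X^{\bar 0})$ is simple in $\underline{\repA}$; and $\cF(X^{\bar 0})$ is simple in $\repA$ directly by Proposition \ref{prop:Fsimple} (or Proposition \ref{prop:SimpleCurrentSmatrix}(1)) using fixed-point freeness (\ref{it:superwrongstatsbasic1}) — here $\mathcal C$ is semisimple, so the hypothesis of Proposition \ref{prop:Fsimple} that $J\boxtimes X^{\bar 0}$ is simple and $\not\cong X^{\bar 0}$ is met. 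Assertion (\ref{it:superwrongstatsbasic4}) is Proposition \ref{propo:rep0inductioncriteria} combined with $\mathcal M_{A, \cF(X^{\bar 0})}\cong \mathcal M_{A,A\boxtimes X^{\bar 0}}$ decomposing as $\mathcal M_{V,X^{\bar 0}}\oplus \mathcal M_{J,X^{\bar 0}} = 1 \oplus \epsilon_{X^{\bar 0}}$, which is the identity iff $\epsilon_{X^{\bar 0}} = 1$; I would spell out that $\cF(X^{\bar 0})\in\repzA \iff \mathcal M_{A,X^{\bar 0}} = 1_{A\boxtimes X^{\bar 0}}$, and the $J$-component contributes $\epsilon_{X^{\bar 0}}$.

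For (\ref{it:superwrongstatsbasic5}), I would compute directly with the $S^\hopflink$-matrix: $S^\hopflink_{J,X^{\bar 0}} = \tr(\mathcal M_{X^{\bar 0},J}) = \tr(\mathcal M_{J,X^{\bar 0}})$ (symmetry), and since $\mathcal M_{X^{\bar 0},J} = \epsilon_{X^{\bar 0}}1_{X^{\bar 0}\boxtimes J}$, this trace is $\epsilon_{X^{\bar 0}}\qdim(X^{\bar 0}\boxtimes J) = \epsilon_{X^{\bar 0}}\qdim(X^{\bar 0})\qdim(J) = -\epsilon_{X^{\bar 0}}\qdim(X^{\bar 0})$, using $\qdim(J) = d = -1$; alternatively this is the $W=J$, $X = X^{\bar 0}$ case of the first identity in the proof of Lemma \ref{lem:Sinverse}. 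Finally (\ref{it:superwrongstatsbasic6}) is the specialization of \eqref{eqn:orbitfoldingSinv} in Lemma \ref{lem:Sinverse} with $d = -1$, reading $X^{\bar 1} \cong J\boxtimes X^{\bar 0}$ and $W^{\bar 1}\cong J\boxtimes W^{\bar 0}$ (by (\ref{it:superwrongstatsbasic2})), so that $(S^{\pm1})_{X^i,W^j} = (S^{\pm1})_{X^{\bar 0}\boxtimes J^i, W^{\bar 0}\boxtimes J^j} = d^{i+j}\epsilon_{X^{\bar 0}}^{\,j}\epsilon_{W^{\bar 0}}^{\,i}(S^{\pm1})_{X^{\bar 0},W^{\bar 0}} = (-1)^{i+j}\epsilon_{X^{\bar 0}}^{\,j}\epsilon_{W^{\bar 0}}^{\,i}(S^{\pm1})_{X^{\bar 0},W^{\bar 0}}$, where $i,j$ are read as $0$ or $1$.

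I do not anticipate a serious obstacle here, since essentially every piece is a direct specialization of a theorem already established; the one place requiring genuine care is (\ref{it:superwrongstatsbasic2.5})/(\ref{it:superwrongstatsbasic3}), where I must be careful to distinguish simplicity in $\repA$ from simplicity in $\underline{\repA}$ and to route the argument through Proposition \ref{propo:fRepAsimplicity} correctly — the key point being that the obstruction to $\repA$-simplicity is precisely an isomorphism $X^{\bar 0}\cong X^{\bar 1}$, which fixed-point freeness rules out. A secondary bookkeeping point is ensuring the positivity $\qdim(X) > 0$ for simple $X$ in $\cC$ (a standard fact for modular tensor categories of regular VOAs, where categorical dimensions coincide with positive asymptotic dimensions) is invoked where needed in (\ref{it:superwrongstatsbasic1}); if one prefers to avoid positivity one can instead argue via $\epsilon_X$ and the order-two simple current structure as in \cite{CKL}.
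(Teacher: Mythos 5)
Your proposal takes essentially the same route as the paper: item (\ref{it:superwrongstatsbasic1}) via comparing quantum dimensions, item (\ref{it:superwrongstatsbasic2}) from Proposition \ref{propo:X0simple} and Corollary \ref{corol:Xsimpleisoinduced}, item (\ref{it:superwrongstatsbasic2.5}) via Proposition \ref{propo:fRepAsimplicity} together with (\ref{it:superwrongstatsbasic1})--(\ref{it:superwrongstatsbasic2}), item (\ref{it:superwrongstatsbasic3}) from Proposition \ref{prop:Fsimple} (or Corollary \ref{corol:inducedsimple}), item (\ref{it:superwrongstatsbasic4}) from Proposition \ref{propo:rep0inductioncriteria}, item (\ref{it:superwrongstatsbasic5}) from the definition of $S^\hopflink$, and item (\ref{it:superwrongstatsbasic6}) from Lemma \ref{lem:Sinverse} with $d=-1$. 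One small correction is warranted in your argument for (\ref{it:superwrongstatsbasic1}): the claim that $\qdim(X)>0$ for every simple $X$ in $\cC$ is false in this setting, and in fact $J$ itself is a counterexample since $\qdim(J)=d=-1$. What the argument actually needs, and what the paper invokes, is only that $\qdim(X)\neq 0$ for simple $X$ (true in any modular tensor category), which already gives $-\qdim(X)\neq\qdim(X)$ and hence $J\boxtimes X\not\cong X$; you should replace the positivity claim with nonvanishing, after which the argument is exactly the paper's.
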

\begin{proof}
%
For \eqref{it:superwrongstatsbasic1}, the categorical dimension of the simple $V$-module $X$ is non-zero since $\cC$ is modular. Now, 
$\qdim(J\boxtimes X) = \qdim(J)\qdim(X) = -\qdim(X)$.
\eqref{it:superwrongstatsbasic2} is Proposition \ref{propo:X0simple} and Corollary \ref{corol:Xsimpleisoinduced} specialized to this situation. 
\cref{propo:fRepAsimplicity}, (\ref{it:superwrongstatsbasic1}),
and (\ref{it:superwrongstatsbasic2}) imply (\ref{it:superwrongstatsbasic2.5}).
Proposition \ref{prop:Fsimple} and \eqref{it:superwrongstatsbasic1} (or alternatively Corollary \ref{corol:inducedsimple} and (\ref{it:superwrongstatsbasic2.5})) imply \eqref{it:superwrongstatsbasic3}.
 Proposition \ref{propo:rep0inductioncriteria} and (\ref{it:superwrongstatsbasic2})  imply \eqref{it:superwrongstatsbasic4}.
\eqref{it:superwrongstatsbasic5} is clear from the definition of $S^\hopflink$
and $\qdim(\sic)=-1$. Finally, \eqref{it:superwrongstatsbasic6} is Lemma \ref{lem:Sinverse} applied to this case. 
\end{proof}  

\begin{nota}
  Given $X=X^\zero\oplus X^\one$ in $\rep A$, we let
  $\ch^\pm[X]=\ch[X^\zero]\pm \ch[X^\one]$. We refer to $\ch^-$ as the supercharacter.  If $X$ is simple, we say that $X$ is untwisted if
  $\epsilon_{X^\even}=1$ and twisted if $\epsilon_{X^\even}=-1$. 
  By  Lemma \ref{lem:superwrongstatsbasic}\eqref{it:superwrongstatsbasic4}, untwisted objects are precisely  the simple $A$-modules in $\cC$; twisted modules, on the other hand, are twisted with respect to the parity automorphism of $A$.

  We also let
  $\sim$ be the equivalence relation on $\cS$ given by
  $X\sim X\boxtimes J$.  The equivalence classes $\cS/\sim$ correspond
  to simple objects in $\repA$, with a fixed choice of
  representative in each class determining the parity. We fix such a
  choice now.  The corresponding set of representatives can be identified with $[\repA]$ 
  in the Introduction.
  These representatives will be denoted by a superscript
  $\zero$, such as $X^\zero$, and correspondingly, we will let
  $\cS = \cS^\zero \cup \cS^\one$, where the union is disjoint thanks to
  \eqref{it:superwrongstatsbasic1} above.
	
	As in the Introduction, we let $[\repzA], [\reptw]\subseteq [\repA]$ be the representatives of simple
	untwisted and twisted objects, respectively.
\end{nota}	

\begin{defi}\label{defi:adim}
	Let $A=V\oplus J$ be a \VOSA{} such that the representation category of its even part $V$ is a modular tensor category, and 
	let $X$ be a simple $A$-module. 
	We define
	\[
	\adim^\pm[X]:= \lim_{iy\rightarrow 0^+} \frac{\text{ch}^\pm[X](iy)}{\text{ch}^\pm[A](iy)}
	= \lim_{iy\rightarrow i\infty} \frac{\text{ch}^\pm[X](-1/iy)}{\text{ch}^\pm[A](-1/iy)}, 
	\]
	whenever the limit exists. 
	These asymptotic dimensions were analyzed and used extensively in \cite{DW3, DJX}.
	In those papers, asymptotic dimensions were denoted $\qdim$, but we avoid this notation
	since for us $\qdim$ is the categorical dimension, that is, the trace of the identity morphism.
\end{defi}

The calculation for the modular $S$-transformation of (super)characters is as
follows.  Let $X=X^\zero\oplus X^\one$ be a simple object in $\repA$
and let $v\in V$ be homogeneous of weight $\mathrm{wt}[v]$ with respect to Zhu's modified vertex operator algebra structure on $V$ \cite{Zhu}:
\begin{align*}
  (\ch^\pm[X])(-1/\tau,v)
  &=\tau^{\mathrm{wt}[v]}\sum_{W\in\cS}(S_{X^\zero,W} \pm S_{X^\one,W})\ch[W](\tau,v)\\
  &=\tau^{\mathrm{wt}[v]}\sum_{W\in\cS}(1 \pm d\epsilon_{W})S_{X^\zero,W}\ch[W](\tau,v)\\
  &=\tau^{\mathrm{wt}[v]}\sum_{W^\zero\in\cS^\zero}(1 \pm d\epsilon_{W^\zero})(S_{X^\zero,W^\zero}\ch[W^\zero](\tau,v)
    +S_{X^\zero,W^\one}\ch[W^\one](\tau,v))\\
  &=\tau^{\mathrm{wt}[v]}\sum_{W^\zero\in\cS^\zero}S_{X^\zero,W^\zero}(1 \pm d\epsilon_{W^\zero})
    (\ch[W^\zero](\tau,v)+d\epsilon_{X^\zero}\ch[W^\one](\tau,v))\\
  &=\tau^{\mathrm{wt}[v]}\sum_{W^\zero\in\cS^\zero}S_{X^\zero,W^\zero}\cdot
    (1 \pm d\epsilon_{W^\zero}) \cdot (\ch^{d\epsilon_{X^\zero}}[\cF(W^\zero)](\tau,v))\\
  &=\tau^{\mathrm{wt}[v]}\sum_{W^\zero\in\cS^\zero}S_{X^\zero,W^\zero}\cdot
    (1 \mp \epsilon_{W^\zero}) \cdot (\ch^{-\epsilon_{X^\zero}}[\cF(W^\zero)](\tau,v))\\
  &=\tau^{\mathrm{wt}[v]}\sum_{\substack{W^\zero\in\cS^\zero\\\epsilon_{W^\zero}=\mp 1 }}
  2\cdot S_{X^\zero,W^\zero} \cdot (\ch^{-\epsilon_{X^\zero}}[\cF(W^\zero)](\tau,v)).
\end{align*}
Note that $\cF(W^\zero)$ exhaust the representatives of simple objects in $\repA$, up to
parity.  Taking into account various cases, we can summarize how the
(super)characters close under modular transformations for a simple
object $X$ in $\repA$.
\begin{center}
\renewcommand{\arraystretch}{1.5}
\begin{tabular}{r|c|c}
  $X=X^\zero\oplus X^\one$ & $\ch^+[X]$ & $\ch^-[X]$\\
  \hline
  $X$ is untwisted & $\sum\ch^-[\mathrm{twisted}]$ & $\sum\ch^-[\mathrm{untwisted}]$\\
  $X$ is twisted & $\sum\ch^+[\mathrm{twisted}]$ & $\sum\ch^+[\mathrm{untwisted}]$
\end{tabular}
\end{center}

We can summarize the information in the following way.  We let
$\cS^\even =\{ X^\even_1,X^\even_2,\dots,W^\even_1,W^\even_2,\dots \}$ where the $X$'s induce
to untwisted modules and the $W$'s induce to twisted modules.  Then, we
have the following same pattern in the $S^{\pm 1}$-matrices:
\begin{align}
\renewcommand{\arraystretch}{1.2}
S = \begin{tabu}{r|rrrrrr}
       & X^\zero_1  & X^\one_1  &\dots  & W^\zero_1  & W^\one_1  & \dots \\
\hline
X^\zero_1  & a      & -a     &\dots  & b      & -b     & \dots  \\
X^\one_1  & -a     &  a     &\dots  & b      & -b     & \dots  \\
\vdots & \vdots & \vdots &\vdots & \vdots & \vdots & \vdots \\
W^\zero_1  & b      &  b     &\dots  & c      &  c     & \dots  \\
W^\one_1  & -b     & -b     &\dots  & c      &  c     & \dots  \\
\vdots & \vdots & \vdots &\vdots & \vdots & \vdots & \vdots 
\end{tabu},\quad
S^{-1} = \begin{tabu}{r|rrrrrr}
& X^\zero_1  & X^\one_1  &\dots  & W^\zero_1  & W^\one_1  & \dots \\
\hline
X^\zero_1  &  e     & -e     &\dots  & f      & -f     & \dots  \\
X^\one_1  & -e     &  e     &\dots  & f      & -f     & \dots  \\
\vdots & \vdots & \vdots &\vdots & \vdots & \vdots & \vdots \\
W^\zero_1  & f      &  f     &\dots  & g      &  g     & \dots  \\
W^\one_1  & -f     & -f     &\dots  & g      &  g     & \dots  \\
\vdots & \vdots & \vdots &\vdots & \vdots & \vdots & \vdots 
\end{tabu}
\renewcommand{\arraystretch}{1},
\end{align}
and hence, if we change the basis, denoting $X^\pm_i = X^\zero_i\pm X^\one_i$
and similarly for the $W$'s, we get the following two matrices
respectively, which have a pattern similar to each other:
\begin{align}
  \renewcommand{\arraystretch}{1.2}
  \widetilde{S} = \begin{tabu}{r|rrrrrr}
    & X^+_1  & X^-_1  &\dots  & W^+_1  & W^-_1  & \dots \\
    \hline
    X^+_1  & 0      &  0     &\dots  & 0      & 2b     & \dots  \\
    X^-_1  &  0     & 2a     &\dots  & 0      &  0     & \dots  \\
    \vdots & \vdots & \vdots &\vdots & \vdots & \vdots & \vdots \\
    W^+_1  & 0      &  0     &\dots  &2c      &  0     & \dots  \\
    W^-_1  & 2b     &  0     &\dots  & 0      &  0     & \dots  \\
    \vdots & \vdots & \vdots &\vdots & \vdots & \vdots & \vdots 
  \end{tabu},\quad \widetilde{S}^{-1} = \begin{tabu}{r|rrrrrr}
    & X^+_1  & X^-_1  &\dots  & W^+_1  & W^-_1  & \dots \\
    \hline
    X^+_1  & 0      &  0     &\dots  & 0      & 2f     & \dots  \\
    X^-_1  &  0     & 2e     &\dots  & 0      &  0     & \dots  \\
    \vdots & \vdots & \vdots &\vdots & \vdots & \vdots & \vdots \\
    W^+_1  & 0      &  0     &\dots  &2g      &  0     & \dots  \\
    W^-_1  & 2f     &  0     &\dots  & 0      &  0     & \dots  \\
    \vdots & \vdots & \vdots &\vdots & \vdots & \vdots & \vdots
\end{tabu}
\renewcommand{\arraystretch}{1}.
\end{align}
In particular, we have the following observation:
\begin{quote}
  Given simple modules $U=U^\zero\oplus U^\one$ and $Z=Z^\zero\oplus Z^\one$ in $\repA$,
  there exists a unique coefficient in the $\widetilde{S}^{\pm 1}$
  matrices corresponding to $U^\pm$ and $Z^\pm$ that can be non-zero.
  We denote this coefficient
  $(\widetilde{S}^{\pm 1})_{U,Z}$. For example, in the matrices
  above,
  $\widetilde{S}_{X_1,W_1}:=\widetilde{S}_{X_1^+,W_1^-}=2 S_{X^\zero_1,
    W^\zero_1}$.
\end{quote}

The $T$-transformation is easy. Note that by balancing, for any simple
$X^\zero\in\cS$, (after identifying $\theta$ with the scalar by which it
acts), we have: $\theta_{X^\zero\boxtimes \sic}=\epsilon_{X^\zero}\theta_{X^\zero}$.
Hence for a simple $\repA$-object $X=X^\zero\oplus X^\one$,
\begin{align*}
  \ch^\pm[X](\tau+1)
  &= \ch[X^\zero](\tau+1) \pm \ch[X^\one](\tau+1)\\
  &=\theta_{X^\zero}\ch[X^\zero](\tau)\pm\epsilon_{X^\zero}\theta_{X^\zero}\ch[X^\one](\tau)\\
  &=\theta_{X^\zero}\ch^{\pm\epsilon_{X^\zero}}[X](\tau).
\end{align*}

Now we turn to fusion rules.  Let
$U=U^\zero\oplus U^\one, W=W^\zero\oplus W^\one, X=X^\zero\oplus X^\one$ be simple objects in
$\repA$.  We let
${N^\pm}_{U,W}^X=N_{U^\zero,W^\zero}^{X^\zero}\pm N_{U^\zero,W^\zero}^{X^\one}$. First,
\begin{align*}
  N_{U^\zero,W^\zero}^{X^\zero}
  &=\sum_{Z \in\cS}\dfrac{S_{U^\zero,Z}\cdot
    S_{W^\zero,Z}\cdot (S^{-1})_{Z,X^\zero}}{S_{V,Z}}\\
  &=\sum_{Z^\zero\in\cS^\zero}\left(1+\dfrac{\epsilon_{U^\zero}\epsilon_{W^\zero}}
    {\epsilon_{X^\zero}}\right)
    \dfrac{S_{U^\zero,Z^\zero}\cdot S_{W^\zero,Z^\zero}\cdot (S^{-1})_{Z^\zero,X^\zero}}{S_{V,Z^\zero}}\\
 &=\sum_{Z^\zero\in\cS^\zero}
  2\cdot\dfrac{S_{U^\zero,Z^\zero}\cdot S_{W^\zero,Z^\zero}\cdot (S^{-1})_{Z^\zero,X^\zero}}{S_{V,Z^\zero}}. 	
\end{align*}
if $\epsilon_{X^\zero}=\epsilon_{U^\zero}\epsilon_{W^\zero}$.
Noting that $\epsilon_{X^\zero}=\epsilon_{X^\one}$ (since
$\cM_{\sic,\sic}=1_{V}$), we see that if $\epsilon_{X^\zero}=\epsilon_{U^\zero}\epsilon_{W^\zero}$:
\begin{align*}
  {N^\pm}_{U,W}^X &=\sum_{Z^\zero\in\cS^\zero}2\cdot\dfrac{S_{U^\zero,Z^\zero}\cdot S_{W^\zero,Z^\zero}\cdot \left((S^{-1})_{Z^\zero,X^\zero}\pm (S^{-1})_{Z^\zero,X^\one} \right)
  }{S_{V,Z^\zero}}\\
                  &=\sum_{Z^\zero\in\cS^\zero} 2\cdot(1\pm d\epsilon_{Z^\even})\cdot\dfrac{S_{U^\zero,Z^\zero}\cdot S_{W^\zero,Z^\zero}\cdot (S^{-1})_{Z^\zero,X^\zero}}{S_{V,Z^\zero}}\\
     	&=\sum_{\substack{Z^\zero\in\cS^\zero\\\epsilon_{Z^\zero}=\mp 1}}4\cdot\dfrac{S_{U^\zero,Z^\zero}\cdot S_{W^\zero,Z^\zero}\cdot (S^{-1})_{Z^\zero,X^\zero}}{S_{V,Z^\zero}}\\
	&=\sum_{\substack{Z \,\text{simple}\\\epsilon_{Z^\zero}=\mp 1}}
	\dfrac{\widetilde{S}_{U,Z}\cdot \widetilde{S}_{W,Z}\cdot (\widetilde{S}^{-1})_{Z,X}}{\widetilde{S}_{A,Z}}
\end{align*}
From this, we get the following fusion (super)coefficients,
recalling that for a simple object $Z=Z^\zero\oplus Z^\one$ in $\repA$, we
let $Z^\pm$ be the elements $Z^\zero\pm Z^\one$ in the Grothendieck ring. With $U,W,X\in [\repA]$ and
\begin{align*}
t&: [\repA] \rightarrow \ZZ/2\ZZ, \quad
X\mapsto \begin{cases}
1 &\text{if}\quad X\in[\rep^{\text{tw}}A]\\
0 &\text{if}\quad X\in[\repzA],
\end{cases}
\end{align*}
we have:
\begin{equation}
\begin{split}
{N^+}_{U,W}^{X}&=  \delta_{t(U)+t(W), t(X)}\sum\limits_{Z \in [\rep^{\text{tw}}A]}
\dfrac{\widetilde{S}_{U,Z}\cdot \widetilde{S}_{W,Z}\cdot (\widetilde{S}^{-1})_{Z,X}}{\widetilde{S}_{V, Z}}\\
{N^-}_{U,W}^{X}&=  \delta_{t(U)+t(W), t(X)} \sum\limits_{Z \in [\repzA]}
\dfrac{\widetilde{S}_{U,Z}\cdot \widetilde{S}_{W,Z}\cdot (\widetilde{S}^{-1})_{Z,X}}{\widetilde{S}_{V, Z}}.
\end{split}
\end{equation}

We understand the numbers $N^\pm$ as follows. Let $U=U^\zero\oplus U^\one$,
$W=W^\zero\oplus W^\one$, and $X=X^\zero\oplus X^\one$ be simple modules in $\repA$, and let
$\Pi$ denote the parity reversal functor, that is,
$\Pi(X) = X^\one\oplus X^\zero$.  Note that $\cF(X^\one) = \Pi(X)$.  Since
$\cF(U^\zero)\boxtimes_A \cF(W^\zero) \cong \cF(U^\zero\boxtimes W^\zero)$, we have that
\begin{align*}
\cF(U^\zero)\boxtimes_A \cF(W^\zero) &= \bigoplus_{X\in\cS} N_{U^\zero,W^\zero}^{X}\cF(X) \\
&= \bigoplus_{X^\zero\in\cS^\zero} N_{U^\zero,W^\zero}^{X^\zero}\cF(X^\zero) \oplus  \bigoplus_{X^\one\in\cS^\one} N_{U^\zero,W^\zero}^{X^\one}\cF(X^\one).
\end{align*}
Correspondingly, intertwining operators in $\cC$ of type
$\binom{X^\zero}{U^\zero\, W^\zero}$ lift to even intertwining operators, and
those of type $\binom{X^\one}{U^\zero\, W^\zero}$ lift to odd intertwining
operators.  Therefore,  ${N^+}^X_{U\,W}$ is the dimension of the
space of $A$-intertwining operators of type $\binom{X}{U\,W}$, and ${N^-}^{X}_{U\,W}$ is the corresponding superdimension.

Now we analyze the properties of asymptotic dimensions.
Suppose there exists a unique  untwisted simple module $Z_{(+)}$ with lowest conformal dimension 
among untwisted simple modules,  and there also exists a unique simple twisted module $Z_{(-)}$
of lowest conformal dimension among twisted simple modules.
We assume that $\ch^-[Z_{(+)}]$ does not vanish
and that the lowest conformal weight spaces of $Z_{(\pm)}$ are purely even.
For a simple $A$-module $X$, we have:
\begin{align}
\adim^\pm[X] = \lim_{iy\rightarrow i\infty}\dfrac{\ch^\pm[X](-1/iy)}{\ch^\pm[A](-1/iy)} = 
\dfrac{\widetilde{S}_{X,Z_{(\mp)}}}{\widetilde{S}_{A,Z_{(\mp)}}}
=\dfrac{S_{X^0,Z^0_{(\mp)}}}{S_{V,Z^0_{(\mp)}}}.
\end{align}
Now, for simple modules $X$ and $Y$ in $\repA$, we have that:
\begin{align}
\adim^\pm[X]\adim^{\pm}[Y]&=\dfrac{S_{X^\zero,Z^\zero_{(\mp)}}}{S_{V,Z^\zero_{(\mp)}}} \dfrac{S_{Y^\zero,Z^\zero_{(\mp)}}}{S_{V,Z^\zero_{(\mp)}}}
\nonumber\\
&=\sum_{\substack{W^\zero\in \cS^\zero\\ \epsilon_{W^\zero}=
	\epsilon_{X^\zero}\epsilon_{Y^\zero}}}N_{X^\zero, Y^\zero}^{W^\zero} \dfrac{S_{W^\zero,Z^\zero_{(\mp)}}}{S_{V,Z^\zero_{(\mp)}}}
+N_{X^\zero, Y^\zero}^{W^\one} \dfrac{S_{W^\one,Z^\zero_{(\mp)}}}{S_{V,Z^\zero_{(\mp)}}}
\nonumber\\
&=\sum_{\substack{W^\zero\in \cS^\zero\\ \epsilon_{W^\zero}=
		\epsilon_{X^\zero}\epsilon_{Y^\zero}}}\left(N_{X^\zero, Y^\zero}^{W^\zero} + d\epsilon_{Z^\zero_{(\mp)}} N_{X^\zero, Y^\zero}^{W^\one}\right)
\dfrac{S_{W^\zero,Z^\zero_{(\mp)}}}{S_{V,Z^\zero_{(\mp)}}}\nonumber\\
&=\sum_{\substack{W^\zero\in \cS^\zero\\\epsilon_{W^\zero}=
		\epsilon_{X^\zero}\epsilon_{Y^\zero}}}{N^\pm}_{X, Y}^{W}
\dfrac{\widetilde{S}_{W,Z_{(\mp)}}}{\widetilde{S}_{A,Z_{(\mp)}}}
=\sum_{\substack{W^\zero\in \cS^\zero\\ \epsilon_{W^\zero}=
		\epsilon_{X^\zero}\epsilon_{Y^\zero}}}{N^\pm}_{X, Y}^{W}\adim^\pm[W].
\label{eqn:adimSupWrong}
\end{align}
This shows that asymptotic dimensions respect the tensor product on $\repA$.

\subsubsection{Algebras with wrong statistics}
Now let $A=V\oplus\sic$ be a vertex operator algebra such that
$\theta_\sic=-1_\sic$, $\qdim(\sic)=-1$, and
$\cM_{\sic,\sic}=1_{V}$.  In this case, objects of $\repA$ do not have an inherent $\ZZ/2\ZZ$-grading.
However,  (\ref{it:SimpleCurrentSmatrix1})--(\ref{it:SimpleCurrentSmatrix3}) of Proposition \ref{prop:SimpleCurrentSmatrix} apply in this setting (since $\qdim(J) =-1$ implies fixed-point freeness) to yield the following lemma:
\begin{lemma}
Every simple object $X$ in $\repA$ is an induced object; in particular, $X=X^\zero\oplus X^\one$
  where $X^\zero$ is simple in $\cC$ and $X^\one=\sic\boxtimes X^\zero$.
   Conversely, simple objects in $\cC$ induce to simple objects in $\repA$.
\end{lemma}

From Lemma \ref{lem:Sinverse}, the properties of Hopf links are the same as in the wrong statistics superalgebra case. Thus every statement in the wrong statistics superalgebra case goes through here as well after fixing an arbitrary choice of
$\ZZ/2\ZZ$-decomposition for every simple object of $\repA$, except for
the $T$-transformation. We list relevant results in the Introduction, Section \ref{sec:BPlikevoas}, and we now only indicate the difference in the modular $T$-transformation.

In this case, for a simple module
$X^\zero$ in $\cC$, after identifying $\theta$ with the scalar by which it
acts, we have $\theta_{X^\zero\boxtimes \sic}=-\epsilon_{X^\zero}\theta_{X^\zero}$.
Hence, for a simple module $X=X^\zero\oplus X^\one$ in $\repA$,
\begin{align*}
  \ch^\pm[X](\tau+1)
  &= \ch[X^\zero](\tau+1) \pm \ch[X^\one](\tau+1)\\
  &=\theta_{X^\zero}\ch[X^\zero](\tau)\mp\epsilon_{X^\zero}\theta_{X^\zero}\ch[X^\one](\tau)\\
  &=\theta_{X^\zero}\ch^{\mp\epsilon_{X^\zero}}[X](\tau).
\end{align*}

The analysis of asymptotic dimensions carried out in \eqref{eqn:adimSupWrong}
also goes through if 
we assume that $\ch^-[Z_{(-)}]$ does not vanish
and that the lowest conformal weight spaces of $Z_{(\pm)}$ are purely even.

\subsubsection{Superalgebras with correct statistics}\label{CSsuperalgVerlinde}

Recall  that  ``correct  statistics''   means  that  for
$A=V\oplus\sic$,  $\qdim(\sic)=1$ and $\theta_\sic = \pm 1_{\sic}$. This  means that  for a  simple module
$X^\zero$ in $\cC$,  we   do not  necessarily  have   $X^\zero\not\cong  \sic\boxtimes
X^\zero$. 
This complicates  things somewhat,  especially when $A$
is a vertex operator algebra. 

From Corollary \ref{corol:inducedsimple}, simple modules of $\cC$ induce to simple modules of $\underline{\repA}$, and from Corollary \ref{corol:Xsimpleisoinduced}, simple modules of $\underline{\repA}$ are induced from simple modules in $\cC$.
In the discussion below, it will be important to consider those modules of $\repA$
that are simple in $\underline{\repA}$.

\begin{nota}
  Consider the set of representatives of isomorphism classes of simple objects in $\underline{\repA}$ up to parity.
  Each such simple module can be written uniquely as
  $X\cong X^\zero\oplus X^\one$ where the $X^i$ are in $\cS$ with $X^\one\cong \sic\boxtimes X^\zero$. 
  Let $\cS^\zero$ be those
  $X^\zero$ such that $X^\zero\not\cong X^\one$. Let $\cS^\one$ be the set of
  simple $V$-modules obtained by tensoring $\sic$ to the objects in $\cS^\zero$ and
  let $\cS^f= \cS \backslash (\cS^\zero \cup \cS^\one)$, that is, those simple modules
  that are fixed under the action of $\sic$.
\end{nota}  

In this case, we have the following $S$-matrix relations:
\begin{lemma}\label{lem:SACSbasics}
  Let $W$ and $X$ be simple $V$-modules in $\cS$ and let $X^{(i)}=\sic^i\boxtimes X$ for $i\in\NN$.
  \begin{enumerate}
  \item For $i,j\in\NN$, $(S^{\pm})_{W^{(i)},X^{(j)}}=\epsilon_X^i\epsilon_W^j(S^{\pm})_{W,X}$.
  \label{it:SACSbasics1}
  \item If $X\in \cS^f$, then $\epsilon_X=-1$.
    \label{it:SACSbasics2}
  \item If $X\in\cS^f$ and if $\epsilon_W=-1$, then $(S^{\pm})_{W,X}=0$.
  \label{it:SACSbasics3}
  \end{enumerate}
\end{lemma}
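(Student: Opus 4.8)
\textbf{Proof plan for Lemma \ref{lem:SACSbasics}.}

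The plan is to deduce all three statements from the general $S$-matrix identity already established in Lemma \ref{lem:Sinverse}, together with the balancing/spin-statistics relation and basic properties of categorical dimensions and monodromy. For part \eqref{it:SACSbasics1}, I would simply invoke \eqref{eqn:orbitfoldingSinv} of Lemma \ref{lem:Sinverse}: in the correct-statistics case $\qdim(\sic)=d=1$, so the formula $(S^{\pm 1})_{W\boxtimes\sic^i,X\boxtimes\sic^j}=d^{i+j}(\epsilon_W^j\epsilon_X^i)(S^{\pm 1})_{W,X}$ collapses to exactly the claimed $(S^{\pm})_{W^{(i)},X^{(j)}}=\epsilon_X^i\epsilon_W^j(S^{\pm})_{W,X}$. (One must check that the hypotheses of that lemma — $\sic$ a self-dual simple current of order two with $\cM_{\sic,\sic}=1_{\unit}$ — are in force here; these are exactly the standing assumptions on $A=V\oplus\sic$ at the start of the Verlinde-formula subsection.)

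For part \eqref{it:SACSbasics2}, the idea is to use the spin-statistics relation $c_{\sic,\sic}=\theta_\sic\qdim(\sic)$ of Theorem \ref{thm:spinstats} together with the hypothesis $\cM_{\sic,\sic}=1_{\unit}$, which forces $c_{\sic,\sic}^2=1$ and hence $\theta_\sic=\pm1$. If $X\in\cS^f$ then $\sic\boxtimes X\cong X$, and I would compute $\epsilon_X$ by examining the twist on $\sic\boxtimes X$ via balancing: $\theta_{\sic\boxtimes X}=\cM_{\sic,X}\circ(\theta_\sic\boxtimes\theta_X)=\epsilon_X\theta_\sic\theta_X$ as scalars. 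Since $\sic\boxtimes X\cong X$ we get $\theta_X=\epsilon_X\theta_\sic\theta_X$, hence $\epsilon_X\theta_\sic=1$, i.e.\ $\epsilon_X=\theta_\sic^{-1}=\theta_\sic$. So I must rule out $\theta_\sic=1$ for a fixed point $X$. Here I expect the main obstacle: one needs to show that if $\theta_\sic=1$ and $\qdim(\sic)=1$ then $\sic$ can have no fixed points. This should follow because in that ``correct $\ZZ$-graded'' case $A=V\oplus\sic$ is an honest $\ZZ$-graded vertex operator algebra (Theorem \ref{thm:fourcases}, case 1) and one can argue using the modular/categorical $S$-matrix — e.g.\ a fixed point would make a row of the $S$-matrix of the orbifold degenerate, contradicting modularity — or alternatively quote that $\theta_\sic=1$ forces $\sic$ itself to be an ordinary module over a larger VOA with no room for fixed points. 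I would look for the cleanest such argument, likely phrased via the non-degeneracy of $S$ on $\cC$ combined with \eqref{it:SACSbasics1}: if $X$ is a fixed point with $\epsilon_X=1$ then by \eqref{it:SACSbasics1} the rows of $S$ indexed by $W$ and $\sic\boxtimes W$ agree for all $W$ with $\epsilon_W=1$ and differ by a sign otherwise, which when combined with $X\cong\sic\boxtimes X$ produces a linear dependence among rows of $S$, contradicting invertibility.

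Finally, part \eqref{it:SACSbasics3} is immediate from \eqref{it:SACSbasics1} and \eqref{it:SACSbasics2}: take $X\in\cS^f$, so $X^{(1)}=\sic\boxtimes X\cong X$; applying \eqref{it:SACSbasics1} with $i=0$, $j=1$ gives $(S^{\pm})_{W,X}=(S^{\pm})_{W,X^{(1)}}=\epsilon_W(S^{\pm})_{W,X}$, and since $\epsilon_W=-1$ by hypothesis this forces $(S^{\pm})_{W,X}=-(S^{\pm})_{W,X}$, hence $(S^{\pm})_{W,X}=0$. The only genuine work is in \eqref{it:SACSbasics2}; parts \eqref{it:SACSbasics1} and \eqref{it:SACSbasics3} are formal consequences of earlier results.
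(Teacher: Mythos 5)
The paper's own proof is a three-line argument: item (1) is Lemma~\ref{lem:Sinverse} specialized to $d=1$; item (2) is ``the balancing equation $\theta_{J\boxtimes X}=\cM_{J,X}\circ(\theta_J\boxtimes\theta_X)$''; item (3) is the same application of (1) you give. So for parts (1) and (3) you reproduce the paper's proof exactly, and for part (2) your balancing computation $\epsilon_X\theta_J=1$, hence $\epsilon_X=\theta_J$, is precisely what the paper intends.

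The only place you go astray is the extended worry about ruling out $\theta_J=1$. There is nothing to rule out: in this subsection $A=V\oplus J$ is a \emph{vertex operator superalgebra} of correct statistics, and by Theorem~\ref{thm:fourcases} the combination $\qdim(J)=1$, $\theta_J=1$ lands in case~1, which yields a $\ZZ$-graded ordinary VOA rather than a VOSA. Since $A$ is assumed to be a VOSA throughout, one is necessarily in case~3, so $\theta_J=-1$ is part of the standing hypotheses and the balancing argument closes at once. Your proposed fallback (a linear-dependence argument among rows of the $S$-matrix) is not needed, and as sketched it does not quite go through: the identity $S_{J\boxtimes W,X}=\epsilon_X S_{W,X}$ from part (1), together with $J\boxtimes X\cong X$, only returns tautologies when $\epsilon_X=1$ and does not by itself produce a nontrivial kernel vector for $S$. (A correct statement along these lines — that a simple current with $\theta_J=1$, $\qdim(J)=1$ in a modular tensor category acts freely — is true, but it relies on the nondegeneracy of $S$ in a more substantive way and is a heavier tool than warranted.) In short: parts (1) and (3) match the paper; for part (2) trim the digression and simply invoke $\theta_J=-1$ from the VOSA assumption.
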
  
\begin{proof}
 Item \eqref{it:SACSbasics1} is Lemma \ref{lem:Sinverse} applied to this setting. Item \eqref{it:SACSbasics2} follows from the balancing equation: $\theta_{J\boxtimes X}=\cM_{J,X}\circ(\theta_J\boxtimes\theta_X)$. For item \eqref{it:SACSbasics3}, it follows from item \eqref{it:SACSbasics1} that $$(S^\pm)_{W, X}=(S^\pm)_{W, X\boxtimes J}=\epsilon_W(S^\pm)_{W,X}=-(S^\pm)_{W,X},$$
 so that $(S^\pm)_{W,X}=0$.
\end{proof}

Now let $X=X^\even\oplus X^\odd$, with $X^i\in\cS$, be a simple module in $\underline{\repA}$. 
We proceed as for wrong statistics superalgebras, but now using Lemma \ref{lem:SACSbasics},  to obtain:
\begin{align*}
  (\ch^\pm[X])&(-1/\tau)
  =\sum_{W^\zero\in \cS^\zero,\epsilon_{W^\zero}=\pm 1}
    2\cdot S_{X^\zero,W^\zero} \ch^{\epsilon_{X^\zero}}[\cF(W^\zero)](\tau)
    +\sum_{W^\zero\in\cS^f}(1\mp 1)\cdot S_{X^\zero,W^\zero}  \ch[W^\zero](\tau)\\
  &=\sum_{W^\zero\in \cS^\zero,\epsilon_{W^\zero}=\pm 1}
    2\cdot S_{X^\zero,W^\zero}  \ch^{\epsilon_{X^\zero}}[\cF(W^\zero)](\tau)
    +\sum_{W^\zero\in\cS^f}(1\mp 1)\cdot S_{X^\zero,W^\zero}
    \cdot \dfrac{1}{2}\cdot \ch[\cF(W^\zero)](\tau).
\end{align*}  
We can summarize the information as before. Let $X^\zero_1, X^\zero_2,\dots$ be simple $V$-modules in $\cS^\zero$ that induce to untwisted
modules, let $W^\zero_1, W^\zero_2,\dots$ be simple $V$-modules in $\cS^\zero$ that induce to
twisted modules, and let $W^f_1, W^f_2,\dots$ be $V$-modules in $\cS^f$.
By Lemma \ref{lem:SACSbasics}, the $S$-matrix for $\cC$ has the following pattern:
\begin{align}
\renewcommand{\arraystretch}{1.2}
S = \begin{tabu}{r|rrrrrrrr}
       & X^\zero_1  & X^\one_1  &\dots  & W^\zero_1  & W^\one_1  & \dots &W^f_1 &\dots\\
\hline
X^\zero_1  & a      &  a     &\dots  & b      &  b     & \dots & e & \dots \\
X^\one_1  & a      &  a     &\dots  & -b     & -b     & \dots  & -e & \dots\\
\vdots & \vdots & \vdots &\vdots & \vdots & \vdots & \vdots & \vdots & \vdots\\
W^\zero_1  & b      & -b     &\dots  & c      &  -c    & \dots  & 0 & \dots \\
W^\one_1  &  b     & -b     &\dots  & -c     &  c     & \dots  & 0 & \dots \\
\vdots & \vdots & \vdots &\vdots & \vdots & \vdots & \vdots & \vdots & \dots \\
W^f_1  & e       & -e     &\dots  & 0      &  0     & \dots  & 0 & \dots \\
\vdots & \vdots & \vdots &\vdots & \vdots & \vdots & \vdots & \vdots & \vdots 
\end{tabu}.
\end{align}
The $S^{-1}$-matrix follows the exact same pattern.  If we change the
basis, denoting $X^\pm_i=X_i^\zero \pm X_i^\one$ and similarly 
$W^\pm_i=W_i^\zero\pm W_i^\one$, $W^{f+}_i=W^f_i+W^f_i$, we obtain the following
\emph{non-symmetric} matrix, with a similar pattern for its inverse.
This matrix, as before, describes the transformation laws for characters
and supercharacters of simple modules in $\repA$.
\begin{align}
\renewcommand{\arraystretch}{1.2}
\widetilde{S} = \begin{tabu}{r|rrrrrrrr}
       & X^+_1  & X^-_1  &\dots  & W^+_1  & W^-_1  & \dots &W^{f+}_1 &\dots\\
\hline
X^+_1  & 2a     &  0     &\dots  & 0      & 0     & \dots & 0 & \dots \\
X^-_1  & 0      &  0     &\dots  & 2b      & 0      & \dots  & e & \dots\\
\vdots & \vdots & \vdots &\vdots & \vdots & \vdots & \vdots & \vdots & \vdots\\
W^+_1  &  0      & 2b     &\dots  & 0      &  0     & \dots  & 0 & \dots \\
W^-_1  &  0     &  0     &\dots  & 0      &  2c     & \dots  & 0 & \dots \\
\vdots & \vdots & \vdots &\vdots & \vdots & \vdots & \vdots & \vdots & \dots \\
W^{f+}_1  & 0       & 2e     &\dots  & 0      &  0     & \dots  & 0 & \dots \\
\vdots & \vdots & \vdots &\vdots & \vdots & \vdots & \vdots & \vdots & \vdots 
\end{tabu}.
\end{align}
Again, as before, we have:
\begin{quote}
	Given simple modules $U=U^\zero\oplus U^\one$ and $Z=Z^\zero\oplus Z^\one$,
	there exists a unique coefficient in the $\widetilde{S}^{\pm 1}$
	matrices corresponding to $U^\pm$ and $Z^\pm$ that can be non-zero.
	We denote this coefficient
	$(\widetilde{S}^{\pm 1})_{U,Z}$.  For example, in the matrices
	above,
	$\widetilde{S}_{W_1,X_1}:=\widetilde{S}_{W_1^+,X_1^-}=2 S_{W^\zero_1,
		X^\zero_1}$.
	However, $\widetilde{S}$ is \emph{non-symmetric} when one
	subscript corresponds to a fixed-point object.
\end{quote}
From the $\widetilde{S}$-matrix, we can summarize the (super)character $S$-transformation properties as:
\begin{center}
	\renewcommand{\arraystretch}{1.5}
	\begin{tabular}{r|c|c}
		$X=X^\zero\oplus X^\one$ & $\ch^+[X]$ & $\ch^-[X]$\\
		\hline
		$X$ is untwisted & $\sum\ch^+[\mathrm{untwisted}]$ & $\sum\ch^+[\mathrm{twisted}]$\\
		$X$ is twisted, $X^\zero\not\cong X^\one$& $\sum\ch^-[\mathrm{untwisted}]$ & $\sum\ch^-[\mathrm{twisted}]$\\
		$X$ is twisted, $X^\zero\cong X^\one$ &    $\sum\ch^-[\mathrm{untwisted}]$          & $0$
	\end{tabular}
\end{center}

As in the wrong statistics case, we calculate the even and odd fusion coefficients.
For simple modules $U=U^\zero\oplus U^\one$, $W=W^\zero\oplus W^\one$, $X=X^\zero\oplus X^\one$,
\begin{align*}
N^{X^\zero}_{U^\zero, W^\zero} 
&=\sum_{\substack{Z^\zero\in\cS^\zero\\ \epsilon_{X^\zero}=\epsilon_{U^\zero}\epsilon_{W^\zero}  }}
2\cdot \dfrac{S_{U^\zero,Z^\zero} S_{W^\zero,Z^\zero} (S^{-1})_{Z^\zero,X^\zero}}{S_{V, Z^\zero}}
+\sum_{\substack{Z^\zero\in\cS^f\\ \epsilon_{X^\zero}=\epsilon_{U^\zero}=\epsilon_{W^\zero}=1  }}
\dfrac{S_{U^\zero,Z^\zero} S_{W^\zero,Z^\zero} (S^{-1})_{Z^\zero,X^\zero}}{S_{V, Z^\zero}}.
\end{align*}
Therefore,
\begin{align}
{N^{\pm}}^X_{U, W} 
&=\sum_{\substack{Z^\zero\in\cS^\zero\\ \epsilon_{X^\zero}=\epsilon_{U^\zero}\epsilon_{W^\zero}  }}
2\cdot (1\pm \epsilon_{Z^\zero})\cdot\dfrac{S_{U^\zero,Z^\zero} S_{W^\zero,Z^\zero} (S^{-1})_{Z^\zero,X^\zero}}{S_{V, Z^\zero}}\nonumber\\
&+\sum_{\substack{Z^\zero\in\cS^f\\ \epsilon_{X^\zero}=\epsilon_{U^\zero}=\epsilon_{W^\zero}=1  }}
 (1\mp 1)\cdot
\dfrac{S_{U^\zero,Z^\zero} S_{W^\zero,Z^\zero} (S^{-1})_{Z^\zero,X^\zero}}{S_{V, Z^\zero}}.
\label{eqn:supAlgCorStatFus}
\end{align}
Setting 
\[
t : [\repA] \rightarrow \ZZ/2\ZZ, \hspace{1em} X \mapsto \begin{cases} \even &
\qquad \text{if} \ X \in [\repzA] \\ \odd & \qquad \text{if} \ X \in \
[\rep^{\text{tw}}A]\end{cases}\qquad\text{and} \qquad n_W = \begin{cases} 2
&\qquad \text{if} \ W \ \in[\rep^{\text{fix}}A] \\ 1
&\qquad\text{else} \end{cases},
\]
we thus have
\begin{equation}
\begin{split}
{N^+}_{U,W}^{X}&= n_X\cdot \delta_{t(U)+t(W), t(X)}\sum\limits_{Z \in [\repzA]}
\dfrac{\widetilde{S}_{U,Z}\cdot \widetilde{S}_{W,Z}\cdot (\widetilde{S}^{-1})_{Z,X}}{\widetilde{S}_{A, Z}}\\
{N^-}_{U,W}^{X}&=  \delta_{t(U)+t(W), t(X)} \sum\limits_{Z \in [\rep^{\text{tw}}A]}
\dfrac{\widetilde{S}_{U,Z}\cdot \widetilde{S}_{W,Z}\cdot (\widetilde{S}^{-1})_{Z,X}}{\widetilde{S}_{A, Z}}
\end{split}\label{eqn:supAlgCorStatVer}
\end{equation}
Again, we remind the reader that $\widetilde{S}$ is not symmetric for all values of the subscripts.

Now we analyze the asymptotic dimensions. 
Suppose there exist $Z_{(\pm)}$ as in the wrong statistics case. We assume that
$\ch^-[Z_{(-)}]$ does not vanish; in particular, $Z_{(-)}$ is not a fixed-point-type twisted module
and the lowest conformal weight spaces of $Z_{(\pm)}$ are purely even.
Then we have:
\begin{align*}
\adim^\pm[X] = \dfrac{\widetilde{S}_{X,Z_{(\pm)}}}{\widetilde{S}_{A,Z_{(\pm)}}}=\dfrac{S_{X^\zero,Z^\zero_{(\pm)}}}{S_{V,Z^\zero_{(\pm)}}}.
\end{align*}
Therefore,
\begin{align*}
\adim^\pm[X]&\adim^{\pm}[Y]=\dfrac{S_{X^\zero,Z^\zero_{(\pm)}}}{S_{V,Z^\zero_{(\pm)}}}\dfrac{S_{Y^\zero,Z^\zero_{(\pm)}}}{S_{V,Z^\zero_{(\pm)}}}=\sum_{\substack{W\in \cS \\ \epsilon_{W}=\epsilon_{X^\zero}\epsilon_{Y^\zero} }} N_{X^\zero,Y^\zero}^{W}\dfrac{S_{W,Z^\zero_{(\pm)}}}{S_{V,Z^\zero_{(\pm)}}}\\
&=\sum_{
	\substack{W^\zero\in \cS^\zero \\ \epsilon_{W^\zero}=\epsilon_{X^\zero}\epsilon_{Y^\zero}}}  \left(N_{X^\zero,Y^\zero}^{W^\zero}\dfrac{S_{W^\zero,Z^\zero_{(\pm)}}}{S_{V,Z^\zero_{(\pm)}}} + N_{X^\zero,Y^\zero}^{W^\one}\dfrac{S_{W^\one,Z^\zero_{(\pm)}}}{S_{V,Z^\zero_{(\pm)}}}\right)
+ \sum_{ \substack{W^f\in \cS^f\\ \epsilon_{X^\zero}\epsilon_{Y^\zero}=-1}
	} N_{X^\zero,Y^\zero}^{W^f}\dfrac{S_{W^f,Z^\zero_{(\pm)}}}{S_{V,Z^\zero_{(\pm)}}}\\
&=\sum_{\substack{W^\zero\in \cS^\zero\\\epsilon_{W^\zero}=\epsilon_{X^\zero}\epsilon_{Y^\zero}}} \left(N_{X^\zero,Y^\zero}^{W^\zero}\pm N_{X^\zero,Y^\zero}^{W^\one}\right)\dfrac{S_{W^\zero,Z^\zero_{(\pm)}}}{S_{V,Z^\zero_{(\pm)}}} + 
\sum_{\substack{W^f\in \cS^f\\ \epsilon_{X^\zero}\epsilon_{Y^\zero}=-1}} N_{X^\zero,Y^\zero}^{W^f}\dfrac{S_{W^f,Z^\zero_{(\pm)}}}{S_{V,Z^\zero_{(\pm)}}}\\
&=\sum_{\substack{W \text{\,non-fixed-point\,simple}\\\epsilon_{W^\zero}=\epsilon_{X^\zero}\epsilon_{Y^\zero}}} {N^\pm}_{X,Y}^{W}\dfrac{\widetilde{S}_{W,Z_{(\pm)}}}{\widetilde{S}_{A,Z_{(\pm)}}} + 
\sum_{\substack{W^f\in \cS^f\\ \epsilon_{X^\zero}\epsilon_{Y^\zero}=-1}} N_{X^\zero,Y^\zero}^{W^f}\dfrac{S_{W^f,Z^\zero_{(\pm)}}}{S_{V,Z^\zero_{(\pm)}}}\\
&=\sum_{\substack{W \text{\,non-fixed-point\,simple}\\\epsilon_{W^\zero}=\epsilon_{X^\zero}\epsilon_{Y^\zero}}} {N^\pm}_{X,Y}^{W}\adim^{\pm}[W] + 
\sum_{\substack{W^f\in \cS^f\\ \epsilon_{X^\zero}\epsilon_{Y^\zero}=-1}} N_{X^\zero,Y^\zero}^{W^f}\dfrac{S_{W^f,Z^\zero_{(\pm)}}}{S_{V,Z^\zero_{(\pm)}}}.
\end{align*}
Hence, for $\adim^-$ we have:
\begin{align}
\adim^-[X]\adim^-[Y]=\sum_{\substack{W \text{simple}\\\epsilon_{W^\zero}=\epsilon_{X^\zero}\epsilon_{Y^\zero}}}{N^-}_{X,Y}^W\adim^-[W],
\label{eqn:supAlgCorStatadim}
\end{align}
so asymptotic superdimension respects the tensor product on $\repA$. The relation $\adim^+[X]\adim^+[Y]=\adim^+[X\boxtimes_A Y]$ also holds if $X$ and $Y$ are both untwisted or both twisted.

\begin{rema} Several observations about fixed-point simple modules are in order;
let $T$ be such a module.
First, by definition, (and also from \eqref{eqn:supAlgCorStatFus}), 
${N^-}^{T}_{X,Y}=0$, ${N^-}^{W}_{T,Y}=0$, ${N^-}^{W}_{X,T}=0$.
For such a $T$, one can alternately define ${N^+}^{T}_{X,Y} = N_{X^\zero, Y^\zero}^{T^\zero}$, which also
makes \eqref{eqn:supAlgCorStatVer} uniform.
Since $\ch^-[T]=0$, the negative asymptotic dimension $\adim^-[T]=0$.
It is now clear that \eqref{eqn:supAlgCorStatadim} completely disregards such modules $T$.
\end{rema}

Since this scenario is a little tricky with an asymmetric $\widetilde{S}$-matrix, we present a quick example:
\begin{exam}
Let $A$ be the vertex operator superalgebra of one free fermion.
Up to isomorphisms, it has one simple untwisted module, say $U=A$,
and one twisted module, say $T$. 
The twisted module is a fixed point.
We have the following characters:
\begin{align*}
\ch^+[U] &= q^{-1/24}\prod_{n\geq 0}(1+q^{n+1/2}) = \dfrac{\eta(\tau)^2}{\eta(\tau/2)\eta(2\tau)}, \quad 
&&\ch^-[U] =q^{-1/24} \prod_{n\geq 0}(1-q^{n+1/2}) = \dfrac{\eta(\tau/2)}{\eta(\tau)},\\
\ch^+[T] &= 2q^{1/12}\prod_{n\geq 1}(1+q^n)=2\dfrac{\eta(2\tau)}{\eta(\tau)},\quad 
&& \ch^-[T]  =0.
\end{align*}
We also have the following fusion rules as modules for the even part of $A$:
\begin{align*}
N_{U^i, U^j}^{U^k} = \delta_{i+j, k},\quad
N_{U^i,T^j}^{T^j} = 1,\quad
 N_{T^i, T^i}^{U^j}=1.
\end{align*}
The $\widetilde{S}$-matrix can be calculated using the transformation properties of $\eta$:
\begin{align*}
\renewcommand{\arraystretch}{1.2}
\widetilde{S} = \begin{tabu}{r|ccc}
     & U^+ & U^- & T^+ \\ \hline
U^+  & 1   &  0 & 0 \\
U^-  & 0   &  0 & 1/\sqrt{2} \\
T^+  & 0 &  \sqrt{2}   & 0,
\end{tabu}=\widetilde{S}^{-1}.
\end{align*}
Now we are ready to calculate the fusion rules.
\begin{align}\nonumber
{N^+}_{U, U}^U &= \sum_{\epsilon_Z=1}\dfrac{\widetilde{S}_{U,Z}
	\widetilde{S}_{U,Z}(\widetilde{S}^{-1})_{Z,U}}{\widetilde{S}_{A,Z}}
=\dfrac{\widetilde{S}_{U,U}
	\widetilde{S}_{U,U}\widetilde{S}_{U,U}}{\widetilde{S}_{A,U}}
=1,\\ \nonumber
{N^-}_{U, U}^U &= \sum_{\epsilon_Z=-1}\dfrac{\widetilde{S}_{U,Z}
	\widetilde{S}_{U,Z}(\widetilde{S}^{-1})_{Z,U}}{\widetilde{S}_{A,Z}}
=\dfrac{\widetilde{S}_{U,T}
	\widetilde{S}_{U,T}\widetilde{S}_{T,U}}{\widetilde{S}_{U,T}}
=\dfrac{(1/\sqrt{2})(1/\sqrt{2})(\sqrt{2})}{(1/\sqrt{2})} =1,\\ \nonumber
{N^+}_{U, T}^T &= \sum_{\epsilon_Z=1}\,2\cdot\dfrac{\widetilde{S}_{U,Z}
	\widetilde{S}_{T,Z}(\widetilde{S}^{-1})_{Z,T}}{\widetilde{S}_{A,Z}}
=\,2\cdot\dfrac{\widetilde{S}_{U,U}
	\widetilde{S}_{T,U}\widetilde{S}_{U,T}}{\widetilde{S}_{U,U}}
=2{(1/\sqrt{2})(\sqrt{2})} =2,\\ \nonumber
{N^-}_{U, T}^T &= \sum_{\epsilon_Z=-1}\dfrac{\widetilde{S}_{U,Z}
	\widetilde{S}_{T,Z}(\widetilde{S}^{-1})_{Z,T}}{\widetilde{S}_{A,Z}}
=\dfrac{\widetilde{S}_{U,T}
	\widetilde{S}_{T,T}\widetilde{S}_{T,T}}{\widetilde{S}_{U,T}}
=0,\\ \nonumber
{N^+}_{T, T}^U &= \sum_{\epsilon_Z=1}\dfrac{\widetilde{S}_{T,Z}
	\widetilde{S}_{T,Z}(\widetilde{S}^{-1})_{Z,U}}{\widetilde{S}_{A,Z}}
=\dfrac{\widetilde{S}_{T,U}
	\widetilde{S}_{T,U}\widetilde{S}_{U,U}}{\widetilde{S}_{U,U}}
=2,\\ \nonumber
{N^-}_{T, T}^U &= \sum_{\epsilon_Z=-1}\dfrac{\widetilde{S}_{T,Z}
	\widetilde{S}_{T,Z}(\widetilde{S}^{-1})_{Z,U}}{\widetilde{S}_{A,Z}}
	=\dfrac{\widetilde{S}_{T,T}
		\widetilde{S}_{T,T}\widetilde{S}_{T,U}}{\widetilde{S}_{U,T}}
		=0.
\end{align}
\end{exam}

\subsection{Cosets by lattice vertex operator algebras: General results}\label{sec:coset}

If $V$ is a \VOA{} and $U$ a vertex operator subalgebra of $V$ (with a different conformal vector), then the commutant $C=\com\left(U, V\right)$ is called the coset vertex operator subalgebra of $U$ in $V$. 
If $U=\com\left(C, V\right)$, then $U$ and $C$ are said to be a mutually commuting or Howe pair in $V$. The aim of this section is to show that much of the
representation theories of the two \VOAs{} $V$ and $C$ are related, and thus one can reconstruct structure in one of them from
that in the other.

\subsubsection{Coset modules}

Surely the simplest cosets are those by lattice or
Heisenberg vertex operator subalgebras. 
Recently, some results regarding such cosets in the general non-rational
setting have been given \cite{CKLR},  but for this section, we concentrate on the rational picture and rederive several
results well-established in the literature in a new and more general way
using the machinery developed thus far. Most notably, we exhibit a natural use
of the induction functor and the characterization of $\rep^0 A$. We begin by recalling
several results from \cite{CKLR} and elsewhere. Recall that a vertex operator algebra is of CFT type if it has no negatively-graded weight spaces and if $V_0=\CC\unit$.
\begin{theo}  \textup{\cite[Corollary\ 4.13]{CKLR}}
  Let $V$ be a simple, self-contragredient, rational, and
  $C_2$-cofinite \VOA{} of CFT type. If $L$ is an even positive
  definite lattice such that the lattice vertex operator algebra $V_L$ is a vertex subalgebra of $V$ 
  and $V_L$ is maximal with this
  property,  then the coset $C=\com(V_L, V)$ is simple, rational, and
  $C_2$-cofinite.
\end{theo}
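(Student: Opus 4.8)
The plan is to realize $V$ as a vertex operator (super)algebra extension of $V_L \otimes C$ and then deduce properties of $C$ from the known properties of $V$ and $V_L$ using the categorical machinery developed above, especially the induction functor and the theory of $\rep^0 A$. Since $V$ is rational and $C_2$-cofinite of CFT type and self-contragredient, its module category $\cC$ is a modular tensor category by Huang's theorems. The vertex subalgebra $V_L\otimes C$ of $V$ has module category equal to the Deligne product of the (modular) module category of $V_L$ with the module category of $C$; the point is that $V$, as a module for $V_L\otimes C$, is an algebra object $A$ in this Deligne product category. By the Huang--Kirillov--Lepowsky theorem (Theorem 3.2 of \cite{HKL}, recalled above), $A$ is a commutative associative algebra with injective unit and trivial twist in the braided tensor category of $V_L\otimes C$-modules, and the rationality assumption on $V$ and $V_L$ makes the relevant categories semisimple; moreover $A$ is haploid under the maximality hypothesis on $V_L$.

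\textbf{Key steps, in order.} First I would establish that $V$ decomposes as a $V_L\otimes C$-module into a (finite) direct sum $\bigoplus_i V_{L+\lambda_i}\otimes C_i$, where the $V_{L+\lambda_i}$ run over simple $V_L$-modules (cosets of $L$ in its dual $L^\circ$) and the $C_i$ are $C$-modules; this uses the maximality of $V_L$ (which forces the $V_L$-isotypic components to be indexed by genuine cosets, not a proper subgroup) together with standard double-commutant/Howe-pair arguments, as in \cite{CKLR}. Second, I would invoke $C_2$-cofiniteness: $V$ being $C_2$-cofinite and $V_L$ being $C_2$-cofinite (it is rational and $C_2$-cofinite) implies $C = \com(V_L, V)$ is $C_2$-cofinite — this is a Miyamoto-type result on $C_2$-cofiniteness of cosets (one can cite \cite{CKLR} for the precise statement, or argue via the surjection onto Zhu-algebra-type data). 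Third, rationality of $C$: here the plan is to use that $V$ is a rational algebra object $A$ in the modular tensor category of $V_L\otimes C$-modules, hence by Theorem 3.4 of \cite{HKL} and our Theorem \ref{thm:repzABTCvrtxSalg}, the category of $V$-modules is $\rep^0 A$; since $V$ is rational, $\rep^0 A$ is semisimple. One then shows that semisimplicity of $\rep^0 A$, combined with the fact that induction $\cF$ from $V_L\otimes C$-modules is a tensor functor whose essential image generates, forces the ambient category of $V_L\otimes C$-modules to be semisimple. Since the $V_L$-module category is already semisimple, this yields semisimplicity of the $C$-module category, i.e. rationality of $C$ (after checking that all grading-restricted generalized $C$-modules arise this way, which follows from $C_2$-cofiniteness of $C$ ensuring $\cC_C$ has vertex tensor category structure and from the adjunction between $\cF$ and restriction, Lemma \ref{lem:FGadjoint}). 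Fourth, simplicity of $C$: since $V$ is simple and $V_L$ is simple, and $V = \bigoplus_i V_{L+\lambda_i}\otimes C_i$ with $V_L\otimes C_0 = V_L\otimes C$ the component containing the vacuum, a simple current / density argument (Proposition \ref{prop:Fsimple} and the Howe-pair property $\com(C,V)=V_L$) shows $C = C_0$ has no nontrivial ideals; alternatively one invokes that $A^\even = V$ simple with $V_L$, $C$ each of CFT type forces $C$ simple, as in \cite{CKLR}.

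\textbf{Main obstacle.} The delicate point is the passage from ``$\rep^0 A$ is semisimple'' (known because $V$ is rational) back to ``the full category of $V_L\otimes C$-modules is semisimple,'' equivalently that $C$ is rational. This is not automatic: $A$-module categories being semisimple does not in general force semisimplicity downstairs unless $A$ is sufficiently large and one controls the restriction functor. The argument must use that $A$ is a \emph{connected étale} algebra whose category of local modules is modular with the \emph{correct} global/categorical dimension, together with the exactness properties of induction (our Proposition \ref{lem:tensrightexact}, Lemma \ref{lem:Gexact}) and the Frobenius--Perron dimension bookkeeping from \cite{DMNO, KO}: one shows $\mathrm{FPdim}(\repA) = \mathrm{FPdim}(\cC_{V_L\otimes C})$ and that $\repA$ contains $\rep^0 A$ which is semisimple of the full expected dimension, then deduces every module in $\cC_{V_L\otimes C}$ is rigid/semisimple. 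Making this rigorous in the possibly-non-semisimple-a-priori setting (before rationality of $C$ is known) is where the real work lies; one must either invoke $C_2$-cofiniteness of $C$ first to get a finite braided tensor category and then apply the DMNO-type criterion, or bootstrap via the fact that $V_L$-modules are already semisimple so only the $C$-factor is in question and a de-equivariantization/orbifold argument along $L^\circ/L$ applies. I expect to cite \cite{CKLR} for the technical heart of this step rather than reproduce it.
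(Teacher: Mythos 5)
The paper does not prove this statement; it is imported verbatim from \cite[Corollary~4.13]{CKLR} and used as a black box in Section~4.3, so there is no internal proof to compare against. Read on its own terms, your sketch has the right overall shape --- decompose $V$ over $V_L\otimes C$, establish $C_2$-cofiniteness, then rationality, then simplicity --- but the rationality step as written is circular, and the clean route (which is what \cite{CKLR} actually uses, and which the paper itself hints at immediately after the statement) runs differently.

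Your Step~3 begins ``$V$ is a rational algebra object $A$ in the \emph{modular tensor category} of $V_L\otimes C$-modules.'' But whether that category is modular --- equivalently, whether $C$ is rational --- is exactly the conclusion to be proved, so you cannot start by placing $A$ in it. Your ``main obstacle'' paragraph shows you see the danger, and indeed the reverse inference you propose (from semisimplicity of $\repzA$ upstairs to semisimplicity of the $V_L\otimes C$-module category downstairs, via DMNO/FPdim bookkeeping) is not how one escapes the circle. The move that does, and which you relegate to a fallback at the very end, is the orbifold observation: $V$ is an $N/L$-graded simple current extension of $V_L\otimes C$ for a lattice $L\subseteq N\subseteq L'$, so the finite abelian character group $\widehat{N/L}$ acts on $V$ by automorphisms with $V^{\widehat{N/L}}=V_L\otimes C$ (you wrote $L^\circ/L$, but the relevant group is $N/L$, not the full dual quotient). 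Applying Carnahan--Miyamoto \cite{CarM} on regularity of fixed-point vertex operator algebras gives rationality and $C_2$-cofiniteness of $V_L\otimes C$ \emph{simultaneously}; since $V_L$ is a rational, $C_2$-cofinite tensor factor, $C$ inherits both. This single step subsumes your Steps~2 and~3 and avoids the reverse-DMNO issue entirely; it should be the primary argument, not a backup. Your Step~1 (decomposition via maximality and Howe pairs) and Step~4 (simplicity via density, cf.\ \cref{prop:Fsimple}) are sound as sketched.
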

We note an easy lemma which we shall frequently use:
\begin{lemma}
	Let $A$, $B$ and $A\otimes B$ be regular \VOA s. 
	Then
	$$(A_1\otimes B_1)\fus{A\otimes B}(A_2\otimes B_2)\cong (A_1\fus{A}A_2)\otimes (B_1\fus{B}B_2)$$
	for $A$-modules $A_1$, $A_2$ and $B$-modules $B_1$, $B_2$.
\end{lemma}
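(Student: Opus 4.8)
The plan is to prove this standard fact about tensor products over a tensor product of vertex operator algebras by exhibiting an isomorphism and verifying it is a $(A\otimes B)$-module map, then invoking the universal property of $P(1)$-tensor products.

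First I would recall that since $A$, $B$, and $A\otimes B$ are regular, their module categories are modular tensor categories (semisimple, with vertex tensor category structure), so all the $P(z)$-tensor products exist. The key observation is that if $\cY^A$ is an intertwining operator among $A$-modules of type $\binom{A_1\fus{A}A_2}{A_1\,A_2}$ and $\cY^B$ is an intertwining operator among $B$-modules of type $\binom{B_1\fus{B}B_2}{B_1\,B_2}$, then the ``tensor product'' intertwining operator defined by
\begin{equation*}
(\cY^A\otimes\cY^B)(a_1\otimes b_1, x)(a_2\otimes b_2) = \cY^A(a_1,x)a_2 \otimes \cY^B(b_1,x)b_2
\end{equation*}
(extended linearly) is an intertwining operator among $A\otimes B$-modules of type $\binom{(A_1\fus{A}A_2)\otimes(B_1\fus{B}B_2)}{A_1\otimes B_1\,\,A_2\otimes B_2}$. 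This is checked directly: lower truncation, the $L(-1)$-derivative property, and the Jacobi identity for $\cY^A\otimes\cY^B$ follow from the corresponding properties for $\cY^A$ and $\cY^B$ together with the fact that the vertex operator of $A\otimes B$ on a tensor product module is $Y_{A_1\otimes B_1}(a\otimes b,x) = Y_{A_1}(a,x)\otimes Y_{B_1}(b,x)$ (and similarly the $L(-1)$-operator and conformal vector decompose as sums). Applying this with $\cY^A = \fus{P(1)}^A$ and $\cY^B = \fus{P(1)}^B$ the canonical tensor product intertwining maps, we obtain a $P(1)$-intertwining map of type $\binom{(A_1\fus{A}A_2)\otimes(B_1\fus{B}B_2)}{A_1\otimes B_1\,\,A_2\otimes B_2}$, hence by the universal property (Definition \ref{def:Pztensprod}) a unique $(A\otimes B)$-module homomorphism
\begin{equation*}
F: (A_1\otimes B_1)\fus{A\otimes B}(A_2\otimes B_2)\longrightarrow (A_1\fus{A}A_2)\otimes (B_1\fus{B}B_2).
\end{equation*}

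Next I would show $F$ is an isomorphism. The cleanest route uses semisimplicity and a dimension/fusion-rule count: the fusion rule $N^{X\otimes Y}_{A_1\otimes B_1,\,A_2\otimes B_2}$ for simple $A\otimes B$-modules $X\otimes Y$ equals $N^X_{A_1,A_2}\cdot N^Y_{B_1,B_2}$ — this itself follows from the above construction (it shows $\dim \geq$) together with the fact that every intertwining operator among $A\otimes B$-modules restricts to give intertwining operators for $A$ and for $B$ separately when one plugs in vacuum vectors in the appropriate slots (giving $\dim\leq$). Given this, decomposing $A_1\fus{A}A_2 = \bigoplus_X N^X_{A_1,A_2} X$ and $B_1\fus{B}B_2 = \bigoplus_Y N^Y_{B_1,B_2} Y$ into simples, both sides of the claimed isomorphism decompose as $\bigoplus_{X,Y} N^X_{A_1,A_2}N^Y_{B_1,B_2}\,(X\otimes Y)$, so they are abstractly isomorphic; then $F$, being a nonzero homomorphism whose components one checks are nonzero on each simple summand (using that $\overline{F}((a_1\fus{A\otimes B}b_1)\otimes(a_2\fus{A\otimes B}b_2))$ hits $(a_1\fus{A}a_2)\otimes(b_1\fus{B}b_2)$ by construction, and such elements span), must be an isomorphism by Schur's lemma.

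The main obstacle I anticipate is organizing the fusion-rule bookkeeping cleanly — in particular establishing the multiplicativity $N^{X\otimes Y}_{A_1\otimes B_1,A_2\otimes B_2} = N^X_{A_1,A_2}N^Y_{B_1,B_2}$ rigorously, which requires both the explicit construction (for $\geq$) and a restriction argument (for $\leq$) showing that every $A\otimes B$-intertwining operator among simple tensor-product modules is a linear combination of products of an $A$-intertwining operator and a $B$-intertwining operator. An alternative that sidesteps the counting is to construct an explicit inverse to $F$: define a $P(1)$-intertwining map $A_1\otimes B_1 \otimes A_2\otimes B_2 \to \overline{(A_1\otimes B_1)\fus{A\otimes B}(A_2\otimes B_2)}$ by $(a_1\otimes b_1)\otimes(a_2\otimes b_2)\mapsto (a_1\otimes b_1)\fus{A\otimes B}(a_2\otimes b_2)$ viewed as an iterated intertwining map in the $A$- and $B$-variables separately, apply the universal properties of $A_1\fus{A}A_2$ and then of $(A_1\fus{A}A_2)\otimes(B_1\fus{B}B_2)$ to factor it, and check the resulting map is a two-sided inverse of $F$ on the spanning set of tensor-product vectors; this is more hands-on but avoids any semisimplicity input and may in fact be the better choice given that the lemma is stated without a regularity hypothesis being strictly necessary for the argument.
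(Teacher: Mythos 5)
The paper states this lemma without proof, simply calling it ``an easy lemma which we shall frequently use,'' so there is no paper argument to compare against; I will evaluate your proof on its own terms. Your overall plan is correct and identifies the right key input: the multiplicativity of spaces of intertwining operators under tensoring of \VOAs, i.e., $N^{X\otimes Y}_{A_1\otimes B_1,\, A_2\otimes B_2} = N^X_{A_1,A_2}\cdot N^Y_{B_1,B_2}$. You should say explicitly that this (together with the companion fact that every simple $A\otimes B$-module is $X\otimes Y$ for simple $X$, $Y$) is exactly Theorem 2.10 of the Abe--Dong--Li reference \cite{ADL} combined with the Frenkel--Huang--Lepowsky tensor-product theorem for modules, both of which the paper uses elsewhere; that saves you from re-deriving the $\geq$ and $\leq$ bounds by hand, which is where your sketch is thinnest.

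Your closing appeal to Schur's lemma is too loose to finish the argument, since the source and target of $F$ are not simple (they carry multiplicities), and a nonzero map between abstractly isomorphic non-simple modules need not be an isomorphism. But you already have the correct ingredients in hand: the elements $(a_1\boxtimes_A a_2)\otimes (b_1\boxtimes_B b_2)$ generate the target $(A_1\boxtimes_A A_2)\otimes(B_1\boxtimes_B B_2)$ as an $A\otimes B$-module (because $a_1\boxtimes_A a_2$ generates $A_1\boxtimes_A A_2$ and similarly for $B$), and they lie in the image of $\overline F$, so $\im F$ is a submodule generating the target and hence $F$ is surjective. Combining surjectivity with the graded-dimension equality that your fusion-rule count delivers in each conformal weight space (these spaces are finite-dimensional) shows $F$ is injective as well, hence an isomorphism. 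That replacement is the clean way to close the argument. The alternative ``explicit inverse'' route you mention in the last paragraph is vaguer than you let on: to factor the canonical $P(1)$-intertwining map through $A_1\boxtimes_A A_2$ and then $(A_1\boxtimes_A A_2)\otimes(B_1\boxtimes_B B_2)$ you would need a bilinear/two-variable universal property for the latter object, and establishing that is essentially re-proving ADL's Theorem 2.10, so you gain nothing over the fusion-rule route.
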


\begin{rema}
	If the vertex operator algebras involved are not regular, then the conclusion of the lemma still holds
	when the left side is already known to be a module that cleanly
	factors as a tensor product of two modules.
	This is the case, for instance, when $A$ is a lattice or Heisenberg 	
	\VOA{}
	and if we are working with a semisimple category of modules.
\end{rema}
Let $V=M_0, M_1, \dots, M_n$ denote the inequivalent simple
$V$-modules, and let $L'$ be the dual lattice of $L$.  Then there is a lattice $N$,
$L \subseteq N \subseteq L'$
such that
\[
V= \bigoplus_{\nu+L\in N/L} V_{\nu+L} \otimes M_{0,\nu}
\]
as a $V_L\otimes C$-module, with $C=M_{0,0}$. 
The finite group $N/L$ acts by automorphisms on $V$;
hence each $V_{\nu+L} \otimes M_{0,\nu}$ is a simple $V_L\otimes C$-module \cite{DM1, DLM}
and also a simple current for $V_L\otimes C$ \cite{CarM}. 
\begin{rema}\label{rem:isomod}
The simple currents $V_{\nu+L} \otimes M_{0,\nu}$ are fixed-point free since this holds
for the simple modules for a lattice \VOA{}.
Hence, Proposition \ref{prop:SimpleCurrentSmatrix} implies that
two simple $V$-modules are isomorphic if and only if they are isomorphic as  $V_L\otimes C$-modules.
\end{rema}
By \cite{CKLR}, all $M_{0,\nu}$ are simple currents for $C=M_{0,0}$ with fusion rules
\[
M_{0,{\nu_1}} \boxtimes_C M_{0,{\nu_2}} = M_{0,{\nu_1+\nu_2}} \, \text{for }\, 
\nu_1,\nu_2\in N/L.
\]
Moreover, they are pairwise inequivalent, which is a consequence of the maximality of $L$ or $V_L$.
A similar $V_L\otimes C$-decomposition holds \cite[Theorem\ 3.8]{CKLR} for 
each simple $V$-module $M_i$:
\[
M_i= \bigoplus_{\lambda+L\in L'/L} V_{\lambda+L} \otimes M_{i,\lambda}, 
\]
with
\[
  M_{0,\nu} \boxtimes_C M_{i,\lambda}
  = M_{i,\nu+\lambda} \, \text{for }\, \nu\in N/L, \lambda\in L'/L.
\]
Simplicity of $M_i$ implies that if
$M_{i,\lambda_1}, M_{i,\lambda_2}\neq 0$ then
$\lambda_1-\lambda_2\in N/L$.  Therefore, for each $M_i$, we fix a choice of $\lambda_i\in L'/L$ such that $\lambda_0=0$ and
\begin{align}
  M_i = \bigoplus_{\nu+L\in N/L} V_{\lambda_i + \nu+L}\otimes M_{i,\lambda_i+\nu}.
  \label{eqn:Midecomp}
\end{align}
The $M_{i,\lambda}$ are simple
$C$-modules by \cite[Theorem\ 3.8]{CKLR}, but for any fixed $i$, they need not be pairwise
non-isomorphic.  
In any case, the decomposition \eqref{eqn:Midecomp}
implies that each $M_i$ can be obtained as the induction of a simple
$V_L\otimes C$-module, for example
$V_{\lambda_i+L}\otimes M_{i,\lambda_i}$. We shall investigate additional properties of the $M_{i,\mu}$ below.

If $X$ is a simple $C$-module then by \cite[Theorem 4.3, Lemma 4.9]{CKLR} and the proof of \cite[Theorem 4.12]{CKLR}, there exists a $\lambda+L\in L'/L$ such that
\begin{align}
\cF(V_{\lambda+\nu'+L} \otimes X) \cong_{V_{L}\otimes C} \bigoplus_{\nu+L\in N/L} V_{\lambda+\nu'+\nu+L} \otimes \left(M_{0,\nu} \boxtimes_C X\right)
\label{eqn:tenslift}
\end{align}
is an untwisted $V$-module if and only if $\nu'+L \in N'/L$, where 
$L\subseteq N'\subseteq L'$ is the lattice dual to $N$.  This result follows directly from the characterization of $\rep^0 V$ in \cite{HKL, CKL}.  As a corollary, setting $\nu'=0$, we can always tensor a given simple $C$-module $X$ with a $V_L$-module such that the result induces to an untwisted $V$-module. This $V$-module is
simple by Proposition \ref{prop:Fsimple}, so 
\[
\cF(V_{\lambda+L} \otimes X) \cong M_i
\]
for some $i \in \{0, \dots, n\}$.  It would make sense to
identify $X$ with $M_{i, \lambda}$, but this identification is not
unique and needs to be clarified.  Consider the special case
$X=C=M_{0,0}$; then for $\nu+L\in N'/L$,
\begin{align}
  M^{\nu}:=\cF(V_{\nu+L} \otimes C)
  \label{eqn:Asimplecurrents}
\end{align}
satisfy the following for $\nu_1,\nu_2\in N'/L$:
\begin{align}
M^{\nu_1} \boxtimes_V M^{{\nu_2}} &= \cF(V_{{\nu_1}+L} \otimes C) \boxtimes_V \cF(V_{{\nu_2}+L} \otimes C) \nonumber\\
&\cong \cF\left((V_{{\nu_1}+L} \otimes C)\boxtimes_{V_L\otimes C} (V_{{\nu_2}+L} \otimes C) \right) =
\cF(V_{{\nu_1}+{\nu_2}+L} \otimes C)
\cong M^{{\nu_1}+{\nu_2}}. 
\label{eqn:MupperGroup}
\end{align}

\begin{propo}\label{Mupperuntwisted}
For $\nu+L\in N'/L$, the	$M^\nu$ are pairwise non-isomorphic untwisted $V$-modules for $V$,
and they are simple currents for $V$.
\end{propo}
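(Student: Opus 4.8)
The plan is to deduce all three assertions from the $V_L\otimes C$-module decomposition \eqref{eqn:tenslift} together with the fact, established in Theorem \ref{thm:inductionfucntor}, that $\cF$ (here, induction from $V_L\otimes C$-modules to $V$-modules) is a tensor functor. First, for untwistedness I would apply \eqref{eqn:tenslift} with $X=C=M_{0,0}$, $\lambda+L=L$, and $\nu'=\nu$: this shows that $M^\nu=\cF(V_{\nu+L}\otimes C)$ is an untwisted $V$-module precisely when $\nu+L\in N'/L$, which is exactly our standing hypothesis. For simplicity, note that $V_{\nu+L}\otimes C$ is a simple $V_L\otimes C$-module since $V_{\nu+L}$ is a simple $V_L$-module, $C$ is simple, and $V_L\otimes C$ is regular. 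Writing $V=\bigoplus_{\mu+L\in N/L}V_{\mu+L}\otimes M_{0,\mu}$ as a $V_L\otimes C$-module, each summand is simple, and $(V_{\mu+L}\otimes M_{0,\mu})\boxtimes_{V_L\otimes C}(V_{\nu+L}\otimes C)\cong V_{\mu+\nu+L}\otimes M_{0,\mu}$; these are simple, and pairwise non-isomorphic as $\mu+L$ ranges over $N/L$, because the $V_L$-modules $V_{\mu+\nu+L}$ are then pairwise distinct. Hence Proposition \ref{prop:Fsimple}, applied with the roles of $A$ and $V$ there played by $V$ and $V_L\otimes C$, shows that $M^\nu$ is a simple $V$-module.

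For pairwise non-isomorphism, I would argue as follows. Suppose $M^{\nu_1}\cong M^{\nu_2}$ as $V$-modules, with $\nu_1+L,\nu_2+L\in N'/L$. By Remark \ref{rem:isomod} this is already an isomorphism of $V_L\otimes C$-modules, so \eqref{eqn:tenslift} gives $\bigoplus_{\mu+L\in N/L}V_{\nu_1+\mu+L}\otimes M_{0,\mu}\cong\bigoplus_{\mu+L\in N/L}V_{\nu_2+\mu+L}\otimes M_{0,\mu}$ as $V_L\otimes C$-modules. Both sides are direct sums of simple $V_L\otimes C$-modules, and since the $M_{0,\mu}$, $\mu+L\in N/L$, are pairwise inequivalent, on each side the unique summand whose $C$-factor is isomorphic to $C=M_{0,0}$ is the one with $\mu+L=L$. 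Comparing these summands yields $V_{\nu_1+L}\otimes C\cong V_{\nu_2+L}\otimes C$, hence $V_{\nu_1+L}\cong V_{\nu_2+L}$ as $V_L$-modules, i.e.\ $\nu_1+L=\nu_2+L$.

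Finally, to see that the $M^\nu$ are simple currents for $V$: since $N'/L$ is a group, $-\nu+L\in N'/L$, so $M^{-\nu}$ is defined and is an untwisted simple $V$-module by the above, and \eqref{eqn:MupperGroup} (which uses that $\cF$ is a tensor functor and that fusion products of $V_L\otimes C$-modules factor) gives $M^\nu\boxtimes_V M^{-\nu}\cong M^0=\cF(V_L\otimes C)\cong V$; by definition this makes $M^\nu$ a simple current. The only real work beyond quoting earlier results is verifying the hypotheses of Proposition \ref{prop:Fsimple} and carefully tracking which cosets lie in $N/L$, $N'/L$, or $L'/L$. There is no genuine obstacle here: the substantive inputs, namely the characterization of $\rep^0 V$ underlying \eqref{eqn:tenslift} and the fact that $\cF$ is a tensor functor, have already been established.
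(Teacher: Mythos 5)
Your proof is correct and follows essentially the same route as the paper's, which deduces non-isomorphism from the $V_L\otimes C$-decompositions being distinct (via the $M_{0,\mu}$ being pairwise inequivalent), the simple-current property from \eqref{eqn:MupperGroup}, and untwistedness from a monodromy/conformal-weight argument. Two small points of difference are worth noting. For untwistedness, the paper re-verifies the monodromy condition directly (monodromy factors over the tensor product $V_L\otimes C$, and $\langle\nu,\mu\rangle\in\ZZ$ for $\nu\in N'$, $\mu\in N$), whereas you cite the characterization already packaged in \eqref{eqn:tenslift}; both are legitimate since \eqref{eqn:tenslift} is exactly the result the paper's monodromy check establishes. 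For simplicity, you verify it explicitly via \cref{prop:Fsimple}, which is more self-contained than the paper's implicit reliance on tensor-invertibility forcing simplicity in a modular tensor category (cited from \cite{CKLR}); your route is fine and arguably clearer. One minor slip: in the non-isomorphism step you invoke Remark \ref{rem:isomod}, but that remark gives the nontrivial converse (a $V_L\otimes C$-isomorphism between simple $V$-modules lifts to a $V$-isomorphism); what you actually need is the trivial direction, that a $V$-module isomorphism is automatically a $V_L\otimes C$-module isomorphism since $V_L\otimes C\subseteq V$. The invocation is harmless but should be dropped.
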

\begin{proof}
Since the $M_{0,\nu}$ are pairwise inequivalent for $\nu\in N/L$, the $M^\nu$ are pairwise non-isomorphic since their
decompositions as $V_L\otimes C$-modules are non-isomorphic.
From \eqref{eqn:MupperGroup}, it is clear that $M^{-\nu}\fus{V}M^\nu\cong M^0\cong V$, so they are invertible
objects, that is, simple currents \cite{CKLR}.
Since all vertex operator algebras under consideration are rational, tensor products and hence monodromy factor over the tensor product of vertex operator algebras.
This immediately implies that the monodromy of $V_{\nu+L}\otimes C$ with $V_{\mu+L}\otimes M_{0,\mu}$ is trivial for any 
$\mu\in N$ and  $\nu\in N'$, which in turn implies that $\cF(V_{\nu+L}\otimes C)$ is a module in $\rep^0 V$.
\end{proof}

We have the following isomorphisms among the $M_{i,\lambda_i+\nu}$:
\begin{theo}\label{thm:cosetModuleClassification}
For $\lambda+L,\mu+L\in L'/L$ and $0\leq i,j\leq n$,
$M_{i, \lambda} \cong M_{j, \mu}$ if and only if $\mu-\lambda \in N'$ and $M^{\mu-\lambda} \boxtimes_V M_i \cong M_j$.
In particular, $M_{i,\lambda}\cong M_{j,\lambda}$ if and only if $i=j$.
\end{theo}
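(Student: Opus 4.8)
The plan is to prove both directions of the equivalence, relying throughout on the induction functor $\cF$ and its properties (Proposition \ref{prop:Fsimple}, Proposition \ref{prop:SimpleCurrentSmatrix}, and the fact that $\cF$ is a tensor functor, Theorem \ref{thm:inductionfucntor}), together with Remark \ref{rem:isomod} (two simple $V$-modules are isomorphic iff they are isomorphic as $V_L\otimes C$-modules) and the $V_L\otimes C$-decomposition \eqref{eqn:Midecomp} of each $M_i$. The key bookkeeping device is that $M_{i,\lambda}$, for a choice of $\lambda$ with $V_{\lambda+L}\otimes M_{i,\lambda}$ a summand of $M_i$, satisfies $\cF(V_{\lambda+L}\otimes M_{i,\lambda})\cong M_i$; more precisely, by \eqref{eqn:tenslift}, $\cF(V_{\lambda+L}\otimes M_{i,\lambda})$ is an untwisted simple $V$-module whose $V_L\otimes C$-decomposition agrees with that of $M_i$ (up to relabelling the coset of $L'/L$ by $N/L$), so it is isomorphic to $M_i$.

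For the ``if'' direction, suppose $\mu-\lambda\in N'$ and $M^{\mu-\lambda}\fus{V}M_i\cong M_j$. The idea is to compute $M^{\mu-\lambda}\fus{V}\cF(V_{\lambda+L}\otimes M_{i,\lambda})$ two ways. On one hand it equals $\cF(V_{\mu-\lambda+L}\otimes C)\fus{V}\cF(V_{\lambda+L}\otimes M_{i,\lambda})\cong\cF\big((V_{\mu-\lambda+L}\otimes C)\fus{V_L\otimes C}(V_{\lambda+L}\otimes M_{i,\lambda})\big)\cong\cF(V_{\mu+L}\otimes M_{i,\lambda})$, using that $\cF$ is a tensor functor and that fusion factors over the tensor product of the (rational) \VOA{}s $V_L$ and $C$. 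On the other hand it equals $M^{\mu-\lambda}\fus{V}M_i\cong M_j$. Hence $\cF(V_{\mu+L}\otimes M_{i,\lambda})\cong M_j\cong\cF(V_{\mu+L}\otimes M_{j,\mu})$ (choosing the representative $\mu$ for $M_j$ after adjusting by an element of $N$ if necessary — here one must be slightly careful about which coset representative is used, and one invokes the $M_{0,\nu}$-simple-current action to align them). Restricting this isomorphism to $V_L\otimes C$ and extracting the appropriate isotypic component forces $M_{i,\lambda}\cong M_{j,\mu}$ as $C$-modules. The special case $\lambda=\mu$ reduces to: $M_{i,\lambda}\cong M_{j,\lambda}$ implies (taking $\mu-\lambda=0\in N'$) $M^0\fus{V}M_i\cong M_j$, i.e. $M_i\cong M_j$, i.e. $i=j$; conversely this is trivial.

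For the ``only if'' direction, suppose $M_{i,\lambda}\cong M_{j,\mu}$ as $C$-modules. Then $V_{\mu-\lambda+L}\otimes M_{i,\lambda}\cong(V_{\mu-\lambda+L}\otimes C)\fus{V_L\otimes C}(V_{\lambda+L}\otimes M_{i,\lambda})$ — wait, more directly: $V_{\mu+L}\otimes M_{i,\lambda}\cong V_{\mu+L}\otimes M_{j,\mu}$, which is a summand of $M_j$, hence lifts to the untwisted module $M_j$; comparing with $\cF(V_{\lambda+L}\otimes M_{i,\lambda})\cong M_i$ and using that $V_{\mu+L}\otimes M_{i,\lambda}\cong(V_{\mu-\lambda+L}\otimes C)\fus{V_L\otimes C}(V_{\lambda+L}\otimes M_{i,\lambda})$, we see that $M_i$ tensored with $V_{\mu-\lambda+L}\otimes C$ must lift to an untwisted module (namely it should give $M_j$). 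By the criterion \eqref{eqn:tenslift} applied with $X=M_{i,\lambda}$ (or rather by Lemma \ref{propo:rep0inductioncriteria} / the conformal weight criterion of Theorem \ref{thm:rep0conformal}), $\cF(V_{\mu-\lambda+L}\otimes M_{i,\lambda})$ being untwisted forces $\mu-\lambda\in N'$; then applying $\cF$ shows $M^{\mu-\lambda}\fus{V}M_i\cong\cF(V_{\mu-\lambda+L}\otimes C)\fus{V}\cF(V_{\lambda+L}\otimes M_{i,\lambda})\cong\cF(V_{\mu+L}\otimes M_{i,\lambda})\cong\cF(V_{\mu+L}\otimes M_{j,\mu})\cong M_j$.

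The main obstacle I expect is the careful handling of coset representatives: the decomposition \eqref{eqn:Midecomp} only pins down $M_i$ up to the $N/L$-action on the label $\lambda_i\in L'/L$, and the modules $M_{i,\lambda}$ for varying $\lambda$ in a fixed $N/L$-coset are related by the simple-current action of the $M_{0,\nu}$ but are generally non-isomorphic as $C$-modules. So when I write ``$M_{i,\lambda}\cong M_{j,\mu}$ implies $\mu-\lambda\in N'$'', I must track precisely which $C$-isotypic components of $M_i$ and $M_j$ are being identified, and ensure the lift criterion \eqref{eqn:tenslift} is being applied to the correct twist parameter $\nu'$. Once the correspondence between $C$-modules appearing in $M_i$ and elements of $L'/L$ modulo the relevant sublattices is set up cleanly (which is essentially done in \cite{CKLR}), the rest is a formal manipulation with $\cF$ and fusion products.
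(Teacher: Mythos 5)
Your proof is correct and takes the same route as the paper's (which simply cites the induction functor for the ``if'' direction and \eqref{eqn:tenslift} together with Proposition \ref{Mupperuntwisted} for the ``only if'' direction); you have merely filled in the details the paper leaves implicit. One minor notational slip: in the ``only if'' paragraph the object whose untwistedness invokes \eqref{eqn:tenslift} should be $\cF(V_{\mu+L}\otimes M_{i,\lambda})$ rather than $\cF(V_{\mu-\lambda+L}\otimes M_{i,\lambda})$ — with $\lambda'=\lambda$ and $\nu'=\mu-\lambda$ in the notation of \eqref{eqn:tenslift} — but this does not affect the substance of the argument.
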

\begin{proof}
The ``if'' direction follows from the induction functor and the ``only if'' direction
follows from the observation \eqref{eqn:tenslift} and Proposition \ref{Mupperuntwisted}.
\end{proof}

Recall that for each $M_i$, we have fixed a choice of $\lambda_i+L\in L'/L$.
We define an equivalence relation on the set $\{ (i,\lambda_i+\nu)\,|\, 0\leq i\leq n, \nu+L\in N/L \}$:
\[
\  (i,\psi) \sim (j,\phi) \ \leftrightarrow M_{i,\psi} \cong M_{j,\phi},
\]
and we let $S$ be the set of equivalence classes under $\sim$. \begin{corol}\label{cor:numberofcosetmods}
The number of inequivalent simple $C$-modules is
\[\vert S\vert = \frac{(n+1) |N/L|}{|N'/L|}. 
\]
\end{corol}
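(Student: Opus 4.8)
The plan is to realize $S$ as the orbit set of a \emph{free} action of the finite abelian group $N'/L$ on the index set
\[
I := \{(i,\lambda_i+\nu) : 0\le i\le n,\ \nu+L\in N/L\},
\]
which has $(n+1)\,|N/L|$ elements. Freeness then forces every orbit to have exactly $|N'/L|$ elements, so $\#(S) = |I|/|N'/L| = (n+1)\,|N/L|/|N'/L|$, which is the claim.

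First I would recall why counting $S$ counts simple $C$-modules. Given a simple $C$-module $X$, the discussion around \eqref{eqn:tenslift} (which rests on the description of $\rep^0 V$ from \cite{HKL, CKL}) produces $\lambda+L\in L'/L$ with $\cF(V_{\lambda+L}\otimes X)$ an untwisted $V$-module; this module is simple by Proposition \ref{prop:Fsimple}, hence isomorphic to some $M_i$. Comparing $V_L\otimes C$-isotypic components and using the fusion rule $M_{0,\mu}\boxtimes_C M_{i,\lambda}=M_{i,\mu+\lambda}$ then identifies $X$ with $M_{i,\psi}$ for some $\psi\in\lambda_i+N/L$, i.e.\ with an element of $I$. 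Thus $(i,\psi)\mapsto[M_{i,\psi}]$ induces a bijection from $S = I/{\sim}$ onto the set of isomorphism classes of simple $C$-modules, and it suffices to count $I/{\sim}$.

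Next I would define the action: for $a+L\in N'/L$ set $a\cdot(i,\psi):=(j,\psi+a)$, where $j$ is the unique index with $M_j\cong M^a\boxtimes_V M_i$ (well defined since $M^a$ is a simple current by Proposition \ref{Mupperuntwisted}, so $M^a\boxtimes_V M_i$ is a simple $V$-module). The step requiring care is that $(j,\psi+a)\in I$, i.e.\ $M_{j,\psi+a}\neq 0$. For this I would first note $M_i\cong\cF(V_{\psi+L}\otimes M_{i,\psi})$ for \emph{every} constituent $\psi\in\lambda_i+N/L$: reindex the decomposition \eqref{eqn:Midecomp} using $M_{0,\mu}\boxtimes_C M_{i,\psi}=M_{i,\mu+\psi}$, and upgrade the resulting $V_L\otimes C$-isomorphism to a $V$-isomorphism via Frobenius reciprocity and simplicity. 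Then, since $\cF$ is a tensor functor (Theorem \ref{thm:inductionfucntor}) and $(V_{a+L}\otimes C)\boxtimes_{V_L\otimes C}(V_{\psi+L}\otimes M_{i,\psi})=V_{a+\psi+L}\otimes M_{i,\psi}$, one gets $M^a\boxtimes_V M_i\cong\cF(V_{a+\psi+L}\otimes M_{i,\psi})$; expanding the right side as a $V_L\otimes C$-module exhibits $M_{i,\psi}$ as the constituent paired with $V_{a+\psi+L}$, so $M_{j,\psi+a}\cong M_{i,\psi}\neq 0$. That this is a group action follows from $M^{a+b}\cong M^a\boxtimes_V M^b$ (equation \eqref{eqn:MupperGroup}) and associativity of $\boxtimes_V$, and it is free because $a\cdot(i,\psi)=(i,\psi)$ forces $\psi+a=\psi$ in $L'/L$, i.e.\ $a+L=L$.

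Finally, Theorem \ref{thm:cosetModuleClassification} says exactly that $M_{i,\psi}\cong M_{j,\phi}$ if and only if $\phi-\psi\in N'$ and $M^{\phi-\psi}\boxtimes_V M_i\cong M_j$, i.e.\ if and only if $(j,\phi)$ lies in the $N'/L$-orbit of $(i,\psi)$; hence the $\sim$-classes are precisely the orbits, $S = I/(N'/L)$, and freeness yields $\#(S) = (n+1)\,|N/L|/|N'/L|$. I expect the only non-formal point to be the well-definedness of the action, namely tracking the $V_L\otimes C$-decomposition of $M^a\boxtimes_V M_i$ carefully enough to see that it is induced from $V_{a+\psi+L}\otimes M_{i,\psi}$; everything else is standard orbit counting.
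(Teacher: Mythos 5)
Your proposal is correct and follows essentially the same route as the paper's proof: both ultimately rest on Theorem \ref{thm:cosetModuleClassification} identifying $\sim$-classes with orbits of the simple-current group $N'/L$ acting freely on the $(n+1)|N/L|$ pairs $(i,\lambda_i+\nu)$. The paper packages this as a double count of liftable simple $V_L\otimes C$-modules (via Proposition \ref{prop:SimpleCurrentSmatrix}), whereas you make the free group action and the orbit count explicit, but the combinatorial content is identical.
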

\begin{proof}
There are $n+1$ inequivalent simple $V$-modules, all obtained by inducing $V_L\otimes C$-modules. For each simple $C$-module there exist $|N'/L|$ inequivalent simple $V_L$-modules such that their tensor product induces to a (simple) $V$-module. Moreover, by by Proposition \ref{prop:SimpleCurrentSmatrix}, each simple $V$-module can be obtained by inducing  any one of $|N/L|$ inequivalent simple   $V_L\otimes C$-modules .
\end{proof}

\subsubsection{Fusion}\label{subsubsec:cosetfusion}

The coset decomposition gives a relation between fusion rules.
We define fusion coefficients for both $V$- and $C$-modules as usual:
\[
M_i \boxtimes_V M_j \cong \bigoplus_{k=0}^n N(V)_{ij}^k M_k,
\]
and for $(i,\nu_i)$, $(j,\nu_j)\in S$, 
\[
M_{i,\nu_i} \boxtimes_C M_{j,\nu_j} \cong \bigoplus_{(k, \nu_k)\in S} N(C)_{(i,\nu_i)(j,\nu_j)}^{(k,\nu_k)} M_{k,\nu_k}
\]
where the latter sum is over $S$, the set of inequivalent $C$-modules, that is, we use the identification of the last theorem. 
Since intertwining operators preserve Heisenberg weights, we have
\[
N(V)_{ij}^k \neq 0 \Longrightarrow \nu_k-\nu_i-\nu_j+L \in  N/L. 
\]
\begin{theo}\label{thm:cosetfusion}
Fusion rules are related as follows:
\begin{align*}
&M_{i,\mu_i} \boxtimes_C M_{j,\mu_j} 
\cong\bigoplus_{\substack{k=0 
		}
  }^n N(V)_{ij}^k\, M_{k, \mu_i+\mu_j},
\quad   \text{(here,\, the\, summands\, are\, pairwise\, inequivalent)}\\
&  M_i \boxtimes_V M_j
\cong \bigoplus_{\substack{ \mu+L \in N'/L \\ (k, \lambda_i+\lambda_j+\mu)\in S}}
  N(C)_{(i,\lambda_i)(j,\lambda_j)}^{(k,\lambda_i+\lambda_j+\mu)} 
M^{-\mu} \boxtimes_{V}M_k.
\end{align*}
\end{theo}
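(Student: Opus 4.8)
The plan is to prove both fusion-rule identities by decomposing everything over the mutually commuting pair $V_L\otimes C$ inside $V$ and exploiting that the various restrictions and inductions are tensor functors. First I would establish the first identity. Starting from the explicit $V_L\otimes C$-decompositions of the simple $V$-modules $M_i=\bigoplus_{\nu+L\in N/L}V_{\lambda_i+\nu+L}\otimes M_{i,\lambda_i+\nu}$ from \eqref{eqn:Midecomp}, I would compute $M_i\boxtimes_V M_j$ using that induction $\cF\colon\sC^0\to\repzA$ is a (braided) tensor functor (Theorems \ref{thm:inductionfucntor} and \ref{thm:Fisbraidedtensor}) together with the product formula $(A_1\otimes B_1)\boxtimes_{V_L\otimes C}(A_2\otimes B_2)\cong(A_1\boxtimes_{V_L}A_2)\otimes(B_1\boxtimes_C B_2)$ and the simple-current fusion rules $V_{\mu_1+L}\boxtimes_{V_L}V_{\mu_2+L}\cong V_{\mu_1+\mu_2+L}$ for the lattice \VOA. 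Restricting the left side $M_i\boxtimes_V M_j\cong\bigoplus_k N(V)_{ij}^k M_k$ to $V_L\otimes C$ and comparing the multiplicity of a fixed Heisenberg weight $\lambda_i+\lambda_j+L$ (or any coset rep of it) against the $V_L\otimes C$-decomposition computed from the right side gives, after matching the $C$-isotypic components, the identity $M_{i,\lambda_i}\boxtimes_C M_{j,\lambda_j}\cong\bigoplus_k N(V)_{ij}^k M_{k,\lambda_i+\lambda_j}$; then one translates from $(\lambda_i,\lambda_j)$ to arbitrary $(\mu_i,\mu_j)$ using $M_{0,\nu}\boxtimes_C M_{i,\lambda}\cong M_{i,\nu+\lambda}$ and the simple-current property of the $M_{0,\nu}$, which also gives the stated pairwise inequivalence of the summands via Theorem \ref{thm:cosetModuleClassification}.

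Next I would prove the second identity, which runs in the reverse direction. The idea is that $M_i\boxtimes_V M_j$ is obtained by inducing the $V_L\otimes C$-module $V_{\lambda_i+L}\otimes M_{i,\lambda_i}$ tensored with $V_{\lambda_j+L}\otimes M_{j,\lambda_j}$. Concretely, I would start from $M_i\cong\cF(V_{\lambda_i+L}\otimes M_{i,\lambda_i})$, $M_j\cong\cF(V_{\lambda_j+L}\otimes M_{j,\lambda_j})$, apply the tensor-functor property of $\cF$ to get $M_i\boxtimes_V M_j\cong\cF\big((V_{\lambda_i+\lambda_j+L})\otimes(M_{i,\lambda_i}\boxtimes_C M_{j,\lambda_j})\big)$, expand $M_{i,\lambda_i}\boxtimes_C M_{j,\lambda_j}=\bigoplus_{(k,\lambda_i+\lambda_j+\mu)\in S}N(C)_{(i,\lambda_i)(j,\lambda_j)}^{(k,\lambda_i+\lambda_j+\mu)}M_{k,\lambda_i+\lambda_j+\mu}$ (here $\mu+L$ must lie in $N/L$ for the summand to be nonzero, but for the \emph{induced} module to be a summand of an untwisted $V$-module one needs $\mu+L\in N'/L$, which picks out exactly the terms in the stated sum), and then identify each induced summand $\cF(V_{\lambda_i+\lambda_j+L}\otimes M_{k,\lambda_i+\lambda_j+\mu})$. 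Using \eqref{eqn:tenslift}, \eqref{eqn:Asimplecurrents}, \eqref{eqn:MupperGroup} and Proposition \ref{Mupperuntwisted}, $\cF(V_{\lambda_i+\lambda_j+L}\otimes M_{k,\lambda_i+\lambda_j+\mu})\cong M^{-\mu}\boxtimes_V\cF(V_{\lambda_i+\lambda_j+\mu+L}\otimes M_{k,\lambda_i+\lambda_j+\mu})\cong M^{-\mu}\boxtimes_V M_k$, where the last isomorphism uses Theorem \ref{thm:cosetModuleClassification} to recognize $\cF(V_{\lambda_k+L}\otimes M_{k,\lambda_k})\cong M_k$ after adjusting by an appropriate $M^{\bullet}$. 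Collecting these gives the claimed formula.

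\textbf{Main obstacle.} The routine parts are the tensor-functoriality bookkeeping and the Heisenberg-weight matching. The delicate point, and the one I expect to require the most care, is keeping track of \emph{which} lattice cosets appear and of the distinction between the condition $\mu+L\in N/L$ (needed for a $C$-fusion summand to be nonzero, since intertwining operators preserve Heisenberg weights) and the condition $\mu+L\in N'/L$ (needed, via the $\rep^0 V$ criterion of \cite{HKL,CKL} and \eqref{eqn:tenslift}, for the corresponding induced module to actually be a summand of the \emph{untwisted} $V$-module $M_i\boxtimes_V M_j$), together with the non-canonical identification $X\leftrightarrow M_{i,\lambda}$ of simple $C$-modules governed by Theorem \ref{thm:cosetModuleClassification}. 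Making sure the equivalence-class set $S$ is used consistently on both sides, so that no summand is counted with the wrong multiplicity or under two different labels, is where the argument has to be written out carefully rather than waved through.
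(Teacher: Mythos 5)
The proposal is correct, and for the second identity it essentially matches the paper's proof (induce $V_{\lambda_i+L}\otimes M_{i,\lambda_i}$ and $V_{\lambda_j+L}\otimes M_{j,\lambda_j}$, apply tensor-functoriality of $\cF$, expand the $C$-fusion, and use \eqref{eqn:tenslift} together with the $\rep^0V$ criterion to discard the twisted summands). For the first identity, however, you take a genuinely different route. The paper proves $N(C)^{(k,\mu_i+\mu_j)}_{(i,\mu_i)(j,\mu_j)}=N(V)^k_{ij}$ by building an explicit chain of maps between spaces of logarithmic intertwining operators --- using Theorem 2.10 of \cite{ADL} to pass from $C$-intertwiners to $V_L\otimes C$-intertwiners, the universal property of $\boxtimes_{V_L\otimes C}$, injectivity of $\cF$ on $\hom$-spaces, tensor-functoriality, and Proposition 11.9 of \cite{DL} to produce an injection in the reverse direction --- and then concludes bijectivity. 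You instead compare the two $V_L\otimes C$-module decompositions of $M_i\boxtimes_V M_j$ (one coming from the $V$-fusion rule $\bigoplus_k N(V)_{ij}^k M_k$ together with \eqref{eqn:Midecomp}, the other coming from $\cF\bigl(V_{\lambda_i+\lambda_j+L}\otimes(M_{i,\lambda_i}\boxtimes_C M_{j,\lambda_j})\bigr)$ expanded via \eqref{eqn:tenslift}), and read off $M_{i,\lambda_i}\boxtimes_C M_{j,\lambda_j}$ from the Heisenberg weight $\lambda_i+\lambda_j+L$ component. This is a valid argument in the present rational/semisimple setting, and it avoids the appeal to \cite{ADL} and \cite{DL} that the paper makes; the trade-off is that it is a blunter counting argument that leans on semisimplicity of $V_L\otimes C$-$\mathrm{mod}$, whereas the paper's version exhibits the underlying natural isomorphism of intertwining-operator spaces, which is slightly more information. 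One small imprecision to tidy up when you write this out: in the second part, the constraint for a summand $M_{k,\lambda_i+\lambda_j+\mu}$ to be nonzero is $\lambda_i+\lambda_j+\mu-\lambda_k\in N$ (the condition $(k,\lambda_i+\lambda_j+\mu)\in S$), not $\mu+L\in N/L$ as your parenthetical says; the $\mu+L\in N'/L$ constraint then comes from the $\rep^0V$ criterion as you correctly state.
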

\begin{proof}
  First we prove $N(C)^{(k,\mu_i+\mu_j)}_{(i,\mu_i) (j,\mu_j)} =N(V)^k_{i j}$ using the following natural maps between spaces of
  intertwining operators:
\begin{align}
\cY_C\binom{M_{k,\mu_i+\mu_j}}{M_{i,\mu_i}\,M_{j,\mu_j}}
&\xrightarrow{\cong}
\cY_{V_L\otimes C}
\binom{V_{\mu_i+\mu_j+L}\otimes M_{k,\mu_i+\mu_j}}
{V_{\mu_i+L}\otimes M_{i,\mu_i}\,V_{\mu_j+L}\otimes M_{j,\mu_j}}
\nonumber\\
&\xrightarrow{\cong} \hom_{V_L\otimes C}
\left( (V_{\mu_i+L}\otimes M_{i,\mu_i})\fus{V_L\otimes C} 
(V_{\mu_j+L}\otimes M_{j,\mu_j}), V_{\mu_i+\mu_j+L}\otimes M_{k,\mu_i+\mu_j}  \right)
\nonumber\\
&\xhookrightarrow{\cF} \hom_{V}
\left( \cF((V_{\mu_i+L}\otimes M_{i,\mu_i})\fus{V_L\otimes C} 
  (V_{\mu_j+L}\otimes M_{j,\mu_j})),\cF(V_{\mu_i+\mu_j+L}\otimes M_{k,\mu_i+\mu_j})  \right)
  \nonumber\\
&\xrightarrow{\cong} \hom_{V}
\left( \cF(V_{\mu_i+L}\otimes M_{i,\mu_i})\fus{V} 
  \cF(V_{\mu_j+L}\otimes M_{j,\mu_j}),
  \cF(V_{\mu_i+\mu_j+L}\otimes M_{k,\mu_i+\mu_j})  \right)
  \nonumber\\
&\xrightarrow{\cong}
\hom_V(M_i\fus{V}M_j,M_k)  \xrightarrow{\cong} \cY_{V}\binom{M_k}{M_i\,M_j}
\label{eqn:isomliftfusion}
\end{align}
The first map is a bijection by \cite[Theorem 2.10]{ADL}.  
Since all
spaces of intertwining operators here are finite dimensional (we
are working in a $C_2$-cofinite setting), it suffices to give an
injection in the reverse direction.  By analyzing Heisenberg weights, any non-zero intertwining operator in
$ \cY_{V}\binom{M_k}{M_i\,M_j}$ when restricted to
$(V_{\mu_i+L}\otimes M_{k,\mu_i})\otimes(V_{\mu_j+L}\otimes
M_{k,\mu_j})$ must have coefficients in
$V_{\mu_i+\mu_j+L}\otimes M_{k,\mu_i+\mu_j}$.  Since $M_i$ and $M_j$ are
simple $V$-modules,  \cite[Proposition 11.9]{DL} implies that the restriction $ \cY_{V}\binom{M_k}{M_i\,M_j}\rightarrow \cY_{V_L\otimes C}
\binom{V_{\mu_i+\mu_j+L}\otimes M_{k,\mu_i+\mu_j}}
{V_{\mu_i+L}\otimes M_{i,\mu_i}\,V_{\mu_j+L}\otimes M_{j,\mu_j}}$ is an injection. Now, $\cY_C\binom{M_{k,\mu_k}}{M_{i,\mu_i}\,M_{j,\mu_j}}=0$ unless $M_{k,\mu_k}\cong M_{k',\mu_i+\mu_j}$ for some $k'$ using the composition of the first three injections in \eqref{eqn:isomliftfusion} (with $\mu_i+\mu_j$ replaced by $\mu_k$ in $C$-modules but not in $V_L$-modules), the observation \eqref{eqn:tenslift}, and Theorem \ref{thm:cosetModuleClassification}. Moreover, the modules $M_{k,\mu_i+\mu_j}$ for $0\leq k\leq n$ are distinct by Theorem \ref{thm:cosetModuleClassification}. This gives the first conclusion of the theorem.

For the second statement,
\begin{align*}
M_i\fus{V}M_j &\cong \cF(V_{\lambda_i+L}\otimes M_{i,\lambda_i})\fus{V}\cF(V_{\lambda_j+L}\otimes M_{j,\lambda_j})\\
&\cong \cF( ( V_{\lambda_i+L}\otimes M_{i,\lambda_i} ) \fus{V_L\boxtimes C} (V_{\lambda_j+L}\otimes M_{j,\lambda_j}))\\
&\cong \bigoplus_{ \lambda+L\in L'/L,\, (k,\nu) \in S}
 N(V_L\otimes C)_{V_{\lambda_i+L}\otimes M_{i,\lambda_i},\, V_{\lambda_j+L}\otimes M_{j,\lambda_j} }^{V_{\lambda+L}\otimes M_{k,\nu}}
\cF(V_{\lambda+L}\otimes M_{k,\nu})\\
&\cong \bigoplus_{\lambda+L\in L'/L,\, (k,\nu) \in S}
N(V_L)_{V_{\lambda_i+L} V_{\lambda_j+L}}^{V_{\lambda+L}} N(C)_{(i,\lambda_i)\,(j,\lambda_j)}^{(k,\nu)}
\cF(V_{\lambda+L}\otimes M_{k,\nu})\\
&\cong \bigoplus_{(k,\nu)\in S}
N(C)_{(i,\lambda_i)\,(j,\lambda_j)}^{(k,\nu)}
\cF(V_{\lambda_i+\lambda_j+L}\otimes M_{k,\nu}).
\end{align*}
The module on the left side is in $\rep^0 V$, but 
from \eqref{eqn:tenslift}, $\cF(V_{\lambda_i+\lambda_j+L}\otimes M_{k,\nu})$
is not in $\rep^0 V$ unless $\nu-\lambda_i-\lambda_j\in N'$.
Thus
\begin{align*}
M_i\fus{V}M_j &\cong \bigoplus_{(k,\nu)\in S}
N(C)_{(i,\lambda_i)\,(j,\lambda_j)}^{(k,\nu)}
\cF(V_{\lambda_i+\lambda_j+L}\otimes M_{k,\nu})\\
&\cong \bigoplus_{\substack{\mu+L\in N'/L, \\ (k,\lambda_i+\lambda_j+\mu)\in S}}
N(C)_{(i,\lambda_i)\,(j,\lambda_j)}^{(k,\lambda_i+\lambda_j+\mu)}
\cF(V_{\lambda_i+\lambda_j+L}\otimes M_{k,\lambda_i+\lambda_j+\mu})\\
&\cong \bigoplus_{\substack{\mu+L\in N'/L, \\ (k,\lambda_i+\lambda_j+\mu)\in S}}
N(C)_{(i,\lambda_i)\,(j,\lambda_j)}^{(k,\lambda_i+\lambda_j+\mu)}
M^{-\mu}\fus{V}M_k
\end{align*}
as desired
\end{proof}

For Heisenberg cosets
of affine Lie algebra \VOAs{} at positive integral levels,
these results have appeared previously in \cite{DLWY,DR,DW3,ADJR}.

\subsubsection{Modular transformations and characters}\label{subsec:cosetmodtrans}

A $V$-module $M_i$ is graded by conformal weights as well as the lattice $L'$:
\[
M_i = \bigoplus_{\substack{n \in h_i+\ZZ \\ \lambda\in L'}} M_{n, \lambda}.
\]
The character of $M_i$ can be refined to a component of a vector-valued Jacobi form by
\[
\text{ch}[M_i]\left(u, \tau\right):= q^{-\frac{c_V}{24}}\sum_{\substack{n \in h_i+\ZZ \\ \lambda\in L'}} \text{dim}\left(M_{n, \lambda}\right) q^n z^\lambda
\]
with $q=e^{2\pi i \tau}$ as usual and $z^\lambda:= e^{2\pi i \langle u, \lambda\rangle}$ for $u$ in the complexification of $L'$ and $\langle\cdot,\cdot\rangle$ the bilinear form of $L'$. 
Modularity in this setting is due to \cite{KMa} while the modularity of ordinary traces as for example those of $C$-modules is the famous result of Zhu \cite{Zhu}.

 We thus define the modular $S$- and $T$-matrices as follows:
\[
\text{ch}[M_i](u, \tau+1) = T^{\chi,V}_i  \text{ch}[M_i](u, \tau)\qquad\text{and}\qquad
\text{ch}[M_i]\left(\frac{u}{\tau},-\frac{1}{\tau}\right) = e^{\pi \frac{u^2}{\tau}} \sum_{j=0}^n S^{\chi,V}_{i,j}  \text{ch}[M_j](u, \tau),
\]
as well as
\[
\text{ch}[M_{i,\nu_i}](\tau+1) = T^{\chi,C}_{i,\nu_i}  \text{ch}[M_{i,\nu_i}](\tau),\qquad\text{and}\qquad
\text{ch}[M_{i,\nu_i}]\left(-\frac{1}{\tau}\right) =  \sum_{(j, \nu_j)\in S} S^{\chi,C}_{(i,\nu_i),(j,\nu_j)}  \text{ch}[M_{(j,\nu_j)}](\tau).
\]
\begin{theo}\label{thm:ST}
The modular $S$- and $T$-matrices are related as follows:
\[
T^{\chi,V}_{i} =  e^{\pi i \left(\langle \nu_i,\nu_i\rangle - \frac{\rank(L)}{12}\right)} T^{\chi,C}_{i,\nu_i} \qquad \text{and}\qquad
S^{\chi,V}_{i,j} = e^{2\pi i \langle\nu_i, \nu_j\rangle}
 \frac{|N/L|}{\sqrt{|L'/L|}} S^{\chi,C}_{(i,\nu_i),(j,\nu_j)} .
\]
\end{theo}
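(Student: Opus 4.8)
The plan is to reduce the statement to the classical modular behaviour of lattice vertex operator algebra characters, combined with Zhu's modularity theorem \cite{Zhu} for $C$-modules and its Jacobi-form refinement \cite{KMa} for $V$-modules, and then to read off the two matrix identities by comparing coefficients in an appropriate basis. The starting point is a character identity: since $V_L$ has central charge $\rank(L)$ and $V_L\otimes C$ is a vertex operator subalgebra of $V$, we have $c_V = \rank(L)+c_C$, and together with the decomposition \eqref{eqn:Midecomp} this gives, as an identity of Jacobi forms in $(u,\tau)$,
\[
\ch[M_i](u,\tau)=\sum_{\nu+L\in N/L}\ch[V_{\lambda_i+\nu+L}](u,\tau)\,\ch[M_{i,\lambda_i+\nu}](\tau),
\]
where each $\ch[V_{\mu+L}]$ is a lattice theta function divided by a power of the Dedekind eta function. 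I would then record the standard transformation laws $\ch[V_{\mu+L}](u,\tau+1)=e^{\pi i(\langle\mu,\mu\rangle-\rank(L)/12)}\ch[V_{\mu+L}](u,\tau)$ and $\ch[V_{\mu+L}](u/\tau,-1/\tau)=|L'/L|^{-1/2}e^{\pi i\langle u,u\rangle/\tau}\sum_{\lambda+L\in L'/L}e^{-2\pi i\langle\mu,\lambda\rangle}\ch[V_{\lambda+L}](u,\tau)$, together with two linear independence facts: the characters $\ch[V_{\mu+L}]$ for $\mu+L\in L'/L$ are linearly independent as functions of $u$, and characters of pairwise non-isomorphic simple $C$-modules are linearly independent (a standard consequence of rationality and $C_2$-cofiniteness, of the kind already used in Section \ref{sec:Verlinde}).

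For the $T$-matrix, apply $\tau\mapsto\tau+1$ to the character identity. The left-hand side becomes $T^{\chi,V}_i\ch[M_i]$, while the $\nu$-th summand on the right acquires the factor $e^{\pi i(\langle\lambda_i+\nu,\lambda_i+\nu\rangle-\rank(L)/12)}T^{\chi,C}_{i,\lambda_i+\nu}$. Since the $\ch[V_{\lambda_i+\nu+L}]$ for distinct $\nu+L\in N/L$ are linearly independent in $u$, comparing the coefficient of each such term and using $\ch[M_{i,\lambda_i+\nu}]\neq 0$ yields $T^{\chi,V}_i=e^{\pi i(\langle\nu_i,\nu_i\rangle-\rank(L)/12)}T^{\chi,C}_{i,\nu_i}$ for any $\nu_i$ with $M_{i,\nu_i}$ a summand of $M_i$, which is the claimed $T$-relation (and incidentally shows that $e^{\pi i\langle\nu_i,\nu_i\rangle}T^{\chi,C}_{i,\nu_i}$ is independent of the choice of $\nu_i$).

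For the $S$-matrix, apply $(u,\tau)\mapsto(u/\tau,-1/\tau)$ to the character identity: on one side substitute the theta $S$-transform on the $V_L$-factors and Zhu's $S$-matrix $S^{\chi,C}$ on the $C$-factors (each $C$-module being identified with its chosen representative in the set $S$), and on the other side substitute $\ch[M_i](u/\tau,-1/\tau)=e^{\pi i\langle u,u\rangle/\tau}\sum_j S^{\chi,V}_{i,j}\ch[M_j](u,\tau)$ and re-expand each $\ch[M_j]$ via the character identity. After cancelling $e^{\pi i\langle u,u\rangle/\tau}$, both sides lie in the span of $\{\ch[V_{\mu+L}](u,\tau):\mu+L\in L'/L\}$ with coefficients that are linear combinations of simple $C$-characters; extracting the coefficient of a fixed $\ch[V_{\mu+L}]$, and then, by linear independence of distinct simple $C$-characters, the coefficient of a fixed simple $C$-module character, should produce the relation $S^{\chi,V}_{i,j}=e^{2\pi i\langle\nu_i,\nu_j\rangle}\tfrac{|N/L|}{\sqrt{|L'/L|}}S^{\chi,C}_{(i,\nu_i),(j,\nu_j)}$.

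The hard part will be the bookkeeping in this last step. The character identity over-counts the simple $C$-modules: the $M_{i,\lambda_i+\nu}$ for $\nu$ ranging over $N/L$ are not pairwise non-isomorphic — they repeat according to the stabilizer of $M_i$ under the simple currents $M^\nu$, $\nu\in N'/L$ of \eqref{eqn:Asimplecurrents} — so one must use Theorem \ref{thm:cosetModuleClassification} to match each summand on the two sides with its representative in $S$ and collect the resulting multiplicities; this is where the factor $|N/L|$ comes from. One also needs a simple-current symmetry of the $C$-module $S$-matrix, namely $S^{\chi,C}_{M_{0,\nu}\boxtimes_C X,\,Y}=e^{2\pi i\langle\nu,\,\cdot\,\rangle}S^{\chi,C}_{X,Y}$ for $\nu\in N/L$, which follows from Lemma \ref{lem:Sinverse} applied in the modular tensor category of $C$-modules (using that $S^{\chi}$ and $S^{\hopflink}$ are proportional) or directly by tracking the lattice-sector phase. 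Combining that phase with the phase $e^{-2\pi i\langle\lambda_i+\nu,\mu\rangle}$ from the theta $S$-transform and summing over $\nu\in N/L$ is what yields both the clean phase $e^{2\pi i\langle\nu_i,\nu_j\rangle}$ and the normalization $|N/L|/\sqrt{|L'/L|}$; organizing this carefully, and checking that the phases — which a priori depend on representatives only modulo $L$ — are consistent, is the main technical content.
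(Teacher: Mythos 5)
Your $T$-matrix argument is in substance the same as the paper's: the paper deduces the relation from the character identity of Theorem~\ref{thm:char}, the additivity of central charges $c_V=\rank(L)+c_C$, and the fact that induction preserves conformal weights modulo $\ZZ$; your explicit extraction of coefficients from $\ch[M_i](u,\tau+1)$ merely spells this out. Your $S$-matrix argument, however, is a genuinely different route from the one the paper takes, and it is worth comparing the two.

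The paper's proof of the $S$-relation is entirely categorical. It invokes Theorem~\ref{prop:liftingS}, which says induction preserves categorical Hopf links, $S^\hopflink_{\cF(B),\cF(C)}=\cF(S^\hopflink_{B,C})$; then uses Huang's theorem that $S^\chi$ and $S^\hopflink$ are proportional, with proportionality constant the dimension $D(\cC)$ of the modular tensor category; then uses \cite[Theorem~4.5]{KO}, $D(\rep^0 V)=D(\cC)/\qdim_{\cC}(V)$; and finally uses that simple currents of correct boson--fermion statistics have quantum dimension $1$, so $\qdim_{\cC}(V)=|N/L|$. Assembling these immediately gives $S^{\chi,V}_{i,j}=|N/L|\cdot S^{\chi,V_L}_{V_{\nu_i+L},V_{\nu_j+L}}\cdot S^{\chi,C}_{(i,\nu_i),(j,\nu_j)}$, and the lattice factor is then computed from the lattice Hopf link. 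Your proposal, by contrast, is a classical analytic computation: expand the Jacobi form $\ch[M_i](u,\tau)$ via the character identity, apply the theta/Zhu $S$-transformations to each tensor factor, compare with the re-expansion of $e^{\pi i\langle u,u\rangle/\tau}\sum_j S^{\chi,V}_{i,j}\ch[M_j](u,\tau)$, and extract coefficients first against the linearly independent $\theta_{\lambda+L}/\eta^{\rank L}$ and then against the linearly independent $C$-characters. What your approach buys is that it works with Zhu's $S$-matrix directly, without invoking Huang's proportionality theorem, the dimension theorem of \cite{KO}, or the machinery of Section~\ref{subsec:miscresults}; it is close in spirit to the classical parafermion computations of Dong--Wang and others. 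What it costs is the bookkeeping you anticipate: matching summands through Theorem~\ref{thm:cosetModuleClassification}, controlling the over-counting that produces the factor $|N/L|$, and above all the simple-current symmetry $S^{\chi,C}_{M_{0,\nu}\boxtimes_C X,\,Y}=\cM_{M_{0,\nu},Y}\cdot\qdim(M_{0,\nu})\cdot S^{\chi,C}_{X,Y}$, whose phase must cancel the lattice-theta phase $\nu$-dependence exactly so that the $\nu$-sum contributes $|N/L|$.

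That last cancellation is the one place where you should be explicit, because the whole computation hinges on it and it is sensitive to sign conventions. The cleanest route is the one you mention in passing: since $V_{\nu+L}\otimes M_{0,\nu}$ is a direct summand of $V$ and $V_{\nu_j+L}\otimes M_{j,\nu_j}$ lifts to $\rep^0 V$, the total $V_L\otimes C$-monodromy of these two modules is trivial, and it factors as the lattice monodromy times the $C$-monodromy; so $\cM_{M_{0,\nu},M_{j,\nu_j}}$ is the inverse of the lattice monodromy $e^{2\pi i\langle\nu,\nu_j\rangle}$. Combined with $\qdim(M_{0,\nu})=1$ (a correct-statistics simple current) and Lemma~\ref{lem:Sinverse}, this yields the needed $\nu$-cancellation once one adopts a sign convention for the theta $S$-transform that is consistent with the one used in the Hopf-link normalization. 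If you instead take the phase $e^{2\pi i\langle\nu,\nu_j\rangle}$ "by analogy with the lattice," you will produce an extra factor $\sum_{\nu\in N/L}e^{\pm4\pi i\langle\nu,\nu_j\rangle}$ that does not collapse to $|N/L|$. So the proposal is sound in outline, but the reader should see the monodromy-inversion argument written out, with the sign of the lattice theta $S$-transform and the sign in the Hopf-link matrix for $V_L$ fixed once and used consistently throughout.
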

\begin{proof}
  The $T$-matrix relation follows from the definitions, the fact that induction preserves twists and hence conformal dimensions mod $\ZZ$, and the fact that central charges satisfy $c_V=c_{V_L\otimes C}=\mathrm{rank}(L)+c_C$.

For the $S$-matrix statement,  Theorem \ref{prop:liftingS} shows
that for modules $\cF(B), \cF(C)$ in  $\rep^0 V$,
\[
S^{\hopflink}_{\cF(B),\cF(C)} = \cF(S^\hopflink_{B,C}) 
\]
where the left side is the Hopf link in Rep$^0\,V$ and the
right side is the Hopf link in $\mathcal C$, the category of $V_L\otimes C$-modules.  
The modular and categorical
$S$-matrices are proportional \cite[Theorem 4.5]{H-rigidity}, 
with proportionality constant the dimension $D(\mathcal{C})$ of the category, that is,
\[
S^{\chi}=\dfrac{1}{D(\mathcal{C})}S^{\hopflink}.
\]
\cite[Theorem 4.5]{KO} shows that $\rep^0V$ is a modular tensor category and that
\[D(\rep^0 V)=D({\mathcal C})/\qdim_{\mathcal C}(V).\]	
Note that \cite[Theorem 3.4]{HKL} and our main result, \cref{thm:repzABTCvrtxSalg}, show that $\rep^0\,V$ is precisely
the modular vertex tensor category of untwisted modules for the regular \VOA{} $V$. 

Next, the dimensions of all simple currents appearing in a \VOA{} extension of correct statistics 
are $1$. Indeed, fix such a simple current, say $J$. Then, $\theta_J=\id_J$
and moreover, skew-symmetry considerations coupled with \eqref{formalbraidchar} 
show that $\cR_{J,J}=\id_{J\boxtimes J}$. Now, the spin statistics theorem (\cite{CKL}, 
\cite[Exercise 8.10.15]{EGNO}) shows that $\qdim(J)=1$.
We therefore have $\text{qdim}_{\mathcal C}(V) =|N/L|$,
since $\qdim$ is additive over direct sums.

Finally,  we have:
\begin{align*}
S^{\chi,V}_{i,j}&=\dfrac{1}{D(\rep^0V)}S^{\hopflink}_{i,j}
=\dfrac{\qdim_{\mathcal{C}}(V)}{D(\mathcal{C})}S^{\hopflink}_{i,j}
=\dfrac{|N/L|}{D(\mathcal{C})}S^{\hopflink}_{i,j}
=\dfrac{|N/L|}{D(\mathcal{C})}S^{\hopflink}_{V_{\nu_i+L}\otimes M_{(i,\nu_i)},V_{\nu_j+L}\otimes M_{(j,\nu_j)}}\\
&= {|N/L|}\cdot S^{\chi,V}_{V_{\nu_i+L}\otimes M_{(i,\nu_i)},V_{\nu_j+L}\otimes M_{(j,\nu_j)}}
={|N/L|}\cdot S^{\chi,V_L}_{V_{\nu_i+L},V_{\nu_j+L}}
\cdot	 S^{\chi,C}_{{(i,\nu_i)},{(j,\nu_j)}}\\
&={|N/L|}\cdot\frac{
	 	e^{2\pi i \langle \nu_i,\nu_j\rangle}
}{\sqrt{|L'/L|}}
\cdot	 S^{\chi,C}_{{(i,\nu_i)},{(j,\nu_j)}}.
\end{align*}
Note that $S^{\chi, V_L}_{V_{\nu_i+L}, V_{\nu_j+L}}$ can be calculated as follows. First,  $S^{\hopflink, V_L}_{V_{\nu_i+L}, V_{\nu_j+L}}=e^{2\pi i\langle\nu_i,\nu_j\rangle}$ by calculating monodromies. Then by \cite[Theorem 4.5]{H-rigidity}, this differs from $S^{\chi, V_L}_{V_{\nu_i+L}, V_{\nu_j+L}}$ only by a proportionality constant which we can calculate using well-known modularity properties of lattice theta functions.
\end{proof}

Finally let us compute the relation of the associated characters.
\begin{theo}\label{thm:char}
Characters are related as follows:
\[
  \ch[M_i](u, \tau)
  = \sum_{\nu+L\in  N/L}\frac{\theta_{\lambda_i+\nu+L}(u,\tau)}
  {\eta(\tau)^{\rank(L)}}  \ch[M_{i,\lambda_i+\nu}](\tau)
\]
and
\[
  \ch[M_{i,\nu_i}](\tau)
  = \frac{\eta(\tau)^{\rank(L)}}{|L'/L|\,\theta_{\nu_i+L}(0, \tau) }
  \sum_{\gamma+L\in L'/L} e^{-2\pi i \langle\nu_i,\gamma\rangle}\ch[M_i](\gamma, \tau)
\]
\end{theo}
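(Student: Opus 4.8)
The two formulas are equivalent: since $\{V_{\lambda+L}\}_{\lambda+L\in L'/L}$ are the simple $V_L$-modules, the characters $\theta_{\lambda+L}(u,\tau)/\eta(\tau)^{\rank(L)}$ are linearly independent, and the second formula is obtained from the first by the standard orthogonality of lattice theta functions, namely
\[
\sum_{\gamma+L\in L'/L} e^{-2\pi i\langle\nu_i,\gamma\rangle}\,\theta_{\lambda_i+\nu+L}(\gamma,\tau)
= |L'/L|\,\delta_{\nu+L,\nu_i-\lambda_i+L}\,\theta_{\nu_i+L}(0,\tau),
\]
which is a routine computation with $\theta_{\mu+L}(u,\tau)=\sum_{\alpha\in\mu+L}q^{\langle\alpha,\alpha\rangle/2}e^{2\pi i\langle u,\alpha\rangle}$. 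So the plan is to prove the first formula and then invert. (One must be slightly careful that $\nu_i=\lambda_i+\nu$ ranges so that the relevant $M_{i,\lambda_i+\nu}$ may be isomorphic for different $\nu$; but the identity is a formal consequence of the $V_L\otimes C$-decomposition regardless.)

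For the first formula, the plan is simply to read off the graded dimensions from the decomposition \eqref{eqn:Midecomp},
\[
M_i = \bigoplus_{\nu+L\in N/L} V_{\lambda_i+\nu+L}\otimes M_{i,\lambda_i+\nu}
\]
as a $V_L\otimes C$-module, which was established in \cite{CKLR} and recalled in the excerpt. The key point is that this is an isomorphism of modules graded by conformal weight \emph{and} by the lattice $L'$ (the $L'$-grading on $M_i$ restricts to the $V_L$-tensor-factor and is trivial on the $C$-tensor-factor, since $C$ commutes with the Heisenberg subalgebra of $V_L$). Hence the refined character factorizes: for each summand,
\[
\ch[V_{\lambda_i+\nu+L}\otimes M_{i,\lambda_i+\nu}](u,\tau)
= \ch[V_{\lambda_i+\nu+L}](u,\tau)\cdot\ch[M_{i,\lambda_i+\nu}](\tau)
= \frac{\theta_{\lambda_i+\nu+L}(u,\tau)}{\eta(\tau)^{\rank(L)}}\,\ch[M_{i,\lambda_i+\nu}](\tau),
\]
using the well-known character of a lattice \VOA{} module $\ch[V_{\mu+L}](u,\tau)=\theta_{\mu+L}(u,\tau)/\eta(\tau)^{\rank(L)}$ and the compatibility of central charges $c_V = c_{V_L}+c_C = \rank(L)+c_C$ (so that the $q^{-c_V/24}$ prefactor splits as $q^{-\rank(L)/24}\cdot q^{-c_C/24}$, with the former absorbed into $1/\eta^{\rank(L)}$). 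Summing over $\nu+L\in N/L$ gives exactly the first displayed formula.

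The only genuinely non-routine input is the $V_L\otimes C$-decomposition \eqref{eqn:Midecomp} itself, but that is quoted from \cite{CKLR} and is not to be reproved here; similarly the factorization of characters over a tensor product of \VOAs{} is standard. So I do not expect a real obstacle: the whole statement is essentially bookkeeping on top of the decomposition theorem. If there is any subtlety to watch, it is purely notational — keeping straight the identification $M_{i,\lambda}\cong M_{j,\mu}$ from \cref{thm:cosetModuleClassification} so that the sum over $\nu+L\in N/L$ on the right-hand side is interpreted correctly (as a sum of graded dimensions of genuine summands, possibly with repetitions among isomorphism classes), and making sure the theta-orthogonality step uses $N/L\subseteq L'/L$ in the right direction when inverting. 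Writing out the theta orthogonality and the $\eta$-transformation bookkeeping cleanly will be the bulk of the actual text, but none of it is deep.
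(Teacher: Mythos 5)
Your proposal is correct and follows essentially the same route as the paper: the first formula is read off from the $V_L\otimes C$-decomposition \eqref{eqn:Midecomp}, and the second is obtained by orthogonality of characters of $L'/L$ combined with the quasi-periodicity $\theta_{\lambda+L}(u+\gamma,\tau)=e^{2\pi i\langle\gamma,\lambda\rangle}\theta_{\lambda+L}(u,\tau)$. The only cosmetic difference is that the paper carries the Jacobi variable $u$ through the projection formula and sets $u=0$ at the end, while you build $u=0$ into the orthogonality identity from the start; these are the same computation.
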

\begin{proof}
  The first statement is just the character version of the module
  decomposition. For the second, we use
\[
  \theta_{\lambda+L}(u+\gamma, \tau) = e^{2\pi i \langle\gamma, \lambda\rangle}
  \theta_{\lambda+L}(u, \tau)
\]
together with the orthogonality relations for irreducible characters of $L'/L$,
\[
  \sum_{\gamma+L \in  L'/L} e^{2\pi i \langle\gamma,\mu -\lambda\rangle}
  = |L'/L| \delta_{\mu+L , \lambda+L} 
\]
for $\mu+L,\lambda+L\in L'/L$,
so that we get the projection
\[
  \delta_{\mu+L, \lambda+L}\theta_{\lambda+L}(u, \tau) = \frac{1}{|L'/L|}
  \sum_{\gamma+L \in  L'/L}e^{-2\pi i \langle\gamma, \lambda\rangle}  \theta_{\mu+L}(u+\gamma, \tau). 
\]
Then we calculate using the first statement of the theorem:
\begin{align*}
 \dfrac{1}{\vert L'/L\vert} & \sum_{\gamma+L\in L'/L} e^{-2\pi i\langle\nu_i,\gamma\rangle}  \ch[M_i](u+\gamma,\tau)\nonumber\\
 &= \dfrac{1}{\vert L'/L\vert} \sum_{\gamma+L\in L'/L}\sum_{\nu+L\in N/L} e^{-2\pi i\langle\nu_i,\gamma\rangle} \dfrac{\theta_{\nu_i+\nu+L}(u+\gamma,\tau)}{\eta(\tau)^{\mathrm{rank}(L)}} \ch[M_{i,\nu_i+\nu}](\tau)\nonumber\\
 & =\sum_{\nu+L\in N/L} \delta_{\nu_i+\nu+L,\nu_i+L} \dfrac{\theta_{\nu_i+L}(u,\tau)}{\eta(\tau)^{\mathrm{rank}(L)}} \ch[M_{i,\nu_i+\nu}](\tau) = \dfrac{\theta_{\nu_i+L}(u,\tau)}{\eta(\tau)^{\mathrm{rank}(L)}}\ch[M_{i,\nu_i}](\tau).
\end{align*}
Setting $u=0$ yields the second conclusion of the theorem.
\end{proof}

\subsubsection{Solving the coset problem}

The above results give a precise way to understand $C$ if $V$ is
sufficiently well understood:
\begin{enumerate}
\item Start with a simple, rational, $C_2$-cofinite \VOA{} $V$ that
  contains a lattice \VOA{} $V_L$ where $L$ is positive definite and
  even.  Prove that $V_L$ is maximal or find the maximal extension of
  $V_L$ contained in $V$.
\item Decompose $V$ as a $V_L\otimes C$-module:
\[
V = \bigoplus_{\mu+L\in  N/L} V_{\mu+L} \otimes M_{0,\mu}.
\]
This can be done on the level of characters since Jacobi theta functions
of lattices are linearly independent. This determines $N$
and hence $N'$.
\item We have shown that there are certain simple current $V$-modules $M^{\mu}$
  for $\mu\in N'/L$ which form an abelian
  intertwining algebra of type $N'/L$. These determine the set $S$ of
  inequivalent simple $C$-modules: all we need to know are the
  fusion rules of the $M^\mu$ with any of the $M_i$. 
\item Get characters, fusion rules, and $S$- and $T$-matrices using the above theorems. 
\end{enumerate}
The only subtlety arising here is that fixed points of the
$M^\mu$ make enumeration of the inequivalent simple $C$-modules more
technical. When $V$ is a simple affine \VOA, $N'/L$ is given by
spectral flow (automorphisms coming from affine Weyl translations)
which gives explicit character relations, so this is actually
verifiable on the level of characters.

\subsubsection{Solving the inverse coset problem}

Now assume that $C$ is sufficiently well understood. Can we
reconstruct $V$? This question actually fits very well in a program
one of us has developed together with A. Linshaw \cite{CL2, CL1}. In \cite{CL1}, we
found minimal strong generating sets of families of coset \VOA{}s $C_k$ inside larger structures $A_k$. If the strong generator type
coincides with the one of a known $W$-algebra, then one might ask if they
are isomorphic. In practice, there will be special values of
the deformation parameter $k$ for which $A_k$ is believed to be interesting, for example admissible levels of $W$-algebras \cite{KW, Ar2}. In this case a strategy might be to find the coset, and
use its properties to deduce properties of $V$:
\begin{enumerate}
\item Suppose we have a \VOA{} $V$ and we believe (or might even know) that $V_L$ and $C$ form a commuting pair inside $V$. 
Then use \cite{CKLR} to find a lattice $N\supseteq L$ such that 
\[
\bigoplus_{\mu+L\in N/L} V_{\mu+L} \otimes C_\mu
\]
is a \VOA. Here the $C_\mu$ must be simple current $C$-modules that give rise to an abelian intertwining algebra of type $N/L$. There are usually very few choices for that which means there are few possibilities for $N$. 
\item Prove that 
\[
V\cong \bigoplus_{\mu+L\in N/L} V_{\mu+L} \otimes C_\mu.
\]
The strategy is to prove that the operator product algebra of a given type is unique and hence both sides are homomorphic images of the same \VOA{} and hence are isomorphic if both of them are simple. This strategy is feasible in many cases and has been succesfully applied in the setting of minimal $W$-algebras, \cite{ACL, ACKL}. It would be nice to extend this further, as for example settling \cite[Conjecture 4.3]{CKL}.
\item Use our results, that is, induction, to determine all $V$-modules and compute their fusion rules and modular properties as above. Recall from Remark \ref{rem:isomod} that simple $V$-modules are isomorphic if and only if they are isomorphic as $V_L\otimes C$-modules.
\end{enumerate}
This inverse coset construction has now been efficiently applied to understand the representation theory of the simple affine \VOSA{} $L_k\left(\mathfrak{osp}(1|2)\right)$ at positive integer level $k$ as an extension of $L_k(\mathfrak{sl}_2) \otimes \vir(k+2, 2k+3)$ \cite{CFK}. Further examples of understanding more subtle affine \VOSAs{} via the inverse coset construction are under investigation.

\subsection{Coset theories: Examples}

In this subsection we use the theory and results of previous sections to study the representations of some regular vertex operator (super)algebras arising as lattice cosets. The examples presented here illustrate how induction can be used to solve both coset and inverse coset problems.

\subsubsection{Type $A$ parafermion algebras}

Parafermion algebras are cosets of Heisenberg (equivalently, lattice) vertex subalgebras of affine vertex operator algebras. They first appeared in \cite{LW, DL} and have received attention in number theory due to their connection with integer partitions \cite{LW}. As they have been treated in many fairly recent works \cite{ALY, DLY, DLWY, DR, DW1, DW2, DW3}, we will not derive new results here. However, these algebras nicely illustrate the efficiency of the theory of vertex operator algebra extensions. We focus on parafermion algebras of type $A$. 

Fix $n\geq 1$ and
consider the simple affine \VOA{} $L_{k}(\mathfrak{sl}_{n+1})$ of type $A_n$ at positive integer level
$k$. The Cartan subalgebra of $\mathfrak{sl}_{n+1}$, embedded into the conformal weight-$1$ subspace of $L_{k}(\mathfrak{sl}_{n+1})$, generates a Heisenberg vertex subalgebra with different conformal vector. The maximal lattice vertex subalgebra of $L_{k}(\mathfrak{sl}_{n+1})$ extending this Heisenberg algebra is $V_L$ for $L=\sqrt{k}A_n$, and the parafermion algebra is then defined to be the coset $C=\mathrm{Com}(V_L, L_{k}(\mathfrak{sl}_{n+1}))$. The lattice determining the decomposition of $L_{k}(\mathfrak{sl}_{n+1})$ as a $V_L\otimes C$-module is $N=\frac{1}{\sqrt{k}} A_n$.

The inequivalent simple $L_{k}(\mathfrak{sl}_{n+1})$-modules are the simple highest-weight
$\widehat{\mathfrak{sl}}_{n+1}$-modules $L(\Lambda)$ of highest weight
$\Lambda=\lambda_0\omega_0 + \dots + \lambda_n\omega_n$, where the
$\omega_i$ are the fundamental weights of $\widehat{\mathfrak{sl}}_{n+1}$ and $\lambda_0+\cdots+\lambda_n=k$; we denote the set
of such weights by $P_k^+$. Recall from \eqref{eqn:Asimplecurrents} that $L_{k}(\mathfrak{sl}_{n+1})$ has simple currents parametrized by cosets in
\begin{equation*}
 N'/L=\sqrt{k} A_n'/\sqrt{k} A_n \cong A_n'/A_n\cong\ZZ/(n+1)\ZZ.
\end{equation*}
Thus these must be all $n+1$ of the simple current $L_{k}(\mathfrak{sl}_{n+1})$-modules classified in \cite{F}: they are the modules $L(k\omega_i)$, $0\leq i\leq n$, with fusion rules
\[
  L(k\omega_i) \boxtimes L(k\omega_j)
  = L(k\omega_{\overline{i+j}}), \qquad \overline{i+j} =
  \begin{cases}
    i+j & \text{if}\ i+j\leq k
    \\ i+j-k & \text{else}
  \end{cases}
\]
and action on simple modules given by
\[
L(k\omega_j) \boxtimes L(\Lambda) = L(s_j(\Lambda))
\]
where the $s_j$ cyclically permute the fundamental weights,
that is, $s_j(\omega_i)=\omega_{\overline{i+j}}$.  This is the same as
addition of weight modulo $kA_n$.  


Now given $\Lambda=\lambda_0\omega_0+\cdots+\lambda_n\omega_n\in P_k^+$,
let $\overline{\Lambda}
=\frac{1}{\sqrt{k}}(\lambda_1\omega_1+\cdots+\lambda_n\omega_n)\in L'$. Recalling \eqref{eqn:Midecomp}, simple
modules of the parafermion coset are labeled $M_{\overline{\Lambda}, \Gamma}$
 for $\Lambda\in P_k^+$, $\Gamma+L\in L'/L$,
with the condition that $\overline{\Lambda}-\Gamma \in 
N/L=\frac{1}{\sqrt{k}} A_n/ \sqrt{k}A_n$.
Further, by Theorem \ref{thm:cosetModuleClassification},
\begin{equation}
  M_{\overline{\Lambda}, \Gamma} \cong M_{\overline{\Lambda'}, \Gamma'} \ \
  \text{if and only if there exists}\ 0\leq j\leq n\ \text{such that}
  \ \ \Lambda'=s_j(\Lambda) \ \ \text{and} \ \ \Gamma'=
  \Gamma+\sqrt{k}\omega_j.\label{eqn:affinecoseteqv}
  \end{equation}

Fusion rules for $L_k(\mathfrak{sl}_{n+1})$-modules can in principal be computed using
Verlinde's formula, but the combinatorics is involved. For $A_1$
and $A_2$ explicit formulae are known (see for instance \cite{FMS, BSVW}). For $A_1$, $P_{k}^+=\{((k-i)\omega_0+i\omega_1)\,|\, 0\leq i \leq k   \}$, and
abbreviating $(k-i)\omega_0 + i\omega_1$ by $\pi(k,i)$, we have
\begin{align*}
L_{\mathfrak{sl}_2}(\pi(k,i))\fus{}L_{\mathfrak{sl}_2}(\pi(k,j))
  =\sum_{\substack{|i-j|\leq\,t\,\leq\text{min}(i+j,2k-i-j)  \\ i+j+t\equiv 0\,\text{mod}\,2}}
L_{\mathfrak{sl}_2}(\pi(k,t)).
\end{align*}
Therefore, from Theorem \ref{thm:cosetfusion}, we
precisely recover the fusion rules of \cite{DW3}:
for $\nu_i+L,\nu_j+L\in N/L$,
\begin{align*}
  M_{\overline{\pi(k,i)},\, \overline{\pi(k,i)} + \nu_i}
  \fus{C}M_{\overline{\pi(k,j)},\, \overline{\pi(k,j)} + \nu_j}
  =\bigoplus_{\substack{|i-j|\leq\,t\,\leq\text{min}(i+j,2k-i-j)
  \\ i+j+t\equiv 0\,\text{mod}\,2}}
  M_{\overline{\pi(k,t)},\, \overline{\pi(k,i)} + \overline{\pi(k,j)} + \nu_i+\nu_j} .
\end{align*}
Note that when $i+j+t$ is even, $
\overline{\pi(k,t)}-\overline{\pi(k,i)}-\overline{\pi(k,j)}\in N$,
so every summand is well-defined.
Also, from \eqref{eqn:affinecoseteqv},
it is clear that the summands are mutually inequivalent.

Type $A_1$ parafermion characters can be read off from Theorem \ref{thm:char}, and modular character transformations from Theorem \ref{thm:ST}. The $T$-matrix is easy:
%

\[
T^{\chi, C}_{M_{\overline{\pi(k,a)},\, \overline{\pi(k,a)} + \nu}} 
= \exp\left(-\pi i \left\langle \overline{\pi(k,a)} + \nu , \overline{\pi(k,a)} + \nu \right \rangle
+\frac{\pi i}{12}
+2\pi i\left( \dfrac{a(a+2)}{4(k+2)}-\dfrac{k}{8(k+2)}\right)\right)
\]
For the $S$-matrix, for brevity, set
\[
  W_a = M_{\overline{\pi(k,a)},\, \overline{\pi(k,a)} + \nu_a},\quad
  W_b = M_{\overline{\pi(k,b)},\, \overline{\pi(k,b)} + \nu_b},
\]
where $\nu_a+L,\nu_b+L\in N/L$. Then,
\[
  S^{\chi,C}_{W_a,\,W_b} = \sqrt{\frac{2}{k}}
  \cdot \sqrt{\frac{2}{k+2}}
  \cdot \sin\left(
  \frac{\pi(a+1)(b+1)}{k+2}\right)
  \cdot \exp\left(-2\pi i 
  \left\langle
    \overline{\pi(k,a)} + \nu_a
    ,
    \overline{\pi(k,b)} + \nu_b\right\rangle 
     \right).
\]

\begin{rema}
It is reasonable to conjecture that the category of admissible-level $L_k(\mathfrak{sl}_2)$-modules as stated in \cite{CR2, AM3}
has the structure of a vertex tensor category. If this is true, then one can also study the parafermion cosets of admissible-level affine \VOAs{} in a manner very similar to the rational case. This is interesting since these parafermion cosets allow for large extensions that are conjecturally $C_2$-cofinite but not rational \cite{ACR}.
\end{rema}

\subsubsection{Rational $N=2$ superconformal algebras $\mathcal L_k$}\label{sec:N=2}

In this section, we discuss a family of \VOSA{}s of correct statistics: the rational $N=2$ superconformal algebras. They are important in superstring theory, where the chiral algebra of the underlying conformal field theory must be the $N=2$ superconformal algebra or an extension. Thus they appeared in the physics literature already in the 1980s; see for example \cite{DPZ}. In the mathematics literature, modules were classified in \cite{Ad1} and fusion rules in \cite{Ad2, Wak2}; we rederive these results here. Interesting results are also possible beyond rationality \cite{Sa}. 
 
 Fix a positive integer level $k$; the $N=2$
superconformal algebra $\mathcal L_k$ is rational at such levels with central charge
$c=3k/(k+2)$. Although $\mathcal{L}_k$ is normally defined as a vertex operator superalgebra structure on the simple level-$k$ vacuum module for the $N=2$ superconformal Lie superalgebra (see for example \cite[Section 1]{Ad1}), we shall here use a lattice coset realization of $\mathcal{L}_k$ due to \cite{CL1}. To begin, we need to introduce some lattices, using notation as in \cite{CKLR, CL1}:
\[
L_\alpha = \ZZ\alpha, \qquad L_\beta=\ZZ\beta,
\qquad L_\gamma=\ZZ\gamma, \qquad L_\mu=\ZZ\mu
\]
with
\[
\alpha^2=2k, \qquad \beta^2=4, \qquad \alpha\beta=0, \qquad 
\gamma=\alpha+\frac{k}{2}\cdot\beta, 
\qquad \mu=\alpha-\beta
\]
so that 
\begin{align*}
\mu^2=2(k+2), \qquad \gamma^2=k(k+2), \qquad\gamma\mu=0,\\
\alpha = \frac{2}{k+2} \gamma + \frac{k}{k+2}\mu, \qquad
\beta = \frac{2}{k+2} \gamma - \frac{2}{k+2}\mu.
\end{align*}
Let $a\alpha+b\beta$ in $L'_\alpha\oplus L'_\beta$, which implies that
$a\in \dfrac{1}{2k}\ZZ$, $b\in\dfrac{1}{4}\ZZ$.
We have
\begin{equation}\label{eq:latticevectors}
  a\alpha+b\beta= \frac{ka-2b}{k+2}\mu + \frac{2(a+b)}{k+2}\gamma
  \in  L'_\mu\oplus \frac{1}{2}L'_\gamma. 
\end{equation}
Now we can present the coset realization of $\mathcal{L}_k$ from \cite{CL1}:
\[
  \mathcal L_k = \com\left( V_{L_\mu}, L_k(\mathfrak{sl}_2)
    \otimes V_{L_{\frac{1}{2}\beta}}\right),
\]
and the even vertex operator subalgebra of $\mathcal L_k$ is
 \[
   \mathcal L_k^{\even} = \com\left( V_{L_\mu},
     L_k(\mathfrak{sl}_2) \otimes V_{L_\beta}\right).  
\]
Our strategy is to first understand $\mathcal L_k^{\even}$-modules
 and then induce to $\mathcal L_k$.

Recall from the preceding parafermion example that $V_{L_\alpha}$ is a vertex subalgebra of
$L_k(\mathfrak{sl}_2)$, and the $V_{L_\alpha}$-modules appearing in the
decomposition of $L_k(\mathfrak{sl}_2)$ are exactly
$V_{\frac{n}{k}\alpha+L_\alpha}$ for $n=0, 1, \dots, k-1$. It follows
from \eqref{eq:latticevectors} that $V_{\nu+L_\mu}$ is a submodule of
$L_k(\mathfrak{sl}_2) \otimes V_{L_\beta}$ if and only if
$\nu+L\in N/L_\mu$ with $N=2L'_\mu$.  Further, we have
$N'=\frac{1}{2}L_\mu$ so $L_\mu$ has index two in
$N'$. 
Denote by $\omega_0, \omega_1$ the fundamental weights of
$\widehat{\mathfrak{sl}}_2$ so that the simple
$L_k(\mathfrak{sl}_2)$-modules are precisely the integrable highest-weight
modules $L(\lambda)$ with highest weight $(k-\lambda)\omega_0+ \lambda\omega_1$
for $\lambda\in\{ 0, 1, \dots, k\}$.  Using
\eqref{eq:latticevectors} it follows that for $b\in\ZZ$,
$V_{\nu+L_\mu}$ is a submodule of
$L(\lambda)\otimes V_{\frac{b}{4}\beta+L_\beta}$ if and only if
$\nu-\frac{\lambda-b}{2(k+2)}\mu\in N$ so that we get 
\begin{equation}
L(\lambda)\otimes V_{\frac{b}{4}\beta+L_\beta} \cong 
\bigoplus_{\nu-\frac{\lambda-b}{2(k+2)}\mu+L_{\mu}\in N/L_{\mu}} V_{\nu+L_\mu}
\otimes M(\lambda, b, \nu) 
\label{eqn:VLmuLeven}
\end{equation}
as $V_{L_\mu}\otimes \mathcal L_k^{\even}$-modules and further,
all $M(\lambda, b, \nu)$ are simple
$\mathcal{L}_k^{\even}$-modules and every simple
$\mathcal{L}_k^{\even}$-module is isomorphic to exactly two of
these since $|N'/L_{\mu}|$=2 (see  \cref{thm:cosetModuleClassification} and \cref{cor:numberofcosetmods}).
It remains to identify those that are isomorphic. The order-$2$ simple current of
$L_k(\mathfrak{sl}_2) \otimes V_{L_\beta}$ that identifies isomorphic
$\mathcal L_k^{\even}$-modules is
\begin{align*}
 M^{\frac{1}{2}\mu} = \cF(V_{\frac{1}{2}\mu+L_\mu}\otimes \mathcal{L}_k^{\even}) = \bigoplus_{\nu+L_\mu\in N/L_\mu} V_{\frac{1}{2}\mu+\nu+L_\mu}\otimes M(0,0,\nu)\cong L(\lambda)\otimes V_{\frac{b}{4}\beta+L_\beta}
\end{align*}
for some $\lambda\in\lbrace 0, 1,\ldots,k\rbrace$ and $b\in\lbrace 0,1,2,3\rbrace$. Since $L(\lambda)\otimes V_{\frac{b}{4}\beta+L_\beta}$ is a direct sum of the $V_{L_\alpha}\otimes V_{L_\beta}$-modules $V_{\left(\frac{\lambda}{2k}+\frac{n}{k}\right)\alpha+L_\alpha}\otimes V_{\frac{b}{4}\beta+L_\beta}$ for $0\leq n\leq k-1$ and since $\frac{1}{2}\mu=\frac{1}{2}\alpha-\frac{1}{2}\beta$, we have
\begin{equation*}
 \left(\dfrac{1}{2}-\dfrac{\lambda}{2k}-\dfrac{n}{k}\right)\alpha\in L_\alpha \quad \textrm{and} \quad \left(-\dfrac{1}{2}-\dfrac{b}{4}\right)\beta\in L_\beta.
\end{equation*}
From this it follows that $\lambda=k$ and $b=2$, that is,
\begin{equation*}
 M^{\frac{1}{2}\mu}\cong L(k)\otimes V_{\frac{1}{2}\beta+L_\beta}.
\end{equation*}
Then from Theorem \ref{thm:cosetModuleClassification} we get
\begin{equation}
M(\lambda, b, \nu) \cong M(\lambda', b', \nu') \quad \text{if and only if} \quad \lambda'=k-\lambda, \ \ \nu'\equiv\nu+\frac{1}{2}\mu\ (\text{mod}\ L_\mu), \ \ b'\equiv b+2\ (\text{mod}\ 4).
\label{eqn:N2modisom}
\end{equation}
Fusion rules and modular character transformations of these modules now follow from Theorems \ref{thm:cosetfusion}, \ref{thm:ST}, and \ref{thm:char}. Namely,
\begin{align*}
M(\lambda, b, \nu) \fus{\mathcal{L}_k^{\even}} M(\lambda', b', \nu') 
&\cong \bigoplus_{0\leq\lambda''\leq k} {(N^{\mathfrak{sl}_2})}_{\lambda, \lambda'}^{\lambda''} M(\lambda'', b+b', \nu+\nu')\\
&\cong \bigoplus_{\substack{|\lambda-\lambda'| \ \leq\   \lambda'' \ \leq\  \mathrm{min}(\lambda+\lambda' , 2k  - \lambda-\lambda') \\
		\lambda+\lambda'+\lambda'' \equiv 0\,\, \mathrm{mod}\, 2 }} 
 M(\lambda'', b+b', \nu+\nu'),
\end{align*}
In this decomposition, the summands are pairwise inequivalent.
Now we calculate the characters and modular data. Let $0\leq \lambda,\lambda'\leq k$, $b,b'\in\ZZ/4\ZZ$ and  $\nu,\nu'\in L'_{\mu}/L_{\mu}$.
Using Theorem \ref{thm:ST} we have:
\begin{align}
T&{}^{\chi}_{M(\lambda,b,\nu)}= \exp\left(2\pi i\left( \frac{\lambda(\lambda+2)}{4(k+2)} +\frac{b^2}{8} - \dfrac{1}{2}\langle \nu,\nu\rangle -\dfrac{k}{8(k+2)}\right)\right),\label{LevenT}\\
S&{}^{\chi}_{M(\lambda,b,\nu),\,M(\lambda',b',\nu')}
=\dfrac{\sqrt{|L'_\mu/L_\mu|}}{|N/L_\mu|}\cdot 
\exp\left(-2\pi i \langle\nu,\nu'\rangle \right) 
\cdot S^{\chi, L_k(\mathfrak{sl}_2)}_{\lambda,\ \lambda'}\cdot S^{\chi, V_{L_\beta}}_{V_{b\beta/4+L_\beta},\,V_{b'\beta/4+L_\beta}}
\nonumber\\
&=\dfrac{2}{k+2}\cdot 
\exp\left(-2\pi i \langle\nu,\nu'\rangle\right)
\cdot\sin\left(\frac{\pi(\lambda+1)(\lambda'+1)}{k+2}\right)
\cdot \frac{1}{2}
\exp\left(\pi i \frac{bb'}{2}\right)\nonumber\\
& =\dfrac{1}{k+2}\cdot\sin\left(\frac{\pi(\lambda+1)(\lambda'+1)}{k+2}\right)
\cdot\exp\left(2\pi i\left(\dfrac{bb'}{4}-\langle\nu,\nu'\rangle\right)\right)
\label{eqn:N2S}
\end{align}
From Theorem \ref{thm:char} and \eqref{eqn:VLmuLeven}, we have the following characters
(with the Jacobi variable $u$ denoting the Heisenberg weights corresponding to $V_{L_{\mu}}$):
\begin{align*}
\ch&[L(\lambda)\otimes V_{\frac{b}{4}\beta + L_{\beta}}](u,\tau)=
\sum_{\nu+L_\mu\in N/L_\mu}\dfrac{\theta_{\frac{\lambda-b}{2(k+2)}\mu + \nu + L_{\mu}}(u,\tau)}{\eta(\tau)}
\ch\left[M\left(\lambda,b,\frac{\lambda-b}{2(k+2)}\mu+\nu\right)\right](\tau)\\
\ch&\left[M\left(\lambda,b,\nu\right)\right](\tau)=\dfrac{\eta(\tau)}{2(k+2)\cdot\theta_{\nu + L_{\mu}}(0,\tau)}
\sum_{\gamma+L_\mu\in L_{\mu}'/L_{\mu}}e^{-2\pi i \left\langle\nu,\gamma \right\rangle}
\ch[L(\lambda)\otimes V_{\frac{b}{4}\beta + L_{\beta}}](\gamma,\tau).
\end{align*}

We can now study modules for the vertex operator superalgebra $\mathcal{L}_k$ via induction by viewing $\mathcal{L}_k$ as a superalgebra in the representation category of
$\mathcal L_k^{\even}$. In particular, induction yields a classification of untwisted and twisted simple $\mathcal{L}_k$-modules, as well as fusion rules. We also get $S$-transformations for (super)characters of simple $\mathcal{L}_k$-modules as in Section \ref{CSsuperalgVerlinde}.

From the fusion coefficients, $S$- and $T$-matrices, or otherwise,
it is clear that $J = M(0,2,0)$ is a simple current for 
$\mathcal L_k^{\even}=M(0,0,0)$
such that $\theta_J = -1_J$ and $\qdim(J)=1$.
Therefore, $\mathcal{L}_k=M(0,0,0)\oplus M(0,2,0)$ is a $\frac{1}{2}\ZZ$-graded vertex operator superalgebra with even part graded by $\ZZ$ and odd part graded by $\frac{1}{2}+\ZZ$, that is, $\mathcal{L}_k$ is a vertex operator superalgebra of correct statistics.
Further, the fusion of the simple current $M(0, 2, 0)$ is 
\[
M(0, 2, 0) \boxtimes_{\mathcal{L}^\even_k} M(\lambda, b, \nu) \cong M(\lambda, b+2, \nu)
\]
so that the induced modules are
\[
  \cF\left(M(\lambda, b, \nu)\right)
  = M(\lambda, b, \nu) \oplus M(\lambda, b+2, \nu).
\]
From \eqref{eqn:N2modisom} we conclude that $M(\lambda, b, \nu) \cong M(\lambda, b+2, \nu)$ is impossible since $0\not\equiv \frac{1}{2}\mu\,(\text{mod}\,L_\mu)$.
This implies that there are no twisted modules of fixed-point type, so by Proposition \ref{prop:SimpleCurrentSmatrix}, each induced module is simple, and moreover every simple $\mathcal{L}_k$-module is induced from a simple $\mathcal{L}_k^\even$-module. It is easy to decide whether induced modules are twisted or untwisted using the twist, that is, conformal dimension. From the $T$-matrix \eqref{LevenT}, the conformal dimension
$h_{\lambda, b, \nu}$ of $M(\lambda, b, \nu)$ is
\[
  h_{\lambda, b, \nu} = \frac{\lambda(\lambda+2)}{4(k+2)}
  +\frac{b^2}{8}-\frac{\langle\nu,\nu\rangle}{2}.
\]
From this and the balancing equation, $\cF\left(M(\lambda, b, \nu)\right)$ is untwisted if $b$ is even and twisted if $b$ is odd.

Analyzing the fusion for $\mathcal{L}^\even_k=M(0,0,0)$-modules, we have the following
fusion for $\mathcal{L}_k$-modules:
\begin{align*}
{N^+\,}_{\cF(M(\lambda,b,\nu)),\,\cF(M(\lambda',b',\nu'))}^{\cF(M(\lambda'',b'',\nu''))}&=
\begin{cases}
(N^{\mathfrak{sl}_2})^{\lambda''}_{\lambda,\lambda'} 
& \quad \mathrm{if}\,\,\, b+b'-b''\equiv 0,2\,\, \mathrm{mod}\,\, 4,\,\,\,
\nu+\nu'-\nu''\in L_{\mu},\\
(N^{\mathfrak{sl}_2})^{k-\lambda''}_{\lambda,\lambda'} 
& \quad \mathrm{if}\,\,\, b+b'-b''\equiv 0,2\,\, \mathrm{mod}\,\, 4,\,\,\,
\nu+\nu'-\nu''-\mu/2\in L_{\mu}.
\end{cases},\\
{N^-\,}_{\cF(M(\lambda,b,\nu)),\,\cF(M(\lambda',b',\nu'))}^{\cF(M(\lambda'',b'',\nu''))}&=
\begin{cases}
(N^{\mathfrak{sl}_2})^{\lambda''}_{\lambda,\lambda'} 
& \quad \mathrm{if}\,\,\, b+b'-b''\equiv 0\,\, \mathrm{mod}\,\, 4,\,\,\,
\nu+\nu'-\nu''\in L_{\mu},\\
(N^{\mathfrak{sl}_2})^{k-\lambda''}_{\lambda,\lambda'} 
& \quad \mathrm{if}\,\,\, b+b'-b''\equiv 2\,\, \mathrm{mod}\,\, 4,\,\,\,
\nu+\nu'-\nu''-\mu/2\in L_{\mu},\\
-(N^{\mathfrak{sl}_2})^{\lambda''}_{\lambda,\lambda'} 
& \quad \mathrm{if}\,\,\, b+b'-b''\equiv 2\,\, \mathrm{mod}\,\, 4,\,\,\,
\nu+\nu'-\nu''\in L_{\mu},\\
-(N^{\mathfrak{sl}_2})^{k-\lambda''}_{\lambda,\lambda'} 
& \quad \mathrm{if}\,\,\, b+b'-b''\equiv 0\,\, \mathrm{mod}\,\, 4,\,\,\,
\nu+\nu'-\nu''-\mu/2\in L_{\mu}
\end{cases} .
\end{align*}
Note how the conditions on $b$, $b'$, and $b''$ for obtaining non-zero fusion rules implies that the tensor product of two twisted or two untwisted modules is untwisted and the tensor product of one twisted and one untwisted module is twisted.

Now we summarize the $S$-transformations for (super)characters of
simple $\mathcal L_k$-modules.
The simple modules for $\mathcal L_k^{\even}=M(0,0,0)$ are indexed by a set $\cS$ of triples $(\lambda,b,\nu)$, where $0\leq \lambda\leq k$, $0\leq b\leq 3$ and $\nu+L_\mu\in L'_\mu/L_\mu$.
Let us fix a choice of representatives of $\cS$, say $\cS^\even$, under the equivalence $X\sim J\boxtimes X$.
Further choose $\cS^{\even,\text{loc}}$ amongst $\cS^\even$ for which $b$ is even, that is, these induce to untwisted modules,
and let $\cS^{\even,\text{tw}}=\cS^\even\setminus\cS^{\even,\text{loc}}$.
Then we have the following:
for $(\lambda,b,\nu)\in \cS^{\even,\text{loc}}$,
\begin{align*}
\ch^+[\cF(M(\lambda,b,\nu))]\left(-\dfrac{1}{\tau}\right)=
\sum_{(\lambda',b',\nu')\in \cS^{\even,\text{loc}}}
2\cdot S^{\chi}_{M(\lambda,b,\nu),M(\lambda',b',\nu')}
\cdot\ch^+[\cF(M(\lambda',b',\nu'))](\tau)\\
\ch^-[\cF(M(\lambda,b,\nu))]\left(-\dfrac{1}{\tau}\right)=
\sum_{(\lambda',b',\nu')\in \cS^{\even,\text{tw}}}
2\cdot S^{\chi}_{M(\lambda,b,\nu),M(\lambda',b',\nu')}
\cdot\ch^+[\cF(M(\lambda',b',\nu'))](\tau),
\end{align*}
and for $(\lambda,b,\nu)\in \cS^{\even,\text{tw}}$,
\begin{align*}
\ch^+[\cF(M(\lambda,b,\nu))]\left(-\dfrac{1}{\tau}\right)=
\sum_{(\lambda',b',\nu')\in \cS^{\even,\text{loc}}}
2\cdot S^{\chi}_{M(\lambda,b,\nu),M(\lambda',b',\nu')}
\cdot\ch^-[\cF(M(\lambda',b',\nu'))](\tau)\\
\ch^-[\cF(M(\lambda,b,\nu))]\left(-\dfrac{1}{\tau}\right)=
\sum_{(\lambda',b',\nu')\in \cS^{\even,\text{tw}}}
2\cdot S^{\chi}_{M(\lambda,b,\nu),M(\lambda',b',\nu')}
\cdot\ch^-[\cF(M(\lambda',b',\nu'))](\tau),
\end{align*}
where the $S$-matrix for $\mathcal{L}_k^\even$ is given by \eqref{eqn:N2S}.   Fusion rules for $\mathcal{L}_k$-modules may also be computed using the $S$-matrix for the above (super)character transformations and the Verlinde formula of Section \ref{CSsuperalgVerlinde}.

\subsubsection{Bershadsky-Polyakov algebra}\label{sec:BP}

Finally, we discuss two examples of vertex operator (super)algebras with wrong statistics. One source of such algebras is quantum Hamiltonian reduction associated to certain nilpotent elements of a simple Lie (super)algebra. Such $W$-algebras and their cosets have recently appeared prominently as invariants of interfaces of $4$-dimensional $\mathcal N =4$ supersymmetric gauge theories \cite{GR}, and these algebras enjoy a triality \cite{CL3} vastly generalizing Feigin-Frenkel duality \cite{FF} and the coset realization of principal $W$-algebras \cite{ACL2} for type $A$. The first new members are the subregular $W$-algebras of $\mathfrak{sl}_n$, which have fields of conformal weight $\frac{n}{2}$, and the principal $W$-superalgebras of $\mathfrak{sl}_{n|1}$, which have odd fields of conformal weight $\frac{n+1}{2}$. Hence these have wrong statistics if $n$ is odd. Moreover, the natural generalization of Feigin-Frenkel duality, conjectured by Feigin and Semikhatov \cite{FS2}, relates the subregular $W$-algebra of $\mathfrak{sl}_n$ at level $k$ to the principal $W$-superalgebra of $\mathfrak{sl}_{n|1}$ at level $\ell$, where the levels are assumed to be non-critical and related by $(k+n)(\ell+n-1)=1$ \cite{CGN}. We discuss now a simple example of such a dual pair in the instance $n=3$. This subregular $W$-algebra of $\mathfrak{sl}_3$ was first discussed by Bershadsky and Polyakov \cite{Ber, Pol}  and is thus named after them.

The Bershadsky-Polyakov algebra at level $k=-1/2$, which corresponds to the parameter  $\ell=1$ in \cite{ACL}, is an example of a vertex operator algebra with wrong statistics, and it is rational \cite{Ar1}. We obtain its representation theory using an inverse coset construction. Specifically, the Bershadsky-Polyakov algebra is constructed here as a simple current extension of a tensor product of commuting lattice and Virasoro vertex operator algebras.

Consider the Virasoro minimal model vertex operator algebra $\vir(3,5)$ 
of central charge $c=-3/5$. This algebra is simple, rational, $C_2$-cofinite, and of CFT type.
It has four irreducible modules up to isomorphism, $W_{1,s}$ for $1\leq s\leq 4$,
with $\vir(3,5)=W_{1,1}$. The conformal dimension of $W_{1,s}$ is $h_{1,s}=( (5-3s)^2 - 4)/60$,
and tensor products are given by:
\[
W_{1,s}\fus{}W_{1,n} \cong \bigoplus_{\substack{l = |n-s| +1 \\ 
		l+n+s \text{\,odd}
		}}^{\text{min}(n+s-1,9-n-s)} W_{1,l}.
\]
In other words, the fusion rules are:
\begin{align*}
N(\vir(3,5))_{(1,t),(1,t')}^{(1,t'')}= 
\begin{cases}
1 & \quad \mathrm{if}\,\, |t -t'|+1 \leq t'' \leq  \mathrm{min}\{t +t'-1,9-t-t'\},\,\,  t +t'+t''\,\,\mathrm{is\,\,odd},\\
0 & \quad \mathrm{otherwise} .
\end{cases}
\end{align*}
In particular, $W_{1,4}$ is a simple current of order $2$ and conformal dimension $3/4$; it has quantum dimension
$-1$ as can be seen from the $S$-matrix coefficients \eqref{eqn:virS} below.

Let $L=\beta\ZZ$ be an even positive definite lattice such that $\langle\beta,\beta\rangle=6$.
We consider the vertex operator algebra $V=  V_{L}\otimes \vir(3,5)$. 
It has a simple current extension $A= V \oplus J$ 
with $J=V_{\frac{1}{2}\beta+L}\otimes W_{1,4}$. 
Using quantum dimensions, it can be quickly checked that
this extension is a vertex operator algebra with wrong statistics \cite{CKL}. The algebra
$A$ is precisely the Bershadsky-Polyakov algebra at $\ell=1$ \cite{Kawa, ACL}.

Now let $V_{\frac{i}{6}\beta+L}\otimes W_{1,s}$ be a simple module for $V$,
and let us denote its conformal weight by $h_{i,1,s}$.
Since fusion factors over tensor products of vertex operator algebras in the rational setting,
we have:
\begin{align*}
\cF(V_{\frac{i}{6}\beta+L}\otimes W_{1,s}) &\cong \left(V_{\frac{i}{6}\beta+L}\otimes W_{1,s}\right)
\oplus  \left( V_{\frac{i+3}{6}\beta+L}\otimes W_{1,5-s}\right),\\
h_{i+3,1,5-s}&=\frac{i^2+6i + 9}{12} + \frac{(10 - 3s)^2-4}{60},\quad 
h_{i,1,s}=\frac{i^2}{12} + \frac{(5-3s)^2-4}{60}\\
\Rightarrow h_{i+3,1,5-s} - h_{i,1,s} + \ZZ &= \frac{i-s}{2} + \ZZ.
\end{align*}
Therefore,
\begin{align}
\cF(V_{\frac{i}{6}\beta +L}\otimes W_{1,s}) = 
\begin{cases}
\text{Untwisted} \quad &\text{if}\,\,\, i-s \equiv 1 \pmod{2},\\
\text{Twisted} \quad &\text{if}\,\,\, i-s \equiv 0 \pmod{2}.
\end{cases}
\label{eqn:BPinduction}
\end{align}

Up to isomorphism, there are 24 simple $V$-modules $X_{i,s}=V_{\frac{i}{6}\beta+L}\otimes W_{1,s}$ for $0\leq i\leq 5$ and $1\leq s\leq 4$. These modules fall into 12 classes under the equivalence relation $X\sim J\fus{} X$ for $X$ simple, and of these equivalence classes, 6 induce to untwisted and 6 induce to twisted modules. Moreover, every simple (un)twisted $A$-module can be realized as one of these. Let $\cS^\even$ denote the pairs $(i,s)$ corresponding to a choice of equivalence class representatives of $V$-modules, and let $\cS^{\even,\text{loc}}$, respectively $\cS^{\even,\text{tw}}$, denote the set of pairs $(i,s)\in\cS^\even$ such that $\cF(X_{i,s})$ is untwisted, respectively twisted. Thus every untwisted, respectively twisted, simple $A$-module is isomorphic to exactly one of the induced modules $\cF(X_{i,s})$ for $(i,s)\in\cS^{\even,\text{loc}}$, respectively $(i,s)\in\cS^{\even,\text{tw}}$. The fusion of simple $A$-modules is given by
\begin{align*}
\cF(X_{i,r}) \fus{A} \cF(X_{j,s}) &\cong \cF(X_{i,r} \fus{V} X_{j,s})
\cong\bigoplus_{\substack{l = |r-s| +1 \\ l+r+s \text{\,odd}}}^{\text{min}(r+s-1,9-r-s)} 
\cF(X_{i+j,l}).
\end{align*}
Note how fusion respects the twisting \eqref{eqn:BPinduction}. 
Noting also that $\cF(X_{i+j,s})\cong \cF(X_{i+j+3,5-s})$, the fusion rules are:
\begin{align}
\label{BP:fusioncoeff}
N_{\cF(X_{i,r}), \cF(X_{j,s}) }^{\cF(X_{k,t})}  &= 
\begin{cases}
N(\vir(3,5))_{(1,r), (1,s)}^{(1,t)}  &\quad \text{if\,}\, k-i-j\equiv 0 \pmod{6}\\
N(\vir(3,5))_{(1,r), (1,s)}^{(1,5-t)} &\quad \text{if\,}\, k-i-j\equiv 3 \pmod{6}\\
0 &\quad \text{otherwise}.
\end{cases}
\end{align}

Now we discuss $S$-transformations of characters of (twisted) $A$-modules. First we recall the modular $S$-matrix for $\vir(3,5)$ from \cite[Equation 10.134]{FMS}:
\begin{align*}
S^\chi_{W_{(r,s)}\, W_{(\rho, \sigma)}} &= 2\sqrt{\dfrac{2}{15}} (-1)^{1+s\rho +r\sigma}
\sin\left( \dfrac{5\pi}{3}r\rho \right)\sin\left( \dfrac{3\pi}{5} s\sigma\right),
\end{align*}
where $W_{1,1}=W_{(1,1)}$ but for other $s$, $W_{1,s} = W_{(2,5-s)}$.
Therefore we obtain, for $2\leq s,\sigma\leq 4$,
\begin{align*}
S^\chi_{W_{1,1}\, W_{1, 1}} &= - 2\sqrt{\dfrac{2}{15}} 
\sin\left( \dfrac{5\pi}{3} \right)\sin\left( \dfrac{3\pi}{5} \right)=\sqrt{\dfrac{2}{5}}\sin\left(\dfrac{3\pi}{5}\right),\\
S^\chi_{W_{1,1}\, W_{1, \sigma}} &= 2\sqrt{\dfrac{2}{15}} (-1)^{\sigma}
\sin\left( \dfrac{10\pi}{3} \right)\sin\left( \dfrac{3\pi}{5} (5-\sigma)\right)=(-1)^{\sigma+1}\sqrt{\dfrac{2}{5}}\sin\left(\dfrac{3\pi\sigma}{5}\right),\\
S^\chi_{W_{1,s}\, W_{1, \sigma}} &= -2\sqrt{\dfrac{2}{15}} 
\sin\left( \dfrac{20\pi}{3} \right)\sin\left( \dfrac{3\pi}{5} (5-s)(5-\sigma)\right)=(-1)^{s+\sigma}\sqrt{\dfrac{2}{5}}\sin\left(\dfrac{3\pi s\sigma}{5}\right).
\end{align*}
Note that in fact
\begin{equation}\label{eqn:virS}
S^\chi_{W_{1,s}\, W_{1, \sigma}} = (-1)^{s+\sigma}\sqrt{\dfrac{2}{5}}\sin\left(\dfrac{3\pi s\sigma}{5}\right)
\end{equation}
for all $1\leq s,\sigma\leq 4$. As in Section \ref{subsec:cosetmodtrans}, the modular $S$-matrix for $V_L$ is given by:
\[
S^\chi_{V_{\frac{a}{6}\beta+L}, V_{\frac{b}{6}\beta+L}} = 
\frac{1}{6}\exp\left(2\pi i \frac{ab}{6} \right).
\]
Using these $S$-matrices, (super)characters of $A$-modules have the following modular $S$-transformations: for $(i,s)\in \cS^{\even,\text{loc}}$,
\begin{align*}
\ch^+[\cF(X_{i,s})]\left(-\dfrac{1}{\tau}\right)=
\sum_{(j,r)\in \cS^{\even,\text{tw}}}
2\cdot S^{\chi, V_L}_{V_{\frac{i}{6}\beta+ L}, V_{\frac{j}{6}\beta+L}} 
\cdot
S^{\chi,\vir(3,5)}_{W_{1,s}, W_{1,r}} 
\cdot\ch^-[\cF(X_{j,r})](\tau)\\
\ch^-[\cF(X_{i,s})]\left(-\dfrac{1}{\tau}\right)=
\sum_{(j,r)\in \cS^{\even,\text{loc}}}
2\cdot S^{\chi, V_L}_{V_{\frac{i}{6}\beta+ L}, V_{\frac{j}{6}\beta+L}} \cdot
S^{\chi,\vir(3,5)}_{W_{1,s}, W_{1,r}} 
\cdot\ch^-[\cF(X_{j,r})](\tau),
\end{align*}
and for $(i,s)\in \cS^{\even,\text{tw}}$,
\begin{align*}
\ch^+[\cF(X_{i,s})]\left(-\dfrac{1}{\tau}\right)=
\sum_{(j,r)\in \cS^{\even,\text{tw}}}
2\cdot S^{\chi, V_L}_{V_{\frac{i}{6}\beta+ L}, V_{\frac{j}{6}\beta+L}} \cdot
S^{\chi,\vir(3,5)}_{W_{1,s}, W_{1,r}} 
\cdot\ch^+[\cF(X_{j,r})](\tau)\\
\ch^-[\cF(X_{i,s})]\left(-\dfrac{1}{\tau}\right)=
\sum_{(j,r)\in \cS^{\even,\text{loc}}}
2\cdot S^{\chi, V_L}_{\frac{i}{6}\beta+ L, \frac{j}{6}\beta+L} \cdot
S^{\chi,\vir(3,5)}_{W_{1,s}, W_{1,r}} 
\cdot\ch^+[\cF(X_{j,r})](\tau).
\end{align*}
The fusion rules for $A$-modules can also be derived using these $S$-matrices for $A$ together with the Verlinde formula of Section \ref{sec:Verlinde}.

\subsubsection{Superalgebra with wrong statistics}\label{sec:wrongsuper}

We can easily modify the inverse coset construction of the previous example to study the representation theory of a vertex operator superalgebra with wrong statistics. This algebra is in fact the simple principal $W$-superalgebra of $\mathfrak{sl}_{3|1}$ at level $\ell = -1/3$, by a special case of the main theorems of both \cite{CGN, CL3}. 

We again work with $\vir(3,5)$, but now we tensor it with $V_{L}$ where
$L=\gamma\ZZ$ with
$\langle\gamma,\gamma\rangle=10$. Let $V=V_{L}\otimes\vir(3,5)$.
As before, $V$ is a simple, rational, $C_2$-cofinite, vertex operator algebra of CFT type;
$V$ has an order-$2$ simple current $J=V_{\frac{1}{2}\gamma + L}\otimes W_{1,4}$ with $\qdim(J)=-1$ and $\theta_J=\id_J$. We set $A=V\oplus J$, and by \cite{CKL}, $A$ is a superalgebra of wrong statistics.
By Proposition \ref{prop:SimpleCurrentSmatrix}, every simple module in $\repA$ is an induced module, and conversely, every simple $V$-module
induces to a simple $\repA$-module. Let $X_{i,s}= V_{\frac{i}{10}\gamma+ L}\otimes W_{1,s}$,
and let us denote its conformal weight by $h_{i,1,s}$.
Then
\begin{align*}
\cF(X_{i,s}) &= X_{i,s}\oplus X_{i+5,5-s}\\
h_{i+5,1,5-s}&=\frac{i^2+10i + 25}{20} + \frac{(10 - 3s)^2-4}{60},\quad 
h_{i,1,s}=\frac{i^2}{20} + \frac{(5-3s)^2-4}{60}\\
\Rightarrow h_{i+5,1,5-s} - h_{i,1,s} + \ZZ &= \frac{i-s+1}{2} + \ZZ.
\end{align*}
Therefore,
\begin{align*}
\cF(X_{i,s}) = 
\begin{cases}
\text{Untwisted} \quad &\text{if}\,\,\, i-s \equiv 1 \pmod{2}\\
\text{Twisted} \quad &\text{if}\,\,\, i-s \equiv 0 \pmod{2}
\end{cases} .
\label{eqn:wrongstatBPinduction}
\end{align*}
Similar to the previous example, there are 40 simple $V$-modules up to isomorphism
indexed by the pairs $(i,j)$ with $0\leq i\leq 9$,
$1\leq j\leq 4$, which fall 
into 20 classes under the identification $X\sim J\boxtimes X$.
 Of these equivalence classes, 10  induce to untwisted modules  
and 10 to twisted modules. As before, we denote sets of representatives of these classes by $\cS^\even$, $\cS^{\even,\text{loc}}$, and $\cS^{\even,\text{tw}}$. Up to (possibly odd) isomorphism,
each untwisted, respectively twisted, simple $A$-module  is induced from exactly one module $X_{i,s}$ for $(i,s)\in\cS^{\even,\text{loc}}$, respectively $(i,s)\in\cS^{\even,\text{tw}}$.

Similar to \eqref{BP:fusioncoeff},  we have the following (super)fusion rules:
\begin{align*}
(N^\pm)_{\cF(X_{i,s}), \cF(X_{j,r})}^{\cF(X_{k,t})}
&={N}_{X_{i,s}, X_{j,r}}^{X_{k,t}}\pm{N}_{X_{i,s}, X_{j,r}}^{X_{5+k,5-t}}\nonumber\\
&=\begin{cases}
\hphantom{\pm } N(\vir(3,5))_{(1,s),\,(1,r)}^{(1,t)}\,\,\text{ if } k\equiv i+j\pmod{10}\\
\pm N(\vir(3,5))_{(1,s),\,(1,r)}^{(1,5-t)}\,\,\text{ if } k\equiv i+j+5\pmod{10}
\end{cases} .
\end{align*}
The $S$-transformations of (super)characters of $\cF(X_{i,s})$ for $(i,s)\in\cS^\even$ have exactly the same form as for the Bershadsky-Polyakov algebra; the only difference is that we use the $S$-matrix for $V_{\gamma\ZZ}$ rather than $V_{\beta\ZZ}$. Fusion rules for $A$-modules may also be computed using the resulting $S$-matrix for $A$.

For further examples of regular vertex operator superalgebras with wrong statistics, see the analysis of the affine \VOSA{}  
 $L_k\left(\mathfrak{osp}(1|2)\right)$ and its parafermion coset,  at positive integer level $k$,  in \cite{CFK}.

\bibliographystyle{alpha}

\bigskip

\noindent
(T. C.) Department of Mathematical and Statistical Sciences,
University of Alberta, 
Edmonton, Alberta T6G 2G1, Canada\\
\emph{E-mail:} \texttt{creutzig@ualberta.ca}

\medskip

\noindent
(S. K.) Department of Mathematical and Statistical Sciences,
University of Alberta, 
Edmonton, Alberta T6G 2G1, Canada\\
\textit{Current address:} Department of Mathematics, University of Denver, Denver, Colorado 80210, USA
\emph{E-mail:} \texttt{shashank.kanade@du.edu}

\medskip

\noindent
(R. M.) Department of Mathematics,
Vanderbilt University, 
Nashville, Tennessee 37240, USA\\
\textit{Current address:} Yau Mathematical Sciences Center, Tsinghua University, Beijing 100084, China\\
\emph{E-mail:} \texttt{rhmcrae@tsinghua.edu.cn}
\end{document}